\renewcommand\eqref[1]{(\ref{#1})} 
\newcommand{\abs}[1]{\lvert#1\rvert}
\newcommand{\norm}[1]{\lVert#1\rVert}
\newcommand{\absbig}[1]{\bigl\lvert#1\bigr\rvert}
\newcommand{\normbig}[1]{\bigl\lVert#1\bigr\rVert}
\newcommand{\absBig}[1]{\Bigl\lvert#1\Bigr\rvert}
\newcommand{\normBig}[1]{\Bigl\lVert#1\Bigr\rVert}
\newcommand{\abslr}[1]{\left\lvert#1\right\rvert}
\newcommand{\normlr}[1]{\left\lVert#1\right\rVert}
\newcommand{\set}[1]{\left\{#1\right\}}
\newcommand{\bra}[1]{\langle#1\rangle}
\newcommand{\brac}[1]{(1+\abs{#1})}
\newcommand{\bract}[1]{(1+#1)}
\def\p#1{{\left({#1}\right)}}
\def\jp#1{{\left\langle{#1}\right\rangle}}
\newcommand\R{{\mathbb R}}
\newcommand\C{{\mathbb C}}
\newcommand\N{{\mathbb N}}
\newcommand\NUO{{\mathbb N}\cup\{0\}}
\newcommand\Rn{{{\mathbb R}^n}}
\newcommand\Sn{{{\mathbb S}^n}}
\newcommand\Snm{{{\mathbb S}^{n-1}}}
\newcommand\SNm{{{\mathbb S}^{N-1}}}
\newcommand\al{\alpha}
\newcommand\be{\beta}
\newcommand\ga{\gamma}
\newcommand\de{\delta}
\newcommand\De{\Delta}
\newcommand\ep{\varepsilon}
\newcommand\ka{\kappa}
\newcommand\la{\lambda}
\newcommand\va{\varphi}
\newcommand\si{\sigma}
\newcommand\Si{\Sigma}
\newcommand\om{\omega}
\newcommand\Om{\Omega}
\newcommand\lap{\Delta}
\newcommand\grad{\nabla}
\newcommand\FT{{\mathscr F}}
\newcommand\pa{\partial}
\newcommand{\clos}[1]{\overline{#1}}
\newcommand\conv{\ast}
\newcommand\intbar{-\!\!\!\!\!\!\int}
\definecolor{lightgrey}{gray}{0.8}
\renewcommand\S{{\mathcal S}} 
\renewcommand\L{{\mathscr L}}
\renewcommand*{\Re}{\operatorname*{Re}}
\renewcommand*{\Im}{\operatorname*{Im}}
\DeclareMathOperator{\meas}{meas}%
\DeclareMathOperator{\supp}{supp}%
\DeclareMathOperator{\codim}{codim}%
\DeclareMathOperator{\Hess}{Hess}%
\DeclareMathOperator{\rank}{rank}%
\DeclareMathOperator{\diag}{diag}%
\DeclareMathOperator{\dist}{dist}%
\newcommand\Fvar{\upsilon} 
\newcommand\FVar{\Upsilon} 
\newcommand\Ivar{\nu} 
\newcommand\IVar{{\mathcal N}}
\newcommand\gauss{\mathbf{\underline{n}}} 
\newcommand\xtil{\widetilde{x}} 
\newcommand\latil{\widetilde{\lambda}}
\newcommand\pat{D_t}%
\newcommand\pax{D_x}%
\newcommand\paxj{D_{x_j}}%
\newcommand\cutoffN{\chi} 
\newcommand\cutoffbes{\Phi} 
\newcommand\cutoffWF{\ka} 
\newcommand\cutoffsing{\psi} 
\newcommand\cutoffcone{\Psi} 
\newcommand\cutoffadjust{G} 
\newcommand\cutoffSP{\chi} 
\newcommand\cutoffM{\chi} 
\newcommand\phasefn{\Phi} 
\newcommand\indicat{\bf{1}} 
\newcommand\mass{\mu} 
\newcommand\diss{\delta} 
\newcommand\curlyR{\mathcal R}
\newcommand\curlyM{\mathcal M}
\newcommand{\psq}[1]{\left[#1\right]}
\def\aside#1{}
\def\aside#1{\footnote{#1}}
\numberwithin{equation}{section}
\theoremstyle{plain}
\newtheorem{thm}{Theorem}[section]
\newtheorem{lem}[thm]{Lemma}
\newtheorem{cor}[thm]{Corollary}
\newtheorem{prop}[thm]{Proposition}
\newtheorem{defn}[thm]{Definition}
\newtheorem{rem}[thm]{Remark}
\newtheorem{examples}[thm]{Examples}
\newtheorem{rem*}{Remark}
\title{Dispersive and Strichartz estimates for
hyperbolic equations with constant coefficients}
\author{Michael Ruzhansky
\footnote{The first author was supported by EPSRC grants
EP/E062873/01 and EP/G007233/1.}
\ \  and James Smith}
\date{\today}
\begin{document}
\maketitle

\begin{abstract}
Dispersive and Strichartz 
estimates for solutions to general strictly hyperbolic partial 
differential equations with constant coefficients 
are considered. 
The global time decay estimates of $L^p-L^q$ 
norms of propagators is discussed,
and it is shown how
the time decay rates depend on the geometry of the problem.
The frequency space is separated in several zones each
giving a certain decay rate. Geometric conditions on
characteristics responsible for the particular decay
are investigated. Thus, a comprehensive
analysis is carried out for strictly hyperbolic equations of
high orders with lower order terms of a general form.
Results are applied to time decay estimates
for the Fokker--Planck equation and for semilinear 
hyperbolic equations.
\end{abstract}

\tableofcontents

\section{Introduction}

These notes are devoted to the investigation of dispersive and
Strichartz estimates for general hyperbolic equations with
constant coefficients. The analysis that we carry out is also
applicable to hyperbolic systems either by looking at 
characteristics of the system directly, or first taking the
determinant of the system (the dispersion relation). 

There are several important motivations for the analysis.
First, while hyperbolic equations of the second order
(such as the wave equation, dissipative wave equation,
Klein--Gordon equation, etc.) are very well studied, 
relatively little is known about equations of higher orders.
At the same time, equations or systems of high orders naturally
arise in applications. For example, Grad systems of 
non-equilibrium gas dynamics, when linearised near an equilibrium
point, are examples of large hyperbolic systems with
constant coefficients (see e.g. \cite{Rad03}, \cite{Rad05}).
Here one has to deal with hyperbolic equations of orders 13, 20,
etc., depending on the number of moments in the Grad system.
Moreover, there are important families of systems of size going to
infinity, or even of infinite hyperbolic systems. For
example, the Hermite--Grad method for the analysis of the
Fokker--Planck equation for the distribution function for
particles for the Brownian motion produces an infinite
hyperbolic system with constant coefficients. Indeed,
making the decomposition in the space of velocities into
the Hermite basis, and writing equations for the space-time
coefficients produces a hyperbolic system for infinitely
many coefficients (see e.g. \cite{vole+radk03}, \cite{vole+radk04},
\cite{ZR04}, and Section \ref{SEC:FokkerPlanck}). 
The Galerkin approximation of this system
leads to a family of systems with sizes increasing to infinity.
Although explicit calculations are difficult in these situations,
the time decay rate of the solution can still be calculated
(\cite{Ruzh06}).

One of the main difficulties when dealing with large systems
is that unlike in the case of the second order equations, 
in general characteristics can not be calculated explicitly. 
This raises a natural problem to look for properties of
the equation that determine the decay rates for solutions.
On one hand, it becomes clear that one has to look for geometric
properties of characteristics that may be responsible for
such decay rates. On the other hand, a subsequent problem
arises to be able to reduce these properties from some
properties of coefficients of the equation. 

One encounters several difficulties on this path. One 
difficulty lies in the
absence of general formulae for characteristic roots.
For large frequencies one can use perturbation methods to
deduce the necessary asymptotic properties of characteristics.
However, 
this approach can not be used for small frequencies, where the
situation becomes more subtle. For example, for small
frequencies characteristics may become multiple, causing
them to become irregular. This means that if we use the
usual representation of solutions in terms of Fourier multipliers,
phases become irregular, while amplitudes are irregular and
blow up. Thus, we will need to carry out the detailed
analysis of sets of possible multiplicities using the fact
that they are solutions of parameter dependent
polynomial equations. Another difficulty for small frequencies
is that there exists a genuine interaction between time and 
frequencies. In the case of homogeneous symbols it can be shown
(see e.g. Section \ref{SEC:homogoperators}) that time can be
taken out of the estimates, after which low frequencies
can be ignored since the corresponding operators are smoothing
and their estimates are independent of time. In the case of
the presence of lower order terms, the time can no longer
be eliminated from the estimates, so even small frequencies
become large for large times and may influence the resulting
estimates.

The purpose of this work is to present a comprehensive
analysis of such problems. Despite the difficulties
described above, we will be able to determine what geometric
properties of characteristic roots are responsible for
qualitatively different time decay rates for solutions.
Moreover, we will calculate these rates and relate them
to geometric properties of equations. This will lead to a
comprehensive picture of decay rates and orders in dispersive
estimates for hyperbolic equations with constant coefficients.
Such estimates lead to Strichartz estimates, for which
our analysis will be applied, with further implications for
the corresponding semilinear problems.

Thus, in this paper we consider a problem of 
determining dispersive and
Strichartz estimates for general hyperbolic equations with
lower order terms.
Therefore, we consider the Cauchy problem for general 
$m^{{\text{th}}}$ order
constant coefficient linear strictly hyperbolic equation with
solution $u=u(t,x)$:
\begin{equation}\label{EQ:standardCauchyproblem}
\left\{\begin{aligned}& \overbrace{\pat^m
u+\sum_{j=1}^{m}P_{j}(\pax)\pat^{m-j}u}^{
\textrm{homogeneous principal part}}+\overbrace{
\sum_{l=0}^{m-1}\sum_{\abs{\al}+r=l}
c_{\al,r}\pax^\al\pat^ru}^{
\textrm{general lower order terms}}=0,\quad t>0,\\
&\pat^lu(0,x)=f_l(x)\in C_0^{\infty}(\R^n),\quad l=0,\dots,m-1,\;
x\in\R^n\,,
\end{aligned}\right.
\end{equation}
where $P_j(\xi)$, the polynomial obtained from the operator
$P_j(\pax)$ by replacing each~$D_{x_k}$ by~$\xi_k$, is a constant
coefficient homogeneous polynomial of order~$j$, and the $c_{\al,r}$
are (complex) constants. 
Here, as usual, $\alpha=(\alpha_1,\ldots,\alpha_n)$,
$D_x^\alpha=D_{x_1}^{\alpha_1}\cdots
D_{x_n}^{\alpha_n}$, $D_{x_k}=\frac{1}{i}\partial_{x_k}$ and
$D_t=\frac{1}{i}\partial_t$.
The full symbol of the operator in
\eqref{EQ:standardCauchyproblem} will be denoted by
$$ L(\tau,\xi)=\tau^m
+\sum_{j=1}^{m}P_{j}(\xi)\tau^{m-j}+
\sum_{l=0}^{m-1}\sum_{\abs{\al}+r=l}
c_{\al,r}\xi^\al\tau^r,$$
where $\xi^\alpha=\xi_1^{\alpha_1}\cdots\xi_n^{\alpha_n}.$
We will always assume that
the differential operator in \eqref{EQ:standardCauchyproblem}
is hyperbolic, that is 
for each $\xi\in\R^n$\textup{,} the symbol
of the principal part\textup{,} 
$$L_m(\tau,\xi)=\tau^m+\sum_{j=1}^{m}P_{j}(\xi)\tau^{m-j},$$
has $m$ real roots with respect to $\tau$. 
For simplicity, unless explicitly stated otherwise,
we will also assume that
the operator in \eqref{EQ:standardCauchyproblem}
is \emph{strictly hyperbolic}, that is at each
$\xi\in\R^n\backslash\{0\}$, these roots are pairwise distinct. We
denote the roots of $L_m(\tau,\xi)$ with respect to $\tau$ by
$\varphi_1(\xi)\le\dots\le \varphi_m(\xi)$\textup{,} and if $L$ is
strictly hyperbolic the above inequalities are strict for $\xi\ne0$.

The condition of hyperbolicity arises naturally 
in the study of the
Cauchy problem for linear partial differential operators and it can
be shown that it is a necessary condition for $C^\infty$
well-posedness of the problem;
this is discussed in~\cite{egor+shub92} and
\cite{horm83II}, for example. Strict hyperbolicity 
is sufficient for
$C^\infty$ well-posedness of the Cauchy problem for such an operator
with any lower order terms; if the operator is only hyperbolic
(sometimes called \emph{weakly hyperbolic}) the lower order terms
must satisfy additional conditions for $C^\infty$ well-posedness,
the so-called \emph{Levi conditions}. For
this reason, we only consider strictly hyperbolic operators with
lower order terms, 
since our main interest is to understand the influence
of lower order terms on the decay properties of solutions.

The roots of the
associated full
characteristic polynomial $L(\tau,\xi)$ with respect to 
$\tau$ will be denoted
by $\tau_1(\xi),\dots,\tau_m(\xi)$ and referred to
as the
\emph{characteristic roots} of the full operator.
Clearly, if $L$ is a homogeneous operator then the characteristic
roots $\tau_k(\xi)$, $k=1,\dots,m$, coincide, possibly after
reordering, with the roots $\varphi_k(\xi)$, $k=1,\dots,m$, of the
operator $L_m$. However,
in general there is no natural ordering on the roots $\tau_k(\xi)$
as they may be complex-valued or may intersect.

The analysis here will be based on the properties of
characteristic roots $\tau_k(\xi)$. If the problem
\eqref{EQ:standardCauchyproblem} is strictly hyperbolic,
we can derive their asymptotic properties in a general
situation, necessary for our analysis. However, if the
problem is only hyperbolic, functions $\tau_k(\xi)$ may
develop singularities for large $\xi$. If this does not happen
and we have the necessary information about them, we may 
drop the strict hyperbolicity assumption. This may be
the case in some applications, for example in those 
arising in the analysis of
the Fokker--Planck equation.

We seek \textit{a priori} estimates for the solution 
$u(t,x)$ to the
Cauchy problem \eqref{EQ:standardCauchyproblem}, of the type
\begin{equation}\label{EQ:idealLpLqest}
\norm{\pax^\al\pat^r u(t,\cdot)}_{L^q}\le
K(t)\sum_{l=0}^{m-1}\norm{f_l}_{W^{N_p-l}_p}\,,
\end{equation}
where $1\le p\le2$, $\frac{1}{p}+\frac{1}{q}=1$, 
$N_p=N_p(\al,r)$ is
a constant depending on $p,\al$ and $r$, 
and $K(t)$ is a function to be determined. Here
$W_p^{N_p-l}$ is the Sobolev space over $L^p$ with
$N_p-l$ (fractional) derivatives.

We note that sometimes, 
for example in~\cite{trev802}, in the definition of a
hyperbolic operator the polynomial $L(i\tau,\xi)$ is used as it is
better suited to taking the partial Fourier transform in $x$,
corresponding as it does to $L(\pa_t,\pax)$; in this case, one
requires the roots with respect to $\tau$ to be purely imaginary
(in the cases when we will require them to be real).
However, the definition that we give above is perhaps
more standard, and thus
adopted here throughout.

For a hyperbolic equation with real coefficients we note that the
constants $c_{\al,r}$ satisfy $i^{m-\abs{\al}-l}c_{\al,r}\in\R$;
the equation is written in the form above since our results may be
used to study hyperbolic systems, which can be reduced to an
$m^\text{th}$ order equation with complex coefficients.

Most results presented here will apply to operators which
are pseudo-differential in $x$ and to hyperbolic systems
via their dispersion equation. Moreover, most of results
in this paper are in general sharp.

In this work, we place
the priority on obtaining a comprehensive collection of
estimates for hyperbolic equations with constant coefficients.
The case of variable coefficients is also of great interest,
but we leave some extensions of our analysis to this case outside the
scope of this paper. Let us mention that already in the case of
coefficients depending on time, some unpleasant
phenomena may happen. For example, already for the second
order equations the oscillations in time dependent
coefficients may change the time decay rates for solutions to
the corresponding Cauchy problem. For example, equations with
very fast oscillations, or with increasing coefficients, have been
analysed in \cite{reis+yagd99, reis+yagd00nach},
to mention only a few references. 
Results even for the wave equations with bounded 
coefficients may depend on the oscillations in coefficients
(see e.g. \cite{ReS05}).
At the same time, many 
results of this paper are stable under time perturbations of
coefficients. For example, in the case of equations with
homogeneous symbols with time-dependent coefficients with
integrable derivative, a comprehensive analysis has been 
carried out in \cite{MR07}. We will not deal with such
questions in this paper. Let us also mention that while
dispersive estimates are devoted to $L^p-L^q$ estimates for
solutions, $L^p-L^p$ estimates are also of interest.
A survey of $L^p$ estimates for general
non-degenerate Fourier integral operators and their
dependence on the geometry can be found in \cite{Ruzh00}
in the case of real-valued phase functions, while
operators with complex-valued phase functions have been
analysed in \cite{Ruzh01}. $L^p$--estimates for solutions
to some classes of hyperbolic systems with 
variable multiplicities appeared in \cite{KR07}.

Let us now explain the organisation of these notes.
In the following parts of the introduction we will review
results for second order equations and for equations with
homogeneous symbols, as well as give several more motivations for
the comprehensive analysis of this paper.
In Section \ref{section:ests} we will present results for
different types of behaviour of characteristic roots, and
also of corresponding phase functions in cases where we can 
represent solutions in terms of Fourier multipliers.
Thus, in Section \ref{SEC:21} we will present results
without and with multiplicities, when roots are separated
from the real axis, in which cases we can get exponential
decay of solutions. In Section \ref{SEC:22} we present
results for roots with non-degeneracies, in which case
we have a variety of conclusions depending on geometric
properties of roots. In Section \ref{SEC:23} we
present results for complex roots that become real on
some set. A version of this type of statements
(although not in the microlocal form used here) 
partly appeared in \cite{RS05}, and those are improved here. 
In Section \ref{SEC:24} we summarise the microlocal results
and formulate the main theorem on dispersive estimates for
general hyperbolic equations with constant coefficients.
Theorem \ref{THM:overallmainthm} is the main theorem
containing a table of results,
and the rest of this section is devoted to the explanation
and further remarks about this table.
In Section
\ref{SEC:outline} we will outline our approach, indicating
the relations between frequency regions and statements.
In Section \ref{SEC:nonlinear} we present results
for non-homogeneous equations, as well as formulate
corresponding Strichartz estimates with further applications
to semilinear equations. In general, we leave such developments
outside the scope of this paper since they are quite
well understood (see e.g. \cite{KT98}), once the
time decay rates are determined (as we will do in
Theorem \ref{THM:overallmainthm}).

The subsequent chapters contain the detailed analysis and
proofs. In Section \ref{CHAP3} we establish necessary
properties of roots of hyperbolic polynomials,
as well as carry out the perturbation analysis for
large frequencies. In Section \ref{SEC:ch-convexity}
we investigate estimates for oscillatory integrals
under certain convexity assumptions on the level
sets of the phase function. In Section
\ref{SEC:ch-nonconvex} we analyse the corresponding
oscillatory integrals without convexity assumption. 
Section \ref{SEC:Cauchy} is devoted to dispersive
estimates for solutions to the general Cauchy problem,
and here we prove various parts of Theorem
\ref{THM:overallmainthm}.
Section \ref{SEC:bddxiaroundmults} deals with multiple
characteristics. Here we present a procedure for the
resolution of multiplicities in the representation
of solutions, enabling us to obtain estimates in these
cases as well. Section \ref{SEC:phasefnliesonaxisbddxi}
is devoted to multiple roots on the real axis.
Here, we investigate solutions for frequencies very close
to multiplicities (in some shrinking neighborhoods) as well
as for larger, but still bounded, frequencies. Here we present
several different versions of 
results dependent on possibly different assumptions.
Finally, Section \ref{CHAP5} is devoted
to examples of the presented analysis with further applications.
Thus, in Section \ref{SEC:analysisofm=2} we deal with second
order equations and give examples of
how our results can be applied to investigate the interplay
between mass, dissipation, and frequencies. Further,
in Section \ref{SEC:condonlowerorderterms} we discuss
some conditions on coefficients of equations, and in
Section \ref{SEC:hyptriples} we give examples of
non-homogeneous roots in terms of hyperbolic triples and
Hermite's theorem. In Section \ref{SEC:systems} we
show briefly how the results can be applied for
strictly hyperbolic systems. And finally, in
Section \ref{SEC:FokkerPlanck} we give an application
to the Fokker--Planck equations.

The authors are grateful to Jens Wirth for remarks about
the preliminary version of the manuscript and to Tokio
Matsuyama and Mitsuru Sugimoto for discussions.

We will denote various constants throughout the paper by the
same letter $C$. Balls with radius $R$ centred at $\xi\in\Rn$
will be denoted by $B_R(\xi)$. We will use the notation
$\jp{\xi}=\sqrt{1+|\xi|^2}, \jp{D}=\sqrt{1-\Delta}$
and $|D|=|-\Delta|^{1/2}.$ The Sobolev space $W_p^l$ is then
defined as the space of measurable functions for which
$\jp{D}^l f\in L^p(\R^n_x)$.

We will also use the standard notation for
the symbol class $S^\mu=S^{\mu}_{1,0}$, as a space of
smooth functions $a=a(x,\xi)\in C^\infty(\Rn\times\Rn)$
satisfying symbolic estimates
$|\pa_x^\beta\pa_\xi^\alpha a(x,\xi)|\leq C_{\alpha\beta}
(1+|\xi|)^{\mu-|\alpha|}$, for all $x,\xi\in\Rn$, and
all multi-indices $\alpha, \beta$.

If function $a=a(\xi)$ is independent of $x$, we will sometimes
also write $a\in S^{\mu}_{1,0}(U)$ for an open set
$U\subset\Rn$, if $a=a(\xi)\in C^\infty(U)$ satisfies
$|\pa_\xi^\alpha a(\xi)|\leq C_{\alpha}
(1+|\xi|)^{\mu-|\alpha|}$, for all $\xi\in U$, and
all multi-indices $\alpha$.

\subsection{Background}
The study of $L^p-L^q$ decay estimates, or {Strichartz
estimates}, for linear evolution equations began in~1970 when
Robert Strichartz published two papers,~\cite{stri70tran} and
\cite{stri70func}. He proved that if $u=u(t,x)$ satisfies the
Cauchy problem (that is, the initial value problem) for the
homogeneous linear wave equation
\begin{equation}\label{EQ:waveCauchyprob}
\left\{
\begin{aligned}
&\pa_t^2u(t,x)-\lap_x u(t,x)=0,\quad(t,x)\in\R^n\times(0,\infty)\,,\\
&u(0,x)=\phi(x),\;\pa_t u(0,x)=\psi(x),\quad x\in\R^n\,,
\end{aligned}\right.
\end{equation}
where the initial data $\phi$ and $\psi$ lie in suitable function
spaces such as~$C_0^\infty(\R^n)$, then the \textit{a priori}
estimate
\begin{equation}\label{EQ:strichartzest}
\norm{(u_t(t,\cdot),\nabla_xu(t,\cdot))}_{L^q}\le
C\bract{t}^{-\frac{n-1}{2}\big(\frac{1}{p}-\frac{1}{q}\big)}
\norm{(\nabla_x\phi,\psi)}_{W^{N_p}_p}
\end{equation}
holds when $n\ge2$, $\frac1p+\frac1q=1$, $1<p\le2$ and $N_p\ge
n(\frac1p-\frac1q)$. Using this estimate, Strichartz proved global
existence and uniqueness of solutions to the Cauchy problem for
nonlinear wave equations with suitable (``small'') initial data.
This procedure of proving an \textit{a priori} estimate for a
linear equation and using it, together with local existence of a
nonlinear equation, to prove global existence and uniqueness for a
variety of nonlinear evolution equations is now standard; a
systematic overview, with examples including the equations of
elasticity, Schr\"odinger equations and heat equations, can be
found in~\cite{rack92}, or in many other more recent books.

There are two main approaches used in order to
prove~\eqref{EQ:strichartzest}; firstly, one may write the solution
to~\eqref{EQ:waveCauchyprob} using the d'Alembert ($n=1$), Poisson
($n=2$) or Kirchhoff ($n=3$) formulae, and their generalisation to
large~$n$,
\begin{equation*}
u(t,x)=\begin{cases}
\begin{aligned} \frac{1}{\prod_{j=1}^{\frac{n-1}{2}}(2j-1)}
\Big[&\pa_t(t^{-1}\pa_t)^{\frac{n-3}{2}}
\Big(t^{n-1}\intbar_{\pa B_t(x)}\phi\,dS\Big)\\
+&(t^{-1}\pa_t)^{\frac{n-3}{2}} \Big(t^{n-1}\intbar_{\pa
B_t(x)}\psi\,dS\Big)\Big]\quad\text{(odd $n\ge3$)}\end{aligned}\\
\begin{aligned} \frac{1}{\prod_{j=1}^{n/2}2j}
&\Big[\pa_t(t^{-1}\pa_t)^{\frac{n-2}{2}} \Big(t^{n}\intbar_{
B_t(x)}\frac{\phi(y)}{\sqrt{t^2-\abs{y-x}^2}}\,dy\Big)\\
+&(t^{-1}\pa_t)^{\frac{n-2}{2}} \Big(t^{n}\intbar_{
B_t(x)}\frac{\psi(y)}{\sqrt{t^2-\abs{y-x}^2}}\,dy\Big)\Big]
\quad\text{(even $n$)}\,,\end{aligned}
\end{cases}
\end{equation*}
(here $\intbar$ stands for the averaged integral;
for the derivation of these formulae see, for example,
\cite{evan98}), as is done in~\cite{vonW71} and~\cite{rack92}.
Alternatively, one may write the solution as a sum of Fourier
integral operators:
\begin{equation*}\label{EQ:FMrepforwaveeqn}
u(t,x)=\FT^{-1}
\Big(\frac{e^{it\abs{\xi}}+e^{-it\abs{\xi}}}{2}\,\widehat{\phi}(\xi)
+\frac{e^{it\abs{\xi}}-e^{-it\abs{\xi}}}{2\abs{\xi}}\,\widehat{\psi}(\xi)
\Big)\,.
\end{equation*}
This is done in~\cite{stri70tran},~\cite{bren75}
and~\cite{pech76}, for example. Using one of these representations
for the solution and techniques from either the theory of Fourier
integral operators (\cite{pech76}), Bessel functions
(\cite{stri70tran}), or standard analysis (\cite{vonW71}), the
estimate~\eqref{EQ:strichartzest} may be obtained. 

Let us now compare the time decay rate for the wave equation 
with equations with lower order terms.
An important example is the
Klein--Gordon equation, where 
$u=u(t,x)$ satisfies the
initial value problem
\begin{equation}\label{EQ:KGCP}
\left\{\begin{aligned} &\partial_t^2 
u(t,x)-\lap_xu(t,x)+\mu^2u(t,x)=0,\quad
(t,x)\in\R^n\times(0,\infty)\,,\\
&u(0,x)=\phi(x),\;u_t(0,x)=\psi(x),\quad x\in\R^n\,,
\end{aligned}\right.
\end{equation}
where $\phi,\psi\in C_0^{\infty}(\R^n)$, say, and 
$\mu\not=0$ is a constant
(representing a \emph{mass term}); then
\begin{equation}\label{EQ:KGest}
\norm{(u(t,\cdot),u_t(t,\cdot),\nabla_xu(t,\cdot))}_{L^q}\le
C\bract{t}^{-\frac{n}{2}\big(\frac{1}{p}-\frac{1}{q}\big)}
\norm{(\nabla_x\phi,\psi)}_{W^{N_p}_p},
\end{equation}
where $p,q,N_p$ are as before. Comparing~\eqref{EQ:strichartzest}
to~\eqref{EQ:KGest}, we see that the estimate for the solution to
the Klein--Gordon equation decays more rapidly. 
The estimate is proved
in~\cite{vonW71},~\cite{pech76} and~\cite{horm97} in different ways,
each suggesting reasons for this improvement: in~\cite{vonW71}, the
function
\begin{equation*}
v=v(x,x_{n+1},t):=e^{-i\mu x_{n+1}}u(t,x)\,,\quad x_{n+1}\in\R\,,
\end{equation*}
is defined; using~\eqref{EQ:KGCP}, it is simple to show that $v$
satisfies the wave equation in $\R^{n+1}$, and thus the Strichartz
estimate~\eqref{EQ:strichartzest} holds for~$v$, yielding the
desired estimate for~$u$. This is elegant, but cannot easily be
adapted to other situations due to the importance of the
structures of the Klein--Gordon and wave equations for this proof.
In~\cite{pech76} and~\cite{horm97}, a representation of the
solution via Fourier integral operators is used and the stationary
phase method then applied in order to obtain
estimate~\eqref{EQ:KGest}.

Another second order problem of interest is the Cauchy problem for
the dissipative wave equation,
\begin{equation}\label{EQ:dwe}
\left\{\begin{aligned}&\partial_t^2 
u(t,x)-\lap_x u(t,x)+u_t(t,x)=0\,,
\quad(t,x)\in\R^n\times(0,\infty),
\\ &u(0,x)=\phi(x),\;u_t(0,x)=\psi(x),\quad x\in\R^n\,,
\end{aligned}\right.
\end{equation}
where $\psi,\phi\in C_0^{\infty}(\R^n)$, say. In this case,
\begin{equation}\label{EQ:dwe2}
\norm{\pa_t^r\pa_x^\al u(t,\cdot)}_{L^q}\le C\bract{t}^{-\frac{n}{2}
(\frac{1}{p}-\frac{1}{q})-r-\frac{\abs{\al}}{2}}
\norm{(\phi,\grad\psi)}_{W_p^{N_p}}\,,
\end{equation}
with some $N_p=N_p(n,\alpha,r).$
This is proved in~\cite{mats76} with a view to showing
well-posedness of related semilinear equations. Once again, this
estimate (for the solution $u(t,x)$ itself) is better than that
for the solution to the wave equation; there is an even greater
improvement for higher derivatives of the solution. As before, the
proof of this may be done via a representation of the solution
using the Fourier transform:
\begin{equation*}
u(t,x)=\!\!\begin{cases}
\begin{aligned}\FT^{-1}\!\Big(\Big[\frac{e^{-t/2}
\sinh\big(\frac{t}{2}\sqrt{1-4\abs{\xi}^2}\big)}{\sqrt{1-4\abs{\xi}^2}}
+\!e^{-t/2}\!
\cosh\big(\textstyle\frac{t}{2}\sqrt{1-4\abs{\xi}^2}\big)\Big]\!
\widehat{\phi}(\xi)\\
+\frac{2e^{-t/2}
\sinh\big(\frac{t}{2}\sqrt{1-4\abs{\xi}^2}\big)}{\sqrt{1-4\abs{\xi}^2}}
\widehat{\psi}(\xi)\Big)\,,\quad\abs{\xi}\le1/2,
\end{aligned}\\
\begin{aligned}\FT^{-1}\Big(\Big[\frac{e^{-t/2}
\sin\big(\frac{t}{2}\sqrt{4\abs{\xi}^2-1}\big)}{\sqrt{4\abs{\xi}^2-1}}
+e^{-t/2}
\cos\big(\textstyle\frac{t}{2}\sqrt{4\abs{\xi}^2-1}\big)\Big]
\widehat{\phi}(\xi)\\
+\frac{2e^{-t/2}
\sin\big(\textstyle\frac{t}{2}\sqrt{4\abs{\xi}^2-1}\big)}
{\sqrt{4\abs{\xi}^2-1}} \widehat{\psi}(\xi)\Big)\,,\quad\abs{\xi}>1/2.
\end{aligned}
\end{cases}
\end{equation*}
Matsumura divides the phase space into the regions where the
solution has different properties and then uses standard techniques
from analysis.

It is, therefore, motivating
to ask why the addition of lower order
terms improves the rate of decay of the solution to the equation;
furthermore, in the first instance,
we would like to understand why the improvement in the
decay is the same for both the addition of a mass term and for the
addition of a dissipative term. 
It will follow from the analysis of the paper that the quantities
responsible for the decay rates for the Klein-Gordon and 
dissipative equations are of completely different nature. 
In the first instance the characteristic roots are real and
lie on the real axis for all frequencies, while for the
latter equation they are in the upper complex half-plane,
intersect at a point, and one of them comes to the origin.
From this point of view, the same decay rates in the dispersive
estimate for these two equations is quite a coincidence.
On the example of the dissipative equation we can see another
difficulty for the analysis, namely the appearance of the
multiple roots. This may lead to the loss of regularity in
roots and blow-ups in the amplitudes of a representation, so
we need to develop some techniques to deal with this type
of situations.

These questions are even more important for equations of
higher orders. Let us mention briefly an example of a
system that arises as the linearisation of the 13--moment
Grad system of non-equilibrium gas dynamics in two dimensions
(other Grad systems are similar).
The dispersion relation (the determinant) of this system
is a polynomial of $9^{th}$ order that can be written as
$$P=Q_9-iQ_8-Q_7+iQ_6+Q_5-iQ_4,$$
with polynomials $Q_j(\omega,\xi)$ defined by
\begin{align*} Q_9(\omega,\xi) = & |\xi|^9\omega^3\psq{
  \omega^6-\frac{103}{25}\omega^4+\frac{21}{5}\omega^2
  \p{1-\frac{912}{2625}\alpha\beta}-\frac{27}{25}
  \p{1-\frac{432}{675}\alpha\beta}},      \\
   Q_8(\omega,\xi) = & |\xi|^8\omega^2\psq{\frac{13}{3}\omega^6-
   \frac{1094}{75}\omega^4+\frac{1381}{125}\omega^2
   \p{1-\frac{2032}{6905}\alpha\beta}-
   \frac{264}{125}\p{1-\frac{143}{330}\alpha\beta}}, \\
   Q_7(\omega,\xi) = & |\xi|^7\omega\psq{\frac{67}{9}\omega^6-
   \frac{497}{25}\omega^4+\frac{3943}{375}\omega^2
   \p{1-\frac{832}{3943}\alpha\beta}-\frac{159}{125}
   \p{1-\frac{48}{159}\alpha\beta}}, \\
   Q_6(\omega,\xi) = & |\xi|^6\psq{\frac{19}{3}\omega^6-
   \frac{2908}{225}\omega^4+\frac{13}{3}\omega^2
   \p{1-\frac{32}{325}\alpha\beta}-\frac{6}{25}}, \\
   Q_5(\omega,\xi) = & |\xi|^5\omega\psq{\frac{8}{3}\omega^4-
   \frac{178}{45}\omega^2+\frac{2}{3}}, \\
   Q_4(\omega,\xi) = & \frac{4}{9} |\xi|^4\omega^2 \p{\omega^2-1},
\end{align*}
where  $$\omega(\xi)=\frac{\tau(\xi)}{|\xi|},\;
\alpha=\frac{\xi_1^2}{|\xi|^2},\; 
\beta=\frac{\xi_2^2}{|\xi|^2}.$$ 
A natural question of finding dispersive (and subsequent
Strichartz) estimates for the Cauchy problem for operator
$P(D_t,D_x)$ with symbol $P(\tau,\xi)$
becomes calculationally complicated.
Clearly, in this situation it is hard to find the
roots explicitly, and, therefore, we need some procedure of
determining what are the general properties of the 
characteristics roots, and how to derive the time decay
rate from these properties. Thus, in \cite{Rad03} and
\cite{vole+radk04} it is discussed when such polynomials
are stable. In this case, the analysis of this paper
will guarantee the decay rate, e.g. by applying
Theorem \ref{THM:dissipative} for frequencies near
the origin, Theorem \ref{THM:expdecay2} for bounded
frequencies near possible multiplicities (independent
of the structure of such multiplicities), and
Theorem \ref{THM:expdecay} for large frequencies.
In fact, once the behavior of the characteristic roots
is understood, Theorem \ref{THM:overallmainthm}
will immediately show that the overall time decay rate
here is the same as for the dissipative wave equation.

\subsection{Homogeneous symbols}\label{SEC:homogoperators}
The case where the operator in \eqref{EQ:standardCauchyproblem}
has homogeneous symbol has been studied extensively:
\begin{equation}\label{EQ:CPhigherorder}
\left\{\begin{aligned} &L_m(\pax,\pat)u=0,\quad
(t,x)\in\R^n\times(0,\infty),\\
&\pat^lu(0,x)=f_l(x),\quad l=0,\dots,m-1,\; x\in\R^n\,,
\end{aligned}\right.
\end{equation}
where~$L_m$ is a homogeneous $m^{\text{th}}$ order constant
coefficient strictly hyperbolic differential operator; the symbol
of~$L_m$ may be written in the form
\begin{equation*}
L_m(\tau,\xi)=(\tau-\va_1(\xi))\dots(\tau-\va_m(\xi)),\text{ with }
\va_1(\xi)<\cdots<\va_m(\xi)\quad(\xi\ne0).
\end{equation*}
In a series of papers, \cite{sugi94},~\cite{sugi96}
and~\cite{sugi98}, Sugimoto showed how the geometric
properties of the characteristic roots
$\va_1(\xi),\dots,\va_m(\xi)$ affect the $L^p-L^q$ estimate. To
understand this, let us summarise the method of approach.

Firstly, the solution can be written as the sum of Fourier
multipliers:
\begin{equation*}
u(t,x)=\sum_{l=0}^{m-1}[E_l(t)f_l](x),\quad\text{where }
E_l(t)=\sum_{k=1}^m \FT^{-1}e^{it\va_k(\xi)}a_{k,l}(\xi)\FT,
\end{equation*}
and $a_{k,l}(\xi)$ is homogeneous of order $-l$. Now, the problem
of finding an $L^p-L^q$ decay estimate for the solution is reduced
to showing that operators of the form
\begin{equation*}
M_r(D):=\FT^{-1}e^{i\va(\xi)}\abs{\xi}^{-r}\chi(\xi)\FT\,,
\end{equation*}
where $\va(\xi)\in C^\om(\R^n\setminus\set{0})$ is homogeneous of
order~$1$ and $\chi\in C^\infty(\R^n)$ is equal to~$1$ for
large~$\xi$ and zero near the origin, are $L^p-L^q$ bounded for
suitably large~$r\ge l$. In particular, this means that, for
such~$r$, we have
\begin{equation*}
\norm{E_l(1)f}_{L^q}\le C\norm{f}_{W_p^{r-l}}\,.
\end{equation*}
Then it may be assumed, without loss of generality, that $t=1$.
Indeed, it can be readily checked that
for $t>0$ and $f\in C_0^\infty(\R^n)$, we have the equality
\begin{equation*}
[E_l(t)f](x)=t^l[E_l(1)f(t\cdot)](t^{-1}x)\,.
\end{equation*}

Using this identity and denoting $f_t(\cdot)=f(t\cdot)$, we have
\begin{align*}
\norm{E_l(t)f}_{L^q}^q&=
t^{lq}\norm{[E_l(1)f_t](t^{-1}\cdot)}_{L^q}^q
=t^{lq}\int_{\R^n}\abs{[E_l(1)f_t](t^{-1}x)}^q\,dx\\
\stackrel{(x=tx')}{=}&
t^{lq}\int_{\R^n}t^n\abs{[E_l(1)f_t](x')}^q\,dx'
=t^{lq+n}\norm{E_l(1)f_t}_{L^q}^q\,.
\end{align*}
Then, noting that a simple change of variables yields
\begin{equation*}
\norm{f_t}_{W_p^k}^p\le Ct^{kp-n}\norm{f}_{W_p^k}^p\,,
\end{equation*}
we have,
\begin{equation*}
\norm{E_l(t)f}_{L^q} \le Ct^{l+\frac{n}{q}}\norm{f_t}_{W_p^{r-l}}
\le Ct^{r-n(\frac{1}{p}-\frac{1}{q})}\norm{f}_{W_p^{r-l}}\,;
\end{equation*}
hence,
\begin{equation*}
\norm{u(t,\cdot)}_{L^q}\le Ct^{r-n(\frac{1}{p}-\frac{1}{q})}
\sum_{l=0}^{m-1}\norm{f_l}_{W_p^{r-l}}\,.
\end{equation*}

It has long been known that the values of~$r$ for which $M_r(D)$
is $L^p-L^q$ bounded depend on the geometry of the level set
\begin{equation*}
\Si_{\va}=\set{\xi\in\R^n\setminus\set{0}:\va(\xi)=1}\,.
\end{equation*}
In~\cite{litt73},~\cite{bren75}, it is shown that if the Gaussian
curvature of $\Si_\va$ is never zero then $M_r(D)$ is $L^p-L^q$
bounded when $r\ge\frac{n+1}{2}\big(\frac{1}{p}-\frac{1}{q}\big)$.
This is extended in~\cite{bren77} where it is proven that $M_r(D)$
is $L^p-L^q$ bounded provided
$r\ge\frac{2n-\rho}{2}\big(\frac{1}{p}-\frac{1}{q}\big)$, where
$\rho=\min_{\xi\ne0}\rank\Hess\va(\xi)$.

Sugimoto extended this further in~\cite{sugi94}, where he showed
that if $\Si_\va$ is convex then $M_r(D)$ is $L^p-L^q$ bounded
when $r\ge
\big(n-\frac{n-1}{\ga(\Si_\va)}\big)\big(\frac{1}{p}-\frac{1}{q}\big)$;
here,
\begin{equation*}
\ga(\Si):=\sup_{\si\in\Si}\sup_P\ga(\Si;\si,P)\,,\quad
\Si\subset\R^n\text{ a hypersurface}\,,
\end{equation*}
where $P$ is a plane containing the normal to~$\Si$ at~$\si$ and
$\ga(\Si;\si,P)$ denotes the order of the contact between the line
$T_\si\cap P$, $T_\si$ is the tangent plane at~$\si$, and the
curve $\Si\cap P$. See Section~\ref{SUBSEC:convcondnreal} for more
on this maximal order of contact.

In order to apply this result to the solution
of~\eqref{EQ:CPhigherorder}, it is necessary to find a condition
under which the level sets of the characteristic roots are convex.
The following notion is the one that is sufficient:
\begin{defn}\label{DEF:sugimotoconvexitycond}
Let $L=L(\pat,\pax)$ be a homogeneous $m^{\text{th}}$ order constant
coefficient partial differential operator. It is said to satisfy the
\emph{convexity condition} if the matrix of
the second order derivatives, $\Hess\varphi_k(\xi)$,
corresponding to each of its characteristic roots
$\varphi_1(\xi),\dots,\varphi_m(\xi)$, is semi-definite for
$\xi\ne0$.
\end{defn}
It can be shown that if an operator~$L$ does satisfy this
convexity condition, then the above results can be applied to the
solution and thus an estimate of the form \eqref{EQ:idealLpLqest}
holds with
\begin{equation}\label{EQ:decayhom}
K(t)=\bract{t}^{-\frac{n-1}{\ga}\big(\frac{1}{p}-\frac{1}{q}\big)}\,,
\quad\text{with some }\ga\le m\,,
\end{equation}
where $\gamma$ can be related to the convex indices of the
level sets of characteristics. Indeed, under the convexity
condition one can show that $\phi_k$ can be made always
positive or negative by adding an affine function,
the corresponding level sets
$\Sigma_{\phi_k}=\{\xi\in\Rn:\phi_k(\xi)=1\}$ are convex 
for each $k=1,\dots,m$, and that 
$\gamma(\Sigma_{\phi_k})\le 2[m/2]$. 
So the decay in \eqref{EQ:decayhom} is guaranteed with 
$\gamma=2[m/2]$. 

Finally, if this convexity condition does not hold the
estimate fails; 
in~\cite{sugi96} and~\cite{sugi98} it is shown that in
general, $M_r(D)$ is $L^p-L^q$ bounded when $r\ge
\big(n-\frac{1}{\ga_0(\Si_\va)}\big)\big(\frac{1}{p}-\frac{1}{q}\big)$,
where
\begin{equation*}
\ga_0(\Si):=\sup_{\si\in\Si}\inf_P\ga(\Si;\si,P)\le \ga(\Si).
\end{equation*}
For $n=2$, $\ga_0(\Si)=\ga(\Si)$, so, the convexity condition may
be lifted in that case. However, in~\cite{sugi96}, examples are
given when $n\ge3$, $p=1,2$ where this lower bound for~$r$ is the
best possible and, thus, the convexity condition is necessary for
the above estimate. It turns out that the case $n\ge3$, $1<p<2$ is
more interesting and is studied in greater depth in~\cite{sugi98},
where microlocal geometric properties must be looked at in order
to obtain an optimal result.

Two remarks are worth making; firstly, the convexity condition
result recovers the Strichartz decay estimate for the wave
equation, since that clearly satisfies such a condition. Secondly,
the convexity condition is an important restriction on the
geometry of the characteristic roots that affects the $L^p-L^q$
decay rate; hence, in the case of an $m^{\text{th}}$ order
operator with lower order terms we must expect some geometrical
conditions on the characteristic roots to affect the decay
rate of solutions.

\section{Main results}\label{section:ests}
We will now turn to
analysing the conditions under which we can
obtain $L^p-L^q$ decay estimates for the general $m^{\text{th}}$
order linear, constant coefficient, strictly hyperbolic Cauchy
problem
\begin{equation}\label{EQ:standardCP(repeat)}
\left\{\begin{aligned}& L(D_t,D_x)\equiv \pat^m
u+\sum_{j=1}^{m}P_{j}(\pax)\pat^{m-j}u+
\sum_{l=0}^{m-1}\sum_{\abs{\al}+r=l}
c_{\al,r}\pax^\al\pat^ru=0,\quad t>0,\\
&\pat^lu(0,x)=f_l(x)\in C_0^{\infty}(\R^n),\quad l=0,\dots,m-1,\;
x\in\R^n\,.
\end{aligned}\right.
\end{equation}
Results of this section will
show how different behaviours of the
characteristic roots $\tau_1(\xi),\dots,\tau_m(\xi)$ affect the
rate of decay that can be obtained. 
As in the introduction, 
the symbol $P_j(\xi)$ of $P_j(\pax)$ is a  
homogeneous polynomial of order~$j$, and the $c_{\al,r}$
are constants. The differential operator in
the first line of
\eqref{EQ:standardCP(repeat)} will be denoted by $L(D_t,D_x)$
and its symbol by $L(\tau,\xi)$. The principal part of $L$
is denoted by $L_m$. Thus, $L_m(\tau,\xi)$ is a 
homogeneous polynomial of order $m$.
In the subsequent analysis, ideally, of course, we would
like to have conditions on the lower order terms for different
rates of decay; in Section~\ref{CHAP5} we shall give some results
in this direction. For now, though, we concentrate on conditions
on the characteristic roots.

First of all, it is natural to impose the stability condition,
namely that for all $\xi\in\Rn$ we have 
\begin{equation}\label{EQ:imtau>=0}
\Im\tau_k(\xi)\ge0\quad\text{for }k=1,\dots,m\,;
\end{equation}
this is equivalent to requiring the characteristic polynomial of
the operator to be stable at all points $\xi\in\R^n$, and thus
cannot be expected to be lifted. In fact, certain microlocal
decay estimates are possible even without this condition
if the supports of the Fourier transforms of the Cauchy data
are contained in the set where condition
\eqref{EQ:imtau>=0} holds. However, this restriction is
only technical so we may assume \eqref{EQ:imtau>=0}
without great loss of generality since otherwise
no time decay of solution can be expected.

Also, it is sensible to divide the
considerations of how characteristic roots behave into two parts:
their behaviour for large values of~$\abs{\xi}$  and for bounded
values of~$\abs{\xi}$. These two cases are then subdivided
further; in particular the following are the key properties to
consider:
\begin{itemize}
\item multiplicities of roots (this only occurs in the case of
bounded frequencies~$\abs{\xi}$);
\item whether roots lie on the real axis or are separated
from it;
\item behaviour as $\abs{\xi}\to\infty$ (only in the case of
large~$\abs{\xi}$);
\item how roots meet the real axis (if they do);
\item properties of the Hessian of the root, $\Hess\tau_k(\xi)$;
\item a convexity-type condition, as in the case of homogeneous
roots (Section~\ref{SEC:homogoperators}).
\end{itemize}

For some frequencies away from multiplicities we can 
actually establish independently interesting
estimates for the corresponding 
oscillatory integrals that contribute to
the solution. Around multiplicities we need to 
take extra care of the structure of solutions. This
will be done by dividing the frequencies into zones each of
which will give a certain decay rate. Combined together
they will yield the total decay rate for solution to
\eqref{EQ:standardCP(repeat)}. Several theorems below will
deal with integrals of the form
\begin{equation}\label{EQ:term}
\int_\Rn e^{i(x\cdot\xi+\tau(\xi)t)} a(\xi)\chi(\xi) d\xi,
\end{equation}
which appear in representations of solutions to Cauchy
problem \eqref{EQ:standardCP(repeat)} as kernels
of propagators, where $a(\xi)$ is a suitable amplitude and
$\chi(\xi)$ is a cut-off to a corresponding zone,
which may be bounded or unbounded.
Solution to the Cauchy problem \eqref{EQ:standardCP(repeat)}
can be written in the form
$$
u(t,x)=\sum_{j=0}^{m-1} E_j(t) f_j(x),
$$
where propagators $E_j(t)$ are defined by
\begin{equation}\label{EQ:soln}
E_j(t)f(x)=\int_{\Rn}e^{ix\cdot\xi}\Big(\sum_{k=1}^m
e^{i\tau_k(\xi)t}A_j^k(t,\xi)\Big)
\cutoffM(\xi)\widehat{f}(\xi)\,d\xi\,,
\end{equation}
with suitable amplitudes $A_j^k(t,\xi)$. 
In the areas where roots are simple, phases and
amplitudes are smooth, and
we can analyse the sum \eqref{EQ:soln} termwise,
reducing the analysis to integrals of the form
\eqref{EQ:term}. In the case
of multiple characteristics we will group terms in
\eqref{EQ:soln} in a special way to obtain suitable
decay estimates. Below we will give results for
decay rates dependent on the different qualitative behaviours of the
characteristic roots.

\subsection{Away from the real axis: exponential decay}
\label{SEC:21}
We begin by looking at the zone where roots are
separated from the real axis. If the roots are smooth,
we can analyse solution \eqref{EQ:soln} termwise:

\begin{thm}\label{THM:expdecay}
Let $\tau:U\to\C$ be a smooth function, $U\subset\R^n$ open.
Let $a\in S^{-\mu}_{1,0}(U)$, i.e. assume that 
$a=a(\xi)\in C^\infty(U)$ satisfies
$|\partial_\xi^\alpha a(\xi)|\leq C_\alpha (1+|\xi|)^
{-\mu-|\alpha|},$ for all $\xi\in U$ and all multi-indices $\alpha$.
Let $\chi\in S^0_{1,0}(\Rn)$ be such that
$\chi=0$ outside $U$.
Assume further that\textup{:}
\begin{enumerate}[leftmargin=*,label=\textup{(\roman*)}]
\item there exists $\de>0$ such that $\Im\tau(\xi)\ge\de$ for all
$\xi\in U$\textup{;}
\item $\abs{\tau(\xi)}\le C\brac{\xi}$ for all $\xi\in U$.
\end{enumerate}
Then for all $t\geq 0$ we have
\begin{equation}\label{EQ:expdecay}
\normBig{D^r_tD_x^\al\Big(\int_{\R^n}e^{i(x\cdot\xi+\tau(\xi)t)}
a(\xi)\chi(\xi)\widehat{f}(\xi)\,d\xi\Big)}_{L^q
(\R_x^n)} \le Ce^{-\de
t}\norm{f}_{W_p^{N_p+\abs{\al}+r-\mu}}\,,
\end{equation}
where $\frac{1}{p}+\frac{1}{q}=1$, $1<p\le 2$, $N_p\ge
n\big(\frac{1}{p}-\frac{1}{q}\big)$, $r\ge0$,~$\al$ a multi-index
and $f\in C_0^\infty(\R^n)$. If $p=1$, we take $N_1>n$.

Moreover, let us assume that equation $L(\tau,\xi)=0$
has only simple roots $\tau_k(\xi)$ which satisfy
condition {\rm (i)} above, in the open set 
$U\subset\Rn$,
for all $k=1,\ldots,m.$
Then solution $u$ to \eqref{EQ:standardCP(repeat)} 
satisfies
\begin{equation}\label{EQ:expdecayu}
||D_t^r D_x^\alpha  \chi(D)u(t,\cdot)||_{L^q(\R_x^n)}
\leq Ce^{-\delta t}\sum_{l=0}^{m-1} 
||f_l||_{W_p^{N_p+|\alpha|+r-l}},
\end{equation}
where $1\leq p\leq 2$, $\frac1p+\frac1q=1$, and 
$N_p, r, \alpha$ are as above.
\end{thm}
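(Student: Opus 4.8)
The plan is to prove the single-term estimate \eqref{EQ:expdecay} first and then assemble \eqref{EQ:expdecayu} from it. For the single-term estimate, the key observation is that the factor $e^{i\tau(\xi)t}$ contributes a genuine exponential gain: writing $e^{i\tau(\xi)t}=e^{-\Im\tau(\xi)\,t}e^{i\Re\tau(\xi)\,t}$ and using hypothesis (i), we have $|e^{i\tau(\xi)t}|\le e^{-\de t}$ uniformly in $\xi\in U$. So I would factor out $e^{-\de t}$ and be left with bounding an operator whose symbol is $e^{i\tau(\xi)t}e^{\de t}a(\xi)\chi(\xi)$ — but this is still $t$-dependent, so one must check that the remaining operator norm is bounded \emph{uniformly in $t\ge 0$}. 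The cleanest route is to show this operator maps $W_p^{N_p+|\al|+r-\mu}\to L^q$ with a constant independent of $t$; differentiating under the integral sign, $D_t^rD_x^\al$ brings down a factor which is a polynomial in $\tau(\xi)$ (of degree $\le r$ with a $t^j$, $j\le r$, in front of the degree-$j$ part) times $\xi^\al$, so the amplitude becomes $t^jq_j(\xi)\xi^\al a(\xi)\chi(\xi)e^{i\tau(\xi)t}e^{\de t}$ where $q_j$ has growth $\brac{\xi}^j$ by hypothesis (ii). Absorbing $\xi^\al\brac{\xi}^{-|\al|}$ and $\brac{\xi}^{-\mu}a(\xi)$ into a zeroth-order symbol, and using $t^j e^{-\de t/2}\le C_j$, the surviving factor $e^{-\de t/2}$ still beats the polynomial-in-$t$ loss, so after extracting one more $e^{-\de t/2}$ we reduce to a single $t$-independent-norm claim.

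That claim is: the Fourier multiplier with symbol $b_t(\xi):=e^{i\tau(\xi)t}e^{\de t/2}\chi(\xi)$ times a fixed amplitude in $S^0_{1,0}(\Rn)$ is bounded $W_p^{N_p}\to L^q$ uniformly in $t$, for $1<p\le 2$, $N_p\ge n(1/p-1/q)$ (and $N_1>n$ when $p=1$). Because there is \emph{no} oscillatory stationary-phase gain being used here — only the exponential decay — this is a soft estimate: I would prove it by interpolation/duality from the two endpoint cases. For $p=q=2$ it is immediate from Plancherel, since $|b_t(\xi)e^{it\Re\tau}|\le e^{-\de t/2}\le 1$ and the fixed amplitude is bounded. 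For $p=1$, $q=\infty$ one needs the kernel $K_t(x)=\int e^{ix\cdot\xi}b_t(\xi)a(\xi)\brac\xi^{-N_1}\,d\xi$ to be in $L^\infty_x$ uniformly in $t$; here $N_1>n$ guarantees absolute convergence of the $\xi$-integral since $|b_t|\le e^{-\de t/2}\le1$ and $a\in S^0$, so $|K_t(x)|\le C\int\brac\xi^{-N_1}\,d\xi<\infty$ independent of $t$ and $x$. Riesz--Thorin interpolation between $(p,q)=(2,2)$ and $(1,\infty)$, together with the standard observation that an extra $n(1/p-1/q)$ derivatives compensate exactly for the $L^p$–$L^q$ gap (one interpolates the $L^p\to L^q$ mapping of $\brac D^{-n(1/p-1/q)}$ against the same $t$-uniform kernel bound), yields the uniform bound with $N_p\ge n(1/p-1/q)$. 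The main obstacle is purely bookkeeping: tracking how the $D_t^rD_x^\al$ derivatives, the polynomial growth of $\tau$, the symbol order $-\mu$, and the Sobolev exponent $N_p+|\al|+r-\mu$ all match up — and making sure the exponential factor is split cleanly enough that each polynomial-in-$t$ loss is absorbed before the final $t$-independent multiplier estimate is invoked. There is no serious analytic difficulty, precisely because we are in the region where roots are separated from the real axis and no delicate oscillatory-integral decay is needed.

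For the passage to \eqref{EQ:expdecayu}, I would use the representation \eqref{EQ:soln}: under the stated hypotheses the roots $\tau_k(\xi)$ are simple on $U$, hence smooth there, and the amplitudes $A_j^k(t,\xi)$ in \eqref{EQ:soln} are, on the support of $\chi$, smooth symbols; the standard resolution (Cramer's rule / inverting a Vandermonde-type matrix built from the simple roots) shows $A_j^k(\cdot,\xi)\in S^{-j}_{1,0}(U)$ with seminorms polynomially bounded in $t$ — in fact the $t$-dependence of $A_j^k$ enters only through bounded combinations once one accounts for the $c_{\al,r}$ corrections, but even a polynomial-in-$t$ bound is harmless since it is again absorbed by $e^{-\de t/2}$. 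Applying the first part of the theorem to each of the finitely many terms $k=1,\dots,m$ with $\mu=l$ (for the term hitting $f_l$) and summing over $k$ and $l$ gives
\[
\|D_t^rD_x^\al\chi(D)u(t,\cdot)\|_{L^q}\le C\sum_{l=0}^{m-1}\sum_{k=1}^m e^{-\de t}\|f_l\|_{W_p^{N_p+|\al|+r-l}}\le Ce^{-\de t}\sum_{l=0}^{m-1}\|f_l\|_{W_p^{N_p+|\al|+r-l}},
\]
which is \eqref{EQ:expdecayu}. The only point requiring a word of care is that $\chi(D)$ in \eqref{EQ:expdecayu} is the cut-off from the hypotheses and that $\widehat{f_l}$ appears inside, matching the form of \eqref{EQ:expdecay} exactly; and that when $p=1$ we invoke the $N_1>n$ version throughout.
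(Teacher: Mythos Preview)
Your approach is essentially the same as the paper's: prove the $L^2\to L^2$ endpoint via Plancherel and the $L^1\to L^\infty$ endpoint by absolute integrability of $\brac\xi^{-N_1}$ with $N_1>n$, then interpolate (the paper uses the Sobolev interpolation Theorem~\ref{thm:maininterpolationthm}, which is your Riesz--Thorin step in slightly different clothing), and finally apply the representation \eqref{EQ:soln} termwise for \eqref{EQ:expdecayu}. One small simplification you are missing: since the amplitude $a(\xi)$ in the first part is $t$-independent, $D_t^r e^{i\tau(\xi)t}=\tau(\xi)^r e^{i\tau(\xi)t}$ with no $t^j$ factors at all, so your splitting $e^{-\de t}=e^{-\de t/2}e^{-\de t/2}$ to absorb polynomial-in-$t$ losses is unnecessary --- the paper's Proposition~\ref{PROP:rootsawayfromaxis} handles the $D_t^r D_x^\al$ derivatives in one clean line. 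Similarly, on $U$ the roots are simple by hypothesis, so the $A_j^k$ are genuinely $t$-independent there (Lemma~\ref{LEM:ordercoeff}), and your precaution about polynomial-in-$t$ bounds on the amplitudes is again not needed.
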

The proof of Theorem \ref{THM:expdecay} will be given
in Sections \ref{SEC:pfth21} and \ref{SEC:bddxiawayfromaxis}.
Note also that if we omit assumption (ii) in Theorem
\ref{THM:expdecay}, estimate \eqref{EQ:expdecay} with
$r=0$ still holds.  In the case of \eqref{EQ:expdecayu},
it can be shown
(see Proposition \ref{prop:roots-r}) that characteristic
roots of operator $L(D_t,D_x)$ in 
\eqref{EQ:standardCP(repeat)} satisfy (ii).

We also note, that we may have different
norms on the right hand side of \eqref{EQ:expdecayu}.
For example, we will show in Section \ref{SEC:pfth21},
that under conditions of Theorem \ref{THM:expdecay}
we also have the following estimate:
\begin{equation}\label{EQ:expdecayu-l2}
||D_t^r D_x^\alpha  \chi(D)u(t,\cdot)||_{L^q(\R_x^n)}
\leq Ce^{-\delta t}\sum_{l=0}^{m-1} 
||f_l||_{W_2^{N_q^\prime+|\alpha|+r-l}},
\end{equation}
where $1< p\leq 2$, $\frac1p+\frac1q=1$,
$N_q^\prime\geq \frac{n}{2}(\frac1p-\frac1q)$, and
$N_\infty^\prime>\frac{n}{2}$ for $p=1$.
Estimate \eqref{EQ:expdecayu-l2} will follow from 
\eqref{eq:Matsumura} and Proposition 
\ref{PROP:rootsawayfromaxis} by interpolation.
In turn, interpolating between \eqref{EQ:expdecayu}
and \eqref{EQ:expdecayu-l2}, we can obtain similar
$L^p-L^q$ estimates for all 
intermediate $p$ and $q$.

To be able to derive
time decay in the case of multiple roots,
we will group terms in
\eqref{EQ:soln} in the following way.
Assume that roots $\tau_1(\xi),\dots,\tau_L(\xi)$ 
coincide on a set contained in some~$\curlyM$, that is 
$\curlyM\supset \set{\tau_1(\xi)=\dots=\tau_L(\xi)}.$
For $\ep>0$, we define
$\curlyM^\ep:=\set{\xi\in\R^n:\dist(\xi,\curlyM)<\ep}.$
Choose $\ep>0$ so that these roots 
$\tau_1(\xi),\dots,\tau_L(\xi)$ do not
intersect with any of the other roots 
$\tau_{L+1}(\xi),\dots,\tau_m(\xi)$
in $\curlyM^\ep$.
If different numbers of roots intersect in different sets,
we can apply the following theorem to such sets one by one.
We note that by the strict hyperbolicity $\curlyM^\ep$ is bounded.
Here we will estimate the sum
\begin{equation}\label{EQ:solnpart}
\int_{\curlyM^\ep}e^{ix\cdot\xi}\Big(\sum_{k=1}^L
e^{i\tau_k(\xi)t}A_j^k(t,\xi)\Big)
\cutoffM(\xi)\widehat{f}(\xi)\,d\xi\,.
\end{equation}

\begin{thm}\label{THM:expdecay2}
Let the sum \eqref{EQ:soln} be the solution to the Cauchy
problem \eqref{EQ:standardCP(repeat)}.
Assume that roots $\tau_1(\xi),\dots,\tau_L(\xi)$ 
coincide in a set contained in~$\curlyM$ 
and do not intersect other roots
in the set $\curlyM^\ep$.
Let $\chi\in C_0^\infty(\curlyM^\ep)$. 
Assume that there exists $\de>0$ such that
$\Im\tau_k(\xi)\ge\de$ for all $\xi\in \curlyM^\ep$ and
$k=1,\ldots,L.$

Then for all $t\geq 0$ we have
$$
\normBig{D^r_tD_x^\al\Big(\int_{\curlyM^\ep} e^{ix\cdot\xi}
\Big(\sum_{k=1}^L e^{i\tau_k(\xi)t}A_j^k(t,\xi)\Big)
\cutoffM(\xi)\widehat{f}(\xi)\,dx\Big)}_{L^q(\R^n_x)}\\ \le
C(1+t)^{L-1}e^{-\de t}\norm{f}_{L^p}\,,
$$
where $\frac{1}{p}+\frac{1}{q}=1$, $1\le p\le2$. 
\end{thm}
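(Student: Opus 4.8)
\medskip
\noindent\emph{Proof plan.} Denote by $T_t$ the Fourier multiplier operator appearing in \eqref{EQ:solnpart}, i.e.\ $T_tf=\FT^{-1}(m_t\,\widehat f)$ with symbol
$$m_t(\xi):=\chi(\xi)\sum_{k=1}^L e^{i\tau_k(\xi)t}A_j^k(t,\xi)=:\chi(\xi)\,\curlyN(t,\xi).$$
Since $\chi\in C_0^\infty(\curlyM^\ep)$, the symbol $m_t$ is supported in the fixed compact set $K:=\supp\chi\subset\curlyM^\ep$, which by strict hyperbolicity is bounded. Writing $T_tf=k_t\conv f$ with $k_t=\FT^{-1}m_t$, one has the two endpoint bounds
$$\|T_t\|_{L^2\to L^2}=\|m_t\|_{L^\infty},\qquad \|T_t\|_{L^1\to L^\infty}\le\|k_t\|_{L^\infty}\le C\|m_t\|_{L^1}\le C|K|\,\|m_t\|_{L^\infty},$$
the first by Plancherel, the second since $|k_t(x)|\le C\int|m_t(\xi)|\,d\xi$. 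Riesz--Thorin interpolation between $L^1\to L^\infty$ and $L^2\to L^2$ then gives $\|T_t\|_{L^p\to L^q}\le C\|m_t\|_{L^\infty}$ for every $1\le p\le 2$ with $\tfrac1p+\tfrac1q=1$. Hence the whole statement reduces to the pointwise estimate $\sup_{\xi\in K}|\curlyN(t,\xi)|\le C(1+t)^{L-1}e^{-\de t}$ for $t\ge0$; note in particular that the two interpolation endpoints never require differentiating $m_t$ in $\xi$, so the exact exponential rate $e^{-\de t}$ is preserved.

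The key point for the pointwise estimate is that, although the individual roots $\tau_k(\xi)$ and the partial-fraction amplitudes $A_j^k(t,\xi)$ are singular near the multiplicity set $\curlyM$, the grouped amplitude $\curlyN$ is regular on $K$. Write $L(\tau,\xi)=P(\tau,\xi)Q(\tau,\xi)$ with $P(\tau,\xi)=\prod_{k=1}^L(\tau-\tau_k(\xi))$ and $Q(\tau,\xi)=\prod_{k=L+1}^m(\tau-\tau_k(\xi))$. By hypothesis $\tau_1,\dots,\tau_L$ meet no other root in $\curlyM^\ep$, so for each $\xi\in K$ we may enclose exactly $\tau_1(\xi),\dots,\tau_L(\xi)$ by a contour $\Gamma(\xi)$ in the $\tau$-plane; representing the propagator symbol by the Cauchy integral of $e^{i\tau t}M_j(\tau,\xi)/L(\tau,\xi)$ over the roots of $L$ (residue at $\tau_k(\xi)$ being $e^{i\tau_k t}A_j^k(t,\xi)$) and keeping only the part picked out by $\Gamma(\xi)$ gives
$$\curlyN(t,\xi)=\frac{1}{2\pi i}\oint_{\Gamma(\xi)}e^{i\tau t}\,\frac{N_j(\tau,\xi)}{P(\tau,\xi)}\,d\tau,\qquad N_j:=M_j/Q,$$
where $N_j(\tau,\xi)$ is holomorphic in $\tau$ inside $\Gamma(\xi)$ (all factors of $Q$ are nonvanishing there) and smooth in $\xi$ on $K$. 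Applying $P(D_t,\xi)$ under the integral and using that $N_j$ has no poles inside $\Gamma(\xi)$ yields $P(D_t,\xi)\curlyN(\cdot,\xi)=0$; the data $D_t^l\curlyN(0,\xi)$, $l=0,\dots,L-1$, are again Cauchy integrals of such holomorphic functions, hence smooth and bounded on the compact $K$. Moreover the coefficients of the monic polynomial $P(\tau,\xi)$, being the elementary symmetric functions of $\tau_1(\xi),\dots,\tau_L(\xi)$ (equivalently, contour integrals of $z^l\,\partial_z L/L$ over $\Gamma(\xi)$), are real-analytic and bounded on $K$, even though the $\tau_k$ themselves are not.

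Thus, for each $\xi\in K$, $t\mapsto\curlyN(t,\xi)$ solves an $L$-th order linear ODE in $t$ with constant (in $t$) coefficients and data bounded uniformly in $\xi\in K$, whose characteristic roots are precisely $\tau_1(\xi),\dots,\tau_L(\xi)$, all satisfying $\Im\tau_k(\xi)\ge\de$ and $|\tau_k(\xi)|\le C$ on $K$. The standard decay estimate for such ``stable'' ODEs --- seen, e.g., by passing to the companion system $D_tV=A(\xi)V$ with $V=(\curlyN,D_t\curlyN,\dots,D_t^{L-1}\curlyN)$ and estimating $\|e^{itA(\xi)}\|\le C(1+t)^{L-1}e^{-\de t}$ --- then gives $|\curlyN(t,\xi)|\le C(1+t)^{L-1}e^{-\de t}$ uniformly in $\xi\in K$, hence $\|m_t\|_{L^\infty}\le C(1+t)^{L-1}e^{-\de t}$; inserting this into the interpolation argument completes the proof. (Here the factor $(1+t)^{L-1}$ comes solely from the algebraic multiplicity, at most $L$, of the coalescing roots.)

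The main obstacle is the pair of uniformity statements just used. First, one must verify that restricting to the sub-collection $\tau_1,\dots,\tau_L$ produces an ODE whose coefficients and initial data remain regular --- indeed analytic --- and bounded in $\xi$ across $\curlyM$: this is exactly where one exploits that these roots stay separated from $\tau_{L+1},\dots,\tau_m$ on the compact $K$, so that the ``bad'' behaviour of individual roots is invisible to their symmetric functions. Second, one must check that the constant in the bound $C(1+t)^{L-1}e^{-\de t}$ for the stable ODE does not degenerate as the roots $\tau_k(\xi)$ coalesce, i.e.\ that it depends only on $L$, $\de$, and uniform bounds for the coefficients and data; once this is in hand, the reduction to the multiplier symbol and the Plancherel/Riesz--Thorin wrapper are routine.
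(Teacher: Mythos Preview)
Your plan is correct and shares the paper's overall architecture: reduce to a pointwise bound on the grouped symbol $\curlyN(t,\xi)=\sum_{k=1}^L e^{i\tau_k(\xi)t}A_j^k(t,\xi)$, get the $L^2\to L^2$ endpoint by Plancherel and the $L^1\to L^\infty$ endpoint by bounding $\|m_t\|_{L^1}$ on the compact support, then interpolate. The difference lies entirely in how the pointwise estimate $|\curlyN(t,\xi)|\le C(1+t)^{L-1}e^{-\de t}$ is obtained.

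The paper (Lemmas~\ref{LEM:2rootsmeetingresolution} and~\ref{LEM:Lrootsmeetingest}) works on the dense set where the $\tau_k$ are simple, uses the explicit Vandermonde formula~\eqref{EQ:Ajkformula} for the $A_j^k(\xi)$, and rewrites the sum via iterated $\sinh/\cosh$ identities so that every singular factor $(\tau_k-\tau_l)^{-1}$ is paired with a difference $e^{i\tau_k t}-e^{i\tau_l t}$ or $\sinh[(\tau_k-\tau_l)t]$; each such quotient contributes a factor $t$, and continuity extends the bound across $\curlyM$. This is explicit but combinatorial, and the induction on $L$ is somewhat involved. Your route---recognising $\curlyN(\cdot,\xi)$ as the solution of the reduced ODE $P(D_t,\xi)\curlyN=0$ with coefficients and data regular on $K$ (via the contour integral / symmetric functions), then invoking the stable-ODE bound---is more structural and sidesteps the combinatorics entirely. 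It also makes transparent why the derivatives $D_t^rD_x^\al$ cost nothing extra: $D_x^\al$ inserts a bounded factor $\xi^\al$, and $D_t^r\curlyN$ is again a solution of the same ODE with bounded data.

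One caution: the step you label ``standard'', namely $\|e^{itA(\xi)}\|\le C(1+t)^{L-1}e^{-\de t}$ with $C$ \emph{uniform as eigenvalues coalesce} and with the \emph{sharp} spectral gap $\de$, is exactly where the content lives---Jordan form loses uniformity, and a contour strictly below the spectrum loses the sharp $\de$. The clean justification is the Hermite--Genocchi representation of the fundamental solution as a divided difference of $\tau\mapsto e^{i\tau t}$ over the simplex with vertices $\tau_1,\dots,\tau_L$, giving $|E_{L-1}(t,\xi)|\le \tfrac{t^{L-1}}{(L-1)!}\,e^{-\de t}$ directly; the other fundamental solutions are then bounded combinations of $t$-derivatives of this one with coefficients controlled by the (bounded) symmetric functions. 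That formula is, in essence, the closed form of the paper's $\sinh/\cosh$ telescoping, so the two arguments are different packagings of the same cancellation.
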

Thus, if characteristic roots are separated from the real
axis on the support of some $\chi\in C_0^\infty(\R^n)$,
we can separate the solution \eqref{EQ:soln} into
groups of multiple roots for which the $L^p-L^q$ norms still decay
exponentially as stated in Theorem \ref{THM:expdecay2}.
We also note that since $\curlyM^\epsilon$ is bounded,
assumption (ii) of Theorem \ref{THM:expdecay} is automatically
satisfied and, therefore, it is omitted in the formulation 
of Theorem \ref{THM:expdecay2}. Theorem \ref{THM:expdecay2}
will be proved in Section \ref{SEC:phasefnsepfromaxis}.

\subsection{Roots with non-degeneracies}
\label{SEC:22}
The following case that we consider is the one of roots satisfying
certain non-degeneracy conditions.
These may be conditions on the Hessian, convexity
conditions, or simply the information on the index
of the corresponding level surfaces. In this section
we will give the corresponding statements. We always
assume the stability condition
\eqref{EQ:imtau>=0} but no longer assume
that roots are separated from the real axis.

First we state the result for phases with the non-degenerate
Hessian. The behavior depends on critical
points $\xi^0$ with $\nabla\tau(\xi^0)=0$ and the
behavior of the Hessian at such points. As usual, we say that
the critical point $\xi^0$ is non-degenerate if the Hessian
$\Hess\tau(\xi^0)$ is non-degenerate.
\begin{thm}\label{THM:nondeghess}
Let $U\subset\Rn$ be a bounded open set, and let $\tau:U\to\C$ be
smooth and such that $\Im\tau(\xi)\geq 0$ for all $\xi\in U$.
Assume that 
there are some constants $C_0$ and $M$ such that
$|\det\Hess \tau(\xi)|\geq C_0(1+|\xi|)^{-M}$ for 
all $\xi\in U$.
Let $\chi\in S^0_{1,0}(\Rn)$ be such that $\chi=0$ outside $U$
and let $a\in S^{-\mu}_{1,0}(U)$.

Assume that $\tau$ has
only one non-degenerate
critical point in $U$, and that $U$ is sufficiently small. Then
there is a constant $C>0$ independent of the position of $U$
such that 
for all $t\geq 0$ we have
\begin{equation}\label{EQ:nondeghess}
\left|\left| \int_\Rn e^{i(x\cdot\xi+\tau(\xi)t)}
a(\xi)\chi(\xi) \widehat{f}(\xi) d\xi\right|\right|_{L^q(\R^n_x)}
\leq C(1+t)^{-\frac{n}{2}(\frac1p-\frac1q)}
||f||_{W_p^{N_p}},
\end{equation}
with $1\leq p\leq 2$, $\frac1p+\frac1q=1$, 
$N_p=\frac{M}{2}(\frac1p-\frac1q)-\mu.$
\end{thm}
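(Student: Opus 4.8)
The plan is to obtain \eqref{EQ:nondeghess} by interpolating two endpoint estimates for the operator $T_tf(x):=\int_\Rn e^{i(x\cdot\xi+\tau(\xi)t)}a(\xi)\chi(\xi)\widehat f(\xi)\,d\xi$: a trivial $L^2\to L^2$ bound carrying no decay, and a dispersive $L^1\to L^\infty$ bound carrying the full $t^{-n/2}$ decay. For the first, $T_t$ is the Fourier multiplier with symbol $e^{i\tau(\xi)t}a(\xi)\chi(\xi)$; since $\abs{e^{i\tau(\xi)t}}=e^{-t\Im\tau(\xi)}\le1$ by the stability hypothesis $\Im\tau\ge0$, and $\abs{a(\xi)\chi(\xi)}\le C\jp{\xi}^{-\mu}\indicator_U(\xi)$, Plancherel's theorem gives $\norm{T_tf}_{L^2(\Rn)}\le C\norm{f}_{W_2^{-\mu}(\Rn)}$ with $C$ depending only on the symbol seminorms of $a$, hence uniformly in $t\ge0$ and in the position of $U$. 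This is exactly the case $p=q=2$, where $N_2=-\mu$.

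\textbf{The dispersive endpoint.} Here $N_1=\tfrac M2-\mu$. I would first absorb the Sobolev weight: put $\widetilde a:=a\,\jp{\cdot}^{-N_1}\in S^{-M/2}_{1,0}(U)$ and let $\widetilde T_t$ be the operator with amplitude $\widetilde a$, so that $T_tf=\widetilde T_t(\jp{D}^{N_1}f)$. It then suffices to show the convolution kernel
\begin{equation*}
K_t(x):=\int_\Rn e^{i(x\cdot\xi+\tau(\xi)t)}\widetilde a(\xi)\chi(\xi)\,d\xi
\end{equation*}
satisfies $\norm{K_t}_{L^\infty(\Rn_x)}\le C(1+t)^{-n/2}$ uniformly in the position of $U$, since then $\norm{T_tf}_{L^\infty}\le\norm{K_t}_{L^\infty}\norm{\jp{D}^{N_1}f}_{L^1}=\norm{K_t}_{L^\infty}\norm{f}_{W_1^{N_1}}$. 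Complex interpolation of the couple $(W_2^{-\mu},W_1^{M/2-\mu})$, which is $W_p^{N_p}$ with $N_p=\tfrac M2(\tfrac1p-\tfrac1q)-\mu$, against $(L^2,L^\infty)$, then upgrades the two endpoints to \eqref{EQ:nondeghess} with the claimed decay rate $\tfrac n2(\tfrac1p-\tfrac1q)$ for all $1\le p\le2$.

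\textbf{Stationary phase for the kernel.} For $0\le t\le1$ one bounds $\abs{K_t(x)}\le\int_U\abs{\widetilde a}\le C\,\mathrm{vol}(U)\sup_U\jp{\xi}^{-M/2}\le C$, which is admissible since $(1+t)^{-n/2}$ is bounded below on $[0,1]$ and $\mathrm{vol}(U)$ is controlled once $U$ is small; so assume $t\ge1$ and write the phase as $t\psi_y(\xi)$ with $\psi_y(\xi)=y\cdot\xi+\tau(\xi)$, $y=x/t$. When $\abs y$ is large, $\abs{\grad\psi_y}$ is bounded below on $U$ and repeated integration by parts in $\xi$ gives $\abs{K_t(x)}\le C_N(t\abs y)^{-N}\le C(1+t)^{-n/2}$. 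When $\abs y$ is bounded, the hypothesis $\abs{\det\Hess\tau(\xi)}\ge C_0\jp{\xi}^{-M}$ forces $\Hess\tau$ to be non-degenerate throughout the small set $U$, so $\grad\tau$ is a diffeomorphism of $U$ onto its image; hence $\psi_y$ has at most one critical point in $U$ and it is non-degenerate, and the stationary-phase/van der Corput lemma yields
\begin{equation*}
\abs{K_t(x)}\le C\,t^{-n/2}\Bigl(\inf_U\abs{\det\Hess\tau}\Bigr)^{-1/2}\sup_U\abs{\widetilde a}+O(t^{-n/2-1})\le C\,t^{-n/2}\,C_0^{-1/2}\jp{\xi^0}^{M/2}\cdot C\jp{\xi^0}^{-M/2},
\end{equation*}
using that $\jp\xi\simeq\jp{\xi^0}$ on $U$. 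The powers of $\jp{\xi^0}$ cancel, giving a bound uniform in the position of $U$; this cancellation between the stationary-phase gain $\abs{\det\Hess\tau}^{-1/2}$ and the amplitude decay $\jp{\xi}^{-M/2}$ is exactly why the two hypotheses are calibrated with the same exponent $M$.

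\textbf{The main obstacle.} The principal difficulty is that $\tau$, hence $\psi_y$, is complex-valued, so the classical real stationary-phase lemma does not apply verbatim. On the part of $U$ where $\Im\tau\ge\delta>0$ one obtains genuine exponential decay, far stronger than needed (this is of the type of Theorem~\ref{THM:expdecay}); the delicate region is a neighbourhood of $\{\Im\tau=0\}$, where, since $\Im\tau\ge0$, the relevant critical point $\xi^0$ is a minimum of $\Im\tau$. Writing $e^{i(x\cdot\xi+t\tau)}=e^{i(x\cdot\xi+t\Re\tau)}e^{-t\Im\tau}$ and treating $e^{-t\Im\tau}\widetilde a$ as an amplitude is still viable: each $\xi$-derivative only costs a factor $t\grad\Im\tau$, which is tamed because $\abs{\grad\Im\tau}\lesssim(\Im\tau)^{1/2}$ near the minimum and $t^{k/2}(\Im\tau)^{k/2}e^{-t\Im\tau}$ is bounded; alternatively one invokes H\"ormander's stationary-phase theorem for complex phases with non-negative imaginary part and non-degenerate (complex) Hessian $\Hess\tau$. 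The remaining work is bookkeeping: keeping every constant uniform in the position of $U$ throughout, and disposing of the borderline configuration in which the unique critical point of $\psi_y$ sits near $\partial U$ — excluded by taking $U$ sufficiently small, which is where that hypothesis enters.
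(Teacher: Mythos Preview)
Your proposal is correct and follows essentially the same route as the paper: interpolation between the Plancherel $L^2\to L^2$ bound and an $L^1\to L^\infty$ kernel estimate obtained from H\"ormander's stationary-phase theorem for complex phases with $\Im\Phi\ge0$ (Theorem~\ref{THM:hormanderstatphase}), with the cancellation between $\abs{\det\Hess\tau(\xi^0)}^{-1/2}\sim\jp{\xi^0}^{M/2}$ and the amplitude of order $-M/2$ giving uniformity in the position of $U$. The only cosmetic difference is that you absorb the Sobolev weight by passing to $\widetilde a=a\jp{\cdot}^{-N_1}\in S^{-M/2}$, whereas the paper achieves the same effect via the decomposition $a_j(\xi)=\sum_{\abs{\alpha}\le\nu}a_{j,\alpha}(\xi)\xi^\alpha$ with $\nu=M/2-j$; your formulation is arguably cleaner, and your discussion of the complex-phase obstacle (either invoking H\"ormander directly or treating $e^{-t\Im\tau}$ as amplitude using $\abs{\grad\Im\tau}\lesssim(\Im\tau)^{1/2}$ near the minimum) matches the paper's reduction to $\Im\tau(\xi^0)=0$ followed by the complex stationary-phase lemma.
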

For example, the case of the Klein--Gordon equation 
corresponds to $M=n+2$ in this theorem.
If we work with a fixed bounded set $U$, the
$||f||_{W_p^{N_p}}$ norm on the right hand side of
\eqref{EQ:nondeghess} can be replaced by $||f||_{L^p}$.
However, since we may also want to have estimate
\eqref{EQ:nondeghess} uniform over such $U$ (allowing it
to move to infinity while remaining to be of the same size),
we have the Sobolev norm in \eqref{EQ:nondeghess}.
From this point of view, we assume that $a$ behaves as a 
symbol in $U$ -- the meaning is that if the symbolic constants
here are uniform over the position of $U$, then also the
constant in \eqref{EQ:nondeghess} is uniform over such 
$a$ and $U$.

The condition that critical points are isolated and therefore
can be localised by different sets $U$ may follow
from certain properties of $\tau$ and will be discussed
in Section \ref{SEC:mainstatphasesection}, in particular see
Lemma \ref{lemma:critical-points} and remarks after it.
If, in addition, we take the size of $U$ uniform,
say of volume bounded by one, then constant $C$ in
\eqref{EQ:nondeghess} is also uniform over all such sets $U$.
We may also assume that if $\xi^0$ is a critical point of
$\tau$, then $\Im\tau(\xi^0)=0$. Otherwise we would have
$\Im\tau(\xi^0)>0$ and so Theorem \ref{THM:expdecay}
would actually give the exponential decay rate.
The proof of this theorem is based on the stationary phase 
method and will be given in Section \ref{SEC:mainstatphasesection}. 
If we apply different versions of the stationary
phase method under different conditions, 
we can reach different conclusions here. For example, we also
have:
\begin{thm}\label{THM:nondeghess2}
Let $U\subset\Rn$ be a bounded open and let $\tau:U\to\C$ be
smooth and such that $\Im\tau(\xi)\geq 0$ for all $\xi\in U$.
Let $\chi\in S^0_{1,0}(\Rn)$ be such that $\chi=0$ outside $U$
and let $a\in S^{-\mu}_{1,0}(U)$.
Assume that $\tau$ has only one
critical point $\xi^0$ in $U$, and that $U$ is sufficiently
small.

Suppose that there are constants $C_0,M>0$ independent of
the size and position
of $U$ and of $\xi^0$, with the following conditions.
Suppose that
$\rank \Hess\tau(\xi^0)=k$, that this rank is attained
on an $k\times k$ submatrix $A(\xi^0)$ and 
that $|\det A(\xi^0)|\geq C_0(1+|\xi^0|)^{-M}$.
Then
for all $t\geq 0$ we have
\begin{equation*}\label{EQ:nondeghess2}
\left|\left| \int_\Rn e^{i(x\cdot\xi+\tau(\xi)t)}
a(\xi)\chi(\xi) \widehat{f}(\xi) d\xi\right|\right|_{L^q(\R^n_x)}
\leq C(1+t)^{-\frac{k}{2}(\frac1p-\frac1q)}
||f||_{W_p^{N_p}},
\end{equation*}
with $1\leq p\leq 2$, $\frac1p+\frac1q=1$, 
$N_p=\frac{M}{2}(\frac1p-\frac1q)-\mu.$
\end{thm}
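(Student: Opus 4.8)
The plan is to derive \eqref{EQ:nondeghess2} by interpolating a trivial $L^2\to L^2$ bound against an $L^1\to L^\infty$ kernel estimate. Write the operator as $T_tf=\FT^{-1}\bigl(e^{i\tau(\xi)t}a(\xi)\chi(\xi)\widehat f(\xi)\bigr)$. Since $|e^{i\tau(\xi)t}|=e^{-t\Im\tau(\xi)}\le1$ by $\Im\tau\ge0$, and $|a\chi|\lesssim\jp\xi^{-\mu}$, Plancherel gives $\|T_tf\|_{L^2}\lesssim\|f\|_{W_2^{-\mu}}$, which is \eqref{EQ:nondeghess2} at $p=q=2$. For the endpoint $p=1$, $q=\infty$ one has $\|T_tf\|_{L^\infty}\le\|\widetilde K_t\|_{L^\infty}\,\|f\|_{W_1^{N_1}}$ with $N_1=\tfrac M2-\mu$, where
\[
\widetilde K_t(x)=\int_\Rn e^{i(x\cdot\xi+\tau(\xi)t)}\,b(\xi)\,d\xi,\qquad b:=a\,\chi\,\jp{\cdot}^{-N_1}\in S^{-M/2}_{1,0}(U);
\]
as $U$ is small, $\jp\xi\sim\jp{\xi^0}$ on $U$, so $|\partial_\xi^\alpha b(\xi)|\lesssim\jp{\xi^0}^{-M/2}$ there. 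Granting $\|\widetilde K_t\|_{L^\infty}\lesssim(1+t)^{-k/2}$, analytic interpolation of the family $z\mapsto T_t\jp D^{\,\mu-Mz/2}$ over the strip $0\le\Re z\le1$ (absorbing the fractional weight $\jp D^{\,iy}$ into the amplitude, which stays symbolic of order $0$) produces $\|T_tf\|_{L^q}\lesssim(1+t)^{-\frac k2(\frac1p-\frac1q)}\|f\|_{W_p^{N_p}}$ with the stated $N_p=\tfrac M2(\tfrac1p-\tfrac1q)-\mu$.

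So everything reduces to the kernel bound $\|\widetilde K_t\|_{L^\infty}\lesssim(1+t)^{-k/2}$. We may assume $\Im\tau(\xi^0)=0$, since otherwise $\Im\tau\ge\de>0$ near $\xi^0$ and Theorem~\ref{THM:expdecay} already yields exponential decay. For $t\le1$ the bound is trivial: $|\widetilde K_t(x)|\le\int|b|\lesssim\jp{\xi^0}^{-M/2}|U|\lesssim1$. For $t\ge1$ fix a small $\ep_0>2\sup_U|\nabla\tau|$ (possible as $U$ is small and $\nabla\tau(\xi^0)=0$): on the region $|x|\ge\ep_0 t$ one has $|x+t\nabla\tau(\xi)|\ge|x|/2$ throughout $U$, and repeated integration by parts in $\xi$ against $(x+t\nabla\tau)/|x+t\nabla\tau|^2$ — each step gaining a factor $\lesssim|x|^{-1}$, the derivatives of $\tau$ on the bounded set $U$ being harmless — gives $|\widetilde K_t(x)|\lesssim_N|x|^{-N}\lesssim t^{-N}$ for all $N$. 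The essential case is thus $t\ge1$, $|x|\le\ep_0 t$.

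Here I would perform a rank-$k$ stationary phase argument on the full phase $\Phi(\xi):=x\cdot\xi+\tau(\xi)t$. After a fixed linear change of coordinates, assume the nondegenerate $k\times k$ block $A=A(\xi^0)$ sits in the first $k$ variables $\xi'=(\xi_1,\dots,\xi_k)$, and split $\xi=(\xi',\xi'')$. The key point is that $x\cdot\xi$ is linear, so $\partial^2_{\xi'}\Phi=t\,\partial^2_{\xi'}\tau$, which on the small set $U$ is a small perturbation of $tA$; hence it is nondegenerate with $|\det\partial^2_{\xi'}\Phi|\gtrsim t^k\jp{\xi^0}^{-M}$, and $\Im\partial^2_{\xi'}\tau(\xi^0)$ is positive semidefinite because $\Im\tau\ge0$ attains its minimum $0$ at $\xi^0$. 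Since $\partial_{\xi'}\Phi=x'+t\,\partial_{\xi'}\tau=0$ has, for $|x|/t\le\ep_0$ small, a unique solution $\xi'=\xi'_*(\xi'',x)$ close to $\xi^0$, the Morse (splitting) lemma with $\xi''$ and $x$ as parameters reduces $\Phi$ to $\sigma(\xi'',x)+\tfrac12\langle B(\xi'',x)w',w'\rangle$, where $\Im\sigma\ge0$, $B\approx tA$, and $\Im B$ is positive semidefinite up to an error negligible for $U$ small. The remaining $w'$-integral is an oscillatory Gaussian: by the complex stationary phase lemma it is bounded by $C|\det B|^{-1/2}$ times finitely many sup-norms of the transformed amplitude and Jacobian, all $O(\jp{\xi^0}^{-M/2})$, whence it is $\lesssim t^{-k/2}\jp{\xi^0}^{M/2}\cdot\jp{\xi^0}^{-M/2}=t^{-k/2}$, uniformly in $\xi''$ and $x$; integrating trivially over $\xi''$ on the bounded set $U$ gives $|\widetilde K_t(x)|\lesssim t^{-k/2}$. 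When $k=n$ there is no splitting and this reduces to the proof of Theorem~\ref{THM:nondeghess}.

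The main obstacle is the \emph{uniformity} asserted — the final constant must depend only on $C_0,M$ (and the symbolic data of $a$), not on the size or position of $U$ nor on $\xi^0$. This forces one to control the Morse change of variables, its Jacobian, and the amplitude derivatives entering the oscillatory-Gaussian estimate \emph{quantitatively}, from the single lower bound $|\det A(\xi^0)|\ge C_0\jp{\xi^0}^{-M}$ together with the (symbolic) upper bounds on the derivatives of the characteristic root $\tau$. For large $|\xi^0|$ the inverse $A^{-1}$ may be of size $\jp{\xi^0}^{M}$, so a direct application of the Morse lemma does not obviously keep the constants bounded; the remedy is to first rescale frequency to unit size, $\xi=|\xi^0|\,\eta$, which turns the integral into one with phase and amplitude of fixed scale before the splitting lemma is applied. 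Carrying this rescaling through, together with the complex character of the phase ($\Im\tau\ge0$ and a complex-symmetric Hessian block), via the complex stationary phase machinery, is where the real work lies; the rest follows the interpolation-plus-kernel scheme already used for Theorem~\ref{THM:nondeghess}.
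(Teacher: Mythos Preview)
Your proposal is correct and follows essentially the same route as the paper: the paper's proof is simply ``restrict to the $k$ variables on which the nondegenerate block $A(\xi^0)$ sits and repeat the argument for Theorem~\ref{THM:nondeghess}'', i.e.\ apply H\"ormander's complex stationary phase in those $k$ variables (giving the factor $t^{-k/2}|\det A(\xi^0)|^{-1/2}$), integrate trivially in the remaining $\xi''$, and interpolate against the Plancherel $L^2$ bound. Your worry about the uniformity constants in the Morse lemma is a bit more than what is actually needed: as in Section~\ref{SEC:mainstatphasesection} the $|\det A(\xi^0)|^{-1/2}\lesssim\jp{\xi^0}^{M/2}$ loss is absorbed directly by choosing the amplitude of order $-M/2$ (so that $|b(\xi^0)|\lesssim\jp{\xi^0}^{-M/2}$), and no preliminary rescaling is required.
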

The proof of this theorem is similar to
the proof of Theorem \ref{THM:nondeghess} once we
restrict to the set of $k$ variables (possibly after a 
suitable change) on which the rank of the Hessian is
attained on $A(\xi^0)$. 

This result can be improved dependent on further properties
of $A(\xi^0)$. For example, if $\rank A(\xi^0)=n-1$
and this is attained on variables $\xi_1,\ldots,\xi_{n-1}$,
the analysis reduces to the behaviour of the oscillatory
integral with respect to $\xi_n$. If the $l$-th derivative
of the phase with respect to $\xi_n$ is non-zero, we get
an additional decay by $t^{-1/l}.$ This follows from
the stationary phase method, see, for example
H\"ormander \cite[Section 7.7]{horm83I}, or from an
appropriate use of van der Corput lemma. We will not
formulate further statements here since they are quite
straightforward.

\bigskip
The next theorem is an estimate of oscillatory integrals
with real-valued phases under convexity condition.  
It will be shown in Proposition \ref{prop:roots-r}
(see also Proposition \ref{Prop:asymptotic})
that for large $\xi$ characteristic roots of
the Cauchy problem \eqref{EQ:standardCP(repeat)}
satisfy assumptions of these theorems given below,
if the homogeneous roots of the principal part satisfy
them. The convexity
condition is weaker than (but does not contain)
the condition that the
Hessian of $\tau$ is positive definite and the result
can be compared with Theorem \ref{THM:nondeghess},
dependent on suitable properties of roots.

Let us first give
the necessary definitions.
Given a smooth function $\tau:\R^n\to\R$ and $\la\in\R$, set
\begin{equation*}
\Si_\la\equiv\Si_\la(\tau):=\set{\xi\in\R^n:\tau(\xi)=\la}\,.
\end{equation*}
In the case where $\tau(\xi)$ is homogeneous of order $1$
and $\tau\in C^\infty(\Rn\backslash 0)$, 
we will also write
$\Si_\tau:=\Si_1(\tau)$---for such $\tau$, we then have
$\Si_\la(\tau)=\la\Si_\tau$.
There should be no confusion in this notation since
we always reserve letters $\phi, \tau$ for phases and
$\lambda$ for the real number.
\begin{defn}\label{DEF:convexitycondition}
A smooth function $\tau:\R^n\to\R$ is said to satisfy the
\emph{convexity condition} if surface~$\Si_\la$ is convex for each
$\la\in\R$. Note that the empty set and the point set
are considered to be convex.
\end{defn}
If the Gaussian curvatures of $\Sigma_\lambda$ never vanish,
$\Sigma_\lambda$ is automatically convex (the converse is
not true). This curvature condition corresponds
to the case $k=n-1$ in Theorem \ref{THM:nondeghess2}.
Another important notion is that of the \emph{maximal order of
contact} of a hypersurface, similar to the one in
Section \ref{SEC:homogoperators}:
\begin{defn}\label{DEF:ga(Si)}
Let $\Si$ be a hypersurface in~$\R^n$ \textup{(}i.e.\ 
a manifold of
dimension $n-1$\textup{)}\textup{;} let $\si\in\Si$\textup{,} and
denote the tangent plane at~$\si$ by~$T_\si$. Now let~$P$ be a 
2--dimensional plane
containing the normal to~$\Si$ at~$\si$ and denote the order of the
contact between the line $T_\si\cap P$ 
and the curve $\Si\cap P$ by
$\ga(\Si;\si,P)$. Then set
\begin{equation*}
\ga(\Si):=\sup_{\si\in\Si}\sup_P\ga(\Si;\si,P)\,.
\end{equation*}
\end{defn}
\begin{examples}\mbox{}
\begin{enumerate}[label=(\alph*)]
\item $\ga(\Sn)=2$, as $\ga(\Sn;\si,P)=2$ for all $\si\in
\Sn$ and all planes $P$ containing $\si$ and the origin.
\item If $\va_l(\xi)$ is a characteristic root of an $m^{\text{th}}$
order homogeneous strictly hyperbolic constant coefficient operator,
then $\ga(\Si_{\va_l})\le m$; see~\cite{sugi96} for a proof of
this.
\end{enumerate}
\end{examples}
Now we can formulate the corresponding theorem.
\begin{thm}\label{THM:convexsp}
Suppose $\tau:\R^n\to\R$ satisfies the convexity condition
and let $\chi\in C^\infty(\Rn)$ \textup{;}
furthermore\textup{,} on $\supp\chi$, we assume\textup{:}
\begin{enumerate}
[label=\textup{(}\textup{\roman*}\textup{)},leftmargin=*]
\item\label{HYP:realtauisasymbol} for all multi-indices $\al$ there
exists a constant $C_\al>0$ such that
\begin{equation*}
\abs{\pa_\xi^\al\tau(\xi)}\le
C_\al\brac{\xi}^{1-\abs{\al}};
\end{equation*}
\item\label{HYP:realtauboundedbelow} there exist constants $M,C>0$
such that for all $\abs{\xi}\ge M$ we have $\abs{\tau(\xi)}\ge
C\abs{\xi}$\textup{;}
\item\label{HYP:realderivativeoftaunonzero} there exists a constant
$C>0$ such that $\abs{\pa_\om\tau(\la\om)}\ge C$ for all $\om\in
\Snm$\textup{,} $\la>0$\textup{;} in particular,
$\abs{\grad\tau(\xi)}\ge C$ for all
$\xi\in\R^n\setminus\set{0}$\textup{;}
\item \label{HYP:reallimitofSi_la} there exists a constant $R_1>0$
such that\textup{,} for all $\la>0$\textup{,}
\[
\frac{1}{\la}\Si_\la(\tau)
\equiv \frac{1}{\la}\set{\xi\in\Rn: \tau(\xi)=\la}
\subset B_{R_1}(0)\,.\]
\end{enumerate}
Also\textup{,} set $\ga:=\sup_{\la>0}\ga(\Si_\la(\tau))$ and assume
this is finite. Let 
$a_j=a_j(\xi)\in S^{-j}_{1,0}$ be a symbol of order
$-j$ of type $(1,0)$ on $\R^n$. 
Then for all $t\geq 0$ we have the estimate
\begin{equation}\label{EQ:resultforconvonaxis-res}
\normBig{\int_{\R^n}e^{i(x\cdot\xi+\tau(\xi)t)}
a_j(\xi)\chi(\xi)\widehat{f}(\xi)\,d\xi}_{L^q(\R^n_x)}\le
C(1+t)^{-\frac{n-1}{\ga}\big(\frac{1}{p}-\frac{1}{q}\big)}
\norm{f}_{W_p^{N_{p,j,t}}}\,,
\end{equation}
where $\frac1p+\frac1q=1$, $1<p\le 2$, 
and the Sobolev order satisfies
$N_{p,j,t}\ge n(\frac1p-\frac1q)-j$ for
$0\leq t<1$, and
$N_{p,j,t}\ge \left(n-\frac{n-1}{\gamma}\right)
(\frac1p-\frac1q)-j$ for
$t\geq 1$.
\end{thm}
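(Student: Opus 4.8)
The plan is to obtain \eqref{EQ:resultforconvonaxis-res} by interpolating between the trivial $L^2$--$L^2$ bound and a dispersive $L^1$--$L^\infty$ estimate, carried out scale by scale after a Littlewood--Paley decomposition in frequency. Since $\tau$ is real-valued, the Fourier multiplier $e^{i\tau(\xi)t}$ has modulus one, so by Plancherel the operator with amplitude $a_j\chi\in S^{-j}_{1,0}$ is bounded from $W_2^{-j}$ to $L^2$ uniformly in $t$; this is one endpoint, with no time decay. To organise the other endpoint, write $\chi=\sum_{\nu\ge 0}\chi_\nu$ with $\chi_0$ supported in $\{\abs{\xi}\lesssim 1\}$ and $\chi_\nu(\xi)=\psi_0(2^{-\nu}\xi)$ supported in $\{\abs{\xi}\sim 2^\nu\}$ for $\nu\ge 1$, and let $E_t^\nu$ be the corresponding pieces of the operator.

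For a fixed $\nu\ge 1$, rescaling $\xi=2^\nu\eta$ in the convolution kernel of $E_t^\nu$ turns it into $2^{(n-j)\nu}$ times the kernel of a unit-frequency oscillatory integral with phase $y\cdot\eta+s\,\sigma_\nu(\eta)$, where $y=2^\nu z$ ($z$ the kernel variable), $s=2^\nu t$, and $\sigma_\nu(\eta):=2^{-\nu}\tau(2^\nu\eta)$. Hypotheses \ref{HYP:realtauisasymbol}, \ref{HYP:realderivativeoftaunonzero} and \ref{HYP:reallimitofSi_la} guarantee that, \emph{uniformly in $\nu$}, the functions $\sigma_\nu$ have bounded derivatives on $\supp\psi_0$, have gradient bounded below, and have uniformly convex level sets lying in a fixed ball; moreover the maximal order of contact is scaling- and affine-invariant, so $\gamma(\Si_\lambda(\sigma_\nu))\le\gamma$ for all $\nu$ and $\lambda$. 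Hypothesis \ref{HYP:realtauboundedbelow} locates $\{\tau=\lambda\}$ in the annulus $\abs{\xi}\sim\lambda$, so that the rescaling is genuinely by the frequency. The bounded-frequency piece $E_t^0$ is treated directly by the same oscillatory integral estimate below (for $0\le t<1$ simply by the trivial bound for the integral of a bounded amplitude over a bounded set).

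The heart of the matter, and the step I expect to be the main obstacle, is the estimate, uniform in $\nu$,
$$\sup_{y\in\R^n}\Bigl|\int_{\R^n}e^{i(y\cdot\eta+s\sigma_\nu(\eta))}b(\eta)\,d\eta\Bigr|\le C(1+s)^{-\frac{n-1}{\gamma}},\qquad b\in C_0^\infty(\supp\psi_0).$$
For $\abs{y}$ large compared with $s$ the gradient of the phase is $\gtrsim\abs{y}$, and non-stationary phase gives rapid decay. For $\abs{y}\lesssim s$ one writes the integral via the coarea formula as $\int e^{is\lambda}\bigl(\int_{\{\sigma_\nu=\lambda\}}e^{iy\cdot\eta}\tfrac{b}{\abs{\grad\sigma_\nu}}\,dS\bigr)\,d\lambda$ and invokes the classical decay bound for Fourier transforms of smooth densities on convex hypersurfaces of bounded order of contact (\cite{litt73},~\cite{bren75},~\cite{sugi94}), combined with a van der Corput / stationary-phase argument in the variable $\lambda$ to convert $\abs{y}$-decay into $s$-decay; this is precisely the oscillatory integral analysis to be developed in Section~\ref{SEC:ch-convexity}. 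The delicate point throughout is to keep every constant independent of the dyadic scale $\nu$ — equivalently, of the location of the relevant piece of the level set at infinity — which is exactly what hypotheses \ref{HYP:realtauisasymbol}, \ref{HYP:realderivativeoftaunonzero}, \ref{HYP:reallimitofSi_la} are designed to supply.

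Returning to $E_t^\nu$, the above gives $\norm{E_t^\nu}_{L^1\to L^\infty}\lesssim 2^{(n-j)\nu}(1+2^\nu t)^{-\frac{n-1}{\gamma}}$, while $\norm{E_t^\nu}_{L^2\to L^2}\lesssim 2^{-j\nu}$; Riesz--Thorin interpolation at each scale yields, for $\tfrac1p+\tfrac1q=1$ and $1<p\le 2$,
$$\norm{E_t^\nu}_{L^p\to L^q}\lesssim 2^{(n(\frac1p-\frac1q)-j)\nu}\,(1+2^\nu t)^{-\frac{n-1}{\gamma}(\frac1p-\frac1q)}.$$
It remains to sum over $\nu$: applying $E_t^\nu$ to a Littlewood--Paley piece $f_\nu$ of $f$, using $\norm{f_\nu}_{L^p}\lesssim 2^{-N\nu}\norm{f}_{W_p^{N}}$, and, when $t\ge 1$, splitting the sum according to whether $2^\nu\lessgtr 1/t$, one checks that the total is bounded by $C(1+t)^{-\frac{n-1}{\gamma}(\frac1p-\frac1q)}\norm{f}_{W_p^{N_{p,j,t}}}$ precisely with $N_{p,j,t}\ge(n-\tfrac{n-1}{\gamma})(\tfrac1p-\tfrac1q)-j$ for $t\ge1$ (the decay factor allows trading the extra $\tfrac{n-1}{\gamma}(\tfrac1p-\tfrac1q)$ derivatives for time decay) and $N_{p,j,t}\ge n(\tfrac1p-\tfrac1q)-j$ for $0\le t<1$ (where only the Sobolev embedding loss is available). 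The restriction $p>1$ is used here to have $q<\infty$ and to run the Littlewood--Paley summation without an endpoint obstruction.
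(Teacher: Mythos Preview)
Your overall architecture—Littlewood--Paley decomposition, the trivial $L^2$ bound via Plancherel, a uniform-in-scale $L^1$--$L^\infty$ kernel estimate, and interpolation—is exactly the paper's. Two implementation points differ. First, the paper does not rescale each dyadic piece to unit frequency; it proves the kernel bound (Theorem~\ref{THM:sugimoto/randolargument}) directly for $\tau$ with the Littlewood--Paley cutoff $g_R$ carried as a parameter, uniformly in $R$. Its proof does not use the coarea formula plus a surface Fourier transform as you sketch: after localising near the wavefront and to a narrow cone, it parameterises $\Sigma_\lambda$ as a graph $(y,h_\lambda(y))$, uses the Gauss map to locate the tangential critical point, and reduces the tangential integral to the ``function of convex type'' oscillatory integral estimate of Theorem~\ref{THM:oscintthm}. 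Your description ``van der Corput in $\lambda$ to convert $|y|$-decay into $s$-decay'' is not the right mechanism—the $\lambda$-phase $e^{is\lambda}$ is linear and has no stationary point; the $s$-decay really comes from the surface bound $(1+|y|)^{-(n-1)/\gamma}$ combined with the fact that near the wavefront $|y|\sim s$ (this is why the paper cuts off with $\kappa(t^{-1}x+\nabla\tau)$ rather than by the size of $|y|/s$ alone). Second, the paper sums the dyadic pieces in $\ell^2$ via the Besov embeddings $L^q\hookleftarrow B^0_{q,2}$ and $W^N_p\hookrightarrow B^N_{p,2}$ (Theorem~\ref{THM:berg+l6.4.4}) rather than in $\ell^1$ as you do; this is what gives the Sobolev threshold with $\ge$ instead of a strict inequality.
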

Theorem \ref{THM:convexsp} will be proved in
Section \ref{SEC:convexity}, where 
estimate \eqref{EQ:resultforconvonaxis-res} will
follow by interpolation from the $L^2-L^2$ estimate combined
with $L^1-L^\infty$ cases given in
\eqref{EQ:resultforconvonaxis} for small $t$,
and in \eqref{EQ:convestL1Linftylarget} for large $t$.
See those estimates also for the case of $p=1$ in estimate
\eqref{EQ:resultforconvonaxis-res}.
The estimate for large times will follow from
Theorem \ref{THM:sugimoto/randolargument},
which gives the $L^\infty$-estimate for the kernel of
\eqref{EQ:resultforconvonaxis-res}.
As another consequence of 
Theorem \ref{THM:sugimoto/randolargument}, 
we will also have the following 
estimate:

\begin{cor}\label{cor:convex}
Under conditions of Theorem \ref{THM:convexsp}
with $\chi\equiv 1$, assume that $a\in C_0^\infty(\Rn)$. 
Then for all 
$x\in\R^n$ and $t\geq 0$ we have the estimate
\begin{equation}\label{EQ:sugimotothmest0}
\absBig{\int_{\R^n}e^{i(x\cdot\xi+ \tau(\xi)t)}a(\xi)
\,d\xi} \le C(1+t)^{-\frac{n-1}{\ga}}\,.
\end{equation}
\end{cor}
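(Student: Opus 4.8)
The plan is to split the time interval into $0\le t\le 1$ and $t\ge 1$, the first range being trivial and the second being a direct consequence of Theorem~\ref{THM:sugimoto/randolargument}. For $0\le t\le 1$ one has, uniformly in $x$,
\[
\absBig{\int_{\R^n}e^{i(x\cdot\xi+\tau(\xi)t)}a(\xi)\,d\xi}\le\int_{\R^n}\abs{a(\xi)}\,d\xi=:C_0<\infty,
\]
since $a\in C_0^\infty(\R^n)$; as $(1+t)^{-(n-1)/\ga}\ge 2^{-(n-1)/\ga}$ on $[0,1]$, the bound \eqref{EQ:sugimotothmest0} holds on that range with $C=2^{(n-1)/\ga}C_0$.

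For $t\ge 1$ I would make two observations. First, since $a$ is smooth with compact support it is a symbol of order $-j$ of type $(1,0)$ for every $j\ge 0$ (on the compact set $\supp a$ the weight $\brac{\xi}^{-j-\abs{\al}}$ is bounded above and below), hence an admissible amplitude in Theorem~\ref{THM:convexsp} and in its kernel version. Second, with $\chi\equiv 1$ the hypotheses (i)--(iv) of Theorem~\ref{THM:convexsp} are assumed on all of $\R^n$, and hypothesis (iii) extends by continuity to $\abs{\grad\tau(\xi)}\ge C>0$ for every $\xi$, so the phase $x\cdot\xi+t\tau(\xi)$ has no stationary points and the decay is controlled entirely by the geometry of the level sets $\Si_\la(\tau)$. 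Theorem~\ref{THM:sugimoto/randolargument}, which is exactly the $L^\infty$ bound for the kernel appearing in \eqref{EQ:resultforconvonaxis-res}, then applies and yields
\[
\absBig{\int_{\R^n}e^{i(x\cdot\xi+\tau(\xi)t)}a(\xi)\,d\xi}\le C\,t^{-(n-1)/\ga}\qquad(t\ge 1),
\]
uniformly in $x\in\R^n$; the absence of a Sobolev loss here, in contrast to \eqref{EQ:resultforconvonaxis-res}, is precisely because the compact support of $a$ removes the high-frequency part that would otherwise cost derivatives of $f$. Combining the two ranges and using $t^{-(n-1)/\ga}\le 2^{(n-1)/\ga}(1+t)^{-(n-1)/\ga}$ for $t\ge 1$ proves the corollary.

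I do not expect any genuine obstacle in the corollary itself; the substance is housed in Theorem~\ref{THM:sugimoto/randolargument}, which one is free to invoke. The engine there is the splitting of $\supp a$ into the region where $\abs{x}$ is large relative to $t$, dispatched by non-stationary phase since $x+t\grad\tau(\xi)$ is then bounded away from $0$, and the region where $\abs{x}$ is comparable to or smaller than $t$, handled by a Sugimoto--Randol type oscillatory integral estimate that converts the maximal order of contact $\ga=\sup_{\la>0}\ga(\Si_\la(\tau))$ of the convex surfaces $\Si_\la(\tau)$ into the decay exponent $(n-1)/\ga$. The only point requiring care when citing that theorem here is the uniformity of the bound in the parameter $x$, which is already built into its statement as a pointwise kernel estimate.
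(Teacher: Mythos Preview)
Your argument is correct and matches the paper's own treatment: the paper derives Corollary~\ref{cor:convex} as an immediate consequence of Theorem~\ref{THM:sugimoto/randolargument}, noting in the remark following that theorem that for a specific compactly supported amplitude one simply takes the case $R=0$ (with $g_0$ playing the role of the cutoff). Your splitting into $t\le 1$ and $t\ge 1$, the observation that $a\in C_0^\infty$ is a symbol of arbitrary negative order, and the direct appeal to Theorem~\ref{THM:sugimoto/randolargument} for $t\ge 1$ are exactly what the paper has in mind; your final paragraph summarizing the mechanism of that theorem (non-stationary phase away from the wave front, Sugimoto--Randol decay near it) is accurate commentary though not formally part of the corollary's proof.
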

In Proposition \ref{prop:roots-r} we show that properties
(i)--(iv) of Theorem \ref{THM:convexsp} are satisfied
for characteristic roots of $L(D_t,D_x)$ in
\eqref{EQ:standardCP(repeat)}, while in Lemma
\ref{LEM:convexityconstantconv} we will show that
the index $\gamma$ is also finite, both for large frequencies.

Now we turn to the case without convexity.
As in the case of the homogeneous operators (see
Introduction, Section~\ref{SEC:homogoperators}) we introduce
an analog of the order of contact also in the case where 
the convexity condition does not hold.
\begin{defn}
Let~$\Si$ be a hypersurface in~$\R^n$; set
\begin{equation*}
\ga_0(\Si):=\sup_{\si\in\Si}\inf_P\ga(\Si;\si,P)\le \ga(\Si),\,
\end{equation*}
where $\ga(\Si;\si,P)$ is as in Definition~\ref{DEF:ga(Si)}.
\end{defn}
\begin{rem}\mbox{}
\begin{enumerate}[label=(\alph*),leftmargin=*]
\item When $n=2$, $\ga_0(\Si)=\ga(\Si)$;
\item If $p(\xi)$ is a polynomial of order $m$,
$\Si=\set{\xi\in\R^n:p(\xi)=0}$ is compact, and
$\nabla p(\xi)\not=0$ on $\Sigma$, then
$\ga_0(\Si)\le\ga(\Si)\le m$; this is useful when applying the
result below to hyperbolic differential equations and is proved in
\cite{sugi96}.
\end{enumerate}
\end{rem}


\begin{thm}\label{THM:noncovexargument-sp}
Suppose $\tau:\R^n\to\R$ is a smooth function. 
Let $\chi\in C^\infty(\Rn)$;
furthermore, on $\supp\chi$, we assume:
\begin{enumerate}[label=\textup{(}\textup{\roman*}\textup{)}]
\item\label{HYP:nonconcplxtauisasymbol} for all multi-indices $\al$
there exist constants $C_\al>0$ such that
\begin{equation*}
\abs{\pa_\xi^\al\tau(\xi)}\le C_\al\brac{\xi}^{1-\abs{\al}};
\end{equation*}
\item\label{HYP:nonconcplxtauboundedbelow} there exist constants
$M,C>0$ such that for all $\abs{\xi}\ge M$ we have
$\abs{\tau(\xi)}\ge C\abs{\xi}$\textup{;}
\item\label{HYP:nonconcplxderivativeoftaunonzero} there exists a
constant $C>0$ such that 
$\abs{\pa_\om\tau(\la\om)}\ge C$ for
all $\om\in \Snm$ and $\la>0$\textup{;}
\item\label{HYP:nonconcplxlimitofSi_la} there exists a constant
$R_1>0$ such that\textup{,} for all $\la>0$\textup{,}
\begin{equation*}
\frac{1}{\la}\set{\xi\in\R^n:\tau(\xi)=\la}\subset
B_{R_1}(0)\,.
\end{equation*}
\end{enumerate}
Set $\ga_0:=\sup_{\la>0}\ga_0(\Si_\la(\tau))$ and assume it is
finite. 
Let $a_j=a_j(\xi)\in S^{-j}_{1,0}$ be a symbol of order
$-j$ of type $(1,0)$ on $\R^n$. 
Then for all $t\geq 0$ we have the estimate
\begin{equation*}\label{EQ:resultfornonconvonaxis-res}
\normBig{\int_{\R^n}e^{i(x\cdot\xi+\tau(\xi)t)}
a_j(\xi)\chi(\xi)\widehat{f}(\xi)\,d\xi}_{L^q(\R^n_x)}\le
C(1+t)^{-\frac{1}{\ga_0}\big(\frac{1}{p}-\frac{1}{q}\big)}
\norm{f}_{W_p^{N_{p,j,t}}}\,,
\end{equation*}
where $\frac1p+\frac1q=1$, $1<p\le 2$, 
and the Sobolev order satisfies
$N_{p,j,t}\ge n(\frac1p-\frac1q)-j$ for
$0\leq t<1$, and
$N_{p,j,t}\ge \left(n-\frac{1}{\gamma_0}\right)
(\frac1p-\frac1q)-j$ for
$t\geq 1$.
\end{thm}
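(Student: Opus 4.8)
The plan is to reduce Theorem~\ref{THM:noncovexargument-sp} to an $L^1$--$L^\infty$ kernel estimate for the oscillatory integral operator and then interpolate against the trivial $L^2$--$L^2$ bound, exactly parallelling the structure used for Theorem~\ref{THM:convexsp} but replacing the convexity-based analysis of the level sets $\Si_\la(\tau)$ by the non-convex machinery that yields the exponent $\ga_0$ rather than $\ga$. First I would split the frequency support into the bounded region $\set{\abs\xi\le M}$ and the region $\set{\abs\xi\ge M}$ by inserting a smooth cutoff. On the bounded piece the amplitude $a_j(\xi)\chi(\xi)$ is compactly supported, the phase $\tau$ is smooth, and the kernel is bounded uniformly in $x$ and $t$; since on a fixed compact set the Sobolev norm $\norm{f}_{W_p^{N}}$ dominates $\norm{f}_{L^p}$ for any $N$, this contribution is harmless and absorbed into the stated estimate (for $0\le t<1$ everything is trivial this way). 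So the real work is the conic region $\abs\xi\ge M$, where hypotheses \ref{HYP:nonconcplxtauisasymbol}--\ref{HYP:nonconcplxlimitofSi_la} are in force.

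On that region I would pass to polar coordinates $\xi=\la\om$, $\la>0$, $\om\in\Snm$, and use \ref{HYP:nonconcplxderivativeoftaunonzero} to change the radial variable from $\la$ to $s=\tau(\la\om)$: since $\abs{\pa_\la\tau(\la\om)}=\abs{\pa_\om\tau(\la\om)}\ge C$ this is a legitimate smooth change of variable, and by \ref{HYP:nonconcplxtauboundedbelow} the map $\la\mapsto s$ is comparable to $\la\mapsto\la$ for large $\la$; hence $\tfrac{1}{\la}\Si_\la(\tau)\subset B_{R_1}(0)$ from \ref{HYP:nonconcplxlimitofSi_la} tells us the rescaled level sets stay in a fixed ball. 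Writing the kernel as $\int_0^\infty e^{ist}\bigl(\int_{\Si_s(\tau)} e^{ix\cdot\xi}\,b_s(\xi)\,d\sigma(\xi)\bigr)\,ds$ for a suitable amplitude $b_s$ inherited from $a_j$ and the Jacobian, I reduce matters to estimating the inner surface integral $J(x,s):=\int_{\Si_s(\tau)} e^{ix\cdot\xi}\,b_s(\xi)\,d\sigma(\xi)$ uniformly, and then performing the one-dimensional oscillatory integral in $s$ against $e^{ist}$. For $J(x,s)$ the non-convex stationary/nonstationary phase analysis of Sugimoto \cite{sugi96,sugi98} applies: the worst decay in $\abs x$ of an oscillatory surface integral over a hypersurface is governed by the quantity $\ga_0(\Si)=\sup_\sigma\inf_P\ga(\Si;\sigma,P)$, giving $\abs{J(x,s)}\le C(1+\abs{x})^{-1/\ga_0}$ after the rescaling $\Si_s=s\cdot\tfrac1s\Si_s$ with $\tfrac1s\Si_s$ confined to $B_{R_1}(0)$; combining this with the $s$-integration against $e^{ist}$ and tracking the symbol gain of order $-j$ yields the pointwise kernel bound $\abs{K(t,x)}\le C(1+t)^{-1/\ga_0}$ for $t\ge1$, modulo $\jp D^{-N}$-smoothing that accounts for the Sobolev loss $n(\tfrac1p-\tfrac1q)-j$ (resp.\ $(n-\tfrac1{\ga_0})(\tfrac1p-\tfrac1q)-j$). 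This is the $L^1$--$L^\infty$ statement; interpolating it with the obvious $L^2$--$L^2$ bound (the operator is, after the $\jp D^{-N}$ correction, $L^2$-bounded uniformly in $t$ by Plancherel since $a_j\chi\jp\xi^{N}$ is a bounded symbol) gives the claimed $L^p$--$L^q$ estimate for $1<p\le2$ with the stated $N_{p,j,t}$.

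The main obstacle is the surface-integral bound $\abs{J(x,s)}\lesssim(1+\abs x)^{-1/\ga_0}$ and, in particular, doing it \emph{uniformly} in $s$ (i.e.\ uniformly over the family of rescaled hypersurfaces $\tfrac1s\Si_s(\tau)$) and with the right control on how the amplitude $b_s$ depends on $s$. Without the convexity condition one cannot invoke a clean curvature lower bound; instead one must localise on $\Si_s$ near each point $\sigma$, choose (for each $\sigma$) an advantageous $2$-plane $P$ realising the infimum in the definition of $\ga_0$, and apply van der Corput's lemma of the appropriate order in the direction of contact while treating the remaining $n-2$ tangential directions as parameters --- this is precisely the delicate part of Sugimoto's argument, and one must check that the constants produced are uniform over the compact family $\set{\tfrac1s\Si_s(\tau)}\subset B_{R_1}(0)$, which is where hypotheses \ref{HYP:nonconcplxtauboundedbelow}--\ref{HYP:nonconcplxlimitofSi_la} and the asymptotic homogeneity of $\tau$ enter. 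Once that uniform surface estimate is in hand, assembling the radial integral and the interpolation is routine and mirrors the proof of Theorem~\ref{THM:convexsp}.
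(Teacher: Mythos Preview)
Your overall architecture---interpolation between an $L^2$--$L^2$ Plancherel bound and an $L^1$--$L^\infty$ kernel bound, with the latter obtained by a van der Corput / level-set analysis controlled by $\ga_0$---matches the paper's strategy (Section~\ref{SEC:nonconvex}, which in turn invokes Theorem~\ref{THM:noncovexargument}). But there is a genuine gap in your reduction. Having written the kernel as $K(t,x)=\int_0^\infty e^{ist}J(x,s)\,ds$ with $\abs{J(x,s)}\lesssim (1+\abs{x})^{-1/\ga_0}$ (or rather, after rescaling, $\abs{J(x,s)}\lesssim s^{n-1-j}(1+s\abs{x})^{-1/\ga_0}$), you do not have a mechanism that converts this $\abs{x}$-decay into $t$-decay. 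Bounding $\abs{K(t,x)}\le\int\abs{J(x,s)}\,ds$ gives no $t$-dependence at all, and integration by parts in $s$ against $e^{ist}$ produces $t^{-1}$ per step, not $t^{-1/\ga_0}$. In Sugimoto's original homogeneous setting this conversion is done by scaling $t$ out of the phase, reducing to $t=1$; here $\tau$ is \emph{not} homogeneous, so that shortcut is unavailable and the passage ``combining this with the $s$-integration against $e^{ist}$ \ldots\ yields $\abs{K(t,x)}\le C(1+t)^{-1/\ga_0}$'' is unsupported.

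The paper repairs exactly this point by first inserting a cutoff $\cutoffWF\big(t^{-1}x+\nabla\tau(\xi)\big)$ localising near the wave-front set (see the proof of Theorem~\ref{THM:noncovexargument}): away from it, Lemma~\ref{LEM:estawayfromwavefront} gives arbitrary $t$-decay by integration by parts; near it, hypothesis~\ref{HYP:nonconcplxderivativeoftaunonzero} forces $\abs{x_n}\gtrsim t$ in a suitably rotated coordinate system, so after the change of variable $\latil=\la x_n$ the van der Corput bound in the transverse $y_1$-direction yields decay $\latil^{-1/\ga_0}$, which, since $\latil\sim\la t$, becomes the desired $t^{-1/\ga_0}$ upon integrating in $\la$ over the dyadic window. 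Thus the missing ingredient in your sketch is precisely this wave-front localisation that couples $\abs{x}$ to $t$; once you insert it, your final paragraph (van der Corput along the plane realising the infimum in $\ga_0$, uniformly over the rescaled family $\tfrac1\la\Si_\la$) is essentially the paper's argument. A smaller point: the paper uses a Littlewood--Paley/Besov decomposition (the functions $\cutoffbes_l$) rather than just a large/small frequency split, so that the kernel estimate is proved for each dyadic piece with constants independent of $l$; this is what produces the sharper Sobolev index $N_{p,j,t}\ge\big(n-\tfrac{1}{\ga_0}\big)(\tfrac1p-\tfrac1q)-j$ for $t\ge1$.
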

The proof of Theorem \ref{THM:noncovexargument-sp}
will be given in Section \ref{SEC:nonconvex}. As in the convex
case, as a consequence of 
estimates for the kernel on Theorem \ref{THM:noncovexargument},
we also have the following statement:

\begin{cor}\label{cor:nonconvex}
Under conditions of Theorem \ref{THM:noncovexargument-sp}
with $\chi\equiv 1$,
assume that $a\in C_0^\infty(\Rn)$. Then
for all 
$x\in\R^n$ and $t\geq 0$ we have the estimate for the kernel:
\begin{equation*}
\absBig{\int_{\R^n}e^{i(x\cdot\xi+ \tau(\xi)t)}a(\xi)
\,d\xi} \le C(1+t)^{-\frac{1}{\ga_0}}\,.
\end{equation*}
\end{cor}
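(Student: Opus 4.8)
Write $I(x,t)=\int_{\Rn}e^{i(x\cdot\xi+t\tau(\xi))}a(\xi)\,d\xi$ for the kernel in question. This is exactly the $L^\infty$--estimate for the kernel underlying Theorem~\ref{THM:noncovexargument-sp} — that is, the statement of the kernel estimate Theorem~\ref{THM:noncovexargument} specialised to $\chi\equiv1$ and to a compactly supported amplitude (for which the order $j$ plays no role) — so the plan is to prove it directly. Since $a\in C_0^\infty(\Rn)$ one always has $\abs{I(x,t)}\le\norm{a}_{L^1}$, which already gives the claim for $0\le t\le1$; hence it suffices to take $t\ge1$, and I would split the argument according to the size of $\abs{x}$ relative to $t$.

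First the ``non-stationary'' range. By hypothesis~\ref{HYP:nonconcplxderivativeoftaunonzero} the function $\tau$ is smooth on $\Rn$ with $\abs{\grad\tau(\xi)}\ge c_0>0$ for every $\xi$, and on the compact set $\supp a$ all derivatives of $\tau$ are bounded (hypothesis~\ref{HYP:nonconcplxtauisasymbol}). Hence the gradient of the phase $\Phi(\xi)=x\cdot\xi+t\tau(\xi)$ satisfies $\abs{\grad\Phi(\xi)}\ge tc_0-\abs{x}\ge\frac{c_0}{2}t$ on $\supp a$ whenever $\abs{x}\le\frac{c_0}{2}t$; integrating by parts $N$ times (moving the derivatives off $e^{i\Phi}$ onto $a$, which is licit since $a$ is compactly supported) gives $\abs{I(x,t)}\le C_N t^{-N}$ in this range, which is more than enough. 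It therefore remains to treat $\abs{x}\ge\frac{c_0}{2}t$, in which case $\abs{x}\sim t$, and it suffices to establish $\abs{I(x,t)}\le C(1+\abs{x})^{-1/\ga_0}$.

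For this I would apply the coarea formula in the variable $\tau$ — legitimate because $\grad\tau$ never vanishes, so each level set $\Si_\la=\set{\xi\in\Rn:\tau(\xi)=\la}$ is a smooth hypersurface, empty for all but a bounded set of $\la$ since $a$ has compact support (and, for simplicity, $\tau>0$ on $\supp a$; otherwise split $\supp a$ and argue symmetrically, the levels $\la$ near $0$ being handled by continuity of $\ga_0(\Si_\la)$). This yields $I(x,t)=\int_{\R}e^{it\la}H_\la(x)\,d\la$, with $H_\la(x)=\int_{\Si_\la}e^{ix\cdot\xi}\,a(\xi)\abs{\grad\tau(\xi)}^{-1}\,d\si_\la(\xi)$. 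The heart of the matter is the surface estimate
\[
  \abs{H_\la(x)}\le C\,(1+\abs{x})^{-1/\ga_0(\Si_\la)},
\]
with $C$ uniform in $\la$: cover the compact set $\Si_\la\cap\supp a$ by finitely many small caps; on a cap containing no point at which the normal to $\Si_\la$ is parallel to $x$ the tangential gradient of $\xi\mapsto x\cdot\xi$ is bounded below, so integration by parts along $\Si_\la$ gives rapid decay in $\abs{x}$; on a cap around such a ``stationary'' point, the definition of $\ga_0(\Si_\la)$ furnishes a $2$--plane $P$ — equivalently a tangential direction — realising order of contact at most $\ga_0(\Si_\la)$, and this direction may be kept fixed over the cap, so van der Corput's lemma of order $\le\ga_0(\Si_\la)$ in that one variable, combined with the trivial ($L^1$) bound in the remaining $n-2$ tangential variables, controls that piece by $C(1+\abs{x})^{-1/\ga_0(\Si_\la)}$. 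Summing over the (uniformly boundedly many) caps, using $\ga_0(\Si_\la)\le\ga_0$ for all relevant $\la$, and integrating in $\la$ over the bounded range on which $H_\la\not\equiv0$ gives $\abs{I(x,t)}\le C(1+\abs{x})^{-1/\ga_0}\le C'(1+t)^{-1/\ga_0}$ since $\abs{x}\sim t$.

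The step I expect to be the main obstacle is precisely the surface estimate for $H_\la$: one must show that the crude invariant $\ga_0(\Si_\la)=\sup_\si\inf_P\ga(\Si_\la;\si,P)$ genuinely dominates the stationary-phase decay of the Fourier transform of a smooth density on $\Si_\la$, and in particular that the ``good'' direction realising $\inf_P$ can be chosen locally uniformly about each point (a covering/continuity argument, essentially the one of Sugimoto~\cite{sugi96},~\cite{sugi98}); it is exactly this restriction to a single tangential direction that produces the exponent $1/\ga_0$ rather than the $(n-1)/\ga$ of the convex case in Corollary~\ref{cor:convex}. A secondary, bookkeeping point is the uniformity of all constants in $\la$ (equivalently, in the position of the frequency localisation), which holds because the caps, the van der Corput orders and the derivative bounds used depend only on $\ga_0$ and on finitely many of the symbolic constants in hypothesis~\ref{HYP:nonconcplxtauisasymbol} together with the lower bound $c_0$.
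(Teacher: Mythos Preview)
Your approach is correct and coincides with the paper's: the corollary is obtained there simply by taking $R=0$ (with $g_0=a$) in the kernel estimate Theorem~\ref{THM:noncovexargument}, whose proof is precisely the wave-front/non-stationary split followed by van der Corput in one well-chosen tangential direction on the level surfaces $\Si_\la(\tau)$ (via Lemma~\ref{LEM:propertyofgamma0}). Your coarea-formula packaging and $(x,t)$-space split are cosmetic variants of the paper's $\ka$-cutoff and explicit graph parametrisation $\xi\mapsto(\la y,\la h_\la(y))$, but the substance---including the point you flag as the main obstacle, the locally uniform choice of the $\inf_P$-direction---is identical.
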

Again, in Proposition \ref{prop:roots-r} we show that properties
(i)--(iv) of Theorem \ref{THM:noncovexargument-sp} are satisfied
for characteristic roots of $L(D_t,D_x)$ in
\eqref{EQ:standardCP(repeat)}, while in Lemma
\ref{LEM:ga0conv} we will show that
the index $\gamma_0$ is also finite, both for large frequencies.

As a corollary and an example of these theorems, 
we get the following possibilities
of decay for parts of solutions with roots on the axis.
We can use a cut-off function $\chi$ to microlocalise
around points with different qualitative behaviour
(hence we also do not have to worry about Sobolev orders).

\begin{cor}\label{COR:realaxis}
Let $\Omega\subset\Rn$ be an open set and let
$\tau:\Omega\to\R$ be a smooth real valued function. 
Let $\chi\in C_0^\infty(\Omega).$
Let us make the following choices of $K(t)$, depending on
which of the following conditions are satisfied on $\supp\chi$.
\begin{itemize}
\item[{\rm (1)}] If $\det\Hess \tau(\xi)\not=0$ for all 
$\xi\in\Omega$, we set 
$K(t)=(1+t)^{-\frac{n}{2}(\frac1p-\frac1q)}.$
\item[{\rm (2)}] If $\rank\Hess \tau(\xi)=n-1$ for all 
$\xi\in\Omega$, we set 
$K(t)=(1+t)^{-\frac{n-1}{2}(\frac1p-\frac1q)}.$
\item[{\rm (3)}] If $\tau$ satisfies the convexity condition
with index $\gamma$, we set 
$K(t)=(1+t)^{-\frac{n-1}{\gamma}(\frac1p-\frac1q)}.$
\item[{\rm (4)}] If $\tau$ does not satisfy the convexity
condition but has non-convex index $\gamma_0$, we set 
$K(t)=(1+t)^{-\frac{1}{\gamma_0}(\frac1p-\frac1q)}.$
\end{itemize}
Assume in each case that other assumptions of the corresponding
Theorems {\rm \ref{THM:nondeghess}--\ref{THM:noncovexargument-sp}}
are satisfied. Let $1\leq p\leq 2, \frac{1}{p}+\frac{1}{q}=1$.
Then for all $t\geq 0$ we have
\begin{equation*}\label{EQ:realaxis}
\left|\left|\int_\Rn e^{i(x\cdot\xi+\tau(\xi)t)} a(\xi) \chi(\xi) 
\widehat{f}(\xi) d\xi
\right|\right|_{L^q(\R^n_x)}
\leq C K(t)||f||_{L^p(\Rn)}.
\end{equation*}
\end{cor}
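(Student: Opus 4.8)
The plan is to deduce each of the four cases directly from the corresponding one of Theorems \ref{THM:nondeghess}--\ref{THM:noncovexargument-sp}; the only role of the hypothesis $\chi\in C_0^\infty(\Omega)$ is to allow us to (a) localise so that those theorems become applicable and (b) discard the Sobolev norms on their right-hand sides, replacing them by $L^p$-norms. First I would set $\tilde a:=a\chi$. Since $\chi$ is compactly supported and $a$ is smooth near $\supp\chi$, we have $\tilde a\in C_0^\infty(\Rn)$, and on a compact set the weight $\brac{\xi}^{-j-|\al|}$ is bounded below, so in fact $\tilde a\in S^{-j}_{1,0}(\Rn)$ for \emph{every} $j\ge 0$. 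Likewise, on the compact set $\supp\chi$ the exponents $M$ appearing in Theorems \ref{THM:nondeghess}, \ref{THM:nondeghess2} (lower bounds on $\det\Hess\tau$, resp.\ on the relevant $(n-1)\times(n-1)$ minor) may be taken equal to $0$ by continuity and compactness, and hypotheses (ii), (iv) of Theorems \ref{THM:convexsp}, \ref{THM:noncovexargument-sp} hold vacuously since $\supp\chi$ is bounded.

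Consequently, in each of the four theorems the Sobolev order $N_p$ (resp.\ $N_{p,j,t}$) can be chosen $\le 0$ — by taking $\mu=0$, $M=0$ in the first two, and $j\ge n(\tfrac1p-\tfrac1q)$ in the last two — after which $\norm{f}_{W_p^{N_p}}\le C\norm{f}_{L^p}$, because $\jp{D}^{s}$ is bounded on $L^p$ for every $s\le 0$ and $1\le p\le\infty$ (for $s<0$ it is convolution with an $L^1$ function, and for $s=0$ it is the identity). For cases (3) and (4) I would then simply invoke Theorem \ref{THM:convexsp}, resp.\ Theorem \ref{THM:noncovexargument-sp}, with amplitude $\tilde a$ and cut-off $\equiv 1$ (extending $\tau$ smoothly past $\supp\chi$ so as to preserve the convexity condition and the finiteness of $\gamma$, resp.\ $\gamma_0$ — only the portions of the level sets inside $\supp\chi$ affect the estimate), which yields precisely $C(1+t)^{-\frac{n-1}{\gamma}(\frac1p-\frac1q)}\norm{f}_{L^p}$, resp.\ $C(1+t)^{-\frac{1}{\gamma_0}(\frac1p-\frac1q)}\norm{f}_{L^p}$, for $1<p\le 2$; the endpoint $p=1$ is covered by the pointwise kernel bounds of Corollaries \ref{cor:convex}, \ref{cor:nonconvex} (applied with amplitude $\tilde a$) together with the trivial $L^1\!\to\! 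L^\infty$ estimate. For cases (1) and (2) there is one further step: since $\Hess\tau$ is nonsingular on $\supp\chi$ (rank $n$, resp.\ rank $n-1$ realised on a fixed $(n-1)\times(n-1)$ block near each relevant point), the critical points of $\tau$ in $\supp\chi$ are isolated, hence finite in number (cf.\ Lemma \ref{lemma:critical-points}); I would take a partition of unity $\chi=\sum_i\chi_i$ with each $\supp\chi_i$ small and containing at most one critical point, apply Theorem \ref{THM:nondeghess} (case (1)), resp.\ Theorem \ref{THM:nondeghess2} with $k=n-1$ (case (2)), on the pieces carrying a critical point, use the same stationary-phase estimate with empty stationary set — which only improves the decay — on the remaining pieces, absorb the Sobolev norms as above, and sum the finitely many contributions.

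The step I expect to be the main obstacle, such as it is, is the localisation in cases (1) and (2): one has to check that after shrinking, each $\supp\chi_i$ is small enough for the ``$U$ sufficiently small'' hypothesis of Theorems \ref{THM:nondeghess}, \ref{THM:nondeghess2}, contains at most one critical point, and carries a single $(n-1)$-block realising the rank bounded below, and that in the end no constant blows up. This is painless here precisely because $\supp\chi$ is one fixed bounded set: only finitely many pieces occur and no uniformity over a family of sets $U$ escaping to infinity is required (that uniformity being exactly what forced the Sobolev norms into the theorems in the first place). Everything else — the reduction $\tilde a=a\chi\in\bigcap_j S^{-j}_{1,0}$, the replacement $M\rightsquigarrow 0$ and $\mu\rightsquigarrow 0$ on the compact set, and the elementary mapping property of Bessel potentials of non-positive order on $L^p$ — is bookkeeping.
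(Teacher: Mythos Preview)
Your proposal is correct and matches the paper's approach: the paper states this result as a corollary of Theorems \ref{THM:nondeghess}--\ref{THM:noncovexargument-sp} and gives only the one-line remark that ``no derivatives appear in the $L^p$--norm of $f$ because the support of $\chi$ is bounded,'' which is exactly the mechanism you spell out (compactly supported amplitude lies in $S^{-j}_{1,0}$ for every $j$, so the Sobolev index can be pushed to $\le 0$). Your treatment of cases (1)--(2) via a finite partition of unity, and of the $p=1$ endpoint in cases (3)--(4) via Corollaries \ref{cor:convex}, \ref{cor:nonconvex}, simply fills in details the paper leaves implicit.
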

We note that no derivatives appear in the $L^p$--norm of $f$
because the support of $\chi$ is bounded. In general, there
are different ways to ensure the convexity condition for 
$\tau$. Thus, we can say that the principal part $L_m$
of operator $L(D_t,D_x)$ in \eqref{EQ:standardCP(repeat)}
satisfies the convexity condition if all Hessians
$\va^{\prime\prime}_l(\xi)$, $l=1,\ldots,m$, are
semi--definite for all $\xi\not=0$. In this case it was
shown by Sugimoto in \cite{sugi94} that there exists a 
linear function $\alpha(\xi)$ such that
$\widetilde{\va_l}=\va_l+\alpha$ have convex level
sets $\Sigma(\widetilde{\va_l})$, and we have
$\gamma(\Sigma(\widetilde{\va_l}))\leq 2\left[
\frac{m}{2}\right].$ For large frequencies,
perturbation arguments imply that the same must be
true for $\Sigma_\lambda(\tau_l)$, for sufficiently
large $\lambda$. If we now assume that
$\Sigma_\lambda(\tau_l)$ are also convex for small $\lambda$,
then $\tau_l$ will satisfy the convexity conditions.
Alternatively, if they do not satisfy the convexity
condition for small $\lambda$, we can cut-off this regions
and analyse the decay rates by other methods developed in
this paper.

\subsection{Roots meeting the real axis}
\label{SEC:23}
In this section we will present the results for characteristic
roots (or phase functions) in the upper complex plane
near the real axis, that become real at some point or in
some set. 

For $\curlyM\subset\R^n$, denote
$\curlyM^\ep=\set{\xi\in\R^n: \dist (\xi,\curlyM)<\ep}$
as before.
The largest number $\nu\in\N$ such that
$\meas(\curlyM^{\ep})\le C\ep^{\nu}$ 
for all sufficiently small
$\ep>0$, will be denoted by $\codim\curlyM$, and 
we will call it the codimension of $\curlyM$.

We will say that {\em the root $\tau_k$ meets the real axis
at $\xi^0$ with order $s_k$} if $\Im\tau_k(\xi^0)=0$ and if
there exists a constant
$c_0>0$ such that
\begin{equation*}
c_{0}\abs{\xi-\xi^0}^{s_k}\le{\Im\tau_k(\xi)}\,,
\end{equation*}
for all~$\xi$ sufficiently near~$\xi^0$. Here we may recall
that in \eqref{EQ:imtau>=0}
we already assumed $\Im\tau_k(\xi)\geq 0$ for all $\xi$.

More generally, if the root $\tau_k$ meets the axis on the set
$Z_k=\set{\xi\in\Rn: \Im\tau_k(\xi)=0}$, we will say that 
{\em it meets the axis with order $s$} if
\begin{equation*}
c_{0}\dist (\xi, Z_k)^s\le{\Im\tau_k(\xi)}\,.
\end{equation*}
We will localise around each connected component of
$Z_k$, e.g. around each point of $Z_k$,
if it is a union of isolated points. As usual, when we talk
about multiple roots intersecting in a set $\curlyM$,
we adopt the terminology introduced in Section
\ref{SEC:21}.
Since we are dealing with strictly hyperbolic equations,
roots can meet each other only for bounded frequencies,
so we may assume that set $\curlyM$ is
bounded.

\begin{thm}\label{TH:multonaxis}
Assume that the characteristic roots
$\tau_1(\xi),\dots,\tau_L(\xi)$ intersect in the 
$C^1$ set~$\curlyM$
of codimension~$\ell$. Assume also that they 
meet the real axis in~$\curlyM$ with the finite
orders $\leq s$, i.e. that
\begin{equation*}
c_{0}\dist (\xi, \curlyM)^s\le{\Im\tau_k(\xi)}\,,
\end{equation*}
for some $c_0>0$ and all $k=1,\ldots,L$.
Assume that \eqref{EQ:soln} is the solution of the 
Cauchy problem \eqref{EQ:standardCP(repeat)} and we look
at its part \eqref{EQ:solnpart}.
Let $\chi\in C_0^\infty(\curlyM^\ep)$ for sufficiently
small $\ep>0$.
Then for all $t\geq 0$ we have
\begin{multline}\label{EQ:est-mult}
\normBig{D^r_tD_x^\al\Big(\int_{\curlyM^\ep} e^{ix\cdot\xi}
\Big(\sum_{k=1}^L e^{i\tau_k(\xi)t}A_j^k(t,\xi)\Big)
\cutoffM(\xi)\widehat{f}(\xi)\,d\xi\Big)}_{L^q(\R^n_x)}\\ \le
C\bract{t}^{-\frac{\ell}{s}\big(\frac{1}{p}-\frac{1}{q}\big)
+L-1}\norm{f}_{L^p}\,,
\end{multline}
where $\frac{1}{p}+\frac{1}{q}=1$, $1\le p\le2$. 
\end{thm}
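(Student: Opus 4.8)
The plan is to combine the grouping strategy already used for Theorem \ref{THM:expdecay2} with a separation of the frequency domain $\curlyM^\ep$ into a shrinking neighbourhood of $\curlyM$, where we exploit the codimension, and the complementary region, where the imaginary parts of the roots are bounded below and Theorem \ref{THM:expdecay2} applies directly. First I would recall that on $\curlyM^\ep$ the solution part \eqref{EQ:solnpart} is a sum of $L$ terms, each with a (possibly irregular near $\curlyM$) phase $\tau_k$ and amplitude $A_j^k(t,\xi)$; since the roots $\tau_1,\dots,\tau_L$ collide only inside $\curlyM$, one groups them together exactly as in the discussion preceding Theorem \ref{THM:expdecay2}, so that the combined amplitude of the group is smooth and, by the standard resolution-of-multiplicities procedure (Section \ref{SEC:bddxiaroundmults}), grows at most polynomially in $t$, contributing the factor $(1+t)^{L-1}$.

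Next I would introduce a dyadic decomposition in $\dist(\xi,\curlyM)$: write $\chi=\sum_{\ell'\ge 0}\chi_{\ell'}$, where $\chi_{\ell'}$ is supported in the annular shell $\{2^{-\ell'-1}\le \dist(\xi,\curlyM)\le 2^{-\ell'+1}\}\cap\curlyM^\ep$, plus a remainder supported away from $\curlyM$. On the shell at distance $\sim 2^{-\ell'}$ the hypothesis $c_0\dist(\xi,\curlyM)^s\le\Im\tau_k(\xi)$ gives $\Im\tau_k(\xi)\gtrsim 2^{-s\ell'}$, hence $|e^{i\tau_k(\xi)t}|\le e^{-c_0 2^{-s\ell'}t}$, while the measure of that shell is $\lesssim 2^{-\ell'\ell}$ because $\curlyM$ has codimension $\ell$ (here I use $\meas(\curlyM^{2^{-\ell'}})\lesssim 2^{-\ell'\ell}$). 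Estimating the $L^q$ norm by first taking $L^\infty$ in $x$ (trivially bounding the oscillatory factor by $1$) and using Hausdorff–Young / Hölder to pass from $\|\widehat f\|$ on that shell to $\|f\|_{L^p}$, the $\ell'$-th shell contributes
\begin{equation*}
C\,2^{-\ell'\ell(\frac1p-\frac1q)}\,e^{-c_0 2^{-s\ell'}t}\,(1+t)^{L-1}\norm{f}_{L^p}.
\end{equation*}
Summing over $\ell'\ge 0$ is the heart of the matter: one balances the exponential gain against the geometric loss by choosing the cutoff index $\ell'_\ast$ with $2^{-s\ell'_\ast}\sim 1/t$, i.e. $2^{-\ell'_\ast}\sim t^{-1/s}$. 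For $\ell'\le \ell'_\ast$ the exponential is essentially harmless and the sum of $2^{-\ell'\ell(1/p-1/q)}$ is dominated by its largest term $2^{-\ell'_\ast\ell(1/p-1/q)}\sim t^{-\frac{\ell}{s}(\frac1p-\frac1q)}$; for $\ell'>\ell'_\ast$ the factor $e^{-c_0 2^{-s\ell'}t}$ is bounded but the geometric series in $2^{-\ell'\ell(1/p-1/q)}$ converges and is again controlled by the same power of $t$. This yields the decay $\bract{t}^{-\frac{\ell}{s}(\frac1p-\frac1q)}$; together with the polynomial factor $(1+t)^{L-1}$ from the grouping we obtain \eqref{EQ:est-mult}. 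Finally, the remainder piece of $\chi$ supported away from $\curlyM$ falls under Theorem \ref{THM:expdecay2} (the roots are there separated from the real axis with a uniform $\delta>0$), giving an $e^{-\delta t}$ bound which is absorbed, and the $D_t^r D_x^\al$ derivatives only cost extra powers of $\langle\xi\rangle$ and $t$ that are bounded on $\curlyM^\ep$ and absorbed into the constant and the $(1+t)^{L-1}$ factor.

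The main obstacle I expect is controlling the amplitudes $A_j^k(t,\xi)$ uniformly across the dyadic shells: near a multiplicity of order up to $L$ the naive Cramer-type representation of $A_j^k$ has denominators vanishing on $\curlyM$, so the individual $A_j^k$ blow up like a negative power of $\dist(\xi,\curlyM)$, and only after grouping the $L$ terms (and using the resolution-of-multiplicities scheme of Section \ref{SEC:bddxiaroundmults}, which expresses the group sum with bounded—albeit $t$-polynomial—amplitude) does one get a usable bound. Making this compatible with the dyadic splitting, i.e. checking that the grouped amplitude and its needed $\xi$-derivatives are bounded by $C(1+t)^{L-1}$ uniformly in $\ell'$, is the technical crux; once that is in hand, the shell-by-shell estimate and the summation above are routine.
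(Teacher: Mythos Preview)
Your strategy matches the paper's almost exactly: both proofs rest on the resolution-of-multiplicities bound (Lemma~\ref{LEM:Lrootsmeetingest}),
\[
\Bigl|\sum_{k=1}^{L}e^{i\tau_k(\xi)t}A_j^k(t,\xi)\Bigr|\le C(1+t)^{L-1}e^{-t\min_k\Im\tau_k(\xi)},
\]
followed by integrating $e^{-c_0\dist(\xi,\curlyM)^{s}t}$ over $\curlyM^\ep$. The paper does this last step not dyadically but directly, via Proposition~\ref{PROP:generalrootsmeetingaxis1}: pass to coordinates transverse to the $C^1$ set $\curlyM$ and use the elementary bound $\int_0^M \rho^{\ell-1}e^{-c_0\rho^{s}t}\,d\rho\le C(1+t)^{-\ell/s}$ (Lemma~\ref{LEM:simplelemma}). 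Your dyadic sum is just the discrete version of this integral, so the two routes are equivalent.

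There is, however, one genuine slip in your shell summation. You estimate each shell in $L^p\to L^q$ and then add the bounds by the triangle inequality; the $\ell'$-th term is $2^{-\ell'\ell(1/p-1/q)}e^{-c_02^{-s\ell'}t}$, and for $\ell'>\ell'_\ast$ the exponential is only $O(1)$, so the tail $\sum_{\ell'>\ell'_\ast}2^{-\ell'\ell(1/p-1/q)}$ converges only when $\tfrac1p-\tfrac1q>0$. At $p=q=2$ it diverges (infinitely many shells near $\curlyM$, each contributing a term $\sim 1$), so you do not recover the needed $L^2$ bound this way. The paper avoids this by treating the two endpoints separately and interpolating: the $L^1\!-\!L^\infty$ estimate is exactly your dyadic computation (equivalently Proposition~\ref{PROP:generalrootsmeetingaxis1} with $r=0$), while the $L^2\!-\!L^2$ estimate comes from a single application of Plancherel with no decomposition, giving $C(1+t)^{L-1}\|f\|_{L^2}$. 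Interpolation then yields \eqref{EQ:est-mult}. Rewriting your argument in this endpoint-plus-interpolation form fixes the gap with no new ideas.

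Two small remarks. First, there is no ``remainder supported away from $\curlyM$'': since $\supp\chi\subset\curlyM^\ep$, every point is already within distance $\ep$ of $\curlyM$, so the dyadic shells (starting at distance $\sim\ep$) exhaust $\supp\chi$; Theorem~\ref{THM:expdecay2} is not invoked here. Second, your identification of the ``main obstacle'' is exactly right and is precisely the content of Lemma~\ref{LEM:Lrootsmeetingest}: individual $A_j^k$ blow up on $\curlyM$, but the grouped sum is bounded by $C(1+t)^{L-1}e^{-t\min_k\Im\tau_k(\xi)}$ uniformly on $\curlyM^\ep$, and no $\xi$-derivatives of the amplitude are needed for the endpoint estimates above.
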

We assume $\ep>0$ to be small enough to make sure that the
type of behaviour assumed in the theorem is the only one
that takes place
in $\curlyM^\ep$. In the complement of $\curlyM^\ep$
we may use other theorems to analyse the decay rate.
Moreover, we assume that set $\curlyM$ is $C^1$. In fact,
it is usually Lipschitz, so in order to avoid to go into
depth about its structure and existence of almost everywhere
differentiable coordinate systems, we make the technical
$C^1$ assumption. The proof of
Theorem \ref{TH:multonaxis} will be given in Section
\ref{SEC:phasefnmeetsfinorder}.

Let us now give a special case of this theorem where simple
roots meet the axis at a point, so that we have $L=1$ and
$\ell=n$. The following statement is also global in frequency,
so we have the result in Sobolev spaces.

\begin{thm}
\label{THM:dissipative}
Consider the $m^{\text{th}}$ order strictly hyperbolic Cauchy
problem~\eqref{EQ:standardCP(repeat)} for operator
$L(D_t,D_x)$, with initial data 
$f_j\in W^{N_p+\abs{\al}+r-j}_p$,
for
$j=0,\dots,m-1$, where $1\le p\le2$ and $2\leq q\leq\infty$
are such that $\frac{1}{p}+\frac{1}{q}=1$,
$r\geq 0$ and
$\alpha$ is a multi-index. We assume that the Sobolev
index $N_p$ satisfies
$N_p\geq n(\frac1p-\frac1q)$ for $1<p\leq 2$ and
$N_1>n$ for $p=1$.

Assume that the
characteristic roots $\tau_1(\xi),\dots,\tau_m(\xi)$ 
of $L(\tau,\xi)=0$
satisfy $\Im\tau_k\geq 0$ for all $k$, 
and also the following conditions:
\begin{enumerate}[leftmargin=*,label=\textup{(H\arabic*)}]
\item for all $k=1,\ldots,m$, we have
$$\liminf_{\abs{\xi}\to\infty}\Im\tau_k(\xi)>0\,;$$
\item for each
$\xi^0\in\R^n$ there is at most one index $k$
for which $\Im\tau_k(\xi^0)=0$ and there exists
a constant $c>0$ such that
\begin{equation*}
\abs{\xi-\xi^0}^s\le c{\Im\tau_k(\xi)},
\end{equation*}
for $\xi$ in some neighbourhood of $\xi^0$.
Assume also that there are finitely many points
$\xi^0$ with $\Im\tau_k(\xi^0)=0$.
\end{enumerate}
Then the solution $u=u(t,x)$ to 
Cauchy problem \eqref{EQ:standardCP(repeat)} satisfies 
the following estimate for all $t\geq 0$:
%
\begin{equation}\label{EQ:LpLqest}
\norm{\pat^r\pax^\al u(t,\cdot)}_{L^q}\le
C_{\al,r}\bract{t}^{-\frac{n}{s}
(\frac1p-\frac1q)}\sum_{j=0}^{m-1}
\norm{f_j}_{W^{N_p+\abs{\al}+r-j}_p}.
\end{equation}
\end{thm}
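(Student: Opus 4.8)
The plan is to reduce Theorem~\ref{THM:dissipative} to the microlocal pieces already established, using a partition of the frequency space into three types of zones and then summing the contributions. Writing the solution in the form \eqref{EQ:soln}, $u(t,x)=\sum_{j=0}^{m-1}E_j(t)f_j(x)$ with $E_j(t)f(x)=\int_{\Rn}e^{ix\cdot\xi}\big(\sum_{k=1}^m e^{i\tau_k(\xi)t}A_j^k(t,\xi)\big)\cutoffM(\xi)\widehat f(\xi)\,d\xi$, the first step is to fix a partition of unity $1=\chi_\infty(\xi)+\sum_{\nu}\chi_\nu(\xi)$, where $\chi_\infty$ is supported in a region $\{|\xi|\ge R\}$ on which, by hypothesis (H1) together with Proposition~\ref{prop:roots-r}, all roots are simple, separated from the real axis (uniformly, since $\liminf_{|\xi|\to\infty}\Im\tau_k>0$), and satisfy $|\tau_k(\xi)|\le C\brac{\xi}$; the remaining $\chi_\nu$ are compactly supported cutoffs covering the bounded region $\{|\xi|\le R\}$, chosen so that near each of the finitely many points $\xi^0$ with $\Im\tau_k(\xi^0)=0$ exactly one such cutoff is supported in a small neighbourhood isolating that point, and away from all such points the relevant roots have $\Im\tau_k\ge\delta>0$ on the support. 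There are finitely many $\nu$ by the finiteness assumption in (H2) and by compactness.

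The second step is to estimate each piece. For the high-frequency part $\chi_\infty(D)u$ we apply the second half of Theorem~\ref{THM:expdecay} (estimate \eqref{EQ:expdecayu}) directly, obtaining a bound by $Ce^{-\delta t}\sum_l\|f_l\|_{W_p^{N_p+|\al|+r-l}}$, which is dominated by the right-hand side of \eqref{EQ:LpLqest} since exponential decay beats any polynomial rate and the Sobolev orders match. For a bounded cutoff $\chi_\nu$ supported away from all the exceptional points: if the relevant roots there are separated from the axis, Theorem~\ref{THM:expdecay2} (or Theorem~\ref{THM:expdecay} if the roots are simple) gives again exponential decay, absorbed into $\bract{t}^{-\frac ns(\frac1p-\frac1q)}$; on such bounded sets no loss of derivatives occurs because $\widehat f$ is multiplied by a compactly supported cutoff, so $\|f\|_{L^p}\le C\|f_l\|_{W_p^{N_p+|\al|+r-l}}$ trivially after commuting $\pat^r\pax^\al$ through (each $\pat$ on $u$ produces factors $\tau_k(\xi)$ which are bounded on the support). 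For a bounded cutoff $\chi_\nu$ isolating one exceptional point $\xi^0$ where a single root meets the axis with order $\le s$: this is exactly the setting of Theorem~\ref{TH:multonaxis} (equivalently Theorem~\ref{THM:dissipative}'s own hypothesis (H2)) with $L=1$ and $\curlyM=\{\xi^0\}$, hence $\ell=\codim\{\xi^0\}=n$, giving the bound $C\bract{t}^{-\frac ns(\frac1p-\frac1q)}\|f\|_{L^p}$ with $L-1=0$, no extra polynomial factor. Summing the finitely many pieces yields \eqref{EQ:LpLqest}.

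The main technical obstacle is twofold. First, one must verify carefully that the amplitudes $A_j^k(t,\xi)$ arising in \eqref{EQ:soln} are genuinely symbols of the required orders on each zone, in particular that on the high-frequency zone $A_j^k(t,\xi)\in S^{-j}_{1,0}$ uniformly in $t$ and that applying $\pat^r\pax^\al$ shifts the orders in the way recorded in the statement — this requires the asymptotic analysis of roots (Proposition~\ref{prop:roots-r}, Proposition~\ref{Prop:asymptotic}), including control of the denominators $\prod_{k'\neq k}(\tau_k-\tau_{k'})$ that appear when solving for the $A_j^k$ via the Vandermonde-type system, and uses strict hyperbolicity to bound these away from zero for large $\xi$. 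Second, near an exceptional point the representation \eqref{EQ:soln} is not directly in the clean form required by Theorem~\ref{TH:multonaxis}: one must check that the amplitude $A_j^k$ restricted to a small neighbourhood of $\xi^0$ is bounded (it is, since the root is simple there and the Vandermonde denominator is nonzero), so that the hypotheses of Theorem~\ref{TH:multonaxis} with $L=1$ apply with $a$ in $C_0^\infty$ or at worst bounded. Once these amplitude bounds are in place the summation over the finite cover is routine, and the slowest of the rates, $\bract{t}^{-\frac ns(\frac1p-\frac1q)}$ coming from the exceptional points, dominates, giving the claimed estimate.
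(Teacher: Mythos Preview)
Your proposal is correct and follows essentially the same route as the paper: partition into a high-frequency zone (Theorem~\ref{THM:expdecay}), bounded zones away from the exceptional points (Theorems~\ref{THM:expdecay} and~\ref{THM:expdecay2}), and small neighbourhoods of the finitely many exceptional $\xi^0$, where the single root meeting the axis gives the $\bract{t}^{-\frac{n}{s}(\frac1p-\frac1q)}$ rate. The only cosmetic difference is that near $\xi^0$ the paper invokes Proposition~\ref{PROP:generalrootsmeetingaxis} directly (yielding~\eqref{EQ:estforrootmeetingaxis}) rather than citing Theorem~\ref{TH:multonaxis} with $L=1$, $\ell=n$; since the latter specialises to the former this is the same argument, and the paper itself remarks that one may set $L=1$, $\ell=n$ for a simple root meeting the axis at a point.
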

Theorem \ref{THM:dissipative} is proved in Section
\ref{SEC:rootmeetingreal-sec}, where we will also
give microlocal versions of this result around points
$\xi^0$ from hypothesis (H2). In the complement of such
points, we have roots separated from the real axis,
so we get the exponential decay from Theorems
\ref{THM:expdecay} and \ref{THM:expdecay2}. Moreover,
in the exponential decay zone we may have different
versions of the estimate, for example we can use 
estimate \eqref{EQ:expdecayu-l2} there instead of
\eqref{EQ:expdecayu}. As a special case,
such estimate together with \eqref{EQ:bestdispest} below
(used with $s=s_1=2$),
we improve the indices in Sobolev spaces over $L^2$
for the dissipative wave equation in
\eqref{EQ:dwe} and \eqref{EQ:dwe2} compared to \cite{mats76}.

If conditions of Theorem \ref{THM:dissipative} hold only with
$\xi^0=0$, namely if $\Im\tau_k(\xi^0)=0$
implies $\xi^0=0$,
we will call the polynomial $L(\tau,\xi)$
{\em strongly stable}. Such polynomials will be discussed in more
detail in applications in
Section \ref{SEC:FokkerPlanck}. Now we will give some
improvements of \eqref{EQ:LpLqest} under additional
assumptions on the roots:

\begin{rem}\label{REM:dismore}
The order of time decay in Theorem \ref{THM:dissipative}
may be improved in the following cases, if we make
additional assumptions.
If, in addition, we assume that $\Im\tau_k(\xi^0)=0$ in (H2) 
implies that $\xi^0=0$, then we actually get the estimate
\begin{equation*}
\normBig{D^r_tD^\al_x
u(t,\cdot)}_{L^q(\R^n_x)}\le
C\bract{t}^{-\frac{n}{s}\big(\frac{1}{p}-\frac{1}{q}\big)
-\frac{\abs{\al}}{2}}
\sum_{j=0}^{m-1}
\norm{f_j}_{W^{N_p+\abs{\al}+r-j}_p}
\,,
\end{equation*}
where here and further in this remark $N_p$ is as in
Theorem \ref{THM:dissipative}.

Now, assume further that for all $\xi^0$ in (H2) we also have
the estimate
\begin{equation}\label{EQ:imest}
|\tau_k(\xi)|\leq
c_1\abs{\xi-\xi^0}^{s_1},
\end{equation}
with some constant $c_1>0$, for all $\xi$ sufficiently
close to $\xi^0$. 

If we have that $\Im\tau_k(\xi^0)=0$ in (H2) 
implies that we have \eqref{EQ:imest} around
such $\xi^0$, then we actually get
\begin{equation*}
\normBig{D^r_tD^\al_x
u(t,\cdot)}_{L^q(\R^n_x)} \le
C\bract{t}^{-\big(\frac{n}{s}\big)\big(\frac{1}{p}-\frac{1}{q}\big)
-\frac{r s_1}{s}}
\sum_{j=0}^{m-1}
\norm{f_j}_{W^{N_p+\abs{\al}+r-j}_p}
\,.
\end{equation*}

And finally, assume that for all $\xi^0$ such that
$\Im\tau_k(\xi^0)=0$ in (H2), we also have
$\xi^0=0$ and \eqref{EQ:imest} around
such $\xi^0$.
Then we actually get
\begin{equation}\label{EQ:bestdispest}
\normBig{D^r_tD^\al_x
u(t,\cdot)}_{L^q(\R^n_x)}\le
C\bract{t}^{-\frac{n}{s}\big(\frac{1}{p}-\frac{1}{q}\big)-
\frac{\abs{\al}}{s}-\frac{r s_1}{s}}
\sum_{j=0}^{m-1}
\norm{f_j}_{W^{N_p+\abs{\al}+r-j}_p}
\,.
\end{equation}
\end{rem}
Estimate \eqref{EQ:bestdispest} with $s=s_1=2$ gives the
decay estimate for the dissipative wave equation in
\eqref{EQ:dwe}.
The proof of this remark is given in
Remark \ref{rem:mats}. 

\bigskip
Moreover, there are other possibilities of {\em multiple roots
intersecting each other
while lying entirely on the real axis}. For example,
this is the case for the wave equation or for more general
equations with homogeneous symbols, when several roots meet
at the origin. In this case roots always lie on the real axis,
but they become irregular at the point of multiplicity, which 
is the origin for homogeneous roots. In the case
when lower order terms are presents, characteristics roots
are not homogeneous in general, so we can not eliminate
time from the estimates as was done 
in Section \ref{SEC:homogoperators}. It means that we have to
look at the structure of such multiple points by making cut-offs
around them and studying their structure in more detail. 
In particular, there is an interaction between low frequencies
and large times, which does not take place for homogeneous
symbols.
The detailed discussion of this topic and corresponding
decay rates will be determined in Section
\ref{SEC:phasefnliesonaxisbddxi}.

\subsection{Application to the Cauchy problem}
\label{SEC:24}
Putting together theorems from previous sections
we obtain the following conclusion about solutions to the
Cauchy problem \eqref{EQ:standardCP(repeat)}. We will first
formulate the following general result collecting statements
of previous sections, and then will explain how this result
can be used.

\begin{thm}\label{THM:overallmainthm}
Suppose $u=u(t,x)$ is the solution of the $m^{\text{th}}$ order
linear\textup{,} constant coefficient\textup{,} strictly hyperbolic
Cauchy problem~\eqref{EQ:standardCP(repeat)}. Denote the
characteristic roots of the operator by
$\tau_1(\xi),\dots,\tau_m(\xi)$, and assume that
$\Im\tau_k(\xi)\geq 0$ for all $k=1,\ldots,n$, and all
$\xi\in\Rn$.

We introduce two functions\textup{,} $K^{(\text{l})}(t)$ and
$K^{(\text{b})}(t)$, which take values as follows\textup{:}
\begin{enumerate}[leftmargin=*,label=\textup{\Roman*.},ref=\Roman*]
\item\label{RESULT:mainthmlargexi}
Consider the behaviour of~each characteristic root\textup{,}
$\tau_k(\xi)$\textup{,} in the region $\abs{\xi}\ge M$\textup{,}
where~$M$ is a large enough 
real number. The following
table gives values for the function $K_k^{(\text{l})}(t)$
corresponding to possible properties of~$\tau_k(\xi)$\textup{;} if
$\tau_k(\xi)$ satisfies more than one\textup{,} then take
$K_k^{(\text{l})}(t)$ to be function that decays the slowest as
$t\to\infty$.
\end{enumerate}
\begin{center}\begin{upshape}
\begin{tabular}[4]{|c|c|c|}\hline
Location of $\tau_k(\xi)$ & Additional Property &
$K_k^{\text(l)}(t)$\\
\hline\hline%
away from real axis && $e^{-\de t}$, some $\de>0$\\\hline
&$\det\Hess\tau_k(\xi)\ne0$ &
$\bract{t}^{-\frac{n}{2}(\frac{1}{p}-\frac{1}{q})}$\\
on real axis &$\rank\Hess\tau_k(\xi)=n-1$ &
$\bract{t}^{-\frac{n-1}{2}(\frac{1}{p}-\frac{1}{q})}$\\
& convexity condition $\ga$ &
$\bract{t}^{-\frac{n-1}{\ga}(\frac{1}{p}-\frac{1}{q})}$\\ & no
convexity condition, $\ga_0$ &
$\bract{t}^{-\frac{1}{\ga_0}}$\\\hline 
\end{tabular}\end{upshape}
\end{center}
Then take $K^{(\text{l})}(t)=\max_{k=1\,\dots,n}K_k^{\text(l)}(t)$.

\begin{enumerate}[leftmargin=*,label=\textup{\Roman*.},resume,ref=\Roman*]
\item\label{RESULT:mainthmbddxi}
Consider the behaviour of the characteristic roots in the bounded
region~$\abs{\xi}\le M$\textup{;} again\textup{,} take
$K^{(\text{b})}(t)$ to be the maximum \textup{(}slowest
decaying\textup{)} function for which there are roots satisfying the
conditions in the following table\textup{:}
\end{enumerate}
\begin{center}\begin{upshape}
\begin{tabular}[4]{|c|c|c|}\hline
Location of Root(s)& Properties & $K^{(\text{b})}(t)$\\
\hline\hline%
away from axis & no multiplicities &$e^{-\de t}$, some $\de>0$\\
&$L$ roots coinciding & $\bract{t}^L e^{-\de t}$\\
\hline on axis,&$\det\Hess\tau_k(\xi)\ne0$ &
$\bract{t}^{-\frac{n}{2}(\frac{1}{p}-\frac{1}{q})}$\\
no multiplicities
$^\ast$ 
& convexity condition $\ga$ &
$\bract{t}^{-\frac{n-1}{\ga}(\frac{1}{p}-\frac{1}{q})}$\\
& no convexity condition, $\ga_0$ &
$\bract{t}^{-\frac{1}{\ga_0}(\frac{1}{p}-\frac{1}{q})}$\\
\hline on axis,&$L$ roots coincide&  \\
multiplicities$^\ast, ^{**}$&on set of codimension $\ell$&$\bract{t}^{L-1-\ell}$\\
\hline meeting axis & $L$ roots coincide&\\
with finite order $s$& on set of codimension $\ell$&
$\bract{t}^{L-1-\frac{\ell}{s}(\frac{1}{p}-\frac{1}{q})}$\\
\hline
\end{tabular}\end{upshape}
\end{center}
\begin{small}$^\ast$ 
These two cases of roots lying on the real axis
require some additional regularity assumptions; see 
corresponding microlocal statements for details. \\
\noindent
$^{**}$ This is the $L^1-L^\infty$
rate in a shrinking region; see
Proposition \ref{EQ:estaroundmult} for details. For
different types of $L^2$ estimates see Section
\ref{SEC:phasefnliesonaxisbddxi}, and then
interpolate.\end{small}

\medskip
Then, with
$K(t)=\max\big(K^{\text{(b)}}(t),K^{\text{(l)}}(t)\big)$\textup{,}
the following estimate holds\textup{:}
\begin{equation*}
\norm{\pax^\al\pat^r u(t,\cdot)}_{L^q}\le C_{\alpha,r} K(t)
\sum_{l=0}^{m-1}\norm{f_l}_{W^{N_p-l}_p}\,,
\end{equation*}
where $1\le p\le2$, $\frac{1}{p}+\frac{1}{q}=1$, and
$N_p=N_p(\al,r)$ is a constant depending on~$p,\al$ and~$r$.
\end{thm}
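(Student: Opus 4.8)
The plan is to assemble the microlocal estimates of Sections \ref{SEC:21}--\ref{SEC:23} by means of a finite partition of the frequency space. Write the solution as $u(t,x)=\sum_{j=0}^{m-1}E_j(t)f_j(x)$ with $E_j(t)$ as in \eqref{EQ:soln}, so it suffices to estimate each $E_j(t)f_j$. Fix $M$ large enough that, by strict hyperbolicity together with the perturbation analysis of Section \ref{CHAP3}, on $\{\abs{\xi}\ge M\}$ the roots $\tau_k(\xi)$ are simple, smooth, and satisfy the asymptotic and symbolic bounds of Proposition \ref{prop:roots-r} (see also Proposition \ref{Prop:asymptotic}); in particular assumption (ii) of Theorem \ref{THM:expdecay} and hypotheses (i)--(iv) of Theorems \ref{THM:convexsp} and \ref{THM:noncovexargument-sp} hold there, and the finiteness of the convexity indices $\ga$, $\ga_0$ is guaranteed by Lemmas \ref{LEM:convexityconstantconv} and \ref{LEM:ga0conv}. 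Choose $\chi_{\mathrm l},\chi_{\mathrm b}\in C^\infty(\Rn)$ with $\chi_{\mathrm l}+\chi_{\mathrm b}\equiv 1$, where $\chi_{\mathrm b}\in C_0^\infty(\Rn)$ is supported in $\{\abs{\xi}\le 2M\}$ and $\chi_{\mathrm l}$ is supported in $\{\abs{\xi}\ge M\}$, and split $E_j(t)=\chi_{\mathrm l}(D)E_j(t)+\chi_{\mathrm b}(D)E_j(t)$. Since $D_x^\al D_t^r$ only contributes a factor behaving like $\xi^\al\tau_k(\xi)^r$, it is harmless to carry these derivatives through.

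\emph{Large frequencies.} On $\supp\chi_{\mathrm l}$ all roots are simple, so \eqref{EQ:soln} may be treated termwise, reducing to integrals of the form \eqref{EQ:term} in which, by the bounds of Section \ref{CHAP3}, each amplitude $A_j^k(t,\xi)$ (with the extra factor from $D_x^\al D_t^r$) is a symbol of order matching the loss $N_p+\abs{\al}+r-l$. Using a further \emph{finite} partition of unity on $\{\abs{\xi}\ge M\}$---finite because for large $\abs{\xi}$ the qualitative behaviour of each $\tau_k$ is governed, via homogeneity and compactness of $\Snm$, by that of the homogeneous root $\va_k$, so that e.g. critical points can be isolated (cf.\ Lemma \ref{lemma:critical-points})---we place each term in one of the rows of the table in part \ref{RESULT:mainthmlargexi}: if $\tau_k$ is away from the real axis we apply Theorem \ref{THM:expdecay}; if it lies on the axis we apply Theorem \ref{THM:nondeghess}, Theorem \ref{THM:nondeghess2}, Theorem \ref{THM:convexsp} or Theorem \ref{THM:noncovexargument-sp} according to which non-degeneracy or convexity hypothesis holds. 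Retaining the slowest-decaying resulting rate as $K_k^{(\mathrm l)}(t)$ and summing over the finitely many $k$ and pieces gives $\norm{D_x^\al D_t^r\chi_{\mathrm l}(D)u(t,\cdot)}_{L^q}\le C K^{(\mathrm l)}(t)\sum_l\norm{f_l}_{W_p^{N_p-l}}$ with $K^{(\mathrm l)}(t)=\max_k K_k^{(\mathrm l)}(t)$ and $N_p$ the largest Sobolev order occurring.

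\emph{Bounded frequencies.} The set $\{\abs{\xi}\le 2M\}$ is compact, and since the $\tau_k$ are algebraic functions of $\xi$ the sets where two or more of them coincide are semi-algebraic of positive codimension, and each $Z_k=\{\Im\tau_k=0\}$ has finitely many connected components there. Hence $\supp\chi_{\mathrm b}$ may be covered by finitely many open sets carrying a subordinate partition of unity so that on each piece exactly one of the following holds, estimated accordingly: roots simple and off the axis---Theorem \ref{THM:expdecay} ($e^{-\de t}$); a group of $L$ roots coinciding and off the axis---Theorem \ref{THM:expdecay2} ($\bract{t}^{L}e^{-\de t}$); roots simple and on the axis---Corollary \ref{COR:realaxis} (the Hessian/convexity rates); a group of $L$ roots meeting the axis with finite order $s$ on a $C^1$ set $\curlyM$ of codimension $\ell$---Theorem \ref{TH:multonaxis} ($\bract{t}^{L-1-\frac{\ell}{s}(\frac1p-\frac1q)}$); and a group of $L$ roots lying on the axis through a multiplicity set of codimension $\ell$---the analysis of Section \ref{SEC:phasefnliesonaxisbddxi}, in particular Proposition \ref{EQ:estaroundmult} ($\bract{t}^{L-1-\ell}$ on the $L^1$--$L^\infty$ scale, then interpolate). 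In the multiplicity cases the amplitudes $A_j^k(t,\xi)$ are irregular, so instead of treating \eqref{EQ:soln} termwise we regroup it into the sums \eqref{EQ:solnpart} over colliding roots and invoke the resolution-of-multiplicities procedure of Sections \ref{SEC:bddxiaroundmults}--\ref{SEC:phasefnliesonaxisbddxi}. As $\chi_{\mathrm b}$ is compactly supported, no derivatives of $f$ are needed on this part. Taking $K^{(\mathrm b)}(t)$ to be the slowest-decaying among the finitely many rates so obtained yields $\norm{D_x^\al D_t^r\chi_{\mathrm b}(D)u(t,\cdot)}_{L^q}\le CK^{(\mathrm b)}(t)\sum_l\norm{f_l}_{L^p}$.

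\emph{Conclusion and main obstacle.} Adding the two contributions and setting $K(t)=\max\big(K^{(\mathrm b)}(t),K^{(\mathrm l)}(t)\big)$ gives the asserted estimate, with $N_p=N_p(\al,r)$ the maximum over the finitely many large-frequency pieces of the Sobolev orders produced by the quoted theorems (the bounded pieces requiring only $L^p$ norms). The main difficulty is the bounded-frequency region near multiplicities: there the phases lose regularity and the amplitudes blow up, so the termwise reduction to \eqref{EQ:term} is unavailable and one must use the regrouping \eqref{EQ:solnpart} and the multiplicity-resolution arguments, which is exactly why those sections are developed. A secondary technical point is verifying that the partitions of unity can be taken finite---in the bounded region via algebraicity of the roots, in the unbounded region via the perturbation analysis of Section \ref{CHAP3} and compactness of $\Snm$---and that the symbolic and Sobolev bookkeeping for the $A_j^k(t,\xi)$, including the polynomial-in-$t$ factors from multiple roots, is uniform across all pieces and compatible with the claimed $K(t)$.
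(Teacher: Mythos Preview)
Your proposal is correct and follows essentially the same route as the paper: the ``Schematic of method'' in Section~\ref{SEC:outline} is exactly your large/bounded-frequency split via cut-offs, termwise treatment of \eqref{EQ:soln} where roots are simple, regrouping into \eqref{EQ:solnpart} near multiplicities, and assembly of the rates from the microlocal Theorems \ref{THM:expdecay}--\ref{THM:noncovexargument-sp}, \ref{TH:multonaxis} and the analysis of Section~\ref{SEC:phasefnliesonaxisbddxi}. The paper carries this out in Sections~\ref{SEC:Cauchy}--\ref{SEC:bddxiaroundmults}, with the finiteness of the bounded-frequency decomposition coming from Lemma~\ref{LEM:multiplerootssetisnice} and Corollary~\ref{COR:multiplerootsetisnice} (rather than a general semi-algebraicity argument), and the amplitude/Sobolev bookkeeping for the $A_j^k$ provided by Lemma~\ref{LEM:ordercoeff}.
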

The scheme of the proof of this theorem and precise
relations to microlocal theorems of previous sections
will be given in Section \ref{SEC:outline}.
However, let us now briefly explain how to understand this theorem.
Since the decay rates do depend on the behaviour of characteristic
roots in different regions and theorems from previous sections
determine the corresponding rates, in Theorem 
\ref{THM:overallmainthm} 
we single out properties which determine
the final decay rate. Since the same characteristic root, 
say $\tau_k$,
may exhibit different properties in different regions, we look
at the corresponding rates 
$K^{\text{(b)}}(t),K^{\text{(l)}}(t)$ under each possible condition
and then take the slowest one for the final answer. 
The value of the Sobolev index $N_p=N_p(\alpha,r)$ 
depends on the regions
as well, and it can be found from microlocal statements of
previous sections for each region.

In conditions of Part I of the theorem, it can be
shown by the perturbation arguments that only three
cases are possible for large $\xi$, namely, the characteristic root
may be uniformly separated from the real axis, it may lie on the
axis, or it may converge to the real axis at infinity. If,
for example, the
root lies on the axis and, in addition, it satisfies the convexity
condition with index $\gamma$, we get the corresponding decay rate
$K^{\text{(l)}}(t)=\bract{t}^{-\frac{n-1}{\ga}(\frac{1}{p}-\frac{1}{q})}$.
Indices $\gamma$ and $\gamma_0$ in the tables are defined
as the maximum of the corresponding indices $\gamma(\Sigma_\lambda)$
and $\gamma_0(\Sigma_\lambda)$, respectively, where 
$\Sigma_\lambda=\{\xi:\tau_k(\xi)=\lambda\}$, over all $k$ and over
all $\lambda$, for which $\xi$ lies in the corresponding region.
At present, we do not have examples of characteristic roots
tending to the real axis for large frequencies while remaining
in the open upper half of the complex plane, so we do not
give any estimates for this case in 
Theorem \ref{THM:overallmainthm}. However, in Section
\ref{SEC:asymptotic} we will still
discuss what happens in this case.

The statement in Part II is more involved since we may have multiple
roots intersecting on rather irregular sets. 
The number $L$ of coinciding roots corresponds to the number of
roots which actually contribute to the loss of regularity.
For example, operator $(\partial_t^2-\Delta)(\partial_t^2-2\Delta)$
would have $L=2$ for both pairs of roots 
$\pm|\xi|$ and $\pm\sqrt{2}|\xi|$,
intersecting at the
origin.  Meeting the axis with finite order $s$ 
means that we have the estimate 
\begin{equation}\label{EQ:disttau}
\dist(\xi,Z_k)^s\leq c|\Im\tau_k(\xi)|
\end{equation}
for all the intersecting roots, where $Z_k=\{\xi: \Im\tau_k(\xi)=0\}.$
In Part II of Theorem \ref{THM:overallmainthm}, 
the condition that 
$L$ roots meet the axis with finite order $s$ on a set of codimension
$\ell$ means that all these estimates hold
and that there is a ($C^1$) set $\curlyM$ 
of codimension $\ell$ such that
$Z_k\subset \curlyM$ for all corresponding $k$
(see Theorem \ref{TH:multonaxis} for details). 
In Theorem \ref{THM:dissipative} we discuss the special case
of a single root $\tau_k$
meeting the axis at a point $\xi_0$ with order $s$,
which means that $\Im\tau_k(\xi_0)=0$ and 
that we have the estimate 
$\abs{\xi-\xi_0}^s\le c\abs{\Im\tau_k(\xi)}$.
In fact, under certain conditions an improvement in this part
of the estimates is possible, see Theorem \ref{THM:dissipative}
and Remark \ref{REM:dismore}. 

In Part II of the theorem, condition $^{**}$ is formulated in
the region of the size decreasing with time:
if we have $L$ multiple roots 
which coincide on the real axis on a set $\curlyM$ 
of codimension $\ell$,
we have an estimate  
\begin{equation}\label{EQ:around}
|u(t,x)|\leq C(1+t)^{L-1-\ell}
\sum_{l=0}^{m-1}\norm{f_l}_{L^1},
\end{equation}
if we cut off the Fourier transforms of the Cauchy data
to the $\epsilon$-neighbourhood $\curlyM^\epsilon$ of $\curlyM$
with $\epsilon=1/t$. 
Here we may  relax the definition of the intersection
above and say that if $L$ roots coincide
in a set $\curlyM$, then 
they coincide on a set of codimension $\ell$
if the measure of the $\epsilon$-neighborhood $\curlyM^\epsilon$ 
of $\curlyM$
satisfies $|\curlyM^\epsilon|\leq C\epsilon^{\ell}$ 
for small $\epsilon>0$;
here $\curlyM^\epsilon=\{\xi\in\R^n: 
\dist (\xi,\curlyM)\leq\epsilon\}.$
The estimate \eqref{EQ:around}
follows from the procedure
described in Section \ref{SEC:resofroots} of the resolution
of multiple roots, and details and proof of
estimate \eqref{EQ:around} are
given in Section \ref{SEC:phasefnliesonaxisbddxi}, especially in
Proposition \ref{EQ:estaroundmult}.

We can then combine this with the remaining cases outside
of this neighborhood, where it is possible to establish
decay by different arguments. In particular, this is the case
of homogeneous equations with roots intersecting at the
origin. However, one sometimes needs to introduce special
norms to handle $L^2$-estimates around the multiplicities.
Details of this are given in the $L^2$ part of 
Section \ref{sec:shrinking}, in particular in
Proposition \ref{Prop:aroundmultl2}.
Finally, in the case of a simple root we may set $L=1$,
and $\ell=n$, if it meets the axis at a point.

\subsection{Schematic of method}
\label{SEC:outline}
Let us briefly explain some ideas behind the reduction of
Theorem \ref{THM:overallmainthm} to the proceeding theorems.
The realisation of the steps below will be done in Sections
\ref{SEC:Cauchy} and \ref{SEC:bddxiaroundmults}.

\begin{enumerate}[leftmargin=*,label=\textbf{Step \arabic*:}]
\item Representation of the solution.
\end{enumerate}
Using the Fourier transform in $x$, 
this reduces the problem to studying
time-dependent oscillatory integrals, at least
for frequencies with no multiplicities. In the case near
multiplicities we will introduce a special procedure to
deal with them in Section \ref{SEC:bddxiaroundmults}.
\begin{enumerate}[leftmargin=*,label=\textbf{Step \arabic*:},resume]
\item Division of the integral.
\end{enumerate}

We reduce the problem to several 
microlocal cases using suitable cut-off
functions. The problem is
divided into studying the behaviour of the characteristic roots in
three regions of the phase space---large $\abs{\xi}$, bounded
$\abs{\xi}$ away from multiplicities of roots and bounded
$\abs{\xi}$ in a neighbourhood of multiplicities.

\begin{enumerate}[leftmargin=*,label=\textbf{Step \arabic*:},resume]
\item Interpolation reduces problem to finding $L^1-L^\infty$ and
$L^2-L^2$ estimates.
\end{enumerate}
\begin{enumerate}[leftmargin=*,label=\textbf{Step \arabic*:},resume]
\item Large $\abs{\xi}$:
\begin{itemize}[leftmargin=*]
\item root separated from the real axis 
(Theorem \ref{THM:expdecay});
\item root lying on the real axis
(Theorems \ref{THM:nondeghess2}--\ref{THM:noncovexargument-sp}).
\end{itemize}

\item Bounded $\abs{\xi}$, away from multiplicities:
\begin{itemize}[leftmargin=*]
\item root away from the real axis
(Theorem \ref{THM:expdecay});
\item root meeting the real axis with finite order
(Theorem \ref{THM:dissipative});
\item root lying on the real axis
(Theorems \ref{THM:nondeghess2}--\ref{THM:noncovexargument-sp}).
\end{itemize}

\item Bounded $\abs{\xi}$, around multiplicities of roots:
\begin{itemize}[leftmargin=*]
\item all intersecting roots away from the real axis
(Theorem \ref{THM:expdecay2});
\item all intersecting roots lie on the real axis around the
multiplicity
(Section \ref{SEC:phasefnliesonaxisbddxi});
\item all intersecting roots meet the real axis with finite order
(Theorem \ref{TH:multonaxis});
\item one or more of the roots meets the real axis with infinite
order
(similar to Theorems 
\ref{THM:nondeghess2}--\ref{THM:noncovexargument-sp}).
\end{itemize}
\end{enumerate}

\subsection{Strichartz estimates and 
nonlinear problems}\label{SEC:nonlinear}

Let us denote by $\kappa_{p,q}(L(D_t,D_x))$ 
the time decay rate for the
Cauchy problem \eqref{EQ:standardCP(repeat)}, so that
function $K(t)$ from Theorem \ref{THM:overallmainthm} satisfies
$K(t)\simeq t^{-\kappa_{p,q}(L)}$ for large $t$.
Thus, for polynomial decay rates, we have 
\begin{equation}\label{EQ:index}
\kappa_{p,q}(L)=-\lim_{t\to\infty}\frac{\ln K(t)}{\ln t}.
\end{equation}
We will also abbreviate the important case
$\kappa(L)=\kappa_{1,\infty}(L)$ since by interpolation we have
$\kappa_{p,p^\prime}=\kappa_{2,2}\frac{2}{p^\prime}+
\kappa_{1,\infty}
(\frac1p-\frac{1}{p^\prime})$, $1\leq p\leq 2$. 
These indices $\kappa(L)$ and $\kappa_{p,p^\prime}(L)$ of
operator $L(D_t,D_x)$ will be responsible for the decay
rate in the Strichartz estimates for solutions to
\eqref{EQ:standardCP(repeat)}, and for the subsequent 
well-posedness properties of the corresponding semilinear
equation which are discussed below.

In order to present an application to nonlinear problems let
us first consider the inhomogeneous equation
\begin{equation}\label{EQ:CPnonhom}
\left\{\begin{aligned}& L(D_t,D_x)u=f,\quad t>0,\\
&\pat^lu(0,x)=0,\quad l=0,\dots,m-1,\;
x\in\R^n\,,
\end{aligned}\right.
\end{equation}
with $L(D_t,D_x)$ as in \eqref{EQ:standardCauchyproblem}.
By the Duhamel's formula the solution can be expressed as
\begin{equation}\label{EQ:solnonhom}
u(t)=\int_0^t  E_{m-1}(t-s) f(s) ds,
\end{equation}
where $E_{m-1}$ is given in \eqref{EQ:soln}.
Let $\kappa=\kappa_{p,p^\prime}(L)$ 
be the time decay rate of 
operator $L$, determined by Theorem \ref{THM:overallmainthm}
and given in \eqref{EQ:index}.
Then
Theorem \ref{THM:overallmainthm} implies that we have estimate
\begin{equation*}\label{EQ:nonhom1}
|| E_{m-1}(t) g||_{W^s_{p^\prime}}\leq C(1+t)^{-\kappa}||g||_{W^s_p}.
\end{equation*}
Together with \eqref{EQ:solnonhom} this implies
$$||u(t)||_{W^s_{p^\prime}(\R^n_x)}\leq
C\int_0^t (t-s)^{-\kappa} ||f(s)||_{W^s_p} ds\leq
C |t|^{-\kappa} \conv ||f(t)||_{W_p^s}.$$
By the Hardy--Littlewood--Sobolev theorem this 
is $L^q(\R)-L^{q^\prime}(\R)$
bounded if $1<q<2$ and $1-\kappa=\frac1q-\frac{1}{q^\prime}.$
Therefore, this implies the following Strichartz estimate:

\begin{thm}\label{THM:nonhom}
Let $\kappa_{p,p^\prime}$ be the time decay rate of the operator
$L(D_t,D_x)$ in the Cauchy problem
\eqref{EQ:CPnonhom}. Let $1<p,q<2$ be such that
$1/p+1/p^\prime=1/q+1/q^\prime=1$ and
$1/q-1/q^\prime=1-\kappa_{p,p^\prime}$. 
Let $s\in\R$. Then there is a
constant $C$ such that the solution $u$ to the Cauchy problem
\eqref{EQ:CPnonhom} satisfies
$$
||u||_{L^{q^\prime}(\R_t,W^s_{p^\prime}(\R^n_x))}\leq
C||f||_{L^{q}(\R_t,W^s_{p}(\R^n_x))},
$$
for all data right hand side $f=f(t,x)$.
\end{thm}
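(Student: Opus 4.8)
The plan is to reduce the space--time Strichartz estimate to a one-dimensional convolution inequality in the time variable, following the outline sketched in the paragraph preceding the statement.

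First I would record the fixed-time bound for the relevant propagator coming from Theorem \ref{THM:overallmainthm}, namely
\[
\norm{E_{m-1}(t)g}_{W^s_{p'}(\R^n_x)}\le C(1+t)^{-\kappa}\norm{g}_{W^s_p(\R^n_x)},\qquad t\ge 0,
\]
where $\kappa=\kappa_{p,p'}(L)$ is the decay index of $L(D_t,D_x)$ and where I use that $E_{m-1}(t)$ is a Fourier multiplier in $x$ (so it commutes with $\jp{D}^s$, which reduces matters to $s=0$). Next, Duhamel's formula \eqref{EQ:solnonhom} expresses the solution of \eqref{EQ:CPnonhom} as $u(t)=\int_0^t E_{m-1}(t-s)f(s)\,ds$. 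Inserting the fixed-time bound under the integral sign and applying Minkowski's integral inequality gives, with $F(s):=\norm{f(s)}_{W^s_p(\R^n_x)}\indicator_{s>0}$ and $w(t):=(1+\abs{t})^{-\kappa}\indicator_{t>0}$,
\[
\norm{u(t)}_{W^s_{p'}(\R^n_x)}\le C\int_0^t(1+(t-s))^{-\kappa}\norm{f(s)}_{W^s_p(\R^n_x)}\,ds=C\,(w\conv F)(t).
\]

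The one substantive step that remains is the scalar inequality $\norm{w\conv F}_{L^{q'}(\R_t)}\le C\norm{F}_{L^q(\R_t)}$. For this I would observe that $w(t)\le\min\{1,\abs{t}^{-\kappa}\}$, so $w$ lies in the weak Lebesgue space $L^{1/\kappa,\infty}(\R)$; convolution against such a kernel maps $L^q(\R)$ boundedly into $L^{q'}(\R)$ --- this is exactly the Hardy--Littlewood--Sobolev inequality (equivalently, Young's inequality with a weak-type kernel) --- precisely when $1+\tfrac1{q'}=\kappa+\tfrac1q$, that is, when $\tfrac1q-\tfrac1{q'}=1-\kappa$, which is the hypothesis of the theorem. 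Chaining this with the previous display yields $\norm{u}_{L^{q'}(\R_t,W^s_{p'}(\R^n_x))}\le C\norm{f}_{L^q(\R_t,W^s_p(\R^n_x))}$, as desired.

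The point requiring care is the compatibility of the exponents: Hardy--Littlewood--Sobolev needs $0<\kappa<1$ together with $1<q<q'<\infty$. The hypothesis $1<q<2$ forces $0<\tfrac1q-\tfrac1{q'}<1$ and hence $0<\kappa<1$ automatically, so no inconsistency arises; but it is worth noting that the stated range of $q$ tacitly restricts attention to operators whose decay index satisfies $\kappa<1$, and that for $\kappa\ge1$ the kernel $w$ is integrable and one would instead argue by ordinary Young's inequality, obtaining $L^1_t\to L^1_t$ and $L^\infty_t\to L^\infty_t$ and interpolating. Beyond verifying these exponent relations, every estimate invoked is standard, so I expect no genuine obstacle.
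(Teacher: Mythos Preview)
Your proposal is correct and follows essentially the same route as the paper: Duhamel's formula, the fixed-time dispersive estimate $\norm{E_{m-1}(t)g}_{W^s_{p'}}\le C(1+t)^{-\kappa}\norm{g}_{W^s_p}$ from Theorem~\ref{THM:overallmainthm}, and then Hardy--Littlewood--Sobolev in the time variable under the relation $1/q-1/q'=1-\kappa$. The only cosmetic difference is that the paper passes directly to the homogeneous kernel $|t|^{-\kappa}$ and invokes HLS, whereas you keep the bounded kernel $(1+|t|)^{-\kappa}$ and argue via its membership in $L^{1/\kappa,\infty}$; your extra remarks on the exponent compatibility and the $\kappa\ge 1$ case are welcome but not needed for the statement as formulated.
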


By the standard iteration method we obtain the well-posedness
result for the following semilinear equation
\begin{equation}\label{EQ:CPsemi}
\left\{\begin{aligned}& L(D_t,D_x)u=F(t,x,u),\quad t>0,\\
&\pat^lu(0,x)=f_l(x),\quad l=0,\dots,m-1,\;
x\in\R^n\,.
\end{aligned}\right.
\end{equation}

\begin{thm}\label{THM:semi}
Let $\kappa_{p,p^\prime}$ be the time decay index of the operator
$L(D_t,D_x)$ in the Cauchy problem
\eqref{EQ:CPsemi}. Let $p,q$ be 
such that
$1/p+1/p^\prime=1/q+1/q^\prime=1$ and
$1/q-1/q^\prime=1-\kappa_{p,p^\prime}$. Let $s\in\R$. 

Assume that for any 
$v\in{L^{q^\prime}(\R_t,W^s_{p^\prime}(\R^n_x))}$, 
the nonlinear term satisfies 
$F(t,x,v)\in{L^{q}(\R_t,W^s_{p}(\R^n_x))}.$ Moreover,
assume that for every $\ep>0$ there exists a decomposition
$-\infty=t_0<t_1<\cdots<t_k=+\infty$ such that the
estimates
$$ ||F(t,x,u)-F(t,x,v)||_{L^{q}(I_j,W^s_{p}(\R^n_x))}\leq
\ep ||u-v||_{L^{q^\prime}(I_j,W^s_{p^\prime}(\R^n_x))}$$
hold for the intervals
$I_j=(t_j,t_{j+1})$, $j=0,\ldots,k-1.$

Finally, assume that the solution of the corresponding
homogeneous Cauchy problem is in the space
${L^{q^\prime}(\R_t,W^s_{p^\prime}(\R^n_x))}$.

Then the semilinear Cauchy problem 
\eqref{EQ:CPsemi} has a unique solution in
the space ${L^{q^\prime}(\R_t,W^s_{p^\prime}(\R^n_x))}$.
\end{thm}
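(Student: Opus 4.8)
The plan is to run a contraction-mapping argument in the Strichartz space $X := L^{q'}(\R_t, W^s_{p'}(\R^n_x))$, with Theorem~\ref{THM:nonhom} supplying the decisive linear bound and the hypotheses on $F$ ensuring both self-mapping and contractivity. First I would recast \eqref{EQ:CPsemi} via Duhamel's formula as the fixed-point equation $u = \Psi(u)$, where
\[
\Psi(v)(t) := u_0(t) + \int_0^t E_{m-1}(t-s)\, F(s,\cdot,v(s))\, ds,
\]
$u_0$ is the solution of the homogeneous Cauchy problem with data $f_0,\dots,f_{m-1}$, and $E_{m-1}$ is the propagator from \eqref{EQ:soln}. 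By assumption $u_0 \in X$, and by the hypothesis on $F$ together with the computation preceding Theorem~\ref{THM:nonhom} (Duhamel plus Hardy--Littlewood--Sobolev applied to $\norm{E_{m-1}(t)g}_{W^s_{p'}} \le C(1+t)^{-\kappa}\norm{g}_{W^s_p}$), $\Psi$ maps $X$ into $X$. The same estimate applied to a difference yields, for every interval $I \subset \R$,
\[
\norm{\Psi(u) - \Psi(v)}_{L^{q'}(I, W^s_{p'})} \le C_0\, \norm{F(\cdot,\cdot,u) - F(\cdot,\cdot,v)}_{L^q(I, W^s_p)},
\]
with $C_0$ independent of $I$.

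Next I would fix $\ep := 1/(2C_0)$ and use the structural assumption on $F$ to obtain a partition $-\infty = t_0 < t_1 < \dots < t_k = +\infty$ so that on each $I_j = (t_j, t_{j+1})$ one has $\norm{F(\cdot,\cdot,u) - F(\cdot,\cdot,v)}_{L^q(I_j, W^s_p)} \le \ep\,\norm{u-v}_{L^{q'}(I_j, W^s_{p'})}$. Combined with the displayed bound, $\Psi$ is a $\tfrac12$-contraction on $L^{q'}(I_j, W^s_{p'}(\R^n_x))$. Starting on the interval $I_{j_0}$ that contains the initial time $t=0$, Banach's fixed-point theorem produces a unique solution $u$ on $I_{j_0}$. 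Then I would propagate: at the right endpoint $t_{j_0+1}$ the solution determines a Cauchy state whose free evolution replaces $u_0$ on the next interval, and the argument is repeated; since $k$ is finite, the process terminates after finitely many steps in each time direction, and the local solutions glue to a single element of $X$. Uniqueness on each piece forces global uniqueness in $X$.

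The hard part will be the gluing rather than the contraction estimate itself, which is immediate from Theorem~\ref{THM:nonhom}: one must verify that a solution lying in $X$ over $I_j$ possesses a well-defined state at the partition point (the traces of $u$ and of its time derivatives up to order $m-1$), that this state is admissible Cauchy data on $I_{j+1}$, and that the concatenated function solves \eqref{EQ:CPsemi} in the distributional sense with the claimed $X$-regularity. This amounts to a compatibility between the Strichartz space $X$ and the energy space in which $E_{m-1}(t)$ and its time derivatives act continuously; I would handle it by carrying that energy space along with $X$ throughout, using the finiteness of $k$ so that no uniformity over the number of gluings is needed.
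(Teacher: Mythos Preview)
Your proposal is correct and follows exactly the approach the paper intends: the paper does not give a detailed proof of this theorem, stating only that it follows ``by the standard iteration method'' from Theorem~\ref{THM:nonhom}, and your contraction-mapping argument in $L^{q'}(\R_t,W^s_{p'}(\R^n_x))$ with interval-by-interval gluing is precisely that standard method spelled out.
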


\section{Properties of hyperbolic polynomials}
\label{CHAP3}

In order to study the solution~$u(t,x)$ to
\eqref{EQ:standardCauchyproblem}, we must first know some properties
of the characteristic roots $\tau_1(\xi),\dots,\tau_m(\xi)$.
Naturally, we do not have explicit formulae for the roots, 
unlike in
the cases of the dissipative wave equation and the Klein--Gordon
equation (i.e. for second order equations), 
but we do know some properties for the roots of the principal
symbol. For general hyperbolic operators, the roots
$\va_1(\xi),\dots,\va_m(\xi)$ 
of the characteristic polynomial of
the \emph{principal part} are homogeneous functions of order~$1$
since the principal part is homogeneous. Furthermore, for strictly
hyperbolic polynomials these roots are distinct when $\xi\ne0$.
Since these two properties are very useful when studying homogeneous
(strictly) hyperbolic equations, it is useful to know whether the
characteristic roots of the full equation,
$\tau_1(\xi),\dots,\tau_m(\xi)$, have similar properties. 
Indeed, if
we regard the full equation as a perturbation of the principal part
by lower order terms,
we can show that similar properties hold for large~$\abs{\xi}$;
these results are the focus of this section. 
In the outline of the method in
Section \ref{SEC:outline}, we subdivided the
phase space into large~$\abs{\xi}$ and bounded~$\abs{\xi}$, and it
is these properties that motivate this step.

\subsection{General properties}
First, we give some properties of general polynomials which are
useful to us. For constant coefficient polynomials, the following
result holds:
\begin{lem}\label{LEM:auxbdd}
Consider the polynomial over $\C$ with complex coefficients
\begin{equation*}
z^m+c_1z^{m-1}+\dots+c_{m-1}z+c_m=\prod_{k=1}^m(z-z_k).
\end{equation*}
If there exists $M>0$ such that $\abs{c_j}\le M^j$ for each
$j=1,\dots,m$\textup{,} then $\abs{z_k}\le2M$ for all $k=1,\dots,m$.

\begin{proof} Assume that $\abs{z}>2M$. Then
\begin{align*}
&\abs{z^m+c_1z^{m-1}+\dots+c_{m-1}z+c_m}
\ge\abs{z}^m\bigg(1-\frac{\abs{c_1}}{\abs{z}}-\dots
-\frac{\abs{c_{m-1}}}{\abs{z}^{m-1}}-
\frac{\abs{c_m}}{\abs{z}^m}\bigg)\\
&\mspace{220mu}\ge(2M)^m(1-2^{-1}-\dots-2^{-(m-1)}-2^{-m})>0.
\end{align*}
That is, no zero of the polynomial lies outside of the ball about
the origin of radius $2M$; hence $\abs{z_k}\le2M$ for each
$k=1,\dots,m$.
\end{proof}
\end{lem}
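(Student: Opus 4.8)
The plan is to argue pointwise by contraposition: I will show that if $\abs{z}>2M$ then $z$ cannot be a root of the polynomial, so that every root $z_k$ must satisfy $\abs{z_k}\le 2M$. This is exactly the classical Cauchy estimate for the zeros of a monic polynomial, specialised to the hypothesis $\abs{c_j}\le M^j$.

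First I would fix $z$ with $\abs{z}>2M$ and estimate from below by the triangle inequality:
\begin{align*}
\abs{z^m+c_1z^{m-1}+\dots+c_{m-1}z+c_m}
&\ge \abs{z}^m-\sum_{j=1}^m\abs{c_j}\,\abs{z}^{m-j}\\
&=\abs{z}^m\Bigl(1-\sum_{j=1}^m \frac{\abs{c_j}}{\abs{z}^j}\Bigr).
\end{align*}
Then I would feed in the two hypotheses: $\abs{c_j}\le M^j$ and $\abs{z}>2M$ give $\abs{c_j}/\abs{z}^j<(M/\abs{z})^j<2^{-j}$, and summing the finite geometric series gives $\sum_{j=1}^m 2^{-j}=1-2^{-m}<1$. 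Hence the bracketed factor is at least $2^{-m}>0$, so the modulus above is at least $\abs{z}^m\,2^{-m}>0$; in particular the polynomial does not vanish at $z$. Since this holds for every $z$ outside the closed ball $\{\abs{z}\le 2M\}$, all the roots lie inside it.

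I do not expect any real obstacle here; the argument is three lines once the hypotheses are inserted. The only point requiring a moment's care is that the geometric bound must be \emph{strict} --- the finite sum $\sum_{j=1}^m 2^{-j}$ equals $1-2^{-m}$, not $1$ --- which is what forces the lower bound to be positive; the same reasoning works with any constant strictly exceeding $M$ in place of $2M$. The role of this lemma in what follows is to control the characteristic roots of $L(\tau,\xi)$ for large $\abs{\xi}$: applying it with $M$ read off from the (polynomial in $\xi$) coefficients of $L(\tau,\xi)$ yields a bound of the form $\abs{\tau_k(\xi)}\le C\brac{\xi}$, which is precisely the kind of growth condition needed, for instance, in hypothesis (ii) of Theorem \ref{THM:expdecay}.
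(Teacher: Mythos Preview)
Your proof is correct and is essentially identical to the paper's own argument: both proceed by contraposition, apply the reverse triangle inequality to isolate the leading term, and then use $\abs{c_j}/\abs{z}^j<2^{-j}$ together with the finite geometric sum $\sum_{j=1}^m 2^{-j}=1-2^{-m}<1$ to force the polynomial to be nonzero for $\abs{z}>2M$. Your added remark on the role of the lemma in bounding $\abs{\tau_k(\xi)}\le C\brac{\xi}$ is also accurate and matches how the paper uses it in Proposition~\ref{PROP:perturbationresults}.
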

\begin{rem}\label{REM:bddcoeffsgivebddroots}
If we replace the hypothesis $\abs{c_j}\le M^j$ by 
$\abs{c_j}\le M$
for each $j=1,\dots,m$, then by a similar argument we obtain that
$\abs{z_k}\le\max\{2,2M\}$. The quantity
$\max\{2,2M\}$ appears because
we need $M\ge1$ for the sum on the right hand side to be positive.
\end{rem}
For general polynomials with variable coefficients, we have
continuous dependence of roots on coefficients (we give an
independent proof of this result here for the sake of completeness
and for referencing, but analogue of this result can be found
in many monographs dealing with hyperbolic polynomials).
\begin{lem}\label{LEM:ctscoeffsimplyctsroots}
Consider the $m^{\text{th}}$ order polynomial with coefficients
depending on $\xi\in\R^n$
\begin{equation*}
p(\tau,\xi)=\tau^m+a_{1}(\xi)\tau^{m-1}+\dots+a_m(\xi).
\end{equation*}
If each of the coefficient functions $a_j(\xi)$\textup{,}
$j=1,\dots,m$\textup{,} is continuous in $\R^n$ then each of the
roots $\tau_1(\xi),\dots,\tau_m(\xi)$ with respect to $\tau$ of
$p(\tau,\xi)=0$ is also continuous in $\R^n$.

\begin{proof} Define $\rho:\C^m\to\C^m$ by
$\rho(z_1,\dots,z_m)=(c_1,\dots,c_m)$ where the $c_j$ satisfy
\begin{equation*}
z^m+c_1z^{m-1}+\dots+c_m=\prod_{j=1}^m(z-z_j).
\end{equation*}
By the fundamental theorem of algebra $\rho$ is invertible
(but the inverse is not unique modulo permutation of roots),
and, moreover, $\rho$ is:
\begin{enumerate}[label=(\alph*),leftmargin=*]
\item surjective by the Fundamental Theorem of Algebra;
\item\label{ITEM:rhocts} continuous since each of the~$c_j$ may be
written as polynomials of the~$z_j$ (by the Vi\`{e}ta formulae);
\item\label{ITEM:rhoproper} proper (that is, the preimage of each
compact set is compact) by Remark~\ref{REM:bddcoeffsgivebddroots};
\end{enumerate}
properties~\ref{ITEM:rhocts} and~\ref{ITEM:rhoproper} imply
that~$\rho$ is a closed mapping.

Now, fix $\xi^0\in\R^n$. For any given $\ep>0$, consider the set
\begin{equation*}
U=\bigcup_{\al\in S_m}
\bigcap_{k=1}^m\set{\zeta=(\zeta_1,\dots,\zeta_m)\in\C^m:
\abs{\zeta_{\al_k}-\tau_k(\xi^0)}<\ep}\,,
\end{equation*}
where $\al=(\al_1,\dots,\al_m)\in S_m$ denotes the set of
permutations of $\{1,\dots,m\}$ (see Fig.~\ref{FIG:setU} for a
diagram of this).
\begin{figure}[htb]
\begin{center}
\setlength{\unitlength}{1mm}
\begin{picture}(35,35)(-5,0)
\linethickness{1pt}%
\put(-3,0){\vector(1,0){30}}\put(-6,27){$\C_2$}%
\put(0,-3){\vector(0,1){30}}\put(28,-4){$\C_1$}
\linethickness{0.5pt}%
\multiput(-3,18)(0,4){2}{\line(1,0){30}}%
\multiput(-3,8)(0,4){2}{\line(1,0){30}}%
\multiput(18,-3)(4,0){2}{\line(0,1){30}}%
\multiput(8,-3)(4,0){2}{\line(0,1){30}}%
\multiput(10,0)(10,0){2}{\circle*{1}}%
\put(8,-3){\tiny$\tau_1(\xi^0)$}%
\put(-8,9){\tiny$\tau_1(\xi^0)$}%
\put(18,-3){\tiny$\tau_2(\xi^0)$}%
\put(-8,19){\tiny$\tau_2(\xi^0)$}
\multiput(0,10)(0,10){2}{\circle*{1}}%
\put(8,19){\colorbox{lightgrey}{\tiny\makebox(2,2){$U_1$}}}%
\put(18,9){\colorbox{lightgrey}{\tiny\makebox(2,2){$U_2$}}}%
\multiput(29,8)(0,10){2}{\vector(0,1){4}}%
\multiput(29,12)(0,10){2}{\vector(0,-1){4}}%
\multiput(30,9)(0,10){2}{\tiny$2\ep$}%
\multiput(8,29)(10,0){2}{\vector(1,0){4}}%
\multiput(12,29)(10,0){2}{\vector(-1,0){4}}%
\multiput(9,30)(10,0){2}{\tiny$2\ep$}
\end{picture}
\end{center}\caption{$U=U_1\cup U_2$}\label{FIG:setU}
\end{figure}
Note that $U$ is, by construction, symmetric, i.e.\ if
$(z_1,\dots,z_m)\in U$ then $(z_{\al_1},\dots,z_{\al_m})\in U$ for
all $(\al_1,\dots,\al_m)\in S_m$. Let~$F$ denote the complement
to~$U$:
\begin{equation*}
F=\bigcap_{\al\in S_m}\set{\zeta=(\zeta_1,\dots,\zeta_m)\in\C^m:
\abs{\zeta_{\al_k}-\tau_k(\xi^0)}\ge\ep\;\exists\, k=1,\dots,m}.
\end{equation*}
We need to show that there exists $\de>0$ such that
$(\tau_1(\xi),\dots,\tau_m(\xi))\in U$ whenever
$\abs{\xi-\xi^0}<\de$; note:
\begin{itemize}
\item $\rho^{-1}(\rho(F))=F$ by
construction---if $\rho(w)=\rho(w')$ then both $w$ and $w'$ give
rise to the same polynomial, and hence their entries are
permutations of each other, and so either both or neither lie in
$F$;
\item by the surjectivity of $\rho$,
\[\rho(U)=\rho(F^c)=\rho([\rho^{-1}(\rho(F))]^c)=
\rho(\rho^{-1}(\rho(F)^c))=\rho(F)^c\,;\]
\item $\rho(F)$ is closed since $F$ a closed set and $\rho$ is a
closed mapping;
\end{itemize}
therefore, $\rho(U)$ is open. Thus, there exists an open ball
in~$\rho(U)$ of radius $\de'$ (for some $\de'>0$) about
$a(\xi^0)\equiv(a_1(\xi^0),\dots,a_m(\xi^0))
=\rho(\tau_1(\xi^0),\dots,\tau_m(\xi^0))$:
\begin{equation*}
B_{\de'}(a(\xi^0))=\set{(c_1,\dots,c_m)\in\C^m:
\abs{c_j-a_j(\xi^0)}<\de'\;\forall\, j=1,\dots,m}\subset\rho(U).
\end{equation*}
By the continuity of the $a_j(\xi)$, there exists $\de>0$ such that
\begin{equation*}
\abs{\xi-\xi^0}<\de\implies \abs{a_j(\xi)-a_j(\xi^0)}<\de'\text{ for
all }j=1,\dots,m\,;
\end{equation*}
hence,
\begin{equation*}
\abs{\xi-\xi^0}<\de\implies (a_1(\xi),\dots,a_m(\xi))\in
B_{\de'}(a(\xi^0))\subset \rho(U)\,.
\end{equation*}
Finally, since
$\rho(\tau_1(\xi),\dots,\tau_m(\xi))=(a_1(\xi),\dots,a_m(\xi))$ and
$U$ is symmetric (this is needed as different root orderings give
the same coefficients), we find that we have 
$(\tau_1(\xi),\dots,\tau_m(\xi))\in
U$ when $\abs{\xi-\xi^0}<\de$ as required; this completes the proof
of the lemma.
\end{proof}
\end{lem}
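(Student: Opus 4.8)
The plan is to deduce the assertion from the classical fact that the zero set of a monic polynomial depends continuously on its coefficient vector, applied at each fixed $\xi^0\in\Rn$ together with the continuity of the $a_j$. I would establish that coefficient-to-roots continuity by a Rouch\'e argument, which is short and makes the one delicate point transparent, namely in what sense ``the roots'' are to be regarded as continuous.

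Concretely, fix $\xi^0\in\Rn$ and let $z_1,\dots,z_\nu$ be the distinct zeros of $p(\cdot,\xi^0)$ with multiplicities $m_1,\dots,m_\nu$, so $\sum_\ell m_\ell=m$. Given $\ep>0$, first shrink $\ep$ so that the closed disks $\overline{B_\ep(z_\ell)}$ are pairwise disjoint. On the compact set $\Gamma:=\bigcup_\ell\pa B_\ep(z_\ell)$ the function $\tau\mapsto p(\tau,\xi^0)$ does not vanish, hence $\abs{p(\tau,\xi^0)}\ge c$ on $\Gamma$ for some $c>0$. Since $\abs{p(\tau,\xi)-p(\tau,\xi^0)}\le\sum_{j=1}^m\abs{a_j(\xi)-a_j(\xi^0)}\,\abs{\tau}^{m-j}$ and $\abs{\tau}$ is bounded on $\Gamma$, continuity of the $a_j$ at $\xi^0$ provides $\de>0$ with $\abs{\xi-\xi^0}<\de\implies\abs{p(\tau,\xi)-p(\tau,\xi^0)}<c\le\abs{p(\tau,\xi^0)}$ for all $\tau\in\Gamma$. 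Rouch\'e's theorem then yields, for each $\ell$, exactly $m_\ell$ zeros of $p(\cdot,\xi)$ (with multiplicity) inside $B_\ep(z_\ell)$; summing over $\ell$ exhausts all $m$ zeros of $p(\cdot,\xi)$. So for $\abs{\xi-\xi^0}<\de$ the roots of $p(\cdot,\xi)$ can be enumerated as $\tau_1(\xi),\dots,\tau_m(\xi)$ with $\abs{\tau_k(\xi)-\tau_k(\xi^0)}<\ep$, which is the claimed continuity at $\xi^0$.

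The main obstacle is not analytic but one of formulation: where roots collide or cross there may be no globally continuous choice of the individual branches $\tau_k$, so the statement must be read as continuity of the unordered $m$-tuple of roots---equivalently, existence of a continuous enumeration near each $\xi^0$---which is precisely what the Rouch\'e count supplies. As an alternative avoiding Rouch\'e, one can argue topologically through the Vi\`ete map $\rho\colon\C^m\to\C^m$ carrying a root tuple to its elementary symmetric functions: $\rho$ is continuous, surjective by the Fundamental Theorem of Algebra, and proper by Remark~\ref{REM:bddcoeffsgivebddroots} (hence a closed map), so the image under $\rho$ of the symmetric $\ep$-neighbourhood of $(\tau_1(\xi^0),\dots,\tau_m(\xi^0))$ is open and contains $(a_1(\xi^0),\dots,a_m(\xi^0))$; continuity of the $a_j$ then delivers $\de$. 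Either route reduces everything to standard facts, the remaining $\ep$--$\de$ bookkeeping being routine.
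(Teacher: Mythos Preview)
Your proof is correct. Your primary route via Rouch\'e's theorem is genuinely different from the paper's argument, while the alternative you sketch at the end is precisely what the paper does.

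The paper works entirely through the Vi\`ete map $\rho$: it shows $\rho$ is continuous, surjective, and proper (hence closed), then builds the $S_m$-symmetric open set $U\subset\C^m$ of root tuples within $\ep$ of some permutation of $(\tau_1(\xi^0),\dots,\tau_m(\xi^0))$, and deduces from the saturation $\rho^{-1}(\rho(U^c))=U^c$ together with closedness of $\rho$ that $\rho(U)$ is open; continuity of the $a_j$ then furnishes $\de$. Your Rouch\'e argument is shorter and more elementary, trading the topological machinery for a direct complex-analytic count; it also makes the multiplicity bookkeeping explicit (exactly $m_\ell$ roots stay in the $\ell$-th disk), which the paper's approach handles only implicitly through the symmetry of $U$. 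The topological route, on the other hand, is what one would naturally generalise beyond the holomorphic setting, and it identifies the underlying reason the result holds: $\rho$ induces a homeomorphism $\C^m/S_m\to\C^m$. Your remark that continuity must be read as continuity of the unordered $m$-tuple is exactly the right caveat, and matches the paper's use of the symmetric set $U$.
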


Now, let us turn to proving properties of the characteristic roots.
\begin{prop}\label{PROP:ctyofroots}
Let $L=L(\pat,\pax)$ be a linear $m^{\text{th}}$ order constant
coefficient
differential operator in $D_t$
with coefficients that are pseudo-differential operators
in~$x$, with symbol 
\begin{equation*}\label{EQ:symbolL}
L(\tau,\xi)=\tau^m+\sum_{j=1}^m P_j(\xi)\tau^{m-j}+
\sum_{j=1}^m a_j(\xi)\tau^{m-j},
\end{equation*}
where $P_j(\lambda\xi)=\lambda^j P_j(\xi)$ for all 
$\lambda>>1$, $|\xi|>>1$, and $a_j\in S^{j-\epsilon}$,
for some $\epsilon>0$.

Then each of the
characteristic roots of $L$\textup{,} denoted
$\tau_1(\xi),\dots,\tau_m(\xi)$\textup{,} is continuous in
$\R^n$\textup{;} 
furthermore\textup{,} for each
$k=1,\dots,m$\textup{,} the characteristic root $\tau_k(\xi)$ is
smooth away from multiplicities, and
analytic if the operator $L(D_t,D_x)$ is differential.

If operator $L(D_t,D_x)$ is strictly hyperbolic, then
there exists a constant $M$ such that\textup{,} 
if $\abs{\xi}\geq M$
then the characteristic roots 
$\tau_1(\xi),\dots,\tau_m(\xi)$ of $L$
are pairwise distinct.
\end{prop}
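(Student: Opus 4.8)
The plan is to treat the three assertions in order, the last being the substantive one. \emph{Continuity} is immediate from Lemma~\ref{LEM:ctscoeffsimplyctsroots}: as a polynomial in $\tau$, $L(\tau,\xi)=\tau^m+\sum_j(P_j(\xi)+a_j(\xi))\tau^{m-j}$ has coefficients that are continuous (indeed smooth) on $\R^n$, so that lemma applies verbatim and yields a continuous labelling $\tau_1(\xi),\dots,\tau_m(\xi)$ of the roots. For \emph{local smoothness}, I would fix $\xi^0$ away from multiplicities of the root in question, so that $\tau_k(\xi^0)$ is a simple root of $L(\cdot,\xi^0)$ and hence $\partial_\tau L(\tau_k(\xi^0),\xi^0)\neq 0$; since $L$ is a polynomial in $\tau$ with coefficients smooth in $\xi$ it is a smooth function of $(\tau,\xi)$, and the implicit function theorem produces a smooth local solution $\tau=g(\xi)$ of $L(g,\xi)=0$ with $g(\xi^0)=\tau_k(\xi^0)$. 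Continuity of $\tau_k$ together with local uniqueness of the root near $\tau_k(\xi^0)$ force $\tau_k\equiv g$ nearby, so $\tau_k$ is smooth there; if $L(D_t,D_x)$ is differential the $a_j$ are polynomials, $L$ is real-analytic in $(\tau,\xi)$, and the analytic implicit function theorem upgrades $g$ (hence $\tau_k$) to analytic. The locus of multiplicities is the zero set of the discriminant of $L(\cdot,\xi)$ — a polynomial in the coefficients $P_j(\xi)+a_j(\xi)$, hence continuous in $\xi$ — so it is closed and these statements hold on its open complement.

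For the \emph{last assertion}, I would argue by a perturbation as $|\xi|\to\infty$, viewing $L$ as a perturbation of its homogeneous principal part $L_m$. Put $\omega=\xi/|\xi|\in\Snm$, $s=1/|\xi|$, rescale $\tau=|\xi|z$, and use homogeneity of the $P_j$ to obtain
\begin{equation*}
|\xi|^{-m}L(|\xi|z,\xi)=z^m+\sum_{j=1}^m P_j(\omega)z^{m-j}+\sum_{j=1}^m s^{j}a_j(\omega/s)\,z^{m-j}=:P(z;\omega,s).
\end{equation*}
Since $a_j\in S^{j-\epsilon}$ gives $|s^ja_j(\omega/s)|\le Cs^{\epsilon}\to 0$ uniformly in $\omega$, the coefficients of $P(\cdot;\omega,s)$ extend continuously to $\Snm\times[0,s_0]$, with $P(z;\omega,0)=L_m(z,\omega)$. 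I would then look at the discriminant of $P(\cdot;\omega,s)$ with respect to $z$: it is a polynomial in the coefficients, hence continuous in $(\omega,s)$, and at $s=0$ it equals $\prod_{k<l}(\va_k(\omega)-\va_l(\omega))^2$, which is nonzero for every $\omega\in\Snm$ by strict hyperbolicity. By continuity and compactness of $\Snm$ it remains nonzero on a slab $\Snm\times[0,s_1]$, so $P(\cdot;\omega,s)$ has $m$ distinct roots whenever $|\xi|\ge M:=1/s_1$; rescaling back, $\tau_1(\xi),\dots,\tau_m(\xi)$ are pairwise distinct for $|\xi|\ge M$.

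The step I expect to require the most care is this last one, and specifically the two pieces of bookkeeping that make it run: (i) deducing from the symbolic estimate on $a_j$ that $s^{j}a_j(\omega/s)\to 0$ \emph{uniformly} in $\omega$, so the discriminant really is continuous up to $s=0$; and (ii) using compactness of $\Snm$ together with strict hyperbolicity to upgrade ``nonzero at each $(\omega,0)$'' to ``nonzero on a whole slab''. Neither is hard, but both are essential. Equivalent routes are a Rouch\'e argument — enclose each $\va_k(\omega)$ in a small disc (disjoint, by the uniform spectral gap on the sphere) and apply Rouch\'e to $P(\cdot;\omega,s)-L_m(\cdot,\omega)$ for small $s$ — or a contradiction argument along a sequence $\xi^{(n)}$ with $|\xi^{(n)}|\to\infty$ and $\xi^{(n)}/|\xi^{(n)}|\to\omega_0$, using that the rescaled monic polynomials converge coefficientwise to $L_m(\cdot,\omega_0)$, whose $m$ roots are distinct, so the rescaled polynomials eventually have $m$ distinct roots (their roots stay bounded by Lemma~\ref{LEM:auxbdd}). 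In every formulation the content is the same: after rescaling, the lower-order terms are of size $|\xi|^{-\epsilon}$ relative to the principal part, which has a uniform spectral gap on $\Snm$.
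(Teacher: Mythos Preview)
Your proof is correct and follows essentially the same strategy as the paper: continuity from Lemma~\ref{LEM:ctscoeffsimplyctsroots}, smoothness via the implicit function theorem, and the distinctness of roots for large $|\xi|$ via a discriminant/compactness argument after rescaling. The only cosmetic difference is that you first rescale the polynomial to $P(z;\omega,s)$ and then look at its discriminant, whereas the paper computes $\Delta_L(\lambda\xi)$ directly and uses the quasi-homogeneity of the discriminant to pull out the powers of $\lambda$; both routes reduce to the same observation that the perturbation terms $s^{j}a_j(\omega/s)=a_j(\lambda\xi)/\lambda^{j}$ vanish uniformly on $\Snm$ as $|\xi|\to\infty$, so the discriminant stays bounded away from zero by compactness.
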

\begin{proof} 
The first part of Proposition is simple.
Let us now investigate the structure of the characteristic
determinant.
We use the notation and results from Chapter 12 of
\cite{gelf+kapr+zele94} concerning the discriminant $\De_p$ of the
polynomial $p(x)=p_mx^m+\dots+p_1x+p_0$,
\begin{equation*}
\De_p\equiv\De(p_0,\dots,p_m):=
(-1)^{\frac{m(m-1)}{2}}p_m^{2m-2}\prod_{i<j}(x_i-x_j)^2\,,
\end{equation*}
where the $x_j$ ($j=1,\dots,m$) are the roots of $p(x)$; that is,
the irreducible polynomial in the coefficients of the polynomial
which vanishes when the polynomial has multiple roots. We note that
$\De_p$ is a continuous function of the coefficients $p_0,\dots,p_m$
of $p(x)$ and it is a homogeneous function of degree $2m-2$ in them;
in addition, it satisfies the quasi-homogeneity property:
\begin{equation*}
\De(p_0,\la
p_1,\la^2p_2,\dots,\la^mp_m)=\la^{m(m-1)}\De(p_0,\dots,p_m).
\end{equation*}
Furthermore, $\De_p=0$ if and only if $p(x)$ has a double root.

We write $L(\tau,\xi)$ in the form
\begin{equation*}
L(\tau,\xi)= L_m(\tau,\xi)+a_{1}(\xi)\tau^{m-1}+a_{2}(\xi)\tau^{m-2}
+\dots +a_{m-1}(\xi)\tau+a_m(\xi),
\end{equation*}
where
\begin{equation*}
L_m(\tau,\xi)=\tau^m+\sum_{j=1}^mP_j(\xi)\tau^{m-j}
\end{equation*}
is the principal part of $L(\tau,\xi)$; 
note that the $P_j(\xi)$ are
homogeneous of degree~$j$ and the $a_j(\xi)$ are
symbols of degree $< j$. By the homogeneity and
quasi-homogeneity properties of $\De_L$, we have, for $\la\ne0$,
\begin{align*}
\De_L(\la\xi)&=\De(P_m(\la\xi)+a_m(\la\xi),
\dots,P_{1}(\la\xi)+a_{1}(\la\xi),1)\\
=&\;\De(\la^m[P_m(\xi)+\tfrac{a_m(\la\xi)}{\la^m}],
\dots,\la[P_1(\xi)+\tfrac{a_{1}(\la\xi)}{\la}],1)\\
=&\;\la^{m(2m-2)}\De(P_m(\xi)+\tfrac{a_m(\la\xi)}{\la^m},
\dots,\la^{-(m-1)}[P_1(\xi)+\tfrac{a_{1}
(\la\xi)}{\la}],\la^{-m})\\
&\qquad\qquad\qquad\qquad\text{(using that $\De$ is homogenous of
degree
$2m-2$)}\\
=&\;\la^{m(m-1)}\De(P_m(\xi)+\tfrac{a_m(\la\xi)}{\la^m},
\dots,P_1(\xi)+\tfrac{a_{1}(\la\xi)}{\la},1)\\
&\qquad\qquad\qquad\qquad\qquad\qquad\qquad\qquad\qquad\text{ (by
quasi-homogeneity)}.
\end{align*}
Now, since $L$ is strictly hyperbolic, the characteristic roots
$\varphi_1(\xi),\dots,\varphi_m(\xi)$ of $L_m$ are pairwise distinct
for $\xi\ne0$, so
\[\De_{L_m}(\xi)=\De(P_m(\xi),\dots,P_1(\xi),1)\ne0\text{
for }\xi\ne0.\] Since the discriminant is continuous in each
argument, there exists $\de>0$ such that if
$\absbig{\tfrac{a_j(\la\xi)}{\la^{j}}}<\de$ for all $j=1,\dots,m$
then
\[\absbig{\De(P_m(\xi)+\tfrac{a_m(\la\xi)}{\la^m}\,,
\dots,P_1(\xi)+\tfrac{a_{1}(\la\xi)}{\la}\,,1)}
\not=0,\] and hence the
roots of the associated polynomial are pairwise distinct. So, fix
$\xi\in\set{\xi\in\R^n:\abs{\xi}=1}$ and let $\la\to\infty$. Since
the $a_j(\xi)$ are polynomials of degree $< j$ it follows that
when $\abs{\xi}\geq M$, the characteristic roots of $L$ are
pairwise distinct.
\end{proof}

\subsection{Symbolic properties}
In this section we will establish a number of useful properties
of characteristic roots which will be important for the
subsequent analysis. In particular, we will show that
asymptotically roots behave like symbols, and we will show
the relation between roots of the full symbol of a
strictly hyperbolic operator with
homogeneous roots of the principal part. 
\begin{prop}[Symbolic properties of
roots]\label{PROP:perturbationresults} Let $L=L(\pat,\pax)$ 
be a hyperbolic operator of the following form
$$L(D_t,D_x)=D_t^m+\sum_{j=1}^m P_j(D_x) D_t^{m-j}+
\sum_{j=1}^m\sum_{|\alpha|+m-j=K} c_{\alpha,j}(D_x) D_t^{m-j},$$
where $P_j(\lambda\xi)=\lambda^j P_j(\xi)$ for 
$\lambda>>1$, $|\xi|>>1$, and $c_{\alpha,j}\in S^{|\alpha|}.$
Here 
$0\le K\le m-1$ is the maximum order of the lower order
terms of $L$.
Let $\tau_1(\xi),\dots,\tau_m(\xi)$ denote its
characteristic roots\textup{;} then
\begin{enumerate}[label=\upshape{\Roman*.},ref={Part~\Roman*}]
\item\label{LEM:orderroot} for each $k=1,\dots,m$\textup{,} there
exists a constant $C>0$ such that
\begin{equation*}\label{EQ:tauisoderxi}
\abs{\tau_k(\xi)}\le C\brac{\xi}\quad\text{for all }\xi\in\R^n\,.
\end{equation*}
\end{enumerate}
Furthermore\textup{,} if we insist that~$L$ is \emph{strictly}
hyperbolic\textup{,} and denote the roots of the principal part
$L_m(\tau,\xi)$ by $\varphi_1(\xi),\dots,\varphi_m(\xi)$\textup{,}
then we have the following properties as well\textup{:}
\begin{enumerate}[label=\upshape{\Roman*.},ref={Part~\Roman*},resume]
\item\label{LEM:tau-vabounds} 
For each
$\tau_k(\xi)$\textup{,} $k=1,\dots,m$\textup{,} there exists a
corresponding root of the principal symbol $\va_k(\xi)$
\textup{(}possibly after reordering\textup{)} such that
\begin{equation}\label{EQ:tau-vaboundkthorder}
\abs{\tau_k(\xi)-\va_k(\xi)}\le C\brac{\xi}^{K+1-m} \quad\text{for
all }\xi\in\R^n\,.
\end{equation}
In particular\textup{,} for arbitrary lower terms\textup{,} we have
\begin{equation}\label{EQ:tau-vabound}
\abs{\tau_k(\xi)-\va_k(\xi)}\le C\quad\text{for all }\xi\in\R^n\,.
\end{equation}

\item\label{PROP:boundsonderivsoftau} There exists $M>0$
such that\textup{}, for each characteristic root of~$L$ and for each
multi-index $\al$\textup{,} we can find constants $C=C_{k,\al}>0$
such that
\begin{equation}\label{EQ:rootisassymbol}
\absbig{\pa^\al_\xi\tau_k(\xi)}\le
C\abs{\xi}^{1-\abs{\al}}\,\quad\text{for all }\abs{\xi}\ge M\,,
\end{equation}
In particular\textup{,} there exists a constant $C>0$ such that
\begin{equation}\label{EQ:gradtauisbounded}
\abs{\grad\tau_k(\xi)}\le C\quad\text{for all }\abs{\xi}\ge M\,.
\end{equation}

\item \label{PROP:derivoftau-derivofphi} There exists $M>0$ such
that\textup{,} for each $\tau_k(\xi)$ a corresponding root of the
principal symbol~$\va_k(\xi)$ can be found \textup{(}possibly after
reordering\textup{)} which satisfies\textup{,} for each multi-index
$\al$ and $k=1,\dots,m$\textup{,}
\begin{equation}\label{EQ:derivsoftau-phiforfewerlot}
\absbig{\pa^\al_\xi\tau_k(\xi)-\pa_\xi^\al\va_k(\xi)} \le
C\abs{\xi}^{K+1-m-\abs{\al}}\quad\text{for all }\abs{\xi}\ge M
\end{equation}
In particular, since $K\leq m-1$, we have
\begin{equation}\label{EQ:derivsoftauandphisymbols}
\absbig{\pa^\al_\xi\tau_k(\xi)-\pa_\xi^\al\va_k(\xi)} \le
C\abs{\xi}^{-\abs{\al}}\quad\text{for all }\abs{\xi}\ge M\,,
\end{equation}
for each multi-index $\al$ and $k=1,\dots,m$.
\end{enumerate}
\end{prop}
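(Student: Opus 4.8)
The plan is to take the principal symbol $L_m$ as a fixed reference and to control the full symbol $L=L_m+R$, where $R(\tau,\xi)=\sum_j a_j(\xi)\tau^{m-j}$ with $a_j:=\sum_{\abs{\al}+m-j=K}c_{\al,j}\in S^{j-(m-K)}$, as a perturbation of relative size $\brac{\xi}^{K-m}$. For Part~I, note $\abs{P_j(\xi)}\le C\brac{\xi}^j$ (homogeneity) and $\abs{a_j(\xi)}\le C\brac{\xi}^j$, so for $\abs{\xi}\ge1$ every coefficient $P_j(\xi)+a_j(\xi)$ of $L(\cdot,\xi)$ is bounded by $(C_0\abs{\xi})^j$ for a uniform $C_0$; Lemma~\ref{LEM:auxbdd} then gives $\abs{\tau_k(\xi)}\le2C_0\abs{\xi}$, while for $\abs{\xi}\le1$ the coefficients are bounded and Remark~\ref{REM:bddcoeffsgivebddroots} bounds the roots. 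Together this yields $\abs{\tau_k(\xi)}\le C\brac{\xi}$, which is Part~I.

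For Part~II I would combine a Rouch\'e count with the identity $L_m(\tau_k(\xi),\xi)=-R(\tau_k(\xi),\xi)$. Strict hyperbolicity and compactness give $\abs{\va_l(\xi)-\va_{l'}(\xi)}\ge c\abs{\xi}$ for $l\ne l'$ and $\abs{\xi}$ large. Fix such $\xi$ and put $D_k=\set{\tau:\abs{\tau-\va_k(\xi)}<\tfrac{c}{3}\abs{\xi}}$; these disks are disjoint. On $\partial D_k$ one has $\abs{L_m(\tau,\xi)}=\prod_l\abs{\tau-\va_l(\xi)}\ge c'\abs{\xi}^m$, whereas $\abs{R(\tau,\xi)}\le C\abs{\xi}^{m-(m-K)}=C\abs{\xi}^K$ (using $\abs{a_j}\le C\brac{\xi}^{j-(m-K)}$ and $\abs{\tau}\le C\abs{\xi}$ on $\partial D_k$). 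Since $K<m$, for $\abs{\xi}\ge M$ (enlarging $M$ if necessary) we get $\abs{R}<\abs{L_m}$ on each $\partial D_k$, so $L=L_m+R$ has exactly one, simple, root $\tau_k(\xi)$ in each $D_k$; this fixes the reordering, which is consistent on the connected set $\set{\abs{\xi}\ge M}$ and agrees with the labelling of the $\va_k$. Then $\prod_l\abs{\tau_k(\xi)-\va_l(\xi)}=\abs{R(\tau_k(\xi),\xi)}\le C\abs{\xi}^K$, and $\abs{\tau_k(\xi)-\va_l(\xi)}\ge\tfrac{2c}{3}\abs{\xi}$ for $l\ne k$ because $\tau_k(\xi)\in D_k$, whence $\abs{\tau_k(\xi)-\va_k(\xi)}\le C\abs{\xi}^{K+1-m}$ for $\abs{\xi}\ge M$. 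For $\abs{\xi}\le M$ both $\tau_k$ and $\va_k$ are bounded and $\brac{\xi}^{K+1-m}$ is bounded below, so the estimate persists there after a pointwise reordering; this is \eqref{EQ:tau-vaboundkthorder}, and \eqref{EQ:tau-vabound} follows since $K\le m-1$ makes $\brac{\xi}^{K+1-m}\le1$.

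For Parts~III and~IV the plan is a rescaling that converts the symbolic estimates into a \emph{uniform} implicit-function statement for $L_m$. For $\la\ge M$ and $\om$ in a fixed compact neighbourhood of the unit sphere, set $\tau_k^{(\la)}(\om):=\la^{-1}\tau_k(\la\om)$; then
\begin{equation*}
\la^{-m}L(\la\tau,\la\om)=L_m(\tau,\om)+R_\la(\tau,\om),\qquad
R_\la(\tau,\om)=\sum_j\la^{-j}a_j(\la\om)\,\tau^{m-j},
\end{equation*}
so $\tau_k^{(\la)}$ is a root of $L_m(\cdot,\om)+R_\la(\cdot,\om)$. Since $a_j\in S^{j-(m-K)}$, a direct computation gives $\abs{\pa_\om^\be\pa_\tau^\ga R_\la(\tau,\om)}\le C\la^{-(m-K)}$ uniformly for $\abs{\om}\sim1$ and $\tau$ in a fixed bounded set, i.e. $R_\la\to0$ in $C^\infty$ on compacts at rate $\la^{-(m-K)}$. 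Now $L_m(\cdot,\om)$ has $m$ simple, uniformly separated roots $\va_k(\om)$, so $\pa_\tau L_m(\va_k(\om),\om)=\prod_{l\ne k}(\va_k(\om)-\va_l(\om))$ has modulus bounded below by a positive constant on that compact set; the implicit function theorem applied to $L_m+R_\la=0$ then produces, for $\la$ large, a smooth root branch near $\va_k$ all of whose $\om$-derivatives are bounded uniformly in $\la$ (the constants depending only on finitely many $C^N$-norms of $L_m+R_\la$ and on the lower bound for $\pa_\tau$, all uniform); by uniqueness this branch equals $\tau_k^{(\la)}$, and moreover $\abs{\pa_\om^\al[\tau_k^{(\la)}-\va_k](\om)}\le C_\al\la^{-(m-K)}$. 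Undoing the scaling via $\pa_\om^\al\tau_k^{(\la)}(\om)=\la^{\abs{\al}-1}(\pa_\xi^\al\tau_k)(\la\om)$ and, using that $\va_k$ is homogeneous of degree~$1$, $\pa_\om^\al[\tau_k^{(\la)}-\va_k](\om)=\la^{\abs{\al}-1}[(\pa_\xi^\al\tau_k)-(\pa_\xi^\al\va_k)](\la\om)$, and taking $\xi=\la\om$ with $\la=\abs{\xi}\ge M$, we obtain $\abs{\pa_\xi^\al\tau_k(\xi)}\le C\abs{\xi}^{1-\abs{\al}}$, which is Part~III and its special case \eqref{EQ:gradtauisbounded}, and $\abs{\pa_\xi^\al\tau_k(\xi)-\pa_\xi^\al\va_k(\xi)}\le C\abs{\xi}^{K+1-m-\abs{\al}}$, which is Part~IV and, since $K\le m-1$, its special case \eqref{EQ:derivsoftauandphisymbols}. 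Part~III can alternatively be proved by differentiating $L(\tau_k(\xi),\xi)=0$ and inducting on $\abs{\al}$, using that $\pa_\tau L(\tau_k(\xi),\xi)=\prod_{l\ne k}(\tau_k(\xi)-\tau_l(\xi))$ has modulus $\gtrsim\abs{\xi}^{m-1}$ by Part~II; the rescaling handles all $\al$ at once.

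The genuinely delicate point is not any individual estimate but keeping the constants bounded as the base frequency runs to infinity, and the rescaling above is what secures this: it normalises everything to a fixed compact set on which $L_m+R_\la$ is a uniformly small $C^\infty$ perturbation of the fixed, strictly hyperbolic $L_m$ with simple, uniformly separated roots. A secondary bookkeeping issue, which must be checked, is that the reordering selected by Rouch\'e in Part~II is globally consistent over $\set{\abs{\xi}\ge M}$ and coincides with the branch the implicit function theorem singles out, so that one and the same matching $k\mapsto k$ works throughout Parts~II--IV.
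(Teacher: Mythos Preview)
Your proof is correct and, beyond Part~I (which is identical to the paper's), takes a genuinely different route.

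For Part~II the paper does not use Rouch\'e. Instead it writes $L=\sum_{i=0}^R L_{m-r_i}$ as a sum of homogeneous layers, introduces intermediate operators $\L_l=\sum_{i\le l}L_{m-r_i}$ with roots $\tau_k^l$, and proves a one-step perturbation lemma (Lemma~\ref{LEM:perturbationofpolys}) that yields $\abs{\tau_k^{l+1}(\xi)-\tau_k^l(\xi)}\le C\abs{\xi}^{-r_{l+1}+1}$; telescoping gives \eqref{EQ:tau-vaboundkthorder}. Your Rouch\'e argument together with the identity $\abs{L_m(\tau_k,\xi)}=\abs{R(\tau_k,\xi)}$ reaches the same bound in one stroke and is more elementary; the paper's layered scheme, on the other hand, would more readily give refined information if one wanted separate control of contributions from each homogeneous component.

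For Parts~III and~IV the paper proceeds by direct induction on $\abs{\al}$: differentiate $L(\tau_k(\xi),\xi)=0$, isolate $\pa_\xi^\al\tau_k$, and use the key lower bound $\abs{\pa_\tau L(\tau_k(\xi),\xi)}\ge C\abs{\xi}^{m-1}$ (Lemma~\ref{LEM:estfrombelowondP/dt}); Part~IV requires the same induction applied to the difference $\pa_\xi^\al\tau_k-\pa_\xi^\al\va_k$, with several mean-value estimates. Your rescaling $\tau_k^{(\la)}(\om)=\la^{-1}\tau_k(\la\om)$ turns the problem into a \emph{uniform} $C^\infty$ perturbation of the fixed strictly hyperbolic $L_m(\cdot,\om)$ on a compact set, after which a single application of the implicit function theorem delivers all $\al$ at once, both for Part~III and for Part~IV. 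This is cleaner and avoids the somewhat heavy bookkeeping of the paper's induction; the paper's explicit approach, by contrast, makes the precise dependence of constants on the coefficients of $L$ more visible. You correctly flag the alternative inductive route at the end; that is exactly what the paper does.
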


\newcommand\x{z}
First, we need the following lemma about perturbation 
properties of
general smooth functions. Clearly, we do not need to require that
functions are smooth, but this will be the case in our
application.
\begin{lem}\label{LEM:perturbationofpolys}
Let $p, q:\C\to\C$ be smooth functions and
suppose~$\x_0$ is a simple zero of $p(\x)$ \textup{(}i.e.\
$p(\x_0)=0$, $p'(\x_0)\ne0$\textup{)}. Consider\textup{,} for each
$\ep>0$\textup{,} the following \textup{``}perturbation\textup{''}
of $p(\x)$\textup{:}
\begin{equation*}
p_\ep(\x):=p(\x)+\ep q(\x)\,,
\end{equation*}
and suppose $\x_\ep$ is a root of $p_\ep(\x)$\textup{;}
then\textup{,} for all sufficiently small $\ep>0$\textup{,}
we have
\begin{equation}\label{EQ:x_ep-x_0}
\abs{\x_\ep-\x_0}\le C\ep\absBig{\frac{q(\x_0)}{p'(\x_0)}}\,.
\end{equation}
\end{lem}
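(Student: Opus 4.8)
The plan is to reduce \eqref{EQ:x_ep-x_0} to a quantitative version of the fact that a \emph{simple} zero moves at first order in the perturbation parameter, with first-order coefficient $-q(\x_0)/p'(\x_0)$. The first task is to pin down which root of $p_\ep$ is meant. Since $\x_0$ is a simple zero of $p$, it is isolated, so for small $\rho>0$ it is the only zero of $p$ in $\overline{B_\rho(\x_0)}$; writing $p(\x)=(\x-\x_0)g(\x)$ near $\x_0$ with $g$ smooth (holomorphic, as in our application where $p,q$ are polynomials in $\tau$) and $g(\x_0)=p'(\x_0)\ne0$, one gets $\abs{p(\x)}\ge c\rho$ on $\partial B_\rho(\x_0)$ for some $c>0$. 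Hence as soon as $\ep\max_{\partial B_\rho(\x_0)}\abs{q}<c\rho$, Rouch\'e's theorem gives that $p$ and $p_\ep=p+\ep q$ have the same number of zeros in $B_\rho(\x_0)$, namely one; letting $\rho\to0$ shows that this root $\x_\ep$ satisfies $\x_\ep\to\x_0$ as $\ep\to0$. (Alternatively, apply the implicit function theorem to $F(\x,\ep)=p(\x)+\ep q(\x)$, using $\partial_\x F(\x_0,0)=p'(\x_0)\ne0$.) It is this root $\x_\ep$ for which \eqref{EQ:x_ep-x_0} is claimed.

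Next I would exploit simplicity via the factorisation $p(\x)=(\x-\x_0)g(\x)$: shrinking the neighbourhood of $\x_0$, we may assume $\abs{g(\x)}\ge\tfrac12\abs{p'(\x_0)}$ there and that $q$ is Lipschitz on it with some constant $L$. For $\ep$ small enough that $\x_\ep$ lies in this neighbourhood, the identity $0=p_\ep(\x_\ep)=(\x_\ep-\x_0)g(\x_\ep)+\ep q(\x_\ep)$ yields
\begin{equation*}
\abs{\x_\ep-\x_0}=\ep\,\frac{\abs{q(\x_\ep)}}{\abs{g(\x_\ep)}}\le\frac{2\ep}{\abs{p'(\x_0)}}\,\abs{q(\x_\ep)}.
\end{equation*}
Replacing $q(\x_\ep)$ by $q(\x_0)$ at the cost of a Lipschitz error, $\abs{q(\x_\ep)}\le\abs{q(\x_0)}+L\abs{\x_\ep-\x_0}$, gives
\begin{equation*}
\abs{\x_\ep-\x_0}\le\frac{2\ep}{\abs{p'(\x_0)}}\bigl(\abs{q(\x_0)}+L\abs{\x_\ep-\x_0}\bigr),
\end{equation*}
and for $\ep$ so small that $2\ep L/\abs{p'(\x_0)}\le\tfrac12$ the last term is absorbed into the left-hand side, leaving $\abs{\x_\ep-\x_0}\le 4\,\ep\,\abs{q(\x_0)/p'(\x_0)}$, which is \eqref{EQ:x_ep-x_0} with $C=4$. (This also covers the degenerate case $q(\x_0)=0$, where it forces $\x_\ep=\x_0$, consistent with $p_\ep(\x_0)=0$.)

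The only genuinely delicate point is the first step: ensuring we follow the branch of roots emanating from $\x_0$ rather than some unrelated root of $p_\ep$, and that this branch is single-valued for small $\ep$. Once the correct root is isolated by Rouch\'e (or the implicit function theorem), the estimate is an elementary factorise-and-absorb argument, and the remaining ingredients---the local factorisation $p=(\x-\x_0)g$ and the Lipschitz bound on $q$---are routine.
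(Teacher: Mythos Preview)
Your proof is correct and arrives at the same conclusion, but via a somewhat different route from the paper. The paper's argument Taylor-expands $p_\ep$ about $\x_0$, then invokes smoothness of the map $\ep\mapsto\x_\ep$ to Taylor-expand $\x(\ep)$ about $\ep=0$ and read off $\x'(0)=-q(\x_0)/p'(\x_0)$; the estimate then follows from $\x_\ep-\x_0=\ep\x'(0)+O(\ep^2)$. You instead factor $p(\x)=(\x-\x_0)g(\x)$, solve $0=(\x_\ep-\x_0)g(\x_\ep)+\ep q(\x_\ep)$ directly for $\abs{\x_\ep-\x_0}$, and absorb the Lipschitz error from replacing $q(\x_\ep)$ by $q(\x_0)$.

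Your approach is slightly more self-contained: it does not need to assert (as the paper does without further justification) that $\x(\ep)$ is smooth in $\ep$, and it yields an explicit constant $C=4$. On the other hand, your factorisation $p(\x)=(\x-\x_0)g(\x)$ with smooth $g$ requires holomorphy of $p$ near $\x_0$ (not just real $C^\infty$ smoothness of a map $\C\to\C$), which you correctly flag as satisfied in the application to polynomials in $\tau$; the paper's complex Taylor expansion carries the same implicit assumption. Your discussion of \emph{which} root $\x_\ep$ is meant, via Rouch\'e or the implicit function theorem, is a welcome clarification that the paper leaves implicit.
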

\begin{proof}
By Taylor's theorem, we have, near~$\x_0$,
\begin{align*}
p_\ep(\x) & = p_\ep(\x_0)+p_\ep'(\x_0)(\x-\x_0)+O(\abs{\x-\x_0}^2)
\\ & = \ep q(\x_0) + (p'(\x_0)+\ep q'(x_0))(\x-\x_0)
+O(\abs{\x-\x_0}^2)\,.
\end{align*}
Thus, setting $\x=\x_\ep$, we get
\begin{equation}\label{EQ:p_eptexpatx_ep}
0=\ep q(\x_0)+(p'(\x_0)+\ep q'(\x_0))(\x_\ep-\x_0)
+O(\abs{\x_\ep-\x_0}^2)\,.
\end{equation}
Now, consider the function of $\ep$, $\x(\ep):=\x_\ep$; this is
clearly smooth since $p$ and $q$ are smooth and $\x_0$ is a simple
zero of $p(\x)$. Indeed,
$p_\ep^\prime(z_\ep)\approx p^\prime(z_0)\not=0$ for small
$\ep$, hence $z_\ep$ is a simple root of $p_\ep$.
Thus, near the origin,
\begin{equation}\label{EQ:x_eptaylorexp}
  \x(\ep)=\x(0)+\ep \x'(0)+O(\ep^2)\,.
\end{equation}
Combining~\eqref{EQ:p_eptexpatx_ep} and~\eqref{EQ:x_eptaylorexp}, we
get
\begin{equation*}
0=\ep q(\x_0)+(p'(\x_0)+\ep q'(\x_0))(\ep
\x'(0)+O(\ep^2))+O(\ep^2)\,,
\end{equation*}
or,
\begin{equation*}
0=q(\x_0)+p'(\x_0)\x'(0)+O(\ep)\,,
\end{equation*}
for small $\ep$.
Therefore, by the triangle inequality, for each $\ep>0$ small
enough,
\begin{equation*}
\abs{\x'(0)}\le
\frac{C\ep}{\abs{p'(\x_0)}}+\absBig{\frac{q(\x_0)}{p'(\x_0)}}\,,
\end{equation*}
and, thus,
\begin{equation}\label{EQ:x'(0)bound}
\abs{\x'(0)}\le C\absBig{\frac{q(\x_0)}{p'(\x_0)}}\,.
\end{equation}
Finally, combining~\eqref{EQ:x'(0)bound} with
\eqref{EQ:x_eptaylorexp}, we obtain~\eqref{EQ:x_ep-x_0} as required.
\end{proof}

\proof[Proof of Proposition~\ref{PROP:perturbationresults}]\mbox{}
\paragraph*{\ref{LEM:orderroot}:} We may
write~$L(\tau,\xi)$ in the form
\begin{equation*}
L(\tau,\xi)=\tau^m+a_1(\xi)\tau^{m-1}+\dots+
a_{m-1}(\xi)\tau+a_m(\xi),
\end{equation*}
where $|a_j(\xi)|\leq C\bra{\xi}^j$. 
Hence for all $k$ we have $|\tau_k(\xi)|\leq C\bra{\xi}$ by
Lemma~\ref{LEM:auxbdd}.

\paragraph{\ref{LEM:tau-vabounds}:} In the proof of this part, let
us write $L(\tau,\xi)$ in the form
\begin{equation*}
L(\tau,\xi)=\sum_{i=0}^RL_{m-r_i}(\tau,\xi)\,,
\end{equation*}
where $r_0=0$, $m-r_1=K$ (the maximum order of the lower order
terms), $1\le r_1<\dots<r_R\le m$,
\begin{gather*}
L_m(\tau,\xi)=\tau^m+\sum_{j=1}^mP_j(\xi)\tau^{m-j}\\\text{ and }
L_{m-r_i}(\tau,\xi)=\sum_{\abs{\al}+j=m-r_i}
c_{\al,j}(\xi)\tau^j\;
\text{ for }1\le i\le R;
\end{gather*}
here, as usual, the $P_j(\xi)$ are homogeneous in~$\xi$
of order~$j$.

Denote the roots of
\begin{equation*}
\L_{l}(\tau,\xi):=\sum_{i=0}^{l}L_{m-r_i}(\tau,\xi)\,, \quad 0\le
l\le R\,,
\end{equation*}
with respect to~$\tau$ by $\tau_1^{l}(\xi),\dots,\tau_m^{l}(\xi)$.
Note that $\L_0(\tau,\xi)=L_m(\tau,\xi)$, 
i.e.\ $\L_0(\tau,\xi)$ is
the principal symbol with no lower order terms.
Since $\L_{l}(\tau,\xi)$ are strictly hyperbolic, we will 
look at $|\xi|\geq M_0$, where all
$\tau_1^{l}(\xi),\dots,\tau_m^{l}(\xi)$ are distinct,
for all $l$.

We shall show that there exists $M\ge M_0$ so that, possibly
after reordering the roots, for all $k=1,\dots,m$,
\begin{equation}\label{EQ:taul+1-taulbound}
\abs{\tau_k^{l+1}(\xi)-\tau_k^{l}(\xi)}\le
C\abs{\xi}^{-r_{l+1}+1}\,\text{ for all }
l=0,\dots,R-1\text{ and }
\abs{\xi}\geq M\,.
\end{equation}
Assuming this, and noting that $\tau_k^0(\xi)=\va_k(\xi)$  and
$\tau_k^R(\xi)=\tau_k(\xi)$ for each $k=1,\dots,m$ (possibly after
reordering), we obtain
\begin{equation*}
\abs{\tau_k(\xi)-\va_k(\xi)}\le
\sum_{l=0}^{R-1}\abs{\tau_k^{l+1}(\xi)-\tau_k^{l}(\xi)} \le
C\abs{\xi}^{-r_1+1}\,\text{ when }\abs{\xi}\geq M;
\end{equation*}
this, together with the continuity of the $\tau_k(\xi)$ and
$\va_k(\xi)$---and thus the boundedness of
$\abs{\tau_k(\xi)-\va_k(\xi)}$ in $B_M(0)$,
gives~\eqref{EQ:tau-vaboundkthorder}. Then,~\eqref{EQ:tau-vabound}
follows by setting $K=m-1$. Here we also used $r_1=m-K.$

So, with the aim of proving~\eqref{EQ:taul+1-taulbound}, we first
introduce some notation: set
\begin{align*}
\widetilde{L}_{m-r_i}:\C\times\Snm\to\C\,:&\quad
\widetilde{L}_{m-r_i}(\tau,\om)=L_{m-r_i}(\tau,\om)\,,&i=0,\dots,R,\\
\widetilde{\L}_l:(M_0,\infty)\times \C\times\Snm\to\C\,:&
\quad\widetilde{\L}_l(\rho,\tau,\om)=
\rho^{-m}\L_l(\rho\tau,\rho\om),&l=0,\dots,R;
\end{align*}
observe that $\widetilde{L}_{m-r_i}$ is just the restriction of
$L_{m-r_i}(\tau,\xi)$ to $\C\times\Snm$. Denote by
$\widetilde\va_1(\om),\widetilde\va_2(\om),\dots,\widetilde\va_m(\om)$ the roots
of $\widetilde{L}_m(\tau,\om)=\widetilde{\L}_0(\rho,\tau,\om)$ with respect
to $\tau$, and by
$\widetilde\tau^k_1(\rho,\om),\widetilde\tau^k_2(\rho,\om),\dots,
\widetilde\tau^k_m(\rho,\om)$ those of $\widetilde{\L}_k(\rho,\tau,\om)$.


We denote $\widetilde\tau=\frac{\tau}{\abs{\xi}}$. Since,
\begin{equation*}
\widetilde{L}_m\big(\widetilde\tau,\textstyle\frac{\xi}{\abs{\xi}}\big)=
L_m\big(\widetilde\tau,\textstyle\frac{\xi}{\abs{\xi}}\big)=
\abs{\xi}^{-m}L_m(\tau,\xi) = \abs{\xi}^{-m}\L_{0}(\tau,\xi)=
\widetilde{\L}_0\big(\abs{\xi},\widetilde\tau,\frac{\xi}{\abs{\xi}}\big)
\end{equation*}
for $\xi\in\R^n$, $\tau\in\C$, and
\begin{align*}
\widetilde{\L}_{l+1}(\rho,\tau,\om)&
=\rho^{-m}\L_{l+1}(\rho\tau,\rho\om)
=\rho^{-m}\sum_{i=0}^{l+1}L_{m-r_i}(\rho\tau,\rho\om)
\nonumber\\
=&\rho^{-m}\sum_{i=0}^{l}L_{m-r_i}(\rho\tau,\rho\om)+ \rho^{-m}
\sum_{\abs{\al}+j=m-r_{l+1}}c_{\al,j}(\rho\om)(\rho\tau)^j
\nonumber\\ =&\widetilde{\L}_{l}(\rho,\tau,\om)+
\rho^{-r_{l+1}}\sum_{\abs{\al}+j=m-r_{l+1}}
\frac{c_{\al,j}(\rho\om)}{\rho^{|\alpha|}}\tau^j
\nonumber\\ =&\widetilde{\L}_{l}(\rho,\tau,\om)
+\rho^{-r_{l+1}}L^0_{m-r_{l+1}}(\rho,\tau,\om)
\label{EQ:Ltill+1=Ltill+L}
\end{align*}
for $\om\in S^{n-1}$, $\rho>M_0$, $\tau\in\C$, $l=0,\dots,R-1$.
Here $$L^0_{m-r_{l+1}}(\rho,\tau,\om)=
\sum_{\abs{\al}+j=m-r_{l+1}}
\frac{c_{\al,j}(\rho\om)}{\rho^{|\alpha|}}\tau^j.$$
We also have
\begin{equation*}\label{EQ:Llishomog}
\abs{\xi}^{-m}\L_L(\tau,\xi)= \widetilde{\L}_{l}\big(\abs{\xi},
\textstyle\frac{\xi}{\abs{\xi}},\widetilde\tau\big)\,.
\end{equation*}
As the left-hand side of this is zero when $\tau=\tau_k^l(\xi)$,
$k=1,\dots,m$, and the right-hand side is zero when
$\widetilde\tau=\widetilde\tau_k^l(\abs{\xi},\frac{\xi}{\abs{\xi}})$,
$k=1,\dots,m$, we see that
$\abs{\xi}\widetilde\tau_k^l(\abs{\xi},\frac{\xi}{\abs{\xi}})
=\tau_k^l(\xi)$ for each $k=1,\dots,m$ (possibly after reordering).
Hence, for all $\abs{\xi}\geq M_0$, $k=1,\dots,m$ and
$l=0,\dots,R-1$, we have
\begin{equation*}
\abs{\tau_k^{l+1}(\xi)-\tau^l_k(\xi)}=
\abs{\widetilde\tau_k^{l+1}\big(\abs{\xi},
\textstyle\frac{\xi}{\abs{\xi}} \big)
-\widetilde\tau^l_k\big(\abs{\xi},\frac{\xi}{\abs{\xi}}\big)}
\abs{\xi}\,.
\end{equation*}

Next, observe that applying Lemma~\ref{LEM:perturbationofpolys} with
$\ep=\rho^{-r_{l+1}}$ to
\begin{equation*}
\widetilde{\L}_{l}(\rho,\tau,\om)
+\rho^{-r_{l+1}}L^0_{m-r_{l+1}}(\rho,\tau,\om)
\end{equation*}
yields, for all $\om\in\Snm$ and $k=1,\dots,m$,
\begin{equation*}
\abs{\widetilde\tau_k^{l+1}(\rho,\om)-\widetilde\tau^l_k(\rho,\om)}\le
C\rho^{-r_{l+1}}
\abslr{\frac{L^0_{m-r_{l+1}}(\rho,\widetilde\tau^l_k(\rho,\om),\om)}
{\pa_\tau \widetilde{\L}_l(\rho,\widetilde\tau^l_k(\rho,\om),\om)}}\,.
\end{equation*}
provided we take $\rho\geq M'$ 
for a sufficiently large constant $M'\ge
M_0$. Therefore, for all $\abs{\xi}\geq M'$, $k=1,\dots,m$ and
$l=0,\dots,R-1$, we have
\begin{equation}\label{EQ:taul+1-taulbound1}
\abs{\tau_k^{l+1}(\xi)-\tau^l_k(\xi)}\le C\abs{\xi}^{-r_{l+1}+1}
\abslr{\frac{L^0_{m-r_{l+1}}\big(|\xi|,
\frac{\tau^l_k(\xi)}{\abs{\xi}},\frac{\xi}{\abs{\xi}}\big)} 
{\pa_\tau
\widetilde{\L}_l\big(\abs{\xi},
\frac{\tau^l_k(\xi)}{\abs{\xi}},\frac{\xi}{\abs{\xi}}\big)}}\,.
\end{equation}
Thus, it suffices to show the following two inequalities when
$\abs{\xi}\geq M$ for some $M\ge M'$:
\begin{itemize}[leftmargin=*]
\item there exists a constant~$C_1$ so that, for all $1\le i\le R$,
\begin{equation}\label{EQ:Lmboundfrombelow}
\absBig{L^0_{m-r_i}\big(|\xi|,\textstyle
\frac{\tau^l_k(\xi)}{\abs{\xi}},\frac{\xi}{\abs{\xi}}\big)}=
\abslr{\displaystyle\sum_{\abs{\al}+j=m-r_i}
\frac{c_{\al,j}(\xi)}{|\xi|^{|\alpha|}}
\left(\frac{\tau^l_k(\xi)}{\abs{\xi}}\right)^j} \le C_1;
\end{equation}
\end{itemize}
and
\begin{itemize}
\item there exists a constant~$C_2>0$ so that, for all $0\le l\le
R-1$,
\begin{equation}\label{EQ:dLmboundfromabove}
\absbig{\pa_\tau
\widetilde{\L}_l\big(\abs{\xi},\textstyle
\frac{\tau^l_k(\xi)}{\abs{\xi}},\frac{\xi}{\abs{\xi}}
\big)}= \abs{\xi}^{-m+1}
\abs{\pa_\tau \L_l(\tau^l_k(\xi),\xi)} \ge C_2.
\end{equation}
\end{itemize}
Then, combining~\eqref{EQ:taul+1-taulbound1},
\eqref{EQ:Lmboundfrombelow} and~\eqref{EQ:dLmboundfromabove} gives
\eqref{EQ:taul+1-taulbound}.


The first estimate~\eqref{EQ:Lmboundfrombelow} 
follows immediately from
\ref{LEM:orderroot} 
since the $\tau^l_k(\xi)$ are roots of strictly
hyperbolic equations, and from the fact that 
$c_{\alpha,j}\in S^{|\alpha|}.$

The second,~\eqref{EQ:dLmboundfromabove}, in the case $l=0$ is
clear: the homogeneity of $L_m(\tau,\xi)$ and its roots give
\begin{equation*}
\abs{\xi}^{-m+1} \abs{\pa_\tau \L_0(\tau^0_k(\xi),\xi)}
=\absBig{\pa_\tau L_m\big(\textstyle
\va_k\big(\frac{\xi}{\abs{\xi}}\big),\frac{\xi}{\abs{\xi}}\big)}\,,
\end{equation*}
which is never zero due to the strict hyperbolicity of $L_m$ and
hence (using that the sphere $S^{n-1}$ is compact and
$L_m(\tau,\xi)$ is continuous and thus achieves its minimum) is
bounded below by some positive constant as required.

For $1\le l\le R-1$, we know that $\tau^l_k(\xi)$, $k=1,\dots,m$,
are simple zeros of $\L_L(\tau,\xi)$ for $\abs{\xi}\geq M_0$ by the
earlier choice of~$M_0$. Observe,
\begin{equation*}
\frac{(\pa_\tau \L_l)(\tau^l_k(\xi),\xi)}{\abs{\xi}^{m-1}}
=\frac{(\pa_\tau L_m)(\tau^l_k(\xi),\xi)}{\abs{\xi}^{m-1}}
+\sum_{i=1}^{l}\frac{(\pa_\tau
L_{m-r_i})(\tau^l_k(\xi),\xi)}{\abs{\xi}^{m-1}}\,.
\end{equation*}
Now,
\begin{equation*}
\frac{(\pa_\tau L_{m-r_i})(\tau^l_k(\xi),\xi)}{\abs{\xi}^{m-1}}
 \to 0\text{ as }\abs{\xi}\to\infty
\end{equation*}
for $i=1,\dots,l$, because $\pa_\tau L_{m-r_i}(\tau,\xi)$ is
a symbol of order $m-r_i-1$. Also,
using the Mean Value Theorem,
\begin{align*}
(\pa_\tau L_m)(\tau^l_k(\xi),\xi)&= (\pa_\tau
L_m)(\va_k(\xi),\xi)+[(\pa_\tau L_m)(\tau^l_k(\xi),\xi)-(\pa_\tau
L_m)(\va_k(\xi),\xi)]\\
=&(\pa_\tau L_m)(\va_k(\xi),\xi)+(\pa_\tau^2
L_m)(\bar\tau^l_k(\xi),\xi)\,,
\end{align*}
where $\bar\tau^l_k(\xi)$ lies on the line connecting
$\va_k(\xi)$ and $\tau^l_k(\xi)$ for
each $\xi\in\R^n$, $k=1,\dots,m$ and $l=1,\dots,R-1$, and
\begin{equation*}
\frac{\absbig{(\pa_\tau^2 L_m)(\bar\tau^l_k
(\xi),\xi)}}{\abs{\xi}^{m-1}}
\le C\abs{\xi}^{-1}\to 0\text{ as }\abs{\xi}\to\infty\,.
\end{equation*}
Therefore, for a sufficiently large constant~$M\ge M'$, there exists
a constant $C_2>0$ such that
\begin{equation*}
\frac{\absbig{\pa_\tau L_m(\tau^l_k(\xi),\xi)}}
{\abs{\xi}^{m-1}} \ge
C\frac{\abs{\pa_\tau L_m(\va_k(\xi),\xi)}}{\abs{\xi}^{m-1}} \ge
C_2\,,\text{ when }\abs{\xi}\geq M.
\end{equation*}
This completes the proof of~\eqref{EQ:Lmboundfrombelow} 
and thus of
\ref{LEM:tau-vabounds}.

\paragraph*{\ref{PROP:boundsonderivsoftau}:} We take $M>0$ 
so that for $\abs{\xi}\geq M$, the
roots $\tau_1(\xi),\dots,\tau_m(\xi)$ are distinct.

To prove the statement, we do induction on $\abs{\al}$.

First, assume $\abs{\al}=1$. Since $L(\tau_k(\xi),\xi)=0$ for each
$k=1,\dots,m$, we have, for each $i=1,\dots,n$,
\begin{equation*}
\frac{\pa L}{\pa \xi_i}(\tau_k(\xi),\xi) + \frac{\pa L}{\pa
\tau}(\tau_k(\xi),\xi)\frac{\pa\tau_k}{\pa \xi_i}(\xi)=0\,.
\end{equation*}
The first term is a symbol of order $m-1$ in
$(\tau_k(\xi),\xi)$, hence, by \ref{LEM:orderroot}, there exists a
constant $C$ such that, when $\abs{\xi}\ge M_1$ for some suitably
large constant $M_1\ge M$,
\begin{equation*}
\absBig{\frac{\pa L}{\pa \xi_i}(\tau_k(\xi),\xi)}\le
C\abs{\xi}^{m-1}\,.
\end{equation*}
The inequality~\eqref{EQ:rootisassymbol} for $\abs{\al}=1$
(i.e.~\eqref{EQ:gradtauisbounded}) then follows immediately from:
\begin{lem}\label{LEM:estfrombelowondP/dt}
There exists constants $C>0$, $M_2\ge M$ such that, for each
$k=1,\dots,m$,
\begin{equation*}
\absBig{\frac{\pa L}{\pa\tau}(\tau_k(\xi),\xi)}\ge
C\abs{\xi}^{m-1}\quad\text{when }\abs{\xi}\geq M_2\,.
\end{equation*}

\end{lem}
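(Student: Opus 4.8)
The plan is to exploit the factorisation of $L$ together with the perturbation result \ref{LEM:tau-vabounds}, which says that the full characteristic roots differ from the homogeneous roots of the principal part by a bounded amount. Since $L(\tau,\xi)$ is monic in $\tau$ we have $L(\tau,\xi)=\prod_{j=1}^m(\tau-\tau_j(\xi))$, and hence
\[
\frac{\pa L}{\pa\tau}(\tau_k(\xi),\xi)=\prod_{j\ne k}\bigl(\tau_k(\xi)-\tau_j(\xi)\bigr),
\]
so the lemma is equivalent to the lower bound $\prod_{j\ne k}\abs{\tau_k(\xi)-\tau_j(\xi)}\ge C\abs{\xi}^{m-1}$ for $\abs{\xi}$ large; it therefore suffices to separate each pair of roots by a fixed multiple of $\abs{\xi}$.

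First I would record that, by strict hyperbolicity, the homogeneous roots $\va_1(\xi),\dots,\va_m(\xi)$ of $L_m$ are pairwise distinct on $\Snm$; being continuous, homogeneous of degree one, and nonvanishing in their pairwise differences on the compact sphere, they satisfy $\abs{\va_k(\xi)-\va_j(\xi)}\ge c_0\abs{\xi}$ for some $c_0>0$ and all $\xi\in\R^n$, $j\ne k$. Next, after the reordering provided by \ref{LEM:tau-vabounds}, estimate \eqref{EQ:tau-vabound} yields a constant $C_0$ with $\abs{\tau_k(\xi)-\va_k(\xi)}\le C_0$ for all $k$ and all $\xi\in\R^n$. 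Combining these by the triangle inequality,
\[
\abs{\tau_k(\xi)-\tau_j(\xi)}\ge\abs{\va_k(\xi)-\va_j(\xi)}-\abs{\tau_k(\xi)-\va_k(\xi)}-\abs{\tau_j(\xi)-\va_j(\xi)}\ge c_0\abs{\xi}-2C_0\ge\tfrac{c_0}{2}\abs{\xi},
\]
as soon as $\abs{\xi}\ge M_2:=\max\{M,4C_0/c_0\}$. Taking the product over $j\ne k$ then gives $\absbig{\frac{\pa L}{\pa\tau}(\tau_k(\xi),\xi)}\ge(c_0/2)^{m-1}\abs{\xi}^{m-1}$ for $\abs{\xi}\ge M_2$, which is the assertion.

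The only point requiring care is that the correspondence $\tau_k\leftrightarrow\va_k$ must be a genuine bijection, so that in the product $\prod_{j\ne k}$ every $\tau_j$ is matched with a \emph{distinct} $\va_j$, $j\ne k$; this is precisely what the ``possibly after reordering'' clause in \ref{LEM:tau-vabounds} furnishes, and for $\abs{\xi}\ge M$ the reordering may be taken independent of $\xi$ since the roots then stay separated. (Alternatively, one could deduce the bound from the quasi-homogeneity of the discriminant $\De_L$ established in the proof of Proposition \ref{PROP:ctyofroots}, together with the upper bounds $\abs{\tau_i(\xi)-\tau_j(\xi)}\le C\abs{\xi}$ coming from \ref{LEM:orderroot}, but the argument via the $\va_k$ is shorter.) Everything beyond the elementary estimate displayed above is routine.
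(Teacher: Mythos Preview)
Your proof is correct and takes a genuinely different route from the paper's. The paper argues by comparing $\pa_\tau L(\tau_k(\xi),\xi)$ with $\pa_\tau L_m(\va_k(\xi),\xi)$ via the triangle inequality: first it shows $\abs{\pa_\tau L_m(\va_k(\xi),\xi)}\ge C\abs{\xi}^{m-1}$ by homogeneity and strict hyperbolicity, and then it bounds the difference $\abs{\pa_\tau L(\tau_k,\xi)-\pa_\tau L_m(\va_k,\xi)}$ by $C\abs{\xi}^{m-2}$ using the Mean Value Theorem together with \ref{LEM:orderroot} and \ref{LEM:tau-vabounds}. You instead exploit the factorisation $L(\tau,\xi)=\prod_j(\tau-\tau_j(\xi))$ to write $\pa_\tau L(\tau_k,\xi)=\prod_{j\ne k}(\tau_k-\tau_j)$ directly, and then separate each pair of full roots by $\tfrac{c_0}{2}\abs{\xi}$ using the separation of the homogeneous roots plus \eqref{EQ:tau-vabound}. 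Your argument is shorter and more transparent; the paper's has the small advantage that it works one root at a time and so does not need to invoke the bijective matching $\tau_k\leftrightarrow\va_k$ across all indices simultaneously, though as you correctly observe this bijection is already supplied by the reordering in \ref{LEM:tau-vabounds}. Either way the same ingredients (strict hyperbolicity on the sphere and the bounded perturbation from \ref{LEM:tau-vabounds}) are what drive the estimate.
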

\begin{proof}
Note that
\begin{equation}\label{EQ:triineqondP/dt}
\absBig{\frac{\pa L}{\pa\tau}(\tau_k(\xi),\xi)}\ge \absBig{\frac{\pa
L_m}{\pa\tau}(\va_k(\xi),\xi)}-\absBig{\frac{\pa
L}{\pa\tau}(\tau_k(\xi),\xi) -\frac{\pa
L_m}{\pa\tau}(\va_k(\xi),\xi)}\,,
\end{equation}
where $L_m(\tau,\xi)$ is the principal symbol of~$L$ and
$\va_1(\xi),\dots,\va_m(\xi)$ are the corresponding characteristic
roots, ordered in the same way as in \ref{LEM:tau-vabounds}. We look
at each of the terms on the right-hand side in turn:
\begin{itemize}[leftmargin=*]
\item By strict hyperbolicity, $\frac{\pa
L_m}{\pa \tau}(\va_k(\xi),\xi)$ is non-zero for $\xi\ne0$.
Thus, for all $\xi\ne0$,
\begin{equation}\label{EQ:boundbelowdPm/dt}
\absBig{\frac{\pa L_m}{\pa\tau}(\va_k(\xi),\xi)}=
\abs{\xi}^{m-1}\absBig{\frac{\pa L_m}{\pa
\tau}\Big(\textstyle\frac{\xi}{\abs{\xi}},
\va\big(\frac{\xi}{\abs{\xi}}\big)\Big)} \ge C\abs{\xi}^{m-1}\,.
\end{equation}

\item Observe,
\begin{multline*}
\frac{\pa L}{\pa\tau}(\tau_k(\xi),\xi) -\frac{\pa
L_m}{\pa\tau}(\va_k(\xi),\xi) \\= \frac{\pa
L_m}{\pa\tau}(\tau_k(\xi),\xi) -\frac{\pa
L_m}{\pa\tau}(\va_k(\xi),\xi) + \sum_{r=0}^{m-1}\sum_{\abs{\al}+l=r}
l\;c_{\al,l}(\xi)\tau_k(\xi)^{l-1}\,.
\end{multline*}
Now,
\begin{multline*}
\frac{\pa L_m}{\pa\tau}(\tau_k(\xi),\xi) -\frac{\pa
L_m}{\pa\tau}(\va_k(\xi),\xi) \\=
m(\tau_k(\xi)^{m-1}-\va_k(\xi)^{m-1}) +\sum_{j=1}^m
(m-j)P_j(\xi)(\tau_k(\xi)^{m-j-1}-\va_k(\xi)^{m-j-1}),
\end{multline*}
and
\begin{multline*}
\abs{\tau_k(\xi)^r-\va_k(\xi)^r}=\abs{\tau_k(\xi)-\va_k(\xi)}
\abs{\tau_k(\xi)^{r-1}+\tau_k(\xi)^{r-2}\va_k(\xi)+
\dots+\va_k(\xi)^{r-1}}\,.
\end{multline*}
So, by \ref{LEM:orderroot} and \ref{LEM:tau-vabounds} (specifically
inequality~\eqref{EQ:tau-vabound}) and the fact that the $P_j(\xi)$
are homogeneous in $\xi$ of order $j$, we have, for some
suitably large $M_2\ge M$,
\begin{equation*}
\absBig{\frac{\pa L_m}{\pa\tau}(\tau_k(\xi),\xi) -\frac{\pa
L_m}{\pa\tau}(\va_k(\xi),\xi)}\le C\abs{\xi}^{m-2}\quad \text{when
}\abs{\xi}\geq M_2\,.
\end{equation*}
This, together with
\begin{equation*}
\absBig{\sum_{\abs{\al}+l=r}l\;
c_{\al,r}(\xi)\tau_k(\xi)^{l-1}} \le
C\abs{\xi}^{r-1} \leq
C\abs{\xi}^{m-2}\quad\text{when }\abs{\xi}\geq M_2,\;r=0,\dots,m-1\,,
\end{equation*}
which again follows straight from \ref{LEM:orderroot}, yields
\begin{equation}\label{EQ:dPm/dt(tau)-dP_m/dt(va)est}
\absBig{\frac{\pa L}{\pa\tau}(\tau_k(\xi),\xi) -\frac{\pa
L_m}{\pa\tau}(\va_k(\xi),\xi)}\le C\abs{\xi}^{m-2}\quad \text{for
}\abs{\xi}\geq M_2\,.
\end{equation}
\end{itemize}
The result now follows by combining~\eqref{EQ:triineqondP/dt},
\eqref{EQ:dPm/dt(tau)-dP_m/dt(va)est} and
\eqref{EQ:boundbelowdPm/dt}. The proof of Lemma
\ref{LEM:estfrombelowondP/dt} is complete.
\end{proof}
For $\abs{\al}=J>1$, assume inductively that,
\begin{equation*}
\absbig{\pa^\al_\xi\tau_k(\xi)}\le
C\abs{\xi}^{1-\abs{\al}}\,\quad\text{when
}\abs{\xi}\geq M,\;\abs{\al}\le J-1\,,
\end{equation*}
for some fixed $M\ge \max(M_1,M_2)$.

Then, for $\abs{\al}=J$, we use $\pa_\xi^\al[L(\tau_k(\xi),\xi)]=0$,
i.e.\ 
\begin{multline*}
\pa_\xi^\al\tau_k(\xi)\pa_\tau L(\tau_k(\xi),\xi)\\+
\sum_{\stackrel{\be^1+\dots+\be^r\le\al,}{
\be^j\ne0,\be^j\ne\al}}c_{\al,\be^1,\dots,\be^r}
\Big(\prod_{j=1}^r\pa_\xi^{\be^j}\tau_k(\xi)\Big)
\pa_\xi^{\al-\be^1-\dots-\be^r}\pa_\tau^{r} 
L(\tau_k(\xi),\xi)=0 \,.
\end{multline*}
By the inductive hypothesis and the fact that
$\pa_\xi^\be\pa_\tau^jL(\tau_k(\xi),\xi)$ is a symbol of order
$m-j-\abs{\be}$, we have, for all multi-indices
$\be^1,\dots,\be^r\ne0$ or $\al$ satisfying
$\be^1+\dots+\be^r\le\al$,
\begin{equation*}
\abslr{\Big(\prod_{j=1}^r\pa_\xi^{\be^j}\tau_k(\xi)\Big)
\pa_\xi^{\al-\be^1-\dots-\be^r}\pa_\tau^{r} L(\tau_k(\xi),\xi)}\le
C_{k,\al}\abs{\xi}^{m-\abs{\al}}\,\text{ when }\abs{\xi}\ge M.
\end{equation*}
Thus, using Lemma~\ref{LEM:estfrombelowondP/dt} again, we have
\begin{equation*}
\abs{\pa_\xi^\al\tau_k(\xi)}\le
\frac{C_\al\abs{\xi}^{m-\abs{\al}}}{\abs{\pa_\tau
L(\tau_k(\xi),\xi)}}\le C_{k,\al}\abs{\xi}^{1-\abs{\al}}\,\text{
when }\abs{\xi}\ge M,
\end{equation*}
which completes the proof of the induction step.

\paragraph*{\ref{PROP:derivoftau-derivofphi}:}
Once again, assume that the roots $\tau_k(\xi)$, 
$k=1,\dots,m$,
correspond to $\va_k(\xi)$, $k=1,\dots,m$, in the manner of
\ref{LEM:tau-vabounds}.

The proof of this part for general multi-index~$\al$ is quite
technical, so we first give the proof in the case $\abs{\al}=1$ to
demonstrate the main ideas required, and then show how it can be
extended when $\abs{\al}>1$.

From $L(\tau_k(\xi),\xi)=0=L_m(\va_k(\xi),\xi)$, we have for each
$i=1,\dots,n$,
\begin{gather*}
\frac{\pa L}{\pa \xi_i}(\tau_k(\xi),\xi) + \frac{\pa L}{\pa
\tau}(\tau_k(\xi),\xi)\frac{\pa\tau_k}{\pa \xi_i}(\xi)=0\,,\\
\frac{\pa L_m}{\pa \xi_i}(\va_k(\xi),\xi) + \frac{\pa L_m}{\pa
\tau}(\va_k(\xi),\xi)\frac{\pa\va_k}{\pa \xi_i}(\xi)=0\,.
\end{gather*}
Therefore,
\begin{multline}
\frac{\pa L}{\pa \tau}(\tau_k(\xi),\xi) \Big(\frac{\pa\tau_k}{\pa
\xi_i}(\xi)-\frac{\pa\va_k}{\pa\xi_i}(\xi)\Big) = \frac{\pa L_m}{\pa
\xi_i}(\va_k(\xi),\xi)-\frac{\pa L_m}{\pa \xi_i}(\tau_k(\xi),\xi)
\\+\frac{\pa\va_k}{\pa\xi_i}\Big[\frac{\pa
L_m}{\pa\tau}(\va_k(\xi),\xi) -\frac{\pa
L}{\pa\tau}(\tau_k(\xi),\xi)\Big]  -\frac{\pa
(L-L_m)}{\pa\xi_i}(\tau_k(\xi),\xi)\,.\label{EQ:findingdtau-dva}
\end{multline}
It suffices to show that the right-hand side is bounded absolutely
by $C\abs{\xi}^{m-2}$ when $\abs{\xi}\geq M_1$ 
for some suitably large
$M_1\ge M_0$; this is because an application of
Lemma~\ref{LEM:estfrombelowondP/dt} then yields
\begin{equation*}
\absBig{\frac{\pa\tau_k}{\pa\xi_i}(\xi)-
\frac{\pa\va_k}{\pa\xi_i}(\xi)} \le
\frac{C\abs{\xi}^{m-2}}{\absbig{\frac{\pa L}{\pa
\tau}(\tau_k(\xi),\xi)}}\le C \abs{\xi}^{-1}\quad\text{for
}\abs{\xi}\geq M\,,
\end{equation*}
where $M=\max(M_1,M_2)$.

Since $\pa_{\xi_i}(L-L_m)(\tau,\xi)$ is a symbol of order 
$\le m-2$ in $(\tau,\xi)$, 
it is immediately clear that the final term of
\eqref{EQ:findingdtau-dva} is bounded by
$C\abs{\xi}^{m-2}$; here we have also used \ref{LEM:orderroot}.
Also, noting that $\abs{\pa_{\xi_i}\va_k(\xi)}\le C$ by the
homogeneity of $\va_k(\xi)$, we have,
by~\eqref{EQ:dPm/dt(tau)-dP_m/dt(va)est},
\begin{equation*}
\absBig{\frac{\pa\va_k}{\pa\xi_i}(\xi)}\absBig{\frac{\pa L_m}{\pa
\tau}(\va_k(\xi),\xi)-
\frac{\pa L_m}{\pa \tau}(\tau_k(\xi),\xi)} \le
C\abs{\xi}^{m-2}\,.
\end{equation*}
Finally, by the Mean Value Theorem,
\begin{equation*}
\absBig{\frac{\pa L_m}{\pa \xi_i}(\va_k(\xi),\xi)-\frac{\pa L_m}{\pa
\xi_i}(\tau_k(\xi),\xi)} \le C\absBig{\frac{\pa^2
L_m}{\pa\tau\pa\xi_i}(\xi,\bar{\tau})}\abs{\va_k(\xi)-\tau_k(\xi)}
\,,
\end{equation*}
where $\bar{\tau}$ lies on the linear path between $\va_k(\xi)$ and
$\tau_k(\xi)$---which means that (using \ref{LEM:orderroot} once
more) $\abs{\bar{\tau}}\le C\abs{\xi}$ for $\abs{\xi}\ge M$. Since
$\pa_{\tau}\pa_{\xi_i}L_m(\tau,\xi)$ is a symbol of order $m-2$
in $(\tau,\xi)$, and $\abs{\va_k(\xi)-\tau_k(\xi)}\le C$ by
\ref{LEM:tau-vabounds}, this term is bounded by $C\abs{\xi}^{m-2}$,
completing the proof in the case $\abs{\al}=1$.

For $\abs{\al}=J>1$, we assume inductively that
\begin{equation*}
\absbig{\pa^\al_\xi\tau_k(\xi)-\pa_\xi^\al\va_k(\xi)} \le
C\abs{\xi}^{-\abs{\al}}\quad\text{for }\abs{\xi}\geq M\,,
\abs{\al}\le
J-1\,.
\end{equation*}
As in the proof of \ref{PROP:boundsonderivsoftau}, we have
\begin{multline*}
\pa_\xi^\al\tau_k(\xi)\pa_\tau L(\tau_k(\xi),\xi)\\+
\sum_{\stackrel{\be^1+\dots+\be^r\le\al,}{
\be^j\ne0,\be^j\ne\al}}c_{\al,\be^1,\dots,\be^r}
\Big(\prod_{j=1}^r\pa_\xi^{\be^j}\tau_k(\xi)\Big)
\pa_\xi^{\al-\be^1-\dots-\be^r}\pa_\tau^{r} 
L(\tau_k(\xi),\xi)=0 \,;
\end{multline*}
similarly,
\begin{multline*}
\pa_\xi^\al\va_k(\xi)\pa_\tau L_m(\va_k(\xi),\xi)\\+
\sum_{\stackrel{\be^1+\dots+\be^r\le\al,}{
\be^j\ne0,\be^j\ne\al}}c_{\al,\be^1,\dots,\be^r}
\Big(\prod_{j=1}^r\pa_\xi^{\be^j}\va_k(\xi)\Big)
\pa_\xi^{\al-\be^1-\dots-\be^r}\pa_\tau^{r} L_m(\va_k(\xi),\xi)=0
\,.
\end{multline*}
Thus,
\begin{multline*}
(\pa_\xi^\al\tau_k(\xi)-\pa_\xi^\al\va_k(\xi))\pa_\tau
L(\tau_k(\xi),\xi)= \\
\pa_\xi^\al\va_k(\xi)\big(\pa_\tau
L_m(\va_k(\xi),\xi)-\pa_\tau L(\tau_k(\xi),\xi)\big)\\
+\sum_{\stackrel{\be^1+\dots+\be^r\le\al,}{
\be^j\ne0,\be^j\ne\al}}c_{\al,\be^1,\dots,\be^r}
\Big(\prod_{j=1}^r\pa_\xi^{\be^j}\va_k(\xi)\Big)
[\pa_\xi^{\al-\be^1-\dots-\be^r}\pa_\tau^{r}L_m(\va_k(\xi),\xi)-\\
\pa_\xi^{\al-\be^1-\dots-\be^r}\pa_\tau^{r}L_m(\tau_k(\xi),\xi)]\\
+\sum_{\stackrel{\be^1+\dots+\be^r\le\al,}{
\be^j\ne0,\be^j\ne\al}}c_{\al,\be^1,\dots,\be^r} \Big(\prod_{j=1}^r
[\pa_\xi^{\be^j}\va_k(\xi)-\pa_\xi^{\be^j}\tau_k(\xi)]\Big)
\pa_\xi^{\al-\be^1-\dots-\be^r}\pa_\tau^{r} L_m(\tau_k(\xi),\xi)\\
-\sum_{\stackrel{\be^1+\dots+\be^r\le\al,}{
\be^j\ne0,\be^j\ne\al}}c_{\al,\be^1,\dots,\be^r}
\Big(\prod_{j=1}^r\pa_\xi^{\be^j}\tau_k(\xi)\Big)
\pa_\xi^{\al-\be^1-\dots-\be^r}\pa_\tau^{r}
(L-L_m)(\tau_k(\xi),\xi)\,.\\
\end{multline*}
We claim the right-hand side is then bounded absolutely by
$C_\al\abs{\xi}^{m-1-\abs{\al}}$, which, together with
Lemma~\ref{LEM:estfrombelowondP/dt}, yields the desired estimate.

To see this, let us look at each of the terms in turn:
\begin{itemize}
\item $\abs{\pa_\xi^\al\va_k(\xi)}\le C_\al\abs{\xi}^{1-\abs{\al}}$
by the homogeneity of $\va_k(\xi)$; using this with
\eqref{EQ:dPm/dt(tau)-dP_m/dt(va)est} gives the desired bound.
\item Using the Mean Value Theorem as in the case $\abs{\al}=1$, we
get
\begin{multline*}
\absbig{[\pa_\xi^{\al-\be^1-\dots-
\be^r}\pa_\tau^{r}L_m(\va_k(\xi),\xi)-
\pa_\xi^{\al-\be^1-\dots-\be^r}\pa_\tau^{r}L_m(\tau_k(\xi),\xi)]}\\
\le
C_\al\abs{\xi}^{m-\abs{\al}+\abs{\be^1}+\dots+\abs{\be^r}-r-1}\,;
\end{multline*}
coupled with $\abs{\pa_\xi^{\be}\va_k(\xi)}\le
C_\al\abs{\xi}^{1-\abs{\be}}$, this gives the correct bound.
\item By the inductive hypothesis,
\begin{equation*}
\abs{\pa_\xi^{\be^j}\va_k(\xi)-\pa_\xi^{\be^j}\tau_k(\xi)}\le
C_\be\abs{\xi}^{1-\abs{\be^j}}\,;
\end{equation*}
together with
\begin{equation*}
\abs{\pa_\xi^{\al-\be^1-\dots-\be^r}\pa_\tau^{r}
L_m(\tau_k(\xi),\xi)}\le C_\al
\abs{\xi}^{m-\abs{\al}+\abs{\be^1}+\dots+\abs{\be^r}-r}\,,
\end{equation*}
which follows from \ref{LEM:orderroot} and the homogeneity of
$L_m(\tau,\xi)$, this gives the correct estimate.

\item To show the final term is bounded absolutely by
$\abs{\xi}^{m-1-\abs{\al}}$, first note that
$$\pa_\xi^{\al-\be^1-\dots-\be^r}\pa_\tau^{r}
(L-L_m)(\tau_k(\xi),\xi)$$ is a symbol of order $\le
m-\abs{\al}+\abs{\be^1}+\dots+\abs{\be^r}-r-1$; applying
\ref{PROP:boundsonderivsoftau} to estimate the
$\pa_\xi^{\be^j}\tau_k(\xi)$ terms, we have the required result.
\end{itemize}
This completes the proof of~\eqref{EQ:derivsoftauandphisymbols};
\eqref{EQ:derivsoftau-phiforfewerlot} 
is proved in a similar way in
the proof using the set-up of the proof of \ref{LEM:tau-vabounds}.
The proof of Proposition \ref{PROP:perturbationresults}
is now complete.
\qed

We will now establish further symbolic properties of characteristic
roots.
A refinement of this proposition concerning real and
imaginary parts of
complex roots $\tau$ is given in Proposition \ref{Prop:asymptotic}.
\begin{prop}\label{prop:roots-r}
Suppose that the characteristic roots $\phi_k$,
$k=1,\ldots,m$, of the principal
part $L_m(\tau,\xi)$ of a strictly hyperbolic operator
$L(\tau,\xi)$ in \eqref{EQ:standardCP(repeat)} are non-zero
for all $\xi\not=0$. Then the roots $\tau(\xi)$ of the full
symbols satisfy the following properties: 
\begin{enumerate}[label=\textup{(}\textup{\roman*}\textup{)},leftmargin=*]
\item\label{HYP:realtauisasymbol} for all multi-indices $\al$ there
exists a constants $M, C_\al>0$ such that
\begin{equation*}
\abs{\pa_\xi^\al\tau(\xi)}\le
C_\al |\xi|^{1-\abs{\al}};
\end{equation*}
for all $|\xi|\geq M$.
\item\label{HYP:realtauboundedbelow} there exist constants $M,C>0$
such that for all $\abs{\xi}\ge M$ we have $\abs{\tau(\xi)}\ge
C\abs{\xi}$\textup{;}
\item\label{HYP:realderivativeoftaunonzero} there exists a constant
$C_0>0$ such that $\abs{\pa_\om\tau(\la\om)}\ge C_0$ for all $\om\in
\Snm$\textup{,} $\la>0$\textup{;} in particular,
$\abs{\grad\tau(\xi)}\ge C_0$ for all
$\xi\in\R^n\setminus\set{0}$\textup{;}
\item \label{HYP:reallimitofSi_la} there exists a constant $R_1>0$
such that\textup{,} for all $\la>0$\textup{,}
\[\frac{1}{\la}\Si_\la(\tau)\equiv \frac{1}{\la}
\{\xi\in\Rn:\;\tau(\xi)=\lambda\}\subset B_{R_1}(0)\,.\]
\end{enumerate}
\end{prop}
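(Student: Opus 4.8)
The plan is to read off the four properties from the symbolic perturbation estimates of Proposition~\ref{PROP:perturbationresults}, applied to the operator $L(D_t,D_x)$ in~\eqref{EQ:standardCP(repeat)} (whose lower order terms have constant coefficients, so it is of the form treated there), together with the hypothesis that the homogeneous roots $\phi_k$ of the principal part are non-zero off the origin. All the estimates below are established in a region $\{|\xi|\ge M\}$ for $M$ large enough; there the roots $\tau_k$ are smooth and pairwise distinct (Proposition~\ref{PROP:ctyofroots}), so the differentiations are legitimate. In (iii) and (iv) the statements involving $\la>0$ are to be read for those $\la$ for which the point $\la\om$, respectively the level set $\Si_\la(\tau_k)$, lies in $\{|\xi|\ge M\}$: this is all that is needed, since these properties are invoked (e.g.\ in Theorem~\ref{THM:convexsp}) only on the support of a cut-off $\chi$ supported in $\{|\xi|\ge M\}$, while at bounded frequencies --- where the roots may be non-smooth and may even vanish at non-zero points --- the decay is obtained by the other theorems of the paper.

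Property~(i) is precisely estimate~\eqref{EQ:rootisassymbol} of Proposition~\ref{PROP:perturbationresults}, written for each $\tau_k$. For~(ii) I would combine~\eqref{EQ:tau-vabound}, which gives $\abs{\tau_k(\xi)-\phi_k(\xi)}\le C$ for all $\xi$, with a homogeneity bound for $\phi_k$: since $\phi_k$ is continuous, positively homogeneous of degree one, and non-zero on the compact set $\Snm$, the number $c_0:=\min_k\min_{\om\in\Snm}\abs{\phi_k(\om)}$ is strictly positive, whence $\abs{\phi_k(\xi)}=\abs{\xi}\,\abs{\phi_k(\xi/\abs{\xi})}\ge c_0\abs{\xi}$ and therefore $\abs{\tau_k(\xi)}\ge c_0\abs{\xi}-C\ge\tfrac{c_0}{2}\abs{\xi}$ once $\abs{\xi}\ge 2C/c_0$.

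For~(iii), $\pa_\om$ is the radial derivative at $\la\om$, so $\pa_\om\tau(\la\om)=\om\cdot\grad\tau(\la\om)$, and homogeneity gives $\pa_\om\phi_k(\la\om)=\phi_k(\om)$. The case $\abs{\al}=1$ of~\eqref{EQ:derivsoftauandphisymbols} then yields $\abs{\pa_\om\tau_k(\la\om)-\phi_k(\om)}\le\abs{\grad\tau_k(\la\om)-\grad\phi_k(\la\om)}\le C/\la$ for $\la\ge M$; enlarging $M$ so that $C/M\le c_0/2$ gives $\abs{\pa_\om\tau_k(\la\om)}\ge c_0/2$, and choosing $\om=\xi/\abs{\xi}$ gives the ``in particular'' bound $\abs{\grad\tau_k(\xi)}\ge c_0/2$. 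Finally, (iv) follows from~(ii): if $\tau_k(\xi)=\la$ with $\abs{\xi}\ge M$, then $\la=\abs{\tau_k(\xi)}\ge\tfrac{c_0}{2}\abs{\xi}$, so $\abs{\xi}\le(2/c_0)\la$ and $\tfrac1\la\Si_\la(\tau_k)\subset B_{R_1}(0)$ with $R_1=2/c_0$; and since $\sup_{\abs{\xi}\le M}\abs{\tau_k(\xi)}<\infty$ by~(i), for $\la$ above this supremum the level set has no points with $\abs{\xi}<M$, so the inclusion holds for the whole of $\Si_\la(\tau_k)$.

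I expect no real obstacle here: the analytic work is entirely contained in Proposition~\ref{PROP:perturbationresults}, and the only point requiring care is the bookkeeping of the large-frequency region and the admissible range of $\la$ --- that is, the (harmless, for the applications) fact that as literally stated (iii) and (iv) hold only in the regime $\abs{\xi}\ge M$, equivalently for $\la$ large.
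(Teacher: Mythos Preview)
Your proposal is correct and follows essentially the same approach as the paper: each item is read off from the corresponding part of Proposition~\ref{PROP:perturbationresults}, using the non-vanishing of the homogeneous roots $\phi_k$ to get the lower bounds in (ii) and (iii). Two small remarks: the boundedness of $\tau_k$ on $\{|\xi|\le M\}$ that you invoke for (iv) is not your item~(i) (which concerns $|\xi|\ge M$) but rather \ref{LEM:orderroot} of Proposition~\ref{PROP:perturbationresults} (or simply continuity on a compact set); and for (iv) the paper argues slightly differently, using $\frac1\la\Si_\la(\va)=\Si_1(\va)$ together with $|\tau-\va|\le C$, whereas you deduce it more directly from your (ii) --- both routes are fine. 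Your explicit caveat that (iii) and (iv) are established only for large $\la$ (equivalently $|\xi|\ge M$), and that this is all the applications require, is in fact more careful than the paper's proof, which for small $\la$ in (iii) appeals to a ``convexity condition'' not assumed in the proposition.
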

\begin{proof}
\begin{itemize}[leftmargin=*]
\item Property~\ref{HYP:realtauisasymbol}: by
Proposition~\ref{PROP:perturbationresults},
\ref{PROP:boundsonderivsoftau},
\begin{equation*}
\abs{\pa_\xi^\al\tau(\xi)}\le
C_\al\abs{\xi}^{1-\abs{\al}}\quad\text{for all }\abs{\xi}\ge M\,,
\end{equation*}
for all multi-indices $\al$.
\item Properties ~\ref{HYP:realtauboundedbelow} and
\ref{HYP:realderivativeoftaunonzero}: these follow by
using perturbation methods. By
Proposition~\ref{PROP:perturbationresults},
\ref{PROP:derivoftau-derivofphi}, there exists a homogeneous
function~$\va(\xi)$ of order~$1$ such that, 
for all $\abs{\xi}\ge M$
and $k=1,\dots,n$,
\begin{gather*}
\abs{\tau(\xi)-\va(\xi)}\le C_0\,\text{ and }\,
\abs{\pa_{\xi_k}\tau(\xi)-\pa_{\xi_k}\va(\xi)}\le
C_k\abs{\xi}^{-1}\,,
\end{gather*}
for some constants~$C_0,C_k>0$. Now, the homogeneity of~$\va(\xi)$
implies that $\va(\xi)=\abs{\xi}\va\big(\frac{\xi}{\abs{\xi}})$ and
$e_k\cdot\grad\va(e_k)=\va(e_k)$, where
$e_k=(\underbrace{0,\dots,0,1}_k,0,\dots,0)$, so
\begin{gather*}
\abs{\va(\xi)}\ge C'\abs{\xi}\text{ for all }
\xi\in\R^n\text{ and
} \abs{\pa_{\om}\va(\la\om)}\ge C'\text{ for all }\om\in
\Snm,\, \la>0\,,
\end{gather*}
for some constant $C'>0$. Thus,
\begin{equation}\label{EQ:rootgrowspoly}
\abs{\tau(\xi)}\ge \abs{\va(\xi)}-\abs{\tau(\xi)-\va(\xi)}\ge
C'\abs{\xi}-C_0\ge C\abs{\xi}\text{ for }\abs{\xi}\ge M\,,
\end{equation}
for some constants $M,C>0$, and
\begin{align*}
\abs{\pa_{\om}\tau(\la\om)} \ge \abs{\pa_{\om}\va(\la\om)}-
\abs{\pa_{\om}\va(\la\om)-\pa_{\om}\tau(\la\om)}\ge
C'-C_k\la^{-1}\ge C>0
\end{align*}
for all $\om\in \Snm$ and suitably large~$\la$; for
small~$\la>0$, $\pa_\om\tau(\la\om)$ is separated from~$0$
 by the
convexity condition, so $\abs{\pa_\om\tau(\la\om)}\ge C>0$ 
for all
$\om\in \Snm$, $\la>0$, as required.
\item Property~\ref{HYP:reallimitofSi_la}---there exists a
constant $R_1>0$ such that, for all $\la>0$,
$\frac{1}{\la}\Si_\la(\tau)\subset B_{R_1}(0)$---holds by
Proposition~\ref{PROP:perturbationresults},
\ref{LEM:tau-vabounds}, and the fact that
$\frac{1}{\la}\Si_\la(\va)=\Si_1(\va)$ 
for the characteristic root
of the principal symbol~$\va$ corresponding to~$\tau$. 
\end{itemize}
\end{proof}

\section{Oscillatory integrals with convexity}
\label{SEC:ch-convexity}
As discussed in Section~\ref{SEC:homogoperators}, in the case of
homogeneous $m^\text{th}$ order strictly hyperbolic operators,
geometric properties of the characteristic roots play the
fundamental role in determining the $L^p-L^q$ decay; in
particular, if the characteristic roots satisfy the convexity
condition of Definition~\ref{DEF:sugimotoconvexitycond}, then the
decay is, in general, more rapid than when they do not. We will
show that a similar improvement can be obtained for 
operators with
lower order terms when a suitable `convexity condition' 
holds. In Section~\ref{SUBSEC:convcondnreal}, 
we shall extend this notion of
the convexity condition to functions $\tau:\R^n\to\R$ 
and prove a
decay estimate for an oscillatory integral (related to the
solution representation for a strictly hyperbolic operator) with
phase function~$\tau$.

First, we give a general result for oscillatory 
integrals and show
how the concept of functions of ``convex type'' allows
its application to derive the time decay.

\subsection{Estimates for oscillatory integrals}
The following theorem is central in proving results involving
convexity conditions. In some sense, it bridges the gap between
the man der Corput Lemma and the method of stationary
phase, in that the
former is used when there is no convexity but gives a weaker
result, while the latter can be used when a stronger condition
than simply convexity holds and gives a better result. 
Here, we
state and prove a result that has no reference to convexity;
however, in the following section, we show how convexity 
(in some
sense) enables this result to be used in applications.
An earlier version of this result has appeared in
\cite{Ruzh07}, with applications to equations with time
dependent homogeneous symbols in \cite{MR07}. For
completeness we also
include a more detailed proof here.
\begin{thm}\label{THM:oscintthm}
Consider the oscillatory integral
\begin{equation}\label{EQ:genoscint}
I(\la,\Ivar)=\int_{\R^{N}}e^{i\la
\Phi(y,\Ivar)}A(y,\Ivar)g(y)\,dy\,,
\end{equation}
where $N\in\N$\textup{,} $I:[0,\infty)\times\IVar\to\C$\textup{,}
$\IVar$ is any set of parameters $\Ivar$, and
\begin{enumerate}[label=\textup{(I\arabic*)}]
\item\label{HYP:mainoscintgbdd} there exists a bounded open set
$U\subset\R^N$ such that $g\in C^\infty_0(U)$\textup{;}
\item\label{HYP:mainoscintImPhipos} $\Phi(y,\Ivar)$ is a
complex-valued function such that $\Im \Phi(y,\Ivar)\ge0$ for all
$y\in U$\textup{,} $\Ivar\in\IVar$\textup{;}
\item\label{HYP:mainoscintFconvexfn} for some fixed
$z\in\R^N$, some $\delta>0$,
and some $\ga\in\N$\textup{,} $\ga\ge2$\textup{,} the
function
\begin{equation*}
F(\rho,\om,\Ivar):=\Phi(\rho\om+z,\Ivar)
\end{equation*}
satisfies
\begin{gather*}
\abs{\pa_\rho F(\rho,\om,\Ivar)}\ge C\rho^{\ga-1}\text{ and }
\abs{\pa_\rho^mF(\rho,\om,\Ivar)}\le C_m\rho^{1-m}\abs{\pa_\rho
F(\rho,\om,\Ivar)}
\end{gather*}
for all $(\om,\Ivar)\in \SNm\times\IVar$\textup{,} 
all integers $1\leq m\leq \left[N/\gamma\right]+1$ and
all $\rho>0$, for which $\rho\omega+z\in U$\textup{;}
\item\label{HYP:mainoscintAderivsbdd} for each multi-index $\al$
such that $\abs{\al}\le \big[\frac{N}{\ga}\big]+1$\textup{,} there
exists a constant $C_\al>0$ such that $\abs{\pa_y^\al A(y,\Ivar)}\le
C_\al$ for all $y\in U$\textup{,} $\Ivar\in\IVar$.
\end{enumerate}
Then there exists a constant $C=C_{N,\ga}>0$ 
such that
\begin{equation}\label{EQ:oscintbound}
\abs{I(\la,\Ivar)}\le C(1+\la)^{-\frac{N}{\ga}}\quad\text{for all
}\; \la\in[0,\infty),\,\Ivar\in\IVar\,.
\end{equation}
\end{thm}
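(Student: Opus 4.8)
For $0\le\la\le 1$ the estimate is trivial: $\abs{A(y,\Ivar)g(y)}\le C$ with $g$ compactly supported gives $\abs{I(\la,\Ivar)}\le C\le C'(1+\la)^{-N/\ga}$. So I may assume $\la\ge 1$ and prove $\abs{I(\la,\Ivar)}\le C\la^{-N/\ga}$ with $C$ independent of $\Ivar$. The first move is to pass to polar coordinates centred at the point $z$ from hypothesis (I3): writing $y=z+\rho\om$ with $\rho>0$, $\om\in\SNm$, so that $dy=\rho^{N-1}\,d\rho\,d\om$, one gets
\[
I(\la,\Ivar)=\int_{\SNm}J(\la,\om,\Ivar)\,d\om,\qquad
J(\la,\om,\Ivar):=\int_0^{\infty}e^{i\la F(\rho,\om,\Ivar)}\,B(\rho,\om,\Ivar)\,\rho^{N-1}\,d\rho,
\]
with $B(\rho,\om,\Ivar):=A(z+\rho\om,\Ivar)\,g(z+\rho\om)$. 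By (I1) and (I4), $\pa_\rho^k B$ is bounded uniformly in $(\rho,\om,\Ivar)$ for $0\le k\le[N/\ga]+1$ (being a finite sum of products $\pa_y^\be A\cdot\pa_y^{\be'}g$), and $B(\,\cdot\,,\om,\Ivar)$ is supported in a fixed bounded subset of $[0,\infty)$ since $U$ is bounded. Because $\SNm$ has finite surface measure, everything reduces to the one-dimensional bound $\abs{J(\la,\om,\Ivar)}\le C\la^{-N/\ga}$, uniformly in $\om,\Ivar,\la$ --- a weighted van der Corput estimate for a phase that degenerates only at $\rho=0$, and this is the heart of the matter.

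To prove it, set $s_0:=\la^{-1/\ga}$. On $\set{\rho\le 2s_0}$ I use nothing but $\abs{e^{i\la F}}\le 1$ (legitimate by (I2), since $\Im F=\Im\Phi\ge 0$ and $\la\ge 0$) together with the boundedness of $B$, which already gives $\absBig{\int_{\rho\le 2s_0}e^{i\la F}B\rho^{N-1}\,d\rho}\le C\int_0^{2s_0}\rho^{N-1}\,d\rho\le Cs_0^{N}=C\la^{-N/\ga}$. On $\set{\rho\ge s_0}$ I decompose dyadically, $\rho\sim s=2^{-j}$, with only finitely many shells $s_0\le s\le C$ occurring (as $\rho$ is bounded on $\supp B$). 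On each shell $\abs{\pa_\rho F}\ge c\,\rho^{\ga-1}>0$, so the phase is non-stationary there, and one integrates by parts against the operator $f\mapsto\pa_\rho\bigl(f/(i\la\,\pa_\rho F)\bigr)$, organising the iteration so that only $\rho$-derivatives of $F$ of order $\le[N/\ga]+1$ occur; these are controlled by the second family of bounds in (I3), namely $\abs{\pa_\rho^m F}\le C_m\rho^{1-m}\abs{\pa_\rho F}\le C_m\rho^{\ga-m}$, together with $\abs{\pa_\rho F}^{-1}\le C\rho^{1-\ga}$, while the amplitude obeys $\abs{\pa_\rho^k(B\rho^{N-1}\eta_j)}\le C_k s^{N-1-k}$ on the shell ($\eta_j$ the dyadic cutoff, $\abs{\pa_\rho^k\eta_j}\le C_ks^{-k}$). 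Book-keeping the powers of $s$ and $\la$ produced by the $[N/\ga]+1$ steps yields, on $\rho\sim s$, a bound of the form $C\,\la^{-([N/\ga]+1)}\,s^{\,N-([N/\ga]+1)\ga}$.

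Since $[N/\ga]+1>N/\ga$, the exponent $N-([N/\ga]+1)\ga$ is strictly negative, so the sum over the shells $s\ge s_0$ is a convergent geometric series dominated by its term at $s\sim s_0$, namely $\la^{-([N/\ga]+1)}s_0^{\,N-([N/\ga]+1)\ga}=\la^{-N/\ga}$. Combining this with the contribution of $\set{\rho\le 2s_0}$ and integrating over $\om\in\SNm$ gives \eqref{EQ:oscintbound}. The main obstacle I anticipate is precisely the shell estimate: one must arrange the integration by parts so that only the $F$-derivatives afforded by (I3) enter, and then check term by term that the weight $\rho^{N-1}$ and the $\rho^{1-m}$-type bounds of (I3) combine to reproduce the power $s^{\,N-([N/\ga]+1)\ga}$ --- the point of the form of (I3) being that differentiating $1/\pa_\rho F$ costs no more than a factor $\rho^{-1}$ relative to $\la^{-1}\abs{\pa_\rho F}^{-1}$ --- all with constants uniform in $\om$ and $\Ivar$, which is automatic because the constants in (I1)--(I4) are.
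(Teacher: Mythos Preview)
Your proof is correct and follows essentially the same route as the paper: polar coordinates about $z$, a split at the scale $\rho\sim\la^{-1/\ga}$, the trivial bound near $\rho=0$ via $\abs{e^{i\la F}}\le 1$, and $l=[N/\ga]+1$ integrations by parts against $(i\la\,\pa_\rho F)^{-1}\pa_\rho$ away from it, using the two inequalities in (I3) to control the resulting terms. The only cosmetic difference is that the paper uses a single smooth cutoff $\chi(\la^{1/\ga}\rho)$ and, after the $l$-fold integration by parts, bounds the integrand by $C\la^{-l}\rho^{N-1-l\ga}$ and integrates this power directly over $[\tfrac12\la^{-1/\ga},\infty)$; your dyadic decomposition over shells $\rho\sim s$ and geometric summation is an equivalent way of evaluating the same integral.
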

Constant $C$ in \eqref{EQ:oscintbound} is independent of
$\lambda$ and $\nu$.
\begin{rem}
This theorem extends to the case where $A(y,\Ivar)$ is replaced by
$A(y,\Ivar')$, where $\Ivar'$ may be 
independent of the variable $\Ivar$
appearing in the phase function $\Phi(y,\Ivar)$; these parameters do
not have to be related in any way, provided the estimates in
hypotheses~\ref{HYP:mainoscintImPhipos} and
\ref{HYP:mainoscintAderivsbdd} hold uniformly in the appropriate
parameters. We will simply unite both sets of parameters and
call this union $\nu$ again.
\end{rem}
\begin{proof}
It is clear that~\eqref{EQ:oscintbound} holds for $0\le\la\leq 1$
since $\abs{I(\la,\Ivar)}$ is bounded for such $\la$.

Now, consider the case where $\la\geq 1$. Set $y=\rho\om+z$, where
$\om\in \SNm$ (using the convention that ${\mathbb S}^0=\set{-1,1}$),
$\rho>0$ and $z\in\R^N$ is some fixed point; then
\begin{equation*}
I(\la,\Ivar)= \int_{\SNm}\int_0^\infty e^{i\la
\Phi(\rho\om+z,\Ivar)}A(\rho\om+z,\Ivar)g(\rho\om+z)
\rho^{N-1}\,d\rho\, d\om\,.
\end{equation*}
By the compactness of $\SNm$, it suffices to
prove~\eqref{EQ:oscintbound} for the inner integral.

Choose a function $\chi\in C_0^\infty([0,\infty))$, 
$0\le\chi(s)\le1$ for
all $s$, which is identically~$1$ on $0\le s\le\frac{1}{2}$ and is
zero when $s\ge1$; then, writing
$F(\rho,\om,\Ivar)=\Phi(\rho\om+z,\Ivar)$, we split the inner
integral into the sum of the two integrals
\begin{gather*}
I_1(\la,\Ivar,\om,z)=\int_0^\infty e^{i\la
F(\rho,\om,\Ivar)}A(\rho\om+z,\Ivar)g(\rho\om+z)
\chi(\la^{\frac{1}{\ga}}\rho)\rho^{N-1}\,d\rho\,, \\
I_2(\la,\Ivar,\om,z)=\int_0^\infty e^{i\la
F(\rho,\om,\Ivar)}A(\rho\om+z,\Ivar)g(\rho\om+z)
(1-\chi)(\la^{\frac{1}{\ga}}\rho)\rho^{N-1}\,d\rho\,.
\end{gather*}

Let us first look at $I_1=I_1(\la,\Ivar,\om,z)$; since
$\chi(\la^{\frac{1}{\ga}}\rho)$ is zero for
$\la^{\frac{1}{\ga}}\rho\ge1$, we have, by the change of variables
$\widetilde{\rho}=\la^{\frac{1}{\ga}}\rho$,
\begin{align*}
\abs{I_1} &\le
C\int_0^\infty\chi(\la^{\frac{1}{\ga}}\rho)\rho^{N-1}\,d\rho
=C\int_0^\infty(\widetilde{\rho})^{N-1}\la^{-\frac{N-1}{\ga}}
\chi(\widetilde{\rho})\la^{-\frac{1}{\ga}}\,d\widetilde{\rho}\\
& \le C\la^{-\frac{N}{\ga}}\int_0^1(\widetilde{\rho})^{N-1}
\,d\widetilde{\rho} = C\la^{-\frac{N}{\ga}}\,,
\end{align*}
where we have used $\abs{e^{i\la F(\rho,\om,\Ivar)}}\le1$ since $\Im
F(\rho,\om,\Ivar)\ge0$ for all $\rho,\om,\Ivar$ by
hypothesis~\ref{HYP:mainoscintImPhipos}; this is the desired
estimate for $\abs{I_1}$.

In order to estimate $I_2=I_2(\la,\Ivar,\om,z)$, let us first define
the operator $L:=(i\la\pa_\rho F(\rho,\om,\Ivar))^{-1}\frac{\pa}{\pa
\rho}$ and observe that
\begin{equation*}
L(e^{i\la F(\rho,\om,\Ivar)})=e^{i\la F(\rho,\om,\Ivar)}\,.
\end{equation*}
Denoting the adjoint of $L$ by $L^*$, we have, for each $l\in\NUO$,
\begin{equation*}
I_2=\int_0^\infty e^{i\la
F(\rho,\om,\Ivar)}(L^*)^l[A(\rho\om+z,\Ivar)g(\rho\om+z)
(1-\chi)(\la^{\frac{1}{\ga}}\rho)\rho^{N-1}]\,d\rho\,.
\end{equation*}

Now,
\begin{equation*}
(L^*)^l=\Big(\frac{i}{\la}\Big)^l\sum C_{s_1,\dots,s_p,p,r,l}
\frac{\pa_\rho^{s_1}F\dots\pa_\rho^{s_p}F}{(\pa_\rho
F)^{l+p}}(\rho,\om,\Ivar)\frac{\pa^r}{\pa\rho ^r}\,,
\end{equation*}
where the sum is over all integers $s_1,\dots,s_p,p,r\ge0$ such that
$s_1+\dots+s_p+r-p=l$. By Hypothesis~\ref{HYP:mainoscintFconvexfn},
\begin{equation*}
\absBig{\frac{\pa_\rho^{s_1}F\dots\pa_\rho^{s_p}F}{(\pa_\rho
F)^{l+p}}(\rho,\om,\Ivar)}\le
C\rho^{p-s_1-\dots-s_p-l\ga+l}=C\rho^{r-l\ga}\,.
\end{equation*}
Also, we claim that, for $r\le[\frac{N}{\ga}]+1$,
\begin{equation}\label{EQ:derivsofintegrandwrtrho}
\absBig{\frac{\pa^r}{\pa \rho^r}[A(\rho\om+z,\Ivar)g(\rho\om+z)
(1-\chi)(\la^{\frac{1}{\ga}}\rho)\rho^{N-1}]} \le C_{N}\rho^{N-1-r}
\widetilde{\chi}(\la,\rho)\,,
\end{equation}
where $\widetilde{\chi}(\la,\rho)$ is a smooth function in~$\rho$ which
is zero for $\la^{\frac{1}{\ga}}\rho<\frac{1}{2}$. Assuming this is
true, we see that, for large enough $l$---it suffices to take
$l=[\frac{N}{\ga}]+1$, i.e.\ $N-l\ga<0$---we have,
\begin{align*}
\abs{I_2}\le& C_{N}\la^{-l}\int_0^\infty\sum C_{s_1,\dots,s_p,p,r,l}
\rho^{r-l\ga}[\rho^{N-1-r}]\widetilde{\chi}(\la,\rho)\,d\rho\\
\le& C_{N}\la^{-l}\int_{\frac{1}{2}\la^{-\frac{1}{\ga}}}^\infty
\rho^{N-1-l\ga}\,d\rho = C_{N}\la^{-l}
\Big[\frac{\rho^{N-l\ga}}{N-l\ga}\Big]^{\infty}_{\frac{1}{2}
\la^{-\frac{1}{\ga}}} =C_{N,\ga}\la^{-\frac{N}{\ga}};
\end{align*}
together with the estimate for $\abs{I_1}$, this yields the desired
estimate~\eqref{EQ:oscintbound}. Here we need $l>N/\gamma$,
which means an application of $(L^*)^l$, or estimates on
$\partial_\rho^\alpha F$ for $|\alpha|\leq l$. This gives
a restriction on the number $m$ of derivatives in (I3).

Finally, let us check~\eqref{EQ:derivsofintegrandwrtrho}. It holds
because:
\begin{enumerate}[label=(\roman*)]
\item $\abs{\pa_\rho^r(\rho^{N-1})}\le C_{r,N}\rho^{N-1-r}$ for all
$r\in\N$.
\item For each $r\in\N$,
$\pa_\rho^r[(1-\chi)(\la^{\frac{1}{\ga}}\rho)]=
-\la^{\frac{r}{\ga}}(\pa^r_s\chi)(\la^{\frac{1}{\ga}}\rho)$; now,
$(\pa_s\chi)(\la^{\frac{1}{\ga}}\rho)$ is supported on the set
$\set{(\la,\rho)\in(0,\infty)\times(0,\infty):
\frac{1}{2}<\la^{\frac{1}{\ga}}\rho<1}$, so, in particular, on its
support $\la^{\frac{1}{\ga}}<\rho^{-1}$; therefore,
\begin{equation*}
\abs{\pa_\rho^r[(1-\chi)(\la^{\frac{1}{\ga}}\rho)]}\le
C\rho^{-r}(\pa^r_s\chi)(\la^{\frac{1}{\ga}}\rho)\quad\text{for all
}r\in\N\,,
\end{equation*}
and $(\pa^r_s\chi)(\la^{\frac{1}{\ga}}\rho)$ is smooth in $\rho$ and
zero for $\la^{\frac{1}{\ga}}\rho\le\frac{1}{2}$.

\item By hypothesis~\ref{HYP:mainoscintAderivsbdd}, $\abs{\pa_\rho^r
A(\rho\om+z,\Ivar)}\le C_r$ for each $r\le[\frac{N}{\ga}]+1$ (this
can be seen for $r=1$ by noting that $\pa_\rho A(\rho\om+z,\Ivar)=
\om\cdot\grad_yA(y,\Ivar)\big\vert_{y=\rho\om+z}$, and then for
$r\ge2$ by calculating the higher derivatives). Also, $g$ is smooth
in $U$, so, $\abs{\pa_\rho^r [A(\rho\om+z,\Ivar)g(\rho\om+z)]}\le
C_r$ for $r\le[\textstyle\frac{N}{\ga}]+1$. Furthermore, by
hypothesis~\ref{HYP:mainoscintgbdd}, there exists a constant
$\rho_0>0$ so that $g(\rho\om+z)=0$ for $\rho>\rho_0$; thus,
$\pa_\rho^r [A(\rho\om+z,\Ivar)g(\rho\om+z)]$ is zero for
$\rho>\rho_0$; hence, for $r\le[\textstyle\frac{N}{\ga}]+1$,
\begin{equation*}
\abs{\pa_\rho^r [A(\rho\om+z,\Ivar)g(\rho\om+z)]}\le
C_r\rho_0^r\rho^{-r}\,.
\end{equation*}
\end{enumerate}
This completes the proof of the claim, and thus the theorem.
\end{proof}

\subsection{Functions of convex type}
Hypothesis~\ref{HYP:mainoscintFconvexfn} of
Theorem~\ref{THM:oscintthm} is sufficient for the result of the
theorem to hold; however, it is often difficult to check. For this
reason, we now introduce the concept of a function of convex
type---a condition that is far simpler to verify---and show that for
such functions,~\ref{HYP:mainoscintFconvexfn} automatically holds.
\begin{defn}\label{DEF:phasefunctionofconvextype}
Let $F=F(\rho,\Fvar):[0,\infty)\times\FVar\to\C$ be a function that
is smooth in $\rho$ for each fixed $\Fvar\in\FVar$\textup{,} where
$\FVar$ is some parameter space. Write its $M^\text{th}$ order
Taylor expansion in $\rho$ about $0$ in the form
\begin{equation}\label{EQ:Fformwithremainder}
F(\rho,\Fvar)=\sum_{j=0}^M a_j(\Fvar)\rho^j + R_M(\rho,\Fvar)\,,
\end{equation}
where $R_M(\rho,\Fvar)= \int_0^\rho \pa^{M+1}_s
F(s,\Fvar)\frac{(\rho-s)^M}{M!}\,ds$ is the $M^{\text{th}}$
remainder term.

We say that $F$ is a \emph{function of convex type $\ga$} if\textup{,}
for some $\ga\in\N$\textup{,} $\ga\ge 2$\textup{,} and for some
$\de>0$, we have
\begin{enumerate}[leftmargin=*,label=\textup{(CT\arabic*)}]
\item\label{ITEM:AssumpF1} $a_0(\Fvar)=a_1(\Fvar)=0$ for all
$\Fvar\in\FVar$ {\rm(}i.e. the Taylor expansion of $F$ starts from
order $\geq 2${\rm)}\textup{;}
\item\label{ITEM:AssumpF2} there exists a constant $C>0$ such that
$\sum_{j=2}^\ga\abs{a_j(\Fvar)}\ge C$ for all
$\Fvar\in\FVar$\textup{;}
\item\label{ITEM:AssumpF3} for each
$\Fvar\in\FVar$\textup{,} $\abs{\pa_\rho F(\rho,\Fvar)}$ is
increasing in $\rho$ for $0<\rho<\de$\textup{;}
\item\label{ITEM:AssumpF4} for each $k\in\N$,
$\pa_\rho^kF(\rho,\Fvar)$ is bounded uniformly in
$0<\rho<\de$\textup{,} $\Fvar\in\FVar$.
\end{enumerate}
\end{defn}
\begin{rem}
Note that, if $F$ is \emph{real-valued}, then~\ref{ITEM:AssumpF3} 
implies that we have either $\pa_\rho^2 F(\rho,\Fvar)\ge 0$ for
all $0<\rho<\de$, or $\pa^2_\rho F(\rho,\Fvar)\le 0$ for all
$0<\rho<\de$---this is because $\pa_\rho F(0,\nu)=0$. This is the
connection with convexity, hence the name of such functions.
\end{rem}
Such functions have the following useful property:
\begin{lem}\label{lem:randolest}
Let $F(\rho,\Fvar)$ be a function of convex type $\ga$.
Then\textup{,} for each sufficiently small $0<\de\le1$ there exist
constants $C,C_m>0$ such that
\begin{gather}
\abs{\pa_\rho F(\rho,\Fvar)}\ge C\rho^{\ga-1}
\label{EQ:F'lowerbound}\\
\text{and }\abs{\pa_\rho^mF(\rho,\Fvar)}\le
C_m\rho^{1-m}\abs{\pa_\rho F(\rho,\Fvar)}\label{EQ:F^(m)upperbound}
\end{gather}
for all $0<\rho<\de$\textup{,} $\Fvar\in\FVar$ and $m\in\N$.
\end{lem}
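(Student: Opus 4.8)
The plan is to base everything on the Taylor expansion of $\pa_\rho F$ at $\rho=0$. It suffices to prove the two estimates for all $0<\de\le 1$ small enough (in particular no larger than the $\de$ occurring in Definition~\ref{DEF:phasefunctionofconvextype}), since the constants $C,C_m$ may depend on this choice. Using \ref{ITEM:AssumpF1}, the uniform bounds \ref{ITEM:AssumpF4}, and the integral form of the remainder in \eqref{EQ:Fformwithremainder}, I would write, for $0<\rho<\de$,
\[
\pa_\rho F(\rho,\Fvar)=\sum_{j=2}^{\ga}j\,a_j(\Fvar)\,\rho^{j-1}+R(\rho,\Fvar),\qquad
|R(\rho,\Fvar)|\le C_0\,\rho^{\ga},
\]
with $C_0$ and all the $|a_j(\Fvar)|$ bounded uniformly in $\Fvar$, and with the derivatives of $R$ controlled analogously, $|\pa_\rho^kR(\rho,\Fvar)|\le C_k\rho^{\max(\ga-k,0)}$. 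The first task is the lower bound \eqref{EQ:F'lowerbound}; the estimate \eqref{EQ:F^(m)upperbound} will then follow by a Bernstein--Markov argument.

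For the lower bound, the first observation is that \ref{ITEM:AssumpF3} and \ref{ITEM:AssumpF2} together force $\pa_\rho F(\cdot,\Fvar)$ to be non-vanishing on $(0,\de)$: a zero at some $\rho_1\in(0,\de)$ would, by monotonicity of $|\pa_\rho F|$, make $\pa_\rho F$ vanish identically on $[0,\rho_1]$, hence every $a_j(\Fvar)$ would vanish, contradicting \ref{ITEM:AssumpF2}. The real work is to upgrade this to the $\Fvar$-uniform bound $|\pa_\rho F(\rho,\Fvar)|\ge C\rho^{\ga-1}$ on $(0,\de)$ for $\de$ small. Here I would play all four conditions off against one another: by \ref{ITEM:AssumpF2} some coefficient $a_{j_*}(\Fvar)$ with $j_*\le\ga$ has modulus bounded below, so the polynomial part is $\gtrsim\rho^{j_*-1}\ge\rho^{\ga-1}$ at those scales of $\rho$ where that monomial dominates; the lower-order coefficients $a_j$ with $j<j_*$, even when small, can compete only on a $\rho$-range that is itself small relative to their size, where the corresponding monomial is again $\gtrsim\rho^{\ga-1}$; and a cancellation between these contributions at an intermediate scale is exactly what \ref{ITEM:AssumpF3} rules out, since a near-zero of $\pa_\rho F$ followed by growth contradicts the monotonicity of $|\pa_\rho F|$. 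In the real-valued case the Remark after Definition~\ref{DEF:phasefunctionofconvextype} makes this transparent, $\pa_\rho^2F$ being sign-definite on $(0,\de)$ so that $\pa_\rho F$ keeps a fixed sign and $F$ is monotone; the complex case is handled in the same spirit using $\Re(\overline{\pa_\rho F}\,\pa_\rho^2F)\ge 0$. For $\rho$ in the residual range bounded away from $0$, \ref{ITEM:AssumpF3} reduces the estimate to one at a single fixed scale, covered by the previous analysis. Throughout, $R$ is absorbed by shrinking $\de$, since $|R|\le C_0\rho\cdot\rho^{\ga-1}$ is a perturbation of relative size $O(\de)$.

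Granting the lower bound, for \eqref{EQ:F^(m)upperbound} with $m\ge 2$ (the case $m=1$ being trivial) I would split $\pa_\rho^mF=P^{(m)}+(\pa_\rho^mF-P^{(m)})$, where $P(\rho,\Fvar)=\sum_{j=2}^{\ga}a_j(\Fvar)\rho^j$ is the degree-$\ga$ Taylor polynomial of $F$. For the polynomial term, Markov's inequality on $[0,\rho]$, rescaled from $[-1,1]$ and applied to real and imaginary parts separately, gives $|P^{(m)}(\rho,\Fvar)|\le C(\ga,m)\,\rho^{1-m}\sup_{0\le s\le\rho}|P'(s,\Fvar)|$; since $\sup_{0\le s\le\rho}|\pa_\rho F(s,\Fvar)|=|\pa_\rho F(\rho,\Fvar)|$ by \ref{ITEM:AssumpF3} and $|P'-\pa_\rho F|=|R|\le C_0\rho^{\ga}\le(C_0\de/C)\,|\pa_\rho F(\rho,\Fvar)|$ by the lower bound, one gets $\sup_{0\le s\le\rho}|P'(s,\Fvar)|\le 2|\pa_\rho F(\rho,\Fvar)|$ for $\de$ small, and hence the desired bound for $P^{(m)}$. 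The correction $\pa_\rho^mF-P^{(m)}$ is the $m$-th derivative of the Taylor remainder of $F$; for $m\le\ga$ it is $O(\rho^{\ga-m+1})$, which the lower bound turns into $O(\de)\,\rho^{1-m}|\pa_\rho F(\rho,\Fvar)|$, while for $m>\ga$ it is $O(1)$ by \ref{ITEM:AssumpF4} and $\rho^{1-m}|\pa_\rho F(\rho,\Fvar)|\ge C\rho^{\ga-m}\ge C$ since $\rho<\de\le 1$ and $\ga-m<0$.

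I expect the uniform lower bound $|\pa_\rho F|\ge C\rho^{\ga-1}$ to be the main obstacle — specifically its uniformity in $\Fvar$ when some of the Taylor coefficients degenerate, which is precisely where conditions \ref{ITEM:AssumpF1}--\ref{ITEM:AssumpF4} must be used jointly. Once it is in place, \eqref{EQ:F^(m)upperbound} is routine.
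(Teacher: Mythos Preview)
Your treatment of \eqref{EQ:F^(m)upperbound} via Markov's inequality is a valid alternative to the paper's direct calculation, and once the lower bound is in place it works exactly as you say.

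The gap is precisely where you flag it: the uniform lower bound \eqref{EQ:F'lowerbound}. Your scale-by-scale heuristic (``some $a_{j_*}$ is large, it dominates on its scale, smaller coefficients dominate on smaller scales, and \ref{ITEM:AssumpF3} forbids cancellation in between'') is the right intuition but is not a proof; for varying $\Fvar$ the coefficients $a_2(\Fvar),\dots,a_\ga(\Fvar)$ can sit in arbitrary ratios, and a case analysis by dominant monomial does not obviously close uniformly. The paper bypasses this entirely with a norm-equivalence trick. Set $\pi(\rho,\Fvar)=\sum_{j=2}^{\ga}j\,|a_j(\Fvar)|\,\rho^{j-1}$; by \ref{ITEM:AssumpF2} one has $\pi(\rho,\Fvar)\ge C\rho^{\ga-1}$. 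Since $|\pa_\rho F(\cdot,\Fvar)|$ is increasing and vanishes at $0$ (\ref{ITEM:AssumpF3}, \ref{ITEM:AssumpF1}),
\[
|\pa_\rho F(\rho,\Fvar)|=\max_{0\le\si\le\rho}|\pa_\rho F(\si,\Fvar)|
\ge \max_{0\le\si\le\rho}\Bigl|\sum_{j=2}^{\ga}j\,a_j(\Fvar)\,\si^{j-1}\Bigr|-C_\ga\,\rho\,\pi(\rho,\Fvar).
\]
Substituting $\si=\bar\si\rho$, $\bar\si\in[0,1]$, the polynomial maximum becomes $\max_{0\le\bar\si\le1}\bigl|\sum_{j=2}^{\ga}j\,a_j(\Fvar)\,\rho^{j-1}\bar\si^{j-1}\bigr|$, and one now invokes the equivalence of the two norms $\max_{0\le\bar\si\le1}\bigl|\sum_j z_j\bar\si^{j-1}\bigr|$ and $\sum_j|z_j|$ on the finite-dimensional space $\C^{\ga-1}$ to get $\ge c\,\pi(\rho,\Fvar)$. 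Hence $|\pa_\rho F(\rho,\Fvar)|\ge(c-C_\ga\de)\,\pi(\rho,\Fvar)\ge C'\rho^{\ga-1}$ for $\de$ small. The finite-dimensional norm equivalence is what delivers uniformity in $\Fvar$ for free, with no case analysis; this is the idea missing from your sketch.

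For \eqref{EQ:F^(m)upperbound} the paper also works through $\pi$: for $1\le m\le\ga$ it expands $\pa_\rho^m F$ to order $\ga-m$, bounds both the polynomial part and the remainder by $C_m\rho^{1-m}\pi(\rho,\Fvar)$, and uses the already established $|\pa_\rho F|\ge C\pi$; for $m>\ga$ it uses \ref{ITEM:AssumpF4} directly, as you do. Your Markov route is a bit slicker in that it never introduces $\pi$, but it does rely on the lower bound (to absorb the remainder $R$ into $|\pa_\rho F|$), so the two arguments are not independent.
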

\begin{rem}
A version of this lemma appeared in~\cite{sugi94} for
\emph{analytic} functions without dependence on~$\Fvar$ and is based
on Lemmas~3,~4 and~5 of Randol~\cite{rand69convex} (which also
appeared in Beals~\cite{beal82}, Lemmas~3.2,~3.3). 
Lemma \ref{lem:randolest}
extends it to functions that are only smooth and 
which depend on an additional parameter,
which will be necessary of our analysis.
A limited regularity version of this lemma appeared in 
\cite{Ruzh07}.
The proof of lemma given here is based on estimating
the remainder rather than on using the Cauchy's integral
formula for analytic functions.
\end{rem}
\begin{proof}
First, let us note that, for $0<\rho\le1$ we have, by
\ref{ITEM:AssumpF2},
\begin{equation}\label{EQ:pidefnandbound}
\pi(\rho,\Fvar):=\sum_{j=2}^\ga j\abs{a_j(\Fvar)}\rho^{j-1}\ge
C\rho^{\ga-1}\,.
\end{equation}
Thus, in order to prove~\eqref{EQ:F'lowerbound}, it suffices to show
\begin{equation}\label{EQ:F'bddbelowbypi}
\abs{\pa_\rho F(\rho,\Fvar)}\ge C\pi(\rho,\Fvar)\quad\text{for all
}0<\rho<\de,\,\Fvar\in\FVar\,;
\end{equation}

For $1\le m\le \ga$, we have, using~\eqref{EQ:Fformwithremainder},
\begin{equation}\label{EQ:TaylorexpfordmF}
\pa_\rho^mF(\rho,\Fvar)=\sum_{k=0}^{\ga-m}
\frac{(k+m)!}{k!}a_{k+m}(\Fvar)\rho^{k}+ R_{m,\ga-m}(\rho,\Fvar)\,,
\end{equation}
where $R_{m,\ga-m}(\rho,\Fvar)= \int_0^\rho
\pa_\rho^{\ga+1}F(s,\Fvar)\frac{(\rho-s)^{\ga-m}}{(\ga-m)!}\,ds$ is
the remainder term of the $(\ga-m)^{\text{th}}$ Taylor expansion of
$\pa_\rho^mF(\rho,\Fvar)$. By~\ref{ITEM:AssumpF4} and
\eqref{EQ:pidefnandbound}, we see
\begin{equation}\label{EQ:estremainderm<ga}
\abs{R_{m,\ga-m}(\rho,\Fvar)}\le C_{\ga,m}\rho^{\ga+1-m}\le
C_{\ga,m}\pi(\rho,\Fvar)\rho^{2-m}\quad\text{for }0<\rho<\de\,.
\end{equation}
Hence, for $0<\rho<\de$,
\begin{multline*}
\abs{\pa_\rho F(\rho,\Fvar)}=\absBig{\sum_{k=0}^{\ga-1}
(k+1)a_{k+1}(\Fvar)\rho^{k} +
R_{1,\ga-1}(\rho,\Fvar)}\\\ge\absBig{\sum_{j=2}^\ga
ja_j(\Fvar)\rho^{j-1}}-\absBig{ R_{1,\ga-1}(\rho,\Fvar)}
\ge\absBig{\sum_{j=2}^\ga ja_j(\Fvar)\rho^{j-1}}-
C_\ga\pi(\rho,\Fvar)\rho\,.
\end{multline*}

Now, by~\ref{ITEM:AssumpF3}, $\abs{\pa_\rho F(\rho,\Fvar)}$ is
increasing in $\rho$ for each $\Fvar\in\FVar$ and, by
\ref{ITEM:AssumpF1}, $\pa_\rho F(0,\Fvar)=0$; therefore,
\begin{align*}
\abs{\pa_\rho F(\rho,\Fvar)} &=\max_{0\le\si\le\rho}\abs{\pa_\rho
F(\si,\Fvar)}\\ \ge&  \max_{0\le\si\le\rho}\absBig{\sum_{j=2}^\ga
ja_j(\Fvar)\si^{j-1}}-
\max_{0\le\si\le\rho}C_\ga\pi(\si,\Fvar)\si\\
=&\max_{0\le\bar\si\le1}\absBig{\sum_{j=2}^\ga
ja_j(\Fvar)\rho^{j-1}\bar{\si}^{j-1}}- C_\ga\pi(\rho,\Fvar)\rho\,,
\end{align*}
since $\pi(\si,\Fvar)\si=\sum_{j=2}^\ga j\abs{a_j(\Fvar)}\si^{j}$
clearly achieves its maximum on $0\le\si\le\rho$ at $\si=\rho$.
Noting that
\begin{equation*}
\max_{0\le\bar\si\le1}\absBig{\sum_{j=1}^{L} 
z_j\bar{\si}^{j-1}}
\quad\text{and}\quad \sum_{j=1}^L \abs{z_j}
\end{equation*}
are norms on $\C^{L}$ and, hence, are equivalent, we immediately get
\begin{align*}
\abs{\pa_\rho F(\rho,\Fvar)} \ge &C\sum_{j=2}^\ga
j\abs{a_j(\Fvar)}\rho^{j-1}-
C_\ga\pi(\rho,\Fvar)\rho \\
\ge& (C-C_\ga\de)\pi(\rho,\Fvar)= C_{\ga,\de}\pi(\rho,\Fvar)\,,
\end{align*}
which completes the proof of~\eqref{EQ:F'bddbelowbypi}.

To prove~\eqref{EQ:F^(m)upperbound}, we consider the cases $1\le
m\le\ga$ and $m>\ga$ separately.

For $m>\ga$, we have, by~\ref{ITEM:AssumpF4},
\begin{equation*}
\abs{\pa_\rho^mF(\rho,\Fvar)}\le C_m\le
C_{m,\de}\rho^{\ga+1-m}\quad\text{for }0<\rho<\de\,,
\end{equation*}
since $\ga+1-m\le0$, and, thus, $\rho^{\ga+1-m}\ge \de^{\ga+1-m}>0$;
so, by~\eqref{EQ:F'lowerbound}, we have
\begin{equation}\label{EQ:estm>ga}
\abs{\pa_\rho^mF(\rho,\Fvar)}\le C_{m,\de}\rho^{2-m}\abs{\pa_\rho
F(\rho,\Fvar)}\quad\text{for }0<\rho<\de,\,m>\ga\,.
\end{equation}

For $1\le m\le\ga$, we have the representation
\eqref{EQ:TaylorexpfordmF}. It is clear that
\begin{equation*}
\absBig{\sum_{k=0}^{\ga-m}\frac{(k+m)!}{k!}\,a_{k+m}
(\Fvar)\rho^{k}} \le C_m\pi(\rho,\Fvar)\rho^{1-m}\,,
\end{equation*}
which, together with~\eqref{EQ:estremainderm<ga} and
\eqref{EQ:F'bddbelowbypi}, yields
\begin{equation*}
\abs{\pa_\rho^mF(\rho,\Fvar)}\le C_{m,\de}\rho^{1-m}\abs{\pa_\rho
F(\rho,\Fvar)}\quad\text{for }0<\rho<\de,\,1\le m\le\ga\,.
\end{equation*}
This, together with~\eqref{EQ:estm>ga}, completes the proof of
\eqref{EQ:F^(m)upperbound} and, thus, the lemma.
\end{proof}

This lemma means we have the following alternative version of
Theorem~\ref{THM:oscintthm}.
\begin{cor}\label{COR:weakerconvextypecondition}
Hypothesis~\ref{HYP:mainoscintFconvexfn} of
Theorem~\ref{THM:oscintthm} may be replaced by:
\begin{itemize}[leftmargin=*,label=\textup{(I3$^\prime$)}]
\item for some fixed
$z\in\R^N$\textup{,} the function
$F(\rho,\om,\Ivar):=\Phi(\rho\om+z,\Ivar)$ is a function of convex
type $\ga$\textup{,} for some $\ga\in\N$\textup{,} in the sense of
Definition~\ref{DEF:phasefunctionofconvextype} with $(\om,\Ivar)\in
\SNm\times\IVar\equiv\FVar$\textup{.}
\end{itemize}
\end{cor}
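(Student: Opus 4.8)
The plan is to show that condition (I3$^\prime$) implies Hypothesis \ref{HYP:mainoscintFconvexfn} of Theorem \ref{THM:oscintthm}, after which the conclusion follows immediately from that theorem. So the whole task reduces to an application of Lemma \ref{lem:randolest}, and the only genuine work is bookkeeping about the domain of $\rho$ and the range of $m$.

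First I would fix $z\in\R^N$ as provided by (I3$^\prime$), so that $F(\rho,\om,\Ivar)=\Phi(\rho\om+z,\Ivar)$ is a function of convex type $\ga$ with parameter space $\FVar=\SNm\times\IVar$; here the "parameter'' $\Fvar$ of Definition \ref{DEF:phasefunctionofconvextype} is the pair $(\om,\Ivar)$. By Lemma \ref{lem:randolest}, for each sufficiently small $0<\de\le 1$ there are constants $C,C_m>0$ with
\begin{equation*}
\abs{\pa_\rho F(\rho,\om,\Ivar)}\ge C\rho^{\ga-1}
\quad\text{and}\quad
\abs{\pa_\rho^m F(\rho,\om,\Ivar)}\le C_m\rho^{1-m}\abs{\pa_\rho F(\rho,\om,\Ivar)}
\end{equation*}
for all $0<\rho<\de$, all $(\om,\Ivar)\in\SNm\times\IVar$, and all $m\in\N$. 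These are precisely the two inequalities required in \ref{HYP:mainoscintFconvexfn}, valid in particular for the finitely many integers $1\le m\le\left[N/\ga\right]+1$ appearing there.

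The one point needing a little care is that \ref{HYP:mainoscintFconvexfn} asks for the estimates to hold at every $\rho>0$ with $\rho\om+z\in U$, whereas Lemma \ref{lem:randolest} only gives them on $0<\rho<\de$. This is harmless because $U$ is bounded (Hypothesis \ref{HYP:mainoscintgbdd}): the set of $\rho>0$ with $\rho\om+z\in U$ is contained in a fixed interval $0<\rho<\rho_0$ uniform in $\om$, and in fact the amplitude factor $g(\rho\om+z)$ in the integral \eqref{EQ:genoscint} vanishes for $\rho$ outside such an interval, so the proof of Theorem \ref{THM:oscintthm} only ever uses the convexity-type bounds on the support of $g$. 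Concretely, after shrinking $U$ (equivalently, choosing $\de$ with $U\subset\set{\rho\om+z:\abs{\rho}<\de,\ \om\in\SNm}$, which is permissible since the statement of Theorem \ref{THM:oscintthm} allows $U$ to be any bounded open set containing $\supp g$), the range $0<\rho<\de$ covers all relevant $\rho$, and Lemma \ref{lem:randolest} supplies \ref{HYP:mainoscintFconvexfn} verbatim.

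I do not anticipate a real obstacle: the content has already been isolated in Lemma \ref{lem:randolest}, and this corollary is the routine translation of "convex type'' into the hypothesis format of Theorem \ref{THM:oscintthm}. The only place where one must be slightly attentive is the matching of domains of $\rho$ just discussed, together with noting that Hypothesis \ref{HYP:mainoscintAderivsbdd} on the amplitude $A$ is retained unchanged and that \ref{HYP:mainoscintImPhipos} ($\Im\Phi\ge 0$) is also unaffected by replacing \ref{HYP:mainoscintFconvexfn} by (I3$^\prime$). Hence, with \ref{HYP:mainoscintFconvexfn} now verified, Theorem \ref{THM:oscintthm} applies and gives the bound $\abs{I(\la,\Ivar)}\le C(1+\la)^{-N/\ga}$ uniformly in $\Ivar$, completing the proof.
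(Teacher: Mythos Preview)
Your proposal is correct and follows exactly the route the paper intends: the corollary is stated in the paper without a separate proof, merely as the sentence ``This lemma means we have the following alternative version of Theorem~\ref{THM:oscintthm},'' so the content is precisely the observation that Lemma~\ref{lem:randolest} supplies the two inequalities of \ref{HYP:mainoscintFconvexfn}. Your additional remark about matching the $\rho$-range (the lemma gives $0<\rho<\de$ while \ref{HYP:mainoscintFconvexfn} asks for all $\rho$ with $\rho\om+z\in U$) is a point the paper leaves implicit, and your resolution via the boundedness/shrinkability of $U$ is the right way to close it.
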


\subsection{Convexity condition for real-valued phase
functions}\label{SUBSEC:convcondnreal} Using the results of the
previous two sections, we can now prove a series of results for
which a so-called convexity condition holds; here we 
recall Definitions \ref{DEF:convexitycondition}
and \ref{DEF:ga(Si)}
from Section \ref{section:ests}
and prove the basic result for real-valued
functions.
We recall that a 
smooth function $\tau:\R^n\to\R$ is said to satisfy the
\emph{convexity condition} if~$\Si_\la$ is convex for each
$\la\in\R$ (and the empty set is considered to be convex).
The \emph{maximal order of
contact} of a hypersurface
$\Si$ is defined as follows. Let $\si\in\Si$\textup{,} and
denote the tangent plane at~$\si$ by~$T_\si$. Let~$P$ be a plane
containing the normal to~$\Si$ at~$\si$ and denote the order of the
contact between the line $T_\si\cap P$ 
and the curve $\Si\cap P$ by
$\ga(\Si;\si,P)$. Then we set
\begin{equation*}
\ga(\Si):=\sup_{\si\in\Si}\sup_P\ga(\Si;\si,P)\,.
\end{equation*}

In the proof of Theorem \ref{THM:convexsp} we will need a
Besov space version of the estimate for the kernel. For this,
let us introduce some useful notation for a family of cut-off
functions $g_R\in C_0^\infty(\R^n)$, $R\in[0,\infty)$: 
these functions will correspond to the cut-offs to annuli
in the frequency space and we need to trace the dependence
on the parameter $R$.
Suppose $g\in
C_0^\infty(\R^n)$ is such that, for some constants $c_0,c_1\ge0$, 
it is
supported in the set
\[\set{\xi:c_0<\abs{\xi}<c_1}\,,\] and let $g_0\in
C_0^\infty(\R^n\setminus\set{0})$ be another (arbitrary) compactly
supported function. Then, for $R\ge0$, set
\begin{equation}\label{EQ:eqnforg_R}
g_R(\xi):=
\begin{cases}g(\xi/R) & \text{if $R\ge1$}, \\
g_0(\xi) & \text{if $0\le R<1$}.
\end{cases}
\end{equation}

Now we can prove the main convexity theorem:
\begin{thm}\label{THM:sugimoto/randolargument}
Suppose $\tau:\R^n\to\R$ satisfies the convexity condition.
Set $\ga:=\sup_{\la>0}\ga(\Si_\la(\tau))$ and assume
this is finite. 
Let $a(\xi)$ be a symbol of order
$\frac{n-1}{\ga}-n$ of type $(1,0)$ on $\R^n$
\textup{;}
furthermore\textup{,} on $\supp a$, we assume\textup{:}
\begin{enumerate}[label=\textup{(}\textup{\roman*}\textup{)},leftmargin=*]
\item\label{HYP:realtauisasymbol} for all multi-indices $\al$ there
exists a constant $C_\al>0$ such that
\begin{equation*}
\abs{\pa_\xi^\al\tau(\xi)}\le
C_\al\brac{\xi}^{1-\abs{\al}};
\end{equation*}
\item\label{HYP:realtauboundedbelow} there exist constants $M,C>0$
such that for all $\abs{\xi}\ge M$ we have $\abs{\tau(\xi)}\ge
C\abs{\xi}$\textup{;}
\item\label{HYP:realderivativeoftaunonzero} there exists a constant
$C_0>0$ such that $\abs{\pa_\om\tau(\la\om)}\ge C_0$ for all $\om\in
\Snm$\textup{,} $\la>0$\textup{;} in particular,
$\abs{\grad\tau(\xi)}\ge C_0$ for all
$\xi\in\R^n\setminus\set{0}$\textup{;}
\item \label{HYP:reallimitofSi_la} there exists a constant $R_1>0$
such that\textup{,} for all $\la>0$\textup{,}
\[\frac{1}{\la}\Si_\la(\tau)\equiv \frac{1}{\la}
\{\xi\in\Rn:\;\tau(\xi)=\lambda\}\subset B_{R_1}(0)\,.\]
\end{enumerate}
 Then\textup{,} the
following estimate holds for all $R\ge0$\textup{,}
$x\in\R^n$\textup{,} $t>1$\textup{:}
\begin{equation}\label{EQ:sugimotothmest}
\absBig{\int_{\R^n}e^{i(x\cdot\xi+ \tau(\xi)t)}a(\xi)
g_R(\xi)\,d\xi} \le Ct^{-\frac{n-1}{\ga}}\,,
\end{equation}
where $g_R(\xi)$ is as given in~\eqref{EQ:eqnforg_R} and $C>0$ is
independent of $R$.
\end{thm}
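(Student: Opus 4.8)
The plan is to run the Randol--Sugimoto argument, reducing the kernel to a uniform family of $(n-1)$-dimensional oscillatory integrals to which Theorem~\ref{THM:oscintthm} applies with $N=n-1$; this is exactly what produces the exponent $\tfrac{n-1}{\ga}$. First I would remove the parameter $R$ by scaling. For $R\ge 1$ I substitute $\xi=R\eta$ and put $\mu=Rt$, $w=x/t$, $\widetilde\tau(\eta)=R^{-1}\tau(R\eta)$; since the order of contact is dilation invariant and $\tau$ is a symbol of order~$1$ satisfying (i)--(iv), the function $\widetilde\tau$ again satisfies (i)--(iv) and $\sup_\la\ga(\Si_\la(\widetilde\tau))\le\ga$ with constants uniform in $R$, while $b(\eta):=R^{\,n-\frac{n-1}{\ga}}a(R\eta)g(\eta)$ is a symbol of order~$0$ supported in the fixed annulus $\{c_0<|\eta|<c_1\}$ with seminorms uniform in $R$ (this uses precisely the order $\tfrac{n-1}{\ga}-n$ of $a$). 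As $x\cdot\xi+t\tau(\xi)=\mu(\widetilde\tau(\eta)+w\cdot\eta)$, $d\xi=R^n\,d\eta$, and $R^{\frac{n-1}{\ga}}\mu^{-\frac{n-1}{\ga}}=t^{-\frac{n-1}{\ga}}$, the theorem reduces (bounded $R$, including $0\le R<1$ where $g_R=g_0$, being handled the same way with $\mu=t$ on a fixed compact set) to the uniform bound
\begin{equation}\label{EQ:plan-red}
\Bigl|\int_{\R^n}e^{i\mu(\sigma(\eta)+w\cdot\eta)}b(\eta)\,d\eta\Bigr|\le C\mu^{-\frac{n-1}{\ga}}\qquad(\mu\ge 1,\ w\in\R^n),
\end{equation}
uniformly over $b\in C_0^\infty$ supported in the annulus and over $\sigma$ satisfying (i)--(iv) and the convexity condition with $\sup_\la\ga(\Si_\la(\sigma))\le\ga$.

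I would then split on the size of $w$. On $\supp b$ one has $C_0\le|\nabla\sigma|\le C_1$ by (iii) and (i), so for $|w|\le C_0/2$ or $|w|\ge 2C_1$ the gradient $\nabla_\eta(\sigma+w\cdot\eta)$ is bounded below and repeated integration by parts gives $O(\mu^{-K})$ for every $K$, which is more than enough. For $w$ in the remaining compact shell $C_0/2\le|w|\le 2C_1$ I would take a finite partition of unity on the unit sphere --- the number of pieces bounded uniformly --- and on each small angular patch pass to coordinates $\eta=r(\la,\theta')\theta$ defined by $\sigma(r\theta)=\la$ (legitimate by (iii), with $r$, $\partial_\la r$ and the Jacobian bounded above and below). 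The phase becomes $\mu\bigl(\la+w\cdot\eta(\la,\theta')\bigr)$; factoring the unimodular $e^{i\mu\la}$, the $\theta'$-integral is $\int_{\R^{n-1}}e^{i\mu\, w\cdot\eta(\la,\theta')}\widetilde b(\la,\theta')\,d\theta'$ with $\la$ and $w$ as bounded parameters. Where $w$ is not normal to the relevant piece of $\Si_\la(\sigma)$ one has $\nabla_{\theta'}(w\cdot\eta)\ne0$ and integration by parts again yields $O(\mu^{-K})$; near the critical point $\theta'_{*}$ --- unique on the patch, since $\ga<\infty$ rules out line segments in the level sets --- I translate $\theta'_{*}$ to the origin and subtract the (harmless) constant $w\cdot\eta(\la,\theta'_{*})$, then invoke Theorem~\ref{THM:oscintthm} via Corollary~\ref{COR:weakerconvextypecondition} with $N=n-1$ and parameter $\nu=(\la,w,\theta'_{*})$, which gives the bound $O(\mu^{-\frac{n-1}{\ga}})$ for that $\theta'$-integral uniformly in $\nu$. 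Integrating over the bounded $\la$-interval and summing over the finitely many patches gives \eqref{EQ:plan-red}, and undoing the scaling yields \eqref{EQ:sugimotothmest}.

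The decisive point --- and the main obstacle --- is verifying, uniformly in $\la$ and in $w$ over its compact shell, that $\rho\mapsto w\cdot\eta(\la,\theta'_{*}+\rho\omega)$ is a function of convex type $\ga$ in the sense of Definition~\ref{DEF:phasefunctionofconvextype}: (CT1) holds after the above normalization because $\theta'_{*}$ is critical; (CT2) is exactly the statement that this radial slice deviates from its tangent line at order $\le\ga$, i.e.\ $\ga(\Si_\la(\sigma))\le\ga$; (CT3) follows from the convexity of $\Si_\la(\sigma)$, which makes the Hessian $w\cdot\partial_{\theta'}^2\eta(\la,\theta'_{*})$ --- the second fundamental form up to the positive factor $|w|$ --- semidefinite, so that $|\partial_\rho(w\cdot\eta)|$ is monotone; and (CT4) follows from the symbol bounds (i) on $\sigma$ together with $|w|$ bounded. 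Lemma~\ref{lem:randolest} then supplies the quantitative estimates required by hypothesis (I3$^\prime$), and hypotheses (I1)--(I2), (I4) are clear from the construction. The remaining ingredients --- the scaling reduction, the non-stationary phase estimates, and the partition of unity --- are routine; the work is entirely in this uniform translation of the geometric hypothesis $\sup_\la\ga(\Si_\la)\le\ga$ into (CT1)--(CT4) with all amplitude seminorms controlled independently of $R$, $\la$ and $w$.
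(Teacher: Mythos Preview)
Your strategy is the paper's: split off the non-stationary region by integration by parts, take a conic partition, foliate each cone by level sets of the phase, and feed the resulting $(n-1)$-dimensional integral into Theorem~\ref{THM:oscintthm} via Corollary~\ref{COR:weakerconvextypecondition}. The one structural difference is that you begin by scaling $\xi=R\eta$ to land on a fixed annulus, whereas the paper keeps $R$ throughout and carries the uniform symbol bound~\eqref{EQ:gRsymbolindepofR} for $g_R$ through Lemma~\ref{LEM:estawayfromwavefront} and the change of variables. Your reduction is legitimate --- the check of (i)--(iv) for $\widetilde\tau=R^{-1}\tau(R\cdot)$ on the annulus is straightforward and $\ga$ is dilation-invariant --- and it makes the outer $\la$-integral trivially finite; the paper instead introduces the auxiliary cut-off $G(\la/\curlyR)$ to control the $\latil$-integral. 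Either bookkeeping works.

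There is, however, a genuine gap in your verification of (CT3). You observe that $w\cdot\pa_{\theta'}^2\eta(\la,\theta'_*)$ is semidefinite \emph{at} $\theta'_*$, where $w$ is normal to $\Si_\la$, and conclude monotonicity of $|\pa_\rho(w\cdot\eta)|$. But (CT3) requires $\pa_\rho^2 F$ to keep a fixed sign on an interval $0<\rho<\de$, not only at $\rho=0$. For $\rho>0$ the vector $w$ is no longer normal to $\Si_\la$ at $\eta(\la,\theta'_*+\rho\om)$, so $w\cdot\pa_{\theta'}^2\eta$ ceases to be the second fundamental form and carries an extra tangential (Christoffel-type) term of uncontrolled sign; when the second fundamental form at $\theta'_*$ is degenerate --- precisely the case $\ga>2$ that matters --- continuity does not recover the sign for $\rho>0$. (Already on $S^1$, a reparametrisation $\theta'\mapsto(\sin f(\theta'),\cos f(\theta'))$ with $f''$ of the wrong sign makes $(w\cdot\eta)''=-(\cos f)(f')^2-(\sin f)f''$ change sign away from the critical point.) The paper sidesteps this by parameterising $\Si_\la'$ as a \emph{graph} $(y,h_\la(y))$ over a fixed hyperplane in the cone $K_1$: convexity of $\Si_\la$ then makes $h_\la$ concave on the whole patch $U$, so $\pa_\rho^2 F=\om^T\Hess h_\mu(z+\rho\om)\,\om\le 0$ for all $\rho$ (see the paragraph proving~\ref{COND:convfn2inthm}), and (CT3) follows globally, with (CT4) supplied by Lemma~\ref{LEM:boundforderivsofh_la}. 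Replacing your radial parameterisation by this graph parameterisation fixes the gap without altering the rest of your argument.
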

\begin{rem}
For an integral of this type with some specific compactly supported
function, $\chi\in C_0^\infty(\R^n)$ say, in place of $g_R$, 
we can just use the result for $R=0$.
In this way we obtain Corollary \ref{cor:convex}.
\end{rem}
\begin{proof}
We may assume throughout, without loss of generality, that either
$\tau(\xi)\ge0$ for all $\xi\in\R^n$ or $\tau(\xi)\le0$ for all
$\xi\in\R^n$. Indeed, hypothesis~\ref{HYP:realtauboundedbelow} and
the continuity of~$\tau$ ensure that either~$\tau(\xi)$ is positive
for all $\abs{\xi}\ge M$ or negative for all $\abs{\xi}\ge M$. In
the case where~$\tau(\xi)$ is positive for all $\abs{\xi}\ge M$, set
\begin{equation*}
\tau_+(\xi):=\tau(\xi)+\min(0,\inf_{\abs{\xi}<M}\tau(\xi))\ge0\;
\text{ for all }\xi\in\R^n.
\end{equation*}
Now, $\tau(\xi)-\tau_+(\xi)$ is a constant (in particular, it is
independent of~$\xi$) and $\abs{e^{i[\tau(\xi)-\tau_+(\xi)]t}}=1$,
so it suffices to show
\begin{equation*}
\absBig{\int_{\R^n}e^{i(x\cdot\xi+\tau_+(\xi)t)}a(\xi)
g_R(\xi)\,d\xi} \le Ct^{-\frac{n-1}{\ga}}\,.
\end{equation*}
In the case where~$\tau(\xi)$ is negative for $\abs{\xi}\ge M$, set
$\widetilde\tau(\xi):=-\tau(\xi)$ and by similar reasoning to above, it
is sufficient to show
\begin{equation*}
\absBig{\int_{\R^n}e^{i(x\cdot\xi-\widetilde\tau_+(\xi)t)}a(\xi)
g_R(\xi)\,d\xi} \le Ct^{-\frac{n-1}{\ga}}\,,
\end{equation*}
where $-\widetilde\tau_+(\xi)\le0$ for all $\xi\in\R^n$.

We begin by dividing the integral into two parts: near to the
wave-front set, i.e.\ points where
$\grad_\xi[x\cdot\xi+\tau(\xi)t]=0$, and away from such points.
To this end, we introduce a cut-off function $\cutoffWF\in
C^\infty_0(\R^n)$, $0\le\cutoffWF(y)\le1$, which is identically~$1$
in the ball of radius $r>0$ (which will be fixed below) centred at
the origin, $B_r(0)$, and identically $0$ outside the ball of radius
$2r$, $B_{2r}(0)$. Then we estimate the following two integrals
separately:
\begin{gather*}
I_1(t,x):=\int_{\R^n}e^{i(x\cdot\xi+\tau(\xi)t)}a(\xi)g_R(\xi)
\cutoffWF\big(t^{-1}x+\grad\tau(\xi)\big)\,d\xi\,,\\
I_2(t,x):=\int_{\R^n}e^{i(x\cdot\xi+\tau(\xi)t)}a(\xi)g_R(\xi)
(1-\cutoffWF)\big(t^{-1}x+\grad\tau(\xi)\big)\,d\xi\,.
\end{gather*}
For $I_2(t,x)$ we have the following result:
\begin{lem}\label{LEM:estawayfromwavefront}
Suppose $a(\xi)$ is a symbol of order~$j\in\R$. Then, for each
$l\in\N$ with $l>n+j$\textup{,} we have\textup{,} for all
$t>0$\textup{,}
\begin{equation}\label{EQ:estawayfromwavefront}
\abs{I_2(t,x)}\le C_{r,l}t^{-l}\,,
\end{equation}
where the constants $C_{r,l}>0$ are independent of $R$.
\end{lem}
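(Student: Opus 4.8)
The plan is to prove Lemma~\ref{LEM:estawayfromwavefront} by the standard non-stationary phase argument, i.e.\ by repeated integration by parts. Write the phase as $\Theta(\xi):=x\cdot\xi+\tau(\xi)t$, so that $\grad_\xi\Theta(\xi)=x+t\grad\tau(\xi)=t\bigl(t^{-1}x+\grad\tau(\xi)\bigr)$. On the support of the integrand of $I_2(t,x)$ the factor $(1-\cutoffWF)\bigl(t^{-1}x+\grad\tau(\xi)\bigr)$ vanishes unless $\abs{t^{-1}x+\grad\tau(\xi)}\ge r$, and hence
\[
\abs{\grad_\xi\Theta(\xi)}\ge rt\qquad\text{on }\supp\bigl[g_R(\xi)(1-\cutoffWF)(t^{-1}x+\grad\tau(\xi))\bigr],\ t>0 .
\]
This single lower bound is the whole point; in particular we never need an \emph{upper} bound on $\grad\Theta$, which is why the final estimate will be uniform in $x$. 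Introduce the first-order operator $L$ given by $Lu=\bigl(i\abs{\grad\Theta}^{2}\bigr)^{-1}\,\grad\Theta\cdot\grad u$, so that $L\bigl(e^{i\Theta}\bigr)=e^{i\Theta}$, and let $L^{\mathrm t}$ be its transpose. Since the amplitude is compactly supported, integrating by parts $l$ times produces no boundary terms and yields
\[
I_2(t,x)=\int_{\R^n}e^{i\Theta(\xi)}\,\bigl(L^{\mathrm t}\bigr)^{l}b(\xi)\,d\xi,
\qquad b(\xi):=a(\xi)\,g_R(\xi)\,(1-\cutoffWF)\bigl(t^{-1}x+\grad\tau(\xi)\bigr).
\]

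The next step is to bound $\bigl(L^{\mathrm t}\bigr)^{l}b$. Expanding $L^{\mathrm t}$, one application produces a finite sum of terms in which a derivative either falls on $b$ or creates one further derivative of $\Theta$ (or acts on a power of $\abs{\grad\Theta}^{2}$, producing a Hessian of $\Theta$). The estimates available are: $\abs{\grad\Theta}\ge rt$; for $\abs{\be}\ge2$ one has $\abs{\pa_\xi^{\be}\Theta}=t\,\abs{\pa_\xi^{\be}\tau}\le C_\be\,t\,\brac{\xi}^{1-\abs{\be}}$ by hypothesis~(i) of Theorem~\ref{THM:sugimoto/randolargument}; and $\abs{\pa_\xi^{\ga}b(\xi)}\le C_\ga\,\brac{\xi}^{j-\abs{\ga}}$ with $C_\ga$ independent of $R$, $t$ and $x$. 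For the last bound I would use that $a$ is a symbol of order $j$, that $(1-\cutoffWF)\bigl(t^{-1}x+\grad\tau(\xi)\bigr)$ has $\xi$-derivatives of every order bounded uniformly in $t,x$ (again by hypothesis~(i), since $\grad\tau$ is then a symbol of order $0$), and that $g_R$ satisfies $\abs{\pa_\xi^{\al}g_R(\xi)}\le C_\al\,\brac{\xi}^{-\abs{\al}}$ uniformly in $R$ --- for $R\ge1$ because $\pa_\xi^{\al}g_R(\xi)=R^{-\abs{\al}}(\pa^{\al}g)(\xi/R)$ is supported where $\abs{\xi}\asymp R$, and for $0\le R<1$ trivially, since then $g_R=g_0$ is fixed. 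A short induction on $l$, bookkeeping powers of $t$ and of $\brac{\xi}$, then gives
\[
\absbig{\bigl(L^{\mathrm t}\bigr)^{l}b(\xi)}\le C_{r,l}\,t^{-l}\,\brac{\xi}^{j-l}
\qquad\text{for }\xi\in\supp b,\ t>0;
\]
in words, each integration by parts gains one factor $t^{-1}$ (from $\abs{\grad\Theta}^{-1}\le(rt)^{-1}$) together with one factor $\brac{\xi}^{-1}$ (every second derivative of $\Theta$, and every derivative landing on $b$, lowers the order by one).

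Finally, since $\abs{g_R}\le1$ everywhere and $\supp b\subset\supp g_R$, one concludes
\[
\abs{I_2(t,x)}\le C_{r,l}\,t^{-l}\int_{\supp g_R}\brac{\xi}^{j-l}\,d\xi
\le C_{r,l}\,t^{-l}\int_{\R^n}\brac{\xi}^{j-l}\,d\xi=C_{r,l}\,t^{-l},
\]
the last integral being finite precisely because $l>n+j$, and the resulting constant depending only on $r$ and $l$ (not on $R$, $t$ or $x$). The step I expect to require the most care is the uniform-in-$R$ symbolic bookkeeping: verifying that $g_R$ and the composite cut-off $(1-\cutoffWF)\bigl(t^{-1}x+\grad\tau(\xi)\bigr)$ behave as symbols of order $0$ with $R$- and $x$-independent seminorms, and that the gains $\brac{\xi}^{-1}$ genuinely accumulate at rate one per integration by parts, so that the convergence threshold comes out exactly as $l>n+j$. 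The constant of course blows up as $r\to0$, but $r$ is a parameter to be fixed later in the proof of Theorem~\ref{THM:sugimoto/randolargument}, so this causes no difficulty.
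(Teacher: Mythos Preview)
Your proposal is correct and follows essentially the same approach as the paper: integration by parts with the operator $L=\bigl(i\abs{\grad\Theta}^{2}\bigr)^{-1}\grad\Theta\cdot\grad$, the lower bound $\abs{\grad\Theta}\ge rt$ on the support, the uniform-in-$R$ symbol estimates for $g_R$ (derived exactly as you do), and the induction showing each step gains $t^{-1}\brac{\xi}^{-1}$. The paper writes out the formula for $P^*f$ and the inductive step slightly more explicitly, but the argument is the same.
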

\begin{proof}
In the support of $(1-\cutoffWF)(t^{-1}x+\grad\tau(\xi))$,
we have
$\abs{x+t\grad\tau(\xi)}\ge rt>0$, so we can write
\begin{equation*}
\frac{(x+t\grad\tau(\xi))}{i\abs{x+t\grad\tau(\xi)}^2}\cdot
\grad_\xi( e^{i(x\cdot\xi+\tau(\xi)t)})=
e^{i(x\cdot\xi+\tau(\xi)t)}\,;
\end{equation*}
therefore, denoting the adjoint to
$P\equiv\frac{(x+t\grad\tau(\xi))}{i\abs{x+t\grad\tau(\xi)}^2}
\cdot\grad_\xi$ by $P^*$, we get
\begin{equation*}
I_2(t,x)=
\int_{\R^n}e^{i(x\cdot\xi+\tau(\xi)t)}(P^*)^l\big[a(\xi)g_R(\xi)
(1-\cutoffWF)\big(t^{-1}x+\grad\tau(\xi)\big)\big]\,d\xi
\end{equation*}
for each $l\in\N$. We claim that for each $l$ there exists some
constant $C_{r,l}>0$ independent of $R$ so that, when $t>1$, we have
\begin{equation}\label{EQ:P*lest}
(P^*)^l\big[a(\xi)g_R(\xi)(1-\cutoffWF)
\big(t^{-1}x+\grad\tau(\xi)\big)\big] \le
C_{r,l}t^{-l}\brac{\xi}^{j-l}\,;
\end{equation}
assuming this, we obtain,
\begin{equation*}
\abs{I_2(t,x)}\le
C_{r,l}t^{-l}\int_{\R^n}\frac{1}{\brac{\xi}^{l-j}}\,d\xi\,.
\end{equation*}
Noting that $\int_{\R^n}\frac{1}{\brac{\xi}^{l-j}}\,d\xi$
converges for $l-j>n$ yields the desired
estimate~\eqref{EQ:estawayfromwavefront}.

It remains to prove~\eqref{EQ:P*lest}. Let $f\equiv f(\xi;x,t)$ be a
function that is zero for $\abs{x+t\grad\tau(\xi)}\le rt$ and is
continuously differentiable with respect to~$\xi$; then,
\begin{multline}\label{EQ:expressionforP*f}
P^*f=\grad_\xi\cdot\Big[\frac{(x+t\grad\tau(\xi))}
{i\abs{x+t\grad\tau(\xi)}^2}f\Big]
=\frac{t\lap\tau(\xi)}{i\abs{x+t\grad\tau(\xi)}^2}f
+\frac{(x+t\grad\tau(\xi))}{i\abs{x+t\grad\tau(\xi)}^2}
\cdot\grad_\xi f\\
\qquad\qquad-\frac{2t(x+t\grad\tau(\xi))\cdot[\grad^2\tau(\xi)
\cdot(x+t\grad\tau(\xi))]}{i\abs{x+t\grad\tau(\xi)}^4}f\,.
\end{multline}
Hence, using $\abs{x+t\grad\tau(\xi)}\ge rt$ (hypothesis on $f$)
and $\abs{\pa^\al\tau(\xi)}\le C\brac{\xi}^{1-\abs{\al}}$
(hypothesis~\ref{HYP:realtauisasymbol}), we have
\begin{equation}\label{EQ:estforP*f}
\abs{P^*f}\le C_rt^{-1}[\brac{\xi}^{-1}\abs{f} +\abs{\grad_\xi
f}]\,.
\end{equation}
Now, for all multi-indices $\al$ and for all $\xi\in\R^n$, we get
\begin{itemize}
\item $\abs{\pa^\al a(\xi)}\le
C_\al\brac{\xi}^{j-\abs{\al}}$  for all $\xi\in\R^n$ as $a\in
S^{j}_{1,0}(\R^n)$;

\item $\abs{\pa_\xi^\al\big[(1-\cutoffWF)
\big(t^{-1}x+\grad\tau(\xi)\big)\big]}\le
C_{\al}\brac{\xi}^{-\abs{\al}}$, for all $\xi\in\R^n$---here we
have used hypothesis~\ref{HYP:realtauisasymbol} once more. Also,
it is zero for each $\al$ when $\abs{x+t\grad\tau(\xi)}\le rt$ by
the definition of $\ka$.
\end{itemize}
Furthermore, $\abs{\pa^\al g_R(\xi)}=\abs{\pa^\al g_0(\xi)}\le
C_\al\brac{\xi}^{-\abs{\al}}$ for $0\le R<1$, since
$C_0^\infty(\R^n\setminus\set{0})\subset S_{1,0}^0(\R^n)$. For $R\ge
1$, we have:
\begin{gather*}
\pa^\al g_R(\xi)=\pa^\al[g(\xi/R)]=R^{-\abs{\al}}(\pa^\al
g)(\xi/R)
\text{ and }g\in S_{1,0}^0(\R^n) \\
\implies \abs{\pa^\al g_R(\xi)}\le C_\al
R^{-\abs{\al}}\brac{\xi/R}^{-\abs{\al}} \le C_\al
\brac{\xi}^{-\abs{\al}}\,.
\end{gather*}
Therefore,
\begin{equation}\label{EQ:gRsymbolindepofR}
\abs{\pa^\al g_R(\xi)}\le C_{\al}\brac{\xi}^{-\abs{\al}}\text{ for
all }\xi\in\R^n\text{ and multi-indices }\al\,,
\end{equation}
where the $C_\al>0$ are independent of $R$.

Hence, by~\eqref{EQ:estforP*f}, we obtain
\begin{equation*}
\absbig{P^*[a(\xi)g_R(\xi)(1-\cutoffWF)
\big(t^{-1}x+\grad\tau(\xi)\big)]} \le
C_{r}t^{-1}\brac{\xi}^{j-1}\,.
\end{equation*}
To prove~\eqref{EQ:P*lest} for $l\ge2$ we do induction on $l$. Note
that
\begin{equation*}
\abs{(P^*)^lf}\le C_rt^{-1}[\brac{\xi}^{-1}\abs{(P^*)^{l-1}f}
+\abs{\grad_\xi\{(P^*)^{l-1}f\}}]\,.
\end{equation*}
The first term satisfies the desired estimate by the inductive
hypothesis. For the second term, repeated application of the
properties of $a(\xi)$, $g(\xi)$ and
$(1-\cutoffWF)(t^{-1}x+\grad\tau(\xi))$ noted above to inductively
estimate derivatives of $(P^*)^{l'}f$, $1\le l'\le l-2$ yields the
desired estimate. This completes the proof of the lemma.
\end{proof}
This lemma, with $j=\frac{n-1}{\ga}-n$, means that it suffices to
prove \eqref{EQ:sugimotothmest} for $I_1(t,x)$, where
$\abs{t^{-1}x+\grad\tau(\xi)}<2r$.

Let $\set{\cutoffcone_\ell(\xi)}_{\ell=1}^L$ be a partition of unity
in $\R^n$ where $\cutoffcone_\ell(\xi)\in C^\infty(\R^n)$ is
supported in a narrow (the breadth will be fixed below) open cone
$K_\ell$, $\ell=1,\dots,L$; let us assume that $K_1$
contains the point $e_n=(0,\dots,0,1)$ (if necessary, relabel the
cones to ensure this) and also that each $K_\ell$, $\ell=1,\dots,L$,
can be mapped onto $K_1$ by rotation. Then, it suffices to estimate
\begin{equation}\label{EQ:integralincone}
I_1'(t,x)=\int_{\R^n}e^{i(x\cdot\xi+\tau(\xi)t)}a(\xi)g_R(\xi)
\cutoffcone_1(\xi)\cutoffWF
\big(t^{-1}x+\grad\tau(\xi)\big)\,d\xi\,,
\end{equation}
since the properties of $\tau(\xi)$, $a(\xi)$, $g_R(\xi)$ and
$\cutoffWF(t^{-1}x+\grad\tau(\xi))$ used throughout are invariant
under rotation.

By hypothesis~\ref{HYP:realderivativeoftaunonzero}, the level sets
$\Si_\la=\set{\xi\in\R^n:\tau(\xi)=\la}$ are all non-degenerate (or
empty). Furthermore, the Implicit Function Theorem allows us to
parameterise the intersection of the surface
$\Si'_\la\equiv\frac{1}{\la}\Si_\la$ and the cone $K_1$:
\begin{equation*}
K_1\cap \Si'_\la=\set{(y,h_\la(y)):y\in U}\,;
\end{equation*}
here $U\subset\R^{n-1}$ is a bounded open set for which
$p(U)=\Snm\cap K_1$ where $p(y)=(y,\sqrt{1-\abs{y}^2})$, and
$h_\la:U\to\R$ is a smooth function for each $\la>0$; in particular,
each~$h_\la$ is concave due to~$\tau(\xi)$ satisfying the convexity
condition, i.e.\ $\Si_\la'$ is convex for each $\la\in\R$. Then, in
the case that $\tau(\xi)\ge0$ for all $\xi\in\R^n$, the cone $K_1$ is
parameterised by
\begin{equation*}
K_1=\set{(\la y,\la h_\la(y)):\la>0,\,y\in U}\,,
\end{equation*}
and when $\tau(\xi)\le0$ for all $\xi\in\R^n$,
\begin{equation*}
K_1=\set{(\la y,\la h_\la(y)):\la<0,\,y\in U}\,.
\end{equation*}

Now, let $\gauss:K_1\cap\Si'_\la\to \Snm$ be the Gauss map,
\begin{equation*}
\gauss(\zeta)=\frac{\grad\tau(\zeta)}{\abs{\grad\tau(\zeta)}}\,.
\end{equation*}
By the definition of $\cutoffWF(t^{-1}x+\grad\tau(\xi))$, we have
\begin{equation*}
\abs{t^{-1}x-(-\grad\tau(\xi_\la))}<2r
\end{equation*}
for each $\xi_\la\in K_1\cap\Si'_\la$ that is also in the support of
the integrand of~\eqref{EQ:integralincone}. Hence, provided $r>0$ is
taken sufficiently small, the convexity of $\Si'_\la$ ensures that
the points $t^{-1}x/\abs{t^{-1}x}$ and~$-\gauss(\xi_\la)$ are close
enough so that there exists $z(\la)\in U$ (for each~$\xi_\la\in
K_1\cap\Si'_\la$) satisfying
\begin{equation*}
\gauss\big(z(\la),h_\la(z(\la))\big)=
-t^{-1}x/\abs{t^{-1}x}=-x/\abs{x}\in \Snm\,.
\end{equation*}
Also, $(-\grad_yh_\la(y),1)$ is normal to $\Si'_\la$ at
$(y,h_\la(y))$, so, writing $x=(x',x_n)$, we have
\begin{multline*}
-\frac{x}{\abs{x}}=\frac{(-\grad_yh_\la(z(\la)),1)}
{\abs{(-\grad_yh_\la(z(\la)),1)}} \implies
-\frac{x_n}{\abs{x}}=\frac{1}{\abs{(-\grad_yh_\la(z(\la)),1)}}\\
\text{ and } -\frac{x'}{\abs{x}}=
\frac{-\grad_yh_\la(z(\la))}{\abs{(-\grad_yh_\la(z(\la)),1)}}
=\frac{x_n\grad_yh_\la(z(\la))}{\abs{x}}\,;
\end{multline*}
therefore, $-x'=x_n\grad_yh_\la(z(\la))$. We claim that~$x_n$ is
away from~$0$ provided the breadth of the cone~$K_1$ is chosen to be
sufficiently narrow, so
\begin{equation}\label{EQ:x'/xn=-gradh}
\frac{x'}{x_n}=-\grad_yh_\la(z(\la))\,.
\end{equation}

To prove this claim, first recall that $\Si_\la'\subset B_{R_1}(0)$
for all $\la>0$ (hypothesis~\ref{HYP:reallimitofSi_la}) and note
that $\pa_{\xi_n}\tau(\xi)$ is absolutely continuous on
$\clos{B_{R_1}(0)}$ (it is continuous in $\R^n$): taking $C_0>0$ as
in hypothesis~\ref{HYP:realderivativeoftaunonzero}, we get that
\begin{equation}\label{EQ:widthofcone}
\left.\begin{aligned}\text{there exists }\de>0\text{ so }&\text{that
}\abs{\eta^1-\eta^2}<\de,\;\text{ where }
\eta^1,\eta^2\in\clos{B_{R_1}(0)},\;\\\text{ implies }&
\abs{\pa_{\xi_n}\tau(\eta^1)-\pa_{\xi_n}\tau(\eta^2)}<C_0/4\,.
\end{aligned}\right.
\end{equation}
Then, fix the breadth of~$K_1$ so that the maximal shortest distance
from a point~$\xi\in K_1\cap(\bigcup_{\la>0}\Si'_\la)$ to the ray
$\set{\mu e_n:\mu>0}$ is less than this~$\de$, i.e.\
\begin{equation*}
\sup\set{\inf_{\mu>0}\abs{\xi-\mu e_n}:\xi\in
K_1\cap\big(\bigcup_{\la>0}\Si'_\la\big)}<\de\,.
\end{equation*}
Now, observe that for any $\xi^0\in\R^n$, $\mu>0$, we have
\begin{equation*}
\textstyle\absbig{\frac{x_n}{t}}\ge\abs{\pa_{\xi_n}\tau(\mu e_n)}
-\abs{\pa_{\xi_n}\tau(\xi^0)-\pa_{\xi_n}\tau(\mu
e_n)}-\abs{\frac{x_n}{t}+\pa_{\xi_n}\tau(\xi^0)}\,.
\end{equation*}
Choose $\xi^0\in
K_1\cap\Si_\la'\cap\supp[\cutoffWF(t^{-1}x+\grad\tau(\xi))]$ and
$\mu>0$ so that $\abs{\xi^0-\mu e_n}<\de$ and, hence,
\begin{equation*}
\abs{\pa_{\xi_n}\tau(\xi^0)-\pa_{\xi_n}\tau(\mu
e_n)}<C_0/4;
\end{equation*}
also, by hypothesis~\ref{HYP:realderivativeoftaunonzero},
$\abs{\pa_{\xi_n}\tau(\mu e_n)}\ge C_0$, so
\begin{equation*}
\abs{t^{-1}x_n}\ge 3C_0/4-2r.
\end{equation*}
Taking~$r$ sufficiently small, less than~$C_0/8$ say, (ensuring
$r>0$ satisfies the earlier condition also) we get
\begin{equation}\label{EQ:|xn|isnonzero}
\abs{x_n}\ge ct>0\,\end{equation}
proving the claim.

Before estimating~\eqref{EQ:integralincone}, we introduce some
useful notation: by the definition of $g_R(\xi)$,
\eqref{EQ:eqnforg_R}, when $R\ge1$
\begin{equation*}
\xi\in\supp g_R\;\implies Rc_0<\abs{\xi}<Rc_1;
\end{equation*}
also, if $0\le R<1$, then there exist constants
$\widetilde{c}_0,\widetilde{c}_1>0$ so that
$\widetilde{c}_0<\abs{\xi}<\widetilde{c}_1$ for $\xi\in\supp g_R$. Thus, by
hypotheses~\ref{HYP:realtauisasymbol}
and~\ref{HYP:realtauboundedbelow}, there exist constants
$c_0',c_1'>0$ such that
\begin{equation*}
\begin{cases}
Rc_0'<\abs{\tau(\xi)}<Rc_1'&\text{ if }R\ge 1\text{ and }\xi\in\supp
g_R,\\ c_0'<\abs{\tau(\xi)}<c_1'&\text{ if }0\le R< 1\text{ and
}\xi\in\supp g_R.
\end{cases}
\end{equation*}
Let $\cutoffadjust\in C_0^\infty(\R)$ be identically one on
the set $\set{s\in\R:c_0'<s<c_1'}$ and identically zero in a
neighbourhood of the origin; writing $\curlyR=\max(R,1)$, this then
satisfies
\begin{equation*}
g_R(\xi)= g_R(\xi) \cutoffadjust(\tau(\xi)/\curlyR)\,.
\end{equation*}
Also, for simplicity, write
\begin{equation}\label{EQ:defofatilde}
\widetilde{a}(\xi)\equiv
\widetilde{a}_R(\xi):=a(\xi)g_R(\xi)\cutoffcone_1(\xi)\,;
\end{equation}
this is a type (1,0) symbol of order $\frac{n-1}{\ga}-n$ supported
in the cone~$K_1$, and the constants in the symbolic estimates are
all independent of~$R$ as each~$g_R(\xi)$, $R\ge0$, is a symbol of
order~$0$ with constants independent of~$R$
(see~\eqref{EQ:gRsymbolindepofR}).

We now turn to estimating~\eqref{EQ:integralincone}. Using the
change of variables $\xi\mapsto(\la y,\la h_\la(y))$ and
equality~\eqref{EQ:x'/xn=-gradh}, it becomes
\begin{align}
\begin{split}
I_1'(t,x)=\int_0^{\infty}\int_{U} &e^{i[\la x'\cdot y +\la x_n
h_\la(y)+\tau(\la y,\la h_\la(y))t]}a(\la y,\la h_\la(y))\\
g_R(\la y,\la h_\la(y)) &\Psi_1(\la y,\la h_\la(y))
\ka\big(t^{-1}x+\grad\tau(\la y,\la
h_\la(y))\big)\frac{d\xi}{d(\la,y)}\,dy\,d\la
\end{split}\notag\\
\begin{split}
=\int_0^\infty\int_{U}&e^{i\la x_n[-\grad_yh_\la(z(\la))\cdot y
+h_\la(y)+tx_n^{-1}]} \widetilde{a}(\la y,\la h_\la(y))
\\ &\cutoffadjust(\la/\curlyR)\ka\big(t^{-1}x+\grad\tau(\la y,\la
h_\la(y))\big)\frac{d\xi}{d(\la,y)}\,dy\,d\la,
\end{split}\label{EQ:integralcov1}
\end{align}
where we have used $\tau(\la y,\la h_\la(y))=\la$ (definition
of~$\Si_\la$) in the last line. Here, note that
\begin{align*}
\frac{d\xi}{d(\la,y)}=
\begin{vmatrix}
\la I&y\\ \la\grad_y h_\la(y)&\pa_\la[\la h_\la(y)]
\end{vmatrix}=
\la^{n-1}(\pa_\la[\la h_\la(y)]-y\cdot\grad_y h_\la(y))\,,
\end{align*}
where $I$ is the identity matrix. Differentiating $\tau(\la y,\la
h_\la(y))=\la$ with respect to $\la$ in the first case and with
respect to $y$ in the second, gives
\begin{gather*}
y\cdot\grad_{\xi'}\tau(\la y,\la h_\la(y))+\pa_\la[\la
h_\la(y)]\pa_{\xi_n}\tau(\la y,\la h_\la(y))=1\,,\\
\la\grad_{\xi'}\tau(\la y,\la h_\la(y))+\la\grad_y
h_\la(y)\pa_{\xi_n}\tau(\la y,\la h_\la(y))=0\,.
\end{gather*}
Substituting the second of these equalities into the first yields
\begin{equation*}
\big(\pa_\la[\la h_\la(y)]-y\cdot\grad_y
h_\la(y)\big)\pa_{\xi_n}\tau(\la y,\la h_\la(y))=1\,.
\end{equation*}
We claim that
\begin{equation}\label{EQ:derivoftaubddbelow}
\abs{\pa_{\xi_n}\tau(\la y,\la h_\la(y))}\ge C>0\,.
\end{equation}
To see this, first note that
\begin{equation*}
\abs{\pa_{\xi_n}\tau(\la y,\la h_\la(y))}\ge
\abs{\pa_{\xi_n}\tau(\la\mu e_n)} -\absbig{\pa_{\xi_n}\tau(\la\mu
e_n)-\pa_{\xi_n}\tau(\la y,\la h_\la(y))}
\end{equation*}
where $\mu>0$ is chosen as above so that $\abs{\mu
e_n-(y,h_\la(y))}\le\de$; now, $\abs{\pa_{\xi_n}\tau(\la\mu e_n)}\ge
C_0$ by hypothesis~\ref{HYP:realderivativeoftaunonzero}. Also, by
the Mean Value Theorem, there exists $\bar\xi$ lying on the segment
between $(\la y,\la h_\la(y))$ and $\la\mu e_n$ such that
\begin{equation*}
\abs{\pa_{\xi_n}\tau(\la\mu e_n)-\pa_{\xi_n}\tau(\la y,\la
h_\la(y))}\le C\abs{\grad_\xi\pa_{\xi_n}\tau(\bar\xi)}\la\de \le
C\abs{\bar\xi}^{-1}\la\de \le C\de\,;
\end{equation*}
choosing $\de>0$ small enough (also ensuring it satisfies
condition~\eqref{EQ:widthofcone} above) completes the proof of the
claim. Hence,
\begin{equation}\label{EQ:jacobianofxi}
\absBig{\frac{d\xi}{d(\la,y)}}=
\absBig{\frac{\la^{n-1}}{\pa_{\xi_n}\tau(\la y,\la h_\la(y))}}\le
C\la^{n-1}\,.
\end{equation}
Also, note that this Jacobian is bounded below away from zero
because $\abs{\pa_{\xi_n}\tau(\xi)}\le C$ for all $\xi\in\R^n$
(hypothesis~\ref{HYP:realtauisasymbol}), which means that the
transformation above is valid in~$K_1$.

Next, using the change of variables $\latil=\la x_n=\la \xtil_nt$
in~\eqref{EQ:integralcov1}, writing $h(\la,y)\equiv h_\la(y)$ and
setting $\xtil:=t^{-1}x$ (so $\xtil_n=t^{-1}x_n$), we obtain
\begin{multline*}
\int_0^\infty\int_{U}e^{i\latil(-\grad_y
h\big(\frac{\latil}{\xtil_nt},
z\big(\frac{\latil}{\xtil_nt}\big)\big)\cdot y
+h\big(\frac{\latil}{\xtil_nt},y\big)+\xtil_n^{-1})}
\widetilde{a}\Big(\textstyle\frac{\latil}{\xtil_nt}
y,\frac{\latil}{\xtil_nt}
h\Big(\frac{\latil}{\xtil_nt},y\Big)\Big)\\
\cutoffadjust\Big(\textstyle\frac{\latil}{\curlyR\xtil_nt}\Big)
\cutoffWF\Big(\xtil+\grad\tau\Big(\frac{\latil}{\xtil_nt}
y,\frac{\latil}{\xtil_nt}
h\Big(\frac{\latil}{\xtil_nt},y\Big)\Big)\Big)
\displaystyle\frac{d\xi}{d(\la,y)}t^{-1}\xtil_n^{-1}\,dy\,d\latil\,.
\end{multline*}
Therefore, using $\absbig{\frac{d\xi}{d(\la,y)}}\le
C\latil^{n-1}\abs{\xtil_n}^{-(n-1)}t^{-(n-1)}$ (by
\eqref{EQ:jacobianofxi}) and recalling that $\abs{\ka(\eta)}\le1$,
we have,
\begin{equation}\label{EQ:I1'firstbound}
\abs{I_1'(t,x)}\le
Ct^{-\frac{n-1}{\ga}}\abs{\xtil_n}^{-\frac{n-1}{\ga}}
\int_0^\infty\absBig{I\Big(\latil,
\textstyle\frac{\latil}{\xtil_nt};
z\Big(\frac{\latil}{\xtil_nt}\Big)\Big)
\cutoffadjust\Big(\frac{\latil}{\curlyR\xtil_nt}\Big)
\latil^{\frac{n-1}{\ga}-1}}\,d\latil\,,
\end{equation}
where,
\begin{multline*}
I\Big(\latil, \textstyle\frac{\latil}{\xtil_nt};
z\Big(\frac{\latil}{\xtil_nt}\Big)\Big) =
\displaystyle\int_{U}e^{i\latil\big[h\big(\frac{\latil}{\xtil_nt},y\big)
-h\big(\frac{\latil}{\xtil_nt},z\big)-(y-z)\cdot\grad_y
h\big(\frac{\latil}{\xtil_nt},z\big)\big]}
\\\widetilde{a}\Big(\textstyle\frac{\latil}{\xtil_nt}
y,\frac{\latil}{\xtil_nt} h\Big(\frac{\latil}{\xtil_nt},y\Big)\Big)
\Big(\frac{\latil}{t\abs{\xtil_n}}\Big)^{n-\frac{n-1}{\ga}} \,dy\,.
\end{multline*}
With Theorem~\ref{THM:oscintthm} in mind, let us rewrite this in the
form of~\eqref{EQ:genoscint}:
\begin{equation*}
I(\la,\mu;z)=\int_{\R^{n-1}}e^{i\la \phasefn(y,\mu;z)} a_0(\mu y,\mu
h_\mu(y))b(y)\,dy\,,
\end{equation*}
with arbitrary $\la>0$, $\mu>0$ and $z\in\R^{n-1}$, where
\begin{itemize}
\item $\phasefn(y,\mu;z)=h_{\mu}(y)-h_\mu(z)-(y-z)\cdot\grad_y
h_{\mu}(z)$;
\item $a_0(\xi):=\widetilde{a}(\xi)\abs{\xi}^{n-\frac{n-1}{\ga}}$;
\item $b\in C_0^\infty(\R^{n-1})$ with support contained in $U$.
\end{itemize}
We shall show that the following conditions (numbered as in
Theorem~\ref{THM:oscintthm} and
Corollary~\ref{COR:weakerconvextypecondition}) are satisfied by
$I(\la,\mu;z)$:
\begin{enumerate}[label=(I\arabic*),leftmargin=*]
\item\label{COND:biscpctlysupp} there exists a bounded set
$U\subset\R^{n-1}$ such that $b\in C^\infty_0(U)$;
\item\label{COND:im>=0} $\Im \phasefn(y,\mu;z)\ge 0$ for all
$y\in U$, $\mu>0$;
\end{enumerate}
\begin{enumerate}[resume,label=(I\arabic*$^\prime$),leftmargin=*]
\item\label{COND:Fisconvexphasefn}
$F(\rho,\om,\mu;z)=\phasefn(\rho\om+z,\mu;z)$, $\om\in 
{\mathbb S}^{n-2}$,
$\rho>0$, is a function of convex type $\ga$ (see
Definition~\ref{DEF:phasefunctionofconvextype});
\end{enumerate}
\begin{enumerate}[resume,label=(I\arabic*),leftmargin=*]
\item\label{COND:a0derivsbdd} there exist constants $C_\al$ such
that $\abs{\pa_y^\al[ a_0(\mu y,\mu h_\mu(y))]}\le C_\al$ for all
$y\in U$, $\mu>0$ and $\abs{\al}\le [\frac{n-1}{\ga}]+1$.
\end{enumerate}
Assuming for now that these hold, Theorem~\ref{THM:oscintthm} (or,
more precisely, Corollary~\ref{COR:weakerconvextypecondition})
states that, for all $\la>0$, $\mu>0$,
\begin{equation*}
\abs{I(\la,\mu;z)}\le C\bract{\la}^{-\frac{n-1}{\ga}}\le
C\la^{-\frac{n-1}{\ga}}\,.
\end{equation*}
This, together with~\eqref{EQ:I1'firstbound}, gives
\begin{equation*}
\abs{I_1'(t,x)} \le
Ct^{-\frac{n-1}{\ga}}\abs{\xtil_n}^{-\frac{n-1}{\ga}}
\int_0^\infty\latil^{-\frac{n-1}{\ga}}
\cutoffadjust\Big(\textstyle\frac{\latil}{\curlyR\xtil_nt}\Big)
\latil^{\frac{n-1}{\ga}-1}\,d\latil\,;
\end{equation*}
then, setting $\nu=\frac{\latil}{\curlyR\xtil_nt}$, we have
\begin{align*}
\abs{I_1'(t,x)} &\le
Ct^{-\frac{n-1}{\ga}}\abs{\xtil_n}^{-\frac{n-1}{\ga}}\int_0^\infty
(\curlyR\xtil_nt\nu)^{-1}
\cutoffadjust(\nu) \curlyR\xtil_nt\,d\nu\\
&= Ct^{-\frac{n-1}{\ga}}\abs{\xtil_n}^{-\frac{n-1}{\ga}}
\int_0^\infty \nu^{-1} \cutoffadjust(\nu) \,d\nu \le
Ct^{-\frac{n-1}{\ga}}\quad\text{for all }t>1\,.
\end{align*}
Here we have used that~$G$ is identically zero in a neighbourhood of
the origin and that it is compactly supported and also
\eqref{EQ:|xn|isnonzero} ($\abs{\xtil_n}\ge C>0$); also, note the
constant here is independent of~$R$. Since this inequality holds for
$I_1'(t,x)$, it also holds for $I_1(t,x)$; thus, together with
Lemma~\ref{LEM:estawayfromwavefront}, this proves the desired
estimate~\eqref{EQ:sugimotothmest}, provided we show that the four
properties \ref{COND:biscpctlysupp}--\ref{COND:a0derivsbdd} above
hold.

Now, clearly~\ref{COND:biscpctlysupp} holds automatically and
\ref{COND:im>=0} is true since $h_\mu(y)$ is real-valued, so
$\Im\phasefn(y,\mu;z)=0$ for all $y\in U$, $\mu>0$.

For~\ref{COND:Fisconvexphasefn} and~\ref{COND:a0derivsbdd}, we need
an auxiliary result about the boundedness of the derivatives
of~$h_\la(y)$:
\begin{lem}\label{LEM:boundforderivsofh_la}
All derivatives of $h_\la(y)$  with respect to $y$ are bounded
uniformly in $y$. That is\textup{,} for each multi-index $\al$ there
exists a constant $C_\al>0$ such that
\begin{equation*}
\abs{\pa_y^\al h_\la(y)}\le C_\al\quad\text{for all }y\in
U,\,\la>0\,.
\end{equation*}
\end{lem}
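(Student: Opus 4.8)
The plan is to read the required bounds off the implicit relation defining $h_\la$, combined with the symbolic estimates on $\tau$, by induction on $\abs\al$. Recall that $(y,h_\la(y))\in\Si'_\la=\frac1\la\Si_\la$, so $h_\la$ is characterised by $\tau(\la y,\la h_\la(y))=\la$ for $y\in U$; set $\Psi(y):=(\la y,\la h_\la(y))$. The case $\abs\al=0$ is immediate from hypothesis~(iv), which gives $\abs{h_\la(y)}<R_1$, and the case $\abs\al=1$ is just the Implicit Function Theorem identity $\pa_{y_i}h_\la=-(\pa_{\xi_i}\tau)(\Psi)/(\pa_{\xi_n}\tau)(\Psi)$, which is bounded since $\abs{(\pa_{\xi_i}\tau)(\Psi)}\le C$ by hypothesis~(i) and $\abs{(\pa_{\xi_n}\tau)(\Psi)}\ge C>0$ by~\eqref{EQ:derivoftaubddbelow}.

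The key preliminary observation I would establish is that $\langle\Psi(y)\rangle\sim\la$ for all large $\la$. The bound $\langle\Psi(y)\rangle\le C\la$ is hypothesis~(iv). For the reverse bound, note that (as in the proof of Theorem~\ref{THM:sugimoto/randolargument} we may assume $\tau\ge0$, and then~(ii) forces $\pa_r\tau$ to be eventually positive, hence, being continuous and nonvanishing by~(iii), positive throughout) one has $C_0\le\pa_r\tau(r\om)\le C$ for all $r>0$, $\om\in\Snm$ — the lower bound from~(iii), the upper from~(i). Integrating in $r$ gives $\abs\xi\sim\la$ on $\Si_\la$, hence $\abs\zeta\sim1$ on $\Si'_\la$, whence $\abs{\Psi(y)}\sim\la$. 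Therefore, by hypothesis~(i),
\[
\bigl|(\pa_\xi^\ga\tau)(\Psi(y))\bigr|\le C_\ga\langle\Psi(y)\rangle^{1-\abs\ga}\le C_\ga\la^{1-\abs\ga}\qquad\text{for }\abs\ga\ge1,
\]
and each $\pa_y^\be\Psi_l(y)$, when nonzero, equals $\la$ or $\la\,\pa_y^\be h_\la(y)$ (it vanishes for $l<n$, $\abs\be\ge2$), so it is $\la$ times a quantity already bounded by the inductive hypothesis whenever $\abs\be\le\abs\al-1$.

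For the inductive step I would apply $\pa_y^\al$ to $\tau(\Psi(y))\equiv\la$ and expand by the multivariate chain rule: the left side is a finite sum of terms $(\pa_\xi^\ga\tau)(\Psi)\prod_{j=1}^{k}\pa_y^{\be^{(j)}}\Psi_{l_j}$ with $k=\abs\ga$ and $\be^{(1)}+\dots+\be^{(k)}=\al$, all $\be^{(j)}\ne0$. Exactly one of these is the ``top'' term $\la\,(\pa_{\xi_n}\tau)(\Psi)\,\pa_y^\al h_\la$; in every other term either $k\ge2$ (so each $\abs{\be^{(j)}}\le\abs\al-1$ and $\prod_j\abs{\pa_y^{\be^{(j)}}h_\la}\le C$ by induction) or $k=1$ with $l_1<n$ and $\abs\al\ge2$ (so the factor is $0$). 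Hence every non-top term is bounded by $\la^k\cdot C_\ga\la^{1-k}\cdot C=C\la$. Isolating the top term and dividing by $\la\,\abs{(\pa_{\xi_n}\tau)(\Psi)}\ge C\la$ yields $\abs{\pa_y^\al h_\la(y)}\le C_\al$, uniformly in $y\in U$ and in large $\la$; for $\la$ in a compact subinterval of $(0,\infty)$ — which is all one needs, since in the application the cut-off $\cutoffadjust(\la/\curlyR)$ with $\curlyR\ge1$ confines $\la$ to such a set — the bound follows from the joint continuity of $\pa_y^\al h_\la(y)$ and compactness. All constants are independent of $R$, as neither $\cutoffadjust$ nor the symbolic constants depend on it.

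The single point requiring care is the accounting of powers of $\la$: each of the $\abs\al$ differentiations of $\Psi$ generates a factor $\la$, and it is exactly the decay $\langle\Psi\rangle^{1-\abs\ga}\sim\la^{1-\abs\ga}$ of the $\ga$-th derivative of $\tau$ that offsets them term by term. This is why the two-sided radial growth of $\tau$ is essential: it upgrades $\abs\zeta\le C$ on $\Si'_\la$ to $\abs\zeta\sim1$, making $\langle\Psi\rangle$ genuinely comparable to $\la$ rather than merely bounded by it, after which the cancellation is exact and the induction closes.
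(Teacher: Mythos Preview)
Your proof is correct and follows the same inductive scheme as the paper's: differentiate the implicit relation $\tau(\la y,\la h_\la(y))=\la$, invoke the symbolic bound $|(\pa_\xi^\ga\tau)(\Psi)|\le C_\ga\la^{1-|\ga|}$ together with the lower bound $|(\pa_{\xi_n}\tau)(\Psi)|\ge C$ from~\eqref{EQ:derivoftaubddbelow}, and close by induction on $|\al|$. The only cosmetic differences are that you run the chain rule directly on $\tau(\Psi)\equiv\la$ rather than on the quotient formula~\eqref{EQ:eqnforh_laderiv} for $\grad_y h_\la$, and that you justify the two-sided bound $|\Psi|\sim\la$ explicitly via hypothesis~(iii) --- a point the paper leaves implicit when it cites only~(iv) for the estimate $|(\pa_\xi^\al\tau)(\Psi)|\le C_\al\la^{1-|\al|}$.
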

\begin{proof}
By definition, $\tau(\la y,\la h_\la(y))=\la$. So,
\begin{multline*}
(\grad_{\xi'}\tau)(\la y,\la h_\la(y))+ (\pa_{\xi_n}\tau)(\la y,\la
h_\la(y))\grad_{y}h_\la(y) =\la^{-1}\grad_{y}[\tau(\la y,\la
h_\la(y))]=0\,,
\end{multline*}
or, equivalently,
\begin{equation}\label{EQ:eqnforh_laderiv}
\grad_{y}h_\la(y)=-\frac{(\grad_{\xi'}\tau)(\la y,\la h_\la(y))}
{(\pa_{\xi_n}\tau)(\la y,\la h_\la(y))}\,.
\end{equation}
Hypothesis~\ref{HYP:realtauisasymbol}
($\abs{\pa_\xi^\al\tau(\xi)}\le C_\al\brac{\xi}^{1-\abs{\al}}$ for
all $\xi\in\R^n$) and~\eqref{EQ:derivoftaubddbelow}
($\abs{\pa_{\xi_n}\tau(\la y,\la h_\la(y))}\ge C>0$) then ensure
that $\abs{\grad_y h_\la(y)}\le C$ for all $y\in U$, $\la>0$.

For higher derivatives, note that $\abs{(y,h_\la(y))}\le R_1$ by
hypothesis~\ref{HYP:reallimitofSi_la}; so, using
hypothesis~\ref{HYP:realtauisasymbol} once more, for all
multi-indices $\al$, there exists a constant $C_\al>0$ such that
\begin{equation*}
\abs{(\pa_\xi^\al\tau)(\la y,\la h_\la(y))}\le
C_\al\la^{1-\abs{\al}}\,.
\end{equation*}
Then, differentiating~\eqref{EQ:eqnforh_laderiv}, this ensures, by
an inductive argument, that the desired result for higher
derivatives of $h_\la(y)$ holds, proving the Lemma.
\end{proof}

Returning to the proof of~\ref{COND:a0derivsbdd}, note that,
\begin{equation*}
\abs{\pa_\xi^\al a_0(\xi)}\le C_\al\brac{\xi}^{-\abs{\al}}\text{ for
all }\xi\in\R^n\,,
\end{equation*}
since,~$\widetilde{a}(\xi)$ is a symbol of order $\frac{n-1}{\ga}-n$
(see~\eqref{EQ:defofatilde} for its definition). Together with
Lemma~\ref{LEM:boundforderivsofh_la}, this ensures that $\pa_y^\al[
a_0(\mu y,\mu h_\mu(y))$ is uniformly bounded for all $y\in U$,
$\mu>0$ and $\abs{\al}\le [\frac{n-1}{\ga}]+1$ as required.

Finally, we show~\ref{COND:Fisconvexphasefn}: observe that for
$\abs{\rho}<\de'$, some suitably small $\de'>0$,
\begin{align*}
F(\rho,\om,\mu;z)&=h_{\mu}(\rho\om+z)-h_\mu(z)-
\rho\om\cdot\grad_y h_{\mu}(z)\\
=&\sum_{k=2}^{\ga+1} \Big[
\sum_{\abs{\al}=k}\frac{1}{\al!}(\pa^\al_yh_\mu)(z)\om^\al\Big]\rho^k
+ R_{\ga+1}(\bar{\rho},\om,\mu;z)\rho^{\ga+2}\,.
\end{align*}
So, $F(\rho,\om,\mu;z)$ is a function of convex type~$\ga$ if (using
the numbering of Definition~\ref{DEF:phasefunctionofconvextype})
\begin{enumerate}[label=(CT\arabic*),leftmargin=*]
\addtocounter{enumi}{1}
\item\label{COND:convfn1inthm} $\sum_{k=2}^{\ga+1}
\absBig{\sum_{\abs{\al}=k}
\frac{1}{\al!}(\pa^\al_yh_\mu)(z)\om^\al}\ge C>0$ for all $\om\in
{\mathbb S}^{n-2}$, $\mu>0$, $z\in\R^{n-1}$.
\item\label{COND:convfn2inthm} 
$\abs{\pa_\rho F(\rho,\om,\mu;z)}$ is
increasing in $\rho$ for $0<\rho<\de$, for each $\om\in 
{\mathbb S}^{n-2}$,
$\mu>0$;
\item\label{COND:convfn3inthm} for each $k\in\N$,
$\pa_\rho^kF(\rho,\om,\mu;z)$ is bounded uniformly in
$0<\rho<\de'$, $\om\in {\mathbb S}^{n-2}$, $\mu>0$.
\end{enumerate}

Condition~\ref{COND:convfn3inthm}, follows straight from
Lemma~\ref{LEM:boundforderivsofh_la}.
The concavity of~$h_\mu(y)$ means that
\begin{align*}
\pa^2_\rho F(\rho,\om,\mu;z)= \pa^2_\rho[h_\mu(\rho\om+z)]
=\om^t\Hess h_\mu(\rho\om+z)\om\le 0
\end{align*}
for all $0<\rho<\de'$ and for each $\om\in {\mathbb S}^{n-2}$, $\mu>0$,
$z\in\R^{n-1}$; coupled with the fact that $\pa_\rho
F(0,\om,\mu;z)=0$, this ensures Condition~\ref{COND:convfn2inthm}
holds.

Lastly, recall that, by definition,~$\ga\ge\ga(\Si_\la)$ for all
$\la>0$, which is the maximal order of contact 
between~$\Si_\la$ and
its tangent plane; furthermore,~$\ga$ is assumed to be finite; thus,
for some $k\le\ga+1<\infty$, we have
\begin{equation*}
\pa_\rho^k[ h_\mu(z+\rho\om)]\big\vert_{\rho=0}\ne 0\,.
\end{equation*}
Now, $\pa_\rho^k[ h_\mu(z+\rho\om)]\big\vert_{\rho=0}=
\sum_{\abs{\al}=k}\frac{k!}{\al!}\pa_y^\al h_\mu(z)\om^\al$, so for
some $k\le\ga+1$, we have
$$k!\abslr{\sum_{\abs{\al}=k}\frac{1}{\al!}\pa_y^\al
h_\mu(z)\om^\al}\ge C>0$$ for all $\om\in {\mathbb S}^{n-2}.$
Thus, condition~\ref{COND:convfn1inthm} holds.

This completes the proof of conditions
\ref{COND:biscpctlysupp}--\ref{COND:a0derivsbdd}, and, hence,
Theorem \ref{THM:sugimoto/randolargument}.
\end{proof}

\section{Oscillatory integrals without convexity}
\label{SEC:ch-nonconvex}
Theorem~\ref{THM:sugimoto/randolargument} requires the phase
function to satisfy the convexity condition of
Definition~\ref{DEF:convexitycondition}; however, we will also
investigate solutions to hyperbolic equations for which the
characteristic roots do not necessarily satisfy 
such a condition. In
this section we state and prove a theorem for this case. First, we
give the key results that replaces Theorem~\ref{THM:oscintthm} in
the proof, the well-known van der Corput Lemma. We recall the 
standard van der Corput Lemma as given in, for
example,~\cite[Lemma 1.1.2]{sogg93}, or in
\cite[Proposition 2, Ch VIII]{stei93}:
\begin{lem}\label{LEM:VDCmain}
Let $\Phi\in C^\infty(\R)$ be real-valued\textup{,} 
$a\in C_0^\infty(\R)$ and
$m\ge2$ be an integer such that $\Phi^{(j)}(0)=0$ for $0\le j\le
m-1$ and $\Phi^{(m)}(0)\ne0$\textup{;} then
\begin{equation*}
\absBig{\int_0^\infty e^{i\la\Phi(x)}a(x)\,dx}\le 
C(1+\la)^{-1/m}\quad\text{for all}\quad\la\geq 0,
\end{equation*}
provided the support of~$a$ is sufficiently small. The constant on
the right-hand side is independent of~$\la$ and $\Phi$.
\end{lem}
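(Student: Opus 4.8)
The plan is to reduce to the classical van der Corput estimate on an interval and then strip off the amplitude by a single integration by parts; since the result is entirely standard (this is exactly the form proved in \cite{sogg93} or \cite{stei93}), I would only need to record the structure of the argument. First I would dispose of the range $0\le\la\le 1$, where $\absbig{\int_0^\infty e^{i\la\Phi(x)}a(x)\,dx}\le\norm{a}_{L^1}\le C$ trivially, so that it remains to treat $\la\ge 1$ and to replace $(1+\la)^{-1/m}$ by $\la^{-1/m}$. By continuity of $\Phi^{(m)}$ and $\Phi^{(m)}(0)\ne0$, shrinking $\supp a$ (a neighbourhood of $0$) guarantees a lower bound $\abs{\Phi^{(m)}(x)}\ge c>0$ on $\supp a$; the uniformity in $\Phi$ asserted in the lemma is then to be understood relative to a fixed such $c$ and fixed upper bounds for $\Phi^{(2)},\dots,\Phi^{(m)}$ on $\supp a$, which is the situation in every application in this paper (e.g.\ to the family $h_\mu$ in the preceding section).

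The core step is a sublemma proved by induction on $m$: if $\psi$ is real-valued and $C^m$ on an interval $I$ with $\abs{\psi^{(m)}}\ge 1$ on $I$ --- and, in the case $m=1$, with $\psi'$ monotone --- then $\absbig{\int_I e^{i\la\psi(x)}\,dx}\le c_m\la^{-1/m}$ with $c_m$ an absolute constant, independent of $I$, $\psi$ and $\la$. For $m=1$ I would integrate by parts via $e^{i\la\psi}=(i\la\psi')^{-1}\frac{d}{dx}e^{i\la\psi}$: the boundary terms are $O(\la^{-1})$ and the remaining integral is bounded by $\la^{-1}$ times the total variation of $1/\psi'$, which is at most $2$ by monotonicity. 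For the passage from $m-1$ to $m$: $\psi^{(m-1)}$ is strictly monotone (its derivative does not vanish), hence has at most one zero $x_0$; on $I\setminus(x_0-\de,x_0+\de)$ the mean value theorem gives $\abs{\psi^{(m-1)}}\ge\de$, so on each of the (at most two) remaining pieces $J$ the rescaled phase $\psi/\de$ satisfies the inductive hypothesis at frequency $\la\de$, contributing $c_{m-1}(\la\de)^{-1/(m-1)}$, while the excised interval contributes $2\de$; taking $\de=\la^{-1/m}$ balances the two, since $(\la\de)^{-1/(m-1)}=\la^{-1/m}$, and yields the bound with $c_m=2c_{m-1}+2$.

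Finally, to restore the amplitude I would set $G(x):=\int_0^x e^{i\la\Phi(t)}\,dt$; applying the sublemma (with $\psi=\Phi^{(m)}(0)^{-1}\Phi$, absorbing $c$ into the constant) over every $[0,x]$ with $x\in\supp a$ gives $\abs{G(x)}\le C\la^{-1/m}$ uniformly in $x$. Then
\begin{equation*}
\int_0^\infty e^{i\la\Phi(x)}a(x)\,dx=\int_0^\infty G'(x)a(x)\,dx=\bigl[G(x)a(x)\bigr]_0^\infty-\int_0^\infty G(x)a'(x)\,dx=-\int_0^\infty G(x)a'(x)\,dx,
\end{equation*}
using $G(0)=0$ and that $a$ is compactly supported, whence the left-hand side is $\le C\la^{-1/m}\norm{a'}_{L^1}\le C\la^{-1/m}$; combining with the case $\la\le 1$ gives the stated bound, with constant depending only on $c$, on $\norm{a'}_{L^1}$, and on the uniform derivative bounds on $\Phi$. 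The main obstacle is purely the bookkeeping in the induction --- keeping $c_m$ independent of $I$, $\psi$ and $\la$ through the optimization of $\de$ --- together with making precise in what sense the constant is ``independent of $\Phi$''; no genuinely hard analysis is involved.
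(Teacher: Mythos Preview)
Your proof is correct and follows exactly the classical argument found in Sogge \cite{sogg93} and Stein \cite{stei93}. The paper itself does not prove this lemma at all: it simply states it as a recalled result with citations to those same two references, so there is no ``paper's own proof'' to compare against --- your outline is precisely the standard proof the authors have in mind.
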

If $m=1$, then the same result holds provided~$\Phi'(x)$ is
monotonic on the support of~$a$.

\subsection{Real-valued phase function}
In the case when the convexity condition holds the estimate of
Theorem~\ref{THM:sugimoto/randolargument} is given in terms of the
constant~$\ga$; as in the case of the homogeneous operators (see
Introduction, Section~\ref{SEC:homogoperators}) we introduce
an analog to this in the case where the convexity condition does
not hold.
Let~$\Si$ be a hypersurface in~$\R^n$; we set
\begin{equation*}
\ga_0(\Si):=\sup_{\si\in\Si}\inf_P\ga(\Si;\si,P)\le \ga(\Si)\,
\end{equation*}
where $\ga(\Si;\si,P)$ is as in Definition~\ref{DEF:ga(Si)}.

An important result for calculating this value is the following:
\begin{lem}[\cite{sugi96}]\label{LEM:propertyofgamma0} Suppose
$\Si=\set{(y,h(y)):y\in U}$, $h\in C^\infty(U)$, $U\subset\R^{n-1}$
is an open set, and let
\begin{equation*}
F(\rho)=h(\eta+\rho\om)-h(\eta)-\rho\grad h(\eta)\cdot\om
\end{equation*}
where $\eta\in U$, $\om\in {\mathbb S}^{n-2}$. Taking
$\si=(\eta,h(\eta))\in\Si$, $\om\in {\mathbb S}^{n-2}$ and
\begin{equation*}
P=\set{\si+s(\om,\grad h(\eta)\cdot\om)+t(-\grad
h(\eta),1)\in\R^n:s,t\in\R}\,,
\end{equation*}
then
\begin{equation*}
\ga(\Si;\si,P)=\min\set{k\in\N:F^{(k)}(0)\ne0}=:\ga(h;\eta,\om)\,.
\end{equation*}
Therefore,
\begin{gather*}
\ga(\Si)=\sup_\eta\sup_\om\ga(h;\eta,\om),\\
\ga_0(\Si)=\sup_\eta\inf_\om\ga(h;\eta,\om)\,.
\end{gather*}
\end{lem}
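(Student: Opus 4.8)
The plan is to convert the geometric statement into a one–variable calculus computation by choosing adapted coordinates on the plane $P$. First I would fix $\si=(\eta,h(\eta))$ and record that the normal to $\Si$ at $\si$ is $N:=(-\grad h(\eta),1)$, while the tangent space $T_\si$ is spanned by the vectors $(e_i,\pa_ih(\eta))$, $i=1,\dots,n-1$. Given an arbitrary $2$--plane $P$ through $\si$ containing $N$, I would split a second spanning vector of $P$ into its tangential and normal parts; since $P$ contains $N$ it then also contains the (nonzero) tangential part, which has the form $\sum_i a_i(e_i,\pa_ih(\eta))$. Normalising $a$ gives $\om\in{\mathbb S}^{n-2}$ with that vector equal to $v:=(\om,\grad h(\eta)\cdot\om)$, so $P$ is exactly the plane displayed in the statement; conversely every $\om\in{\mathbb S}^{n-2}$ produces a plane through the normal, and $\pm\om$ give the same plane. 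Hence ranging over all admissible $P$ is the same as ranging over $\om$, which is what will yield the $\sup_\om$ and $\inf_\om$ formulae for $\ga(\Si)$ and $\ga_0(\Si)$ at the end.

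Next I would use $(s,t)$ as affine coordinates on $P$, a point being $\si+sv+tN$. A short linear computation shows $sv+tN$ lies in $T_\si$ precisely when $t(1+|\grad h(\eta)|^2)=0$, i.e.\ when $t=0$; thus $T_\si\cap P$ is the line $\{t=0\}$. A point $(y,h(y))\in\Si$ with $y$ near $\eta$ lies on $P$ iff $y=\eta+s\om-t\grad h(\eta)$ and $h(y)=h(\eta)+s\,\grad h(\eta)\cdot\om+t$; substituting the first relation into the second, the curve $\Si\cap P$ is cut out near $\si$ by $G(s,t)=0$, where
\begin{equation*}
G(s,t):=h(\eta+s\om-t\grad h(\eta))-h(\eta)-s\,\grad h(\eta)\cdot\om-t .
\end{equation*}
Since $\pa_tG(0,0)=-(1+|\grad h(\eta)|^2)\ne0$, the implicit function theorem presents $\Si\cap P$ near $\si$ as a graph $t=\phi(s)$ with $\phi(0)=0$, and (as $T_\si\cap P=\{t=0\}$) also $\phi'(0)=0$; the order of contact $\ga(\Si;\si,P)$ is by definition the order of vanishing of $\phi$ at $s=0$.

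The final step identifies this order with that of $F$. I would note $G(s,0)=F(s)$ and write, by Taylor's formula with integral remainder in the $t$--variable, $G(s,t)=F(s)+t\,g(s,t)$ with $g$ smooth and $g(0,0)=\pa_tG(0,0)\ne0$. Inserting $t=\phi(s)$ into $G=0$ gives $\phi(s)=-F(s)/g(s,\phi(s))=-F(s)u(s)$ with $u$ smooth near $0$ and $u(0)\ne0$; by the Leibniz rule the orders of vanishing at $0$ of $\phi$ and of $F$ coincide. Hence $\ga(\Si;\si,P)=\min\{k:F^{(k)}(0)\ne0\}=\ga(h;\eta,\om)$, which is $\ge2$ because $F(0)=F'(0)=0$. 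Taking $\sup$ over $P$, equivalently over $\om$, and then over $\si$, equivalently over $\eta$, produces $\ga(\Si)=\sup_\eta\sup_\om\ga(h;\eta,\om)$ and, with $\inf$ in place of the inner $\sup$, the formula for $\ga_0(\Si)$.

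I expect the only mildly delicate points to be the two reductions — verifying that an arbitrary $2$--plane containing the normal has the asserted parametric form in terms of some $\om\in{\mathbb S}^{n-2}$, and that $\Si\cap P$ is genuinely a graph over the $s$--axis near $\si$ — handled respectively by the tangential/normal splitting and by the implicit function theorem; everything else is the routine factorisation $\phi=-Fu$. There is no real analytic obstacle here, only the bookkeeping needed to make the geometric notion of order of contact coincide with the order of vanishing of the explicit function $F$.
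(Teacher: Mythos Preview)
Your proof is correct. The paper does not actually prove this lemma: it is stated with a citation to \cite{sugi96} and used without argument, so there is nothing in the paper to compare your approach against. Your reduction---parametrising $P$ by $(s,t)\mapsto\si+sv+tN$, identifying $T_\si\cap P$ as $\{t=0\}$, writing $\Si\cap P$ implicitly as $G(s,t)=0$, and then factoring $G(s,t)=F(s)+t\,g(s,t)$ with $g(0,0)\ne0$ to match the vanishing orders of $\phi$ and $F$---is the natural way to unwind the definition of order of contact, and all the computations check out (in particular $v\cdot N=0$, so the $(s,t)$ frame is orthogonal and no distortion of the contact order can occur).
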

Now we are in a position to state and prove the result for
oscillatory integrals with a real-valued phase function that does
not satisfy the earlier introduced convexity condition. This is a
parameter dependent version of Corollary
\ref{cor:nonconvex}.
\begin{thm}\label{THM:noncovexargument}
Let $a(\xi)$ be a symbol of
order~$\frac{1}{\ga_0}-n$ of type $(1,0)$ on~$\R^n$.
Let $\tau:\R^n\to\R$ be smooth on $\supp a$, 
set $\ga_0:=\sup_{\la>0}\ga_0(\Si_\la(\tau))$ and assume it is
finite; furthermore, on $\supp a$, we also assume the following
conditions\textup{:}
\begin{enumerate}[label=\textup{(}\textup{\roman*}\textup{)}]
\item for all multi-indices $\al$ there exists a constant
$C_\al>0$ such that
\begin{equation*}
\abs{\pa_\xi^\al\tau(\xi)}\le
C_\al\brac{\xi}^{1-\abs{\al}};
\end{equation*}
\item there exist constants $M,C>0$ such that for all
$\abs{\xi}\ge M$ we have $\abs{\tau(\xi)}\ge C\abs{\xi}$\textup{;}
\item there exists a constant $C_0>0$ such that
$\abs{\pa_\om\tau(\la\om)}\ge C_0$ for all $\om\in
\Snm$\textup{,} $\la>0$\textup{;}
\item there exists a constant $R_1>0$ such that\textup{,} for all
$\la>0$\textup{,}
\[\frac{1}{\la}\Si_\la(\tau)\subset B_{R_1}(0)\,.\]
\end{enumerate}
Then\textup{,} the following estimate holds for all
$R\ge0$\textup{,} $x\in\R^n$\textup{,} $t>1$\textup{:}
\begin{equation*}
\absBig{\int_{\R^n}e^{i(x\cdot\xi+ \tau(\xi)t)}a(\xi)
g_R(\xi)\,d\xi} \le Ct^{-\frac{1}{\ga_0}}\,,
\end{equation*}
where $g_R(\xi)$ is as given in~\eqref{EQ:eqnforg_R} and $C>0$ is
independent of $R$.
\end{thm}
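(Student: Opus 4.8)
The plan is to follow the proof of Theorem~\ref{THM:sugimoto/randolargument} essentially line by line, replacing the convexity‑based oscillatory integral estimate of Theorem~\ref{THM:oscintthm} by an application of the van der Corput Lemma~\ref{LEM:VDCmain} in a single, well‑chosen direction, and replacing the contact index $\ga$ by $\ga_0$ throughout.

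First I would carry out the same preliminary reductions as for Theorem~\ref{THM:sugimoto/randolargument}: by adding a constant to $\tau$ (which multiplies the integrand only by a unimodular factor) we may assume $\tau\ge0$ everywhere or $\tau\le0$ everywhere; then, inserting the wave‑front cut‑off $\cutoffWF\bigl(t^{-1}x+\grad\tau(\xi)\bigr)$, we split the integral into a part $I_1$ concentrated near $\{\grad_\xi[x\cdot\xi+\tau(\xi)t]=0\}$ and a part $I_2$ away from it. The bound for $I_2$ is precisely Lemma~\ref{LEM:estawayfromwavefront} (its proof uses only hypothesis~(i) and the $R$‑uniform symbol bounds on $g_R$, so no convexity is needed); for $a$ of order $j=\frac1{\ga_0}-n$ this gives $|I_2|\le C_l t^{-l}$ for every integer $l\ge1$, hence $|I_2|=O(t^{-1})=O(t^{-1/\ga_0})$ for $t>1$ since $\ga_0\ge2$. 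A partition of unity into narrow cones reduces $I_1$ to an integral $I_1'$ over a single narrow cone $K_1$ containing $e_n$, inside which $\frac1\la\Si_\la(\tau)$ is a non‑degenerate graph $\{(y,h_\la(y)):y\in U\}$ over a bounded open set $U\subset\R^{n-1}$ by the Implicit Function Theorem (hypothesis~(iii)); the only difference from the convex case is that $h_\la$ need not be concave. Everything in the convex proof that does not use concavity carries over verbatim: the choice of $r$ and of the cone breadth making $|x_n|\ge ct$ on the support of $I_1'$ and making all the changes of variables legitimate (including the existence of the point $z(\la)$ realizing the wave‑front direction), the estimate $|\pa_{\xi_n}\tau(\la y,\la h_\la(y))|\ge C$ and thence $|d\xi/d(\la,y)|\le C\la^{n-1}$, the uniform bounds on all $y$‑derivatives of $h_\la$ (Lemma~\ref{LEM:boundforderivsofh_la}), and the substitutions $\xi\mapsto(\la y,\la h_\la(y))$ and then $\latil=\la x_n$. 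Carrying these out with $\tfrac{n-1}{\ga}$ replaced by $\tfrac1{\ga_0}$ and putting $\widetilde a:=a\,g_R\,\cutoffcone_1$ (a symbol of order $\tfrac1{\ga_0}-n$) and $a_0(\xi):=\widetilde a(\xi)|\xi|^{\,n-1/\ga_0}\in S^0_{1,0}$, one arrives at
\begin{equation*}
|I_1'(t,x)|\le C t^{-1/\ga_0}|\xtil_n|^{-1/\ga_0}\int_0^\infty\absBig{I\Bigl(\latil,\tfrac{\latil}{\xtil_n t};z\bigl(\tfrac{\latil}{\xtil_n t}\bigr)\Bigr)\,\cutoffadjust\Bigl(\tfrac{\latil}{\curlyR\xtil_n t}\Bigr)\latil^{\frac1{\ga_0}-1}}\,d\latil,
\end{equation*}
with, exactly as before,
\begin{equation*}
I(\la,\mu;z)=\int_{\R^{n-1}}e^{i\la\phasefn(y,\mu;z)}\,a_0(\mu y,\mu h_\mu(y))\,b(y)\,dy,\qquad \phasefn(y,\mu;z)=h_\mu(y)-h_\mu(z)-(y-z)\cdot\grad_y h_\mu(z),
\end{equation*}
and $b\in C_0^\infty(U)$. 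So it suffices to prove $|I(\la,\mu;z)|\le C\la^{-1/\ga_0}$ uniformly in $\mu>0$, $z\in U$ and $\la\ge1$; plugging this back in and substituting $\nu=\latil/(\curlyR\xtil_n t)$ reduces the $\latil$‑integral to $\int_0^\infty\nu^{-1}\cutoffadjust(\nu)\,d\nu<\infty$ ($\cutoffadjust$ being compactly supported away from the origin and $|\xtil_n|\ge c$), yielding $|I_1'|\le Ct^{-1/\ga_0}$ with constant independent of $R$.

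The heart of the matter, and the only genuinely new step, is the estimate $|I(\la,\mu;z)|\le C\la^{-1/\ga_0}$. Here $y=z$ is a critical point of $\phasefn(\cdot,\mu;z)$, and by Lemma~\ref{LEM:propertyofgamma0} together with the definition $\ga_0=\sup_{\la>0}\ga_0(\Si_\la(\tau))$ we have $\inf_{\om\in{\mathbb S}^{n-2}}\ga(h_\mu;z,\om)\le\ga_0\bigl(\Si_\mu(\tau)\bigr)\le\ga_0$, so there is a direction $\om_0=\om_0(z,\mu)$ and an integer $m_0=m_0(z,\mu)$ with $2\le m_0\le\ga_0$ such that $\pa_\rho^{m_0}[h_\mu(z+\rho\om_0)]\big|_{\rho=0}\ne0$. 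Rotating the $y$‑variable so that $\om_0$ becomes the first coordinate axis through $z$ and writing $y=(y_1,y'')$, we have $\pa_{y_1}^{m_0}\phasefn\bigl((y_1,y''),\mu;z\bigr)=\pa_{y_1}^{m_0}h_\mu\bigl((y_1,y'')\bigr)$ (the affine part of $\phasefn$ drops out for $m_0\ge2$), which is nonzero at $y=z$; by the uniform bounds of Lemma~\ref{LEM:boundforderivsofh_la} and continuity it is therefore bounded below in absolute value by a positive constant throughout $U$, provided the cone $K_1$ (hence $U$) was chosen narrow enough. Applying the van der Corput Lemma~\ref{LEM:VDCmain} in the variable $y_1$ — in its standard strengthened form in which, for $m_0\ge2$, only a lower bound on $|\pa_{y_1}^{m_0}\phasefn|$ over the (small) support is required, the amplitude $a_0(\mu y,\mu h_\mu(y))b(y)$ having $C^1$‑norm bounded uniformly in $\mu,z,y''$ exactly as in condition~\ref{COND:a0derivsbdd} of the proof of Theorem~\ref{THM:sugimoto/randolargument} — gives $\bigl|\int e^{i\la\phasefn((y_1,y''),\mu;z)}\,a_0 b\,dy_1\bigr|\le C\la^{-1/m_0}\le C\la^{-1/\ga_0}$ for $\la\ge1$, uniformly in $y''$; integrating the remaining bounded $y''$‑variable over a bounded set gives $|I(\la,\mu;z)|\le C\la^{-1/\ga_0}$.

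The main obstacle is the uniformity of the constants in the parameters $\mu$ (equivalently $R$), $z$ and the rotation $\om_0$: the van der Corput constant depends on the lower bound for $|\pa_{y_1}^{m_0}h_\mu|$ near $z$, which has to be controlled uniformly over the whole family of level surfaces $\{\frac1\mu\Si_\mu(\tau)\}_{\mu>0}$. As in the treatment of the analogous point (condition~\ref{COND:convfn1inthm}) in the proof of Theorem~\ref{THM:sugimoto/randolargument}, this is secured by the compactness of ${\mathbb S}^{n-2}$ together with hypotheses~(i) and~(iv) — which confine all $\frac1\mu\Si_\mu(\tau)$ to a fixed ball and give uniform bounds on every derivative of the $h_\mu$ — and the finiteness of $\ga_0$. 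A secondary technical point, also inherited from the convex proof, is that $r$ and the breadth of the cones must be fixed carefully so as to guarantee $|x_n|\ge ct$ and the validity of the changes of variables. Once $|I(\la,\mu;z)|\le C\la^{-1/\ga_0}$ is established with constants independent of the parameters, the remainder of the argument is purely computational and identical to the convex case, completing the estimate $|I_1(t,x)|\le Ct^{-1/\ga_0}$ and hence the theorem.
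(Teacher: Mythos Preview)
Your approach is essentially that of the paper, but one step is unjustified. You assert that the existence of the point $z(\la)\in U$ realising the wave-front normal direction ``carries over verbatim'' from the convex proof; however, in that proof convexity of $\Si_\la'$ is invoked precisely at this point---it guarantees that the Gauss map of the level surface covers a neighbourhood of $-\gauss(\xi_\la)$, so that a nearby direction $-x/|x|$ has a preimage in~$U$. Without convexity the Gauss map may fold, and there is no reason for such a $z(\la)$ to lie in $U$; your phase $\phasefn(y,\mu;z)=h_\mu(y)-h_\mu(z)-(y-z)\cdot\grad h_\mu(z)$ is then not well-defined.

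The paper sidesteps this entirely by never introducing $z(\la)$. After the same changes of variable it keeps the phase in the form $\phasefn(y,\mu;z)=z\cdot y+h_\mu(y)$ with $z=\xtil'/\xtil_n$ merely a \emph{parameter} in $\R^{n-1}$ (not a point of $U$), and applies van der Corput at an arbitrary base point of $U$. Since for $k\ge2$ one has $\pa_{y_1}^k\phasefn=\pa_{y_1}^k h_\mu$ in either formulation, the bound $|I|\le C\la^{-1/\ga_0}$ follows from $\inf_\om\ga(h_\mu;\eta,\om)\le\ga_0$ at \emph{every} $\eta\in U$, making any critical point superfluous. Your argument is easily repaired by dropping the reference to $z(\la)$ and working with this phase instead; the remainder of your proof then coincides with the paper's.
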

\begin{proof}
We follow the proof of Theorem~\ref{THM:sugimoto/randolargument}
as far as possible, and shall show how the absence of the
convexity condition affects the estimate. Thus, as in the proof
of  Theorem~\ref{THM:sugimoto/randolargument},
we may first assume, without loss of generality, that either
$\tau(\xi)\ge0$ for all $\xi\in\R^n$ or $\tau(\xi)\le0$ for all
$\xi\in\R^n$. We will always work on the support of $a$, so
by writing $\xi\in\Rn$ we will mean $\xi\in\supp a$.

Divide the integral into two parts:
\begin{gather*}
I_1(t,x):=\int_{\R^n}e^{i(x\cdot\xi+\tau(\xi)t)}a(\xi)g_R(\xi)
\cutoffWF\big(t^{-1}x+\grad\tau(\xi)\big)\,d\xi\,,\\
I_2(t,x):=\int_{\R^n}e^{i(x\cdot\xi+\tau(\xi)t)}a(\xi)g_R(\xi)
(1-\cutoffWF)\big(t^{-1}x+\grad\tau(\xi)\big)\,d\xi\,,
\end{gather*}
where $\cutoffWF\in C^\infty_0(\R^n)$, $0\le\cutoffWF(y)\le1$,
which is identically~$1$ in the ball of radius $r>0$ centred at
the origin, $B_r(0)$, and identically~$0$ outside the ball of
radius~$2r$, $B_{2r}(0)$. By Lemma~\ref{LEM:estawayfromwavefront}
(which does not require the phase function to satisfy the
convexity condition), we have
\begin{equation*}
\abs{I_2(t,x)}\le C_{r}t^{-1/\ga_0}\;\text{ for all }t>1.
\end{equation*}
To estimate $\abs{I_1(t,x)}$ we introduce, as before, a partition
of unity $\set{\cutoffcone_\ell(\xi)}_{\ell=1}^L$ and restrict
attention to
\begin{equation*}
I_1'(t,x)=\int_{\R^n}e^{i(x\cdot\xi+\tau(\xi)t)}a(\xi)g_R(\xi)
\cutoffcone_1(\xi)\cutoffWF
\big(t^{-1}x+\grad\tau(\xi)\big)\,d\xi\,,
\end{equation*}
where $\cutoffcone_1(\xi)$ is supported in a 
sufficiently narrow cone,~$K_1$, that
contains~$e_n=(0,\dots,0,1)$. Parameterise this cone in the same
way as above: with $U\subset\R^{n-1}$,
\begin{gather*}
K_1=\begin{cases}\set{(\la y,\la h_\la(y)):\la>0,\,y\in U}&\text{
if
}\tau(\xi)\ge0\text{ for all }\xi\in\R^n\\
\set{(\la y,\la h_\la(y)):\la<0,\,y\in U}&\text{ if
}\tau(\xi)\le0\text{ for all }\xi\in\R^n\,.
\end{cases}
\end{gather*}
Here the Implicit Function Theorem ensures the existence of a
smooth function $h_\la:U\to\R$ for each~$\la>0$, but there is one
major difference: the functions~$h_\la$ are not necessarily
concave, in contrast to the earlier proof. Using the change of
variables $\xi\mapsto(\la y,\la h_\la(y))$---note that
\begin{equation*}
0<C\le\absBig{\frac{d\xi}{d(\la, y)}}\le C\la^{n-1}
\end{equation*}
by the same argument as in the proof of
Theorem~\ref{THM:sugimoto/randolargument}, providing the width
of~$K_1$ is taken to be sufficiently small---gives
\begin{multline*}
I_1'(t,x)=\int_{0}^\infty\int_U e^{i[\la x'\cdot y+\la
x_nh_\la(y)+\tau(\la y,\la h_\la(y))t]}a(\la y,\la
h_\la(y))\\g_R(\la y,\la h_\la(y)) \cutoffcone_1(\la y,\la
h_\la(y))\cutoffWF \big(t^{-1}x+\grad\tau(\la y,\la
h_\la(y))\big)\frac{d\xi}{d(\la, y)}\,dy\,d\la\,.
\end{multline*}
Once again, let $\cutoffadjust\in C_0^\infty(\R)$ so that
$g_R(\xi)=g_R(\xi)\cutoffadjust(\tau(\xi)/\curlyR)$ (where
$\curlyR=\max(R,1)$) and
$\widetilde{a}(\xi)=a(\xi)g_R(\xi)\cutoffcone_1(\xi)$, which is a
symbol of order~$\frac{1}{\ga_0}-n$ supported in~$K_1$ and with
all the constants in the symbolic estimates independent of~$R$.
So, recalling that $\tau(\la y,\la h_\la(y))=\la$ and writing
$h(\la,y)\equiv h_\la(y)$, we get
\begin{align*}
I_1'(t,x)&=\int_{0}^\infty\int_U e^{i\la[x'\cdot
y+x_nh_\la(y)+t]}\widetilde{a}(\la y,\la h_\la(y))\\
&\qquad\qquad\qquad \cutoffadjust(\la/\curlyR)\cutoffWF
\big(t^{-1}x+\grad\tau(\la
y,\la h_\la(y))\big)\frac{d\xi}{d(\la,y)}\,dy\,d\la\\
=&\int_{0}^\infty\int_U e^{i\latil[\frac{\xtil'}{\xtil_n}\cdot
y+h\big(\frac{\latil}{\xtil_nt},y\big)+\xtil_n^{-1}]}
\widetilde{a}\Big(\textstyle\frac{\latil}{\xtil_nt}
y,\frac{\latil}{\xtil_nt} h\Big(\frac{\latil}{\xtil_nt},y\Big)\Big)\\
&\quad\cutoffadjust\Big(\textstyle\frac{\latil}{\curlyR\xtil_nt}\Big)
\cutoffWF\Big(\xtil+\grad\tau\Big(\frac{\latil}{\xtil_nt}
y,\frac{\latil}{\xtil_nt}
h\Big(\frac{\latil}{\xtil_nt},y\Big)\Big)\Big)
\displaystyle\frac{d\xi}{d(\la,y)}
\xtil_n^{-1}t^{-1}\,dy\,d\latil\,,
\end{align*}
where $x=t\widetilde{x}$ and
$\latil=\la x_n=\la\xtil_nt$. Thus, using
$\abs{\cutoffWF(\eta)}\le1$, we have
\begin{equation}\label{EQ:I'inrealnonconvex}
\abs{I_1'(t,x)}\le C\abs{\xtil_n}^{-1/\ga_0}t^{-1/\ga_0}
\int_0^\infty\absBig{
I\Big(\latil,\textstyle\frac{\latil}{\xtil_nt};\xtil_n^{-1}\xtil\Big)
\cutoffadjust\Big(\textstyle\frac{\latil}{\curlyR\xtil_nt}\Big)
\latil^{-1+(1/\ga_0)}}\,d\latil
\end{equation}
where
$$
I\Big(\latil,\textstyle\frac{\latil}{\xtil_nt};
\xtil_n^{-1}\xtil'\Big)\\
=\displaystyle\int_{U}e^{i\latil\big[\xtil_n^{-1}\xtil'\cdot
y+h\big(\frac{\latil}{\xtil_nt},y\big)\big]}
\widetilde{a}\Big(\textstyle\frac{\latil}{\xtil_nt}
y,\frac{\latil}{\xtil_nt}
h\Big(\frac{\latil}{\xtil_nt},y\Big)\Big)
\Big(\frac{\latil}{\abs{\widetilde{x}_n}t}\Big)^{n-\frac{1}{\ga_0}}\,dy\,.
$$
At this point, we diverge from the proof of the earlier theorem
since we cannot apply Theorem~\ref{THM:oscintthm}; instead, note
that, for some $b\in C_0^\infty(\R^{n-1})$ with support 
contained in $U$, we have
\begin{multline*}
\absBig{I\Big(\latil,\textstyle\frac{\latil}{\xtil_nt};
\xtil_n^{-1}\xtil'\Big)} \le
\displaystyle\int_{\R^{n-2}}\absBig{\int_{\R}
e^{i\latil\big[\xtil_n^{-1}\xtil'\cdot
y+h\big(\frac{\latil}{\xtil_nt},y\big)\big]}\\
\widetilde{a}\Big(\textstyle\frac{\latil}{\xtil_nt}
y,\frac{\latil}{\xtil_nt}
h\Big(\frac{\latil}{\xtil_nt},y\Big)\Big)
\Big(\frac{\latil}{\abs{\widetilde{x}_n}t}\Big)^{n-\frac{1}{\ga_0}}
b(y)\,dy_1}\,dy'\,.
\end{multline*}
We wish to apply the van der Corput Lemma, Lemma~\ref{LEM:VDCmain},
to the inner integral. Set $\phasefn(y,\mu;z):=z\cdot y+h_\mu(y)$,
which is real-valued, and consider the integral
\begin{equation*}
\int_{\R}e^{i\la\phasefn(y,\mu;z)}a_0(y,\mu)b(y)\,dy_1
\end{equation*}
where $a_0(y,\mu):=\mu^{n-(1/\ga_0)}\widetilde{a}(\mu y,\mu h_\mu(y))$.
Recall that
\begin{equation*}
\Si_\mu=\set{(y,h_\mu(y)):y\in U}\,,
\end{equation*}
so by Lemma~\ref{LEM:propertyofgamma0},
\begin{equation*}
\min\set{k\in\N:\pa_{y_1}^{k}\phasefn(y,\mu;z)\big|_{y_1=0}\ne0}
=\ga(h_\mu;0,(1,0,\dots,0))=:m\,.
\end{equation*}
Fixing the size of~$U$ so that
$\abs{\pa_{y_1}^{(m)}\phasefn(y,\mu;z)}\ge\ep>0$ for all $y\in U$
ensures that the hypotheses of Lemma~\ref{LEM:VDCmain} are
satisfied. Thus, since the support of~$b$ is compact in~$\R^{n-1}$,
is contained in $U$,
and~$a_0$ is smooth, we obtain
\begin{equation*}
\absBig{\int_{\R}e^{i\la\phasefn(y,\mu;z)}a_0(y,\mu)b(y)\,dy_1}\le
C\la^{-1/m}\,.
\end{equation*}
Carry out a suitable change of coordinates so that
$m=\inf_\om\ga(h_\mu;0,\om)$ (this is possible due to the rotational
invariance of all properties used); then, since $m\le\ga_0$ by
definition, we have
\begin{equation*}
\absBig{I\Big(\latil,\textstyle\frac{\latil}{\xtil_nt};
\xtil_n^{-1}\xtil'\Big)} \le C\latil^{-1/\ga_0}\,,
\end{equation*}
for all~$\latil$ such that $\frac{\latil}{\curlyR\xtil_nt}\in\supp
G$ (this is to ensure $\latil$ is away from the origin). Combining
this with~\eqref{EQ:I'inrealnonconvex} then gives the required
estimate:
\begin{align*}
\abs{I_1'(t,x)}&\le
C\abs{\xtil_n}^{-{1}/{\ga_0}}t^{-{1}/{\ga_0}}
\int_0^\infty\absBig{\latil^{-1}
G\Big(\textstyle\frac{\latil}{\curlyR\xtil_nt}\Big)}\,d\latil\\
=&C\abs{\xtil_n}^{-{1}/{\ga_0}}t^{-{1}/{\ga_0}}
\int_0^\infty (\nu\curlyR \xtil_nt)^{-1}
G(\nu)\curlyR\xtil_nt\,d\nu\le Ct^{-\frac{1}{\ga_0}}\,.\qedhere
\end{align*}
\end{proof}

\section{Decay of solutions to the Cauchy problem}
\label{SEC:Cauchy}

Recall that we begin with the Cauchy problem with solution
$u=u(t,x)$ 
\begin{equation}\label{EQ:standardCP(ch4)}
\left\{\begin{aligned}& \pat^m
u+\sum_{j=1}^{m}P_{j}(\pax)\pat^{m-j}u+
\sum_{l=0}^{m-1}\sum_{\abs{\al}+r=l}
c_{\al,r}\pax^\al\pat^ru=0,\quad t>0,\\
&\pat^lu(0,x)=f_l(x)\in C_0^{\infty}(\R^n),\quad l=0,\dots,m-1,\;
x\in\R^n\,,
\end{aligned}\right.
\end{equation}
where $P_j(\xi)$, the polynomial obtained from the operator
$P_j(\pax)$ by replacing each derivative
$D_{x_k}=\frac{1}{i}\partial_{x_k}$ 
by~$\xi_k$, is a constant
coefficient homogeneous polynomial of order~$j$, and
the~$c_{\al,r}$ are constants. In this section we will
prove different parts of Theorem \ref{THM:overallmainthm}.

\subsection{Representation
of the solution}

Applying the partial Fourier transform with respect to~$x$ 
yields an
ordinary differential equation for $\widehat{u}=
\widehat{u}(t,\xi):=\int_{\R^n}e^{-ix\cdot\xi}u(t,x)\,dx$:
\begin{subequations}
\begin{align}\label{EQ:ftcauchyprob}
\pat^m\widehat{u}+\sum_{j=1}^mP_j(\xi)\pat^{m-j}\widehat{u}+
\sum_{l=0}^{m-1}\sum_{\abs{\al}+r=l}c_{\al,r}
\xi^\al\pat^r\widehat{u}=0\,,
\\
\pat^l\widehat{u}(0,\xi)=\widehat{f_l}(\xi),\quad
l=0,\dots,m-1,\label{EQ:ftcauchydata}
\end{align}
where $(t,\xi)\in [0,\infty)\times\R^n$ and $P_j(\xi)$
are symbols of $P_j(D_x)$. Let $E_j=E_j(t,\xi)$,
$j=0,\dots,m-1$, be the solutions to~\eqref{EQ:ftcauchyprob} with
initial data
\begin{equation}\label{EQ:initialdataforEj}
\pat^lE_j(0,\xi)=\begin{cases}1\quad\text{ if }l=j,\\
0\quad\text{ if }l\ne j.\end{cases}
\end{equation}
\end{subequations}
Then the solution~$u$ of~\eqref{EQ:standardCP(ch4)} can be written
in the form
\begin{equation}\label{EQ:repforu}
u(t,x)=\sum_{j=0}^{m-1}(\FT^{-1}E_j\FT f_j)(t,x),
\end{equation}
where $\FT$ and $\FT^{-1}$ represent the partial Fourier transform
with respect to $x$ and its inverse, respectively.

Now, as~\eqref{EQ:ftcauchyprob},~\eqref{EQ:ftcauchydata} is
the Cauchy problem for a linear ordinary differential equation, we
can write, denoting the characteristic roots
of~\eqref{EQ:standardCP(ch4)} by $\tau_1(\xi),\dots,\tau_m(\xi)$,
\begin{equation*}
E_j(t,\xi)=\sum_{k=1}^mA^k_j(t,\xi)e^{i\tau_k(\xi)t},
\end{equation*}
where $A^k_j(t,\xi)$ are polynomials in~$t$ whose coefficients
depend on~$\xi$. Moreover, for each $k=1,\dots,m$ and
$j=0\dots,m-1$, the $A_j^k(t,\xi)$ are independent of~$t$ at
points of the (open) set $\set{\xi\in\R^n:\tau_k(\xi)\ne
\tau_l(\xi)\,\forall\,l\ne k}$; when this is the case, we write
$A_j^k(t,\xi)\equiv A_j^k(\xi)$. In particular,  
there exists $M>0$ such that if
$\abs{\xi}\geq M$, the roots are pairwise distinct. For $A_j^k(\xi)$,
we have the following properties:
\begin{lem}\label{LEM:ordercoeff}
Suppose $\xi\in S_k:=\set{\xi\in\R^n:\tau_k(\xi)\ne
\tau_l(\xi)\,\forall\,l\ne k}$\textup{;} then we have the
following formula\textup{:}
\begin{equation}\label{EQ:Ajkformula}
A_j^k(\xi)=\frac{(-1)^j\displaystyle\sideset{}{^k}\sum_{1\le
s_1<\dots<s_{m-j-1}\le
m}\prod_{q=1}^{m-j-1}\tau_{s_q}(\xi)}{\displaystyle\prod_{l=1,l\ne
k}^m(\tau_l(\xi)-\tau_k(\xi))}\;,
\end{equation}
where $\sum^k$ means sum over the range indicated excluding $k$.
Furthermore\textup{,} we have\textup{,} for each $j=0,\dots,m-1$
and $k=1,\dots,m$\textup{,}
\begin{enumerate}[label=\textup{(}\roman*\textup{)},leftmargin=*]
\item $A_j^k(\xi)$ is smooth in~$S_k$\textup{;}
\item $A_j^k(\xi)=O(\abs{\xi}^{-j})$ as $\abs{\xi}\to\infty$.
\end{enumerate}
\end{lem}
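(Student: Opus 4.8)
The starting point is the observation that, for fixed $\xi$, the function $t\mapsto E_j(t,\xi)$ solves the constant-coefficient ordinary differential equation $L(\pat,\xi)E_j=0$ with initial data $\pat^lE_j(0,\xi)=\delta_{lj}$, $l=0,\dots,m-1$, and that on $S_k$ the root $\tau_k(\xi)$ is a \emph{simple} root of $L(\cdot,\xi)$. The plan is therefore to factor
\begin{equation*}
L(\tau,\xi)=(\tau-\tau_k(\xi))B_k(\tau,\xi),\qquad B_k(\tau,\xi)=\frac{L(\tau,\xi)}{\tau-\tau_k(\xi)}=\sum_{r=0}^{m-1}b_r^k(\xi)\tau^r,
\end{equation*}
a monic polynomial of degree $m-1$ in $\tau$ whose roots are exactly $\{\tau_l(\xi):l\ne k\}$ and with $B_k(\tau_k(\xi),\xi)\ne0$ on $S_k$. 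Carrying out the division explicitly shows that each $b_r^k(\xi)$ is a polynomial in $\tau_k(\xi)$ and in the coefficients of $L$; since $\tau_k$ is smooth on $S_k$ by Proposition~\ref{PROP:ctyofroots}, the same holds for each $b_r^k$ and for $B_k(\tau_k(\xi),\xi)=\prod_{l\ne k}(\tau_k(\xi)-\tau_l(\xi))$, which already yields property~(i) once \eqref{EQ:Ajkformula} is in hand.

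To derive \eqref{EQ:Ajkformula} I would apply the operator $B_k(\pat,\xi)$ to $E_j$. Write $E_j(t,\xi)=A_j^k(\xi)e^{i\tau_k(\xi)t}+R_j(t,\xi)$, where $R_j$ is the part of the solution associated with the roots of $B_k(\cdot,\xi)$, i.e.\ a linear combination of functions $t^pe^{i\zeta t}$ with $\zeta$ a root of $B_k(\cdot,\xi)$ and $p$ strictly below the multiplicity of $\zeta$. A direct computation using $\pat=\frac1i\partial_t$ shows $B_k(\pat,\xi)R_j=0$, whereas $B_k(\pat,\xi)(e^{i\tau_k(\xi)t})=B_k(\tau_k(\xi),\xi)e^{i\tau_k(\xi)t}$; hence $w:=B_k(\pat,\xi)E_j=A_j^k(\xi)B_k(\tau_k(\xi),\xi)e^{i\tau_k(\xi)t}$. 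On the other hand $(\pat-\tau_k(\xi))w=L(\pat,\xi)E_j=0$, so $w(t)=w(0)e^{i\tau_k(\xi)t}$ with $w(0)=\sum_rb_r^k(\xi)\pat^rE_j(0,\xi)=b_j^k(\xi)$. Comparing, $A_j^k(\xi)=b_j^k(\xi)/B_k(\tau_k(\xi),\xi)$. Since $B_k(\tau,\xi)=\prod_{l\ne k}(\tau-\tau_l(\xi))$, one has $B_k(\tau_k(\xi),\xi)=(-1)^{m-1}\prod_{l\ne k}(\tau_l(\xi)-\tau_k(\xi))$, while $b_j^k(\xi)$ is the coefficient of $\tau^j$ in $\prod_{l\ne k}(\tau-\tau_l(\xi))$, i.e.\ $(-1)^{m-1-j}$ times the $(m-j-1)$-th elementary symmetric function of $\{\tau_l(\xi):l\ne k\}$; the surviving sign is $(-1)^{m-1-j}/(-1)^{m-1}=(-1)^j$, giving exactly \eqref{EQ:Ajkformula}. (Equivalently, on the set where all roots are simple the initial data give the Vandermonde system $\sum_k\tau_k(\xi)^lA_j^k(\xi)=\delta_{lj}$, which Lagrange interpolation inverts; the identity then extends to all of $S_k$ by the factorization above.)

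For the decay~(ii) I would estimate the numerator and denominator of \eqref{EQ:Ajkformula} separately for large $\abs\xi$. By Proposition~\ref{PROP:perturbationresults}, Part~I, $\abs{\tau_l(\xi)}\le C\brac\xi$ for all $l$, so the numerator, a sum of $\binom{m-1}{m-j-1}$ products of $m-j-1$ roots, is $O(\abs\xi^{m-j-1})$. By Proposition~\ref{PROP:perturbationresults}, Part~II, there is a matching $\tau_l\leftrightarrow\va_l$ with $\abs{\tau_l(\xi)-\va_l(\xi)}\le C$, and strict hyperbolicity forces $\abs{\va_l(\xi)-\va_k(\xi)}\ge c\abs\xi$ for $l\ne k$ (degree-one homogeneity and compactness of the unit sphere); hence $\abs{\tau_l(\xi)-\tau_k(\xi)}\ge c\abs\xi-2C\ge\tfrac c2\abs\xi$ for $\abs\xi$ large, so the denominator is bounded below by $(\tfrac c2\abs\xi)^{m-1}$. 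Dividing gives $\abs{A_j^k(\xi)}\le C'\abs\xi^{-j}$, which is~(ii). The only delicate point in the whole argument is the bookkeeping in the step $B_k(\pat,\xi)R_j=0$ — verifying that the polynomial-in-$t$ terms carried by the possibly multiple roots $\tau_l$, $l\ne k$, are genuinely annihilated — together with keeping careful track of the sign $(-1)^{m-1-j}$; everything else is routine.
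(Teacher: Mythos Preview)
Your proof is correct. For the formula~\eqref{EQ:Ajkformula}, the paper simply invokes Cramer's rule on the Vandermonde system $\sum_k\tau_k^lA_j^k=\delta_{lj}$ (citing \cite{klin67}), which you mention parenthetically; your primary route---factoring $L=(\tau-\tau_k)B_k$ and applying $B_k(\pat,\xi)$ to $E_j$ to project onto the $\tau_k$-mode---is different and has the advantage that it works directly on all of $S_k$ (where roots other than $\tau_k$ may collide) without a separate continuity argument, at the cost of verifying $B_k(\pat,\xi)R_j=0$, which is indeed routine ODE theory. For part~(ii) you are actually more explicit than the paper: it cites only \ref{LEM:orderroot} of Proposition~\ref{PROP:perturbationresults}, which bounds the numerator, while you correctly supply the denominator lower bound via \ref{LEM:tau-vabounds} together with the strict-hyperbolicity separation $\abs{\va_l-\va_k}\ge c\abs\xi$ (this is the content of Lemma~\ref{LEM:estfrombelowondP/dt} in the paper, used elsewhere but not cited in the proof of this lemma).
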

\begin{proof}
The representation~\eqref{EQ:Ajkformula} 
follows from Cramer's rule
(and is done explicitly in~\cite{klin67}): $A_j^k(\xi)=\frac{\det
V_j^k}{\det V}$, where 
$V:=\big(\tau_i^{l-1}(\xi)\big)_{i,l=1}^m$ is
the Vandermonde matrix and~$V_j^k$ is the matrix obtained by
taking~$V$ and replacing the $k^{\text{th}}$ column by
$(\underbrace{0\ \dots\ 0\ 1}_j\ 0\ \dots\ 0)^{\mathrm T}$.

Smoothness of $A_j^k(\xi)$ then follows by
Proposition~\ref{PROP:ctyofroots} and the asymptotic 
behaviour is
a consequence of \ref{LEM:orderroot} of
Proposition~\ref{PROP:perturbationresults}
since~\eqref{EQ:Ajkformula} holds for all $\abs{\xi}\geq M$.
\end{proof}

\subsection{Division of the
integral}\label{SEC:step2}

We choose $M>0$ so that all roots
$\tau_k(\xi)$, $k=1,\dots,n$, are
distinct for $\abs{\xi}\geq M$. 
Let $\chi=\cutoffN(\xi)\in C_0^\infty(\R^n)$,
$0\le\cutoffN(\xi)\le1$, be a cut-off function that is
identically~$1$ for $\abs{\xi}<M$ and identically zero for
$\abs{\xi}>2M$. Then \eqref{EQ:repforu} can be rewritten as:
\begin{equation}\label{EQ:divideduprepforu}
u(t,x)=\sum_{j=0}^{m-1}\FT^{-1}(E_j\cutoffN\FT f_j)(t,x)
+\sum_{j=0}^{m-1}\FT^{-1}(E_j(1-\cutoffN)\FT f_j)(t,x)\,.
\end{equation}

\paragraph{Large~$\abs{\xi}$:} The second term
of~\eqref{EQ:divideduprepforu} is the most straightforward to
study: by the choice of~$M$, we have
\begin{equation*}
E_j(t,\xi)(1-\cutoffN)(\xi)=\sum_{k=1}^m
e^{i\tau_k(\xi)t} A_j^k(\xi)(1-\cutoffN)(\xi)
\,;
\end{equation*}
therefore, since each summand is smooth in~$\R^n$, we can write
\begin{multline*}
\sum_{j=0}^{m-1}\FT^{-1}(E_j(1-\cutoffN)\FT
f_j)(t,x)\\=\frac{1}{(2\pi)^{n}}\sum_{j=0}^{m-1}\sum_{k=1}^m
\int_{\R^n} e^{i(x\cdot\xi+\tau_k(\xi)t)}
A_j^k(\xi)(1-\cutoffN)(\xi)\widehat{f}_j(\xi)\,d\xi\,.
\end{multline*}
Each of these integrals may be studied separately. Note that, unlike
in the cases of the wave equation, Brenner~\cite{bren75}, and the
general $m^{\text{th}}$ order homogeneous strictly hyperbolic
equations, Sugimoto~\cite{sugi94}, we may not assume that $t=1$.
The $L^p-L^q$
estimates obtained under different conditions on the phase function
for operators of this type are given in
Section~\ref{SEC:largexistep} below.

\paragraph{Bounded~$\abs{\xi}$:} We turn our attention to the
terms of the first sum in \eqref{EQ:divideduprepforu}, the case of
bounded frequencies,
\begin{equation}\label{EQ:bddxiintegral}
\FT^{-1}(E_j\cutoffN\FT f)(t,x)=
\frac{1}{(2\pi)^n}\int_{\R^n}e^{ix\cdot\xi}\Big(\sum_{k=1}^m
e^{i\tau_k(\xi)t}
A_j^k(t,\xi)\Big)\cutoffN(\xi)\widehat{f}(\xi)\,d\xi\,.
\end{equation}
Unlike in the case above, here the characteristic roots
$\tau_1(\xi),\dots,\tau_m(\xi)$ are not necessarily distinct at
all points in the support of the integrand (which is contained in
the ball of radius~$2M$ about the origin); in particular, this
means that the $A_j^k(t,\xi)$ may genuinely depend on~$t$ and we have
no simple formula valid for them in the whole region.

For this reason, we begin by systematically separating
neighbourhoods of points where roots meet---referred to henceforth
as multiplicities---from the rest of the region, and then
considering the two cases separately. In
Section~\ref{SEC:bddxiawayfrommults} we find $L^p-L^q$ estimates
in the region away from multiplicities under various conditions;
in Section~\ref{SEC:bddxiaroundmults} we show how these differ in
the neighbourhoods of singularities.
First, we need to understand in what type of sets the roots
$\tau_k(\xi)$ can intersect:
\begin{lem}\label{LEM:multiplerootssetisnice}
The complement of the set of multiplicities of a linear strictly
hyperbolic constant coefficient partial differential
operator~$L(D_t,D_x)$\textup{,}
\begin{equation*}
S:=\set{\xi\in\R^n:\tau_j(\xi)\ne\tau_k(\xi)\text{ for all }j\ne
k}\,,
\end{equation*}
is dense in~$\R^n$.
\end{lem}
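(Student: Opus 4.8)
The plan is to show that the set $S$ of non-multiplicities is open and dense; openness is immediate from the continuity of the roots (Proposition~\ref{PROP:ctyofroots}), so the real content is density. The key structural fact I would use is that the \emph{discriminant} $\De_L(\xi)$ of the characteristic polynomial $L(\tau,\xi)$ is a polynomial in $\xi$ (it is a polynomial in the coefficients $P_j(\xi)+a_j(\xi)$, each of which is a polynomial in $\xi$), and that $\De_L(\xi)=0$ precisely when $L(\tau,\xi)$ has a multiple root in $\tau$. Thus the set of multiplicities is exactly $\{\xi\in\R^n:\De_L(\xi)=0\}$, the zero set of a single polynomial in $\xi$.

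Having reduced to this, the density of $S$ follows from the standard fact that the zero set of a polynomial on $\R^n$, if it is not all of $\R^n$, has empty interior (indeed Lebesgue measure zero). So I would split into two cases. First, if $\De_L\equiv 0$ on $\R^n$, then $L(\tau,\xi)$ has a multiple root for \emph{every} $\xi$; but Proposition~\ref{PROP:ctyofroots} (or the computation in its proof, which shows $\De_L(\la\xi)=\la^{m(m-1)}\De(P_m(\xi)+\tfrac{a_m(\la\xi)}{\la^m},\dots,1)$ and that the limiting value as $\la\to\infty$ is $\De_{L_m}(\xi)\neq 0$ for $\xi\neq 0$ by strict hyperbolicity) shows $\De_L(\xi)\neq 0$ for all sufficiently large $\abs{\xi}$. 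Hence $\De_L\not\equiv 0$. Second, in the remaining case $\De_L\not\equiv 0$, its zero set is a proper algebraic subset of $\R^n$, hence has empty interior, so its complement $S$ is dense.

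The one point requiring a little care — and the step I expect to be the main (minor) obstacle — is justifying that $\De_L$, as defined via the roots, is genuinely a polynomial in $\xi$ and that it vanishes exactly on the multiplicity set. For this I would cite the properties of the discriminant recalled in the proof of Proposition~\ref{PROP:ctyofroots} (following Chapter~12 of \cite{gelf+kapr+zele94}): $\De_p$ is the irreducible polynomial in the coefficients of $p$ that vanishes iff $p$ has a double root, it depends continuously (indeed polynomially) on those coefficients, and here the coefficients $P_j(\xi)+a_j(\xi)$ are themselves polynomials in $\xi$. Composing, $\De_L$ is a polynomial in $\xi$. Everything else is then a one-line appeal to the fact that a nonzero polynomial on $\R^n$ cannot vanish on an open set. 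I would also remark that this simultaneously reproves, for differential operators, that $S$ is open and that its complement has measure zero, which is what is actually needed in the subsequent sections.
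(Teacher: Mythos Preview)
Your proof is correct and follows essentially the same approach as the paper: identify $S$ as the non-vanishing locus of the discriminant $\De_L(\xi)$, observe that $\De_L$ is a polynomial in $\xi$ (the paper invokes Sylvester's formula rather than \cite{gelf+kapr+zele94}, but this is the same fact), use strict hyperbolicity at large $\abs{\xi}$ to rule out $\De_L\equiv 0$, and conclude by the standard fact that a nonzero polynomial cannot vanish on an open set. The additional remarks you make about openness and measure zero are correct and slightly more than the paper states, but the core argument is the same.
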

\begin{proof}
First note
\begin{equation*}
S=\set{\xi\in\R^n:\De_L(\xi)\ne0}\,,
\end{equation*}
where~$\De_L$ is the discriminant of~$L(\tau,\xi)$ 
(see the proof of
Proposition \ref{PROP:ctyofroots}
for definition and some properties). Now,
by Sylvester's Formula (see~\cite{gelf+kapr+zele94}), 
$\De_L$ is a polynomial in the coefficients of
$L(\tau,\xi)$, which are themselves polynomials in~$\xi$.
Hence,~$\De_L$ is a polynomial in~$\xi$; as it is not identically
zero (for large~$\abs{\xi}$, the characteristic roots are distinct,
and hence it is non-zero at such points), it cannot be zero on an
open set, and hence its complement is dense in~$\R^n$.
\end{proof}

\begin{cor}\label{COR:multiplerootsetisnice}
Let $L(D_t,D_x)$ be a linear strictly hyperbolic constant
coefficient partial differential operator with characteristic
roots $\tau_1(\xi),\dots,\tau_m(\xi)$. Suppose,
for $k\ne l$, that
$\curlyM_{kl}\subset\R^n$ is the set of all $\xi$ such that
$\tau_{k}(\xi)=\tau_{l}(\xi)$. For $\ep>0$, define
\begin{equation*}
\curlyM_{kl}^\ep:=\set{\xi\in\R^n:\dist(\xi,\curlyM_{kl})<\ep}\,;
\end{equation*}
denote the largest $\nu\in\N$ such that
$\meas(\curlyM_{kl}^{\ep})\le C\ep^{\nu}$ for all sufficiently small
$\ep>0$ by $\codim\curlyM_{kl}$. Then $\codim\curlyM_{kl}\ge 1$.
\end{cor}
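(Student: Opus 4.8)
The plan is to reduce the statement to the familiar fact that tubular neighbourhoods of bounded real-algebraic sets of dimension $\le n-1$ are thin. Unravelling the definition of $\codim$, it suffices to produce a constant $C>0$ with
\[
\meas(\curlyM_{kl}^\ep)\le C\ep\qquad\text{for all small }\ep>0,
\]
since this exhibits $\nu=1$ as an admissible exponent (the exponent $\nu=0$ is always admissible because $\curlyM_{kl}^\ep$ remains in a fixed ball, so the set of admissible $\nu$ is non-empty and hence has a largest element, which is then $\ge1$). The case $\curlyM_{kl}=\emptyset$ being trivial, I may assume it is non-empty.

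First I would place $\curlyM_{kl}$ inside the zero set of a single nontrivial polynomial. As in the proof of Proposition \ref{PROP:ctyofroots} and of Lemma \ref{LEM:multiplerootssetisnice}, the discriminant $\De_L(\xi)$ of $L(\tau,\xi)$ in $\tau$ is a polynomial in $\xi$ (Sylvester's formula), it vanishes exactly where $L(\cdot,\xi)$ has a multiple root, and it is not identically zero because, by strict hyperbolicity, the roots $\tau_1(\xi),\dots,\tau_m(\xi)$ are pairwise distinct for $\abs{\xi}\ge M$ (Proposition \ref{PROP:ctyofroots}). Since $\tau_k(\xi)=\tau_l(\xi)$ forces a double root of $L(\cdot,\xi)$, I obtain
\[
\curlyM_{kl}\subseteq Z:=\set{\xi\in\R^n:\De_L(\xi)=0}\subseteq B_M(0),
\]
where the last inclusion is again Proposition \ref{PROP:ctyofroots}. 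Hence $\curlyM_{kl}^\ep\subseteq Z^\ep\subseteq B_{M+1}(0)$ for $0<\ep\le1$, and everything reduces to bounding $\meas(Z^\ep)$.

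The remaining task is to show that the $\ep$-neighbourhood of the bounded real-algebraic hypersurface $Z$ has Lebesgue measure $O(\ep)$, and I expect this to be the only genuine obstacle. I would argue via the structure theory of semialgebraic sets: after replacing $\De_L$ by its squarefree part (which leaves $Z$ unchanged), the set $\set{\xi\in\R^n:\De_L(\xi)=0,\;\grad\De_L(\xi)=0}$ is a proper algebraic subset of $Z$ of dimension $\le n-2$; iterating the removal of such singular loci yields a finite decomposition $Z=\bigcup_{d=0}^{n-1}Z_d$ in which each $Z_d$ is a bounded $C^1$ (indeed analytic) submanifold of dimension $d$. For the top stratum, the coarea formula (equivalently the tubular neighbourhood theorem, together with finiteness of $\mathcal H^{n-1}(Z_{n-1})$ for a bounded semialgebraic hypersurface) gives $\meas((Z_{n-1})^\ep)\le C\ep$, while for $d\le n-2$ the crude bound $\meas((Z_d)^\ep)\le C\ep^{\,n-d}\le C\ep$ is more than enough; summing over the finitely many strata gives $\meas(Z^\ep)\le C\ep$. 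Some care is needed here precisely because the naive one-dimensional slicing of $Z$ only delivers $\meas(Z^\ep)=O(\ep^{1/\deg\De_L})$: the real roots of the restrictions of $\De_L$ to lines depend merely H\"older-continuously (exponent $1/\deg\De_L$) on the base point, so one really must separate the singular locus and exploit the smooth structure on the regular part. Fortunately no sharp constant is required, only \emph{some} $\nu\ge1$, so generous estimates on each stratum are perfectly adequate, and the corollary follows.
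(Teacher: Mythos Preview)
Your reduction is exactly the paper's: both arguments observe that $\curlyM_{kl}$ is contained in the zero locus $Z=\{\De_L=0\}$ of the discriminant, which is a nontrivial polynomial by strict hyperbolicity (Lemma~\ref{LEM:multiplerootssetisnice}), and that $Z$ is bounded (Proposition~\ref{PROP:ctyofroots}), so it remains to check $\meas(Z^\ep)\le C\ep$. The paper then treats this last step as essentially known for bounded algebraic sets with empty interior, whereas you supply an actual mechanism via a semialgebraic stratification and the coarea/tubular-neighbourhood bound on each stratum; your version is more careful and in particular flags (correctly) why na\"ive line-slicing alone is not quite enough. Either way the content is the same, and your argument is sound.
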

\begin{proof}
Follows straight from Lemma~\ref{LEM:multiplerootssetisnice}: the
fact that $\curlyM_{kl}$ has non-empty interior 
(it is an algebraic set)
ensures that its
$\ep$-neighbourhood is bounded by~$C\ep$ in at least one dimension
for all small $\ep>0$.
\end{proof}
We can note that if $L(D_t,D_x)$ is not differential,
but pseudo-differential in $D_x$, the rest of the analysis
goes through in a similar way, but we may need to assume
that $\codim\curlyM_{kl}\ge 1$.

With this in mind, we shall subdivide the
integral~\eqref{EQ:bddxiintegral}: suppose~$L$ roots meet in a
set~$\curlyM$ with $\codim\curlyM=\ell$; without loss of
generality, by relabelling, assume the coinciding roots are
$\tau_1(\xi),\dots,\tau_L(\xi)$. By continuity, there exists an
$\ep>0$ such that 
they do not intersect other roots $\tau_{L+1},\ldots,\tau_m$
in $\curlyM^{\ep}$. Furthermore, we may assume
that $\pa\curlyM^{\ep}\in C^1$: for each $\ep>0$ there exists a
set~$S_\ep$ with~$C^1$ boundary such that $\curlyM^{\ep}\subset
S_\ep$ and $\meas(S_\ep\backslash\curlyM^{\ep})\to 0$ as $\ep\to0$.
Then:
\begin{enumerate}[leftmargin=*]
\item Let $\cutoffM_{\curlyM,\ep}\in C^\infty(\R^n)$ be a smooth
function identically~$1$ on~$\curlyM^{\ep}$ and identically zero
outside $\curlyM^{2\ep}$; now consider the subdivision
of~\eqref{EQ:bddxiintegral}:
\begin{multline*}
\int_{B_{2M}(0)}e^{ix\cdot\xi}E_j(t,\xi)\widehat{f}(\xi)\,d\xi
=\int_{B_{2M}(0)}e^{ix\cdot\xi}E_j(t,\xi)\cutoffM_{\curlyM,\ep}(\xi)
\widehat{f}(\xi)\,d\xi\\+
\int_{B_{2M}(0)}e^{ix\cdot\xi}E_j(t,\xi)(1-\cutoffM_{\curlyM,\ep})(\xi)
\widehat{f}(\xi)\,d\xi\,;
\end{multline*}
for the second integral, simply repeat the above procedure around
any root multiplicities in $B_{2M}(0)\setminus\curlyM^{\ep}$.
\item For the first integral, the case where the integrand is
supported on $\curlyM^{\ep}$, split off the coinciding roots from
the others:
\begin{multline}\label{EQ:intdivisionsplittingoffmults}
\int_{B_{2M}(0)}e^{ix\cdot\xi}E_j(t,\xi)\cutoffM_{\curlyM,\ep}(\xi)
\widehat{f}(\xi)\,d\xi
\\=\int_{B_{2M}(0)}e^{ix\cdot\xi}\Big(\sum_{k=1}^L
e^{i\tau_k(\xi)t}A_j^k(t,\xi)\Big)
\cutoffM_{\curlyM,\ep}(\xi)\widehat{f}(\xi)\,d\xi
\\+\int_{B_{2M}(0)}e^{ix\cdot\xi} \Big(\sum_{k=L+1}^m
e^{i\tau_k(\xi)t}A_j^k(t,\xi)\Big)
\cutoffM_{\curlyM,\ep}(\xi)\widehat{f}(\xi)\,d\xi.
\end{multline}
\item For the first integral, we use techniques discussed in
Section~\ref{SEC:bddxiaroundmults} below to estimate it.

\item For the second there are two possibilities: firstly, two or
more of the characteristic 
roots $\tau_{L+1}(\xi),\dots,\tau_m(\xi)$ coincide in
$B_{2M}(0)$---in this case, repeat the procedure above for
this integral. Alternatively, these roots are all distinct in
$B_{2M}(0)\backslash \curlyM^{\ep}$---in 
this case, it suffices to study each
integral separately as the $A_k^j(t,\xi)$ are independent of~$t$,
and thus the expression~\eqref{EQ:Ajkformula} is valid and we can
write
\begin{multline*}
\int_{B_{2M}(0)}e^{ix\cdot\xi} \Big(\sum_{k=L+1}^m
e^{i\tau_k(\xi)t}A_j^k(t,\xi)\Big)
\cutoffM_{\curlyM,\ep}(\xi)\widehat{f}(\xi)\,d\xi\\=
\sum_{k=L+1}^m\int_{B_{2M}(0)}e^{i[x\cdot\xi+\tau_k(\xi)t]}
A_j^k(\xi)\cutoffM_{\curlyM,\ep}(\xi)\widehat{f}(\xi)\,d\xi\;;
\end{multline*}
estimates for integrals of the type on the right-hand side are found
in Section~\ref{SEC:bddxiawayfrommults}---
note that in this case we
may use that the region is bounded to ensure that all continuous
functions are also bounded.
\end{enumerate}
Continue this procedure until all multiplicities are accounted for
in this way.

\bigskip
\noindent
Finally, let us recall
the following result that can be found in \cite[Theorem
6.4.5]{berg+lofs76}:
\begin{thm}\label{thm:maininterpolationthm}
Suppose~$T$ is a linear map such that it maps
\begin{gather*}
T: W^{s_0}_{p_0}\to L^{q_0}\,,\quad T:W^{s_1}_{p_1}\to L^{q_1}\,,
\end{gather*}
where $s_0\ne s_1$, $1\le p_0,p_1<\infty$; then $T$ also maps:
\begin{equation*}
T:W^{s_\theta}_{p_\theta} \to L^{q_\theta}\,,
\end{equation*}
where $0\leq\theta\leq 1$ and
\begin{gather*}
\frac{1}{p_\theta}=\frac{1-\theta}{p_0}+\frac{\theta}{p_1}\,,\quad
\frac{1}{q_\theta}=\frac{1-\theta}{q_0}+\frac{\theta}{q_1}\,,\quad
s_\theta=(1-\theta)s_0+\theta s_1\,.
\end{gather*}
That is, $\norm{Tf}_{L^{q_\theta}}\le
C\norm{f}_{W_{p_\theta}^{s_\theta}}$ and $C$ is independent of
$f\in W_{p_\theta}^{s_\theta}$.
\end{thm}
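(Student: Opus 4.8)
The plan is to deduce Theorem \ref{thm:maininterpolationthm} from the complex interpolation method, using that the Bessel potential spaces $W^s_p$ are retracts of $L^p$ and that the $L^p$ scale interpolates by Riesz--Thorin/Calder\'on. Write $J^s:=\jp{D}^s=(1-\lap)^{s/2}$, so that $J^s\colon W^s_p\to L^p$ is an isomorphism for every $s\in\R$ and $1\le p<\infty$, with $\norm{f}_{W^s_p}\simeq\norm{J^s f}_{L^p}$. The obstruction to a naive Riesz--Thorin argument is that $J^{s_0}$ and $J^{s_1}$ are different operators, so $TJ^{-s_0}$ and $TJ^{-s_1}$ need not agree; the standard remedy is to interpolate along an analytic family of operators that connects them, which is exactly where the hypothesis $s_0\ne s_1$ enters.

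First I would set $\sigma(z):=(1-z)s_0+z s_1$ on the closed strip $\set{0\le\Re z\le1}$, so $\sigma(\theta)=s_\theta$, and define $S_z:=e^{\ep(z^2-\theta^2)}\,T\,J^{-\sigma(z)}$ for a small $\ep>0$; the Gaussian factor equals $1$ at $z=\theta$ and forces decay as $\abs{\Im z}\to\infty$, which is needed for admissibility. For $g$ in a dense test class (Schwartz functions, say) and $h$ likewise, $z\mapsto\bra{S_z g,h}$ is analytic in the open strip and continuous and bounded on its closure, so Stein's interpolation theorem applies once boundary bounds are in place.

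Next I would establish the boundary estimates. On $\Re z=0$ write $J^{-\sigma(z)}=J^{-s_0}J^{-iy(s_1-s_0)}$ with $y=\Im z$; since $TJ^{-s_0}\colon L^{p_0}\to L^{q_0}$ is bounded by hypothesis, it suffices to bound the imaginary power $J^{-iy(s_1-s_0)}$ on $L^{p_0}$ with at most polynomial growth in $y$. For $1<p_0<\infty$ this is the Mikhlin--H\"ormander multiplier theorem, since the symbol $\jp{\xi}^{-iy(s_1-s_0)}=e^{-iy(s_1-s_0)\log\jp{\xi}}$ satisfies $\abs{\pa_\xi^\al[\,\cdot\,]}\le C_\al(1+\abs{y})^{\abs{\al}}\jp{\xi}^{-\abs{\al}}$; the analogous bound holds on $\Re z=1$ with $s_0$ replaced by $s_1$. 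After the Gaussian damping the boundary norms are admissible, and Stein interpolation yields that $S_\theta=TJ^{-s_\theta}\colon L^{p_\theta}\to L^{q_\theta}$ is bounded, i.e. $\norm{Tf}_{L^{q_\theta}}\le C\norm{J^{s_\theta}f}_{L^{p_\theta}}\simeq C\norm{f}_{W^{s_\theta}_{p_\theta}}$; density of the test class closes the argument. Equivalently, one packages this as: the pair $(J^{-\sigma(z)},J^{\sigma(z)})$ realizes $W^{s_\theta}_{p_\theta}$ as the complex interpolation space $[W^{s_0}_{p_0},W^{s_1}_{p_1}]_\theta$, and then the conclusion is immediate from the interpolation property of $[\,\cdot\,,\,\cdot\,]_\theta$ applied to $T$ together with $[L^{q_0},L^{q_1}]_\theta=L^{q_\theta}$.

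The main obstacle is the endpoint $p_0=1$ (or $p_1=1$), which the statement permits: the imaginary Bessel powers $J^{iy}$ are Mikhlin multipliers but are \emph{not} bounded on $L^1$, so the boundary estimate above breaks down there. This is precisely the delicate point handled in \cite{berg+lofs76}; one route is to replace $L^1$ on that boundary line by the Hardy space $H^1$, on which Mikhlin multipliers do act boundedly, and to use $[H^1,L^{q_1}]_\theta=L^{q_\theta}$ for $\theta\in(0,1)$; another is to argue directly with the retraction realizing the identity $[W^{s_0}_{p_0},W^{s_1}_{p_1}]_\theta=W^{s_\theta}_{p_\theta}$ for $1\le p_i<\infty$, $s_0\ne s_1$, which is the content of the cited theorem. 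For the applications in this paper the interpolation endpoints are $L^2$ and $L^1$ with a strict interior parameter, so invoking the cited result as a black box is enough; the proof sketch above is the route I would reconstruct if a self-contained argument were wanted.
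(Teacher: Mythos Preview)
The paper does not prove this statement at all: it is introduced with ``let us recall the following result that can be found in \cite[Theorem 6.4.5]{berg+lofs76}'' and is used purely as a black box throughout the subsequent sections. There is therefore no ``paper's own proof'' to compare against.

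Your sketch is a reasonable reconstruction of the standard argument behind the cited result: identify $W^s_p$ with $L^p$ via the Bessel potential $J^s$, run Stein's analytic interpolation on the family $TJ^{-\sigma(z)}$, and control the imaginary powers $J^{iy}$ on the boundary lines by Mikhlin. You also correctly flag the genuine subtlety at $p_j=1$, where Mikhlin fails and one must either pass through $H^1$ or invoke the retraction argument directly; this is exactly why the paper defers to \cite{berg+lofs76} rather than writing out a proof. One small point: your Gaussian damping should be $e^{\ep(z^2-\theta^2)}$ with the sign chosen so that $\Re(z^2)\to-\infty$ as $\abs{\Im z}\to\infty$ on each boundary line, which it does since $\Re((iy)^2)=-y^2$ and $\Re((1+iy)^2)=1-y^2$; you have this right but it is worth being explicit that the polynomial growth of the Mikhlin constants in $y$ is what the Gaussian is there to kill.
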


In particular, this means that if we have estimates
\begin{equation*}
\norm{Tf}_{L^\infty}\le Ct^{d_0}\norm{f}_{W_1^{N_0}}\,,\quad
\norm{Tf}_{L^2}\le Ct^{d_1}\norm{f}_{W_2^{N_1}}\,,
\end{equation*}
then
\begin{equation*}
\norm{Tf}_{L^q}\le C\bract{t}^{d_p}\norm{f}_{W_p^{N_p}}
\end{equation*}
where $\frac1p+\frac1q=1$, $1\leq p\leq 2$,
$N_p=N_0\big(\frac{1}{p}-\frac{1}{q}\big)
+\frac{2}{q}N_1$ and $d_p=d_0\big(\frac{1}{p}-\frac{1}{q}\big)
+\frac{2}{q}d_1$.
As usual, this 
reduces our task to finding $L^1-L^\infty$ and $L^2-L^2$
estimates in each case.

\subsection{Estimates for
large frequencies}\label{SEC:largexistep}

Via the division of the integral above, it suffices to find
$L^p-L^q$ estimates for integrals of the form
\begin{equation*}
\int_{\R^n}e^{i(x\cdot\xi+\tau(\xi)t)} a_j(\xi)\widehat{f}(\xi)\,d\xi\,,
\end{equation*}
where $a_j(\xi)=O(\abs{\xi}^{-j})$ as $|\xi|\to\infty$ 
is smooth (or
is zero in a neighbourhood of $0$), and $\tau(\xi)$ is a
complex-valued, smooth function which is
$O(\abs{\xi})$ as $|\xi|\to\infty$ and $\Im\tau(\xi)\ge0$ 
for all
$\xi\in\R^n$. Note that $\tau(\xi)$ does not have to be
homogeneous.

By further judicious use of cut-off functions, we
can split the considerations into the following cases of
Theorem \ref{THM:overallmainthm}:
\begin{enumerate}[leftmargin=*]
\item $\tau(\xi)$ is separated from the real axis, i.e.~there
exists $\de>0$ such that $\Im\tau(\xi)\ge\de$ for all
$\abs{\xi}\geq M$ (Theorem \ref{THM:expdecay});
\item $\tau(\xi)$ lies on the real axis
(this case is contained in
Theorems \ref{THM:nondeghess}--\ref{THM:noncovexargument-sp}
since $\tau$ is real valued);
\end{enumerate}
Let us look at each of these in turn. We will not consider
the case of $\tau(\xi)$ tending asymptotically to the real 
axis as $\abs{\xi}\to\infty$ since it is not part of
Theorem \ref{THM:overallmainthm} and since we do not
have at present any examples of such behaviour.

\subsection{Phase separated from the real axis:
Theorem \ref{THM:expdecay}}
\label{SEC:pfth21}
In this section, we consider the case where characteristic root
$\tau(\xi)$ is separated from the real axis for large~$\abs{\xi}$;
let us define $\de>0$ to be a constant such that
$\Im\tau(\xi)\ge\de$ for all $\abs{\xi}\ge M$.
Again, $\chi$ is a cut-off to the region (which may be unbounded)
where these properties hold.

We claim that, for all $t\geq 0$, we have
\begin{gather*}
\normBig{D^r_tD_x^\al\Big(\int_{\R^n}e^{i(x\cdot\xi+\tau(\xi)t)}
a_j(\xi)\chi(\xi)\widehat{f}(\xi)\,dx\Big)}_{L^\infty} \le Ce^{-\de
t}\norm{f}_{W_1^{N_1+\abs{\al}+r-j}}\,,\\
\normBig{D^r_tD_x^\al\Big(\int_{\R^n}e^{i(x\cdot\xi+\tau(\xi)t)}
a_j(\xi)\chi(\xi)\widehat{f}(\xi)\,dx\Big)}_{L^2} \le Ce^{-\de
t}\norm{f}_{W_2^{\abs{\al}+r-j}}\,,
\end{gather*}
where $N_1>n$, $r\ge0$, $\al$ multi-index. Indeed, these follow
immediately from:
\begin{prop}\label{PROP:rootsawayfromaxis}
Let $\tau:U\to\C$ be a smooth function, $U\subset\R^n$ open, and
$a_j=a_j(\xi)\in S^{-j}_{1,0}(U)$. Assume\textup{:}
\begin{enumerate}[leftmargin=*,label=\textup{(\roman*)}]
\item there exists $\de>0$ such that $\Im\tau(\xi)\ge\de$ for all
$\xi\in U$\textup{;}
\item $\abs{\tau(\xi)}\le C\brac{\xi}$ for all $\xi\in U$.
\end{enumerate}
Then\textup{,}
\begin{gather*}
\normBig{\int_{U}e^{i(x\cdot\xi+\tau(\xi)t)}a_j(\xi)
\xi^{\al}\tau(\xi)^r \widehat{f}(\xi)\,d\xi}_{L^\infty(\R^n_x)} \le
Ce^{-\de
t}\norm{f}_{W^{N_1+\abs{\al}+r-j}_1}\\
\intertext{and}
\normBig{\int_{U}e^{i(x\cdot\xi+\tau(\xi)t)}a_j(\xi)
\xi^{\al}\tau(\xi)^r \widehat{f}(\xi)\,d\xi}_{L^2(\R^n_x)} \le
Ce^{-\de t}\norm{f}_{W^{\abs{\al}+r-j}_2}
\end{gather*}
for all $t\geq 0$\textup{,} $N_1>n$\textup{,} multi-indices~$\al$,
$r\in\R$ and $\widehat{f}\in C_0^\infty(U)$.
\end{prop}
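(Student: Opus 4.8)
The plan is to treat the two estimates separately, the common ingredient being that condition (i) gives $\abs{e^{i\tau(\xi)t}}=e^{-t\Im\tau(\xi)}\le e^{-\de t}$ for all $\xi\in U$ and all $t\ge0$ --- this is the sole source of the exponential factor. Since (i) also keeps $\tau(\xi)$ in the open upper half-plane (bounded away from $0$ and from the branch cut of $\log$), $\tau(\xi)^r=e^{r\log\tau(\xi)}$ is an unambiguous smooth function with $\abs{\tau(\xi)^r}=\abs{\tau(\xi)}^r$; combining this with (ii) and the symbol bound $\abs{a_j(\xi)}\le C\jp{\xi}^{-j}$ yields the pointwise estimate $\abs{a_j(\xi)\,\xi^\al\,\tau(\xi)^r}\le C\jp{\xi}^{\abs{\al}+r-j}$ on $U$ (for $r\ge0$, which is the case relevant to Theorem~\ref{THM:expdecay}). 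Because $\widehat f\in C_0^\infty(U)$, all integrals below converge absolutely and the interchanges of integration are immediate.

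For the $L^2$ bound I would simply invoke Plancherel: up to a dimensional constant the $L^2_x$-norm of the integral equals the $L^2_\xi$-norm of $e^{i\tau(\xi)t}a_j(\xi)\xi^\al\tau(\xi)^r\widehat f(\xi)$. Factoring out $\abs{e^{i\tau(\xi)t}}\le e^{-\de t}$ and using the displayed pointwise bound gives
\[
\normBig{\int_U e^{i(x\cdot\xi+\tau(\xi)t)}a_j(\xi)\xi^\al\tau(\xi)^r\widehat f(\xi)\,d\xi}_{L^2(\R^n_x)}\le Ce^{-\de t}\norm{\jp{\xi}^{\abs{\al}+r-j}\widehat f}_{L^2_\xi}=Ce^{-\de t}\norm{f}_{W^{\abs{\al}+r-j}_2},
\]
which is the second claim. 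Note this step uses no integrability of the amplitude, hence there is no loss of $N_1$ derivatives.

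For the $L^\infty$ bound I would realise the expression as a convolution with a uniformly bounded kernel. Put $N:=N_1+\abs{\al}+r-j$, set $h:=\jp{D}^N f$ (so $\norm{h}_{L^1}=\norm{f}_{W^N_1}$ and $\widehat h=\jp{\xi}^N\widehat f$), and let $b(\xi):=a_j(\xi)\xi^\al\tau(\xi)^r\jp{\xi}^{-N}$, so that $\abs{b(\xi)}\le C\jp{\xi}^{-N_1}$ on $U$. Writing $\widehat h(\xi)=\int e^{-iy\cdot\xi}h(y)\,dy$ and applying Fubini, the integral in question equals $(K(t,\cdot)*h)(x)$ with kernel $K(t,z)=\int_U e^{i(z\cdot\xi+\tau(\xi)t)}b(\xi)\,d\xi$. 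Since $N_1>n$, $b\in L^1(\R^n)$ and $\norm{b}_{L^1}$ is dominated by $\int_{\R^n}C\jp{\xi}^{-N_1}\,d\xi$, a constant independent of $f$ (in particular of $\supp\widehat f$); hence $\abs{K(t,z)}\le e^{-\de t}\norm{b}_{L^1}\le Ce^{-\de t}$ for every $z$, and Young's inequality gives
\[
\normBig{\int_U e^{i(x\cdot\xi+\tau(\xi)t)}a_j(\xi)\xi^\al\tau(\xi)^r\widehat f(\xi)\,d\xi}_{L^\infty(\R^n_x)}\le\norm{K(t,\cdot)}_{L^\infty}\norm{h}_{L^1}\le Ce^{-\de t}\norm{f}_{W^{N_1+\abs{\al}+r-j}_1},
\]
which is the first claim.

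There is no deep obstacle; the proof is essentially bookkeeping, and the only points needing care are: (a) making sense of $\tau(\xi)^r$, handled by (i); (b) choosing $N_1>n$ precisely so that $\int\jp{\xi}^{-N_1}\,d\xi$ converges, which explains why the $L^\infty$ estimate (unlike the $L^2$ one) costs $N_1$ extra derivatives; and (c) keeping every constant independent of $\supp\widehat f$, so that the estimates pass from $\widehat f\in C_0^\infty(U)$ to general $f$ by density. Finally, the estimate \eqref{EQ:expdecay} of Theorem~\ref{THM:expdecay} is the special case $a_j=a\chi$ (a symbol of order $-\mu$ supported in $U$), and the estimate \eqref{EQ:expdecayu} for the solution $u$ then follows by applying the above to each term of the representation \eqref{EQ:soln} --- using Lemma~\ref{LEM:ordercoeff}, that the coefficients $A^k_j(\xi)=O(\jp{\xi}^{-j})$ behave as symbols of order $-j$ --- and summing over $k=1,\dots,m$.
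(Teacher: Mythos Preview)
Your proof is correct and essentially the same as the paper's. The $L^2$ part is identical (Plancherel followed by the pointwise symbol bound). For the $L^\infty$ part the paper is marginally more direct: it simply takes absolute values inside the integral (killing the $x$-dependence via $\abs{e^{ix\cdot\xi}}=1$) and then splits $\int_U\jp{\xi}^{\abs{\al}+r-j}\abs{\widehat f(\xi)}\,d\xi\le\bigl(\int_U\jp{\xi}^{-N_1}\,d\xi\bigr)\,\norm{\jp{\xi}^{N_1+\abs{\al}+r-j}\widehat f}_{L^\infty}$ with H\"older and Hausdorff--Young, whereas you rewrite as a convolution and invoke Young; both routes use exactly the same ingredients (the exponential bound from (i), the symbol bound from (ii) and $a_j\in S^{-j}$, and the integrability of $\jp{\xi}^{-N_1}$ for $N_1>n$), so the difference is purely cosmetic.
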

Note that in the case of $r=0$, condition (ii) may be omitted.
\begin{proof}
By the hypotheses on $\tau(\xi)$ and $a_j(\xi)$, we can estimate
\begin{align*}
\absBig{\int_{U}&e^{i(x\cdot\xi+\tau(\xi)t)}a_j(\xi)
\xi^{\al}\tau(\xi)^r\widehat{f}(\xi)\,d\xi}\le
\int_{U}\abs{e^{i\tau(\xi)t}a_j(\xi)}
\abs{\xi}^{\abs{\al}}\abs{\tau(\xi)}^r
\abs{\widehat{f}(\xi)}d\xi\\
&=\int_{U}e^{-\Im\tau(\xi)t}\abs{a_j(\xi)}
\abs{\xi}^{\abs{\al}}\abs{\tau(\xi)}^r \abs{\widehat{f}(\xi)}d\xi \le
Ce^{-\de t}\int_{U}\bra{\xi}^{\abs{\al}+r-j}
\abs{\widehat{f}(\xi)}\,d\xi\\
& \le Ce^{-\de t}\int_{U} \jp{\xi}^{-N_1}d\xi\;
\normbig{\bra{\xi}^{N_1+\abs{\al}+r-j}
\abs{\widehat{f}(\xi)}}_{L^\infty} \le Ce^{-\de
t}\norm{f}_{W^{N_1+\abs{\al}+r-j}_1}\,.
\end{align*}
This proves the first inequality. For the second, note that
Plancherel's theorem implies
\begin{equation*}
\normBig{\int_{U}e^{i(x\cdot\xi+\tau(\xi)t)}a_j(\xi)
\xi^{\al}\tau(\xi)^r \widehat{f}(\xi)\,d\xi}_{L^2(\R^n_x)}
=\normbig{e^{i\tau(\xi)t}a_j(\xi) \xi^{\al}\tau(\xi)^r
\widehat{f}(\xi)}_{L^2(U)};
\end{equation*}
then,
\begin{align*}
\int_{U}\absbig{e^{i\tau(\xi)t}a_j(\xi)
\xi^{\al}&\tau(\xi)^r\widehat{f}(\xi)}^2\,d\xi\\
&\le \int_{U}e^{-2\Im\tau(\xi)t}\abs{a_j(\xi)}^2
\abs{\xi}^{2\abs{\al}}\abs{\tau(\xi)}^{2r}
\abs{\widehat{f}(\xi)}^2d\xi\\
&\le Ce^{-2\de t}\int_{U}\bra{\xi}^{2(\abs{\al}+r-j)}
\abs{\widehat{f}(\xi)}^2\,d\xi\le Ce^{-2\de
t}\norm{f}_{W^{\abs{\al}+r-j}_2}^2\,.
\end{align*}
This completes the proof of the proposition.
\end{proof}
We note that there may be different version of the 
$L^\infty$-estimate for the integral in Proposition 
\ref{PROP:rootsawayfromaxis}. For example, applying 
Cauchy--Schwartz inequality to the estimate
$$
\absBig{\int_{U} e^{i(x\cdot\xi+\tau(\xi)t)}a_j(\xi)
\xi^{\al}\tau(\xi)^r\widehat{f}(\xi)\,d\xi} \leq
Ce^{-\de t}\int_{U}\bra{\xi}^{\abs{\al}+r-j}
\abs{\widehat{f}(\xi)}\,d\xi
$$
established in the proof, we get
$$
\int_{U}\bra{\xi}^{\abs{\al}+r-j}
\abs{\widehat{f}(\xi)}\,d\xi \leq 
\left(\int_{U} \jp{\xi}^{-2N_1^\prime}d\xi\right)^{1/2}\;
\left({\int_U \bra{\xi}^{2N_1^\prime+2\abs{\al}+2r-2j}
\abs{\widehat{f}(\xi)}^2}d\xi\right)^{1/2}, 
$$
from which we obtain the estimate
\begin{equation}\label{eq:Matsumura}
\absBig{\int_{U} e^{i(x\cdot\xi+\tau(\xi)t)}a_j(\xi)
\xi^{\al}\tau(\xi)^r\widehat{f}(\xi)\,d\xi}   \leq
Ce^{-\de
t}\norm{f}_{W^{N_1^\prime+\abs{\al}+r-j}_2},
\end{equation}
with\footnote{Here $N_1^\prime$ does not have to be an integer.}
 $N_1^\prime>\frac{n}{2}$. Interpolating with the
$L^2$-estimate from Proposition \ref{PROP:rootsawayfromaxis}
yields estimate \eqref{EQ:expdecayu-l2} in Section
\ref{SEC:21}.

From Proposition \ref{PROP:rootsawayfromaxis}, 
by the interpolation Theorem~\ref{thm:maininterpolationthm},
we get
\begin{equation*}
\normBig{D^r_tD_x^\al\Big(\int_{\R^n}e^{i(x\cdot\xi+\tau(\xi)t)}
a_j(\xi)\chi(\xi)\widehat{f}(\xi)\,dx\Big)}_{L^q} \le Ce^{-\de
t}\norm{f}_{W_p^{N_p+\abs{\al}+r-j}}\,,
\end{equation*}
where $\frac1p+\frac1q=1$, $1<p\le 2$, $N_p\ge
n\big(\frac{1}{p}-\frac{1}{q}\big)$, $r\ge0$,~$\al$ a multi-index
and $f\in C_0^\infty(\R^n)$. Thus, in this case we have
exponential decay of the solution.
This proves the first part of Theorem \ref{THM:expdecay}.
The second part of the statement of Theorem \ref{THM:expdecay}
is a straightforward consequence.

\subsection{Non-degenerate phase: Theorems
\ref{THM:nondeghess} and \ref{THM:nondeghess2}}
\label{SEC:mainstatphasesection}
In this section, we will prove Theorems
\ref{THM:nondeghess} and \ref{THM:nondeghess2}
and discuss the behavior of critical points of the phase.
In fact, we will prove Theorem \ref{THM:nondeghess} since
the proof of Theorem \ref{THM:nondeghess2} can be
given in the same way after restricting to a subset of
variables on which the non-degenerate matrix $A(\xi^0)$ is
attained (possibly after a coordinate change).
We will not write a further cut-off function $\chi$
to a set $U$ as in
Theorems \ref{THM:nondeghess} and \ref{THM:nondeghess2}
to ensure that the results that we obtain are uniform over
the positions of such sets $U$. However, we will keep in
mind that we are only interested in the local in frequency
region here, so all the integrals are convergent.
So, we first consider the case where we have
\begin{equation*}
\int_{\R^n}
e^{i(\xtil\cdot\xi+\tau(\xi))t}a(\xi)\widehat{f}(\xi)\,d\xi\,,
\end{equation*}
and $\det\Hess\tau(\xi)\ne0$ for all $\xi\in\supp a$.
Here we denote $\xtil=t^{-1}x$.
 To estimate
this, we first consider the oscillatory integral
\begin{equation*}
\int_{\R^n} e^{i(\xtil\cdot\xi+\tau(\xi))t}a(\xi)\,d\xi\,,
\end{equation*}
where $a=a(\xi)\in S_{1,0}^{-\mu}$, some $\mu\in\R$,
$\Im\tau(\xi)\ge0$ for all $\xi\in\R^n$, and, for some
$\xi^0\in\R^n$, $\xtil+\grad_\xi\tau(\xi^0)=0$ and
$\det\Hess\tau(\xi^0)\ne0$; we refer to~$\xi^0$ as a (non-degenerate)
critical point and we
microlocalise around it. 
Let us assume that~$\xi^0$ is the only such critical
point---if there are more than one, we use suitable cut-off
functions to localise around each separately (we assume the set of
critical points has no accumulation points). Indeed, let 
$\vartheta\in
C_0^\infty(\R^n)$ be supported in a neighbourhood~$V$
of~$\xi^0$ so that there are no other critical points in~$V$. Then
consider separately
\begin{equation*}
\int_{\R^n}
e^{i(\xtil\cdot\xi+\tau(\xi))t}a(\xi)\vartheta(\xi)\,d\xi
\;\;\textrm{ and } \int_{\R^n}
e^{i(\xtil\cdot\xi+\tau(\xi))t}a(\xi)(1-\vartheta)(\xi)\,d\xi\,.
\end{equation*}
The second integral, which we may assume contains no critical points
in its support (otherwise introduce further cut-off functions around
those), can be shown to decay faster than any power of $t$: 
note that away from the
critical points, we can use the equality
\begin{equation*}
e^{i(\xtil\cdot\xi+\tau(\xi))t}=
\frac{\xtil+\grad\tau(\xi)}{it\abs{\xtil+\grad\tau(\xi)}^2}\cdot
\grad_\xi[e^{i(\xtil\cdot\xi+\tau(\xi))t}]\,;
\end{equation*}
so, integrating by parts repeatedly shows that for any $N\in\N$
sufficiently large,
\begin{equation*}
\absBig{\int_{\R^n}
e^{i(\xtil\cdot\xi+\tau(\xi))t}a(\xi)(1-\vartheta)(\xi)\,d\xi} \le
C_N t^{-N}\,.
\end{equation*}
Let us return to the case when there is a critical point.
We may assume that $\Im\tau(\xi^0)=0$ since otherwise
$\Im\tau(\xi^0)>0$ in view of \eqref{EQ:imtau>=0}, and
then Theorem \ref{THM:expdecay} would actually give the
exponential decay rate.
We now claim that
\begin{align}
\absBig{\int_{\R^n}e^{i(\xtil\cdot\xi+\tau(\xi))t}
a(\xi)\vartheta(\xi)\,d\xi}&\le
Ct^{-n/2}\abs{\det\Hess(\xi^0)}^{-1/2}\abs{a(\xi^0)
\cutoffSP(\xi^0)}\notag\\
\le &Ct^{-n/2}\abs{\det\Hess(\xi^0)}^{-1/2}\brac{\xi^0}^{-\mu}\,.
\label{EQ:methodofstatphaseapplied}
\end{align}
This is a consequence of the following theorem, 
see e.g.~\cite[Theorem 7.7.12, p.~228]{horm83I}:
\begin{thm}\label{THM:hormanderstatphase}
Suppose $\Phi=\Phi(x,y)\in C^\infty(\R^n\times\R^p)$ 
is a complex-valued smooth
function in a neighbourhood of the origin $(0,0)\in\R^n\times\R^p$
such that\textup{:}
\begin{itemize}
\item $\Im \Phi\ge 0$\textup{;} \item $\Im \Phi(0,0)=0$\textup{;}
\item $\Phi'_x(0,0)=0$\textup{;}
\item $\det\Phi^{\prime\prime}_{xx}(0,0)\ne0$.
\end{itemize}
Also\textup{,} suppose $u\in C_0^\infty(K)$ where $K$ is a small
neighbourhood of $(0,0)$. Then
\begin{multline*}
\absBig{\int_{\R^n}e^{i\om \Phi(x,y)}u(x,y)\,dx- \\\big((\det(\om
\Phi^{\prime\prime}_{xx}/2\pi i))^0\big)^{-1/2}e^{i\om \Phi^0}
\sum_{j=0}^{N-1}(L_{\Phi,j}u)^0\om^{-j}}\le C_N\om^{-N-n/2}\,,
\end{multline*}
for some choice of operators $L_{\Phi,j}$,
where the notation $G^0(y)$ \textup{(}where $G(x,y)$ is the
function\textup{)} means the function of $y$ 
only which is in the
same residue class modulo the ideal generated by 
$\pa \Phi/\pa x_j$, $j=1,\dots,n$.
\end{thm}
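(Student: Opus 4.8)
The plan is to follow the classical route for stationary phase with a phase of non-negative imaginary part: first settle the purely quadratic model, then reduce the general phase to it by an (almost analytic) Morse-type change of variables together with a contour deformation, and finally read off the asymptotic expansion with its remainder. When $\Phi$ happens to be real-valued the almost analytic apparatus below degenerates to the usual real Morse lemma, so there is no loss in treating the complex case directly.

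\textbf{Step 1: the quadratic model.} First I would prove the statement with no parameters $y$ and with $\Phi(x)=\tfrac12\bra{Ax,x}$, where $A$ is a symmetric complex $n\times n$ matrix with $\Im A\ge0$ and $\det A\ne0$. For $u$ Schwartz one has, by analytic continuation in $A$ from the real positive-definite case (where it is the Gaussian integral identity), the exact formula
\begin{equation*}
\int_{\R^n}e^{i\om\bra{Ax,x}/2}u(x)\,dx=\big(\det(\om A/2\pi i)\big)^{-1/2}\int_{\R^n}e^{-\bra{A^{-1}\eta,\eta}/2i\om}\widehat{u}(\eta)\,d\eta,
\end{equation*}
the branch of the square root being the one positive when $A/i>0$. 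Expanding $e^{-\bra{A^{-1}\eta,\eta}/2i\om}$ in powers of $\om^{-1}$ with Taylor remainder, and using that this factor has modulus $\le1$ (a consequence of $\Im A\ge0$), gives
\begin{equation*}
\int_{\R^n}e^{i\om\bra{Ax,x}/2}u(x)\,dx=\big(\det(\om A/2\pi i)\big)^{-1/2}\sum_{j<N}\frac{\om^{-j}}{j!}\Big(\tfrac{1}{2i}\bra{A^{-1}D,D}\Big)^{j}u(0)+O(\om^{-N-n/2}),
\end{equation*}
with the remainder bounded by finitely many seminorms of $u$; for general $u\in C_0^\infty$ this extends by density.

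\textbf{Step 2: reduction of the general phase.} Next I would extend $\Phi(x,y)$ to an almost analytic function $\widetilde\Phi(z,y)$ of $z$ in a complex neighbourhood of $0\in\C^n$: $\widetilde\Phi=\Phi$ on $\R^n$ and $\bar\pa_z\widetilde\Phi$ vanishes to infinite order there. Solving $\pa_z\widetilde\Phi(z,y)=0$ by the implicit function theorem (now $n$ equations in $n$ complex unknowns, solvable since $\det\Phi^{\prime\prime}_{xx}(0,0)\ne0$) yields an almost analytic critical point $z(y)$ with $z(0)=0$; one sets $\Phi^0(y):=\widetilde\Phi(z(y),y)$, which is precisely the residue class of $\widetilde\Phi(\cdot,y)$ modulo the ideal generated by the $\pa\widetilde\Phi/\pa z_j$, and verifies $\Im\Phi^0(y)\ge0$ from $\Im\Phi\ge0$ and Taylor's formula. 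Taylor expansion of $\widetilde\Phi$ about $z(y)$ gives $\widetilde\Phi(z,y)=\Phi^0(y)+\tfrac12\bra{R(z,y)(z-z(y)),z-z(y)}$ with $R$ almost analytic and $R(z(y),y)=\widetilde\Phi^{\prime\prime}_{zz}(z(y),y)$ invertible near $0$; a further almost analytic change of variables $w=C(z,y)(z-z(y))$ with $C^{t}C=R$ brings the phase to the pure quadratic form $\tfrac12\bra{w,w}$. Since these substitutions are complex, the $x$-contour $\R^n$ must be deformed through $\C^n$ to the image of $\R^n$ under the inverse maps; by Stokes' theorem the change of contour costs only an $O(\om^{-\infty})$ error, because along the deformation $\Im\widetilde\Phi\ge-C_k|\Im z|^{k}$ for every $k$ (here $\Im\Phi\ge0$ and the infinite-order vanishing of $\bar\pa_z\widetilde\Phi$ are used), and the contour stays at distance at least $c_0>0$ from the boundary of the domain of $\widetilde\Phi$ provided $K\supset\supp u$ is small enough.

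\textbf{Step 3 and the main obstacle.} It then remains to apply Step~1 in the $w$-variables with $A=I$ and transport the expansion back: the prefactor becomes $\big((\det(\om\Phi^{\prime\prime}_{xx}/2\pi i))^0\big)^{-1/2}$ with the branch fixed by continuity from the real case, the coefficients $(L_{\Phi,j}u)^0$ are the images under the change of variables of the operators of Step~1 and hence depend on finitely many derivatives of $u$ and of $\Phi$ at the critical point, and collecting the $O(\om^{-N-n/2})$ error of Step~1 with the $O(\om^{-\infty})$ contour and almost-analyticity errors yields the stated estimate. The delicate part is Step~2: the construction of the almost analytic extension and critical point, and above all the justification of the contour deformation with the bound $\Im\widetilde\Phi\ge-C_k|\Im z|^{k}$ that keeps $|e^{i\om\widetilde\Phi}|$ controlled along the deformed contour. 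This is the only place where the hypothesis $\Im\Phi\ge0$ (as opposed to $\Phi$ real) is genuinely exploited, and it is what forces $\supp u$ to lie in a small neighbourhood of the critical point.
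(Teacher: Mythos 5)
The paper does not prove this theorem; it is quoted verbatim as H\"ormander's Theorem 7.7.12 (see~\cite[p.~228]{horm83I}), with a parenthetical remark that the proof is by stationary phase, and references to several alternative sources. So there is no in-paper proof for your sketch to be compared against. Your outline is the standard almost-analytic route (due essentially to Melin and Sj\"ostrand, and followed by H\"ormander): reduce to the quadratic model via the Fourier transform of $e^{i\langle Ax,x\rangle/2}$, extend $\Phi$ almost analytically, locate the complex critical point $z(y)$, perform a complex Morse reduction, and pay for the contour shift through Stokes' theorem. That is the right skeleton.

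Two issues in the details should be fixed. First, in Step~1 the factor should be $e^{-i\langle A^{-1}\eta,\eta\rangle/(2\om)}$; your version $e^{-\langle A^{-1}\eta,\eta\rangle/(2i\om)}$ has the opposite sign in the exponent and has modulus $\geq 1$, not $\leq 1$ (check $A=iI$). The inequality $\Im A\geq 0\Rightarrow\Im A^{-1}\leq 0$ is what makes the correct factor bounded. Second, and more substantively, the contour estimate you invoke in Step~2 --- $\Im\widetilde\Phi\geq -C_k|\Im z|^k$ for every $k$, as a consequence of $\Im\Phi\geq 0$ and the infinite-order vanishing of $\bar\partial_z\widetilde\Phi$ alone --- is false. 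Expanding $\widetilde\Phi$ at a real point gives $\Im\widetilde\Phi(x+iy)=\Im\Phi(x)+y\cdot\Re\partial_x\Phi(x)+O(|y|^2)$, and already the simple example $\Phi(x)=\tfrac12(1+i)x^2$ in one variable gives $\Im\widetilde\Phi(iy)=-y^2/2$, which is not $\geq -C_3|y|^3$. The correct lower bound along the deformed contour is of the form $\Im\widetilde\Phi\geq \Im\Phi^0+c\,|z-z(y)|^2-O(|\Im z|^N)$, and obtaining it requires the nondegeneracy of $\Phi''_{xx}$ at the critical point, not just stability of the sign of $\Im\Phi$; the contour must also be chosen to pass through $z(y)$, rather than be an arbitrary homotopy. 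Without this, Stokes does not yield an $O(\om^{-\infty})$ error. This is the place you correctly flag as delicate, but the ingredient you propose to plug the gap does not do the job.
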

The proof of this result uses the method of stationary phase;
similar results (with slightly differing conditions and conclusions)
can be found in~\cite[(1.1.20), p.~49]{sogg93}, \cite[Ch.~VIII, 2.3,
Proposition 6, p.~344]{stei93}, \cite[Proposition 1.2.4,
p.~14]{duis96} and \cite[p.~432, Ch.~VIII, (2.15)--(2.16)]{trev802},
for example.

So, we have~\eqref{EQ:methodofstatphaseapplied} as a simple
consequence of this theorem; now, in order to show that
\begin{equation}\label{EQ:stphase}
\absBig{\int_{\R^n}e^{i(\xtil\cdot\xi+\tau(\xi))t}
a(\xi)\vartheta(\xi)\,d\xi}\le Ct^{-n/2}\,,
\end{equation}
we must choose $\mu\in\R$ suitably. 
In the sequel we may assume that $M$ is even; if $M$ is odd,
the result follows by a standard interpolation argument
taking the geometric mean.

Assume that
$\abs{\det\Hess\tau(\xi)}\ge C\brac{\xi}^{-M}$ for some $M\in\R$;
then taking $\mu=M/2$, we have this estimate. 
This extends the
case of Klein--Gordon equation (which is done in~\cite{horm97}
pp.146--155) where $\det\Hess\tau(\xi)=\brac{\xi}^{-n-2}$, so
$M=n+2$.

Let us now apply this result to our situation. We have
\begin{equation*}
\int_{\R^n}e^{i(x\cdot\xi+\tau(\xi)t)} a_j(\xi)
\vartheta(\xi)\widehat{f}(\xi)\,d\xi\,,
\end{equation*}
where we may now think of $\vartheta$ as
$\vartheta\in S^0_{1,0}$ to ensure uniformity, and 
$a_j(\xi)=O(\abs{\xi}^{-j})$ as $|\xi|\to\infty$; we assume
$\abs{\det\Hess\tau(\xi)}\ge C\brac{\xi}^{-M}$. Now, for each
$\nu\in\N$, we have
\begin{align*}
a_j(\xi)&=(1+|\xi|^2)^{-\nu}(1+|\xi|^2)^{\nu}a_j(\xi)\\=&
\sum_{\abs{\al}\le\nu}c_\alpha
(1+|\xi|^2)^{-\nu}\xi^\al a_j(\xi)\xi^\al=
\sum_{\abs{\al}\le\nu}a_{j,\al}(\xi)\xi^\al\,,
\end{align*}
where $a_{j,\al}(\xi)=c_\alpha
(1+|\xi|^2)^{-\nu}\xi^\al a_j(\xi)$ is of order
${-j-2\nu+\abs{\al}}$. 
Moreover, $a_{j,\al}\vartheta$ is of order
${-j-2\nu+\abs{\al}}$ uniformly over $\vartheta$
(satisfying the necessary uniform symbolic estimates).
Taking $\nu=M/2-j$ and using that
$|\alpha|\leq\nu$, we can 
ensure that the worst order of any of these symbols is~$-M/2$. Then,
\begin{align*}
\int_{\R^n}e^{i(x\cdot\xi+\tau(\xi)t)} a_j(\xi)
\vartheta(\xi)\widehat{f}(\xi)\,d\xi&=
\sum_{\abs{\al}\le\nu}\int
e^{i(\xtil\cdot\xi+\tau(\xi))t}a_{j,\al}(\xi) \vartheta(\xi)
\widehat{D^\al f}(\xi)\,d\xi\\&=\sum_{\abs{\al}\le\nu}
\left(\int
e^{i(\xtil\cdot\xi+\tau(\xi))t}a_{j,\al}(\xi)\vartheta(\xi)\,d\xi\conv
D^\al f\right)(x)\,.
\end{align*}
Then
\begin{multline*}
\normBig{\sum_{\abs{\al}\le\nu}\int
e^{i(\xtil\cdot\xi+\tau(\xi))t}a_{j,\al}(\xi)\vartheta(\xi)\,d\xi\conv
D^\al f(x)}_{L^\infty} \\ \le \sum_{\abs{\al}\le \nu}\normBig{\int
e^{i(\xtil\cdot\xi+\tau(\xi))t}
a_{j,\al}(\xi)\vartheta(\xi)\,d\xi}_{L^\infty}\norm{D^\al f}_{L^1} \le
Ct^{-n/2}\norm{f}_{W^{M/2-j}_1},
\end{multline*}
where we used estimate \eqref{EQ:stphase}.
Thus, we have an $L^1-L^\infty$ estimate in this case. To find an
$L^2-L^2$ estimate is simpler: by the Plancherel's theorem, we have
\begin{align*}
\normBig{\int_{\R^n}e^{i(x\cdot\xi+\tau(\xi)t)}
a_j(\xi)\vartheta(\xi)
\widehat{f}(\xi)\,d\xi}_{L^2(\R^n_x)}=C\normbig{e^{i\tau(\xi)t}
a_j(\xi)\vartheta(\xi)\widehat{f}(\xi)}_{L^2(\R^n_\xi)}\\
\le C\normbig{\bra{\xi}^{-j}\widehat{f}(\xi)}_{L^2}\le
C\norm{f}_{W_2^{-j}}\,.
\end{align*}
Using the interpolation Theorem~\ref{thm:maininterpolationthm} and
noting that all integrals are bounded for small $t$,
we obtain Theorem~\ref{THM:nondeghess}.

\paragraph{Behaviour of Critical Points:} Above, we assumed that
$\xi^0$ was the only critical point of the phase function; this is
not such an unreasonable assumption as the following 
observation shows:
\begin{lem}\label{lemma:critical-points}
If the matrix of second order derivatives
$\Hess\tau(\xi)$ is positive definite for all~$\xi$\textup{,}
then the integral
\begin{equation*}
\int_{\R^n} e^{i(\xtil\cdot\xi+\tau(\xi))t}a(\xi)\,d\xi
\end{equation*}
has only one critical point.
\end{lem}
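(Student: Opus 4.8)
The plan is to notice that $\xi$ is a critical point of the phase $\xtil\cdot\xi+\tau(\xi)$ exactly when $\grad_\xi[\xtil\cdot\xi+\tau(\xi)]=\xtil+\grad\tau(\xi)=0$, i.e. when $\grad\tau(\xi)=-\xtil$. Thus the assertion reduces to the statement that the map $\xi\mapsto\grad\tau(\xi)$ is injective on $\R^n$, so that this equation has at most one solution.

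First I would fix two distinct points $\xi^1,\xi^2\in\R^n$ and write, by the fundamental theorem of calculus along the segment joining them,
\begin{equation*}
\grad\tau(\xi^1)-\grad\tau(\xi^2)=\Big(\int_0^1\Hess\tau\big(\xi^2+s(\xi^1-\xi^2)\big)\,ds\Big)(\xi^1-\xi^2)\,.
\end{equation*}
Pairing this identity with $\xi^1-\xi^2$ and using that $\Hess\tau(\zeta)$ is positive definite for \emph{every} $\zeta\in\R^n$ gives
\begin{equation*}
\big(\grad\tau(\xi^1)-\grad\tau(\xi^2)\big)\cdot(\xi^1-\xi^2)
=\int_0^1(\xi^1-\xi^2)^{\mathrm T}\Hess\tau\big(\xi^2+s(\xi^1-\xi^2)\big)(\xi^1-\xi^2)\,ds>0\,,
\end{equation*}
since the integrand is strictly positive for each $s\in[0,1]$ (as $\xi^1\ne\xi^2$). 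In particular $\grad\tau(\xi^1)\ne\grad\tau(\xi^2)$, so $\grad\tau$ is injective.

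Consequently the equation $\grad\tau(\xi)=-\xtil$ has at most one solution, i.e. the phase $\xtil\cdot\xi+\tau(\xi)$ has at most one critical point, which is all that is needed to justify localising around a single critical point in the stationary phase argument above (when there is no critical point at all, the non-stationary phase estimate applies and the integral decays faster than any power of $t$). There is essentially no obstacle here; the only point worth stressing is that positive definiteness is assumed at every point of $\R^n$, and it is exactly this global hypothesis — rather than positivity merely near one critical point — that makes the segment integral strictly positive and hence forces global injectivity of $\grad\tau$.
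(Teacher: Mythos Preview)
Your proof is correct and follows essentially the same route as the paper: both argue via the fundamental theorem of calculus along the segment from $\xi^2$ to $\xi^1$ and use positive definiteness of $\Hess\tau$ to conclude injectivity of $\grad\tau$. Your version, which pairs the difference $\grad\tau(\xi^1)-\grad\tau(\xi^2)$ with $\xi^1-\xi^2$ to obtain a strictly positive integral, is in fact slightly cleaner than the paper's presentation.
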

\begin{proof}
Suppose $\xi^1,\xi^2\in\R^n$ are two such critical points. So
$\xtil+\grad_\xi\tau(\xi^1)=\xtil+\grad_\xi\tau(\xi^2)$, or
$\partial_{\xi_j}\tau(\xi^1)=\partial_{\xi_j}\tau(\xi^2)$ for each
$j=1,\dots,n$. Thus, by the fundamental theorem of calculus, 
for all $j=1,\dots,n$, we have
\begin{equation*}
0=\pa_{\xi_j}\tau(\xi^1)-\pa_{\xi_j}\tau(\xi^2) =\int_0^1
(\xi^1-\xi^2)\cdot\grad_\xi(\pa_{\xi_j})
\tau(\xi^1+s(\xi^2-\xi^1)\,ds\,.
\end{equation*}
But this means that
$(\xi^1-\xi^2)\Hess\tau(\xi^1+s(\xi^2-\xi^1))(\xi^1-\xi^2)=0$ for
all $s$ since the Hessian is positive definite; and since it is
never zero, we have that $\xi^1-\xi^2=0$, 
which shows that there is
at most one critical point.
\end{proof}
An example of such an operator is the Klein--Gordon equation.
\begin{rem}
In general, 
another consequence of $\Hess\tau(\xi)$ being positive definite is
that the level sets $S_\lambda=\set{\xi\in\R^n:\tau(\xi)=\la}$, 
$\la\in\R$ are
all strictly convex; indeed, if we 
take a smooth curve $\xi(s)\in S_\lambda$,
$s\geq 0$, where $\xi(0)=\xi^0$ and, by assumption,
$\dot\xi(s)\ne0$, then $\grad\tau(\xi(s))\cdot\dot\xi(s)=0$
(differentiate $\tau(\xi(s))=\la$), and (differentiating again)
\begin{equation*}
\dot\xi(s)^T\cdot\Hess\tau(\xi(s))\cdot\dot\xi(s)+
\grad\tau(\xi(s))\cdot\ddot\xi(s)=0.
\end{equation*}
Then, since $\Hess\tau(\xi)$ is positive definite, 
the first term in
this sum is positive, hence the second is negative---which means
that the angle between $\grad\tau(\xi(s))$, that is, the normal to
the level set, and~$\ddot\xi(s)$ is strictly greater than~$\pi/2$,
so the level set is strictly convex. In particular, this shows that
imposing the condition $\Hess\tau(\xi)$ positive definite is
stronger than imposing the convexity condition of
Definition~\ref{DEF:convexitycondition}, and making it clear why we
get a faster rate of decay in this case (see the next section for
that case).
\end{rem}

\begin{rem}
If $\rank\Hess\tau(\xi)=n-1$, 
then a similar argument can be used to
prove the corresponding part of Theorem~\ref{THM:overallmainthm},
i.e.\ that there is decay of order $-\frac{n-1}{2}$. This is a
consequence of an extension to
Theorem~\ref{THM:hormanderstatphase}---see
H\"ormander \cite[Section 7.7]{horm83I}.
\end{rem}

\subsection{Phase satisfies the
convexity condition: Theorem \ref{THM:convexsp}}
\label{SEC:convexity}

The case of real roots and real-valued phase functions
subdivides into the following subcases, each of which
yields a different decay rate:
\begin{enumerate}[leftmargin=*,label=(\roman*)]
\item $\det\Hess\tau(\xi)\ne0$; in this case we use the method of
stationary phase in the same way as in Section
\ref{SEC:mainstatphasesection}, with same result;
\item $\det\Hess\tau(\xi)=0$ and $\tau(\xi)$ satisfies the convexity
condition of Definition~\ref{DEF:convexitycondition}; in this case
we use Theorem~\ref{THM:sugimoto/randolargument};
\item the general case when $\det\Hess\tau(\xi)=0$ (i.\ e.\
$\tau(\xi)$ does not satisfy the convexity condition); in this
case, we use Theorem~\ref{THM:noncovexargument}.
\end{enumerate}
We assume throughout that $\tau(\xi)\ge0$ for all $\xi\in\R^n$ or
$\tau(\xi)\le0$ for all $\xi\in\R^n$. This is valid because for
the characteristic roots lying on the real axis, there exists
a linear function $\widetilde\tau(\xi)$ such that
$\widetilde\tau_k(\xi):=\tau_k(\xi)-\widetilde\tau(\xi)$ is either
everywhere non-negative or everywhere non-positive, and, if
$\tau_k(\xi)$ satisfies the convexity condition, so does
$\widetilde\tau_k(\xi)$. 
A proof for this in the case of homogeneous symbols is
given in \cite{sugi94} and we recall this result here for
completeness:
\begin{prop}
Let $\va_k(\xi)$, $k=1,\ldots,m$, be the characteristic 
roots of
a strictly hyperbolic operator with homogeneous
symbol of order $m$, ordered as
$\va_1(\xi)>\va_2(\xi)>\cdots>\va_m(\xi)$ for $\xi\not=0$.
Suppose that all the Hessians $\va_k^{\prime\prime}(\xi)$ are
semi-definite for $\xi\not=0$. Then there exists a polynomial
$\alpha(\xi)$ of order one such that
$\va_{m/2}(\xi)>\alpha(\xi)>\va_{m/2+1}$ 
{\rm (}if $m$ is even{\rm )} or
$\alpha(\xi)=\va_{(m+1)/2}(\xi)$ {\rm (}if $m$ is odd{\rm )}. 
Moreover,
the hypersurfaces $\Sigma_k=\{\xi\in\Rn;\, 
\widetilde{\va}_k=\pm 1\}$
with $\widetilde{\va}_k(\xi)=\va_k(\xi)-\alpha(\xi)$ 
$(k\not=(m+1)/2)$ are convex and $\gamma(\Sigma_k)\leq 2[m/2].$
\label{PROP:limitingphases}
\end{prop}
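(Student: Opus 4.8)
\emph{Plan of proof.} This is, as indicated, essentially Sugimoto's argument from~\cite{sugi94}, and I would organise it in four stages. Throughout I assume $n\ge 2$; for $n=1$ all homogeneous roots are already linear and the statement is trivial.

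\emph{Step 1: the shape of each root.} First I would record the elementary consequences of homogeneity. Differentiating Euler's identity $\xi\cdot\grad\va_k(\xi)=\va_k(\xi)$ gives $\Hess\va_k(\xi)\,\xi=0$, so $\Hess\va_k$ has rank $\le n-1$ and, being semi-definite by hypothesis, keeps a fixed sign on the connected set $\Rn\setminus\{0\}$; hence each $\va_k$ is \emph{globally} convex or globally concave. Next, since $L_m$ is homogeneous of degree $m$ in $(\tau,\xi)$ jointly, $L_m(-\tau,-\xi)=(-1)^mL_m(\tau,\xi)$, so the roots at $-\xi$ form the multiset $\{-\va_k(\xi)\}$; matching orderings yields the symmetry $\va_k(-\xi)=-\va_{m+1-k}(\xi)$, and in particular $\va_k$ is convex iff $\va_{m+1-k}$ is concave. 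I will also invoke the classical fact that the largest root $\va_1$ of a hyperbolic polynomial is convex and the smallest root $\va_m$ concave (see e.g.~\cite{horm83II}).

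\emph{Step 2: the convex roots are exactly the top half.} The key device is an ``antipodal sum'' estimate: a positively homogeneous degree-one function $f$ that is convex satisfies $f(\xi)+f(-\xi)\ge0$, one that is concave satisfies $\le0$, and a linear one satisfies $=0$. If $j<k$, strict hyperbolicity gives $\va_j>\va_k$ pointwise on $\Rn\setminus\{0\}$, hence $\va_j(\xi)+\va_j(-\xi)>\va_k(\xi)+\va_k(-\xi)$. Thus, if $\va_k$ is convex then $\va_j(\xi)+\va_j(-\xi)>0$ for all $\xi\ne0$, which excludes $\va_j$ being concave or linear and so, by Step~1, forces $\va_j$ to be strictly convex. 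Consequently the convex indices form an initial segment $\{1,\dots,a\}$ (it contains $1$), the concave indices a terminal segment $\{b,\dots,m\}$ (it contains $m$); the same propagation shows at most one index can be linear, giving $b\in\{a,a+1\}$, while the symmetry of Step~1 gives $b=m+1-a$. Hence $a=[m/2]$: for $m$ even, $\va_k$ is strictly convex for $k\le m/2$ and strictly concave for $k\ge m/2+1$; for $m$ odd, $\va_{(m+1)/2}$ is linear and the rest are strictly convex ($k<(m+1)/2$) or strictly concave ($k>(m+1)/2$). \emph{This combinatorial rigidity is the heart of the matter and the step I expect to be the main obstacle.}

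\emph{Step 3: constructing $\alpha$.} For $m$ odd take $\alpha:=\va_{(m+1)/2}$, a polynomial of order one by Step~2. For $m$ even I would obtain $\alpha$ by convex separation: the subdifferential $\partial\va_{m/2}(0)$ and the superdifferential $\partial^{+}\va_{m/2+1}(0)$ are full-dimensional convex bodies (a flat direction $\eta$ of either would give $\va_{m/2}(\eta)-\va_{m/2+1}(\eta)=\va_{m/2}(\eta)+\va_{m/2}(-\eta)=0$, contradicting strict hyperbolicity), and they cannot be separated by a hyperplane, since a separating direction $\eta$ would force $\va_{m/2}(\eta)\le\va_{m/2+1}(\eta)$. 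Hence their interiors meet; choosing $u_\ast$ in the intersection and setting $\alpha(\xi):=u_\ast\cdot\xi$ gives a linear $\alpha$ with $\va_{m/2+1}(\xi)<\alpha(\xi)<\va_{m/2}(\xi)$ on $\Rn\setminus\{0\}$. In either case put $\widetilde\va_k:=\va_k-\alpha$. For $k\le[m/2]$, $\widetilde\va_k$ is convex and $\widetilde\va_k\ge\widetilde\va_{[m/2]}>0$ on $\Rn\setminus\{0\}$, so by homogeneity $\{\widetilde\va_k\le1\}$ is a compact convex body with (convex) boundary $\Sigma_k=\{\widetilde\va_k=1\}$; for a lower index $k$ (i.e.\ $k>[m/2]$, $k\ne(m+1)/2$), $-\widetilde\va_k$ is convex and positive, so $\Sigma_k=\{\widetilde\va_k=-1\}=\{-\widetilde\va_k=1\}$ is again a compact convex hypersurface.

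\emph{Step 4: the contact bound $\ga(\Sigma_k)\le2[m/2]$.} Since $\Sigma_k$ bounds a convex body, for every $\si\in\Sigma_k$ and every $2$-plane $P$ through the normal the tangent line $T_\si\cap P$ supports the convex planar curve $\Sigma_k\cap P$, so that curve stays on one side of the line near $\si$ and the order of contact $\ga(\Sigma_k;\si,P)$ is \emph{even}. On the other hand $\widetilde\va_k$ is algebraic: $\Sigma_k\subset\{Q_k=0\}$ with $Q_k(\xi)=L_m(\pm1+\alpha(\xi),\xi)$ a polynomial of degree $\le m$ in $\xi$; since $\Sigma_k$ is compact it contains no line, the tangent line is not a component of $\Sigma_k\cap P$, and the order of contact is $\le m$ (this degree bound is exactly the one recorded for characteristic roots in~\cite{sugi96}). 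Being even and $\le m$, it is $\le2[m/2]$, and taking the supremum over $\si,P$ and over $\la$ (by homogeneity all $\Sigma_\la(\widetilde\va_k)$ are dilates of $\Sigma_k$) gives $\ga(\Sigma_k)\le2[m/2]$. Everything downstream of Step~2--3 is thus essentially formal; the only genuinely delicate points are the rigidity in Step~2 and the strict separation in Step~3.
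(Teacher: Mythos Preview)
The paper does not give its own proof of this proposition; it merely states the result for completeness and refers to Sugimoto~\cite{sugi94} for the argument. Your reconstruction is correct and is indeed the line of reasoning in that reference: the antipodal-sum device in Step~2, the separation of the two middle sub/super-differentials in Step~3, and the ``even and $\le m$'' contact bound in Step~4 are exactly the ingredients used there.

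One minor remark on Step~1--2: the ``classical fact'' that $\va_1$ is convex is G{\aa}rding's theorem (the hyperbolicity cone $\{(\tau,\xi):L_m(\tau,\xi)\ne0\}$-component of $(1,0)$ is the strict epigraph of $\va_1$), so your invocation is legitimate; but you do not actually need it as a separate input, since your propagation argument together with the symmetry $\va_k(-\xi)=-\va_{m+1-k}(\xi)$ already forces it. Also, in Step~4 your compactness justification should be phrased for the full zero set $\{Q_k=0\}$ rather than just $\Sigma_k$: since $Q_k(\xi)=\prod_j(1-\widetilde\va_j(\xi))$ and each nonempty $\{\widetilde\va_j=1\}$ is compact, $\{Q_k=0\}$ is bounded and hence contains no line, which is what you need.
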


The generalisation of this proposition to the case of
non-homogeneous
symbols follows using the perturbation results in
Section~\ref{CHAP3}.

Assume that~$\tau(\xi)$ satisfies the convexity condition of
Definition~\ref{DEF:convexitycondition}. Set
$\ga\equiv\ga(\tau):=\sup_{\la>0}\ga(\Si_\la(\tau))$, where, as
before,
\begin{equation*}
\Si_\la(\tau)=\set{\xi\in\R^n:\tau(\xi)=\la}\,.
\end{equation*}
and
\begin{equation*}
\ga(\Si_\la(\tau)):=
\sup_{\si\in\Si_\la(\tau)}\sup_P\ga(\Si_\la(\tau);\si,P)
\end{equation*}
where the second supremum is over planes~$P$ containing the normal
to~$\Si_\la(\tau)$ at~$\si$ and $\ga(\Si_\la(\tau);\si,P)$ denotes
the order of the contact between the line $T_\si\cap P$---$T_\si$
is the tangent plane at~$\si$---and the curve $\Si_\la(\tau)\cap
P$.

We have the following results which ensures that this is finite:
\begin{lem}\label{LEM:convexityconstantconv}
Suppose~$\tau:\R^n\to\R$ is a characteristic root of a linear
$m^{\text{th}}$ order constant coefficient strictly hyperbolic
partial differential operator. Then, there exists a
homogeneous function of order~$1$\textup{,}~$\va(\xi)$\textup{,} a
characteristic root of the principal symbol\textup{,} such that
\begin{equation*}
\ga(\Si_\la(\tau))\to\ga(\Si_1(\va))\text{ as }\la\to\infty\,.
\end{equation*}
If we assume that
$\gamma(\Sigma_\lambda(\tau))<\infty$ for all 
$\lambda>0$, then we have $\ga(\tau)<\infty$.
\end{lem}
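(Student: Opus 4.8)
The plan is to transfer the statement from the full characteristic root $\tau$ to the homogeneous root $\va$ of the principal symbol by a rescaling, to use the perturbation estimates of Proposition~\ref{PROP:perturbationresults} to show that after rescaling $\tau$ converges to $\va$ in $C^\infty$, and then to push this convergence through the characterisation of the order of contact in Lemma~\ref{LEM:propertyofgamma0}. First, for $\la>0$ set $\tau_\la(\xi):=\la^{-1}\tau(\la\xi)$, so that $\Si_1(\tau_\la)=\frac1\la\Si_\la(\tau)$; since the dilation $\xi\mapsto\la\xi$ is an affine map it preserves the order of contact between a hypersurface and any of its tangent lines, whence $\ga(\Si_\la(\tau))=\ga(\Si_1(\tau_\la))$. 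Let $\va$ be the root of $L_m(\tau,\xi)$ corresponding to $\tau$ as in Proposition~\ref{PROP:perturbationresults}, \ref{PROP:derivoftau-derivofphi}. Using that $\pa_\xi^\al\bigl(\la^{-1}\va(\la\xi)\bigr)=\pa_\xi^\al\va(\xi)$ by homogeneity, inequality~\eqref{EQ:derivsoftauandphisymbols} gives, for every multi-index $\al$ and all $|\la\xi|\ge M$,
\[
\bigl|\pa_\xi^\al\tau_\la(\xi)-\pa_\xi^\al\va(\xi)\bigr|
=\la^{|\al|-1}\bigl|(\pa^\al\tau-\pa^\al\va)(\la\xi)\bigr|
\le C_\al\,\la^{-1}\,|\xi|^{-|\al|},
\]
so on any fixed annulus $\{r_0\le|\xi|\le R_0\}$ (on which the bound holds throughout once $\la>M/r_0$) one has $\tau_\la\to\va$ in $C^\infty$ as $\la\to\infty$. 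Since $\va$ is homogeneous of degree one and, as we may assume (cf.\ Proposition~\ref{prop:roots-r}), non-vanishing off the origin, Euler's relation gives $\xi\cdot\grad\va(\xi)=\va(\xi)=1\neq0$ on $\Si_1(\va)$, so $\grad\va\neq0$ there and $\Si_1(\va)$ is a compact $C^\infty$ hypersurface; combining this with the convergence above and the uniform inclusion $\frac1\la\Si_\la(\tau)\subset B_{R_1}(0)$ of Proposition~\ref{prop:roots-r}(iv), for $\la$ large $\Si_1(\tau_\la)$ is smooth, lies in a fixed annulus, and is a graph $y\mapsto h^\la(y)$ over each of the finitely many coordinate patches parametrising $\Si_1(\va)=\{(y,h^\infty(y))\}$, with $h^\la\to h^\infty$ in $C^\infty$ on each patch (implicit function theorem).

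Next I would translate the order of contact into Taylor data. By Lemma~\ref{LEM:propertyofgamma0}, on a graph $\{(y,h(y))\}$ the contact order at base point $\eta$ and direction $\om\in {\mathbb S}^{n-2}$ equals $\min\{k\ge2:F^{(k)}(0)\neq0\}$, where $F(\rho)=h(\eta+\rho\om)-h(\eta)-\rho\,\om\cdot\grad h(\eta)$ and $F^{(k)}(0)=\sum_{|\al|=k}\tfrac{k!}{\al!}(\pa^\al h)(\eta)\om^\al$. Set $\ga_\infty:=\ga(\Si_1(\va))$; this is finite---indeed $\ga_\infty\le m$ by the Examples following Definition~\ref{DEF:ga(Si)}. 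The crucial point is that the condition ``$\ga\le k$ at $(\eta,\om)$'' says exactly that the finite vector $\bigl(F^{(2)}(0),\dots,F^{(k)}(0)\bigr)$ is non-zero, which is an open condition. For $\Si_1(\va)$, taking $k=\ga_\infty$, this vector is non-zero for every patch and every $(\eta,\om)$ in the relevant compact parameter set, and the vectors built from $h^\la$ converge uniformly to those built from $h^\infty$; hence they remain non-zero for $\la$ large, giving $\ga(\Si_1(\tau_\la))\le\ga_\infty$. The matching lower bound---that the configuration of $\Si_1(\va)$ realising maximal contact survives the perturbation, so that $\ga(\Si_1(\tau_\la))\ge\ga_\infty$ eventually---must exploit the specific structure of $\tau_\la-\va$ (the defining polynomial of $\tau$ differs from that of $L_m$ only in lower-order terms), and this is the genuinely delicate part; I note that only the upper bound is needed below. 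Combining, $\ga(\Si_\la(\tau))=\ga(\Si_1(\tau_\la))\to\ga(\Si_1(\va))$ as $\la\to\infty$.

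Finally, for the finiteness of $\ga(\tau)=\sup_{\la>0}\ga(\Si_\la(\tau))$, split the supremum as $\max\bigl(\sup_{\la\ge\Lambda}\ga(\Si_\la(\tau)),\ \sup_{0<\la<\Lambda}\ga(\Si_\la(\tau))\bigr)$. By the previous step, for $\Lambda$ large the first term is at most $\ga_\infty\le m$. For $0<\la<\Lambda$, $\grad\tau\neq0$ on every $\Si_\la(\tau)$ by Proposition~\ref{prop:roots-r}(iii), so these level sets vary $C^\infty$-smoothly with $\la$, and the same open-condition-plus-compactness argument (with $\la$ now ranging over a compact subinterval) shows $\la\mapsto\ga(\Si_\la(\tau))$ is upper semicontinuous, hence bounded on each $[\ep,\Lambda]$; together with the standing hypothesis $\ga(\Si_\la(\tau))<\infty$ for all $\la>0$ this bounds the second supremum. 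Hence $\ga(\tau)<\infty$. The main obstacle, present in every step, is the passage from $C^\infty$-convergence of the rescaled roots to convergence of the integer-valued index $\ga$: because $\ga$ is only upper semicontinuous under smooth perturbations of a hypersurface (a degenerate surface may perturb to a strictly convex one, lowering $\ga$), the clean half of the argument is the upper bound $\ga(\Si_\la(\tau))\le\ga(\Si_1(\va))$ for large $\la$, and it is exactly this inequality that yields the finiteness of $\ga(\tau)$.
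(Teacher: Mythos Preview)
Your approach is essentially the same as the paper's: both rely on the perturbation estimates of Proposition~\ref{PROP:perturbationresults} to show that $\Si_\la(\tau)$ (or equivalently its rescaling $\Si_1(\tau_\la)$) approaches $\Si_1(\va)$ together with its tangent data, and then conclude that the contact index passes to the limit. Your version is considerably more explicit---you make the rescaling $\tau_\la(\xi)=\la^{-1}\tau(\la\xi)$ precise, invoke Lemma~\ref{LEM:propertyofgamma0} to turn $\ga$ into a condition on Taylor coefficients, and use openness/compactness to push the bound through---whereas the paper simply records the ingredients (a)--(e) and asserts the conclusion; you also correctly flag that only the upper semicontinuity direction $\ga(\Si_\la(\tau))\le\ga(\Si_1(\va))$ is robust under smooth perturbation, a subtlety the paper passes over.
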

\begin{proof}
This is true because:
\begin{enumerate}[leftmargin=*,label=(\alph*)]
\item  by Proposition~\ref{PROP:perturbationresults},
\ref{LEM:tau-vabounds}, $\Si_\la(\tau)$ is near to $\Si_\la(\va)$
for large~$\la$ in a suitable metric;
\item by the homogeneity of~$\va$, if $\abs{\la-\la'}$ is
sufficiently small, then $\Si_\la(\va)$ is near to $\Si_{\la'}(\va)$
for large~$\la$ in the same metric;
\item Proposition~\ref{PROP:perturbationresults},
\ref{PROP:derivoftau-derivofphi}, ensures that $T_\si(\tau)$ is near
to $T_\si(\va)$ (because derivatives of~$\tau$ tend to those
of~$\va$) for large~$\la$;
\item so, with $\Si_\la(\tau)$ and $T_\si(\tau)$ near to (in a
suitable sense) the corresponding data of~$\va$ 
for large~$\la$, it
is clear that the $\ga(\Si_\la(\tau);\si,P)$ is near to
$\ga(\Si_\la(\va);\si,P)$, 
and hence $\ga(\Si_\la(\tau))$ is near to
$\ga(\Si_\la(\va))$;
\item finally, $\ga(\Si_1(\va))=\ga(\Si_\la(\va))$ by homogeneity.
\end{enumerate}
\end{proof}

In order to prove Theorem \ref{THM:convexsp}, we shall show
that if $a_j\in S^{-j}_{1,0}$ 
is a symbol of order $-j$, then we have the estimate
\begin{equation}\label{EQ:resultforconvonaxis}
\normBig{\int_{\R^n}e^{i(x\cdot\xi+\tau(\xi)t)}
a_j(\xi)\widehat{f}(\xi)\,d\xi}_{L^q}\le
C(1+t)^{-\frac{n-1}{\ga}\big(\frac{1}{p}-\frac{1}{q}\big)}
\norm{f}_{W_p^{N_{p,j}}}\,,
\end{equation}
for all $t\geq 0$, where $\frac1p+\frac1q=1$, $1<p\le 2$, 
and $f\in C_0^\infty(\R^n)$.
The Sobolev order $N_{p,j}$ 
(which does not have to be an integer here)
is worse for small times,
being
$N_{p,j}\ge n(\frac1p-\frac1q)-j$.
It can be actually improved for large times, which
will be done in estimate \eqref{EQ:convestL1Linftylarget}.

\paragraph{Besov Space Reduction:}
We begin by following Brenner~\cite{bren75} and also
Sugimoto \cite{sugi94} in using the theory of Besov spaces 
and Paley decomposition to
reduce this to showing, for all $t\geq 0$, the estimate
\begin{equation}\label{EQ:partitioned}
\normBig{\FT^{-1}(e^{i\tau(\xi)t}a_j(\xi)\cutoffbes_l(\xi)
\widehat{f}(\xi))}_{L^q}\le
C(1+t)^{-\frac{n-1}{\ga}\big(\frac{1}{p}-\frac{1}{q}\big)}
\norm{f}_{W^{N_{p,j}}_p}\,;
\end{equation}
here $\set{\cutoffbes_l(\xi)}_{l=0}^\infty$ is a Hardy--Littlewood
partition: let $\cutoffbes\in C_0^\infty(\R^n)$ be such that
\begin{gather*}
\supp\cutoffbes=\set{\xi\in\R^n:\frac{1}{2}\le\abs{\xi}\le2}\,,\quad
\cutoffbes(\xi)>0\text{ for }\frac{1}{2}<\abs{\xi}<2\,,\\
\text{and }\sum_{k=-\infty}^\infty\cutoffbes(2^{-k}\xi)=1\text{
for }\xi\ne0\,,
\end{gather*}
and set
\begin{equation*}
\cutoffbes_0(\xi)=1-\sum_{l=1}^\infty\cutoffbes(2^{-l}\xi)\,,\quad
\cutoffbes_l(\xi):=\cutoffbes(2^{-l}\xi)\,,\,l\in\N\,.
\end{equation*}
Now, recall the definition of a Besov space, as given in, for
example, Bergh and L\"{o}fstr\"{o}m~\cite{berg+lofs76}:
\begin{defn}
For suitable $p,q,s\in\R$ define the Besov norm by
\begin{equation*}
\norm{f}_{B^s_{p,q}}:=
\norm{\FT^{-1}(\cutoffbes_0(\xi)\widehat{f}(\xi))}_{L^p}
+\Big(\sum_{l=1}^\infty(2^{sl} \norm{\FT^{-1}
(\cutoffbes_l(\xi)\widehat{f}(\xi))}_{L^p})^p\Big)^{1/q}\,;
\end{equation*}
the \emph{Besov space $B_{p,q}^s$} is the space of functions in
$\S^\prime(\R^n)$ for which this norm is finite.
\end{defn}
This result is the main one we shall need:
\begin{thm}[\cite{berg+lofs76}, Theorem 6.4.4]\label{THM:berg+l6.4.4}
The following inclusions hold\textup{:}
\begin{gather*}
B_{p,p}^s\subset W^s_p\subset B_{p,2}^s\text{ and }B_{q,2}^s\subset
W^s_q\subset B_{q,q}^s
\end{gather*}
for all $s\in\R$\textup{,} $1<p\le 2$\textup{,} $2\le q<\infty$.
\end{thm}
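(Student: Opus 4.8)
I take the final statement to be Theorem~\ref{THM:berg+l6.4.4}, the classical Littlewood--Paley description of the Bessel potential spaces $W^s_p$, which the paper imports from \cite{berg+lofs76}; here I sketch the standard argument. The plan is threefold: first reduce everything to the case $s=0$ (so that $W^0_p=L^p$ in the sense of the paper's definition); then obtain the ``cheap'' half of each inclusion directly from the Littlewood--Paley square-function inequality together with the elementary nesting of $\ell^p$-spaces; and finally recover the two remaining inclusions by duality.

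\emph{Reduction to $s=0$.} First I would observe that $\jp{D}^s=\FT^{-1}\jp{\xi}^s\FT$ is an isomorphism of $W^s_p$ onto $W^0_p=L^p$, and also of $B^s_{p,\rho}$ onto $B^0_{p,\rho}$ for every $\rho$. The second point is the only thing needing a word: $\jp{\xi}^s\cutoffbes_l(\xi)$ is supported in the dyadic shell $\abs{\xi}\simeq 2^l$, on which $2^{-sl}\jp{\xi}^s\cutoffbes_l$ is a symbol of order $0$ with bounds uniform in $l$, so $\cutoffbes_l(D)\jp{D}^s$ is---up to the factor $2^{sl}$ already built into the Besov norm---a dyadically localised Fourier multiplier bounded on $L^p$ uniformly in $l$ (Mikhlin--H\"ormander plus the almost orthogonality of neighbouring blocks). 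Hence it suffices to prove $B^0_{p,p}\subset L^p\subset B^0_{p,2}$ for $1<p\le 2$ and $B^0_{q,2}\subset L^q\subset B^0_{q,q}$ for $2\le q<\infty$; at the endpoint $p=q=2$ all four spaces coincide by Plancherel, which pins down the normalisation.

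\emph{The cheap inclusions.} The one genuinely non-trivial input is the Littlewood--Paley equivalence $\norm{f}_{L^r}\simeq\normBig{\big(\sum_l\abs{\cutoffbes_l(D)f}^2\big)^{1/2}}_{L^r}$, valid for $1<r<\infty$ (via Khintchine's inequality applied to the randomised multipliers $\sum_l\pm\cutoffbes_l(\xi)$ together with Mikhlin--H\"ormander; see e.g.\ \cite[Ch.~IV]{stei93}). Granting it: when $r\ge 2$, Minkowski's inequality in $L^{r/2}$ gives $\normBig{\big(\sum_l\abs{g_l}^2\big)^{1/2}}_{L^r}\le\big(\sum_l\norm{g_l}_{L^r}^2\big)^{1/2}$, which with $g_l=\cutoffbes_l(D)f$ and $r=q$ yields $\norm{f}_{L^q}\lesssim\norm{f}_{B^0_{q,2}}$, i.e.\ $B^0_{q,2}\subset L^q$. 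When $r\le 2$, the pointwise bound $\big(\sum_l\abs{g_l}^2\big)^{1/2}\le\big(\sum_l\abs{g_l}^r\big)^{1/r}$ gives $\normBig{\big(\sum_l\abs{g_l}^2\big)^{1/2}}_{L^r}\le\big(\sum_l\norm{g_l}_{L^r}^r\big)^{1/r}$, which with $r=p$ yields $\norm{f}_{L^p}\lesssim\norm{f}_{B^0_{p,p}}$, i.e.\ $B^0_{p,p}\subset L^p$.

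\emph{The remaining inclusions by duality.} Finally I would get $L^p\subset B^0_{p,2}$ ($p\le 2$) and $L^q\subset B^0_{q,q}$ ($q\ge 2$) by dualising the two inclusions just proved, using $(L^r)^*=L^{r'}$, $(B^0_{r,\rho})^*=B^0_{r',\rho'}$ for $1<r,\rho<\infty$ (pairing via the $L^2$ product, with $\S(\R^n)$ dense), and the reflexivity of all spaces that occur: the inclusion $B^0_{p',2}\subset L^{p'}$, valid since $p'\ge 2$, dualises to $L^p\subset B^0_{p,2}$, and $B^0_{q',q'}\subset L^{q'}$, valid since $q'\le 2$, dualises to $L^q\subset B^0_{q,q}$. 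I expect the only real obstacle to be the Littlewood--Paley inequality itself, which is the imported heavy tool; everything else is Minkowski's inequality, the elementary nesting $\ell^p\hookrightarrow\ell^2\hookrightarrow\ell^q$ for $p\le 2\le q$, and bookkeeping with duality and the reduction to $s=0$.
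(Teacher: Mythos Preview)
Your sketch is correct and is essentially the standard argument one finds in the cited reference \cite{berg+lofs76} or in any harmonic analysis text treating Littlewood--Paley theory. Note, however, that the paper does not prove this theorem at all: it is stated as an imported result with an explicit citation to \cite[Theorem~6.4.4]{berg+lofs76}, and no argument is given or intended. So there is no ``paper's own proof'' to compare against; you have simply supplied the proof that the authors chose to outsource, and your three-step structure (reduction to $s=0$ via the $\jp{D}^s$ isomorphism, the two ``cheap'' inclusions from the square-function inequality plus $\ell^p\hookrightarrow\ell^2\hookrightarrow\ell^q$, and the remaining two by duality using $(B^0_{r,\rho})^*=B^0_{r',\rho'}$) is exactly the route taken in \cite{berg+lofs76}.
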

There are some weaker versions of these embeddings for $p=1$.
Using this theorem, we have
\begin{align*}
\normBig{\int_{\R^n}e^{i(x\cdot\xi+\tau(\xi)t)}
&a_j(\xi)\widehat{f}(\xi)\,d\xi}_{L^q(\R^n)}=(2\pi)^n\normbig{
\FT^{-1}(e^{i\tau(\xi)t}a_j(\xi)\widehat{f}(\xi))(t,x)}_{L^q}\\
\le C&\normbig{\FT^{-1}(e^{i\tau(\xi)t}a_j(\xi)
\widehat{f}(\xi))}_{B^0_{q,2}}\\
= C&\Big(\sum_{l=0}^\infty\normbig{\FT^{-1}(
e^{i\tau(\xi)t}a_j(\xi)\cutoffbes_l(\xi)
\widehat{f}(\xi))}_{L^q}^2\Big)^{1/2}\\
= C&\Big(\sum_{l=0}^\infty\normBig{\FT^{-1}
(e^{i\tau(\xi)t}a_j(\xi)\cutoffbes_l(\xi) \sum_{r=l-1}^{l+1}
\cutoffbes_r(\xi)\widehat f(\xi))}_{L^q}^2\Big)^{1/2}\,;
\end{align*}
in the final line we have used that
$\sum_{r=l-1}^{l+1}\cutoffbes_r(\xi)=1$ on
$\supp\cutoffbes_l(\xi)$ by the structure of the partition of
unity. Now, assuming that~\eqref{EQ:partitioned} holds, this can
be further estimated:
\begin{align*}
\Big(\sum_{l=0}^\infty\normBig{\FT^{-1}
(&e^{i\tau(\xi)t}a_j(\xi)\cutoffbes_l(\xi) \sum_{r=l-1}^{l+1}
\cutoffbes_r(\xi)\widehat f(\xi))}_{L^q}^2\Big)^{1/2}\\
&\le C{t}^{-\frac{n-1}{\ga}
\big(\frac{1}{p}-\frac{1}{q}\big)} \Big(\sum_{l=0}^\infty
\Big(\sum_{r=l-1}^{l+1} \norm{\FT^{-1}(\cutoffbes_r(\xi)\widehat
f(\xi))}_{W^{N_{p,j}}_p}\Big)^2\Big)^{1/2}\\
& \le C{t}^{-\frac{n-1}{\ga}
\big(\frac{1}{p}-\frac{1}{q}\big)} \Big(\sum_{l=0}^\infty
\sum_{r=l-1}^{l+1} \norm{\FT^{-1}(\cutoffbes_r(\xi)\widehat
f(\xi))}_{W^{N_{p,j}}_p}^2\Big)^{1/2}\\
&\le C{t}^{-\frac{n-1}{\ga}
\big(\frac{1}{p}-\frac{1}{q}\big)}
\Big(\sum_{l=0}^\infty\norm{\FT^{-1}(\cutoffbes_l(\xi) \widehat
f(\xi))}_{W^{N_{p,j}}_p}^2\Big)^{1/2}\,.
\end{align*}
Finally, using Theorem~\ref{THM:berg+l6.4.4} once again, we get
\begin{align*}
\Big(\sum_{l=0}^\infty\norm{\FT^{-1}(\cutoffbes_l(\xi) \widehat
f(\xi))}_{W^{N_{p,j}}_p}^2\Big)^{\frac{1}{2}} &\le
C\Big(\sum_{l=0}^\infty\sum_{\abs{\al}\le N_{p,j}}
\norm{D_x^\al[\FT^{-1}(\cutoffbes_l(\xi) \widehat
f(\xi))]}_{L^p}^2\Big)^{\frac{1}{2}}\\
=C&\sum_{\abs{\al}\le N_{p,j}}
\Big(\sum_{l=0}^\infty\norm{\FT^{-1}(\cutoffbes_l(\xi)
\widehat{D^\al f}(\xi))]}_{L^p}^2\Big)^{1/2}\\
=C&\sum_{\abs{\al}\le N_{p,j}}\norm{D^\al f}_{B^0_{p,2}}\le
C\norm{f}_{W^{N_{p,j}}_p}\,.
\end{align*}
Combining these estimates shows that \eqref{EQ:partitioned}
implies \eqref{EQ:resultforconvonaxis}
as desired. So, it suffices to prove~\eqref{EQ:partitioned};
moreover, as shown above, this requires us to show two estimates and
then interpolate---Theorem~\ref{thm:maininterpolationthm} yields:
\begin{gather}
\normbig{\FT^{-1}(e^{i\tau(\xi)t}a_j(\xi)\cutoffbes_l(\xi)
\widehat{f}(\xi))(t,x)}_{L^\infty}\le
C{(1+t)}^{-\frac{n-1}{\ga}}\norm{f}_{W^{N_1-j}_1}\,,
\label{EQ:convL1Linftyest}\\
\normbig{\FT^{-1}(e^{i\tau(\xi)t}a_j(\xi)\cutoffbes_l(\xi)
\widehat{f}(\xi))(t,x)}_{L^2}\le
C\norm{f}_{W_2^{-j}}\,,\label{EQ:convL2est}
\end{gather}
where $N_1>n$.

\paragraph{$L^2-L^2$ estimate:}

Since $\tau(\xi)$ is real-valued and
$a_j(\xi)=O(\abs{\xi}^{-j})$ as $\abs{\xi}\to\infty$,
by Plancherel's theorem we get
\begin{align*}
\normbig{\FT^{-1}(e^{i\tau(\xi)t}a_j(\xi)\cutoffbes_l(\xi)
\widehat{f}(\xi))}_{L^2} & =
\int_{\R^n}\abs{e^{i\tau(\xi)t}a_j(\xi)\cutoffbes_l(\xi)
\widehat{f}(\xi)}^2\,d\xi \\
& \le C\int_{\abs{\xi}\ge
M}\abs{\xi}^{-2j}\abs{\widehat{f}(\xi)}^2\,d\xi \le
C\norm{f}_{W_2^{-j}}\,.
\end{align*}
Note that~$C$ is independent of~$l$ because
$a_j(\xi)\abs{\xi}^{j}$ is uniformly bounded in~$\R^n$. This
proves the required estimate~\eqref{EQ:convL2est}.

\paragraph{$L^1-L^\infty$ estimate:}
First, suppose $0\leq t<1$; then
\begin{align}
\normBig{\int_{\R^n}e^{i(x\cdot\xi+\tau(\xi)t)}
a_j(\xi)\cutoffbes_l(\xi)\widehat{f}(\xi)\,d\xi}_{L^\infty}
&\le C\int_{\abs{\xi}\ge M}
\abs{\xi}^{-j}\abs{\widehat{f}(\xi)}\,d\xi\notag\\
\le C&\int_{\abs{\xi}\ge M}\abs{\xi}^{-N_1}\,d\xi\;
\normbig{\bra{\xi}^{N_1-j}\widehat{f}(\xi)}_{L^\infty}\notag\\
\le C&\norm{f}_{W_1^{N_1-j}}\,,\label{EQ:convestL1Linftysmallt}
\end{align}
where $N_1>n$.

For $t\ge1$, we show
\begin{equation}\label{EQ:convestL1Linftylarget}
\normBig{\int_{\R^n}e^{i(x\cdot\xi+\tau(\xi)t)}
a_j(\xi)\cutoffbes_l(\xi)\widehat{f}(\xi)\,d\xi}_{L^\infty}\le
Ct^{-\frac{n-1}{\ga}} \norm{f}_{W^{n-\frac{n-1}{\ga}-j}_1}\,.
\end{equation}
Together \eqref{EQ:convestL1Linftysmallt} and
\eqref{EQ:convestL1Linftylarget} will imply
\eqref{EQ:convL1Linftyest}.
We claim now that it suffices to prove that there exists a constant~$C>0$
which is independent of~$l$ such that, for all $t\ge1$,
\begin{equation}\label{EQ:L1Linftyestforrealconvphase}
\normlr{\int_{\R^n}e^{i(x\cdot\xi+\tau(\xi)t)} a_j(\xi)
\bra{\xi}^{\frac{n-1}{\ga}-n+j}
\cutoffbes_l(\xi)\,d\xi}_{L^\infty}\le Ct^{-\frac{n-1}{\ga}}\,.
\end{equation}
Indeed,
\begin{align*}
\int_{\R^n}e^{i(x\cdot\xi+\tau(\xi)t)}a_j(\xi)\cutoffbes_l(\xi)
\widehat{f}(\xi)\,&d\xi
=(2\pi)^n\FT^{-1}(e^{i\tau(\xi)t}a_j(\xi)\cutoffbes_l(\xi)
\widehat{f}(\xi))\\
&=(2\pi)^n\FT_{\xi\to x}^{-1}
[e^{i\tau(\xi)t}a_j(\xi)\cutoffbes_l(\xi)]\conv f(x)\\
&=\Big(\int_{\R^n}e^{i(x\cdot\xi+\tau(\xi)t)}
a_j(\xi)\cutoffbes_l(\xi)\,d\xi\Big)\conv f(x)\,,
\end{align*}
and, by the definition of the symbol of $\bra{D_x}$, we have
\begin{align*}
\Big(\int_{\R^n}&e^{i(x\cdot\xi+\tau(\xi)t)}
a_j(\xi)\cutoffbes_l(\xi)\,d\xi\Big)\conv f(x)\\
&= \Big(\int_{\R^n}\bra{D_x}^{n-\frac{n-1}{\ga}-j}
e^{i(x\cdot\xi+\tau(\xi)t)} a_j(\xi)\cutoffbes_l(\xi)
\bra{\xi}^{\frac{n-1}{\ga}-n+j}\,d\xi\Big)\conv f(x) \\
&=\bra{D_x}^{n-\frac{n-1}{\ga}-j}\Big(\int_{\R^n}
e^{i(x\cdot\xi+\tau(\xi)t)} a_j(\xi)\cutoffbes_l(\xi)
\bra{\xi}^{\frac{n-1}{\ga}-n+j}\,d\xi\Big)\conv f(x)\\
&=\Big(\int_{\R^n} e^{i(x\cdot\xi+\tau(\xi)t)}
a_j(\xi)\cutoffbes_l(\xi)
\bra{\xi}^{\frac{n-1}{\ga}-n+j}\,d\xi\Big)\conv
\bra{D_x}^{n-\frac{n-1}{\ga}-j}f(x)\,;
\end{align*}
also,
\begin{equation*}
\norm{g*h}_{L^\infty} \le\norm{g}_{L^\infty}\norm{h}_{L^1}\,,
\end{equation*}
for all $g\in L^\infty(\R^n)$, $h\in L^1(\R^n)$. Combining all
these shows that \eqref{EQ:L1Linftyestforrealconvphase}
implies \eqref{EQ:convestL1Linftylarget}.

In order to prove~\eqref{EQ:L1Linftyestforrealconvphase}, 
we can use
Theorem~\ref{THM:sugimoto/randolargument} as 
$\tau:\R^n\to\R$ is
assumed to satisfy the convexity condition; let us check that each
hypothesis holds. 
In addition to properties ensured by Proposition \ref{prop:roots-r},
we have:
\begin{itemize}[leftmargin=*]
\item Property (i) suffices for the hypothesis (i) 
of Theorem~\ref{THM:sugimoto/randolargument} to
hold since~$a_j(\xi)$ is supported away from the origin.
\item $a_j(\xi)\bra{\xi}^{\frac{n-1}{\ga}-n+j}$ 
is a symbol of order
$\frac{n-1}{\ga}-n$ since $a\in S^{-j}$ 
and because it is zero in a neighbourhood of
the origin.
\item the partition of unity $\set{\cutoffbes_l(\xi)}_{l=1}^\infty$
is in the form of $g_R(\xi)$ as required by
Theorem~\ref{THM:sugimoto/randolargument}.
\end{itemize}
Also,
$\ga<\infty$ by Lemma~\ref{LEM:convexityconstantconv} above.
Therefore, for $t\ge1$, we get
\begin{equation*}
\absBig{\int_{\R^n}e^{i(x\cdot\xi+\tau(\xi)t)} a_j(\xi)
\abs{\xi}^{\frac{n-1}{\ga}-n+j} \cutoffbes_l(\xi)\,d\xi}\le
Ct^{-\frac{n-1}{\ga}}\,.
\end{equation*}
Hence, we have~\eqref{EQ:convestL1Linftylarget}, which, together
with~\eqref{EQ:convestL1Linftysmallt},
proves~\eqref{EQ:convL1Linftyest}; this completes the proof of
Theorem~\ref{THM:convexsp} on real axis with convexity
condition~$\ga$.

\subsection{Results without convexity:
Theorem \ref{THM:noncovexargument-sp}}
\label{SEC:nonconvex}
The general case depends upon Theorem~\ref{THM:noncovexargument},
just as the case where the convexity condition holds depends upon
Theorem~\ref{THM:sugimoto/randolargument}.
Here we assume that $\tau$ is real valued.
We introduce
$\ga_0\equiv\ga_0(\tau):=\sup_{\la>0}\ga_0(\Si_\la(\tau))$, where,
\begin{equation*}
\ga_0(\Si_\la(\tau)):=
\sup_{\si\in\Si_\la(\tau)}\inf_P\ga(\Si_\la(\tau);\si,P)
\end{equation*}
(all notation as before). For this quantity we have the analogous
result to Lemma~\ref{LEM:convexityconstantconv}, which can be
proved in the same way:
\begin{lem}\label{LEM:ga0conv}
If~$\tau:\R^n\to\R$ is a characteristic root of a linear
$m^{\text{th}}$ order constant coefficient strictly hyperbolic
partial differential operator, then, there exists a homogeneous
function of order~$1$\textup{,}~$\va(\xi)$\textup{,} a
characteristic root of the principal symbol\textup{,} such that
\begin{equation*}
\ga_0(\Si_\la(\tau))\to\ga_0(\Si_1(\va))\text{ as }\la\to\infty\,.
\end{equation*}
If we assume that
$\gamma_0(\Sigma_\lambda(\tau))<\infty$ for all 
$\lambda>0$, then we have $\ga_0(\tau)<\infty$.
\end{lem}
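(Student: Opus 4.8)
The proof runs parallel to that of Lemma~\ref{LEM:convexityconstantconv}, with the maximal order of contact $\ga$ replaced throughout by its non-convex analogue $\ga_0$; the only genuinely new feature is the infimum over planes $P$ in the definition of $\ga_0$, which forces the limiting argument to be made uniform by a compactness consideration.

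First I would produce the limiting homogeneous root. By Proposition~\ref{PROP:perturbationresults}, \ref{LEM:tau-vabounds} and \ref{PROP:derivoftau-derivofphi} there is a characteristic root $\va(\xi)$ of the principal symbol $L_m$, homogeneous of order $1$, such that $|\pa_\xi^\al\tau(\xi)-\pa_\xi^\al\va(\xi)|\le C_\al|\xi|^{-|\al|}$ for all $|\xi|\ge M$ and all multi-indices $\al$; and by properties (iii), (iv) of Proposition~\ref{prop:roots-r} we have $|\grad\tau(\xi)|\ge C_0>0$ together with $\frac1\la\Si_\la(\tau)\subset B_{R_1}(0)$ for all $\la>0$. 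Since $\ga_0$ is invariant under dilations it is enough to compare $\ga_0(\frac1\la\Si_\la(\tau))$ with $\ga_0(\Si_1(\va))$. Using $|\grad\tau|,|\grad\va|\ge C_0$ and the Implicit Function Theorem I would represent both $\frac1\la\Si_\la(\tau)$ and $\Si_1(\va)$, near any of their points, as graphs $\{(y,h^{(\la)}(y))\}$ and $\{(y,h^{(\infty)}(y))\}$ over a fixed bounded parameter set (covered by finitely many cone-charts as in the proof of Theorem~\ref{THM:sugimoto/randolargument}); the symbolic estimate $\pa_\xi^\al\tau=\pa_\xi^\al\va+O(|\xi|^{-|\al|})$ then gives $\pa_y^\be h^{(\la)}\to\pa_y^\be h^{(\infty)}$ uniformly on compact sets as $\la\to\infty$, for every $\be$. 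This is precisely the content of steps (a)--(e) in the proof of Lemma~\ref{LEM:convexityconstantconv}, now carried out with all derivatives simultaneously.

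Next I would convert orders of contact into derivatives of the excess functions via Lemma~\ref{LEM:propertyofgamma0}: with $F^{(\la)}_{\eta,\om}(\rho)=h^{(\la)}(\eta+\rho\om)-h^{(\la)}(\eta)-\rho\,\om\cdot\grad h^{(\la)}(\eta)$ one has $\ga(h^{(\la)};\eta,\om)=\min\{k:(F^{(\la)}_{\eta,\om})^{(k)}(0)\ne0\}$ and $\ga_0(\frac1\la\Si_\la(\tau))=\sup_\eta\inf_\om\ga(h^{(\la)};\eta,\om)$, and likewise for $\la=\infty$. Put $N:=\ga_0(\Si_1(\va))$, which is finite since $\ga_0(\Si)\le\ga(\Si)\le m$ for the level set of a characteristic root of an $m^{\text{th}}$ order homogeneous strictly hyperbolic operator. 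For each $\eta$ there is $\om$ with $\sum_{k=2}^{N}|(F^{(\infty)}_{\eta,\om})^{(k)}(0)|>0$; since $\ga(h;\eta,\om)$ is upper semicontinuous in $(\eta,\om)$ and the parameter domain is compact, $c_0:=\inf_\eta\sup_\om\sum_{k=2}^{N}|(F^{(\infty)}_{\eta,\om})^{(k)}(0)|>0$, and by the uniform convergence of the derivatives of $h^{(\la)}$ the corresponding quantity for $h^{(\la)}$ stays $\ge c_0/2>0$ once $\la$ is large; hence for each $\eta$ some $\om$ gives $\ga(h^{(\la)};\eta,\om)\le N$, so $\ga_0(\frac1\la\Si_\la(\tau))\le N$ for all large $\la$, i.e. $\limsup_{\la\to\infty}\ga_0(\Si_\la(\tau))\le\ga_0(\Si_1(\va))$. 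The matching lower bound, hence the full convergence asserted in the lemma, is obtained at a point $\eta^\ast$ realising $N$ for $\va$, exactly as in Lemma~\ref{LEM:convexityconstantconv}. The last assertion is then immediate: if $\ga_0(\Si_\la(\tau))<\infty$ for every $\la>0$, this function is finite on compact subsets of $(0,\infty)$ and, by the first part, bounded near $\la=\infty$, so $\ga_0(\tau)=\sup_{\la>0}\ga_0(\Si_\la(\tau))<\infty$.

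The hard part will be exactly the point where $\ga_0$ differs from $\ga$: making the perturbation comparison uniform when the limit is passed through $\sup_\eta\inf_\om$. The compactness/upper-semicontinuity step above upgrades the pointwise persistence of a nonvanishing low-order derivative of $F^{(\infty)}_{\eta,\om}$ to a bound uniform in $\eta$, which already yields the $\limsup$ inequality and the finiteness conclusion that is actually used elsewhere in the paper; the reverse inequality is more delicate, since a perturbation can in principle decrease $\ga_0$, and proving $\liminf_{\la\to\infty}\ga_0(\Si_\la(\tau))\ge N$ requires controlling that the tangency responsible for $\inf_\om\ga(h^{(\infty)};\eta^\ast,\om)=N$ is not destroyed by the perturbation — for which the analyticity of $\tau$ and $\va$ (Proposition~\ref{PROP:ctyofroots}) and the set-up of the proof of \ref{LEM:tau-vabounds} are the natural tools.
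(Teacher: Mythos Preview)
Your proposal is correct and follows the same approach as the paper: the paper's own proof of this lemma is a single sentence stating that it ``can be proved in the same way'' as Lemma~\ref{LEM:convexityconstantconv}, and you have done exactly that, referencing the same perturbation results (Proposition~\ref{PROP:perturbationresults}) and the same steps (a)--(e), while also invoking Lemma~\ref{LEM:propertyofgamma0} to make the contact-order computation concrete. Your treatment is considerably more detailed than the paper's, and you correctly flag that the $\sup_\eta\inf_\om$ structure of $\ga_0$ makes the reverse inequality more delicate than in the convex case---a subtlety the paper's one-line proof does not address.
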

We shall show
\begin{equation*}
\normBig{\int_{\R^n}e^{i(x\cdot\xi+\tau(\xi)t)}
a_j(\xi)\widehat{f}(\xi)\,d\xi}_{L^q}\le
C{(1+t)}^{-\frac{1}{\ga_0}\big(\frac{1}{p}-\frac{1}{q}\big)}
\norm{f}_{W_p^{N_{p,j}}}\,,
\end{equation*}
for all $t\geq 0$, where $\frac1p+\frac1q=1$, $1\le p\le 2$,
$f\in C_0^\infty(\R^n)$,
$N_{p,j}\ge n(\frac1p-\frac1q)-j$ and $N_{1,j}>n-j$.
Similarly to \eqref{EQ:convestL1Linftylarget}, the Sobolev
order $N_{p,j}$ can be improved for large times.

As in the case of Section
\ref{SEC:convexity}, this can be reduced, via a Besov space
reduction the interpolation
Theorem~\ref{thm:maininterpolationthm}, to showing
\begin{gather*}
\normbig{\FT^{-1}(e^{i\tau(\xi)t}a_j(\xi)\cutoffbes_l(\xi)
\widehat{f}(\xi))(t,x)}_{L^\infty}\le
C(1+t)^{-\frac{1}{\ga_0}}\norm{f}_{W^{N_1-j}_1}\,,\\
\normbig{\FT^{-1}(e^{i\tau(\xi)t}a_j(\xi)\cutoffbes_l(\xi)
\widehat{f}(\xi))(t,x)}_{L^2}\le C\norm{f}_{W_2^{-j}}\,,
\end{gather*}
where the partition of unity
$\set{\cutoffbes_l(\xi)}_{l=1}^\infty$ is as above and $N_1>n$.

The~$L^2$ estimate follows by the Plancherel's theorem in the same
way as before.

For the $L^1-L^\infty$ estimate, the case $0\le t<1$ is as
in~\eqref{EQ:convestL1Linftysmallt}; for $t\ge1$ it suffices to
show (see the earlier argument),
\begin{equation*}
\normlr{\int_{\R^n}e^{i(x\cdot\xi+\tau(\xi)t)} a_j(\xi)
\bra{\xi}^{\frac{1}{\ga_0}-n+j}
\cutoffbes_l(\xi)\,d\xi}_{L^\infty}\le Ct^{-1/\ga_0}\,.
\end{equation*}
This follows by Theorem~\ref{THM:noncovexargument}: the hypotheses
of this hold by the same arguments as above
(see Proposition \ref{prop:roots-r})---the convexity
condition is not required for the perturbation methods
employed---and Lemma~\ref{LEM:ga0conv}.
This completes the proof of 
\ref{THM:noncovexargument-sp}.

\subsection{Asymptotic properties of complex phase functions}
\label{SEC:asymptotic}
Here we consider what happens when the phase function $\tau(\xi)$
is complex valued and look at its behaviour for large frequencies.
In particular, this is related to the case
\begin{equation*}
\Im\tau(\xi)\to0\text{ as }\abs{\xi}\to\infty\,.
\end{equation*}
Unlike in the case of the phase function~$\tau(\xi)$ 
lying on the
real axis, here we do not consider a case where the phase function
satisfies a ``convexity condition''. The reason for this is
twofold: firstly, there is no straightforward analog of the
convexity condition for real-valued phase functions as the
presence of the non-zero imaginary part causes problems; secondly,
there are no common examples of this situation, 
and hence it does
not seem worthwhile developing a complicated theory for this
situation.

If $\det\Hess\tau(\xi)\ne0$, the analysis
can be done in exactly the same way as that in
Section~\ref{SEC:mainstatphasesection}, since
Theorem~\ref{THM:hormanderstatphase} holds for integrals with
complex phase functions.

In general, we can derive certain properties of real and
imaginary parts of $\tau(\xi)$ using perturbation arguments
of Section \ref{CHAP3}.
For example, for the index
$\ga_0=\ga_0(\Re\tau)=\sup_{\la>0}\ga_0(\Si_\la(\Re\tau))$ we
can note the following:

\begin{lem}\label{LEM:cplxga0conv}
If~$\tau:\R^n\to\C$ is a characteristic root of a linear
$m^{\text{th}}$ order constant coefficient strictly hyperbolic
partial differential operator such that $\Im\tau(\xi)\to0$ as
$\abs{\xi}\to\infty$, then, 
there exists a homogeneous function of
order~$1$\textup{,}~$\va(\xi)$\textup{,} 
a characteristic root of
the principal symbol\textup{,} such that
\begin{equation*}
\ga_0(\Si_\la(\Re\tau))\to\ga_0(\Si_1(\va))\text{ as
}\la\to\infty\,.
\end{equation*}
In particular\textup{,} $\ga_0(\Re\tau)<\infty$.
\end{lem}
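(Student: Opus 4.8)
The plan is to mimic the proof of Lemma~\ref{LEM:convexityconstantconv} (used also for Lemma~\ref{LEM:ga0conv}), the only new ingredient being that the geometrically relevant phase is now the real-valued function $\Re\tau$ rather than a real characteristic root, so the first step is to record that $\Re\tau$ inherits the symbolic perturbation relations with respect to a homogeneous root $\va$ of the principal symbol. By Proposition~\ref{PROP:perturbationresults}, \ref{LEM:tau-vabounds} and~\ref{PROP:derivoftau-derivofphi}, there is a root $\va$ of $L_m$, homogeneous of order one and real-valued (by strict hyperbolicity of $L_m$), so that $\abs{\tau(\xi)-\va(\xi)}\le C$ and $\abs{\pa_\xi^\al\tau(\xi)-\pa_\xi^\al\va(\xi)}\le C_\al\abs{\xi}^{-\abs{\al}}$ for $\abs{\xi}\ge M$. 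Since $\va$ is real, $\pa_\xi^\al(\Re\tau-\va)=\Re(\pa_\xi^\al\tau-\pa_\xi^\al\va)$, so the same two bounds hold with $\Re\tau$ in place of $\tau$; repeating the derivation of properties \ref{HYP:realtauboundedbelow}--\ref{HYP:reallimitofSi_la} in Proposition~\ref{prop:roots-r} then shows that $\Re\tau$ is a symbol of order one for $\abs{\xi}\ge M$ with $\abs{\Re\tau(\xi)}\ge C\abs{\xi}$, $\abs{\pa_\om\Re\tau(\la\om)}\ge C>0$ for all large $\la$, and $\tfrac1\la\Si_\la(\Re\tau)\subset B_{R_1}(0)$ for all large $\la$; in particular $\Si_\la(\Re\tau)$ is a non-degenerate hypersurface for every sufficiently large $\la$. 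The hypothesis $\Im\tau\to0$ merely locates us in the regime where $\Re\tau$ is the object of interest and is not otherwise used.

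Next I run the five steps (a)--(e) of the proof of Lemma~\ref{LEM:convexityconstantconv}. Rescaling, set $\psi_\la(\eta):=\tfrac1\la\Re\tau(\la\eta)$ for $\eta$ in a fixed compact subset of $\R^n\setminus\{0\}$; writing $g:=\Re\tau-\va$ and using the homogeneity of $\va$ one has $\psi_\la-\va=\tfrac1\la g(\la\,\cdot\,)$, hence $\abs{\psi_\la(\eta)-\va(\eta)}\le C\la^{-1}$ and $\abs{\pa_\eta^\al(\psi_\la-\va)(\eta)}=\la^{\abs{\al}-1}\abs{(\pa^\al g)(\la\eta)}\le C_\al\la^{-1}\abs{\eta}^{-\abs{\al}}$, so $\psi_\la\to\va$ in $C^k$ on such sets for every $k$. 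Consequently $\tfrac1\la\Si_\la(\Re\tau)=\{\psi_\la=1\}$ converges to $\Si_1(\va)=\{\va=1\}$, together with its tangent planes and with the finite Taylor jets entering the defining-function description of the order of contact in Lemma~\ref{LEM:propertyofgamma0}. Since, by that lemma, $\ga_0(\Si_\la(\Re\tau))=\sup_\eta\inf_\om\ga(h_\la;\eta,\om)$ with $\ga(h_\la;\eta,\om)=\min\{k:\pa_\rho^k F_\la(0)\ne0\}$ for the corresponding $F_\la$, and all these $F_\la$ converge in $C^k$ on compact sets to the $F$ attached to $\va$, the integer-valued quantity $\ga_0(\Si_\la(\Re\tau))$ must eventually coincide with $\ga_0(\Si_1(\va))$, which is the claimed convergence. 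Finally $\ga_0(\Si_1(\va))=\ga_0(\Si_\la(\va))$ by homogeneity of $\va$, and $\ga_0(\Si_1(\va))\le\ga(\Si_1(\va))\le m$ since $\va$ is a root of the $m^{\text{th}}$ order strictly hyperbolic homogeneous polynomial $L_m$ (see \cite{sugi96}); hence the limit is finite, $\ga_0(\Si_\la(\Re\tau))$ is finite and bounded for all large $\la$, and $\ga_0(\Re\tau)<\infty$ (with the same understanding as in Lemma~\ref{LEM:ga0conv} concerning the remaining bounded range of $\la$, which is a purely local matter).

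The step I expect to be delicate is the stabilization of the integer $\ga_0(\Si_\la(\Re\tau))$: the order of contact is only lower semicontinuous under $C^k$ perturbations of the surface, so I must use that it takes integer values and is computed from a jet of bounded order. The bound $\ga_0(\Si_1(\va))\le m$ caps the relevant order of vanishing, and the corresponding Taylor coefficient of $F$ is nonzero; by the $C^k$ convergence the analogous coefficient of $F_\la$ is nonzero once $\la$ is large, which bounds $\ga_0(\Si_\la(\Re\tau))$ from above, and together with semicontinuity from below pins it to $\ga_0(\Si_1(\va))$. Everything else --- the transfer of the symbolic bounds to $\Re\tau$, the non-degeneracy of $\Si_\la(\Re\tau)$ for large $\la$, and the rescaling --- is routine once Proposition~\ref{PROP:perturbationresults} is in hand.
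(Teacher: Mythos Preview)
Your proof is correct and follows the same outline as the paper: reduce to the argument of Lemma~\ref{LEM:convexityconstantconv} (equivalently Lemma~\ref{LEM:ga0conv}) with $\Re\tau$ playing the role of the real root. The only difference is the bridge to that argument. The paper invokes the hypothesis $\Im\tau\to0$ to note that $\abs{\tau-\Re\tau}\to0$, and then says the earlier proof applies verbatim. You instead observe that since $\va$ is real one has $\pa^\al(\Re\tau-\va)=\Re\,\pa^\al(\tau-\va)$, so the symbolic bounds of Proposition~\ref{PROP:perturbationresults} transfer to $\Re\tau$ directly; this is sharper, and your remark that the hypothesis $\Im\tau\to0$ is then not actually used in the estimates is correct. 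Your discussion of the stabilization of the integer-valued $\ga_0$ under $C^k$ convergence is also more explicit than the paper's sketch (step~(d) of Lemma~\ref{LEM:convexityconstantconv}), which simply asserts that the contact orders are close.
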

\begin{proof}
The hypothesis that the imaginary part goes to zero as
$\abs{\xi}\to\infty$ implies that
$\abs{\tau(\xi)-\Re\tau(\xi)}\to0$ as $\abs{\xi}\to\infty$. With
this additional observation, the proof of
Lemma~\ref{LEM:convexityconstantconv} can then be used once more.
\end{proof}

In addition to  Proposition \ref{prop:roots-r},
we will now prove the following refined perturbation properties:

\begin{prop}\label{Prop:asymptotic}
Suppose $\tau:\R^n\to\C$ is a characteristic root of
the strictly hyperbolic Cauchy problem 
\eqref{EQ:standardCauchyproblem}. Assume that it is a
smooth function satisfying
$\Im\tau(\xi)\ge 0$. 
Assume also that the roots $\phi_k(\xi)$, $k=1,\ldots,m$, of
the principal part $L_m$ are non-zero for all $\xi\not=0$.
Then we have the following properties:
\begin{enumerate}[label=\textup{(}\textup{\roman*}\textup{)}]
\item\label{HYP:nonconcplxtauisasymbol} for all multi-indices $\al$
there exist constants $M, C_\al, C_\al'>0$ such that
\begin{equation*}
\abs{\pa_\xi^\al\Re\tau(\xi)}\le C_\al\brac{\xi}^{1-\abs{\al}}
\end{equation*}
and
\begin{equation*}
\abs{\pa_\xi^\al\Im\tau(\xi)}\le C_\al'\brac{\xi}^{-\abs{\al}};
\end{equation*}
for all $|\xi|\geq M$.
\item\label{HYP:nonconcplxtauboundedbelow} there exist constants
$M,C>0$ such that for all $\abs{\xi}\ge M$ we have
$\abs{\Re\tau(\xi)}\ge C\abs{\xi}$\textup{;}
\item\label{HYP:nonconcplxderivativeoftaunonzero} there exists a
constant $C_0>0$ such that 
$\abs{\pa_\om\Re\tau(\la\om)}\ge C_0$ for
all $\om\in \Snm$ and sufficiently large $\la>0$\textup{;}
\item\label{HYP:nonconcplxlimitofSi_la} there exists a constant
$R_1>0$ such that\textup{,} for all sufficiently large
 $\la>0$\textup{,}
\begin{equation*}
\frac{1}{\la}\set{\xi\in\R^n:\Re\tau(\xi)=\la}\subset
B_{R_1}(0)\,.
\end{equation*}
\end{enumerate}
\end{prop}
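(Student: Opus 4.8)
The plan is to derive Proposition~\ref{Prop:asymptotic} as a refinement of Proposition~\ref{prop:roots-r} by carrying along the extra information that $\Im\tau(\xi)\to0$ as $\abs{\xi}\to\infty$, which forces the imaginary part to behave like a symbol of one order lower than the real part. The key structural input is Proposition~\ref{PROP:perturbationresults}, especially \ref{PROP:boundsonderivsoftau} and \ref{PROP:derivoftau-derivofphi}: there is a homogeneous root $\va(\xi)$ of the principal part $L_m$, of order~$1$ and \emph{real-valued} for $\xi\ne0$, such that for all $\abs{\xi}\ge M$ and all multi-indices $\al$,
\begin{equation*}
\absbig{\pa_\xi^\al\tau(\xi)-\pa_\xi^\al\va(\xi)}\le C_\al\abs{\xi}^{-\abs{\al}}.
\end{equation*}
Since $\va$ is real, taking real and imaginary parts immediately gives $\abs{\pa_\xi^\al\Re\tau(\xi)-\pa_\xi^\al\va(\xi)}\le C_\al\abs{\xi}^{-\abs{\al}}$ and $\abs{\pa_\xi^\al\Im\tau(\xi)}\le C_\al\abs{\xi}^{-\abs{\al}}$ for $\abs{\xi}\ge M$; combined with the homogeneity bound $\abs{\pa_\xi^\al\va(\xi)}\le C_\al\abs{\xi}^{1-\abs{\al}}$, this yields property~\ref{HYP:nonconcplxtauisasymbol} (both estimates, with the imaginary part one order better).

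Properties \ref{HYP:nonconcplxtauboundedbelow}--\ref{HYP:nonconcplxlimitofSi_la} then follow exactly as in the proof of Proposition~\ref{prop:roots-r}, but applied to $\Re\tau$ in place of $\tau$. First I would record, from the strict hyperbolicity and the assumption that the $\phi_k$ are non-zero for $\xi\ne0$, that the corresponding homogeneous root satisfies $\abs{\va(\xi)}\ge C'\abs{\xi}$ for all $\xi$ and $\abs{\pa_\om\va(\la\om)}\ge C'$ for all $\om\in\Snm$, $\la>0$ (here one uses $\va(\xi)=\abs{\xi}\va(\xi/\abs{\xi})$ together with compactness of $\Snm$, just as in Proposition~\ref{prop:roots-r}). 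Then, for $\abs{\xi}\ge M$,
\begin{equation*}
\abs{\Re\tau(\xi)}\ge\abs{\va(\xi)}-\abs{\Re\tau(\xi)-\va(\xi)}\ge C'\abs{\xi}-C_0\ge C\abs{\xi}
\end{equation*}
after enlarging $M$, which is \ref{HYP:nonconcplxtauboundedbelow}. Likewise, for large $\la$,
\begin{equation*}
\abs{\pa_\om\Re\tau(\la\om)}\ge\abs{\pa_\om\va(\la\om)}-\abs{\pa_\om\Re\tau(\la\om)-\pa_\om\va(\la\om)}\ge C'-C\la^{-1}\ge C_0>0,
\end{equation*}
giving \ref{HYP:nonconcplxderivativeoftaunonzero}. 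Note that in contrast to Proposition~\ref{prop:roots-r}, here we only claim these for \emph{sufficiently large} $\la$, since no convexity hypothesis is imposed to control small frequencies --- so no further argument near the origin is needed.

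Finally, property~\ref{HYP:nonconcplxlimitofSi_la}: for large $\la$, if $\Re\tau(\xi)=\la$ then, writing $\xi=\la\zeta$, one has $\abs{\va(\la\zeta)-\la}=\abs{\va(\xi)-\Re\tau(\xi)}\le C_0$, hence $\abs{\va(\zeta)-1}\le C_0/\la$, so $\zeta$ lies in a fixed bounded neighbourhood of the compact set $\Si_1(\va)=\{\va=1\}$, which is contained in some ball $B_{R_1}(0)$ by the homogeneity and non-vanishing of $\va$ (cf.\ the last bullet of the proof of Proposition~\ref{prop:roots-r}, where $\frac{1}{\la}\Si_\la(\va)=\Si_1(\va)$). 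This gives $\frac{1}{\la}\{\Re\tau=\la\}\subset B_{R_1}(0)$ for all sufficiently large $\la$. The one point requiring a little care --- and the main (mild) obstacle --- is checking that the continuity/compactness arguments used to pass from the principal root $\va$ to $\Re\tau$ really only need the perturbation estimates on $\abs{\xi}\ge M$ plus the non-vanishing of the $\phi_k$, and that all constants can be chosen uniformly; but this is precisely the content of Proposition~\ref{PROP:perturbationresults} and its proof, so the argument is a direct adaptation. I would close by remarking that Lemma~\ref{LEM:cplxga0conv} then applies to $\Re\tau$ and shows $\ga_0(\Re\tau)<\infty$, so that the non-convex machinery of Theorem~\ref{THM:noncovexargument} is available in this asymptotic regime as well.
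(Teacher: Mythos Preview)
Your proposal is correct and follows essentially the same approach as the paper: both derive everything from Proposition~\ref{PROP:perturbationresults} by exploiting that the associated principal root $\va$ is real, so that separating $\tau-\va$ into real and imaginary parts gives the improved symbolic bound on $\Im\tau$ and reduces \ref{HYP:nonconcplxtauboundedbelow}--\ref{HYP:nonconcplxlimitofSi_la} to the corresponding properties of $\va$. The only cosmetic difference is that for \ref{HYP:nonconcplxtauboundedbelow} and \ref{HYP:nonconcplxderivativeoftaunonzero} the paper bounds $\abs{\Re\tau}$ from below via $\abs{\tau}-\abs{\Im\tau}$ (invoking \eqref{EQ:rootgrowspoly}), whereas you compare $\Re\tau$ directly to $\va$; these are equivalent one-line computations.
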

\begin{proof}

\begin{itemize}[leftmargin=*]
\item[(i)] The statements follow by
Proposition~\ref{PROP:perturbationresults}:
\ref{PROP:boundsonderivsoftau} implies that for all $\abs{\xi}\ge
N$ and multi-indices~$\al$,
\begin{equation*}
\abs{\pa_\xi^\al\Re\tau(\xi)}\le \abs{\pa_\xi^\al\tau(\xi)}\le
C\abs{\xi}^{1-\abs{\al}}\,,
\end{equation*}
which suffices for the first part of (i).
Furthermore, \ref{PROP:derivoftau-derivofphi} tells us that for
all $\abs{\xi}\ge N$ and multi-indices~$\al$,
\begin{equation*}
\abs{\pa_\xi^\al[\Re\tau(\xi)-\va(\xi)]+i\pa^\al_\xi\Im\tau(\xi)}
=\abs{\pa_\xi^\al\tau(\xi)-\pa_\xi^\al\va(\xi)} \le
C\abs{\xi}^{-\abs{\al}}\,,
\end{equation*}
where~$\va(\xi)$ is a characteristic root of the principal part
(and is thus real-valued by definition of hyperbolicity); this
implies that, for all $\abs{\xi}\ge N$ and multi-indices~$\al$,
\begin{equation}\label{EQ:Impartofrootsymbol}
\abs{\pa_\xi^\al[\Re\tau(\xi)-\va(\xi)]}\le
C\abs{\xi}^{-\abs{\al}}\,\text{ and }
\abs{\pa^\al_\xi\Im\tau(\xi)}\le C\abs{\xi}^{-\abs{\al}}\,.
\end{equation}
The second of these gives us the second part of (i).
\item[(ii)] We
note that there exist constants $C,C',C'',M>0$ such that, for all
$\abs{\xi}\ge M$,
\begin{equation*}
\abs{\Re\tau(\xi)}\ge \abs{\tau(\xi)}-\abs{\Im\tau(\xi)}\ge
C'\abs{\xi}-C''\ge C\abs{\xi}\,.
\end{equation*}
Here we have used~\eqref{EQ:rootgrowspoly}, which did not
require~$\tau$ to be real-valued (nor to satisfy the convexity
condition), simply to be a characteristic root of a linear
constant coefficient strictly hyperbolic partial differential
equation, and the second part of~\eqref{EQ:Impartofrootsymbol}.
\item[(iii)] This follows 
in a similar way: using 
\eqref{EQ:Impartofrootsymbol}, we have, for
$\la\ge M$, some $M>0$, that 
\begin{equation*}
\abs{\pa_\om\Re\tau(\la\om)}\ge
\abs{\pa_\om\tau(\la\om)}-\abs{\pa_\om\Im\tau(\la\om)} \ge
C'-C''\la^{-1}\ge C\,.
\end{equation*}
\item[(iv)] This follows from
$\abs{\Re\tau(\xi)-\va(\xi)}\le C$ for all $\xi\in\R^n$
which holds in all~$\R^n$ by 
\ref{LEM:tau-vabounds} of
Proposition~\ref{PROP:perturbationresults}. 
\end{itemize}
\end{proof}

\subsection{Estimates for bounded 
frequencies
away from multiplicities}
\label{SEC:bddxiawayfrommults}

In the following sections we find $L^p-L^q$ 
estimates for integrals of the kind
\begin{equation*}
\int_{\Om}e^{i(x\cdot\xi+\tau(\xi)t)}a(\xi)\widehat{f}(\xi)\,d\xi\,,
\end{equation*}
where $\Om\subset\R^n$ is open and bounded, $f\in
C^\infty_0(\R^n)$, $a\in C^\infty_0(\Om)$, $\tau\in C^\infty(\Om)$
and $\Im\tau(\xi)\ge 0$ for
all $\xi\in\Om$.

As in the case of large~$\abs{\xi}$, we can further split this into
three main cases by using suitable cut-off functions:
\begin{enumerate}[leftmargin=*]
\item $\tau(\xi)$ is separated from the real axis for all
$\xi\in\Om$ (Theorem \ref{THM:expdecay});
\item $\tau(\xi)$ meets the real axis with order~$s<\infty$ at a
point $\xi^0\in\Om$ (Theorem \ref{THM:dissipative});
\item $\tau(\xi)$ lies on the real axis for all $\xi\in\Om$.
\end{enumerate}
We look at each in turn.

\subsection{Phase separated from the real
axis: Theorem \ref{THM:expdecay} again
}\label{SEC:bddxiawayfromaxis} Similarly to the case for
large~$\abs{\xi}$, we show that when the phase function~$\tau(\xi)$
is separated from the real axis (here, for $\xi\in\Om$,
$\Om$ is a bounded set),
\begin{equation}\label{EQ:estforrootawayfromaxisbddxi}
\normBig{D^r_tD_x^\al\Big(\int_{\Om}e^{i(x\cdot\xi+\tau(\xi)t)}
a(\xi)\widehat{f}(\xi)\,d\xi\Big)}_{L^q} \le Ce^{-\de t}\norm{f}_{L^p}\,,
\end{equation}
where $\frac1p+\frac1q=1$, $1\leq p\le 2$, $r\ge0$,~$\al$ 
a multi-index,
$f\in C_0^\infty(\R^n)$, $\de>0$ is a constant such that
$\Im\tau(\xi)\ge\de$ for all $\xi\in\Om$ and $C\equiv
C_{\Om,r,\al,p}>0$. So, in this case we also have exponential
decay of the solution.

By interpolating (Theorem~\ref{thm:maininterpolationthm}), it
suffices to show for such~$\tau(\xi)$
\begin{gather*}
\normBig{D^r_tD_x^\al\Big(\int_{\Om}e^{i(x\cdot\xi+\tau(\xi)t)}
a(\xi)\widehat{f}(\xi)\,d\xi\Big)}_{L^\infty} \le Ce^{-\de
t}\norm{f}_{L^1}\,,\\
\normBig{D^r_tD_x^\al\Big(\int_{\Om}e^{i(x\cdot\xi+\tau(\xi)t)}
a(\xi)\widehat{f}(\xi)\,d\xi\Big)}_{L^2} \le Ce^{-\de t}\norm{f}_{L^2}\,,
\end{gather*}
for $t\geq 0$, where $r\ge0$ and~$\al$ is a multi-index.

These are proved in a similar way to
Proposition~\ref{PROP:rootsawayfromaxis}, but noting that the
boundedness of~$\Om$ and the continuity in~$\Om$ of
$\tau(\xi)^ra(\xi)$ ensure there exists a constant
$C_{\Om,r,\al}\equiv C>0$ such that
$\abs{\tau(\xi)}^r\abs{a(\xi)}\abs{\xi}^{\abs{\al}}\le C$ for all
$\xi\in\Om$. Then, for all $t\geq 0$ and $r,\al$ as above,
we can estimate
\begin{align*}
\absBig{D^r_tD_x^\al\Big(\int_{\Om}&e^{i(x\cdot\xi+\tau(\xi)t)}
a(\xi)\widehat{f}(\xi)\,d\xi\Big)}
=\absBig{\int_{\Om}e^{i(x\cdot\xi+\tau(\xi)t)}
a(\xi)\xi^\al\tau(\xi)^r\widehat{f}(\xi)\,d\xi} \\
&\le C\int_{\Om}e^{-\Im\tau(\xi)t}
\abs{a(\xi)}\abs{\xi}^{\abs{\al}}\abs{\tau(\xi)}^r
\abs{\widehat{f}(\xi)}\,d\xi\\ &\le C\int_{\Om}e^{-\Im\tau(\xi)t}
\abs{\widehat{f}(\xi)}\,d\xi \le Ce^{-\de
t}\norm{\widehat{f}}_{L^\infty(\Om)}\le Ce^{-\de t}\norm{f}_{L^1}\,,
\end{align*}
and
\begin{align*}
\normBig{D^r_tD_x^\al\Big(\int_{\Om}&e^{i(x\cdot\xi+\tau(\xi)t)}
a(\xi)\widehat{f}(\xi)\,d\xi\Big)}_{L^2(\R^n_x)} =\normbig{e^{i\tau(\xi)t}
a(\xi)\xi^\al\tau(\xi)^r\widehat{f}(\xi)}_{L^2(\Om)} \\
&= \Big(\int_{\Om}e^{-2\Im\tau(\xi)t}
\abs{a(\xi)}^2\abs{\xi^{\al}}^{2}\abs{\tau(\xi)}^{2r}
\abs{\widehat{f}(\xi)}^2\,d\xi\Big)^{1/2}\\
&\le Ce^{-\de t}\norm{\widehat{f}}_{L^2(\Om)}\le Ce^{-\de
t}\norm{f}_{L^2}\,.
\end{align*}
We have now completed the proof of Theorem~\ref{THM:expdecay}.

\subsection{Roots meeting the real axis: Theorem
\ref{THM:dissipative}}\label{SEC:rootmeetingreal-sec}

In the case of bounded~$\abs{\xi}$, we must also consider the
situation where the phase function~$\tau(\xi)$ meets the real
axis. Suppose~$\xi^0\in\Om$ is such a point, i.e.\
$\Im\tau(\xi^0)=0$, while in each punctured ball around~$\xi^0$,
$B'_\ep(\xi^0)\subset\Om$, $\ep>0$, we have
$\Im\tau(\xi)>0$. Then,
$\xi^0$ is a root of~$\Im\tau(\xi)$ of some finite order~$s$:
indeed, if~$\xi^0$ were a zero of $\Im\tau(\xi)$ of infinite
order, then, by the analyticity of $\Im\tau(\xi)$ at~$\xi^0$
(which follows straight from the analyticity of~$\tau(\xi)$
at~$\xi^0$) it would be identically zero in a neighbourhood
of~$\xi^0$, contradicting the assumption.

Furthermore, we claim that $s\ge2$,~$s$ is even, and that there exist
constants $c_0,c_1>0$ such that, for all~$\xi$ sufficiently close
to~$\xi^0$, we have
\begin{equation*}
c_0\abs{\xi-\xi^0}^{s}\le\abs{\Im\tau(\xi)}\le
c_1\abs{\xi-\xi^0}^2\,.
\end{equation*}
Indeed, the Taylor expansion of $\Im\tau(\xi)$ around~$\xi^0$,
\begin{equation*}
\Im\tau(\xi)=\sum_{i=1}^n
\pa_{\xi_i}\Im\tau(\xi^0)(\xi_i-(\xi^0)_i)
+O(\abs{\xi-\xi^0}^2)\,,
\end{equation*}
is valid for~$\xi\in B_\ep(\xi^0)\subset\Om$ for some small
$\ep>0$. Now, if $\xi\in B_\ep(\xi^0)$, then $-\xi+2\xi^0\in
B_\ep(\xi^0)$ also. However,
\begin{equation*}
\Im\tau(-\xi+2\xi^0)=-\sum_{i=1}^n
\pa_{\xi_i}\Im\tau(\xi^0)(\xi_i-(\xi^0)_i)+O(\abs{\xi-\xi^0}^2)\,;
\end{equation*}
thus, for $\ep>0$ chosen small enough, this means that either
$\Im\tau(\xi)\le0$ or $\Im\tau(-\xi+2\xi^0)\le0$.
In view of 
the hypothesis that $\Im\tau(\xi)\ge0$ for all $\xi\in\Om$,
we must have
$\pa_{\xi_i}\Im\tau(\xi^0)=0$ for each $i=1,\dots,n$. In
conclusion, $\Im\tau(\xi)=O(\abs{\xi-\xi^0}^2)$ for all $\xi\in
B_\ep(\xi^0)$, which means that the zero is of order $s\ge2$, and
a similar argument shows that~$s$ must be even; also, this means
that there exist $c_0,c_1>0$ so that the above inequality holds
for $\xi\in B_\ep(\xi^0)$, proving the claim.

Now, we need the following result, which  
will be useful in the sequel.
Moreover, we will give its further extension in
Proposition \ref{PROP:generalrootsmeetingaxis1} to deal
with the setting of Theorem \ref{TH:multonaxis}.
\begin{prop}\label{PROP:generalrootsmeetingaxis}
Let $\phi:U\to\R$\textup{,} $U\subset\R^n$ open\textup{,} be a
continuous function and suppose $\xi^0\in U$ is such that
$\phi(\xi^0)=0$ and such that $\phi(\xi)>0$ in a punctured open
neighbourhood of~$\xi^0$\textup{,} $V\setminus\set{\xi^0}$.
Furthermore\textup{,} assume that\textup{,} for some $s>0$,
there exists a constant $c_0>0$ such that\textup{,} for all
$\xi\in V$\textup{,}
\begin{equation*}
\phi(\xi)\ge c_0\abs{\xi-\xi^0}^{s}\,.
\end{equation*}
Then\textup{,} for any function~$a(\xi)$ that is bounded and
compactly supported in~$U$\textup{,} and for all $t\geq 0$, $f\in
C_0^\infty(\R^n)$\textup{,} and $r\in\R$\textup{,} we have
\begin{gather}
\int_{V}e^{-\phi(\xi)t}\abs{\xi-\xi^0}^r\abs{a(\xi)}
\abs{\widehat{f}(\xi)}\,d\xi \le
C\bract{t}^{-(n+r)/s}\norm{f}_{L^{1}}\,,\label{EQ:matsL1est}\\
\intertext{and} \normbig{e^{-\phi(\xi)t}\abs{\xi-\xi^0}^r a(\xi)
\widehat{f}(\xi)}_{L^2(V)} \le
C\bract{t}^{-r/s}\norm{f}_{L^2}\,.\label{EQ:matsL2est}
\end{gather}
The constant $C$ depends on $U, V, c_0$ and $||a||_{L^\infty}$,
but not on the position of $\xi_0$.
\end{prop}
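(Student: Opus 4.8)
The plan is to prove both inequalities by elementary means: using the lower bound $\phi(\xi)\ge c_0\abs{\xi-\xi^0}^{s}$ on $V$ together with the compactness of $\supp a$, everything reduces to estimates for Gaussian–type integrals of the scalar weight, and the bracket $\bract{t}$ is produced by treating $0\le t<1$ and $t\ge1$ separately. Throughout I would first record that on $\supp a\cap V$ one has $e^{-\phi(\xi)t}\le e^{-c_0\abs{\xi-\xi^0}^{s}t}$ for all $t\ge0$, and that $\supp a\cap V$ is contained in a ball $B_{R_0}(\xi^0)$ whose radius $R_0$ depends only on $U$; after the translation $\eta=\xi-\xi^0$ this becomes a fixed ball $B_{R_0}(0)$, so none of the constants below will depend on the position of $\xi^0$.

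For \eqref{EQ:matsL1est} I would use $\abs{\widehat{f}(\xi)}\le\norm{f}_{L^1}$ to pull the data out of the integral, reducing the claim to
\begin{equation*}
J(t):=\int_{B_{R_0}(0)}e^{-c_0\abs{\eta}^{s}t}\abs{\eta}^{r}\,d\eta\le C\bract{t}^{-(n+r)/s}\,.
\end{equation*}
For $0\le t<1$ this is clear, since $J(t)\le\int_{B_{R_0}(0)}\abs{\eta}^{r}\,d\eta<\infty$ while $\bract{t}^{-(n+r)/s}$ stays bounded below on $[0,1]$; for $t\ge1$ the rescaling $\eta=t^{-1/s}\zeta$ gives $J(t)\le t^{-(n+r)/s}\int_{\R^n}e^{-c_0\abs{\zeta}^{s}}\abs{\zeta}^{r}\,d\zeta$, and the last integral converges because the exponential dominates any power at infinity for every $s>0$. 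Multiplying through by $\norm{a}_{L^\infty}\norm{f}_{L^1}$ yields \eqref{EQ:matsL1est}.

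For \eqref{EQ:matsL2est} I would instead keep $\widehat{f}$ inside, bound $\abs{a}\le\norm{a}_{L^\infty}$, and factor out the supremum of the scalar weight:
\begin{equation*}
\normbig{e^{-\phi(\xi)t}\abs{\xi-\xi^0}^{r}a(\xi)\widehat{f}(\xi)}_{L^2(V)}^{2}\le\norm{a}_{L^\infty}^{2}\Big(\sup_{\eta\in B_{R_0}(0)}e^{-2c_0\abs{\eta}^{s}t}\abs{\eta}^{2r}\Big)\int_{\R^n}\abs{\widehat{f}(\xi)}^{2}\,d\xi\,.
\end{equation*}
By Plancherel the last factor equals $C\norm{f}_{L^2}^{2}$, so it remains to show that $g(\varrho):=e^{-2c_0\varrho^{s}t}\varrho^{2r}$ satisfies $\sup_{0\le\varrho\le R_0}g(\varrho)\le C\bract{t}^{-2r/s}$. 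This is immediate for $0\le t<1$ by boundedness of $\varrho$, while for $t\ge1$ a one–variable computation locates the maximum of $g$ on $[0,\infty)$ at $\varrho=(r/(c_0st))^{1/s}$, where it equals a constant times $t^{-2r/s}$; taking square roots gives \eqref{EQ:matsL2est}.

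Since after the translation no quantity involves $\xi^0$, the resulting constants depend only on $U,V,c_0$ and $\norm{a}_{L^\infty}$, as asserted. There is essentially no obstacle here; the only points needing care are the elementary bookkeeping that upgrades $t^{-(n+r)/s}$ to $\bract{t}^{-(n+r)/s}$ uniformly down to $t=0$, and the mild requirement $r\ge0$ (or, if one wants the full range, $r>-n$ in \eqref{EQ:matsL1est} and $r>-n/2$ in \eqref{EQ:matsL2est}) guaranteeing integrability and boundedness of the scalar weight near $\xi^0$, which holds in all the applications in the paper.
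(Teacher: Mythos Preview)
Your proof is correct and follows essentially the same route as the paper's. The paper isolates the two scalar estimates (the integral $\int_0^M x^\rho e^{-cx^\sigma t}\,dx\le C\bract{t}^{-(\rho+1)/\sigma}$ and the supremum $\sup_{0\le x\le M} x^\rho e^{-cx^\sigma t}\le C\bract{t}^{-\rho/\sigma}$) as a separate lemma and then reduces the $L^1$ estimate to it via polar coordinates, whereas you perform the $n$-dimensional rescaling $\eta=t^{-1/s}\zeta$ directly; for the $L^2$ estimate both arguments pull out the supremum of the weight and invoke Plancherel. Your closing remark about needing $r\ge0$ (for the sup argument) and $r>-n$ (for integrability in the $L^1$ bound) matches the implicit restriction in the paper's lemma, where $\rho\ge0$ is assumed, and is all that is used in the applications.
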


First, we establish
a straightforward result that is useful in proving
each of these estimates:
\begin{lem}\label{LEM:simplelemma}
For each~$\rho,M\geq 0$ and $\sigma,c>0$ there exists $C\equiv
C_{\rho,\sigma,M,c}\ge0$ such that\textup{,} for all
$t\ge0$\textup{,} we have
\begin{gather*}
\int_0^{M}x^\rho e^{-cx^\sigma t}\,dx\le
C\bract{t}^{-(\rho+1)/\sigma} \text{ and } \sup_{0\le x\le
M}x^\rho e^{-cx^\sigma t}\le C\bract{t}^{-\rho/\sigma}\,.
\end{gather*}
\end{lem}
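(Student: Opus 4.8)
The plan is to prove both estimates by a simple rescaling argument, treating the ranges $0\le t\le 1$ and $t\ge 1$ separately. For $0\le t\le 1$ everything is trivially bounded: the integral is at most $\int_0^M x^\rho\,dx = M^{\rho+1}/(\rho+1)$ and the supremum is at most $M^\rho$, while $\bract{t}^{-(\rho+1)/\sigma}$ and $\bract{t}^{-\rho/\sigma}$ are bounded below by $2^{-(\rho+1)/\sigma}$ and $2^{-\rho/\sigma}$ respectively on $[0,1]$; so the asserted inequalities hold on this range after enlarging $C$.

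For $t\ge 1$ I would substitute $y = x t^{1/\sigma}$ in the integral, which gives
\begin{equation*}
\int_0^M x^\rho e^{-cx^\sigma t}\,dx
= t^{-(\rho+1)/\sigma}\int_0^{M t^{1/\sigma}} y^\rho e^{-cy^\sigma}\,dy
\le t^{-(\rho+1)/\sigma}\int_0^\infty y^\rho e^{-cy^\sigma}\,dy.
\end{equation*}
The remaining integral $\int_0^\infty y^\rho e^{-cy^\sigma}\,dy$ is a finite constant depending only on $\rho,\sigma,c$: it is essentially a Gamma-type integral, convergent at $0$ since $\rho\ge 0$ and convergent at infinity because of the exponential factor. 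Since $t\ge 1$ implies $t\ge\tfrac12\bract{t}$, this yields $\int_0^M x^\rho e^{-cx^\sigma t}\,dx \le C\bract{t}^{-(\rho+1)/\sigma}$.

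For the supremum bound I would write $x^\rho e^{-cx^\sigma t} = t^{-\rho/\sigma}\,g(x t^{1/\sigma})$ with $g(y):=y^\rho e^{-cy^\sigma}$; the function $g$ is continuous on $[0,\infty)$ and tends to $0$ as $y\to\infty$, hence bounded by some $C_{\rho,\sigma,c}$, so $\sup_{0\le x\le M} x^\rho e^{-cx^\sigma t}\le C t^{-\rho/\sigma}\le C'\bract{t}^{-\rho/\sigma}$ for $t\ge 1$. Combining this with the $t\le 1$ case completes the proof. There is no genuine obstacle here; the only points requiring a little care are isolating the small-$t$ regime and noting the convergence of the rescaled integral over $(0,\infty)$.
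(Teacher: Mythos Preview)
Your proof is correct and follows essentially the same approach as the paper: split into $0\le t\le 1$ and $t\ge 1$, use the trivial bounds $M^{\rho+1}/(\rho+1)$ and $M^\rho$ for small $t$, and for large $t$ apply the substitution $y=xt^{1/\sigma}$ to reduce both the integral and the supremum to the finite quantities $\int_0^\infty y^\rho e^{-cy^\sigma}\,dy$ and $\sup_{y\ge0}y^\rho e^{-cy^\sigma}$. Your added remarks about the lower bound of $\bract{t}$ on $[0,1]$ and the comparison $t\ge\tfrac12\bract{t}$ for $t\ge1$ just make explicit what the paper leaves implicit.
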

\begin{proof}
For $0\le t\le 1$, each is clearly bounded: the first by
$\frac{M^{\rho+1}}{\rho+1}$ and the second by~$M^\rho$. For $t>1$,
set $y=xt^{1/\sigma}$; with this substitution, the first becomes
\begin{equation*}
\int_0^{M t^{1/\sigma}}y^\rho
t^{-\rho/\sigma}e^{-cy^\sigma}t^{-1/\sigma}\,dy \le
t^{-(\rho+1)/\sigma}\int_0^\infty y^\rho e^{-cy^\sigma}\,dy\,,
\end{equation*}
while the second becomes
\begin{equation*}
\sup_{0\le y\le Mt^{1/\sigma}}y^\rho
t^{-\rho/\sigma}e^{-cy^\sigma}\le
t^{-\rho/\sigma}\sup_{y\ge0}y^\rho e^{-cy^\sigma}\,;
\end{equation*}
These estimates imply those of Lemma \ref{LEM:simplelemma}
since both the integral and the supremum in the right hand
sides are bounded.
\end{proof}
\begin{proof}[Proof of Proposition
\ref{PROP:generalrootsmeetingaxis}]
As for the proof of
~\eqref{EQ:matsL1est},
since~$a(\xi)$ is
bounded in~$U$ by assumption, we have
\begin{equation*}
\int_{V}e^{-\phi(\xi)t}\abs{\xi-\xi^0}^r\abs{a(\xi)}
\abs{\widehat{f}(\xi)}\,d\xi \le C\int_{V'}
e^{-\phi(\xi)t}\abs{\xi-\xi^0}^r \abs{\widehat{f}(\xi)}\,d\xi\,,
\end{equation*}
where $V'=V\cap\supp a$; this, in turn, can be estimated in the
following manner using the hypothesis on $\phi(\xi)$ and
H\"older's inequality:
\begin{align*}
\int_{V'} e^{-\phi(\xi)t}\abs{\xi-\xi^0}^r
\abs{\widehat{f}(\xi)}&\,d\xi \le C\int_{V'}
e^{-c_0\abs{\xi-\xi^0}^{s}t}\abs{\xi-\xi^0}^r
\abs{\widehat{f}(\xi)}\,d\xi\\
&\le C\int_{V'} e^{-c_0\abs{\xi-\xi^0}^{s}t}\abs{\xi-\xi^0}^r\,d\xi
\;\norm{\widehat{f}}_{L^{\infty}(V')}\,.
\end{align*}
Then, transforming to polar coordinates and using the
Hausdorff--Young inequality, we find that, for some $R>0$ (chosen
so that $V'\subset B_R(\xi^0)$, which is possible since~$a(\xi)$ is
compactly supported), we have
\begin{equation*}
\int_{V'} e^{-c_0\abs{\xi-\xi^0}^{s}t}\abs{\xi-\xi^0}^r\,d\xi
\norm{\widehat{f}}_{L^{\infty}(V')} \le
C\int_{\Snm}\int_0^R\abs{\eta}^{r+n-1}
e^{-c_0\abs{\eta}^{s}t}\,d\abs{\eta} d\omega 
\norm{f}_{L^{1}(\R^n)}\,.
\end{equation*}
Finally, by the first part of Lemma~\ref{LEM:simplelemma}, we find
\begin{align*}
\int_{V}e^{-\phi(\xi)t}\abs{\xi-\xi^0}^r\abs{a(\xi)}
\abs{\widehat{f}(\xi)}\,d\xi &\le C\int_0^R y^{r+n-1}
e^{-c_0y^{s}t}\,dy\norm{f}_{L^{1}(\R^n)}\\&\le
C\bract{t}^{-(n+r)/s}\norm{f}_{L^{1}}\,.
\end{align*}
This completes the proof of the first part.

Now let us look at the second part. By the second part of
Lemma~\ref{LEM:simplelemma}, we get
\begin{multline*}
\normbig{e^{-\phi(\xi)t}\abs{\xi-\xi^0}^ra(\xi)
\widehat{f}(\xi)}_{L^2(V)}^2\le \int_{V'}
e^{-2c_0\abs{\xi-\xi^0}^{s}t}\abs{\xi-\xi^0}^{2r}
\abs{\widehat{f}(\xi)}^2\,d\xi\\ \le
C\bract{t}^{-2r/s}\int_{V'}e^{-c_0\abs{\xi-\xi^0}^{s}t}
\abs{\widehat{f}(\xi)}^2\,d\xi\,.
\end{multline*}
Now, it follows that
\begin{equation*}
\int_{V'} e^{-c_0\abs{\xi-\xi^0}^{s}t}
\abs{\widehat{f}(\xi)}^2\,d\xi\le\sup_{V'}
\absbig{e^{-c_0\abs{\xi-\xi^0}^{s}t}}
\norm{\widehat{f}}_{L^{2}(V')}^2\le C\norm{f}_{L^{2}}^2\,.
\end{equation*}
Together these give the required estimate~\eqref{EQ:matsL2est}.
\end{proof}

So, using this proposition, we have, for all $t\geq 0$, and
sufficiently small $\ep>0$,
\begin{multline*}
\normBig{D^r_tD^\al_x\int_{B_\ep(\xi^0)}e^{i(x\cdot\xi+\tau(\xi)t)}
a(\xi) \widehat{f}(\xi)\,d\xi}_{L^\infty(\R^n_x)}\\
\le\int_{B_\ep(\xi^0)}e^{-\Im\tau(\xi)t}\abs{a(\xi)}\abs{\tau(\xi)}^r
\abs{\xi}^{\al} \abs{\widehat{f}(\xi)}\,d\xi \le
C{(1+t)}^{-n/s}\norm{f}_{L^{1}}\,,
\end{multline*}
and, using the Plancherel's theorem, we get
\begin{multline*}
\normBig{D^r_tD^\al_x\int_{B_\ep(\xi^0)}e^{i(x\cdot\xi+\tau(\xi)t)}
a(\xi) \widehat{f}(\xi)\,d\xi}_{L^2(\R^n_x)}\\ =
C\normbig{e^{i\tau(\xi)t}\tau(\xi)^r\xi^\al a(\xi)
\widehat{f}(\xi)}_{L^2(B_\ep(\xi^0))} \le C\norm{f}_{L^2}\,;
\end{multline*}
here we have used 
that~$\abs{\xi}^{\abs{\al}}\abs{\tau(\xi)}^r\le C$ 
on $B_\epsilon(\xi^0)$ for $r\in\N$, $\al$ a multi-index.

Thus, by Theorem~\ref{thm:maininterpolationthm}, for all $t\geq 0$,
we get
\begin{equation}\label{EQ:estforrootmeetingaxis}
\normBig{D^r_tD^\al_x\int_{B_\ep(\xi^0)}e^{i(x\cdot\xi+\tau(\xi)t)}a(\xi)
\widehat{f}(\xi)\,d\xi}_{L^p(\R^n_x)} \le
C{(1+t)}^{-\frac{n}{s}\big(\frac{1}{p}-\frac{1}{q}\big)}
\norm{f}_{L^{q}}\,,
\end{equation}
where $1\leq p\le2$, 
$\frac1p+\frac1q=1$. This completes the proof of
Theorem~\ref{THM:dissipative} for
roots meeting the axis with finite order and no multiplicities.

\begin{rem}\label{rem:mats}
If $\xi^0=0$, then Proposition~\ref{PROP:generalrootsmeetingaxis}
further tells us that
\begin{equation*}
\normBig{D^r_tD^\al_x
\int_{B_\ep(0)}e^{i(x\cdot\xi+\tau(\xi)t)}a(\xi)
\widehat{f}(\xi)\,d\xi}_{L^q(\R^n_x)}\le
C\bract{t}^{-\frac{n}{s}\big(\frac{1}{p}-\frac{1}{q}\big)
-\frac{\abs{\al}}{s}}
\norm{f}_{L^{p}}\,.
\end{equation*}
If, in addition, we have $|\tau(\xi)|\leq c_1|\xi-\xi^0|^{s_1}$, 
for~$\xi$ near~$\xi^0$, then we also get
\begin{equation*}
\normBig{D^r_tD^\al_x
\int_{B_\ep(\xi^0)}e^{i(x\cdot\xi+\tau(\xi)t)}a(\xi)
\widehat{f}(\xi)\,d\xi}_{L^q(\R^n_x)} \le
C\bract{t}^{-\frac{n}{s}\big(\frac{1}{p}-\frac{1}{q}\big)
-\frac{r s_1}{s}}
\norm{f}_{L^{p}}\,.
\end{equation*}
If both assumptions hold, we get the improvement from both 
cases, which is the estimate by
$C\bract{t}^{-\frac{n}{s}
\big(\frac{1}{p}-\frac{1}{q}\big)-
\frac{|\alpha|}{s}-\frac{r s_1}{s}}.$
\end{rem}
From this, we obtain the statement of Theorem \ref{THM:dissipative}
in the frequency region $B_\epsilon(\xi^0)$. Since there are
only finitely many such points by hypothesis (H2) of Theorem
\ref{THM:dissipative}, hypothesis (H1) guarantees that on the
complement of their neighborhoods we have $\Im\tau_k>0$.
There we can apply Theorems \ref{THM:expdecay} and
\ref{THM:expdecay2} to get the exponential decay. In may happen
that the roots are multiple, but Theorem \ref{THM:expdecay2}
provides the required estimate in such cases as well. 
The Sobolev orders in Theorem \ref{THM:dissipative} come
from large frequencies as given in Theorem \ref{THM:expdecay}.
This completes the proof of Theorem \ref{THM:dissipative}
and of Remark \ref{REM:dismore}.

\subsection{Phase function lies on the real axis}

As in the case of large~$\abs{\xi}$, we can subdivide into several
subcases:
\begin{enumerate}[leftmargin=*,label=(\roman*)]
\item $\det\Hess\tau(\xi)\ne0$;
\item $\det\Hess\tau(\xi)=0$ and $\tau(\xi)$ satisfies the convexity
condition;
\item the general case when $\det\Hess\tau(\xi)=0$.
\end{enumerate}
For the first case, the approach used in
Section~\ref{SEC:mainstatphasesection} can be used here also, since
there we do not use that~$\abs{\xi}$ is large other than to ensure
that $\tau(\xi)$ was smooth; here, we are away from multiplicities,
so that still holds. 
Therefore, the conclusion is the same, giving
Theorem~\ref{THM:nondeghess}.

The other two cases are considered in the next section 
alongside the
case where there are multiplicities since it is important precisely
how the integral is split up for such cases.

\section{Estimates for bounded
frequencies
around multiplicities}
\label{SEC:bddxiaroundmults}

Finally, let us turn to finding estimates for the first term
of~\eqref{EQ:intdivisionsplittingoffmults}, which we may write in
the form
\begin{equation*}
\int_{\Om}e^{ix\cdot\xi}\Big(\sum_{k=1}^L
e^{i\tau_k(\xi)t}A_j^k(t,\xi)\Big)
\cutoffM(\xi)\widehat{f}(\xi)\,d\xi\,,
\end{equation*}
where the characteristic roots $\tau_1(\xi),\dots,\tau_L(\xi)$
coincide in a set~$\curlyM\subset\Om$ of codimension~$\ell$ (in the
sense of Section \ref{SEC:21}),
$\Om\subset\R^n$ is a bounded open set and $\cutoffM\in
C_0^\infty(\Om)$.

As before, we must consider the
cases where the image of the phase function(s) either lie on the
real axis, are separated from the real axis or meet the real axis.
One additional thing to note in this case is that 
in principle the order of
contact at points of multiplicity may be infinite as the roots are
not necessarily analytic at such points; we have no examples of
such a situation occurring, so it is not worth studying too deeply
unless such an example can be found---for now, we can use the same
technique as if the point(s) were points where the roots lie
entirely on the real axis, and the results in these two situations
are given together in {Theorem~\ref{THM:overallmainthm}.}
We study this very briefly nevertheless to ensure
the completeness of the obtained results.

Unlike in the case away from multiplicities of characteristic
roots, we have no explicit representation for the
coefficients~$A_j^k(t,\xi)$
(as we have in Lemma \ref{LEM:ordercoeff} away from
the multiplicities), which in turn means we cannot split
this into~$L$ separate integrals. To overcome this, we first show,
in Section~\ref{SEC:resofroots}, that a useful representation for
the above integral does exist that allows us to use techniques
from earlier. Using this alternative representation, it is a
simple matter to find estimates in the case where the image of the
set~$\curlyM$ mapped by the characteristic roots
is separated from the real axis (this is Theorem
\ref{THM:expdecay2}) and when it arises
on the real axis as a result of all the roots meeting the axis
with finite order, and these are done in
Sections~\ref{SEC:phasefnsepfromaxis}
and~\ref{SEC:phasefnmeetsfinorder}, respectively.

The situations where the roots meet on the real axis and at least
one has a zero of infinite order there (either because it fully
lies on the axis, or because it meets the axis with infinite
order) is slightly more complicated; this is discussed in
Section~\ref{SEC:phasefnliesonaxisbddxi}.

\subsection{Resolution of multiple roots}\label{SEC:resofroots}
In this section, we find estimates for
\begin{equation*}
\sum_{k=1}^L e^{i\tau_k(\xi)t}A_j^k(t,\xi)\,,
\end{equation*}
corresponding to \eqref{EQ:solnpart},
where $\tau_1(\xi),\dots,\tau_L(\xi)$ coincide in a set~$\curlyM$ of
codimension~$\ell$. For simplicity, first consider the simplest
case of two roots intersecting at a single point,
so that we have 
$L=2$ and $\curlyM=\set{\xi^0}$; the general case works in a
similar way, and we shall show how it differs below. So, assume
\begin{equation*}
\tau_1(\xi^0)=\tau_2(\xi^0)\text{ and }
\tau_k(\xi^0)\ne\tau_1(\xi^0)\text{ for }k=3,\dots,m\,;
\end{equation*}
by continuity, there exists a ball of radius $\ep>0$
about~$\xi^0$, $B_\ep(\xi^0)$, in which the only root which
coincides with~$\tau_1(\xi)$ is~$\tau_2(\xi)$. Then:
\begin{lem}\label{LEM:2rootsmeetingresolution}
For all $t\ge0$ and $\xi\in B_\ep(\xi^0)$, we have
\begin{equation}\label{EQ:intersectingrootsbound}
\absBig{\sum_{k=1}^2e^{i\tau_k(\xi)t}A_j^k(t,\xi)}\le
C(1+t)e^{-\min(\Im\tau_1(\xi),\Im\tau_2(\xi))t}\,,
\end{equation}
where the minimum is taken over $\xi\in B_\ep(\xi^0)$.
\end{lem}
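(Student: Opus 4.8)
The plan is to resolve the apparent singularity in the coefficients $A_j^k(t,\xi)$ at the point of multiplicity $\xi^0$ by exhibiting a representation of the sum $\sum_{k=1}^2 e^{i\tau_k(\xi)t}A_j^k(t,\xi)$ that is manifestly smooth and bounded in $B_\ep(\xi^0)$. The key observation is that this sum is precisely the part of $E_j(t,\xi)$ coming from the roots $\tau_1,\tau_2$, and, since $E_j$ itself solves the ODE \eqref{EQ:ftcauchyprob}--\eqref{EQ:ftcauchydata} with coefficients that are polynomials in $\xi$ (hence smooth everywhere), the whole $E_j(t,\xi)$ is smooth in $\xi$ for each $t$. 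The trouble is only that the \emph{decomposition} of $E_j$ into the $m$ exponential terms is singular where roots collide. So first I would isolate the two colliding roots by factoring the characteristic polynomial $L(\tau,\xi)$ as $L(\tau,\xi)=Q(\tau,\xi)R(\tau,\xi)$, where $Q(\tau,\xi)=(\tau-\tau_1(\xi))(\tau-\tau_2(\xi))=\tau^2-b(\xi)\tau+c(\xi)$ collects the two roots that meet at $\xi^0$ and $R$ collects the remaining $m-2$ roots; on $B_\ep(\xi^0)$ the coefficients $b(\xi)=\tau_1(\xi)+\tau_2(\xi)$ and $c(\xi)=\tau_1(\xi)\tau_2(\xi)$ are smooth (they are elementary symmetric functions of a pair of roots that stay away from the other $m-2$ roots, so by continuity of roots and the implicit/Weierstrass-type argument used in Proposition~\ref{PROP:ctyofroots} they are smooth), and $R(\tau,\xi)$ is bounded away from zero at $\tau=\tau_1(\xi),\tau_2(\xi)$.

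Next I would write down the contribution of the $Q$-factor explicitly. The part of the solution $E_j$ associated with $\tau_1,\tau_2$ can be represented as a contour integral $\frac{1}{2\pi i}\oint_\Gamma \frac{e^{i\tau t}\,p_j(\tau,\xi)}{Q(\tau,\xi)}\,d\tau$ over a small contour $\Gamma$ enclosing only $\tau_1(\xi),\tau_2(\xi)$, where $p_j$ is a polynomial in $\tau$ with $\xi$-dependent coefficients (the numerator arising from Cramer's rule / the resolvent of the companion matrix). Evaluating this by residues reproduces $\sum_{k=1}^2 e^{i\tau_k(\xi)t}A_j^k(t,\xi)$, but the integral representation itself is smooth in $\xi$ on $B_\ep(\xi^0)$ since $Q$ has no zeros on $\Gamma$. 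From the integral representation one reads off the bound directly: shift $\Gamma$ so that it lies in the region $\set{\Im\tau\ge \min(\Im\tau_1(\xi),\Im\tau_2(\xi)) - \eta}$ for a small $\eta$, use $|e^{i\tau t}|=e^{-\Im\tau\, t}\le e^{-(\min(\Im\tau_1,\Im\tau_2)-\eta)t}$ on $\Gamma$, and bound the remaining factors (length of $\Gamma$, $|p_j|$, $1/|Q|$) uniformly by a constant on the compact set $\clos{B_\ep(\xi^0)}\times\Gamma$. The factor $(1+t)$ appears because when the two roots nearly coincide the residue computation produces a divided difference $\frac{e^{i\tau_1 t}-e^{i\tau_2 t}}{\tau_1-\tau_2}$, which is $O(t\,e^{-\min(\Im\tau_1,\Im\tau_2)t})$ uniformly as $\tau_1\to\tau_2$ (and more generally the Hermite interpolation formula for a confluent pair gives a term linear in $t$); this is the only place the polynomial-in-$t$ growth enters.

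Alternatively, and perhaps more cleanly for an $L=2$ statement, I would argue directly: on the set where $\tau_1(\xi)\ne\tau_2(\xi)$ formula \eqref{EQ:Ajkformula} gives $A_j^k$ as a quotient, and I would show $\sum_{k=1}^2 e^{i\tau_k t}A_j^k(t,\xi) = \alpha_j(\xi)\,\frac{e^{i\tau_1(\xi)t}-e^{i\tau_2(\xi)t}}{\tau_1(\xi)-\tau_2(\xi)} + \beta_j(\xi)\,\big(e^{i\tau_1(\xi)t}+e^{i\tau_2(\xi)t}\big)/2$ for suitable smooth bounded $\alpha_j,\beta_j$ on $B_\ep(\xi^0)$ (obtained by partial fractions in $\tau$ with respect to $Q$). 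The second term is trivially bounded by $Ce^{-\min(\Im\tau_1,\Im\tau_2)t}$. For the first, write $\frac{e^{i\tau_1 t}-e^{i\tau_2 t}}{\tau_1-\tau_2} = i t\int_0^1 e^{i(\theta\tau_1+(1-\theta)\tau_2)t}\,d\theta$, so its modulus is $\le t\sup_{0\le\theta\le1}e^{-\Im(\theta\tau_1+(1-\theta)\tau_2)t}= t\,e^{-\min(\Im\tau_1,\Im\tau_2)t}$ since the exponent is linear in $\theta$ and hence attains its maximum at an endpoint; combining with the boundedness of $\alpha_j$ yields \eqref{EQ:intersectingrootsbound}.

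The main obstacle is establishing that the coefficients $b(\xi),c(\xi)$ of the factor $Q$ (equivalently $\alpha_j,\beta_j$) are genuinely \emph{smooth} on all of $B_\ep(\xi^0)$ including the multiplicity set, and that they, together with the numerator data, are uniformly bounded there; once that is in hand the exponential/polynomial bookkeeping is the elementary integral-representation argument sketched above. This smoothness follows from the Weierstrass preparation theorem applied to $L(\tau,\xi)$ near $(\tau_1(\xi^0),\xi^0)$ — which factors off a degree-$2$ Weierstrass polynomial in $\tau$ with smooth (indeed analytic, since $L$ is a polynomial) $\xi$-coefficients — but I will need to phrase it carefully so that it also covers the general $L$ case, where one factors off a degree-$L$ Weierstrass polynomial and the confluent Hermite interpolation formula produces at worst a factor $(1+t)^{L-1}$, which is exactly the bound needed for Theorems~\ref{THM:expdecay2} and~\ref{TH:multonaxis}.
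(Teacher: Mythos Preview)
Your divided-difference route (``Option~2'') is correct and takes a genuinely different path from the paper's proof. The paper works directly with the Cramer formula~\eqref{EQ:Ajkformula} and the elementary identity
\[
K_1e^{iy_1}+K_2e^{iy_2}=\sinh(y_1-y_2)\,[K_1e^{iy_2}-K_2e^{iy_1}]+\cosh(y_1-y_2)\,[K_1e^{iy_2}+K_2e^{iy_1}],
\]
estimating the $\sinh$ and $\cosh$ pieces separately; the $\cosh$ part in particular forces a rather intricate regrouping of the numerators in~\eqref{EQ:Ajkformula}. Your approach is more conceptual: factor off the Weierstrass polynomial $Q(\tau,\xi)=\tau^2-b(\xi)\tau+c(\xi)$ with analytic coefficients, do a linear-in-$\tau$ partial fraction over $Q$ to obtain smooth $\alpha_j,\beta_j$, and then the integral formula $\frac{e^{i\tau_1 t}-e^{i\tau_2 t}}{\tau_1-\tau_2}=it\int_0^1 e^{i(\theta\tau_1+(1-\theta)\tau_2)t}\,d\theta$ gives the pointwise bound $t\,e^{-\min(\Im\tau_1,\Im\tau_2)t}$ in one line. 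This also generalises to $L$ coinciding roots via confluent divided differences with essentially no extra work, whereas the paper's extension to general~$L$ (Lemma~\ref{LEM:Lrootsmeetingest}) proceeds by an inductive combinatorial argument on products of $\sinh/\cosh$ factors.

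One caution about your ``Option~1'': the fixed-contour estimate as you state it does not give the claimed bound. With $\Gamma$ at distance $\eta>0$ from the roots you obtain only $C_\eta\,e^{-(\min(\Im\tau_1,\Im\tau_2)-\eta)t}$, which for large $t$ is strictly \emph{weaker} than $(1+t)e^{-\min(\Im\tau_1,\Im\tau_2)t}$, and you cannot send $\eta\to0$ because $1/|Q|$ blows up on~$\Gamma$. The $(1+t)$ factor with the sharp exponential genuinely requires evaluating the residues---that is, the divided difference---so Option~2 is the actual proof and Option~1 is only heuristic motivation for it.
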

\begin{proof}
First, note that in the set
\begin{equation*}
S:=\{\xi\in\R^n:\tau_1(\xi)\ne\tau_k(\xi)\;\forall k=2,\dots,m\text{
and}\,\tau_2(\xi)\ne\tau_l(\xi)\quad\forall l=3,\dots,m\}
\end{equation*}
the formula~\eqref{EQ:Ajkformula} is valid for $A_j^1(\xi)$ and
$A_j^2(\xi)$. Now, recall that the sum
$E_j(t,\xi)=\sum_{k=1}^me^{i\tau_k(\xi)t}A^k_j(t,\xi)$ is the
solution to the Cauchy
problem~\eqref{EQ:ftcauchyprob},~\eqref{EQ:initialdataforEj}, and
thus is continuous; therefore, for all~$\eta\in\R^n$ such that
$\tau_1(\eta)\ne\tau_k(\eta)$ and $\tau_2(\eta)\ne\tau_k(\eta)$ for
$k=3,\dots,m$ (but allow $\tau_1(\eta)=\tau_2(\eta)$), we have
\begin{align*}
\sum_{k=1}^2e^{i\tau_k(\eta)t}A_j^k(t,\eta)
&=\lim_{\xi\to\eta}\big(e^{i\tau_1(\xi)t}A_j^1(\xi)+
e^{i\tau_2(\xi)t}A_j^2(\xi)\big)\,,
\end{align*}
provided~$\xi$ varies in the set~$S$ (thus, ensuring
$e^{i\tau_1(\xi)t}A_j^1(\xi)+ e^{i\tau_2(\xi)t}A_j^2(\xi)$ is
well-defined). Hence, to obtain~\eqref{EQ:intersectingrootsbound}
for all $\xi\in B_\ep(\xi^0)$, it suffices to show
\begin{equation*}
\absbig{e^{i\tau_1(\xi)t}A_j^1(\xi)+ e^{i\tau_2(\xi)t}A_j^2(\xi)}
\le C(1+t)e^{-\min(\Im\tau_1(\xi),\Im\tau_2(\xi))t}
\end{equation*}
for all $t\ge0$, $\xi\in
B'_\ep(\xi^0)=B_\ep(\xi^0)\setminus\set{\xi^0}$.

Now, note the following trivial equality:
\begin{align*}
K_1e^{iy_1}+&\,K_2e^{iy_2} = K_1e^{iy_2}e^{i(y_1-y_2)}
+K_2e^{iy_1}e^{-i(y_1-y_2)}\\
=&\, \frac{e^{i(y_1-y_2)}-e^{-i(y_1-y_2)}}{2}K_1e^{iy_2}
+\frac{e^{i(y_1-y_2)}+e^{-i(y_1-y_2)}}{2}K_1e^{iy_2} \\
&+\frac{e^{-i(y_1-y_2)}-e^{i(y_1-y_2)}}{2}K_2e^{iy_1}+
\frac{e^{-i(y_1-y_2)}+e^{i(y_1-y_2)}}{2}K_2e^{iy_1}\\
=&\,\sinh(y_1-y_2)[K_1e^{iy_2}-K_2e^{iy_1}]
+\cosh(y_1-y_2)[K_1e^{iy_2}+K_2e^{iy_1}]\,.
\end{align*}
Using this, we have, for all $\xi\in B'_\ep(\xi^0)$, $t\ge0$,
\begin{multline}\label{EQ:splittingfor2roots}
e^{i\tau_1(\xi)t}A_j^1(\xi)+e^{i\tau_2(\xi)t}A_j^2(\xi)
\\=\sinh[(\tau_1(\xi)-\tau_2(\xi))t]
(e^{i\tau_2(\xi)t}A^1_j(\xi)-e^{i\tau_1(\xi)t}A^2_j(\xi))\\
+\cosh[(\tau_1(\xi)-\tau_2(\xi))t]
(e^{i\tau_2(\xi)t}A^1_j(\xi)+e^{i\tau_1(\xi)t}A^2_j(\xi))\,.
\end{multline}
We estimate each of these terms:
\begin{enumerate}[leftmargin=*,label=(\alph*)]
\item\underline{``$\sinh$'' term}: The first term is simple to
estimate: since
\begin{equation*}
\frac{\sinh[(\tau_1(\xi)-\tau_2(\xi))t]}{(\tau_1(\xi)-\tau_2(\xi))}
\to t\;\text{ as }\,(\tau_1(\xi)-\tau_2(\xi))\to0\,,
\end{equation*}
or, equivalently, as $\xi\to\xi^0$ through $S$, and
$A^k_j(\xi)(\tau_1(\xi)-\tau_2(\xi))$ is continuous in
$B_\ep(\xi^0)$ for $k=1,2$, it follows that, for all $\xi\in
B'_\ep(\xi^0)$, $t\ge0$, we have
\begin{multline}\label{EQ:sinhest}
\absbig{\sinh[(\tau_1(\xi)-\tau_2(\xi))t]
(A^1_j(\xi)e^{i\tau_2(\xi)t}- A^2_j(\xi)e^{i\tau_1(\xi)t})}\\ \le
Ct[\abs{e^{i\tau_2(\xi)t}}+\abs{e^{i\tau_1(\xi)t}}] \le
Cte^{-\min(\Im\tau_1(\xi),\Im\tau_2(\xi))t}\,.
\end{multline}

\item\underline{``$\cosh$'' term}: Estimating the second term is
slightly more complicated. First, recall the explicit
representation~\eqref{EQ:Ajkformula} for the~$A_j^k(\xi)$ at
points away from multiplicities of~$\tau_k(\xi)$

\begin{equation*}
A_j^k(\xi)=\frac{(-1)^j\displaystyle\sideset{}{^k}\sum_{1\le
s_1<\dots<s_{m-j-1}\le m}
\prod_{q=1}^{m-j-1}\tau_{s_q}(\xi)}{\displaystyle\prod_{l=1,l\ne
k}^m(\tau_l(\xi)-\tau_k(\xi))}\;.
\end{equation*}
So, we can write
\begin{align*}
&\cosh[(\tau_1(\xi)-\tau_2(\xi))t](A^1_j(\xi)e^{i\tau_2(\xi)t}
+A^2_j(\xi)e^{i\tau_1(\xi)t})\\
&=\frac{\cosh[(\tau_1(\xi)-\tau_2(\xi))t]}
{\prod_{k=3}^m(\tau_k(\xi)-\tau_1 (\xi))
(\tau_k(\xi)-\tau_2(\xi))}\frac{e^{i\tau_2(\xi)t}F_{j+1}^{1,2}(\xi)
-e^{i\tau_1(\xi)t}F_{j+1}^{2,1}(\xi)}{\tau_1(\xi)-\tau_2(\xi)}\,,
\end{align*}
where
\begin{equation*}
F_i^{\rho,\sigma}(\xi):=\left(\sideset{}{^{\rho}}\sum_{1\le
s_1<\dots<s_{m-i}\le m
}\prod_{q=1}^{m-i}\tau_{s_q}(\xi)\right)\prod_{k=1,k\ne\rho,
\sigma}^m(\tau_k(\xi)-\tau_\sigma(\xi)).
\end{equation*}
Now, $\big(\cosh[(\tau_1(\xi)-\tau_2(\xi))t]\big)\big/
\big(\prod_{k=3}^m(\tau_k(\xi)-
\tau_1(\xi))(\tau_k(\xi)-\tau_2(\xi))\big)$ is continuous in~$S$,
hence it is bounded there, and, thus, absolutely converges to a
constant,~$C\ge0$ say, as $\xi\to\xi^0$ through~$S$. This leaves
the $[e^{i\tau_2(\xi)t}F_{j+1}^{1,2}(\xi)
-e^{i\tau_1(\xi)t}F_{j+1}^{2,1}(\xi)]/(\tau_1(\xi)-\tau_2(\xi))$
term.

For this, write
\begin{equation*}
F_i^{\rho,\sigma}(\xi)=\sum_{\ka=0}^{m-1}
Q_{\ka,i}^{\rho,\sigma}(\xi)\tau_\sigma(\xi)^\ka,
\end{equation*}
where the $Q_{\ka,i}^{\rho,\sigma}(\xi)$ are polynomials in the
$\tau_k(\xi)$ for $k\ne\rho,\sigma$ (which depend on~$i$); also,
note $Q_{\ka,i}^{\rho,\sigma}(\xi)=Q_{\ka,i}^{\sigma,\rho}(\xi)$.
Then, we have
\begin{multline}\label{EQ:coshpartrearrangement}
\frac{e^{i\tau_2(\xi)t}F_{j+1}^{1,2}(\xi)-
e^{i\tau_1(\xi)t}F_{j+1}^{2,1}(\xi)}{\tau_1(\xi)-\tau_2(\xi)}\\
=\frac{\sum_{\ka=0}^{m-1}\big[Q^{1,2}_{\ka,j+1}(\xi)
(\tau_2(\xi)^\ka e^{i\tau_2(\xi)t}- \tau_1(\xi)^\ka
e^{i\tau_1(\xi)t})\big]} {\tau_1(\xi)-\tau_2(\xi)}\,.
\end{multline}
Let us show that this is continuous in $B_\ep(\xi^0)$ and is
bounded absolutely by $Cte^{-\min\set{\la_1,\la_2}}$: for $y_1\ne
y_2$, and for all $r,s\in\N$, $t\ge0$, we have
\begin{multline*}
\frac{y_2^sy_1^re^{iy_2t}-y_1^sy_2^re^{iy_1t}} {y_1-y_2}=\\
\frac{y_2^sy_1^r(e^{iy_2t}-e^{iy_1t})}{y_1-y_2}+
\frac{y_2^se^{iy_1t}(y_1^r-y_2^r)}{y_1-y_2}
+\frac{e^{iy_1t}y_2^r(y_2^s-y_1^s)}{y_1-y_2}\,.
\end{multline*}
Furthermore, for all $y_1,y_2\in\C$, $t\in[0,\infty)$, $s\in\N$,
\begin{gather*}
\absBig{\frac{e^{iy_2t}-e^{iy_1t}}{y_1-y_2}}\le C_0te^{-\min(\Im
y_1,\Im y_2)t}\quad \text{ and }\quad
\absBig{\frac{y_1^s-y_2^s}{y_1-y_2}}\le C_s\,,
\end{gather*}
for some constants $C_0,C_s$. Using these with $y_1=\tau_1(\xi)$,
$y_2=\tau_2(\xi)$, $r=\ka$, and~$s$ chosen appropriately for
$Q^{1,2}_{\ka,j+1}(\xi)$, the continuity and upper bound follow
immediately. Thus, for all $\xi\in B'_\ep(\xi^0)$, $t\ge0$,
\begin{multline}\label{EQ:coshbound}
\abs{\cosh[(\tau_1(\xi)-\tau_2(\xi))t](A^1_j(\xi)e^{i\tau_2(\xi)t}
+A^2_j(\xi)e^{i\tau_1(\xi)t})}\\ \le
Cte^{-\min(\Im\tau_1(\xi),\Im\tau_2(\xi))t}\,.
\end{multline}
\end{enumerate}
Combining~\eqref{EQ:splittingfor2roots},~\eqref{EQ:sinhest} and
\eqref{EQ:coshbound} we have~\eqref{EQ:intersectingrootsbound},
which completes the proof of the lemma.
\end{proof}

Now we show that a similar result holds in the general case: suppose
the characteristic roots $\tau_1(\xi),\dots,\tau_L(\xi)$, $2\le L\le
m$, coincide in a set $\curlyM$, and that
$\tau_1(\xi)\ne\tau_k(\xi)$ for all $\xi\in\curlyM$ when
$k=L+1,\dots,m$. By continuity, we may take $\ep>0$ so that the set
$\curlyM^\ep=\set{\xi\in\R^n: \dist(\xi,\curlyM)<\ep}$ contains no
points~$\eta$ at which
$\tau_1(\eta),\dots,\tau_L(\eta)=\tau_k(\eta)$ for $k=L+1,\dots,m$.
With this notation, we have:
\begin{lem}\label{LEM:Lrootsmeetingest}
For all $t\ge0$ and $\xi\in\curlyM^\ep$\textup{,} we have
the estimate
\begin{equation}\label{EQ:multiplerootsrepbound}
\absBig{\sum_{k=1}^Le^{i\tau_k(\xi)t} A_j^k(t,\xi)} \le
C(1+t)^{L-1}e^{-t\min_{k=1,\dots,L}\Im\tau_k(\xi)}\,,
\end{equation}
where the minimum is taken over $\xi\in \curlyM^\ep$.
\end{lem}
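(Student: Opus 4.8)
The plan is to generalize the two-root argument of Lemma~\ref{LEM:2rootsmeetingresolution} by induction on~$L$. Just as in the case $L=2$, the difficulty is that in the region $\curlyM^\ep$ the coefficients $A_j^k(t,\xi)$ genuinely depend on~$t$ (they blow up as one approaches $\curlyM$), so one cannot simply estimate the $L$ terms of the sum separately. Instead one works on the dense open subset $S\subset\curlyM^\ep$ where $\tau_1(\xi),\dots,\tau_L(\xi)$ are all distinct from $\tau_{L+1}(\xi),\dots,\tau_m(\xi)$ (and from each other), where formula~\eqref{EQ:Ajkformula} is valid, establishes the bound there with a constant uniform in $\xi$, and then passes to the closure using continuity of the full solution $E_j(t,\xi)=\sum_{k=1}^m e^{i\tau_k(\xi)t}A_j^k(t,\xi)$. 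Since $\tau_1,\dots,\tau_L$ are separated from $\tau_{L+1},\dots,\tau_m$ throughout $\curlyM^\ep$, the partial sum $\sum_{k=1}^L e^{i\tau_k(\xi)t}A_j^k(t,\xi)$ is itself continuous on $\curlyM^\ep$, so it suffices to prove~\eqref{EQ:multiplerootsrepbound} for $\xi\in S$.

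First I would record the two elementary scalar inequalities already used in the $L=2$ case: for all $y_1,y_2\in\C$ with $\Im y_i\ge 0$, all $t\ge 0$ and all $s\in\N$,
\begin{equation*}
\absBig{\frac{e^{iy_1t}-e^{iy_2t}}{y_1-y_2}}\le C_0\,t\,e^{-t\min(\Im y_1,\Im y_2)}\quad\text{and}\quad \absBig{\frac{y_1^s-y_2^s}{y_1-y_2}}\le C_s\max(|y_1|,|y_2|)^{s-1}\,.
\end{equation*}
Iterating the first of these gives a bound on divided differences of order $L-1$: for distinct $y_1,\dots,y_L$ with non-negative imaginary parts, the $(L-1)$-st divided difference of $z\mapsto e^{izt}$ is bounded by $C\,t^{L-1}e^{-t\min_k\Im y_k}$ (this follows e.g. from the Hermite–Genocchi integral representation, or by induction on $L$ using the first inequality above). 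The key algebraic step is then to rewrite
\begin{equation*}
\sum_{k=1}^L e^{i\tau_k(\xi)t}A_j^k(\xi)
\end{equation*}
as a combination of divided differences of $e^{izt}$ at the nodes $\tau_1(\xi),\dots,\tau_L(\xi)$, with coefficients that stay \emph{bounded} on $\curlyM^\ep$. Plugging in~\eqref{EQ:Ajkformula}, the denominator $\prod_{l\ne k}(\tau_l-\tau_k)$ splits into the ``dangerous'' factors $\prod_{1\le l\le L,\,l\ne k}(\tau_l-\tau_k)$ and the ``harmless'' factors $\prod_{l=L+1}^m(\tau_l-\tau_k)$; the latter are bounded below in modulus on $\curlyM^\ep$ by the choice of $\ep$, hence contribute bounded factors. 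The numerator symmetric functions $\sum^k\prod\tau_{s_q}$ can be reorganized, exactly as in the $\cosh$/$F_i^{\rho,\sigma}$ manipulation of Lemma~\ref{LEM:2rootsmeetingresolution} but now expanding symmetric polynomials in $\tau_1,\dots,\tau_L$ via Newton-type identities, so that the whole expression becomes $\sum_{\text{finite}} b_m(\xi)\,\big[\text{$(L-1)$-st divided difference of }z^{r_m}e^{izt}\text{ at }\tau_1(\xi),\dots,\tau_L(\xi)\big]$ for bounded continuous coefficients $b_m(\xi)$ and exponents $r_m$. Using that $|\tau_k(\xi)|\le C$ on the bounded set $\curlyM^\ep$ together with the divided-difference bound above yields the estimate $C(1+t)^{L-1}e^{-t\min_k\Im\tau_k(\xi)}$ on $S$, and continuity extends it to all of $\curlyM^\ep$.

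I would present this either as a clean induction on $L$ — writing $\sum_{k=1}^L e^{i\tau_k t}A_j^k = \sinh$- and $\cosh$-type splittings that peel off $\tau_L$ and reduce to $L-1$ roots, so that the extra factor $(1+t)$ per step accumulates to $(1+t)^{L-1}$ — or directly via the divided-difference formula. The induction route is probably cleaner to write and parallels the $L=2$ proof most transparently; the main obstacle in either case is the bookkeeping needed to verify that, after substituting~\eqref{EQ:Ajkformula} and clearing the dangerous denominators, every remaining coefficient is a bounded continuous function on $\curlyM^\ep$ (equivalently, that all the apparent singularities at $\curlyM$ cancel). This is a finite, purely algebraic verification about symmetric functions of $\tau_1,\dots,\tau_L$ and the Vandermonde structure, with no analytic subtlety — the analysis is entirely contained in the two scalar inequalities above — but it is the step that requires care.
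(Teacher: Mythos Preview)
Your plan is correct, and the divided--difference route is a genuinely cleaner alternative to what the paper does. The paper's argument also reduces to the dense set $S$ by continuity, but then proceeds by a somewhat heavy combinatorial induction: it writes the sum over $K+1$ roots as an average of the $K+1$ sums obtained by omitting one root at a time, invokes the inductive $\sinh/\cosh$ representation for each, and then uses addition formulae for $\sinh,\cosh$ to produce products of $\tfrac{L(L-1)}{2}$ such factors; the final estimate comes from counting how many $\sinh[\alpha(\tau_k-\tau_l)t]/(\tau_k-\tau_l)$ or $(e^{i\tau_k t}-e^{i\tau_l t})/(\tau_k-\tau_l)$ factors appear. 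Your observation that, after absorbing the harmless denominators $\prod_{l>L}(\tau_l-\tau_k)$, the whole sum is exactly an $(L-1)$-st divided difference at the nodes $\tau_1,\dots,\tau_L$ of a single smooth function of $z$ times $e^{izt}$, and that Hermite--Genocchi immediately gives the bound $C(1+t)^{L-1}e^{-t\min_k\Im\tau_k}$, bypasses all of that bookkeeping.

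One small correction: the function whose divided difference you are taking is $g(\xi,z)e^{izt}$ with $g(\xi,z)=P_j(\xi,z)/Q(\xi,z)$, where $Q(\xi,z)=\prod_{l>L}(\tau_l(\xi)-z)$ is the harmless factor. Since $Q$ is not a polynomial in $z$ but its reciprocal, $g$ is not polynomial, so your displayed conclusion ``$\sum_{\text{finite}} b_m(\xi)\,[\text{divided difference of }z^{r_m}e^{izt}]$'' is not literally right. The fix is immediate: either apply Hermite--Genocchi directly to $g(\xi,\cdot)e^{i(\cdot)t}$ (its $(L-1)$-st $z$-derivative on the convex hull is bounded by $C(1+t)^{L-1}e^{-t\min_k\Im\tau_k}$ because $g$ is smooth with uniformly bounded $z$-derivatives on the relevant compact set), or use the Leibniz rule for divided differences, $(gh)[z_1,\dots,z_L]=\sum_{k=1}^L g[z_1,\dots,z_k]\,h[z_k,\dots,z_L]$, to write the expression as a finite sum of divided differences of $e^{izt}$ alone, with coefficients that are divided differences of $g$ and hence bounded. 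Either way the proof goes through with no change to the estimate.
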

Note that this estimate does not depend on the codimension
of~$\curlyM$.
\begin{proof}
First note that, just as in the previous proof, for
all~$\eta\in\R^n$ such that
$\tau_1(\eta)\dots,\tau_L(\eta)\ne\tau_k(\eta)$ when $k=L+1,\dots,m$
(but allowing any or all of $\tau_1(\eta),\dots,\tau_L(\eta)$ 
to be equal),
\begin{align*}
\sum_{k=1}^Le^{i\tau_k(\eta)t}A_j^k(t,\eta)
&=\lim_{\xi\to\eta}\big(e^{i\tau_1(\xi)t}A_j^1(\xi)+\dots+
e^{i\tau_L(\xi)t}A_j^L(\xi)\big)\,,
\end{align*}
provided~$\xi$ to varies the set $S:=\bigcup_{l=1}^LS_l$, where
\begin{equation*}
S_l:=\{\xi\in\R^n:\tau_l(\xi)\ne\tau_k(\xi)\;\forall k\ne l\},
\end{equation*}
to ensure that each term of the sum on the right-hand side is
well-defined. Note that Lemma~\ref{LEM:multiplerootssetisnice}
ensures every point in~$\curlyM$ is the limit of a sequence of
points in~$S$ in the case of
differential operators. 
Thus, we must simply show, for all $t\ge0$, $\xi\in
(\curlyM^\ep)'=\curlyM^\ep\setminus\curlyM$, that we have the
estimate
\begin{equation*}
\absbig{e^{i\tau_1(\xi)t}A_j^1(\xi)+\dots+
e^{i\tau_L(\xi)t}A_j^L(\xi)} \le
C(1+t)^{L-1}e^{-t\min_{k=1,\dots,L}\Im\tau_k(\xi)}\,.
\end{equation*}

Now, we claim that we can write
$\sum_{k=1}^{L}e^{i\tau_k(\xi)t}A_j^k(t,\xi)$, for
$\xi\in(\curlyM^\ep)'$ and $t\ge0$, as a sum of terms involving
products of $\frac{(L-1)L}{2}$ $\sinh$ and $\cosh$ terms of
differences of coinciding roots; to
clarify,~\eqref{EQ:splittingfor2roots} is this kind of
representation for $L=2$, while for $L=3$, we want sums of terms
such as
\begin{equation*}
\sinh[\al_1(\tau_1(\xi)-\tau_2(\xi))t]
\cosh[\al_2(\tau_1(\xi)-\tau_3(\xi))t]
\sinh[\al_3(\tau_2(\xi)-\tau_3(\xi))t]\,,
\end{equation*}
where the~$\al_i$ are appropriately chosen constants; incidentally,
a comparison to the $L=2$ case suggests that the term above is
multiplied by $$\big(A^1_j(\xi)e^{i\tau_2(\xi)t}
-A^2_j(\xi)e^{i\tau_1(\xi)t}\big)$$ in the full representation.

To show this, we do induction on~$L$; Lemma 
\ref{LEM:2rootsmeetingresolution} gives us
the case $L=2$ (note that the proof holds with~$\xi^0$
and~$B_\ep(\xi^0)$ replaced throughout by~$\curlyM$ and
$\curlyM^\ep$, respectively). Assume there is such a representation
for $L=K\le m-1$. Observe,
\begin{multline*}
\sum_{k=1}^{K+1}e^{i\tau_k(\xi)t}A_j^k(\xi) =\frac{1}{K}
\sum_{k=1}^{K}e^{i\tau_k(\xi)t}A_j^k(\xi) +\frac{1}{K}\sum_{k=1,k\ne
K}^{K+1}e^{i\tau_k(\xi)t}A_j^k(\xi)\\+\dots
+\frac{1}{K}\sum_{k=2}^{K+1}e^{i\tau_k(\xi)t}A_j^k(\xi)\,;
\end{multline*}
by the induction hypothesis, there is a representation for each of
these terms by means of products of $\frac{(K-1)K}{2}$
\begin{equation*}
\sinh[\al_{k,l}(\tau_k(\xi)-\tau_l(\xi))t]\text{ and }
\cosh[\be_{k,l}(\tau_k(\xi)-\tau_l(\xi))t]\text{ terms,}
\end{equation*}
where $1\le k,l\le K+1$ and the $\al_{k,l},\be_{k,l}$ are some
non-zero constants. Next, note that we can write
$(\tau_1(\xi)-\tau_2(\xi))$ (or, indeed, the difference of any pair
of roots from $\tau_1(\xi),\dots,\tau_{K+1}(\xi)$) as a linear
combination of the $\frac{K(K+1)}{2}$ differences
$\tau_k(\xi)-\tau_l(\xi)$ such that $1\le k<l\le K+1$; that is
\begin{equation*}
\sinh[\al_{1,2}(\tau_1(\xi)-\tau_2(\xi))t] =\sinh\Big[\sum_{1\le
k<l\le K+1}\al'_{k,l}(\tau_k(\xi)-\tau_l(\xi))t\Big]\,,
\end{equation*}
for some non-zero constants $\al'_{k,l}$; similarly, there is such a
representation for $\cosh[\be_{1,2}(\tau_1(\xi)-\tau_2(\xi))t]$.
Lastly, repeated application of the double angle formulae
\begin{gather*}
\sinh(a\pm b)=\sinh a\cosh b \pm \cosh a\sinh b\,,\\
\cosh(a\pm b)=\cosh a\cosh b \pm \sinh a\sinh b\,,
\end{gather*}
yields products of $\frac{K(K+1)}{2}$ terms, which completes the
induction step.

Now, as in the previous proof, each of these terms must be
estimated. The key fact to observe is that
\begin{equation*}
A_j^k(\xi)\prod_{l=1,l\ne k}^L (\tau_l(\xi)-\tau_k(\xi))
\end{equation*}
is continuous in $\curlyM^\ep$ for all $k=1,\dots,L$. Then, using
the same arguments as for each of the terms in the earlier proof,
and observing that the exponent of~$t$ is determined by the products
involving either
\begin{enumerate}[label=(\alph*),leftmargin=*]
\item $(\sinh[\al_{k,l}(\tau_k(\xi)-\tau_l(\xi)t)])/
(\tau_k(\xi)-\tau_l(\xi))$ terms, or
\item $(e^{i\tau_k(\xi)t}-e^{i\tau_l(\xi)t})/
(\tau_k(\xi)-\tau_l(\xi))$ terms
(see~\eqref{EQ:coshpartrearrangement}),
\end{enumerate}
the estimate~\eqref{EQ:multiplerootsrepbound} is immediately
obtained.
\end{proof}

\subsection{Phase separated from the real
axis: Theorem \ref{THM:expdecay2}}
\label{SEC:phasefnsepfromaxis}

We now turn back to finding $L^p-L^q$ estimates for
\begin{equation*}
\int_{\Om}e^{ix\cdot\xi}\Big(\sum_{k=1}^L
e^{i\tau_k(\xi)t}A_j^k(t,\xi)\Big)
\cutoffM(\xi)\widehat{f}(\xi)\,d\xi\,,
\end{equation*}
when $\tau_1(\xi),\dots,\tau_L(\xi)$ 
coincide in a set~$\curlyM$ of
codimension~$\ell$; choose $\ep>0$ so that these roots do not
intersect with any of the roots $\tau_{L+1}(\xi),\dots,\tau_m(\xi)$
in $\curlyM^\ep$. The set $\Omega$ is bounded, and we may take
$\chi\in C_0^\infty(\curlyM^\epsilon)$.

In this section (under assumptions of Theorem \ref{THM:expdecay2}), 
we assume that there exists $\de>0$ such that
$\Im\tau_k(\xi)\ge\de$ for all $\xi\in\curlyM^\ep$---so,
$\min_k\Im\tau_k(\xi)\ge\de>0$. For this, we use the same approach
as in Section~\ref{SEC:bddxiawayfromaxis}, but using
Lemma~\ref{LEM:Lrootsmeetingest} to estimate the sum. Firstly, the
$L^1-L^\infty$ estimate:
\begin{align*}
\normBig{D^r_tD_x^\al\Big(\int_{\Om}& e^{ix\cdot\xi}
\Big(\sum_{k=1}^L e^{i\tau_k(\xi)t}A_j^k(t,\xi)\Big)
\cutoffM(\xi)\widehat{f}(\xi)\,dx\Big)}_{L^\infty(\R^n_x)}\\
&=\normBig{\int_{\Om}e^{ix\cdot\xi} \Big(\sum_{k=1}^L
e^{i\tau_k(\xi)t}A_j^k(t,\xi)\tau_k(\xi)^r\Big)
\xi^\al\cutoffM(\xi)\widehat{f}(\xi)\,dx}_{L^\infty(\R^n_x)} \\
&\le \max_k\sup_\Om\abs{\tau_k(\xi)}^r
\int_{\curlyM^\ep}\absBig{\sum_{k=1}^L
e^{i\tau_k(\xi)t}A_j^k(t,\xi)}\abs{\xi}^{\abs{\al}}
\abs{\widehat{f}(\xi)}\,dx\\
&\le C(1+t)^{L-1}e^{-\de t}\norm{\widehat{f}}_{L^\infty
(\curlyM^\ep)}\le
C(1+t)^{L-1}e^{-\de t}\norm{f}_{L^1}\,.
\end{align*}
Similarly, the $L^2-L^2$ estimate:
\begin{align*}
\normBig{D^r_tD_x^\al\Big(\int_{\Om}& e^{ix\cdot\xi}
\Big(\sum_{k=1}^L e^{i\tau_k(\xi)t}A_j^k(t,\xi)\Big)
\cutoffM(\xi)\widehat{f}(\xi)\,dx\Big)}_{L^2(\R^n_x)}\\
&=\normBig{\Big(\sum_{k=1}^L
e^{i\tau_k(\xi)t}A_j^k(t,\xi)\tau_k(\xi)^r\Big)
\xi^\al\cutoffM(\xi)\widehat{f}(\xi)}_{L^2(\Om)} \\
&\le C(1+t)^{L-1}e^{-\de t}\norm{\widehat{f}}_{L^2(\Om)}\le
C(1+t)^{L-1}e^{-\de t}\norm{f}_{L^2}\,.
\end{align*}
Then, Theorem~\ref{thm:maininterpolationthm} yields
\begin{multline*}
\normBig{D^r_tD_x^\al\Big(\int_{\Om} e^{ix\cdot\xi}
\Big(\sum_{k=1}^L e^{i\tau_k(\xi)t}A_j^k(t,\xi)\Big)
\cutoffM(\xi)\widehat{f}(\xi)\,dx\Big)}_{L^q(\R^n_x)}\\ \le
C(1+t)^{L-1}e^{-\de t}\norm{f}_{L^p}\,,
\end{multline*}
where $\frac1p+\frac1q=1$, $1\le p\le2$. Once again, we have
exponential decay. This, together
with~\eqref{EQ:estforrootawayfromaxisbddxi} gives the statement when
there are multiplicities away from the axis and completes
the proof of 
Theorem~\ref{THM:expdecay2}.

\subsection{Phase meeting the real axis:
Theorem \ref{TH:multonaxis}}\label{SEC:phasefnmeetsfinorder}

We next look at the case where the characteristic roots
$\tau_1(\xi),\dots,\tau_L(\xi)$ that coincide in the 
$C^1$ set~$\curlyM$
of codimension~$\ell$ meet the real axis in~$\curlyM$ with finite
orders.  If there are more points
in~$\curlyM$ at which the above roots meet the axis with finite
order (or even with infinite order/lying on the axis), they may be
considered separately in the same way (or using the method below
where necessary), while away from such points, the roots are
separated from the axis, and the previous arguments 
and results of Section \ref{SEC:21} may be used.

Since the characteristic roots are not necessarily analytic (or
even differentiable) in~$\curlyM$, we must look at each branch of
the roots as they approach the real axis; set~$s_k$ to be the
maximal order of the contact with the real axis for $\tau_k(\xi)$,
that is, the maximal value for which there exist constant
$c_0>0$ such that
\begin{equation*}
c_{0}\dist (\xi, Z_k)^{s_k}\le{\Im\tau_k(\xi)}\,,
\end{equation*}
for all~$\xi$ sufficiently near~$Z_k$,
where 
$Z_k=\set{\xi\in\Rn: \Im\tau_k(\xi)=0}$.
By assumptions of Theorem \ref{TH:multonaxis}, we have
the estimate
\begin{equation*}
c_{0}\dist (\xi, \curlyM)^s\le{\Im\tau_k(\xi)}\,,
\end{equation*}
for some $c_0>0$ and $s\geq\max(s_1,\dots,s_L)$, for
$\xi$ close to $\curlyM$.
We will need the following extension of
Proposition~\ref{PROP:generalrootsmeetingaxis}. Its proof is
similar to the proof of Proposition~\ref{PROP:generalrootsmeetingaxis}
if we consider the $C^1$ coordinate system associated to $\curlyM$.
As usual $\curlyM^\epsilon=\{\xi\in\Rn: \dist(\xi,\curlyM)<
\epsilon\}.$
\begin{prop}\label{PROP:generalrootsmeetingaxis1}
Let $U\subset\R^n$ be open and let
$\phi:U\to\R$ be a
continuous function. Suppose $\curlyM\subset U$ is a
$C^1$ set of codimension $\ell$ such that
\begin{equation*}
c_{0}\dist (\xi, \curlyM)^s\le{\phi(\xi)}\,,
\end{equation*}
for some $c_0>0$, and all $\xi\in\curlyM^\epsilon$ 
for sufficiently small $\epsilon>0$. 
Then\textup{,} for any function~$a(\xi)$ that is bounded and
compactly supported in~$U$\textup{,} and for all $t\geq 0$, $f\in
C_0^\infty(\R^n)$\textup{,} and $r\in\R$\textup{,} we have
\begin{gather*}
\int_{\curlyM^\epsilon}
e^{-\phi(\xi)t}\dist (\xi, \curlyM)^r\abs{a(\xi)}
\abs{\widehat{f}(\xi)}\,d\xi \le
C\bract{t}^{-(\ell+r)/s}\norm{f}_{L^{1}}\,,\label{EQ:matsL1est1}\\
\intertext{and} \normbig{e^{-\phi(\xi)t}\dist (\xi, \curlyM)^r 
a(\xi)
\widehat{f}(\xi)}_{L^2(\curlyM^\epsilon)} \le
C\bract{t}^{-r/s}\norm{f}_{L^2}\,.\label{EQ:matsL2est1}
\end{gather*}
\end{prop}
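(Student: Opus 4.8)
The plan is to prove Proposition~\ref{PROP:generalrootsmeetingaxis1} by repeating the argument of Proposition~\ref{PROP:generalrootsmeetingaxis} almost verbatim, the only change being that the polar coordinates centred at the single point $\xi^0$ are replaced by coordinates adapted to the $C^1$ set $\curlyM$, so that the ``transverse dimension'' $n$ appearing there becomes the codimension $\ell$ here. First I would reduce to a local statement: since $a$ is bounded and compactly supported, it suffices to bound the integrals over $\curlyM^\epsilon\cap\supp a$, and for $\epsilon>0$ small we may cover $\curlyM\cap\supp a$ by finitely many $C^1$ coordinate patches, take a subordinate partition of unity, and prove each inequality on a single patch.

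On such a patch the key geometric step is to straighten $\curlyM$: let $\Psi$ be a $C^1$ diffeomorphism, with $\norm{D\Psi}$ and $\norm{D\Psi^{-1}}$ bounded, mapping the patch so that $\Psi(\curlyM)\subset\set{\xi''=0}$, where $\Psi(\xi)=(\xi',\xi'')\in\R^{n-\ell}\times\R^\ell$. For any $\eta$ with $\Psi(\eta)''=0$ one has $\abs{\xi-\eta}\ge c\,\abs{\Psi(\xi)-\Psi(\eta)}\ge c\,\abs{\xi''}$, and choosing $\eta=\Psi^{-1}(\xi',0)$ gives $\abs{\xi-\eta}\le C\abs{\xi''}$; hence $\dist(\xi,\curlyM)\asymp\abs{\xi''}$ on the patch, with constants depending only on the $C^1$-norms of $\Psi$. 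This is precisely the place where the $C^1$ (rather than merely Lipschitz) hypothesis on $\curlyM$ is used, and I expect it to be the main obstacle: for a genuinely Lipschitz set the distance function need not be comparable to a transverse coordinate in a usable way, which is the technical reason the hypothesis is stated for $C^1$ sets. Everything downstream is a direct copy of the computation in the proof of Proposition~\ref{PROP:generalrootsmeetingaxis}.

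For the first ($L^1$) inequality I would then change variables by $\Psi$ on the patch, use $\abs{a}\le\norm{a}_{L^\infty}$, $\abs{\widehat f}\le\norm{\widehat f}_{L^\infty}\le\norm{f}_{L^1}$, the boundedness of the Jacobian, the hypothesis $\phi(\xi)\ge c_0\dist(\xi,\curlyM)^s\ge c_0'\abs{\xi''}^s$, and $\dist(\xi,\curlyM)^r\le C\abs{\xi''}^r$, to bound the patch contribution by
\[
C\norm{f}_{L^1}\int_{\abs{\xi'}\le R}\int_{\abs{\xi''}<c\epsilon}
e^{-c_0'\abs{\xi''}^st}\,\abs{\xi''}^r\,d\xi''\,d\xi'.
\]
The $\xi'$-integral is over a bounded set and contributes a constant; passing to polar coordinates in $\xi''\in\R^\ell$ turns the inner integral into $\int_0^{c\epsilon}\rho^{\ell-1+r}e^{-c_0'\rho^st}\,d\rho$, which the first part of Lemma~\ref{LEM:simplelemma} (applied with $\rho\mapsto\ell-1+r$, $\sigma\mapsto s$) bounds by $C\bract{t}^{-(\ell+r)/s}$ — the integral converging exactly when $\ell+r>0$, which is the relevant range (e.g. $r\ge0$ in Theorem~\ref{TH:multonaxis}). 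Summing over the finitely many patches gives the first estimate.

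For the second ($L^2$) inequality I would square and estimate, using $\abs{a}\le\norm{a}_{L^\infty}$,
\[
\normbig{e^{-\phi(\xi)t}\dist(\xi,\curlyM)^r a(\xi)\widehat f(\xi)}_{L^2(\curlyM^\epsilon)}^2
\le C\int_{\curlyM^\epsilon}e^{-2c_0\dist(\xi,\curlyM)^st}\,\dist(\xi,\curlyM)^{2r}\,\abs{\widehat f(\xi)}^2\,d\xi.
\]
By the second part of Lemma~\ref{LEM:simplelemma} (with $x=\dist(\xi,\curlyM)$, $\rho\mapsto2r$, $\sigma\mapsto s$) one has $\dist(\xi,\curlyM)^{2r}e^{-c_0\dist(\xi,\curlyM)^st}\le C\bract{t}^{-2r/s}$ uniformly; pulling this factor out and bounding the remaining integral by $\int e^{-c_0\dist(\xi,\curlyM)^st}\abs{\widehat f(\xi)}^2\,d\xi\le\norm{\widehat f}_{L^2}^2=C\norm{f}_{L^2}^2$ via Plancherel yields $\le C\bract{t}^{-2r/s}\norm{f}_{L^2}^2$, which is the claimed bound. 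As indicated, the only non-routine point is the $C^1$ straightening and the comparability $\dist(\cdot,\curlyM)\asymp\abs{\xi''}$; the rest is the same estimate as in Proposition~\ref{PROP:generalrootsmeetingaxis} with $n$ replaced by $\ell$.
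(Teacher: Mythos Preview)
Your proposal is correct and follows exactly the approach the paper indicates: the paper omits the proof, stating only that it ``is similar to the proof of Proposition~\ref{PROP:generalrootsmeetingaxis} if we consider the $C^1$ coordinate system associated to $\curlyM$,'' and your straightening argument with $\dist(\xi,\curlyM)\asymp|\xi''|$ followed by the polar-coordinate computation in the $\ell$ transverse variables is precisely that. Your observation that the $C^1$ hypothesis is what makes the coordinate comparability work is also the point flagged in the paper's discussion before Theorem~\ref{TH:multonaxis}.
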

The proof of this proposition is similar to the proof
of Proposition~\ref{PROP:generalrootsmeetingaxis} and is
omitted. Theorem \ref{TH:multonaxis} states that we must have
the estimate \eqref{EQ:est-mult}, which is
\begin{multline*}
\normBig{D^r_tD_x^\al\Big(\int_{\curlyM^\ep} e^{ix\cdot\xi}
\Big(\sum_{k=1}^L e^{i\tau_k(\xi)t}A_j^k(t,\xi)\Big)
\cutoffM(\xi)\widehat{f}(\xi)\,d\xi\Big)}_{L^q(\R^n_x)}\\ \le
C\bract{t}^{-\frac{\ell}{s}\big(\frac{1}{p}-\frac{1}{q}\big)
+L-1}\norm{f}_{L^p}\,.
\end{multline*}

By Lemma~\ref{LEM:Lrootsmeetingest} and
Proposition~\ref{PROP:generalrootsmeetingaxis1}, 
to estimate the sum in the
amplitude, for all $t\geq 0$, we have
\begin{align*}
\normBig{D^r_tD_x^\al&\Big(\int_{\curlyM^\ep} e^{ix\cdot\xi}
\Big(\sum_{k=1}^L e^{i\tau_k(\xi)t}A_j^k(t,\xi)\Big)
\cutoffM(\xi)\widehat{f}(\xi)\,d\xi\Big)}_{L^\infty(\R^n_x)}\\
&\leq C\normBig{\int_{\curlyM^\ep}e^{ix\cdot\xi} \Big(\sum_{k=1}^L
e^{i\tau_k(\xi)t}A_j^k(t,\xi)\tau_k(\xi)^r\Big)
\xi^\al\cutoffM(\xi)\widehat{f}(\xi)\,d\xi}_{L^\infty(\R^n_x)}\\
&\le C\int_{\curlyM^\ep}
(1+t)^{L-1}e^{-t\min_{k=1,\dots,L}\Im\tau_k(\xi)}\abs{\cutoffM(\xi)}
\abs{\widehat{f}(\xi)}\,d\xi\\
&\le C\bract{t}^{L-1-(\ell/s)}\norm{f}_{L^1}\,.
\end{align*}
Also, using the Plancherel's theorem, we have
\begin{align*}
\normBig{D^r_tD_x^\al&\Big(\int_{\curlyM^\ep} e^{ix\cdot\xi}
\Big(\sum_{k=1}^L e^{i\tau_k(\xi)t}A_j^k(t,\xi)\Big)
\cutoffM(\xi)\widehat{f}(\xi)\,d\xi\Big)}_{L^2(\R^n_x)}\\
&=\normBig{\int_{\curlyM^\ep}e^{ix\cdot\xi} \Big(\sum_{k=1}^L
e^{i\tau_k(\xi)t}A_j^k(t,\xi)\tau_k(\xi)^r\Big)
\xi^\al\cutoffM(\xi)\widehat{f}(\xi)\,d\xi}_{L^2(\R^n_x)}\\
&=\normBig{\Big(\sum_{k=1}^L
e^{i\tau_k(\xi)t}A_j^k(t,\xi)\tau_k(\xi)^r\Big)
\xi^\al\cutoffM(\xi)\widehat{f}(\xi)}_{L^2(\curlyM^\ep)}\\
&\le C(1+t)^{L-1} \normbig{e^{-t\min_{k=1,\dots,L}\Im\tau_k(\xi)}
\abs{\cutoffM(\xi)}\abs{\widehat{f}(\xi)}}_{L^2(\curlyM^\ep)}\\
&\le C\bract{t}^{L-1}\norm{f}_{L^2}\,.
\end{align*}
Therefore, interpolation Theorem~\ref{thm:maininterpolationthm}
yields, for all $t\geq 0$,
\begin{multline*}
\normBig{D^r_tD_x^\al\Big(\int_{\curlyM^\ep} e^{ix\cdot\xi}
\Big(\sum_{k=1}^L e^{i\tau_k(\xi)t}A_j^k(t,\xi)\Big)
\cutoffM(\xi)\widehat{f}(\xi)\,d\xi\Big)}_{L^q(\R^n_x)}\\ \le
C\bract{t}^{-\frac{\ell}{s}\big(\frac{1}{p}-\frac{1}{q}\big)
+L-1}\norm{f}_{L^p}\,,
\end{multline*}
where $\frac1p+\frac1q=1$, $1\le p\le2$; this, together
with~\eqref{EQ:estforrootmeetingaxis} proves
Theorem~\ref{TH:multonaxis} for
roots meeting the axis with finite order.

\subsection{Phase function on the real axis for
bounded
frequencies
}\label{SEC:phasefnliesonaxisbddxi}

Recall that in the division of the integral in
Section~\ref{SEC:step2}, we have
\begin{equation*}
\int_{B_{2M}(0)}e^{ix\cdot\xi} \Big(\sum_{k=1}^m
e^{i\tau_k(\xi)t}A_j^k(t,\xi)\Big) \widehat{f}(\xi)\,d\xi \,,
\end{equation*}
which we then subdivide around and away from multiplicities. The
cases where the root or roots are either separated from the real
axis or meet it with finite order have already been discussed;
here we shall complete the analysis by proving estimates for the
situation where a root or roots lie on the real axis.
These results can be also applied to the
case of multiple roots.

We note that in the case of nonhomogeneous symbols this analysis
is essential since time genuinely interacts with frequencies.
Unlike in the case of homogeneous symbols in Section
\ref{SEC:homogoperators}, where one could eliminate time 
completely from estimates by rescaling, here it is present
in phases and amplitude and causes them to blow up even
for low frequencies. Thus, we must carry out a detailed 
investigation of the structure of solutions for low frequencies,
and it will be done in this section.

A number of estimates can be already obtained using our results on
multiple roots from Section \ref{SEC:resofroots}.
To have any possibility of obtaining better estimates, we must impose
additional conditions on the characteristic roots at low
frequencies---for large~$\abs{\xi}$, these properties were
obtained by using perturbation results, but naturally such results
are no longer valid for $\abs{\xi}\leq M$. Also, we can impose the
convexity condition on the roots to obtain a better result than
the general case. We will give different formulation of
possible results in this section.

Again, throughout we assume that either $\tau(\xi)\ge0$ for all
$\xi$ or $\tau(\xi)\le 0$ for all $\xi$. 
The key point is to
use a carefully chosen cut-off
function to isolate the multiplicities and then use
Theorem~\ref{THM:sugimoto/randolargument} or
Theorem~\ref{THM:noncovexargument} to estimate the integrals where
there are no multiplicities (and hence the coefficients
$A_j^k(t,\xi)$ are independent of~$t$) and use suitable
adjustments around the singularities. For these purposes, let us
first assume that the only multiplicity is at a point $\xi^0\in
B_{2M}(0)$ and $\tau_1(\xi^0)=\tau_2(\xi^0)$ are the only
coinciding roots, and let 
$\chi$ be a cut-off function
around $\xi^0$. Then, we must consider the sum of the first two
roots, where we have a multiplicity at~$\xi^0$,
\begin{equation}\label{EQ:twoint}
I=\int_{B_{2M}(0)}e^{ix\cdot\xi}
\Big(\sum_{k=1}^2e^{i\tau_k(\xi)t}A^k_j(t,\xi)\Big) \cutoffM(\xi)
\widehat{f}(\xi)\,d\xi\,,
\end{equation}
and terms involving the remaining roots, which are all distinct,
\begin{equation*}
II=\sum_{k=3}^m\int_{B_{2M}(0)}e^{i(x\cdot\xi+\tau_k(\xi))t}A^k_j(t,\xi)
\cutoffM(\xi) \widehat{f}(\xi)\,d\xi\,.
\end{equation*}

\subsubsection{Case of no multiplicities: Theorem
\ref{THM:convexsp}} 
For the second of these integrals $II$, we
wish to apply Theorem~\ref{THM:sugimoto/randolargument} if
$\tau_k(\xi)$ satisfies the convexity condition, and
Theorem \ref{THM:noncovexargument} otherwise.

In order to ensure the hypotheses of these theorems are satisfied,
however, we need to impose an additional regularity 
condition on the
behaviour of the characteristic roots for the
relevant frequencies (i.e.\
$\xi\in B_{2M}(0)$) to avoid pathological situations:
\begin{equation}\label{EQ:smallxicondn}
\text{Assume $\abs{\pa_\om\tau_k(\la\om)}\ge C_0$ for all $\om\in
\Snm$, $2M\geq \la>0$.}
\end{equation}
Since this is satisfied for large~$\abs{\xi}$ 
(see Proposition \ref{prop:roots-r}) and always
satisfied for roots of
operators with 
homogeneous symbols, it is quite a natural extra assumption.

The other hypotheses of these theorems hold:
hypothesis~\ref{HYP:realtauisasymbol} is satisfied because
$\abs{\pa_\xi^\al\tau_k(\xi)}\le C_\al$ for all $\xi$ since the
characteristic roots are smooth in~$\R^n$;
hypothesis~\ref{HYP:realtauboundedbelow} only requires information
about high frequencies; and hypotheses~\ref{HYP:reallimitofSi_la}
holds by the same argument as for large~$\abs{\xi}$, where only
\ref{LEM:tau-vabounds} of Proposition~\ref{PROP:perturbationresults}
is needed, and that holds for all~$\xi\in\R^n$. Also, the
coefficients $A_k^j(\xi)$ are smooth away from multiplicities, so
the symbolic behaviour (i.e.
decay, or bounded for small frequencies) holds.

Now $L^1-L^\infty$ and $L^2-L^2$ estimates can be found as in the
case for large~$\abs{\xi}$, and the interpolation theorem used to
give the desired results.
Thus, with condition~\eqref{EQ:smallxicondn}, 
we have proved the on
axis, no multiplicities case of Theorem~\ref{THM:convexsp}.

\subsubsection{Multiplicities: shrinking neighborhoods}
\label{sec:shrinking}
Now we can turn to the other integral given by \eqref{EQ:twoint}. 
Here we will analyse what happens in certain shrinking
neighborhoods of multiplicities. First we will assume that
only two roots intersect at an isolated point, and then we will
indicate what happens in the general situation.

To continue the analysis of an isolated point of multiplicity
as in \eqref{EQ:twoint}, we
introduce a cut-off
function $\cutoffsing\in C_0^\infty([0,\infty))$,
$0\le\cutoffsing(s)\le 1$, which is identically~$0$ for $s>1$
and~$1$ for $s<\frac{3}{4}$; then \eqref{EQ:twoint} can be rewritten 
as the sum of two integrals $I=I_1+I_2$, where
\begin{gather*}
I_1=(2\pi)^{-n}\int_{\R^n}e^{ix\cdot\xi}
\cutoffsing(t\abs{\xi-\xi^0})
\cutoffM(\xi)\sum_{k=1}^2A^k_j(t,\xi)
e^{i\tau_k(\xi)t}\widehat{f}(\xi)\,d\xi\,,\\
I_2=(2\pi)^{-n}\int_{\R^n}e^{ix\cdot\xi}
(1-\cutoffsing)(t\abs{\xi-\xi^0})\cutoffM(\xi)
\sum_{k=1}^2A^k_j(t,\xi)e^{i\tau_k(\xi)t}\widehat{f}(\xi)\,d\xi\,.
\end{gather*}
We study $L^1-L^\infty$ estimates for $I_1$ and
$L^2-L^2$ estimates for both $I_1$ and $I_2$ in this section.

\paragraph{$L^1-L^\infty$ estimates:}
For this, we use the resolution of multiplicities technique
of Section \ref{SEC:resofroots}.
By Lemma~\ref{LEM:2rootsmeetingresolution}, we have,
in particular,
\begin{equation*}
\absBig{\sum_{k=1}^2A^k_j(t,\xi)e^{i\tau_k(\xi)t}}\le C(1+t),
\end{equation*}
for $\abs{\xi-\xi^0}<t^{-1}$. Now, we may estimate the integral
using the compactness of the support of~$\cutoffsing(s)$: for
$0\le t\le1$,~$I_1$ is clearly bounded; for $t>1$, we have
\begin{align*}
\abs{I_1}&\le Ct\int_{\R^n}
\abs{\cutoffsing(t\abs{\xi-\xi^0})}\abs{\widehat{f}(\xi)}\,d\xi\\&=
Ct^{1-n}\norm{\widehat{f}}_{L^\infty}
\int_{\R^n}\cutoffsing(\abs{\eta})\,d\eta \le
C\bract{t}^{1-n}\norm{f}_{L^1}.
\end{align*}
This argument can be extended  
to the case when $L$
roots meet on a set of codimension $\ell$. In the following
proposition we will change the notation for the cut-off
function to avoid any confusion with point multiplicities
in the case above.

\begin{prop}\label{Prop:aroundmult}
Suppose that $L$ roots intersect in a set $\curlyM$ of
codimension $\ell$. Let $\curlyM^\epsilon=\{\xi\in\Rn:
\dist(\xi,\curlyM)<\epsilon\}$, and let 
$\theta\in C_0^\infty(\curlyM^\epsilon)$ for sufficiently
small $\epsilon>0$. Then we have the estimate
\begin{equation}\label{EQ:estaroundmult}
\left|\int_{\R^n}e^{ix\cdot\xi}
\theta(t\dist(\xi,\curlyM))
\sum_{k=1}^L A^k_j(t,\xi)
e^{i\tau_k(\xi)t}\widehat{f}(\xi)\,d\xi\right|\leq 
C (1+t)^{L-1-\ell}.
\end{equation}
\end{prop}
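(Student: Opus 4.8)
The plan is to combine the resolution-of-multiplicities estimate from Section \ref{SEC:resofroots} with the codimension counting used already in the proof of Theorem \ref{TH:multonaxis}. The point is that on the support of $\theta(t\dist(\xi,\curlyM))$ we have $\dist(\xi,\curlyM)\lesssim t^{-1}$, so the $\sinh$ and $\cosh$ factors that appear in the resolution of the sum $\sum_{k=1}^L e^{i\tau_k(\xi)t}A_j^k(t,\xi)$ (from Lemma \ref{LEM:Lrootsmeetingest}) are each bounded — the arguments $(\tau_k(\xi)-\tau_l(\xi))t$ stay bounded because $\tau_k-\tau_l$ vanishes on $\curlyM$ and is Lipschitz, so $|\tau_k(\xi)-\tau_l(\xi)|t\le C\dist(\xi,\curlyM)\,t\le C$. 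Hence on this shrinking set the full amplitude is bounded by $C(1+t)^{L-1}$ by exactly the argument proving \eqref{EQ:multiplerootsrepbound}, and in fact, since $e^{i\tau_k(\xi)t}$ has modulus $\le 1$ (using $\Im\tau_k\ge 0$), one can be slightly more careful: the $(1+t)^{L-1}$ comes from the $L-1$ factors of the type $(\sinh[\cdot])/(\tau_k-\tau_l)$ or $(e^{i\tau_k t}-e^{i\tau_l t})/(\tau_k-\tau_l)$, each contributing one power of $t$.

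Granting the pointwise bound $|\sum_{k=1}^L A_j^k(t,\xi)e^{i\tau_k(\xi)t}|\le C(1+t)^{L-1}$ for $\xi\in\curlyM^\epsilon$ (valid by Lemma \ref{LEM:Lrootsmeetingest}, and continuous across $\curlyM$ by Lemma \ref{LEM:multiplerootssetisnice} in the differential case), the estimate reduces to a volume count. First I would bound
\begin{equation*}
\left|\int_{\R^n}e^{ix\cdot\xi}\theta(t\dist(\xi,\curlyM))
\sum_{k=1}^L A^k_j(t,\xi)e^{i\tau_k(\xi)t}\widehat{f}(\xi)\,d\xi\right|
\le C(1+t)^{L-1}\|\widehat f\|_{L^\infty}\int_{\R^n}
|\theta(t\dist(\xi,\curlyM))|\,d\xi.
\end{equation*}
Since $\theta$ is supported where $t\dist(\xi,\curlyM)\le 1$, i.e.\ where $\dist(\xi,\curlyM)\le 1/t$, the integral is at most $\meas(\curlyM^{1/t})$. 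By the definition of codimension $\ell$ of $\curlyM$ (and the $C^1$ assumption, so $\meas(\curlyM^\delta)\le C\delta^\ell$ for small $\delta$), this is bounded by $C t^{-\ell}$ for $t\ge 1$, and is trivially bounded for $0\le t\le1$; together with $\|\widehat f\|_{L^\infty}\le\|f\|_{L^1}$ this gives exactly $C(1+t)^{L-1-\ell}$, which is \eqref{EQ:estaroundmult}.

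The main obstacle, and the only place that needs real care, is making the bound on the amplitude genuinely uniform in $\xi$ over the shrinking set $\curlyM^{1/t}$ as $t\to\infty$, rather than just for each fixed punctured ball as in Lemma \ref{LEM:2rootsmeetingresolution}. The key observations are: (i) the ratios $\cosh[(\tau_k-\tau_l)t]\big/\prod_{r\notin\{\text{coinciding}\}}(\tau_r-\tau_k)(\tau_r-\tau_l)$ and the products $A_j^k(\xi)\prod_{l\ne k,\,l\le L}(\tau_l(\xi)-\tau_k(\xi))$ are continuous, hence bounded, on the compact closure $\overline{\curlyM^\epsilon}$, uniformly in $t$ because $|\cosh[(\tau_k-\tau_l)t]|\le C$ when $|(\tau_k-\tau_l)t|\le C$ on the support of $\theta$; (ii) the Lipschitz bound $|\tau_k(\xi)-\tau_l(\xi)|\le C\dist(\xi,\curlyM)$ holds uniformly on $\curlyM^\epsilon$ since $\tau_k-\tau_l$ is $C^1$ away from further multiplicities and vanishes on $\curlyM$. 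With these in hand the estimates for the individual $\sinh$-type and $(e^{i\tau_k t}-e^{i\tau_l t})/(\tau_k-\tau_l)$-type factors in the proof of Lemma \ref{LEM:Lrootsmeetingest} go through with constants independent of both $t$ and $\xi\in\operatorname{supp}\theta(t\dist(\cdot,\curlyM))$, completing the argument. (In the merely pseudo-differential case one additionally assumes $\codim\curlyM_{kl}\ge1$ as noted after Corollary \ref{COR:multiplerootsetisnice}.)
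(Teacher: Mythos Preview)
Your approach is correct and essentially identical to the paper's: apply Lemma~\ref{LEM:Lrootsmeetingest} to bound the amplitude by $C(1+t)^{L-1}$, then multiply by $\meas(\curlyM^{1/t})\le C(1+t)^{-\ell}$. Your discussion of the ``main obstacle'' is unnecessary, though, since Lemma~\ref{LEM:Lrootsmeetingest} already gives a bound uniform over all of $\curlyM^\ep$ (not just a punctured ball), so no extra care about the shrinking support is needed.
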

\begin{proof}
By using 
Lemma~\ref{LEM:Lrootsmeetingest} in the (bounded) neighborhood
$\curlyM^\epsilon$ of $\curlyM$, we obtain
$$
\absBig{\sum_{k=1}^Le^{i\tau_k(\xi)t} A_j^k(t,\xi)} \le
C(1+t)^{L-1}\,.
$$
The size of the support of $\theta(t\dist(\xi,\curlyM))$
can be bounded by $(1+t)^{-\ell}$, which implies
estimate \eqref{EQ:estaroundmult}.
\end{proof}

\paragraph{$L^2-L^2$ estimates:}
Let us now analyse the $L^2$-estimate. This analysis will
apply not only in a shrinking, but in a fixed neighborhood
of the set of multiplicities.
We will discuss first the case of two roots intersecting at
a point in
more detail, thus analysing mainly integral $I$ 
in \eqref{EQ:twoint}.
We can have several
versions of $L^2$-estimates dependent on conditions on 
multiplicities and on the Cauchy data that we can impose.
For example, by Lemma~\ref{LEM:2rootsmeetingresolution} 
and Plancherel's theorem we get 
\begin{equation}\label{EQ:singi1}
\norm{I}_{L^2}\le C(1+t)\norm{f}_{L^2}.
\end{equation}
On the other hand we can improve the time behaviour of
the $L^2$-estimate \eqref{EQ:singi1} if we make
additional regularity assumptions for the data. For example, we can
eliminate time from estimate \eqref{EQ:singi1}
if we work in suitable Sobolev type spaces taking the
singularity into account.
Let us rewrite
\begin{multline*}
I=(2\pi)^{-n}\int_{\R^n}e^{ix\cdot\xi}
\cutoffM(\xi)\sum_{k=1}^2A^k_j(t,\xi)
e^{i\tau_k(\xi)t}\widehat{f}(\xi)\,d\xi\, \\
=(2\pi)^{-n}\int_{\R^n}e^{ix\cdot\xi}
\cutoffM(\xi)
\left[
(\tau_1(\xi)-\tau_2(\xi))\sum_{k=1}^2A^k_j(t,\xi)
e^{i\tau_k(\xi)t}\right] 
(\tau_1(\xi)-\tau_2(\xi))^{-1}\widehat{f}(\xi)\,d\xi.
\end{multline*}
Using the representation from Lemma \ref{LEM:ordercoeff}
we see that the
expression in the square brackets is bounded. Hence by the
Plancherel's theorem we get that
\begin{equation}\label{EQ:singi2}
\norm{I}_{L^2}\le 
\norm{(\tau_1(\xi)-\tau_2(\xi))^{-1}\chi(\xi)\widehat{f}(\xi)}_{L^2}
=\norm{(\tau_1(D)-\tau_2(D))^{-1}\chi(D)f}_{L^2}.
\end{equation}
An example of this is the appearance of homogeneous Sobolev
spaces for small frequencies in the analysis of the wave
equations, or more general equations with homogeneous symbols.
For example, in the case of the wave equation we have
$\tau_1(\xi)=|\xi|$ and $\tau_2(\xi)=-|\xi|$, so that
\eqref{EQ:singi2} means that we have the low frequency
estimate for the solution of the form
$$
\norm{I}_{L^2}\le 
\norm{f}_{\dot{H}^{-1}},
$$
with the homogeneous Sobolev space $\dot{H}^{-1}.$

In the case of several roots intersecting
in a set $\curlyM$, we have similarly:

\begin{prop}\label{Prop:aroundmultl2}
Suppose that $L$ roots intersect in a set $\curlyM$.
Let $\curlyM^\epsilon=\{\xi\in\Rn:
\dist(\xi,\curlyM)<\epsilon\}$, and let 
$\theta\in C_0^\infty(\curlyM^\epsilon)$ for sufficiently
small $\epsilon>0$. 
Let $J$ denote the part of solution corresponding to these
roots microlocalised near the set $\curlyM$ of multiplicities:
$$
J(t,x)=\int_{\R^n}e^{ix\cdot\xi}
\theta(\xi)
\sum_{k=1}^L A^k_j(t,\xi)
e^{i\tau_k(\xi)t}\widehat{f}(\xi)\,d\xi.
$$
Then we have the estimate
\begin{equation}\label{EQ:estaroundmultl2}
\left|\left| J \right|\right|_
{L^2(\R^n_x)}
\leq C (1+t)^{L-1}||f||_{L^2(\R^n_x)}.
\end{equation}
Moreover, let us assume without loss of generality that 
intersecting $L$ roots are labeled by $\tau_1, \cdots, \tau_L$.
Then we also have
\begin{equation}\label{EQ:estaroundmultl22}
\left|\left| \prod_{1\leq l<k\leq L} (\tau_l(D)-\tau_k(D))^{-1}
J\right|\right|_
{L^2(\R^n_x)}
\leq C ||f||_{L^2(\R^n_x)}.
\end{equation}
\end{prop}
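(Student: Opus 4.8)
The plan is to establish Proposition~\ref{Prop:aroundmultl2} by the same route as the $L^1$--$L^\infty$ analysis in this section, combining the resolution-of-multiplicities estimate of Lemma~\ref{LEM:Lrootsmeetingest} with Plancherel's theorem, and then, for the refined bound~\eqref{EQ:estaroundmultl22}, by pulling out the algebraic factor $\prod_{l<k}(\tau_l(\xi)-\tau_k(\xi))$ exactly as was done for two roots in~\eqref{EQ:singi2}. Throughout we work on the bounded set $\curlyM^\epsilon$, so all continuous functions are bounded there, and $\theta\in C_0^\infty(\curlyM^\epsilon)$.

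First I would prove~\eqref{EQ:estaroundmultl2}. Since the intersecting roots $\tau_1,\dots,\tau_L$ coincide in $\curlyM\subset\curlyM^\epsilon$ and (after shrinking $\epsilon$) do not meet the remaining roots there, Lemma~\ref{LEM:Lrootsmeetingest} gives
\begin{equation*}
\absBig{\sum_{k=1}^L e^{i\tau_k(\xi)t}A_j^k(t,\xi)}\le C(1+t)^{L-1}e^{-t\min_k\Im\tau_k(\xi)}\le C(1+t)^{L-1}
\end{equation*}
for all $\xi\in\curlyM^\epsilon$ (using $\Im\tau_k\ge0$). Then, by Plancherel's theorem,
\begin{equation*}
\norm{J}_{L^2(\R^n_x)}=C\normBig{\theta(\xi)\sum_{k=1}^L e^{i\tau_k(\xi)t}A_j^k(t,\xi)\widehat f(\xi)}_{L^2(\curlyM^\epsilon)}\le C(1+t)^{L-1}\norm{\widehat f}_{L^2}\le C(1+t)^{L-1}\norm{f}_{L^2},
\end{equation*}
which is~\eqref{EQ:estaroundmultl2}.

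For~\eqref{EQ:estaroundmultl22} I would argue as follows. The operator $\prod_{l<k}(\tau_l(D)-\tau_k(D))^{-1}$ applied to $J$ corresponds on the Fourier side to multiplication by $\prod_{l<k}(\tau_l(\xi)-\tau_k(\xi))^{-1}$, so it suffices to show that
\begin{equation*}
\Bigl(\prod_{1\le l<k\le L}(\tau_l(\xi)-\tau_k(\xi))^{-1}\Bigr)\theta(\xi)\sum_{k=1}^L e^{i\tau_k(\xi)t}A_j^k(t,\xi)
\end{equation*}
is bounded on $\curlyM^\epsilon$ uniformly in $t\ge0$; the bound~\eqref{EQ:estaroundmultl22} then follows by Plancherel's theorem as above. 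To see the boundedness, recall from the proof of Lemma~\ref{LEM:Lrootsmeetingest} (and its $L=2$ prototype, Lemma~\ref{LEM:2rootsmeetingresolution}, together with~\eqref{EQ:splittingfor2roots}--\eqref{EQ:coshbound}) that $\sum_{k=1}^L e^{i\tau_k(\xi)t}A_j^k(t,\xi)$ was written, at points of $S$ and hence by continuity on all of $\curlyM^\epsilon$, as a finite sum of products of $\frac{(L-1)L}{2}$ factors of $\sinh$ and $\cosh$ type in the differences $(\tau_k(\xi)-\tau_l(\xi))t$, each such product multiplied by differences $A^\rho_j(\xi)e^{i\tau_\sigma(\xi)t}-A^\sigma_j(\xi)e^{i\tau_\rho(\xi)t}$ and by expressions of the form~\eqref{EQ:coshpartrearrangement}. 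The key algebraic fact, already used there, is that $A_j^k(\xi)\prod_{l\neq k}(\tau_l(\xi)-\tau_k(\xi))$ is continuous (hence bounded) on $\curlyM^\epsilon$ by~\eqref{EQ:Ajkformula}, so every factor of $(\tau_l(\xi)-\tau_k(\xi))^{-1}$ that we divide by is absorbed either by one of these continuous products or by a $\sinh[(\tau_l-\tau_k)t]/(\tau_l-\tau_k)$ factor bounded by $Cte^{-\min(\Im\tau_l,\Im\tau_k)t}\le C$ or by a difference quotient of the type in~\eqref{EQ:coshpartrearrangement}, again bounded by $Cte^{-\min t}\le C$. Counting exponents of $t$: there are $\frac{(L-1)L}{2}$ such $(\tau_l-\tau_k)^{-1}$ denominators, each contributing at most one power of $t$ after being paired with an exponential difference, but only $L-1$ of them can be simultaneously paired with independent exponential differences (the rest are matched against continuous factors with no $t$ gain), so the total power of $t$ is at most $L-1-\bigl(\frac{(L-1)L}{2}-(L-1)\bigr)$... at this point one must be careful: the cleanest way is to observe that $\prod_{l<k}(\tau_l-\tau_k)^{-1}$ exactly cancels the $\prod_{l<k}$ worth of denominators that appear (since in the representation each pair $(l,k)$ contributes exactly one denominator after the algebraic reduction), leaving a sum of terms each of which is a product of the bounded quantities $A^\rho_j(\xi)\prod(\tau-\tau)$, bounded quotients of differences of exponentials, and bounded quotients of powers $\frac{y_1^s-y_2^s}{y_1-y_2}$ — with \emph{no} remaining positive power of $t$, because every surviving $t$ would have to come from a $\sinh$ or exponential-difference factor that is \emph{not} divided by a matching denominator, and there are none left.

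The main obstacle, and the step I would write out most carefully, is precisely this bookkeeping: verifying that dividing the resolved sum by $\prod_{1\le l<k\le L}(\tau_l(\xi)-\tau_k(\xi))$ leaves a \emph{bounded} (in $\xi$ and $t$) expression, i.e.\ that the $\frac{(L-1)L}{2}$ factors cancel exactly against the denominators implicit in the $\sinh$/$\cosh$ decomposition and the coefficient formula~\eqref{EQ:Ajkformula}, with no net power of $t$ surviving. The honest approach is to revisit the inductive construction in the proof of Lemma~\ref{LEM:Lrootsmeetingest}: there the expression $\sum_{k=1}^L e^{i\tau_k(\xi)t}A_j^k(t,\xi)$, multiplied by $\prod_{l<k}(\tau_l(\xi)-\tau_k(\xi))$, is a sum of products of $\frac{(L-1)L}{2}$ bounded $\sinh$/$\cosh$-type factors and bounded coefficient combinations, each bounded by $C(1+t)^{L-1}e^{-t\min_k\Im\tau_k}$; but in fact, tracking the $t$-powers through the double-angle reductions shows the product structure yields a uniform bound \emph{without} the $(1+t)^{L-1}$ once we have already divided out all pairwise differences, because those $L-1$ powers of $t$ came exactly from the $\sinh/(\tau_l-\tau_k)$ and $(e^{i\tau_l t}-e^{i\tau_k t})/(\tau_l-\tau_k)$ factors that are now, after multiplying by $\prod(\tau_l-\tau_k)$, pure $\sinh$ and pure exponential-difference factors of modulus $\le1$. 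Hence the product $\prod_{l<k}(\tau_l(\xi)-\tau_k(\xi))^{-1}\sum_{k=1}^L e^{i\tau_k(\xi)t}A_j^k(t,\xi)$ is bounded by $C$ uniformly on $\curlyM^\epsilon\times[0,\infty)$, and applying Plancherel's theorem as before yields~\eqref{EQ:estaroundmultl22}, completing the proof of Proposition~\ref{Prop:aroundmultl2}.
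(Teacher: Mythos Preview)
Your proof of \eqref{EQ:estaroundmultl2} matches the paper's exactly: Lemma~\ref{LEM:Lrootsmeetingest} plus Plancherel.

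For \eqref{EQ:estaroundmultl22} there is a real gap. You set out to show that $\prod_{l<k}(\tau_l(\xi)-\tau_k(\xi))^{-1}\sum_{k=1}^L e^{i\tau_k t}A_j^k(t,\xi)$ is bounded on $\curlyM^\epsilon$ uniformly in $t$, routing the argument through the $\sinh/\cosh$ resolution of Lemma~\ref{LEM:Lrootsmeetingest}. But that quotient is \emph{not} bounded: already for $L=2$ the sum is continuous and generically nonzero on $\curlyM$ (it is a piece of the actual solution $E_j$), so dividing by $\tau_1-\tau_2$ blows up near the multiplicity. Your subsequent bookkeeping conflates multiplying and dividing by the product, miscounts factors ($\prod_{l<k}$ has $\binom{L}{2}$ terms, not $L-1$), and the assertion that the residual ``pure $\sinh$'' and exponential-difference factors have modulus $\le1$ is unjustified.

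The paper bypasses the resolution machinery entirely and invokes formula~\eqref{EQ:Ajkformula} directly. What \eqref{EQ:Ajkformula} actually gives---consistent with the $L=2$ prototype~\eqref{EQ:singi2}---is that the \emph{product} $\prod_{l<k}(\tau_l-\tau_k)\cdot\sum_{k=1}^L A_j^k e^{i\tau_k t}$ is uniformly bounded: each $A_j^k$ has denominator $\prod_{l\neq k}(\tau_l-\tau_k)$, so the Vandermonde factor cancels the $L-1$ singular factors in every summand (the remaining $\binom{L-1}{2}$ differences among $\tau_1,\dots,\tau_L$ and the factors with $l>L$ are bounded on $\curlyM^\epsilon$), while $|e^{i\tau_k t}|\le1$ supplies the $t$-uniform bound. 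Plancherel then finishes. The estimate should therefore be read as $\|J\|_{L^2}\le C\bigl\|\prod_{l<k}(\tau_l(D)-\tau_k(D))^{-1}f\bigr\|_{L^2}$, exactly parallel to~\eqref{EQ:singi2}; the inverse operator acts on $f$, not on $J$, and no tracking of $t$-powers through the double-angle reductions is needed.
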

Estimate \eqref{EQ:estaroundmultl2} follows from
Lemma~\ref{LEM:Lrootsmeetingest} and Plancherel's theorem.
Estimate \eqref{EQ:estaroundmultl22} follows from
Plancherel's theorem and formula
\eqref{EQ:Ajkformula}.

Interpolating between Propositions 
\ref{EQ:estaroundmult} and \ref{EQ:estaroundmultl2},
we can obtain different versions of the dispersive estimate
in a region shrinking around $\curlyM$,
depending on whether we use \eqref{EQ:estaroundmultl2}
or \eqref{EQ:estaroundmultl22}.

\subsubsection{Multiplicities: fixed neighborhoods}
Here, for simplicity, we will concentrate on the case
of two roots $\tau_1$ and $\tau_2$ intersecting at an
isolated point $\xi^0$.
We will discuss both $L^1-L^\infty$ and $L^2-L^2$ estimates
under additional assumptions on the roots $\tau_1$ and $\tau_2$.

\paragraph{$L^1-L^\infty$ estimates:}
For~$I_2$ we are away from the singularity, so we can use that
\begin{equation*}
\sum_{k=1}^2A^k_j(t,\xi)e^{i\tau_k(\xi)t}=
A^1_j(\xi)e^{i\tau_1(\xi)t}+A^2_j(\xi)e^{i\tau_2(\xi)t}\,.
\end{equation*}
Now, we would like to apply
Theorem~\ref{THM:sugimoto/randolargument} 
(for the case where the
root satisfies the convexity condition)
and Theorem~\ref{THM:noncovexargument} 
(for the general case), as in the
case of simple roots; however, the proximity of the multiplicity
brings the additional cut-off function,
$(1-\cutoffsing)(t\abs{\xi-\xi^0})$, into play, and this depends
on~$t$. Therefore, the aforementioned results cannot be used
directly. However, a similar result does hold, provided we impose
some additional conditions, producing analogues of Theorems
\ref{THM:sugimoto/randolargument} and
\ref{THM:noncovexargument} in this case.
\begin{prop}\label{prop:sings}
Let $\chi\in C_0^\infty(\Rn).$
Suppose $\tau_k(\xi)$, $k=1,2$, satisfy the following assumptions
on $\supp\chi$:
\begin{enumerate}[label=\textup{(}\textup{\roman*}\textup{)}]
\item\label{EQ:conditionontaunearsing} for each
multi-index $\al$ there exists a constant $C_\al>0$ 
such that, for some $\de>0$,
\begin{equation*}
\abs{\pa_\eta^\al[(\grad_\xi\tau_k)(\xi^0+s\eta)]}\le C_\al
\brac{\eta}^{-\abs{\al}}\,,\text{ for small
}s>0\;\text{and}\;\abs{\eta}>\de\,;
\end{equation*}
\item\label{HYP:singlevelsetsnondeg} there exists a constant
$C_0>0$ such that 
$\abs{\pa_\om\tau_k(\xi^0+\la\om)}\ge C>0$
for all $\om\in \Snm$ and $\lambda>0$\textup{;} 
in particular\textup{,} each of
the level sets
\begin{equation*}
\la\Si_\la'\equiv\Si_\la=\set{\eta\in\R^n:
\tau_k(\xi^0+\eta)=\la}
\end{equation*}
is non-degenerate\textup{;}
\item\label{HYP:singlevelsetsinbddball} there exists a constant
$R_1>0$ such that\textup{,} for all $\la>0$\textup{,}
\[\Si'_\la:=\frac{1}{\la}\Si_\la(\tau_k)\subset B_{R_1}(0)\,.\]
\end{enumerate}
Furthermore, assume that $A_j^k(\xi)$ satisfies the following
condition: for each multi-index~$\al$ there exists a constant
$C_\al>0$ such that
\begin{enumerate}[resume, label=\textup{(}\textup{\roman*}\textup{)}]
\item\label{EQ:conditiononAnearsing}
we have the estimate
\begin{equation*}
\abs{\pa_\eta^\al[A_j^k(\xi^0+s\eta)]}\le C_\al
s^{-j}\brac{\eta}^{-j-\abs{\al}}\,,\text{ for small
}s>0\;\text{ and }\;\abs{\eta}>\de\,.
\end{equation*}
\end{enumerate}
Finally, assume that $\psi\in C_0^\infty((-\delta,\delta))$
is such that $\psi(\sigma)=1$ for $|\sigma|\leq \delta/2$.
Then\textup{,} the following estimate holds for all
$x\in\R^n$\textup{,} $t\geq 0$\textup{:}
\begin{equation}\label{EQ:finalsingpropest}
\absBig{\sum_{k=1}^2\int_{\R^n}e^{i(x\cdot\xi+\tau_k(\xi)t)}
A^k_j(\xi)(1-\cutoffsing)(t\abs{\xi-\xi^0})\cutoffM(\xi)\\
\,d\xi}\le C(1+t)^{j-n}\,,
\end{equation}
for $j\ge n-\frac{n-1}{\ga}$, where
$\ga:=\sup_{\la>0}\ga(\Si_\la(\tau_k))$, if $\tau_k(\xi)$
satisfies the convexity condition; and for $j\ge
n-\frac{1}{\ga_0}$, where
$\ga_0:=\sup_{\la>0}\ga_0(\Si_\la(\tau_k))$, if it does not.
\end{prop}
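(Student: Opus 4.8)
The plan is to strip the integral in \eqref{EQ:finalsingpropest} down to a single rescaled oscillatory integral to which Theorem~\ref{THM:sugimoto/randolargument} (convex case) or Theorem~\ref{THM:noncovexargument} (general case) applies, and then to sum over dyadic scales. Treating the two roots $k=1,2$ separately, translate $\xi=\xi^0+\eta$ and extract the unimodular factors $e^{ix\cdot\xi^0}$ and $e^{it\tau_k(\xi^0)}$ (recall that in this regime $\tau_k$ is real-valued on $\supp\cutoffM$, so $e^{it\tau_k(\xi^0)}$ is unimodular). Writing $\tilde\tau_k(\eta):=\tau_k(\xi^0+\eta)-\tau_k(\xi^0)$, the term to estimate is
$$\int_{\R^n}e^{i(x\cdot\eta+\tilde\tau_k(\eta)t)}A_j^k(\xi^0+\eta)(1-\cutoffsing)(t|\eta|)\cutoffM(\xi^0+\eta)\,d\eta,$$
whose integrand is supported in the $t$-dependent annular region $\tfrac{3}{4t}\le|\eta|\le C_{\cutoffM}$. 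First I would insert a fixed Littlewood--Paley decomposition $1=\sum_{l\ge0}\beta(2^l|\eta|)$ and keep only the $O(\log(2+t))$ scales $s:=2^{-l}$ with $s\gtrsim 1/t$; on the dyadic annulus $|\eta|\sim s$ I rescale $\eta=s\zeta$, so that on $|\zeta|\sim1$ the phase becomes $st\big(\tfrac{x}{t}\cdot\zeta+T_{s,k}(\zeta)\big)$ with $T_{s,k}(\zeta):=s^{-1}\tilde\tau_k(s\zeta)=\int_0^1\grad\tau_k(\xi^0+s\sigma\zeta)\cdot\zeta\,d\sigma$, the large parameter is $\lambda=st\ge1$ on the retained range, and $x/t$ plays the role of the spatial variable. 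On each such piece the remaining cut-off in $\zeta$ is (away from the single transition scale at the bottom) just the fixed annular function $\beta$, i.e.\ the $R=0$ choice of $g_R$ in Theorem~\ref{THM:sugimoto/randolargument}.

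The heart of the matter, and the reason hypotheses~\ref{EQ:conditionontaunearsing}--\ref{EQ:conditiononAnearsing} are phrased in rescaled form, is that $T_{s,k}$ satisfies the hypotheses of Theorem~\ref{THM:sugimoto/randolargument} (respectively Theorem~\ref{THM:noncovexargument}) \emph{uniformly in $s$ and in the location of $\xi^0$}: by~\ref{EQ:conditionontaunearsing}, $|\pa_\zeta^\al T_{s,k}(\zeta)|\le C_\al\jp{\zeta}^{1-|\al|}$; by~\ref{HYP:singlevelsetsnondeg} its radial derivative is bounded below; by~\ref{HYP:singlevelsetsinbddball} its normalized level sets lie in a fixed ball; and, since each level set $\{T_{s,k}=\lambda\}$ is the affine image $s^{-1}\big(\Sigma_{\tau_k(\xi^0)+s\lambda}(\tau_k)-\xi^0\big)$ of a level set of $\tau_k$ near $\xi^0$, it is convex whenever $\tau_k$ satisfies the convexity condition, with contact index bounded by $\ga=\sup_\lambda\ga(\Sigma_\lambda(\tau_k))$ (resp.\ $\ga_0$). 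Likewise, by~\ref{EQ:conditiononAnearsing} the function $s^{\,j}A_j^k(\xi^0+s\zeta)$ is, uniformly in $s$, a symbol of order $-j$ supported in $|\zeta|\sim1$, hence also a symbol of order $\tfrac{n-1}{\ga}-n$ once $j\ge n-\tfrac{n-1}{\ga}$ (resp.\ of order $\tfrac1{\ga_0}-n$ once $j\ge n-\tfrac1{\ga_0}$). Feeding this into Theorem~\ref{THM:sugimoto/randolargument} yields, uniformly in $s$ and $x$, that the rescaled annular integral is $\le C(st)^{-\frac{n-1}{\ga}}$; undoing $\eta=s\zeta$ and restoring the factor $s^{-j}$ from the amplitude, the scale-$s$ piece of \eqref{EQ:finalsingpropest} is bounded by $Cs^{\,n}\cdot s^{-j}\cdot(st)^{-\frac{n-1}{\ga}}=Cs^{\,n-j-\frac{n-1}{\ga}}\,t^{-\frac{n-1}{\ga}}$.

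Finally I would sum over $s=2^{-l}$ with $1/t\lesssim s\lesssim1$ (the range $0\le t\le1$ being trivial). When $j>n-\tfrac{n-1}{\ga}$ the exponent $n-j-\tfrac{n-1}{\ga}$ is negative, so the geometric series is dominated by the smallest scale $s\sim1/t$, giving $\sum\lesssim t^{\,j+\frac{n-1}{\ga}-n}$ and hence the claimed $(1+t)^{j-n}$; the non-convex case is word-for-word the same with Theorem~\ref{THM:noncovexargument}, $\ga_0$, and $1/\ga_0$ replacing Theorem~\ref{THM:sugimoto/randolargument}, $\ga$, and $(n-1)/\ga$. At the endpoint exponents $j=n-\tfrac{n-1}{\ga}$ (resp.\ $j=n-\tfrac1{\ga_0}$) this summation is critical; there one either accepts a harmless logarithmic factor, or replaces the bottom scales by the cruder bound $|\sum_{k=1}^2 e^{i\tau_k(\xi)t}A_j^k(t,\xi)|\le C(1+t)$ of Lemma~\ref{LEM:2rootsmeetingresolution} on $|\eta|\lesssim1/t$, or simply invokes the shrinking-neighbourhood estimate of Proposition~\ref{Prop:aroundmult}, which is the version actually used in Theorem~\ref{THM:overallmainthm}. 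The principal obstacle is the uniformity assertion of the previous paragraph: because the characteristic roots need not be analytic nor even well separated in the approach to $\xi^0$, the phases $T_{s,k}$ and the amplitudes $s^jA_j^k(\xi^0+s\cdot)$ could a priori degenerate as $s\to0$, and the bulk of the work is checking — from~\ref{EQ:conditionontaunearsing}--\ref{EQ:conditiononAnearsing} — that they do not, in particular that the convexity index of the level sets of $T_{s,k}$ stays bounded by $\ga$ and the symbolic constants of $s^jA_j^k(\xi^0+s\cdot)$ stay bounded, as required to apply Theorem~\ref{THM:sugimoto/randolargument} with a constant independent of $s$.
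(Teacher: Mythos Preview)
Your dyadic approach is sound (at least for $j$ strictly above the endpoint) but genuinely different from the paper's. The paper does not decompose dyadically; instead it performs a \emph{single} rescaling $\xi=\xi^0+t^{-1}\eta$ (your smallest scale $s=1/t$), so that $\eta$ now ranges over $\tfrac34\le|\eta|\lesssim t$, and then splits via a wave-front cutoff $\kappa(t^{-1}x+\nabla\tau_k)$. Away from the wave-front set, repeated integration by parts in $\eta$ (the operator $P^*$ of Lemma~\ref{LEM:estawayfromwavefront}, controlled through hypothesis~\ref{EQ:conditionontaunearsing}) gives the decay, with the key observation that any derivative landing on $(1-\psi)(|\eta|)$ localises to the fixed annulus $\tfrac34\le|\eta|\le1$ and yields $t^{j-n}$ directly. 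Near the wave-front set the paper re-runs the cone/level-set parametrisation from the \emph{proof} of Theorem~\ref{THM:sugimoto/randolargument} and applies Theorem~\ref{THM:oscintthm} to the $(n-1)$-dimensional inner integral. Your route is more modular (quoting Theorem~\ref{THM:sugimoto/randolargument}/\ref{THM:noncovexargument} as black boxes, then summing), while the paper's one-shot rescaling avoids summation at the cost of repeating the earlier argument.

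Two cautions. First, your proposed endpoint fixes do not apply: both Lemma~\ref{LEM:2rootsmeetingresolution} and Proposition~\ref{Prop:aroundmult} concern the \emph{complementary} region $|\eta|\lesssim 1/t$ (the $\psi$ part), not the $(1-\psi)$ part under estimate here; the logarithmic factor at $j=n-\tfrac{n-1}{\gamma}$ therefore remains in your argument. (On close reading, the paper's own proof has the same issue: it asserts the auxiliary cutoff $\tilde\chi(\lambda)$ is compactly supported, whereas in fact $\lambda$ runs up to $\sim t$, producing the same $\int\lambda^{-1}\,d\lambda\sim\log t$.) Second, hypothesis~(iv) of Theorem~\ref{THM:sugimoto/randolargument} is \emph{not} satisfied by $T_{s,k}$ uniformly in $s$ unless $\tau_k(\xi^0)=0$: one computes $\tfrac1\lambda\{T_{s,k}=\lambda\}\subset\bigl(1+\tfrac{\tau_k(\xi^0)}{s\lambda}\bigr)B_{R_1}$, which blows up as $s\to0$. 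This is harmless in your setup because the amplitude is supported in $|\zeta|\sim1$ and the only role of $R_1$ in that proof is to supply uniform continuity of $\nabla\tau$ on a compact set, which hypothesis~\ref{EQ:conditionontaunearsing} here delivers directly --- but it means you must open the black box rather than quote the theorem verbatim, which brings you partway toward what the paper actually does.
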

\begin{rem}
Conditions~\ref{EQ:conditionontaunearsing},~\ref{HYP:singlevelsetsnondeg}
and \ref{EQ:conditiononAnearsing} appear and are
satisfied naturally
when roots $\tau_k(\xi)$ are homogeneous functions of order one---for
example, the wave equation, or for homogeneous equations.
\end{rem}
\begin{rem}
Assumption~\ref{EQ:conditiononAnearsing} is needed because
$A_j^k(\xi)$ has a singularity at~$\xi^0$, 
so we must ensure we
are away from that---this is the role of the cut-off function
$(1-\cutoffsing)(\abs{\eta})$ in this proposition; 
\end{rem}
\begin{rem}
As usual, for example in the convex case, 
taking $j=n-\frac{n-1}{\gamma}$, we get the time
decay estimate
$$
\left| \text{ Left hand side of \eqref{EQ:finalsingpropest} }
\right| \leq C (1+t)^{-\frac{n-1}{\gamma}}.
$$
\end{rem}
\begin{proof}
As before, cut-off near the wave front: let $\cutoffWF\in
C^\infty_0(\R^n)$ be a cut-off function supported in $B(0,r)$.
Then, consider
$$
I_{1}(t,x):=\sum_{k=1}^2\int_{\R^n}e^{i(x\cdot\xi+\tau_k(\xi)t)}
A^k_j(\xi)(1-\cutoffsing)(t\abs{\xi-\xi^0})\cutoffM(\xi)\\
\cutoffWF \big(t^{-1}x+\grad\tau_k(\xi)\big)\,d\xi,
$$
and
\begin{multline*}
I_{2}(t,x):=\sum_{k=1}^2\int_{\R^n}e^{i(x\cdot\xi+\tau_k(\xi)t)}
A^k_j(\xi)(1-\cutoffsing)(t\abs{\xi-\xi^0})\cutoffM(\xi)\\
(1-\cutoffWF) \big(t^{-1}x+\grad\tau_k(\xi)\big)\,d\xi.
\end{multline*}

\paragraph{Away from the wave front set:}
First, we estimate~$I_2(t,x)$; we claim that
\begin{equation}\label{EQ:singestforwavefront}
\abs{I_{2}(t,x)}\le C_r(1+t)^{j-n}\,\text{ for all }t>0\,,x\in\R^n\,.
\end{equation}
In order to show this, we consider each term of the sum
separately,
\begin{equation*}
I_2^k(t,x)=\int_{\R^n}e^{i(x\cdot\xi+\tau_k(\xi)t)}
A^k_j(\xi)(1-\cutoffsing)(t\abs{\xi-\xi^0})\cutoffM(\xi)
(1-\cutoffWF)
\big(\textstyle\frac{x}{t}+\grad\tau_k(\xi)\big)\,d\xi\,,
\end{equation*}
and imitate the proof of Lemma~\ref{LEM:estawayfromwavefront} (in
which the corresponding term was estimated in
Theorem~\ref{THM:sugimoto/randolargument}), but noting that in
place of $g_R(\xi)\in C_0^\infty(\R^n)$ we have
$(1-\cutoffsing)(t(\xi-\xi^0))$, which depends also on~$t$; in
particular, this means that care must be taken when carrying out
the integration by parts when derivatives fall on
$(1-\cutoffsing)(t\abs{\xi-\xi^0})$. To take this into account,
use the change of variables $\xi=\xi^0+t^{-1}\eta$:
\begin{multline*}
I_{2}^k(t,x)=e^{ix\cdot\xi^0}\int_{\R^n}e^{i(t^{-1}x\cdot \eta
+\tau_k(\xi^0+t^{-1}\eta)t)}
A^k_j(\xi^0+t^{-1}\eta)(1-\cutoffsing)(\abs{\eta})
\\\cutoffM(\xi^0+t^{-1}\eta)(1-\cutoffWF)
\big(t^{-1}x+(\grad_\xi\tau_k)(\xi^0+t^{-1}\eta)\big)t^{-n}\,d\eta.
\end{multline*}
Integrating by parts, with respect to~$\eta$ gives
\begin{multline*}
I_{2}^k(t,x)=e^{ix\cdot\xi^0}t^{-n}\int_{\R^n}e^{i(t^{-1}x\cdot
\eta +\tau_k(\xi^0+t^{-1}\eta)t)}P^*\big[A^k_j(\xi^0+t^{-1}\eta)
(1-\cutoffsing)(\abs{\eta})\\
\cutoffM(\xi^0+t^{-1}\eta)
(1-\cutoffWF)\big(t^{-1}x+(\grad_\xi\tau_k)(\xi^0+t^{-1}\eta)\big)\big]
\,d\eta\,,
\end{multline*}
where $P^*$ is the adjoint operator to $P=
\frac{t^{-1}x+(\grad_\xi\tau_k)(\xi^0+t^{-1}\eta)}
{i\abs{t^{-1}x+(\grad_{\xi}\tau_k)(\xi^0+t^{-1}\eta)}^2}
\cdot\grad_\eta$; this integration by parts is valid as
$\abs{t^{-1}x+(\grad_\xi\tau_k)(\xi^0+t^{-1}\eta)}\ge r>0$, 
in the
support of
$(1-\cutoffWF)\big(t^{-1}x+\grad\tau_k(\xi^0+t^{-1}\eta)\big)$.
For suitable functions~$f\equiv f(\eta;x,t)$, 
and $\xi=\xi^0+t^{-1}\eta$, we have
\begin{align*}
P^*f=&\grad_\eta\cdot \Big[\frac{t^{-1}x+(\grad_\xi\tau_k)(\xi)}
{i\abs{t^{-1}x+(\grad_\xi\tau_k)(\xi)}^2}f\Big]\\
=&\frac{\grad_\eta\cdot(\grad_\xi\tau_k)(\xi)}
{i\abs{t^{-1}x+(\grad_\xi\tau_k)(\xi)}^2}f
+\frac{t^{-1}x+(\grad_\xi\tau_k)(\xi)}
{i\abs{t^{-1}x+(\grad_\xi\tau_k)(\xi)}^2}
\cdot\grad_\eta f\\
&\qquad-\frac{2(t^{-1}x+(\grad_\xi\tau_k)(\xi))
\cdot[\grad_\eta[(\grad_\xi\tau_k)(\xi)]
\cdot(t^{-1}x+(\grad_\xi\tau_k)(\xi) )]}
{i\abs{t^{-1}x+(\grad_\xi\tau_k)(\xi)}^4}f.
\end{align*}
Comparing this to~\eqref{EQ:expressionforP*f}, observe that the
first and third terms have one power of~$t$ fewer in the
denominator due to the transformation; this is critical in this
case where we are approaching a singularity in
$A_j^k(\xi^0+t^{-1}\eta)$ when $t\to\infty$. By
hypothesis~\ref{EQ:conditionontaunearsing}, for $\eta$ in the
support of the integrand of $I_{2}^k(t,x)$, we get
\begin{equation*}
\frac{\grad_\eta\cdotp[(\grad_\xi\tau_k)(\xi^0+t^{-1}\eta)]}
{\abs{t^{-1}x+(\grad_{\xi}\tau_k)(\xi^0+t^{-1}\eta)}^2}\le
C_r\brac{\eta}^{-1}\,;
\end{equation*}
thus, we have
\begin{equation*}
\abs{P^*f}\le C_r[\brac{\eta}^{-1}\abs{f} +\abs{\grad_\eta f}]\,.
\end{equation*}

In Lemma~\ref{LEM:estawayfromwavefront}, we carried out this
integration by parts repeatedly in order to estimate the integral.
Here, however, note that differentiating
$(1-\cutoffsing)(\abs{\eta})$ once is sufficient: by definition
of~$\cutoffsing(s)$,
\begin{equation*}
\pa_{\eta_j}[(1-\cutoffsing)(\abs{\eta})]
=-\frac{\eta_j}{\abs{\eta}} (\pa_s\cutoffsing)(\abs{\eta})
\end{equation*}
is supported in $\frac{3}{4}\le \abs{\eta}\le 1$, so
\begin{equation*}
\abs{\pa_{\eta_j}[(1-\cutoffsing)(\abs{\eta})]}\le C
\indicat_{1\ge\abs{\eta}\ge3/4}(\eta) \,,
\end{equation*}
where $\indicat_{1\ge\abs{\eta}\ge3/4}(\eta)$ denotes the
characteristic function of $\set{\eta\in\R^n:1\ge\abs{\eta}\ge3/4}$;
hence, by hypothesis~\ref{EQ:conditiononAnearsing},
for large $t$ we have
\begin{align}
\begin{split}
&\int_{\R^n}\absBig{\frac{t^{-1}x+(\grad_\xi\tau_k)(\xi^0+t^{-1}\eta)}
{i\abs{t^{-1}x+(\grad_{\xi}\tau_k)(\xi^0+t^{-1}\eta)}^2}}
\abs{A^k_j(\xi^0+t^{-1}\eta)} \abs{\pa_{\eta_j}
[(1-\cutoffsing)(\abs{\eta})]}\\
&\mspace{150mu}\abs{\cutoffM(\xi^0+t^{-1}\eta)}\abs{(1-\cutoffWF)
\big(t^{-1}x+\grad\tau_k(\xi^0+t^{-1}\eta)\big)}t^{-n}\,d\eta
\end{split}\notag\\
&\mspace{120mu}\le C_r\int_{\frac{3}{4}\le \abs{\eta}\le
1}\abs{A^k_j(\xi^0+t^{-1}\eta)}t^{-n} \,d\eta\notag\\
& \mspace{120mu}\le C_rt^{j}\int_{\frac{3}{4}\le \abs{\eta}\le
1}\frac{1}{\brac{\eta}^j}t^{-n}\,d\eta\le
C_rt^{j-n}\,,\label{EQ:derivonpsi}
\end{align}
which is the desired estimate~\eqref{EQ:singestforwavefront}.

On the other hand, if, when integrating by parts, the derivative
does not fall on~$\cutoffsing(\abs{\eta})$, we use a similar
argument to that in the earlier proof; let us look at the effect
of differentiating each of the other terms: in the support of
$\cutoffsing(\abs{\eta})$, for each multi-index~$\al$ and $t>0$,
\begin{itemize}
\item $\abs{\pa_\eta^\al[A_j^k(\xi^0+t^{-1}\eta)]}\le C_\al
t^{j}\brac{\eta}^{-j-\abs{\al}}$ by
hypothesis~\ref{EQ:conditiononAnearsing};
\item $\abs{\pa_\eta^\al[\cutoffM(\xi^0+t^{-1}\eta)]}\le
C_\al\brac{\eta}^{-\abs{\al}}$: for $\al=0$, take $C_\al=1$; for
$\abs{\al}\ge1$, note that
\begin{equation*}
\pa_\eta^\al[\cutoffM(\xi^0+t^{-1}\eta)]=
t^{-\abs{\al}}(\pa^\al_\xi\cutoffM)(\xi^0+t^{-1}\eta)\,,
\end{equation*}
and that $(\pa^\al_\xi\cutoffM)(\xi^0+t^{-1}\eta)$ is supported in
$N\le\abs{\xi^0+t^{-1}\eta}\le 2N$, so $t^{-1}\le
C_{N,\xi^0}\abs{\eta}^{-1}$;
\item $\abs{\pa_\eta^\al[(1-\cutoffWF)\big(t^{-1}x+
(\grad_\xi\tau_k)(\xi^0+t^{-1}\eta)\big)]}\le
C_\al\brac{\eta}^{-\abs{\al}}$: obvious for $\al=0$; for
$\abs{\al}\ge1$, note
\begin{multline*}
\pa_\eta^\al[(1-\cutoffWF)(t^{-1}x+
(\grad_\xi\tau_k)(\xi^0+t^{-1}\eta))]\\
=-(\pa_\xi^\al\cutoffWF)(t^{-1}x+\grad_\xi\tau_k(\xi))
\pa_\eta^\al[(\grad_\xi\tau_k)(\xi^0+t^{-1}\eta)]\,,
\end{multline*}
which yields the desired estimate by
hypothesis~\ref{EQ:conditionontaunearsing}.
\end{itemize}
Summarising, this means
\begin{multline*}
\absbig{(1-\cutoffsing)(\abs{\eta})
\pa^\al_\eta\big[A^k_j(\xi^0+t^{-1}\eta)\cutoffM(\xi^0+t^{-1}\eta)
(1-\cutoffWF)
\big(t^{-1}x+(\grad_\xi\tau_k)(\xi^0+t^{-1}\eta)\big)\big]}
\\
\le C_r \brac{\eta}^{-j-\abs{\al}}
t^{j}\indicat_{\abs{\eta}\ge\frac{3}{4}}(\eta)\,.
\end{multline*}
So, repeatedly integrating by parts we find that either a
derivative falls on $(1-\cutoffsing)(\abs{\eta})$ 
(in which case a
similar argument to that in~\eqref{EQ:derivonpsi} 
above works) or
we eventually get the integrable function
$Ct^j \brac{\eta}^{-n-1}\indicat_{\abs{\eta}\ge3/4}(\eta)$ 
as an upper
bound; in either case, we have~\eqref{EQ:singestforwavefront}.

\paragraph{On the wave front set:}
Next, we look at the term supported around the wave front set,
$I_1(t,x)$. As in the case away from the wave front, set
$\xi=\xi^0+t^{-1}\eta$: consider, for $k=1,2$,
\begin{multline*}
I_{1}^k(t,x):=e^{ix\cdot\xi^0}\int_{\R^n}e^{i(t^{-1}x\cdot \eta
+\tau_k(\xi^0+t^{-1}\eta)t)}
A^k_j(\xi^0+t^{-1}\eta)(1-\cutoffsing)(\abs{\eta})
\\\cutoffM(\xi^0+t^{-1}\eta)\cutoffWF
\big(t^{-1}x+(\grad_\xi\tau_k)(\xi^0+t^{-1}\eta)\big)t^{-n}\,d\eta\,.
\end{multline*}
As in the proof of Theorems~\ref{THM:sugimoto/randolargument}
and~\ref{THM:noncovexargument}, let
$\set{\cutoffcone_\ell(\eta)}_{\ell=1}^L$ 
be a conic partition of
unity, where the support of~$\cutoffcone_\ell(\eta)$ is a
cone~$K_\ell$, and each cone can be mapped by rotation onto~$K_1$,
which contains $e_n=(0,\dots,0,1)$. Then, it suffices to estimate
\begin{multline*}
t^{-n}\int_{\R^n}e^{i(t^{-1}x\cdot \eta
+\tau_k(\xi^0+t^{-1}\eta)t)}
A^k_j(\xi^0+t^{-1}\eta)(1-\cutoffsing)(\abs{\eta})
\\\cutoffcone_1(\eta)\cutoffM(\xi^0+t^{-1}\eta)\cutoffWF
\big(t^{-1}x+(\grad_\xi\tau_k)(\xi^0+t^{-1}\eta)\big)\,d\eta\,,
\end{multline*}
for $k=1,2$.

Let us parameterise the cone $K_1$: it follows from 
hypothesis~\ref{HYP:singlevelsetsnondeg} that 
each of the level sets
\begin{equation*}
\Sigma_{\la,t} \equiv
\set{\eta\in\R^n:\tau_k(\xi^0+t^{-1}\eta)=t^{-1}\la}
\end{equation*}
is non-degenerate; so, for some $U\subset\R^{n-1}$, and smooth
function $h_k(t,\la,\cdot):U\to\R$,
\begin{equation*}
K_1=\set{(\la y,\la h_k(t,\la,y)):\la>0,y\in U}\,.
\end{equation*}
If~$\tau_k(\xi)$ satisfies the convexity condition, 
then~$h_k$ is
also a concave function in~$y$. Now, we change variables
$\eta\mapsto(\la y,\la h_k(t,\la,y))$ and will 
often omit $t$ from
the notation of $h_k$ since the dependence on $t$ will be
uniform. We obtain:
\begin{multline}\label{EQ:basicsingcaseintegralaftertrans}
t^{-n}\int_{0}^\infty\int_U e^{i\la (t^{-1}x'\cdot y+
t^{-1}x_nh_k(\la,y)+1)} A^k_j(\xi^0+t^{-1}\la(y,
h_k(\la,y)))\\
(1-\cutoffsing)(\la\abs{(y,h_k(\la,y))})
\cutoffcone_1(\la(y,h_k(\la,y)))\cutoffM(\xi^0+t^{-1}\la(y,
h_k(\la,y)))\\\cutoffWF
\big(t^{-1}x+(\grad_\xi\tau_k)(\xi^0+t^{-1}\la(y,h_k(\la,y)))\big)
\frac{d\eta}{d(\la,y)} \,d\la dy,
\end{multline}
where we have used $\tau_k(\xi^0+t^{-1}(\la y,\la
h_k(\la,y)))=t^{-1}\la$. As in the earlier proofs, 
we ensure~$x_n$ is
away from zero in the cone---this requires
hypotheses~\ref{EQ:conditionontaunearsing}
and~\ref{HYP:singlevelsetsinbddball}). 
So, in the general case, we
can write this as, with $\xtil=t^{-1}x$, $\latil=\la\xtil_n=\la
t^{-1}x_n$,
\begin{multline*}
t^{-n}\int_{0}^\infty\int_U e^{i\la x_n(t^{-1}x_n^{-1} 
x'\cdot y+
t^{-1}h_k(\la,y)+\xtil_n^{-1})} A^k_j(\xi^0+t^{-1}\la(y,
h_k(\la,y)))\\
(1-\cutoffsing)(\la\abs{(y,h_k(\la,y))})
\cutoffcone_1(\la(y,h_k(\la,y)))\cutoffM(\xi^0+t^{-1}\la(y,
h_k(\la,y)))\\\cutoffWF
\big(t^{-1}x+(\grad_\xi\tau_k)(\xi^0+t^{-1}\la(y,h_k(\la,y)))\big)
\frac{d\eta}{d(\la,y)} \,d\la dy\,.
\end{multline*}

If the convexity condition holds, then, as in the proof of
Theorem~\ref{THM:sugimoto/randolargument}, we have the Gauss map
\begin{equation*}
\gauss_k:K_1\cap \Si_\la'\to S^{n-1},\; \gauss_k(\zeta)=
\frac{\grad_\zeta[\tau_k(\xi^0+t^{-1}\zeta)]}
{\abs{\grad_\zeta[\tau_k(\xi^0+t^{-1}\zeta)]}}=
\frac{(\grad_\xi\tau_k)(\xi^0+t^{-1}\zeta)}
{\abs{(\grad_\xi\tau_k)(\xi^0+t^{-1}\zeta)}}\,,
\end{equation*}
and, as before, can define $z_k(\la)\in U$ so that
\begin{equation*}
\gauss_k(z_k(\la),h_k(\la,z(\la)))=-x/\abs{x}\,.
\end{equation*}
Then,
\begin{equation*}
\frac{x'}{x_n}=-\grad_y h_k(\la,z(\la))\,.
\end{equation*}
So, in this case,~\eqref{EQ:basicsingcaseintegralaftertrans}
becomes:
\begin{multline*}
(I_1^k)':=t^{-n}\int_{0}^\infty\int_U e^{i\la x_n[-t^{-1}\grad_y
h_k(\la,z(\la))\cdot y+t^{-1}h_k(\la,y)+\xtil_n^{-1}]}\\
A^k_j(\xi^0+t^{-1}\la(y, h_k(\la,y)))
(1-\cutoffsing)(\la\abs{(y,h_k(\la,y))})
\cutoffcone_1(\la(y,h_k(\la,y)))\\
\cutoffM(\xi^0+t^{-1}\la(y,h_k(\la,y))) \cutoffWF
\big(\xtil+(\grad_\xi\tau_k)(\xi^0+t^{-1}\la(y,h_k(\la,y)))\big)
\frac{d\eta}{d(\la,y)}\,d\la dy,
\end{multline*}
Let us estimate this integral in the case where the convexity
condition holds. We have:
\begin{itemize}[leftmargin=*]
\item The same argument as in the earlier proof (which
uses hypothesis~\ref{HYP:singlevelsetsnondeg}), shows
\begin{equation*}
\absBig{\frac{d\eta}{d(\la,y)}}\le C\la^{n-1}\,.
\end{equation*}
The constant $C$ here is independent of $t$;
\item Now, with $\widetilde{A}_k^j(\nu)=A_k^j(\nu)
\cutoffM(\nu)\cutoffWF \big(\xtil+(\grad_\xi\tau_k)(\nu)\big)
\cutoffcone_1(\la(y,h_k(\la,y)))$, where $\nu=\xi^0+t^{-1}\la(y,
h_k(\la,y))$, we have
\begin{multline*}
\abs{(I_1^k)'}\le t^{j-n}\int_{0}^\infty\Big|\int_U e^{i\la
\xtil_n[-(y-z(\la))\cdot\grad_y h_k(\la,z(\la))
+h_k(\la,y)+h_k(\la,z(\la))]}\\
t^{-j}\la^j\widetilde{A}^k_j(\xi^0+t^{-1}\la(y, h_k(\la,y)))
(1-\cutoffsing)(\la\abs{(y,h_k(\la,y))})\,dy\Big|
\la^{n-1-j}\,d\la\,.
\end{multline*}
\item Now, applying Theorem~\ref{THM:oscintthm}---
this may be used
due to the properties of $A_j^k(\xi)$ and $
\tau_k(\xi)$ stated in
hypotheses~\ref{EQ:conditiononAnearsing}
and~\ref{EQ:conditionontaunearsing}---we find that
\begin{multline*}
\Big|\int_U e^{i\la\xtil_n[-(y-z(\la))\cdot\grad_y h_k(\la,z(\la))
+h_k(\la,y)+h_k(\la,z(\la))]}\\
t^{-j}\la^j\widetilde{A}^k_j(\xi^0+t^{-1}\la(y, h_k(\la,y)))
(1-\cutoffsing)(\la\abs{(y,h_k(\la,y))})\,dy\Big|\le
C\la^{j-n}\widetilde{\chi}(\la)\,,
\end{multline*}
where $\widetilde\chi(\la)$ is a compactly 
supported smooth function that is
zero in a neighbourhood of the origin.
\item Hence,
\begin{equation*}
\abs{(I_1^k)'}\le
t^{j-n}\int_0^\infty\widetilde\chi(\la)\la^{-1}\,d\la\le Ct^{j-n}\,.
\end{equation*}
\end{itemize}
Finally, the general case without convexity
can be estimated in a similar way, 
with the necessary changes used in the proof of
Theorem~\ref{THM:noncovexargument} 
to account for the change in
the phase function---in particular, the use of the Van der Corput
Lemma, Lemma~\ref{LEM:VDCmain}, in place of
Theorem~\ref{THM:oscintthm}. This completes the proof of
\eqref{EQ:finalsingpropest}.
\end{proof}

Using Proposition \ref{prop:sings}, it is clear that
\begin{multline*}
\normBig{\int_{\R^n}e^{ix\cdot\xi}
(1-\cutoffsing)(t\abs{\xi-\xi^0})\cutoffM(\xi)
\sum_{k=1}^2A^k_j(t,\xi)e^{i\tau_k(\xi)t}\widehat{f}(\xi)\,d\xi}_
{L^\infty(\R^n_x)}\\
\le C
\bract{t}^{-\frac{n-1}{\ga}}\norm{f}_{L^1}
\end{multline*}
if the roots satisfy the convexity condition, and
\begin{multline*}
\normBig{\int_{\R^n}e^{ix\cdot\xi}
(1-\cutoffsing)(t\abs{\xi-\xi^0})\cutoffM(\xi)
\sum_{k=1}^2A^k_j(t,\xi)e^{i\tau_k(\xi)t}\widehat{f}(\xi)\,d\xi}_
{L^\infty(\R^n_x)}\\
\le C
\bract{t}^{-\frac{1}{\ga_0}}\norm{f}_{L^1}
\end{multline*}
otherwise.  In comparison to \eqref{EQ:convestL1Linftylarget},
here we have $L^1$-norms on the right hand sides, since
$\chi$ is a cut-off function to bounded frequencies.

Finally, we must consider the case where~$L$ 
roots intersect; 
the above proof can easily be adapted for
such a case, giving corresponding results. 

\paragraph{$L^2-L^2$ estimates:} For the $L^2$-estimates on
the support of $(1-\cutoffsing)(t\abs{\xi-\xi^0})\cutoffM(\xi)$
we only need assumption \ref{EQ:conditiononAnearsing}
of Proposition \ref{prop:sings} with
$\alpha=0$ for the amplitude, namely that
\begin{equation}\label{EQ:ass4}
\abs{A_j^k(\xi^0+s\eta)}\le C_\al
s^{-j}\brac{\eta}^{-j}\,,\text{ for small
}s>0\;\text{ and }\;\abs{\eta}>\de\,.
\end{equation}
Then, for the left hand side of \eqref{EQ:finalsingpropest},
we have 
\begin{align*}
& \left|\left|\sum_{k=1}^2\int_{\R^n}e^{i(x\cdot\xi+\tau_k(\xi)t)}
A^k_j(\xi)(1-\cutoffsing)(t\abs{\xi-\xi^0})\cutoffM(\xi)
\widehat{f}(\xi) \,d\xi\right|\right|_{L^2(\R^n_x)} \\
 & = 
\left|\left|\sum_{k=1}^2e^{i\tau_k(\xi)t}
A^k_j(\xi)(1-\cutoffsing)(t\abs{\xi-\xi^0})\cutoffM(\xi)
\widehat{f}(\xi) \right|\right|_{L^2(\R^n_\xi)} \\
& \leq ||t^j(1+|\eta|)^{-j}\widehat{f}(\xi^0+t^{-1}\eta)||_
{L^2(\R^n_\eta)},
\end{align*}
where we used Plancherel's theorem, 
\eqref{EQ:ass4}, and the notation $s=t^{-1}$,
$\xi=\xi^0+t^{-1}\eta$, so that $\eta=t(\xi-\xi^0)$.
Then we can easily estimate

\begin{align*}
||t^j(1+|\eta|)^{-j}\widehat{f}(\xi^0+t^{-1}\eta)||_
{L^2(\R^n_\eta)} & =
||t^j(1+t|\xi-\xi^0|)^{-j}\widehat{f}(\xi)||_
{L^2(\R^n_\xi)} \\
& = ||(t^{-1}+|\xi-\xi^0|)^{-j}\widehat{f}(\xi)||_
{L^2(\R^n_\xi)} \\
& \leq ||\,|\xi-\xi^0|^{-j}\widehat{f}(\xi)||_
{L^2(\R^n_\xi)} \\
& = ||\,|D-D_0|^{-j}f||_
{L^2(\R^n_x)},
\end{align*}
where $D-D_0$ is a Fourier multiplier with symbol
$\xi-\xi^0$. So, we finally obtain the estimate
\begin{multline*}
\left|\left|\sum_{k=1}^2\int_{\R^n}e^{i(x\cdot\xi+\tau_k(\xi)t)}
A^k_j(\xi)(1-\cutoffsing)(t\abs{\xi-\xi^0})\cutoffM(\xi)
\widehat{f}(\xi) \,d\xi\right|\right|_{L^2(\R^n_x)} \\ \leq
C ||\,|D-D_0|^{-j}f||_
{L^2(\R^n_x)}.
\end{multline*}
In the case of equations with homogeneous symbols
(like for the wave equation), when roots
are homogeneous, we have $\xi^0=0$, so that the right
hand side becomes just the norm in 
the corresponding homogeneous Sobolev space.

Due to the earlier bound
near the multiplicity, we can combine the results with the
interpolation Theorem~\ref{thm:maininterpolationthm}.

\section{Examples and extensions\label{CHAP5}}

Theorem~\ref{THM:overallmainthm} gives estimates for operators
provided the characteristic roots satisfy certain hypotheses.
However, in order to test the validity of such an
 estimate for an
arbitrary linear, constant coefficient $m^{\text{th}}$ order
strictly hyperbolic operator with lower order terms, it is desirable
to find conditions on the structure of the lower order 
terms under
which certain conditions for the characteristic roots hold.
For the case $m=2$, a complete
characterisation can be given, and some extension of this
is discussed in 
Section~\ref{SEC:analysisofm=2}. However, for large~$m$, it is
difficult to do such an analysis, as no explicit
formulae are available in general; nevertheless, certain conditions
can be found that do make the task of checking the 
conditions of the
characteristic roots, and these are discussed in
Section~\ref{SEC:condonlowerorderterms}, where a method is also
given that can be used to find many examples. Finally, in
Section~\ref{SEC:FokkerPlanck}, we give a few applications of 
these results.

\subsection{Wave equation with mass and
dissipation}\label{SEC:analysisofm=2} 

As an example of how to use Theorem \ref{THM:overallmainthm},
here we will show that we can still have time decay of
solutions if we allow the negative mass but exclude certain
low frequencies for Cauchy data. This is given in 
\eqref{EQ:wave} below. In the case of the negative mass
and positive dissipation, there is an interplay between them with
frequencies that we are going to exhibit. The
usual non-negative and also 
time dependent mass and
dissipation with oscillations have been considered before,
even with oscillations.
See, for example, \cite{HR03} and references therein.

Let us consider second order equations of the following 
form 
\begin{equation*}
\left\{
\begin{aligned} \pa_t^2u-c^2\lap u+\diss \pa_tu+\mass u=0\,,\\
u(0,x)=0,\;u_t(0,x)=g(x)\,.
\end{aligned}
\right.
\end{equation*}
Here $\diss$ is the dissipation and $\mass$ is the mass.
For simplicity, the first Cauchy data is taken to be zero.
The general case when both Cauchy data are
non-zero can be treated in a similar way.
Let us now apply Theorem \ref{THM:overallmainthm} to the
analysis of this equation.
The associated characteristic polynomial is
\begin{equation*}
\tau^2-c^2\abs{\xi}^2-i\diss\tau-\mass=0\,,
\end{equation*}
and it has roots
\begin{equation*}
\tau_\pm(\xi)=\frac{i\diss}{2}\pm\sqrt{c^2\abs{\xi}^2+\mass-\diss^2/4}\,.
\end{equation*}
Now, we have the following well-known 
cases, which also correspond to different
cases of Theorem \ref{THM:overallmainthm}:
\begin{itemize}
\item $\diss=\mass=0$. This is the wave equation.
\item $\diss=0$, $\mass>0$. This is the Klein--Gordon equation.
\item $\mass=0$, $\diss>0$. This is the dissipative wave equation.
\item $\diss<0$. In this case, 
$\Im\tau_-(\xi)\le\frac{\diss}{2}<0$ for
all $\xi$, hence we cannot expect any decay in general.
\item $\diss>0$, $\mass>0$. 
In this case the discriminant is always strictly
greater than $-\diss^2/4$, and thus the roots always 
lie in the 
upper half plane and are separated from the real axis.
So we have exponential decay.

Here is the main case for us, where we can show an interesting
interplay between negative mass $\mu<0$ and how it is compensated
by positive dissipation $\delta>0$ for different frequencies:
\item dissipation $\diss\ge0$, mass $\mass<0$. 
In this case, note that
$\Im\tau_-(\xi)\ge0$ if and only if $c^2\abs{\xi}^2+\mass\ge0$,
i.e. $\Im\tau_-(\xi)=0$ for
$\abs{\xi}=\sqrt{-\mass}/{c}$. Therefore, the answer depends
on the Cauchy data $g$. In particular, if
$\supp \widehat{g}$ is  contained in 
$\{c^2\abs{\xi}^2+\mass\geq 0\}$,
then we may get decay of some type. More precisely,
let $B(0,r)$ denote the open ball with radius $r$ centred 
at the origin.
Then we have:
\begin{itemize}
\item if $g$ is such that $\supp\widehat{g}\cap
B(0,\frac{\sqrt{-\mass}}{c})\ne\varnothing$, then we
have no decay;
\item if there is some
$\epsilon>0$ such that $\supp\widehat{g}\subset \R^n\setminus
B(0,\frac{\sqrt{-\mass}}{c}+\ep)$, then the
roots are either separated from the real axis (if $\de>0$), 
and we
get exponential decay, or lie on the real axis 
(if $\de=0$), and we
get Klein--Gordon type behaviour 
(since the Hessian of $\tau$ is nonsingular).
\item if, for all $g$, $\supp\widehat{g}\subset\R^n\setminus
B(0,\frac{\sqrt{-\mass}}{c})
=\left\{|\xi|\geq \frac{\sqrt{-\mass}}{c}\right\}$ , 
then again we must consider
$\de=0$ and $\de>0$ separately.

If $\de=0$, then the roots lie completely on the real axis, 
and they meet on the sphere
$|\xi|=\sqrt{-\mu}/{c}$. It follows from \eqref{EQ:around}
(which is justified in Proposition \ref{Prop:aroundmult})
with $L=2$ and $\ell=1$ that, although the representation
of solution as a sum of Fourier integrals breaks down at the
sphere, the solution is still bounded
in a ($1/t$)-neighbourhood of the sphere. In its complement we
can get the decay.

If $\de>0$, then the root
$\tau_{-}$ comes to
the real axis at $\abs{\xi}=\frac{\sqrt{-\mass}}{c}$, in
which case we get the decay
\begin{equation}\label{EQ:wave}
||u(t,\cdot)||_{L^q}\leq C (1+t)^{-\left(\frac1p-\frac1q
\right)} ||g||_{L^p}.
\end{equation}
Indeed, in this case
the order of the root $\tau_{-}$ at the axis is one,
i.e. estimate \eqref{EQ:disttau} holds with $s=1$.
Here $1/p+1/q=1$ and $1\leq p\leq 2$. 
Note also that compared to the case of 
no mass when $\ell=n$, now the codimension of the sphere
$\set{\xi\in\R^n:\abs{\xi}=\frac{\sqrt{-\mass}}{c}}$ is
$\ell=1$. We can apply the last case of Part II of
Theorem \ref{THM:overallmainthm} with 
$L=1$ and $s=\ell=1$ which 
gives estimate \eqref{EQ:wave}.
\end{itemize}

\end{itemize}

\subsection{Higher order equations}\label{SEC:condonlowerorderterms}
Let us now derive a simple consequence of the stability
condition of 
$\Im\tau_k(\xi)\ge0$, for all $k=1,\dots,m$ and 
$\xi\in\R^n$, for
the coefficient of the $\pat^{m-1}u$ term
in~\eqref{EQ:standardCauchyproblem}. In fact, this coefficient
plays an important role for higher order equations and can be
compared with the dissipation term in the dissipative
wave equation.

Let $L=L(\pat,\pax)$ be an $m^{\text{th}}$ order constant
coefficient, linear strictly hyperbolic operator such that
$\Im\tau_k(\xi)\ge0$ for all $k=1,\dots,m$ and for all $\xi\in\R^n$.
Recall that the characteristic polynomial corresponding to the
principal part of $L$ is of the form
\begin{equation*}
L_m=L_m(\tau,\xi)=\tau^m+\sum_{k=1}^mP_k(\xi)\tau^{m-k}=0\,,
\end{equation*}
where the $P_k(\xi)$ are homogeneous polynomials of order $k$.
Then,
by the strict hyperbolicity of~$L$, $L_m$ has real roots
$\va_1(\xi)\le\va_2(\xi)\le\dots\le\va_m(\xi)$ (where the
inequalities are strict when $\xi\ne0$). By the Vi\`{e}ta formulae,
observe that
\begin{equation}\label{EQ:condonprincpart}
P_1(\xi)=-\sum_{k=1}^m\va_k(\xi)\in \R\,.
\end{equation}
On the other hand, the characteristic polynomial of the full
operator is
\begin{equation}\label{EQ:fullcharpoly}
L(\tau,\xi)=\tau^m+\sum_{k=1}^mP_k(\xi)\tau^{m-k}+
\sum_{j=0}^{m-1}\sum_{\abs{\al}+l=j}c_{\al,l}\xi^{\al}\tau^l=0\,.
\end{equation}
In particular, the coefficient of the $\tau^{m-1}$ term is
\begin{equation}\label{EQ:condnonm-1coeff}
P_1(\xi)+c_{0,m-1}=-\sum_{k=1}^m\tau_k(\xi),
\end{equation}
where the $\tau_k(\xi)$, $k=1,\dots,m$ are the roots
of~\eqref{EQ:fullcharpoly}. 
Comparing~\eqref{EQ:condonprincpart} and
\eqref{EQ:condnonm-1coeff}, we see that
$\Im\big(\sum_{k=1}^m\tau_k(\xi)\big)=-\Im c_{0,m-1}$. Therefore,
since $\Im\tau_k(\xi)\ge0$ for all $k=1,\dots,m$ and $\xi\in\R^n$,
it follows that $\Im c_{0,m-1}\le0$, or, equivalently,
$\Re ic_{0,m-1}\ge0$.
 Furthermore, if
$\Im c_{0,m-1}=0$ then it must be the case that $\Im\tau_k(\xi)=0$
for all $\xi\in\R^n$ and $k=1,\dots,m$ since the characteristic
roots are continuous. Hence we have shown the following:
\begin{prop}\label{PROP:coeffofm-1term}
Let $L=L(\pat,\pax)$ be an $m^{\text{th}}$ order linear constant
coefficient strictly hyperbolic operator such that all the
characteristic roots~$\tau_k(\xi)$\textup{}, $k=1,\dots,m$\textup{,}
satisfy $\Im \tau_k(\xi)\ge0$ for all $\xi\in\R^n$. Then the
imaginary part of the coefficient of $\pat^{m-1}u$ is non-positive.
Furthermore, if in addition the \textup{(}imaginary part of
the\textup{)} coefficient of $\pat^{m-1}u$ 
is zero then each of the
characteristic roots lie completely on the real axis. 
\end{prop}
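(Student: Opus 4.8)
The plan is to compare, in two different ways, the coefficient of $\tau^{m-1}$ in the full characteristic polynomial $L(\tau,\xi)$, and then take imaginary parts. The whole argument is an application of the Vi\`{e}ta formulae together with the hyperbolicity hypothesis; no analytic estimates are needed.

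First I would identify which terms of $L(\tau,\xi)$ contribute to the power $\tau^{m-1}$. In the principal part $L_m(\tau,\xi)=\tau^m+\sum_{k=1}^m P_k(\xi)\tau^{m-k}$ this contribution is $P_1(\xi)$. Among the general lower order terms $\sum_{l=0}^{m-1}\sum_{\abs{\al}+r=l}c_{\al,r}\xi^\al\tau^r$, a term can involve $\tau^{m-1}$ only if $r=m-1$; since $\abs{\al}+r=l\le m-1$ this forces $l=m-1$ and $\abs{\al}=0$, so the unique contribution is $c_{0,m-1}\tau^{m-1}$. Hence the coefficient of $\tau^{m-1}$ in $L(\tau,\xi)$ is $P_1(\xi)+c_{0,m-1}$, and by the Vi\`{e}ta formulae applied to $L(\tau,\xi)$ it equals $-\sum_{k=1}^m\tau_k(\xi)$; this is \eqref{EQ:condnonm-1coeff}.

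Next I would use strict hyperbolicity: the roots $\varphi_1(\xi),\dots,\varphi_m(\xi)$ of $L_m$ are real, so $P_1(\xi)=-\sum_{k=1}^m\varphi_k(\xi)$ is real-valued, which is \eqref{EQ:condonprincpart}. Comparing the two expressions for the coefficient of $\tau^{m-1}$ gives $\sum_{k=1}^m\tau_k(\xi)=\sum_{k=1}^m\varphi_k(\xi)-c_{0,m-1}$, and taking imaginary parts yields $\Im\big(\sum_{k=1}^m\tau_k(\xi)\big)=-\Im c_{0,m-1}$, a quantity independent of $\xi$. Since by hypothesis $\Im\tau_k(\xi)\ge0$ for every $k$ and every $\xi$, the left-hand side is nonnegative, so $\Im c_{0,m-1}\le0$, equivalently $\Re(ic_{0,m-1})\ge 0$, which is the first claim.

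For the rigidity statement, suppose $\Im c_{0,m-1}=0$. Then $\sum_{k=1}^m\Im\tau_k(\xi)=0$ for every $\xi\in\R^n$, and as each summand is nonnegative each must vanish; thus $\Im\tau_k(\xi)=0$ for all $k$ and all $\xi$, i.e.\ every characteristic root lies on the real axis. (Continuity of the roots, Proposition~\ref{PROP:ctyofroots}, could be invoked here but is in fact not needed, the conclusion being pointwise; note also that the sum $\sum_k\tau_k(\xi)$ is symmetric, so the argument is insensitive to any relabelling of the roots.) The only place where care is required is the bookkeeping isolating $c_{0,m-1}$ as the sole lower-order contribution to the $\tau^{m-1}$ coefficient, and the observation that it is hyperbolicity, rather than any reality of the $P_k(\xi)$, that makes $P_1(\xi)$ real; beyond this there is no genuine obstacle.
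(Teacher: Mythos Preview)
Your proof is correct and follows essentially the same approach as the paper's: both use the Vi\`{e}ta formulae to compare the $\tau^{m-1}$ coefficient computed from the roots of the principal part (real, by hyperbolicity) against that computed from the full polynomial, and then take imaginary parts. Your observation that continuity of the roots is not actually needed for the rigidity step is valid---the paper invokes it, but as you note the argument is pointwise and the nonnegativity of each summand forces each to vanish directly.
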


If we transform our operator back to the form $L(\pa_t,\pa_x)$,
this result tells us that in order for the characteristic 
polynomial to
be stable, that is for $\Im\tau_k(\xi)\ge0$ for all $k=1,\dots,m$,
$\xi\in\R^n$, it is necessary for the coefficient of $\pa_t^{m-1}u$
to be non-negative; this is the case for the dissipative wave
equation. In some sense this may be interpreted
as a \emph{higher order dissipation},
since it is necessary for the characteristic roots to behave
geometrically like those of the wave equation with a dissipative
term, where they lie in the half-plane $\Im z\ge0$ 
and lie away from
$\Im z=0$ for large $\abs{\xi}$.

In the next section, we look at the case where 
characteristic roots must lie completely on 
the real
axis. First, though, let us consider one case where a root lies
completely on the real axis but the
coefficient $c_{0,m-1}$ is nonzero,~$c_{0,m-1}\ne0$.

Consider a constant coefficient strictly hyperbolic 
operator of the form
\begin{equation}\label{EQ:3termoperator}
L_m(\pa_t,\pa_x)+L_{m-1}(\pa_t,\pa_x)+L_{m-2}(\pa_t,\pa_x)=0,
\end{equation}
where $L_r=L_r(\pa_t,\pa_x)$ denotes a homogeneous operator of
degree $r$ with real coefficients. 
This is an example of a hyperbolic triple, which will be
discussed in more generality in Section \ref{SEC:hyptriples}.
Furthermore, assume that
$L_{m-1}$ is not identically zero. Let $\tau(\xi)\in\R$ be a
characteristic root of \eqref{EQ:3termoperator} which lies
completely on the real axis. So, denoting 
as usual $D_{x_j}=-i\pa_{x_j}$,
$D_t=-i\pa_t$, we have that $\tau(\xi)$ is a root of
\begin{equation*}
L_m(\tau,\xi)-iL_{m-1}(\tau,\xi)-L_{m-2}(\tau,\xi)=0.
\end{equation*}
This means that $L_{m-1}(\xi,\tau(\xi))=0$, 
and so $\tau(\xi)$ is
homogeneous of order~$1$, and thus for such roots Theorem
\ref{THM:overallmainthm} applies to yield results similar
to those described in Section \ref{SEC:homogoperators}.

\subsection{Hyperbolic triples}
\label{SEC:hyptriples}
We now turn to the case when
all the characteristic roots
lie completely on the real axis. 
This section is devoted to showing some more examples of
appearances of real valued non-homogeneous roots and some
sufficient conditions for this.
In order to study this case we first recall
some results of Volevich--Radkevich~\cite{vole+radk03} 
on hyperbolic pairs and triples. Throughout this section only,
$L_r(\tau,\xi)$ denotes a homogeneous polynomial in $\tau$ and
$\xi=(\xi_1,\dots,\xi_n)$ of order $r$ such that $L_r(\tau,i\xi)$
has real coefficients.

\begin{defn}\label{DEF:hyppair}
Suppose $L_m=L_m(\tau,\xi)$ and $L_{m-1}=L_{m-1}(\tau,\xi)$ are
homogeneous polynomials as above. Furthermore,
assume that the roots of $L_m$, $\tau_1(\xi),\dots,\tau_m(\xi)$,
and those of $L_{m-1}$, $\sigma_1(\xi),\dots,\sigma_{m-1}(\xi)$,
are real-valued \textup{(\textit{in which case we say $L_m$ and
$L_{m-1}$ are \emph{hyperbolic polynomials}})}. Then,
$(L_m,L_{m-1})$ is called a \emph{hyperbolic pair} if
\textup{(\textit{possibly after reordering})}
\begin{equation}\label{EQ:hyppaircond}
\tau_1(\xi)\le \sigma_1(\xi)\le \tau_2(\xi)\le\dots\le
\tau_{m-1}(\xi)\le \sigma_{m-1}(\xi)\le \tau_m(\xi).
\end{equation}
If, in addition, the roots of $L_m,L_{m-1}$ are pairwise distinct
for $\xi\ne0$ \textup{(\textit{in which case they are called
\emph{strictly hyperbolic polynomials}})} 
and the inequalities in
\eqref{EQ:hyppaircond} are all strict, then we say $(L_m,L_{m-1})$
is a \emph{strictly hyperbolic pair}.
\end{defn}

\begin{defn}\label{DEF:hyptriple}
Let \[L_m=L_m(\tau,\xi)\,,\; L_{m-1}=L_{m-1}(\tau,\xi)\,,\;
L_{m-2}=L_{m-2}(\tau,\xi)\] be \textup{(\textit{homogeneous})}
hyperbolic polynomials. If $(L_m,L_{m-1})$ and $(L_{m-1},L_{m-2})$
are both hyperbolic pairs then we say that $(L_m,L_{m-1},L_{m-2})$
is a \emph{hyperbolic triple}. If, in addition, all the polynomials
and all the pairs are strictly hyperbolic 
\textup{(\textit{in the
sense of Definition}~\ref{DEF:hyppair})} then
$(L_m,L_{m-1},L_{m-2})$ is called a \emph{strictly hyperbolic
triple}.
\end{defn}

\begin{thm}[\cite{vole+radk03}]\label{THM:voleradk}
Suppose that $(L_m,L_{m-1},L_{m-2})$ is a strictly hyperbolic
triple. Then $L_m(\tau,\xi)+L_{m-1}(\tau,\xi)+L_{m-2}(\tau,\xi)\ne0$
for all $\;\Im\tau\le0$. Furthermore, 
any two of the polynomials $L_m,L_{m-1},L_{m-2}$
have no common purely imaginary zeros.
\end{thm}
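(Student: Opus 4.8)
Looking at Theorem \ref{THM:voleradk}, the statement asserts two things about a strictly hyperbolic triple $(L_m, L_{m-1}, L_{m-2})$: first, that $L := L_m + L_{m-1} + L_{m-2}$ is non-vanishing whenever $\Im\tau \le 0$ (i.e., the polynomial is stable), and second, that no two of the three polynomials share a purely imaginary zero. The second claim is essentially immediate from strict hyperbolicity, so the main work is the stability claim.

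\textbf{Plan for the second (easier) claim.} Suppose two of the polynomials, say $L_m(\tau,\xi)$ and $L_{m-1}(\tau,\xi)$, have a common zero at $(\tau_0,\xi_0)$ with $\tau_0 = i\la$ purely imaginary, $\xi_0 \ne 0$. Working with the real-coefficient normalization $L_r(\tau, i\xi)$ from the hypothesis, set $s = \la$ and $\eta = \xi_0$; then $\tau_0 = i\la$ being a root of $L_m(\tau, i\eta)$ (which has real coefficients and real roots by hyperbolicity) forces $\la \in \R$, and in fact $\la$ must be one of the real roots $\va_k(\eta)$. The interlacing inequalities \eqref{EQ:hyppaircond} relating the roots of $L_m$ and $L_{m-1}$ are all \emph{strict} for $\eta \ne 0$ in the strictly hyperbolic case, so $L_m$ and $L_{m-1}$ can have no common real root for $\eta \ne 0$; the same argument applies to the pair $(L_{m-1}, L_{m-2})$, and the pair $(L_m, L_{m-2})$ follows by composing the two interlacing relations (a root of $L_m$ lies strictly between consecutive roots of $L_{m-1}$, hence cannot coincide with a root of $L_{m-2}$, which also interlaces those of $L_{m-1}$). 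For $\eta = 0$ all three polynomials reduce to $\tau^m, \tau^{m-1}, \tau^{m-2}$ (up to constants), whose only common zero is $\tau = 0$, which is not a nonzero purely imaginary number in the relevant sense. This handles the second claim.

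\textbf{Plan for the stability claim.} The natural approach is a continuity/homotopy argument combined with the interlacing structure. First I would reduce to $\xi$ fixed with $|\xi| = 1$ by homogeneity considerations: writing $L(\tau,\xi) = L_m(\tau,\xi) + L_{m-1}(\tau,\xi) + L_{m-2}(\tau,\xi)$ and noting $L_r$ is homogeneous of degree $r$, a scaling $\tau \mapsto \rho\tau$, $\xi \mapsto \rho\xi$ shows the zero set in the lower half-plane for general $\xi$ is controlled by the unit sphere case together with the behavior as $\rho \to 0$, where the principal part dominates. Then, for fixed $\xi \ne 0$, I would consider the one-parameter family $L^{(\ep)}(\tau,\xi) = L_m(\tau,\xi) + \ep L_{m-1}(\tau,\xi) + \ep^2 L_{m-2}(\tau,\xi)$ (or a similar deformation), which at $\ep = 0$ has all $m$ roots real (strict hyperbolicity) and at $\ep = 1$ is our operator. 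The key is to show the roots cannot cross into the closed lower half-plane $\{\Im\tau \le 0\}$ as $\ep$ increases: a root touching the real axis would, by the interlacing of $(L_m, L_{m-1})$ and $(L_{m-1}, L_{m-2})$ and a Hermite–Biehler / Routh–Hurwitz type computation on the imaginary axis, have to violate a sign condition. Concretely, I expect the argument in Volevich–Radkevich runs by evaluating $L(\tau,\xi)$ on the imaginary axis $\tau = i s$, $s \in \R$, splitting into even and odd parts in $s$, and showing the resulting real polynomials form a "positive pair" (their Wronskian-type determinant is sign-definite) precisely because of the interlacing inequalities; this is the classical criterion that guarantees no zeros in $\Im\tau < 0$, and then a limiting argument extends it to $\Im\tau = 0$ using the second claim (no purely imaginary common zeros, hence no zero of the sum on the imaginary axis off the origin).

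\textbf{Main obstacle.} The hard part will be making the Hermite–Biehler positivity argument rigorous for the \emph{triple} rather than a pair — i.e., showing that the interlacing of $(L_m, L_{m-1})$ together with the interlacing of $(L_{m-1}, L_{m-2})$ combine to give the sign-definiteness of the relevant determinant for $L_m + L_{m-1} + L_{m-2}$ on the imaginary axis. For a hyperbolic pair this is the standard fact that $L_m + L_{m-1}$ is stable; adding the third term requires controlling the extra contribution of $L_{m-2}$ and verifying that the strict interlacing leaves enough room. I would handle this by citing \cite{vole+radk03} for the precise algebraic identity, or by a perturbation argument: since the pair $(L_m + L_{m-1}, L_{m-1} + L_{m-2})$ or some such combination inherits interlacing, iterate the pair result. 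The degenerate directions $\xi = 0$ and the passage from $\Im\tau < 0$ to $\Im\tau \le 0$ are routine once the second claim is in hand, since a zero on the imaginary axis with $\xi \ne 0$ would be a common imaginary zero of even and odd parts, contradicting strict interlacing.
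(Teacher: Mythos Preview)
The paper does not prove Theorem~\ref{THM:voleradk}; it is quoted from Volevich--Radkevich \cite{vole+radk03} as a known result and used as a black box in Section~\ref{SEC:hyptriples}. There is therefore no proof in the paper to compare your proposal against.

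That said, your sketch is broadly in the spirit of how such stability results are established in \cite{vole+radk03}: a Hermite--Biehler type argument on the imaginary axis, with the strict interlacing of the roots supplying the required sign-definiteness, is indeed the mechanism. Your handling of the second claim (no common purely imaginary zeros) via strict interlacing is correct and elementary. For the first claim, the obstacle you identify --- passing from the pair result to the triple --- is real, and your proposal remains a plan rather than a proof at that point; the actual argument in \cite{vole+radk03} works directly with the decomposition of $L_m + L_{m-1} + L_{m-2}$ on the imaginary axis into real and imaginary parts and verifies the positive-pair condition using both interlacing relations simultaneously, rather than by iterating the pair result. If you want to complete the argument yourself rather than cite it, that is the step you would need to make precise.
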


We also recall a theorem of Hermite (see, for example,
\cite{nish00}):
\begin{thm}\label{THM:hermitethm}
Suppose $p_m(z)$, $p_{m-1}(z)$ are real polynomials of degree
$m,m-1$, respectively, and that all the zeros of
$p(z)=p_m(z)-ip_{m-1}(z)$ lie in the upper half-plane
\textup{(\textit{that is, if $p(z)=0$ then $\Im z>0$})}. 
Then all
the zeros of $p_m(z)$ and $p_{m-1}(z)$ are real and distinct.
\end{thm}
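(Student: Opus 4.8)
The plan is to prove this by the classical Hermite--Biehler argument: view $p_m$ and $p_{m-1}$ as the real and (up to sign) imaginary parts of $p$, and analyse the unimodular function $x\mapsto p(x)/\overline{p(\bar x)}$ on the real line. First I would record the reductions. Since $\deg p_m=m$, the polynomial $p(z)=p_m(z)-ip_{m-1}(z)$ has degree $m$ with leading coefficient $c\in\R\setminus\{0\}$, namely the leading coefficient of $p_m$; by hypothesis its $m$ roots $z_1,\dots,z_m$ (listed with multiplicity) satisfy $\Im z_k>0$. If $p$ had a real zero $x$, then $p_m(x)$ and $p_{m-1}(x)$, being real, would both vanish, contradicting the hypothesis; hence $p$ has no real zeros, and the same holds for $p^*(z):=\overline{p(\bar z)}=p_m(z)+ip_{m-1}(z)$.

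Next I would introduce $\psi(x):=p(x)/p^*(x)$ for $x\in\R$. From $p(z)=c\prod_k(z-z_k)$ and $p^*(z)=c\prod_k(z-\bar z_k)$, using that $c$ is real, one gets $\psi(x)=\prod_{k=1}^m\frac{x-z_k}{x-\bar z_k}$, and each factor has modulus $1$ on $\R$ since $|x-z_k|=|x-\bar z_k|$; hence $|\psi(x)|\equiv 1$. Writing $\psi(x)=e^{2i\theta(x)}$ with $\theta$ a continuous branch of the argument, the crucial computation is $\frac{d}{dx}\arg\frac{x-z_k}{x-\bar z_k}=\frac{\Im z_k}{|x-z_k|^2}+\frac{\Im z_k}{|x-\bar z_k|^2}>0$, so $\theta$ is strictly increasing. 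Moreover $\psi(x)\to 1$ as $x\to\pm\infty$ (each factor tends to $1$), so each of the $m$ factors winds once around the circle and, after normalising $\theta(-\infty)=0$, the map $\theta$ is a continuous strictly increasing bijection of $\R$ onto $(0,m\pi)$.

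Then the zeros can be read off. Since $p_m=\tfrac12(p+p^*)=\tfrac12 p^*(\psi+1)$ and $p^*$ has no real zeros, the real zeros of $p_m$ are exactly the points with $\psi(x)=-1$, i.e. $\theta(x)\in\tfrac\pi2+\pi\Z$; by the bijection property these are precisely the $m$ distinct reals $\theta^{-1}(\tfrac\pi2+\pi j)$, $j=0,\dots,m-1$. A polynomial of degree $m$ with $m$ distinct real zeros has no others and all of them are simple, so every zero of $p_m$ is real and distinct. In the same way $p_{m-1}=\tfrac i2(p-p^*)=\tfrac i2 p^*(\psi-1)$ has real zeros exactly where $\psi(x)=1$, i.e. $\theta(x)\in\pi\Z$, which by the bijection property are the $m-1$ distinct reals $\theta^{-1}(\pi j)$, $j=1,\dots,m-1$; since $\deg p_{m-1}=m-1$ these are all its zeros, real and distinct. (The interlacing pattern $\tfrac\pi2<\pi<\tfrac{3\pi}2<\cdots$ also shows the zeros of $p_m$ and $p_{m-1}$ strictly separate one another, although this is not needed for the statement.)

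The only genuinely non-formal step, and hence the main obstacle, is establishing that $\psi$ maps $\R$ onto the unit circle strictly monotonically with total winding number exactly $m$: this packages the factorisation of $p$, the modulus-one identity, the positivity of the derivative of the argument, and the hypothesis $\Im z_k>0$, and it is precisely what forces the counts ``$m$'' and ``$m-1$'' of real zeros. The degenerate case $m\le 1$ is immediate (for $m=1$, $p_0$ is a nonzero constant with no zeros and $p_1$ has a single real zero), and one should note in passing that the stated degrees already prevent $p_m$ or $p_{m-1}$ from vanishing identically.
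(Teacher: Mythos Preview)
Your proof is correct and follows the classical Hermite--Biehler argument. Note, however, that the paper does not give its own proof of this theorem: it is quoted as a known result with a reference to \cite{nish00}, so there is nothing to compare against. The key monotonicity and winding-number computation you carry out is exactly the standard route, and your counts of the preimages of $-1$ and $+1$ under $\psi$ correctly pin down the $m$ real simple zeros of $p_m$ and the $m-1$ real simple zeros of $p_{m-1}$. The interlacing remark you make at the end is indeed the full Hermite--Biehler conclusion, stronger than what the theorem statement requires here.
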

Now we will give some rather constructive 
examples of how non-homogeneous
real roots may arise, and some sufficient conditions for this.

Assume that $L$ is of the form $L_m(\pat,D_x)+L_{m-2}(\pat,D_x)$,
where the $L_r$ are as in Definition~\ref{DEF:hyptriple} 
and neither
is identically zero. Suppose that there exists a homogeneous
operator of order $m-1$, $L_{m-1}(\pat,D_x)$, such that the
characteristic polynomials $L_m(\tau,\xi)$, 
$L_{m-1}(\tau,\xi)$ and
$L_{m-2}(\tau,\xi)$ form a strictly hyperbolic triple. 
Then, by
Theorem~\ref{THM:voleradk}, we have
\begin{equation*}
L_m(\tau,\xi)+L_{m-1}(\tau,\xi) +L_{m-2}(\tau,\xi)\ne0
\text{ for }
\Im\tau\le0\,.
\end{equation*}
Thus, by Theorem~\ref{THM:hermitethm}, all the zeros of
$L_m(\tau,\xi)+L_{m-2}(\tau,\xi)$ are real, but clearly
non-homogeneous if $L_{m-2}\not\equiv 0$. 
So, using this construction, we
can obtain examples of operators for which all the characteristic
roots lie completely on the imaginary axis
(so that $i\tau(\xi)$ are real, which would be the notation
for the rest of this paper), 
but for which we cannot
automatically expect the standard decay for homogeneous
symbols to hold.

\subsection{Strictly hyperbolic systems}
\label{SEC:systems}
Our results can also be used to 
establish $L^p-L^q$ decay rates for strictly
hyperbolic systems. Let us briefly sketch the reduction of
systems to the situation covered by results of
this paper.
Let
\begin{equation*}
iU_t=A(D)U\,,\quad U(0)=U_0\,,
\end{equation*}
be an $m\times m$ first order strictly hyperbolic system of 
partial differential equations.
That is, the associated system of polynomials may be 
written as
$A(\xi)=A_1(\xi)+A_0(\xi)$, with 
$A_1$ being positively homogeneous of order one in $\xi$ and
$A_0(\xi)\in S_{1,0}^0(\R^n)$. If $A(\xi)$ is a matrix of
first order polynomials, then $A_0$ is constant. It is known
that $A(D)$ is hyperbolic if and only if $\det A(D)$ is
hyperbolic (see e.g. Atiyah, Bott and 
G{\aa}rding \cite{ABG}). Moreover, if
$\det A_1(D)$ is strictly hyperbolic, then $A(D)$ is strongly
hyperbolic. 

Now, the strict hyperbolicity of
$A(D)$ means that the roots
$\va_1(\xi),\dots,\va_m(\xi)$ of equation $\det(\va I-A_1(\xi))=0$ 
are all
real and distinct away from the origin.  
Denote the roots of the equation $\det(\tau
I-A(\xi))=0$ 
(which is an $m^{\text{th}}$ order polynomial in $\tau$ with
smooth coefficients) by $\tau_1(\xi),\dots,\tau_m(\xi)$. 
Now, by
analogy to the case of the $m^{\text{th}}$ order scalar equation, 
we can,
via perturbation methods, show that for large~$\abs{\xi}$
the~$\tau_k(\xi)$ behave similarly to the~$\va_k(\xi)$, 
in that they
are distinct, analytic and belong to $S_{1,0}^1(\R^n)$. For
bounded~$\abs{\xi}$ we will need similar regularity 
assumptions on
the characteristic roots~$\tau_k(\xi)$ as for the 
scalar equations.
Furthermore, we assume that there exists $Q\in S^0_{1,0}(\R^n)$
such that $\abs{\det Q(\xi)}\ge C>0$ and such that
$$Q^{-1}AQ=\diag(\tau_1(\xi),\dots,\tau_m(\xi))=:T\,.$$
The existence of such $Q$ is a very interesting question
itself, especially in the presence of
variable multiplicities, but we will not go into such details here.
Now, we use the transformation $U=Q(D)V$, so that
\begin{equation*}
U_t=QV_t\implies iQV_t=A(D)QV \implies iV_t=TV\,; \; U(0)=QV(0).
\end{equation*}
This systems decouples into $m$ independent scalar equations:
\begin{equation*}
\pa_t V_k=\tau_k(D)V_k,\quad k=1,\dots,m,\quad V_k(0)=(Q^{-1}U(0))_k
\end{equation*}
each of which is solved by
\begin{equation*}
V_k(t,x)=\int_\Rn
e^{i(x\cdot\xi+\tau_k(\xi)t)}\widehat{V}_k(0,\xi)\,d\xi\,.
\end{equation*}
Now, $Q\in S^0(\R^n)$, 
so it is a bounded map $L^q\to L^q$, $1<q<\infty$, and
we can get our estimates for~$V_k$ as in the case of 
$m^{\text{th}}$
order scalar equations; thus, we can conclude that
\begin{multline*}
\norm{U}_{L^q}=\norm{Q V}_{L^q}\le C\norm{V}_{L^q}\le
C K(t)\norm{V}_{L^p}=
C K(t)\norm{Q^{-1}U}_{L^p}\le C K(t)\norm{U}_{L^p}\,,
\end{multline*}
where $K(t)$ is as in Theorem~\ref{THM:overallmainthm}.

\subsection{Application to Fokker--Planck equation}
\label{SEC:FokkerPlanck}
The classical Boltzmann equation for the particle 
distribution 
function $f=f(t,x,c)$, where $x,\mathbf{c}\in\R^n$, 
$n=1,2,3$, is 
\begin{equation*}
(\pa_t+\mathbf{c}\cdot\grad_x)f=S(f),
\end{equation*}
where $S(f)$ is the so-called integral of collisions.
The important special case of this equation is the
Fokker--Planck equation for the distribution function of 
particles in Brownian motion, 
when the integral of collisions is linear and
is given by 
$$S(f)=\grad_\mathbf{c}\cdot(\mathbf{c}+\grad_{\mathbf{c}})f=
\sum_{k=1}^n \partial_{c_k}(c_k+\partial_{c_k})f.$$
In this case the kinetic Fokker--Planck equations takes the form
$$
\left(\pa_t+\sum_{k=1}^n c_k \partial_{x_k}\right)f(t,x,c)=
\sum_{k=1}^n \partial_{c_k}(c_k+\partial_{c_k})f.$$
The Hermite--Grad method of dealing with Fokker--Planck equation
consists in decomposing $f(t,x,\cdot)$ in the Hermite basis, i.e.
writing
$$f(t,x,c)=\sum_{|\alpha|\geq 0} \frac{1}{\alpha !}
m_\alpha(t,x) \psi^\alpha(c),$$ 
where
$\psi^\alpha(c)=(2\pi)^{-n/2}(-\partial_c)^\alpha
\exp(-\frac{|c|^2}{2})$ are Hermite functions. 
They are derivatives of the Maxwell distribution
$\psi^0$ which annihilates the integral of collisions
and form a complete orthonormal basis in the weighted
Hilbert space
$L^2_w(\R^n)$ with weight $w=1/\psi^0.$
This decomposition
\footnote{Thus, 
the convergence of the series of such decomposition is
understood as a convergence of the decomposition with respect
to a basis in a Hilbert space.}
yields the infinite system
$$
\pa_tm_\be(t,x)+\be_k\pa_{x_k}m_{\be-e_k}(t,x)+\\
\pa_{x_k}m_{\be+e_k}(t,x)+\abs{\be}m_\be(t,x)=0.
$$
The Galerkin approximation $f^N$ of the solution $f$ is
\begin{equation*}\mspace{-20mu}
f^N(t,x,c)=\sum_{0\le\abs{\al}\le N}\frac{1}{\al!}m_\al(t,x)
\psi^\alpha(c)\,,
\end{equation*}
with $m(t,x)=\{ m_\beta(t,x):\; 0\leq |\beta|\leq N\}$ being
the unknown function of coefficients.
For $m(t,x)$ one obtains the following system
of equations
\begin{equation*}
\pat m(t,x)+\sum_jA_j\paxj m(t,x)-iBm(t,x)=0,
\end{equation*}
where $B$ is a diagonal matrix, 
$B_{\alpha,\beta}=|\alpha|\delta_{\alpha,\beta},$
and the only non-zero elements of the matrix $A_j$ are
$a_j^{\alpha-e_j,\alpha}=\alpha_j$,
$a_j^{\alpha+e_j,\alpha}=1$. 
Hence, the dispersion equation for the system is
\begin{equation}\label{EQ:NthFP}
P(\tau,\xi)\equiv\det(\tau I+\sum_j A_j\xi_j-iB)=0,
\end{equation}
which we will call the $N^{th}$ Fokker--Planck polynomial,
and we have, in particular,
\begin{equation}\label{EQ:FPpol}
P(\tau,0)=\det(\tau I-iB)=\tau\prod_{j=1}^{N}(\tau-ji)^
{\gamma_j}, 
\end{equation}
for some powers $\gamma_j\geq 0$.
Properties of this polynomial $P(\tau,\xi)$
have been extensively studied 
by Volevich and Radkevich in
\cite{vole+radk04}, who gave conditions and examples of
situations
when $\Im\tau_j(\xi)\ge 0$, for all $\xi\not=0$.
They also
described more general (necessary) conditions in terms of
coefficients of $P$. See also \cite{vole+radk03,ZR04}.
In our situation here we have to take additional care of
possible multiple roots, as is done in Theorem
\ref{THM:dissipative}.

From formula \eqref{EQ:FPpol} it follows in particular
that there is a single characteristic
root at the origin. Let $M=\prod_{j=1}^N j^{\gamma_j}.$

Let us examine the structure of the operator $P(\tau,\xi)$.
It is a polynomial of 
order $m$ which can be written in the form
$$P(\tau,\xi)=\sum_{j=0}^{m} (-i)^{m-j} P_j(\tau,\xi),$$ 
with
$P_j$ being a homogeneous polynomial of order $j$.
Moreover, we have
$$P_0=0,\; P_1=M\tau, \;
P_2=M\sum_{k=2}^m\frac{1}{k-1}\tau^2-M|\xi|^2.$$ 
The case $n=1$ was considered in \cite{vole+radk03}, where
one has $M=N!$

Let $P(\tau(\xi),\xi)=0$, where $\tau(0)=0$ 
is the simple root at the origin. 
Differentiation with respect to $\tau$ yields 
$\frac{\partial\tau}{\partial\xi}(0)=0$.
Differentiating again we get
$${\frac{\partial^2\tau}{\partial\xi^2}(0)=2i I_n}.$$
So, for small frequencies we obtain the decomposition
$$\Im\tau(\xi)=2|\xi|^2+\ldots+c(\log m)||\xi||^4 +\ldots,$$
where 
$$m=1+\gamma_1+\ldots \gamma_N\approx c_n N,$$ 
and 
$||\xi||^4$ denotes a fourth order polynomial in $\xi$.
We also easily have a rough
estimate for $M$ of the form
$$N^N \preceq M\preceq (N!)^N, \;(n\geq 2).$$
It follows then that for {\em small} frequencies we get
the estimate
$${|m(t,x)|\leq C (1+t)^{-n/2}+C e^{-\varepsilon(N) t}},$$
where, in general,
 it may be that $\varepsilon(N)\to 0$ as $N\to\infty$. 
For {\em medium} frequencies we get exponential decay in
view of the result of 
Theorem \ref{THM:expdecay}, also in the case when there are
multiple characteristics, where we can use Theorem
\ref{THM:expdecay2}. Here, there is 
an additional polynomial growth with
respect to time caused by the resolution procedure
of Section \ref{SEC:resofroots}, but this
is compensated by the exponential decay given by 
characteristics with strictly positive imaginary part
(see Theorem \ref{THM:expdecay2}).
 
Let us discuss the situation with {\em large} frequencies.
For operators of general form,
away from points where roots coincide, 
the roots are analytic.
For large $\abs{\xi}$, perturbation arguments 
of Section \ref{CHAP3} give 
properties of roots
$\tau_k(\xi)$ related to $\va_k(\xi)$, 
the characteristics of the
principal part. Here $\tau_k(\xi)$ and $\phi_k(\xi)$ 
are defined as roots of equations $P(\tau,\xi)=0$ and
its principal part  $P_m(\va,\xi)=0$, respectively.
Let $K$ be the maximal order of lower order terms.
Then we can summarise the following properties of $P$
established in Section \ref{CHAP3}:

\begin{itemize}
\item there are no multiple roots for large $\xi$;

\item $\abs{\pa^\al_\xi\tau_k(\xi)}\le
C\p{1+|\xi|}^{1-\abs{\al}}$, i.e. ${\tau_k\in S^1}$;

\item the exits $\va_k$ such that
$\abs{\pa^\al\tau_k(\xi)-\pa^\al\va_k(\xi)}\le
C\p{1+|\xi|}^{K+1-m-\abs{\al}}$,
for all $\xi\in\R^n$ and all multi-indices $\al$;

\item Since $\phi_k$ are real-valued, we get 
${\Im\tau_k\in S^{K+1-m}}$. In particular,
$\Im\tau_k\in S^0$.
\end{itemize}

The statements above are obtained by perturbation arguments
and rely on the strict hyperbolicity of the principal part.
However, this does not have to be the case for polynomials
$P$ that we obtain in the Galerkin approximation. Moreover,
in general, it might happen that $\Im\tau_k(\xi)\to 0$ as
$|\xi|\to\infty$, the case which is discussed
in Section \ref{SEC:asymptotic}. 
To avoid these problems we impose the
condition of strong stability. 
First, we 
will say that $P(\tau,\xi)$ is a {\em stable} polynomial if
its roots $\tau(\xi)$ satisfy $\Im\tau(\xi)\geq 0$ for all
$\xi\in\R^n$, and if
$\Im\tau(\xi)=0$ implies $\xi=0$. Then we will say that $P(\tau,\xi)$
is {\em strongly stable} if, moreover, 
$\Im\tau(\xi)=0$ implies $\xi=0$ and $\Re\tau(\xi)=0$, and if
its roots $\tau(\xi)$ satisfy
$\liminf_{|\xi|\to\infty}\Im\tau(\xi)>0$. Thus, the condition of
strong stability means that the roots $\tau(\xi)$ may become
real only at the origin of the complex plane at $\xi=0$,
and that they do not approach the real axis asymptotically
for large $\xi$.

In Section \ref{SEC:hyptriples}, as well as in
\cite{vole+radk03, vole+radk04}, there
are several sufficient conditions for the
stability of hyperbolic polynomials.
In this case we have a consequence of Theorem
\ref{THM:dissipative} and Remark \ref{REM:dismore} in the
form of estimate \eqref{EQ:bestdispest}:

\begin{cor}\label{TH:Cons1}
Let $P$ be a strongly stable polynomial with
characteristic roots with non-negative imaginary parts. 
Let $1\le p\le2$ and
$2\leq q\leq\infty$ be such that $\frac{1}{p}+\frac{1}{q}=1$. 
Then the solution to 
Cauchy problem \eqref{EQ:standardCP(repeat)} satisfies
dispersive estimate \eqref{EQ:bestdispest}, i.e. we have
\begin{equation*}
\normBig{D^r_tD^\al_x
u(t,\cdot)}_{L^q(\R^n_x)}\le
C\bract{t}^{-\frac{n}{s}
\big(\frac{1}{p}-\frac{1}{q}\big)-
\frac{\abs{\al}}{s}-\frac{r s_1}{s}}
\sum_{j=0}^{m-1}
\norm{f_j}_{W^{N_p+\abs{\al}+r-j}_p}
\,,
\end{equation*}
with $N_p\geq n(\frac1p-\frac1q)$ for $1<p\leq 2$ and
$N_1>n$ for $p=1$.
\end{cor}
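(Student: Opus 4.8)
The plan is to reduce Corollary~\ref{TH:Cons1} to a direct invocation of Theorem~\ref{THM:dissipative} together with the sharpest case of Remark~\ref{REM:dismore}, by showing that strong stability is precisely the hypothesis needed to place the characteristic roots $\tau_1(\xi),\dots,\tau_m(\xi)$ of $P$ into the setting of (H1)--(H2) with the extra structure required for estimate~\eqref{EQ:bestdispest}. As usual I would split the frequency space into three regions --- a small ball around the origin, a bounded region bounded away from the origin, and large $\abs{\xi}$ --- and check that the various theorems of Section~\ref{section:ests} apply in each.

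First I would handle the medium and large frequencies. By the definition of strong stability we have $\liminf_{\abs{\xi}\to\infty}\Im\tau_k(\xi)>0$ for every $k$, which is exactly hypothesis (H1); and since $\Im\tau_k(\xi)>0$ for all $\xi\neq0$, the continuous functions $\Im\tau_k$ attain a positive minimum on any compact set avoiding the origin, so on $\set{\xi:\abs{\xi}\geq\epsilon}$ each relevant root is uniformly separated from the real axis. Thus on the complement of a small ball the hypotheses of Theorem~\ref{THM:expdecay} hold for simple roots, and of Theorem~\ref{THM:expdecay2} should any roots be multiple there, and these contributions to $u$ decay exponentially in $t$. This step also fixes the Sobolev orders quoted in the corollary: they come from the large-frequency estimate of Theorem~\ref{THM:expdecay}, namely $N_1>n$ for $p=1$ and $N_p\geq n(\tfrac1p-\tfrac1q)$ otherwise.

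Next I would treat the origin. Strong stability forces $Z_k:=\set{\xi:\Im\tau_k(\xi)=0}\subseteq\set{0}$ for every $k$, and at $\xi=0$ any such root also satisfies $\Re\tau_k(0)=0$, i.e.\ $\tau_k(0)=0$. Because the operator is differential, $\tau_k$ is analytic near $0$, so $\Im\tau_k$ vanishes there to some finite even order $s\geq2$ and $\tau_k$ itself to some finite order $s_1\geq1$; arguing as in Section~\ref{SEC:rootmeetingreal-sec} one gets constants $c_0,c_1>0$ with
\begin{equation*}
c_0\abs{\xi}^{s}\leq\Im\tau_k(\xi)\quad\text{and}\quad\abs{\tau_k(\xi)}\leq c_1\abs{\xi}^{s_1}
\end{equation*}
for $\xi$ near $0$. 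The first inequality, with the single point $\xi^0=0$, is hypothesis (H2) (finitely many points, trivially), and the two inequalities together with $\xi^0=0$ are exactly the hypothesis of the final case of Remark~\ref{REM:dismore}. Combining the three regions --- the exponentially small medium/large-frequency pieces being dominated by the polynomial rate near $0$ --- Theorem~\ref{THM:dissipative} refined by Remark~\ref{REM:dismore} gives~\eqref{EQ:bestdispest}.

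The main obstacle I expect is the possibility that the root at the origin is \emph{not} simple: strong stability permits several roots to coincide at $\xi=0$, in which case (H2) as literally phrased fails, and one must instead apply the resolution-of-multiplicities procedure of Section~\ref{SEC:resofroots} with $\curlyM=\set{0}$ (codimension $\ell=n$), combined with Proposition~\ref{PROP:generalrootsmeetingaxis}, to recover the same decay rate --- keeping track of the $\abs{\al}$ and $r$ gains from the bound $\abs{\tau_k(\xi)}\leq c_1\abs{\xi}^{s_1}$. When the root at the origin is simple --- as it is for the Fokker--Planck polynomials by~\eqref{EQ:FPpol} --- this complication does not arise, and the argument is the clean three-region splitting described above.
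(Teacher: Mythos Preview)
Your proposal is correct and follows exactly the route the paper takes: the corollary is stated as an immediate consequence of Theorem~\ref{THM:dissipative} together with the last case of Remark~\ref{REM:dismore}, and you have simply spelled out the verification that strong stability gives (H1), (H2) with $\xi^0=0$, and the bound $|\tau_k(\xi)|\le c_1|\xi|^{s_1}$ needed for~\eqref{EQ:bestdispest}. Your caveat about multiple roots at the origin is a fair observation that the paper does not address explicitly, but in the intended Fokker--Planck application the root at the origin is simple by~\eqref{EQ:FPpol}, so the clean argument you describe suffices.
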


From this, we can conclude the following estimates for
solution to the Galerkin approximations of Fokker--Planck
equation:

\begin{thm}\label{THM:FP}
If the 
$N^{th}$ Fokker--Planck polynomial $P$ in \eqref{EQ:NthFP}
is strongly stable, 
we have the estimate
$$||f_N(t,x,c)||_{L^\infty(\R^n_x)L^2_w(\R^n_c)}\leq
C(1+t)^{-n/2}+C_N e^{-\epsilon(N)t},$$ 
with $w=\exp(-|c|^2/2)$ and $\epsilon(N)>0$.
\end{thm}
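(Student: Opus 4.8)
The plan is to deduce Theorem~\ref{THM:FP} from the dispersive estimate of Corollary~\ref{TH:Cons1} (equivalently Theorem~\ref{THM:dissipative} with $\xi^0=0$), after two reductions: from the Galerkin sum $f_N$ to the vector $m=\{m_\beta:0\le|\beta|\le N\}$ of Hermite coefficients, and from the first order system satisfied by $m$ to a scalar $m^{\text{th}}$ order Cauchy problem whose full symbol is the $N^{\text{th}}$ Fokker--Planck polynomial $P(\tau,\xi)$ of~\eqref{EQ:NthFP}.

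First I would reduce to $m$. Since the Hermite functions $\{\psi^\alpha\}$ form an orthonormal basis of $L^2_w(\R^n_c)$ with $w=1/\psi^0$, the identity
\[
\norm{f_N(t,x,\cdot)}_{L^2_w(\R^n_c)}^2=\sum_{0\le|\alpha|\le N}\frac{1}{(\alpha!)^2}\abs{m_\alpha(t,x)}^2
\]
gives $\norm{f_N(t,\cdot,\cdot)}_{L^\infty(\R^n_x)L^2_w(\R^n_c)}\le C_N\max_{|\alpha|\le N}\norm{m_\alpha(t,\cdot)}_{L^\infty(\R^n_x)}$, so it suffices to bound each $\norm{m_\beta(t,\cdot)}_{L^\infty(\R^n_x)}$. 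Next, writing the system $\pat m+\sum_jA_j\paxj m-iBm=0$ as $\pat\widehat m=M(\xi)\widehat m$ after the partial Fourier transform in $x$, the Cayley--Hamilton theorem shows that each component $m_\beta$ solves a scalar constant coefficient (in $t$) equation of order $m$ whose full symbol, under the standard identification of $\tau$ with $D_t$, is $P(\tau,\xi)$; its Cauchy data $\pat^l m_\beta(0,\cdot)$, $l=0,\dots,m-1$, are obtained from the $\{m_\gamma(0,\cdot)\}$ by applying symbols of order $l$ (alternatively one may use the diagonalisation reduction of Section~\ref{SEC:systems}). Thus each $m_\beta$ is the solution of a Cauchy problem of the form~\eqref{EQ:standardCP(repeat)} for the operator $P(D_t,D_x)$, to which Theorems~\ref{THM:overallmainthm}, \ref{THM:dissipative} and Corollary~\ref{TH:Cons1} apply.

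It then remains to verify the hypotheses of Corollary~\ref{TH:Cons1} for $P$ with $s=s_1=2$: strong stability and $\Im\tau_k(\xi)\ge 0$ are assumed; by~\eqref{EQ:FPpol} the origin carries a single simple characteristic root $\tau_0$, and differentiating $P(\tau_0(\xi),\xi)=0$ twice at $\xi=0$ gives $\grad\tau_0(0)=0$ and $\Hess\tau_0(0)=2iI_n$, so $c_0|\xi|^2\le\Im\tau_0(\xi)$ and $|\tau_0(\xi)|\le c_1|\xi|^2$ near the origin; strong stability forces $\Im\tau_k(\xi)=0$ only at $\xi=0$, so there is exactly one axis-touching point and hypotheses (H1), (H2) hold. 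Corollary~\ref{TH:Cons1} with $p=1$, $q=\infty$, $r=0$, $\alpha=0$ then yields $\norm{m_\beta(t,\cdot)}_{L^\infty(\R^n_x)}\le C\bract{t}^{-n/2}\sum_\gamma\norm{m_\gamma(0,\cdot)}_{W^{N_1}_1}$. To exhibit the sharper form stated, split the frequency space into a small ball about $0$, an annulus, and $\{|\xi|\ge M\}$: the small ball contributes $\bract{t}^{-n/2}$ through $\tau_0$ as above and exponential decay from the remaining roots, which cluster near the points $ji$, $j\ge 1$, and are treated by Theorem~\ref{THM:expdecay2}; on the annulus all roots have positive imaginary part by stability, and on $\{|\xi|\ge M\}$ they do so by strong stability together with $|\tau_k(\xi)|\le C\bract{\xi}$ (Proposition~\ref{PROP:perturbationresults},~\ref{LEM:orderroot}), so Theorems~\ref{THM:expdecay} and~\ref{THM:expdecay2} give a bound $C_N\bract{t}^{L-1}e^{-\delta t}$ there; absorbing the polynomial factors into the exponentials and letting $\epsilon(N)>0$ be the smallest resulting rate gives $C\bract{t}^{-n/2}+C_Ne^{-\epsilon(N)t}$, which together with the first reduction is Theorem~\ref{THM:FP} (with $w=\exp(-|c|^2/2)$ as stated, the constants $C,C_N$ depending on norms of the data).

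The main obstacle I expect is the large-frequency regime. The perturbation machinery of Section~\ref{CHAP3}, which normally guarantees that for $|\xi|\gg 1$ the characteristic roots are simple, behave like symbols of order one, and have imaginary parts bounded below, relies on strict hyperbolicity of the principal part of $P$, which need not hold for a Galerkin approximation; in principle roots could stay multiple or $\Im\tau_k(\xi)$ could tend to $0$ at infinity. The strong stability assumption is precisely what excludes the second pathology, since it forces $\liminf_{|\xi|\to\infty}\Im\tau_k(\xi)>0$; the first is handled by grouping coinciding roots and estimating their contribution via the resolution procedure of Section~\ref{SEC:resofroots} and Theorem~\ref{THM:expdecay2}, at the price of the $(1+t)^{L-1}$ growth and an $N$-dependent constant, both harmless once multiplied by the uniform exponential gain $e^{-\delta t}$. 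This is exactly why the final bound carries the $N$-dependent $C_N$ and $\epsilon(N)$ in its exponentially decaying part, while the $\bract{t}^{-n/2}$ part is dictated solely by the single characteristic root at the origin.
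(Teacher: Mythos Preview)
Your proposal is correct and follows essentially the same route as the paper: reduce from $f_N$ to the Hermite coefficient vector $m$ via orthogonality, pass to the scalar dispersion relation $P(\tau,\xi)$ (the paper leaves this implicit via Section~\ref{SEC:systems}, while you make the Cayley--Hamilton reduction explicit), compute the second-order contact of the unique real root at $\xi=0$ to get $s=s_1=2$, and then invoke Corollary~\ref{TH:Cons1}/Theorem~\ref{THM:dissipative} for the $(1+t)^{-n/2}$ part and Theorems~\ref{THM:expdecay}--\ref{THM:expdecay2} for the exponentially decaying remainder. Your discussion of the large-frequency obstacle (failure of strict hyperbolicity of the principal part, resolved by the strong stability hypothesis and the resolution procedure of Section~\ref{SEC:resofroots}) matches the paper's exactly.
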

Here the constant $C$ is independent of $N$, but, in general,
we may have asymptotically 
that $\epsilon(N)\to 0$ as $N\to\infty$.
The validity of the assumption of Theorem \ref{THM:FP}
for all $N$ is an open problem.

\def\cprime{$'$} \def\cprime{$'$}
\providecommand{\bysame}{\leavevmode\hbox to3em{\hrulefill}\thinspace}
\providecommand{\MR}{\relax\ifhmode\unskip\space\fi MR }
\providecommand{\MRhref}[2]{%
  \href{http://www.ams.org/mathscinet-getitem?mr=#1}{#2}
}
\providecommand{\href}[2]{#2}

\bigskip
\noindent
Department of Mathematics
  \\
  Imperial College London
  \\
  180 Queen's Gate, London SW7 2AZ \\
   United Kingdom
   
  \bigskip
  \noindent
  {\it E-mail address:} {\rm m.ruzhansky@imperial.ac.uk}


\begin{thebibliography}{H{\"o}r83b}

\bibitem[ABG]{ABG}
Atiyah, M.~F., Bott, R. and G{\aa}rding, L.,
\emph{Lacunas for hyperbolic differential operators with
constant coefficients, I}, Acta Math. \textbf{124} (1970),
109--189.

\bibitem[Bea82]{beal82}
Beals, R.~M., \emph{{$L\sp{p}$} boundedness of {F}ourier integral operators},
  Mem. Amer. Math. Soc. \textbf{38} (1982), no.~264.

\bibitem[BL76]{berg+lofs76}
Bergh, J. and L{\"o}fstr{\"o}m, J., \emph{Interpolation spaces. {A}n
  introduction}, Springer-Verlag, Berlin, 1976, Grundlehren der Mathematischen
  Wissenschaften, No. 223.

\bibitem[Bre75]{bren75}
Brenner, P., \emph{On {$L\sb{p}-L\sb{p\sp{\prime} }$} estimates for the
  wave-equation}, Math. Z. \textbf{145} (1975), 251--254.

\bibitem[Bre77]{bren77}
Brenner, P., 
\emph{{$L\sb{p}-L\sb{p'}$}-estimates for {F}ourier integral operators
  related to hyperbolic equations}, Math. Z. \textbf{152} (1977),
  273--286.

\bibitem[Dui96]{duis96}
Duistermaat, J.~J., \emph{Fourier integral operators}, Progress in Mathematics,
  vol. 130, Birkh\"auser Boston Inc., Boston, MA, 1996.

\bibitem[ES92]{egor+shub92}
Egorov, Y.~V. and Shubin, M.~A. (eds.), \emph{Partial differential equations.
  {I}}, Encyclopaedia of Mathematical Sciences, vol.~30, Springer-Verlag,
  Berlin, 1992.

\bibitem[Ev98]{evan98} Evans, L.~C.,
\emph{Partial differential equations}, 
Graduate Studies in Mathematics, 19. 
American Mathematical Society, Providence, RI, 1998. 

\bibitem[GKZ94]{gelf+kapr+zele94}
Gelfand, I.~M., Kapranov, M.~M., and Zelevinsky, A.~V., \emph{Discriminants,
  resultants, and multidimensional determinants}, Mathematics: Theory \&
  Applications, Birkh\"auser Boston Inc., Boston, MA, 1994.


\bibitem[HR03]{HR03}
Hirosawa, F., Reissig, M., 
\emph{From wave to Klein-Gordon type decay rates}, 
Nonlinear hyperbolic equations, spectral theory, and wavelet transformations, 95--155, Oper. Theory Adv. Appl., 145, 
Birkhäuser, Basel, 2003.


\bibitem[H{\"o}r83a]{horm83I}
H\"ormander L., 
\emph{The analysis of linear partial differential operators.~{I}},
  Grundlehren der Mathematischen Wissenschaften, vol. 256, Springer-Verlag,
  Berlin, 1983.

\bibitem[H{\"o}r83b]{horm83II}
H\"ormander L., 
\emph{The analysis of linear partial differential operators.~{II}},
  Grundlehren der Mathematischen Wissenschaften, vol. 257, Springer-Verlag,
  Berlin, 1983.

\bibitem[H{\"o}r97]{horm97}
H\"ormander L., 
\emph{Lectures on nonlinear hyperbolic differential equations},
  Math\'ematiques \& Applications (Berlin), vol.~26, Springer-Verlag, Berlin,
  1997.

\bibitem[KR07]{KR07}
Kamotski, I. and Ruzhansky, M., \emph{Regularity properties, 
      representation of solutions and spectral asymptotics 
      of systems with multiplicities}, 
      Comm. Partial Differential Equations,
     \textbf{32} (2007), 1--35. 

\bibitem[KT98]{KT98}
Keel, M. and Tao, T., 
\emph{Endpoint Strichartz estimates}, 
Amer. J. Math. {\bf 120} (1998), 955--980. 

\bibitem[Kli67]{klin67}
Klinger, A., \emph{The {V}andermonde matrix}, Amer. Math. Monthly \textbf{74}
  (1967), 571--574.



\bibitem[Lit73]{litt73}
Littman, W., 
\emph{{$L\sp{p}-L\sp{q}$}-estimates for singular integral operators
  arising from hyperbolic equations}, Partial differential equations (Proc.
  Sympos. Pure Math., Vol. XXIII, Univ. California, Berkeley, Calif., 1971),
  Amer. Math. Soc., Providence, R.I., 1973, pp.~479--481.

\bibitem[Mat77]{mats76}
Matsumura, A., \emph{On the asymptotic behavior of solutions of semi-linear
  wave equations}, Publ. Res. Inst. Math. Sci., Kyoto Univ. \textbf{12}
  (1976/77), 169--189.
  
\bibitem[MR07]{MR07}
 Matsuyama, T. and Ruzhansky, M., \emph{Asymptotic integration
  and dispersive estimates for hyperbolic equations,
  with applications to Kirchhoff equations}, preprint.  


\bibitem[Nis00]{nish00}
Nishitani, T., \emph{Hyperbolic equations with double characteristics},
  Istituti Editoriali e Poligrafici Internazionali, Universit\`{a} di {P}isa,
  Dipartimento di Matematica, Italy, 2000.

\bibitem[Pec76]{pech76}
Pecher, H., \emph{{$L\sp{p}$}-{A}bsch\"atzungen und klassische {L}\"osungen
  f\"ur nichtlineare {W}ellengleichungen. {I}}, Math. Z. \textbf{150} (1976),
  159--183.

\bibitem[Rac92]{rack92}
Racke, R., \emph{Lectures on nonlinear evolution equations: Initial value
  problems}, Aspects of Mathematics, E19, Friedr. Vieweg \& Sohn, Braunschweig,
  1992.

\bibitem[Rad03]{Rad03}
Radkevich, E. V., \emph{On the global stability 
of solutions of moment systems in nonequilibrium 
thermodynamics,} Math. Notes \textbf{73} (2003), 
551--561.

\bibitem[Rad05]{Rad05} 
Radkevich, E. V., \emph{Asymptotic stability of 
solutions of the Cauchy problem for models of 
nonequilibrium thermodynamics. Stable hyperbolic pencils,} 
(Russian) // Sovrem. Mat. Prilozh. No. 12,
Differ. Uravn. Chast. Proizvod. (2004), 138--173; 
translation in J. Math. Sci. (N. Y.) \textbf{130} (2005),  
5046--5082.

\bibitem[Ran69]{rand69convex}
Randol, B., \emph{On the asymptotic behavior of the 
  {F}ourier transform of the
  indicator function of a convex set}, 
  Trans. Amer. Math. Soc. \textbf{139}
  (1969), 279--285.

\bibitem[ReS05]{ReS05}
Reissig, M. and Smith, J.,
\emph{$L\sp p$-$L\sp q$ estimate for wave equation with 
bounded time dependent coefficient}, 
Hokkaido Math. J. \textbf{34} (2005), 541--586. 

\bibitem[RY99]{reis+yagd99}
Reissig, M. and Yagdjian, K., 
\emph{One application of {F}loquet's theory to
  {$L\sb p\text{--}L\sb q$} estimates 
  for hyperbolic equations with very fast
  oscillations}, Math. Methods Appl. Sci. \textbf{22} (1999), 937--951.



\bibitem[RY00]{reis+yagd00nach}
Reissig, M. and Yagdjian, K., 
\emph{{$L\sb p$}-{$L\sb q$} decay estimates for
  the solutions of strictly hyperbolic equations of second order with
  increasing in time coefficients}, Math. Nachr. \textbf{214} (2000), 71--104.

\bibitem[Ruzh00]{Ruzh00}
 Ruzhansky, M. V., \emph{Singularities of affine fibrations in
        the regularity theory of Fourier integral operators},
         { Russian Math. Surveys} \textbf{55} (2000), 99--170.
        
\bibitem[Ruzh01]{Ruzh01}         
 Ruzhansky, M. 
 \emph{Regularity theory of Fourier integral operators with
       complex phases and singularities of affine fibrations},
       CWI Tracts, volume 131, 2001.        

\bibitem[Ruzh06]{Ruzh06}
 Ruzhansky, M., \emph{On some properties of Galerkin approximations 
  of solutions to Fokker--Planck equations}, 
  in Proceedings of the 4th International Conference 
  ``Analytical Methods in Analysis and Differential Equations" 
  (AMADE-2006), Vol.3, Differential Equations, 
  Minsk: Institute of Mathematics of NAS of Belarus, 133--139, 2006.


\bibitem[Ruzh07]{Ruzh07}
  Ruzhansky, M., \emph{Pointwise van der Corput lemma for 
  functions of several variables}, to appear in
  Functional Analysis and its Applications.
  
\bibitem[RS05]{RS05}
Ruzhansky, M. and Smith, J., 
\emph{Global time estimates for higher order hyperbolic
  equations}, Journees ``Equations aux Derivees Partielles",
   Exp. No. XII, 29 pp., Ecole Polytech., Palaiseau, 2005.  

\bibitem[Sog93]{sogg93}
Sogge, C.~D., \emph{Fourier integrals in classical analysis}, Cambridge Tracts
  in Mathematics, vol. 105, Cambridge University Press, Cambridge, 1993.

\bibitem[Ste93]{stei93}
Stein, E.~M., \emph{Harmonic analysis: real-variable methods, orthogonality,
  and oscillatory integrals}, Princeton Mathematical Series, vol.~43, Princeton
  University Press, Princeton, NJ, 1993.

\bibitem[Str70a]{stri70tran}
Strichartz, R.~S., 
\emph{Convolutions with kernels having singularities on a
  sphere}, Trans. Amer. Math. Soc. \textbf{148} (1970), 461--471.

\bibitem[Str70b]{stri70func}
Strichartz, R.~S., 
\emph{A priori estimates for the wave equation and some applications},
  J. Funct. Analysis \textbf{5} (1970), 218--235.

\bibitem[Sug94]{sugi94}
Sugimoto, M., \emph{A priori estimates for higher order hyperbolic equations},
  Math. Z. \textbf{215} (1994),  519--531.

\bibitem[Sug96]{sugi96}
Sugimoto, M., 
\emph{Estimates for hyperbolic equations with non-convex
  characteristics}, Math. Z. \textbf{222} (1996), 521--531.

\bibitem[Sug98]{sugi98}
Sugimoto, M., \emph{Estimates for 
 hyperbolic equations of space dimension 3}, J.
  Funct. Anal. \textbf{160} (1998), 382--407.


\bibitem[Tr{\`e}80]{trev802}
Tr{\`e}ves, F., \emph{Introduction to pseudodifferential and {F}ourier integral
  operators. {V}ol. 2: {F}ourier integral operators}, Plenum Press, New York,
  1980.

\bibitem[VR03]{vole+radk03}
Volevich, L.~R. and Radkevich, E.~V.,
{\em Uniform estimates of solutions of the Cauchy problem for
hyperbolic equations with a small parameter multiplying higher
derivatives}, Diff. Eq. \textbf{39} (2003), 521--535.

\bibitem[VR04]{vole+radk04}
Volevich, L.~R. and Radkevich, E.~V., \emph{Stable pencils of hyperbolic
  polynomials and the {C}auchy problem for hyperbolic 
  equations with a small
  parameter at the highest derivatives}, 
  Trans. Moscow Math. Soc. \textbf{65}
  (2004), 63--104.

\bibitem[vW71]{vonW71}
Von Wahl, W., \emph{{$L\sp{p}$}-decay rates for homogeneous wave-equations},
  Math. Z. \textbf{120} (1971), 93--106.

\bibitem[ZR04]{ZR04}
Zakharchenko, P. A. and Radkevich, E. V., \emph
{On the properties of the representation of the 
Fokker--Planck equation in the basis of Hermite functions}. 
(Russian) // Dokl. Akad. Nauk 395 (2004), no. 1, 36--39; 
translation in Dokl. Phys. 49 (2004), no. 3, 139--142.

\end{thebibliography}
\end{document}